\documentclass[english,leqno]{article}

\usepackage{latexsym}
\usepackage{linearb}
\usepackage[LGR, OT1]{fontenc}
\usepackage{amsmath,amsthm}
\usepackage{amssymb}
\usepackage{MnSymbol}
\usepackage{graphicx}
\usepackage{marvosym}
\usepackage{eufrak}
\usepackage[margin=1.25in,dvips]{geometry}
\usepackage{color}
\usepackage{comment}
\usepackage{mathrsfs}\usepackage[LGR,T1]{fontenc}
\usepackage[latin9]{inputenc}
\usepackage{geometry}
\geometry{verbose,tmargin=4cm,bmargin=2cm,lmargin=2cm,rmargin=2cm,headheight=2cm,headsep=2cm,footskip=1cm}
\setcounter{tocdepth}{2}
\usepackage{babel}
\usepackage{enumitem}
\usepackage{amsthm}
\usepackage{amstext}
\usepackage{amssymb}
\usepackage{esint}
\usepackage[unicode=true,pdfusetitle,
 bookmarks=true,bookmarksnumbered=false,bookmarksopen=false,
 breaklinks=false,pdfborder={0 0 1},backref=false,colorlinks=false]
 {hyperref}
\usepackage{breakurl}
\hyphenation{space-time}
\hyphenation{Christo-doulou}
\hyphenation{Schwarz-schild}
\hyphenation{Lorentz-ian}

\makeatletter

\DeclareRobustCommand{\greektext}{%
  \fontencoding{LGR}\selectfont\def\encodingdefault{LGR}}
\DeclareRobustCommand{\textgreek}[1]{\leavevmode{\greektext #1}}
\DeclareFontEncoding{LGR}{}{}
\DeclareTextSymbol{\~}{LGR}{126}
\newcommand{\lyxmathsym}[1]{\ifmmode\begingroup\def\b@ld{bold}
  \text{\ifx\math@version\b@ld\bfseries\fi#1}\endgroup\else#1\fi}

\numberwithin{equation}{section}
\numberwithin{figure}{section}
\usepackage{enumitem}		
\newcommand{\lyxaddress}[1]{
\par {\raggedright #1
\vspace{1.4em}
\noindent\par}
}
  \theoremstyle{plain}
  \newtheorem*{thm*}{\protect\theoremname}
\theoremstyle{plain}
\newtheorem{thm}{\protect\theoremname}[section]
  \theoremstyle{remark}
  \newtheorem*{rem*}{\protect\remarkname}
  \theoremstyle{plain}
  \newtheorem{cor}[thm]{\protect\corollaryname}
 \theoremstyle{definition}
 \newtheorem*{defn*}{\protect\definitionname}
  \theoremstyle{definition}
  \newtheorem*{example*}{\protect\examplename}
  \theoremstyle{plain}
  \newtheorem{lem}[thm]{\protect\lemmaname}
  \theoremstyle{plain}
  \newtheorem{prop}[thm]{\protect\propositionname}
 \newlist{casenv}{enumerate}{4}
 \setlist[casenv]{leftmargin=*,align=left,widest={iiii}}
 \setlist[casenv,1]{label={{\itshape\ \casename} \arabic*.},ref=\arabic*}
 \setlist[casenv,2]{label={{\itshape\ \casename} \roman*.},ref=\roman*}
 \setlist[casenv,3]{label={{\itshape\ \casename\ \alph*.}},ref=\alph*}
 \setlist[casenv,4]{label={{\itshape\ \casename} \arabic*.},ref=\arabic*}

\makeatother

  \providecommand{\corollaryname}{Corollary}
  \providecommand{\definitionname}{Definition}
  \providecommand{\examplename}{Example}
  \providecommand{\lemmaname}{Lemma}
  \providecommand{\propositionname}{Proposition}
  \providecommand{\remarkname}{Remark}
  \providecommand{\theoremname}{Theorem}
 \providecommand{\casename}{Case}
\providecommand{\theoremname}{Theorem}

\begin{document}
\title{Logarithmic local energy decay for scalar waves\\ on a general class of asymptotically flat spacetimes}

\author{Georgios Moschidis}

\maketitle

\lyxaddress{Princeton University, Department of Mathematics, Fine Hall, Washington
Road, Princeton, NJ 08544, United States, \tt gm6@math.princeton.edu}
\begin{abstract}
This paper establishes that on the domain of outer communications
of a general class of stationary and asymptotically flat Lorentzian
manifolds of dimension $d+1$, $d\ge3$, the local energy of solutions
to the scalar wave equation $\square_{g}\text{\textgreek{y}}=0$ decays
at least with an inverse logarithmic rate. This class of Lorentzian
manifolds includes (non-extremal) black hole spacetimes with no restriction
on the nature of the trapped set. Spacetimes in this class are moreover
allowed to have a small ergoregion but are required to satisfy an
energy boundedness statement. Without making further assumptions,
this logarithmic decay rate is shown to be sharp. Our results can
be viewed as a generalisation of a result of Burq, dealing with the
case of the wave equation on flat space outside compact obstacles,
and the results of Rodnianski and Tao for asymptotically conic product
Lorentzian manifolds. The proof will bridge ideas from Rodnianski
and Tao (see \cite{Rodnianski2011}) with techniques developed in
the black hole setting by Dafermos and Rodnianski (see \cite{DafRod6},
\cite{DafRod5}). As a soft corollary of our results, we will infer
an asymptotic completeness statement for the wave equation on the
spacetimes considered, in the case where no ergoregion is present.
\end{abstract}
\tableofcontents{}

\section{Introduction}

Recent progress in understanding the behaviour of solutions to the
scalar wave equation 
\begin{equation}
\square_{g}\text{\textgreek{y}}=0\label{eq:WaveEquation}
\end{equation}
 on various general relativistic backgrounds has been astonishing:
In the case of the Schwarzschild exterior background, boundedness
and decay results were established in \cite{KayWald,DafRod1,DafRod2,DafRod4,BlueSof1,BlueSterb}.
In the case of the Kerr family, similar results in the very slowly
rotating case (i.\,e. for angular momentum $a$ and mass $M$ satisfying
$|a|\ll M$) were obtained in \cite{DafRod5,DafRod6,DafRod9,TatToh1,AndBlue1}.
The full subextremal range $|a|<M$ was finally treated in \cite{DafRod8,Shlap,DafRodSchlap}.
See also \cite{Civ} for the Kerr--Newman case. Various refinements
of the earlier decay results on these spacetimes appear in \cite{Luk2010b,Luk2,Schlue2013}.
This picture changes dramatically when one switches attention to extremal
black hole spacetimes, where instability results have been established
in \cite{Aretakis2011,Aretakis2011a,Aretakis,Aretakis2012a}. 

All the preceding examples dealt with asymptotically flat spacetimes,
but a plethora of relevant results have also been proven for spacetimes
with different asymptotic structure: See \cite{DafRod3,Dyatlov2,Dyatlov2011,Melrose2008,Vasy2013}
for the case of Schwarzschild- and Kerr-de\,Sitter spacetimes, and
\cite{Holzegel2012,Holzegel2013,Holzegel2013a} for the case of  Kerr-AdS
spacetimes. 

Given the amount of technical machinery that has been developed by
the aforementioned authors, it is now feasible to move out of the
realm of backgrounds that are algebraically special solutions to the
Einstein equations or perturbations thereof and address questions
regarding the behaviour of scalar waves on more general Lorentzian
manifolds $(\mathcal{M}^{d+1},g)$. A demanding first question in
this direction is the following:

\bigskip{}

\emph{What are the most general types of spacetimes $(\mathcal{M}^{d+1},g)$
on which boundedness and decay of solutions to the scalar wave equation
(\ref{eq:WaveEquation}) can be obtained and studied? Or, from a different
perspective, what are the possible obstructions to stability for (\ref{eq:WaveEquation})
on general backgrounds?}

\bigskip{}

This paper aims to make a step towards providing answers to the above
question, by establishing the following general decay result:
\begin{thm*}
Let $(\mathcal{M}^{d+1},g)$, $d\ge3$, be a globally hyperbolic spacetime,
which is stationary and asymptotically flat, and which can possibly
contain black holes with a non degenerate horizon and a small ergoregion.
Moreover, suppose that an energy boundedness statement is true for
solutions $\text{\textgreek{y}}$ to (\ref{eq:WaveEquation}) on the
domain of outer communications $\mathcal{D}$ of $(\mathcal{M},g)$.
Then the local energy of $\text{\textgreek{y}}$ on $\mathcal{D}$
decays at least with a logarithmic rate: 
\begin{equation}
E_{loc}(t)\lesssim_{m}\frac{1}{\big\{\log(2+t)\big\}^{2m}}E_{w}^{(m)}(0),\label{eq:FirstStatementTheorem}
\end{equation}
 where $t$ is a suitable time function on $\mathcal{D}$ and $E_{w}^{(m)}(0)$
is a weighted initial energy of the first $m$ derivatives of $\text{\textgreek{y}}$.
\end{thm*}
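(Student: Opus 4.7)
The strategy is to convert the time-domain decay problem into a frequency-domain cutoff-resolvent estimate, combining Burq's logarithmic decay philosophy with the geometric energy methods of Dafermos--Rodnianski. Since $(\mathcal{M},g)$ is stationary, I would first take the Fourier transform of $\psi$ in the time function $t$, obtaining for each real $\omega$ a Helmholtz-type equation $P(\omega)\hat{\psi}(\omega) = \hat{F}(\omega)$ on the spatial slice $\Sigma$. The central target is an exponentially-large-in-$\omega$ bound on the cutoff resolvent
\[
\bigl\|\chi R(\omega)\chi\bigr\|_{L^{2}\to H^{1}} \le C e^{C|\omega|}\quad\text{for all real }\omega,
\]
where $\chi$ is supported in a compact subset of $\Sigma$; such a bound, transferred back via Plancherel together with the assumed energy boundedness, yields precisely the logarithmic rate in the theorem.

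The resolvent estimate itself must be assembled from three regions. In the compact interior of $\mathcal{D}$, where arbitrary trapping may occur, I would use a Carleman estimate adapted to the stationary operator $P(\omega)$; because no geometric hypothesis is imposed on the trapped set, one must accept a Carleman weight whose exponent grows linearly in $|\omega|$, and this is precisely what produces the logarithm in the final decay rate. Near spatial infinity, where $g$ is asymptotically flat and the assumption $d\ge 3$ rules out zero-frequency obstructions, I would deploy an $r^{p}$-weighted energy hierarchy in the spirit of Dafermos--Rodnianski, applied to $\hat{\psi}(\omega)$, to obtain polynomial decay in $r$ and to match the far field to the Carleman region. Near the horizon, the red-shift vector field controls the possible degeneration of the Killing energy, while the small-ergoregion hypothesis together with the assumed energy boundedness ensures that $\hat{\psi}(\omega)$ remains a legitimate energy-class object uniformly in $\omega$.

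Once the global resolvent bound is assembled, a quantitative Tauberian argument converts it back to the time domain: writing $\psi(t)=\tfrac{1}{2\pi}\int e^{i\omega t}\hat{\psi}(\omega)\,d\omega$ and integrating with a cutoff at $|\omega|\lesssim\log t/C$ yields $E_{loc}(t)\lesssim(\log(2+t))^{-2}$ for suitably regular, weighted data. The higher-order rate $(\log(2+t))^{-2m}$ then follows by commuting $m$ times with the Killing generator $\partial_{t}$ and controlling the first $m$ derivatives in the $r^{p}$-weighted norms that constitute $E_{w}^{(m)}(0)$.

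The principal obstacle, as I see it, will be patching the three regional estimates into a single uniform-in-$\omega$ bound without inflating the $e^{C|\omega|}$ factor. The small ergoregion and the horizon have to be treated simultaneously in the frequency domain, where the Killing energy is indefinite and the red-shift argument --- fundamentally a physical-space identity --- must be carefully reinterpreted for $P(\omega)$. Arranging for the smallness of the ergoregion to absorb the indefiniteness in the energy identity, without worsening the exponential-in-$\omega$ Carleman constant, is the delicate technical step on which the entire scheme rests; everything else is bookkeeping organised around these three regimes and the frequency-to-time conversion.
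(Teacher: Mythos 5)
Your overall architecture is the right one and matches the paper's in broad strokes: decompose into high and low frequencies, exploit stationarity to trade derivatives for powers of $\omega_{+}$ on the high part, prove a bound with an $e^{C\omega_{+}}$ loss on the low part (via a Carleman-type inequality, exactly because no structure is assumed on the trapped set), and then optimise the frequency threshold at $\omega_{+}\sim\log(2+t)$. However there are two genuine gaps, and a third smaller one, that your sketch does not address and that account for a large fraction of the work in the actual proof.

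First, the Fourier transform of $\psi$ in $t$ is not a priori well-defined. Assumption 4 controls only a foliation-adapted $J^{N}$-energy; since $T$ need not be globally causal and $N$ has no favourable commutation properties with $\square_{g}$, this does \emph{not} rule out pointwise (even exponential) growth of $\psi$ in $t$, so $\hat{\psi}(\omega,\cdot)$ need not exist as a tempered distribution. One must first multiply $\psi$ by a physical-space cutoff $h_{t^{*}}$ (supported in a time strip of length $\sim t^{*}$, following Dafermos--Rodnianski), which produces an inhomogeneous equation $\square_{g}\psi_{t^{*}}=F$ with an error term $F$ supported near $t=0$ and $t=t^{*}$. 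All the subsequent frequency-localised estimates must then be proved for the cut-off function, with the boundary errors from $F$ controlled via Assumption 4. This is not bookkeeping; it structures the entire proof, because the Carleman and ILED estimates are carried out in physical spacetime over $\mathcal{R}(0,t^{*})$ for the frequency-cut pieces $\psi_{k}$, not for the resolvent $R(\omega)$.

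Second, and more seriously, your plan for closing the Carleman estimate near infinity does not work. The Carleman weight $e^{sw}$ increases in $r$ towards infinity, so the boundary term at $r\sim R$ on the right-hand side of the Carleman inequality carries a weight that is \emph{large} compared to the bulk on the left; it cannot be absorbed, and the $r^{p}$-weighted hierarchy — whose role in the paper is only to convert the eventual ILED statement into polynomial decay of the hyperboloidal energy — provides no mechanism to kill it. The paper closes this via the ODE lemma of Rodnianski and Tao (Sections 9--10 of their paper), which exploits the system of ordinary differential inequalities satisfied by the spherical energies $\mathscr{M},\mathscr{F},\mathscr{P}$ of $\psi_{k}$ on cylinders $\{r=\rho\}$, together with the frequency localisation $|\omega|\gtrsim\omega_{k-1}$, to establish a dichotomy: either the boundary integral at $r\sim R$ is bounded by the initial energy, or it decays exponentially in $r$ at a rate $\gtrsim C_{2}\omega_{k-1}$, fast enough to beat the Carleman weight $e^{s\sup w}$. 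Without this step (or an equivalent), the low-frequency ILED estimate does not close. Relatedly, the very-low-frequency piece $\psi_{0}$, with frequency support near $\omega=0$, cannot be handled by the Carleman/ODE machinery (it relies on $|\omega|\gtrsim\omega_{k-1}>0$) and requires its own argument in the paper, combining a Morawetz-type current, a Poincar\'e inequality, and the red-shift; your sketch does not flag this case at all.
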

A more detailed and rigorous statement of this theorem and the assumptions
on the spacetimes under consideration will be presented in Sections
\ref{sub:Assumptions} and \ref{sub:Theorem}. 

As an application of our results, we will deduce quantitative decay
rates for solutions to (\ref{eq:WaveEquation}) on a number of vacuum
(and other) spacetime backgrounds which appear in the literature,
but whose trapping structure is not yet completely understood. These
examples include, for instance, axisymmetric scalar waves on the Emparan--Reall
black rings (see \cite{Emparan2002}) and the Elvang--Figueras black
Saturn (see \cite{Elvang2007}). Furthermore, the results of the current
paper will be used to rigorously establish the so called Friedman
ergosphere instability (introduced and supported heuristically in
\cite{Friedman1978}). That is to say, we will establish by contradiction
that no non-degenerate energy boundedness statement can hold on stationary
spacetimes with ergoregion and no event horizon; see our forthcoming
\cite{Moschidis}.

As a corollary of the above theorem, using also the results of \cite{Moschidisc},
we will infer that the energy flux of solutions $\text{\textgreek{y}}$
to (\ref{eq:WaveEquation}) through a foliation of hyperboloidal hypersurfaces
decays logarithmically in time. This result will in turn yield an
asymptotic completeness statement for the wave equation on spacetimes
$(\mathcal{M},g)$ satisfying the assumptions of the above theorem
but without ergoregion. See Sections \ref{sec:Theorem-Sketch}, \ref{sec:Proof-of-corollary}
and \ref{sec:Proof-of-Corollary2}.

In the context of non-asymptotically flat spacetimes, we should note
the results of Vasy \cite{Vasy2013}, dealing with a class of Lorentzian
manifolds generalising Kerr-de\,Sitter spacetime. For \cite{Vasy2013},
however, the structure of the trapped null geodesics of the underlying
manifold and the de\,Sitter asymptotics play a crucial role in establishing
exponential decay rates for solutions to the wave equation $\square_{g}\text{\textgreek{y}}=0$.
In particular, the trapped set is required to resemble closely that
of Kerr-de\,Sitter spacetime. In contrast, in our setting no structural
condition is placed on the trapped set, which leads to an inverse
logarithmic decay rate (\ref{eq:FirstStatementTheorem}) for the local
energy of scalar waves which is sharp for some of the spacetimes in
the class under consideration. Notice also that the asymptotic flatness
of the spacetimes considered here would prohibit establishing faster
than polynomial decay rates for solutions to (\ref{eq:WaveEquation}).

Before stating more precisely the main result of the current paper,
we will first examine two well understood examples: Solutions of (\ref{eq:WaveEquation})
on flat space outside a compact obstacle and solutions to (\ref{eq:WaveEquation})
on asymptotically conic manifolds of product type. It is the results
of Burq \cite{Burq1998}, in the obstacle case, and of Rodnianski--Tao
\cite{Rodnianski2011}, in the product case, that our main theorem
generalises.

\subsection{The wave equation on $\mathbb{R}^{d}$ with obstacles and a result
of Burq}

Let $\mathcal{O}$ be a compact subset of $\mathbb{R}^{d}$, which
is the closure of a finite number of domains with smooth boundary.
Let also $\text{\textgreek{y}}=\text{\textgreek{y}}(t,x)\in C^{\infty}\big(\mathbb{R}\times(\mathbb{R}^{n}\backslash\mathcal{O})\big)$
solve 
\begin{equation}
(\partial_{t}^{2}-\text{\textgreek{D}})\text{\textgreek{y}}=0
\end{equation}
 on $\mathbb{R}\times(\mathbb{R}^{d}\backslash\mathcal{O})$, with
$\text{\textgreek{y}}\equiv0$ on $\partial\mathcal{O}$ and with
$(\text{\textgreek{y}},\partial_{t}\text{\textgreek{y}})$ compactly
supported (or at least suitably decaying) on $\{t=0\}$. In this case,
one immediately sees that there exists a positive-definite conserved
energy 
\begin{equation}
E(t)=\int_{\mathbb{R}^{d}\backslash\mathcal{O}}|\partial\text{\textgreek{y}}(t,x)|^{2}\, dx
\end{equation}
 (corresponding to the Killing field $\partial_{t}$ on $\mathbb{R}^{d+1}\backslash(\mathbb{R}\times\mathcal{O})$),
which allows to easily handle boundedness issues.

Starting from the boundedness of the energy $E(t)$, ``soft'' arguments
can be used to infer that for any $R>0$, the local energy $E_{R}(t)$
(i.\,e.~the energy contained in a ball $B_{R}$ of $\mathbb{R}^{d}$
of fixed radius $R$) tends to $0$ as $t\rightarrow+\infty$. Attempting
to study the precise rate at which the local energy decays, it is
inevitable that the nature of trapping%
\footnote{Line rays that are ``reflected'' on the obstacle's surface and remain
in a bounded region of space for arbitrarily long time are said to
be \emph{trapped}. An obstacle is called \emph{non-trapping }if it
does not give rise to a trapped ray.%
} will come into play. This subject has been extensively studied during
the last 50 years, but here we will refer to only a few indicative
results. In the case the obstacle $\mathcal{O}\subseteq\mathbb{R}^{d}$
is \emph{non-trapping}, Morawetz, Ralston and Strauss (see \cite{Morawetz1977})
showed that for any function $\text{\textgreek{y}}$ solving $(\partial_{t}^{2}-\text{\textgreek{D}})\text{\textgreek{y}}=0$
on $\mathbb{R}\times(\mathbb{R}^{d}\backslash\mathcal{O})$, with
$\text{\textgreek{y}}=0$ on $\partial\mathcal{O}$ and $(\text{\textgreek{y}},\partial_{t}\text{\textgreek{y}})|_{t=0}$
compactly supported, the local energy 
\[
E_{R}(t)\doteq\int_{B_{R}\backslash\mathcal{O}}|\partial\text{\textgreek{y}}(t,x)|^{2}\, dx
\]
decays at least polynomially in time $t$, and in fact this decay
rate becomes exponential if the space dimension $d$ is odd. In order
to establish this decay rate, it was first shown that 
\begin{equation}
\int_{0}^{\infty}E_{R}(t)\, dt\le C(R)\cdot E(0)\label{eq:ILEDNonTrapping}
\end{equation}
where $E(0)=\int_{\mathbb{R}^{d}\backslash\mathcal{O}}|\partial\text{\textgreek{y}}(0,x)|^{2}\, dx$
is the initial energy of the wave. An inequality of the form (\ref{eq:ILEDNonTrapping})
is usually referred to as an \emph{integrated local energy decay estimate}.

Due to a result by Ralston (see \cite{Ralston1969}), however, the
presence of even a single trapped ray is inconsistent with any quantitative
decay rate for the local energy of waves in terms only of their initial
energy. Moreover, no statement of the form (\ref{eq:ILEDNonTrapping})
can be valid. This fact has been recently generalised to the case
of globally hyperbolic Lorentzian manifolds with trapped null geodesics
by Sbierski in \cite{Sbierski2013}. 

If one is willing to ``sacrifice'' some initial regularity on the
right hand side of (\ref{eq:ILEDNonTrapping}), so as to establish
a quantitative decay estimate for the local energy of a wave $\text{\textgreek{y}}$
in terms of the initial energy of higher order derivatives of $\text{\textgreek{y}}$,
then one can still obtain various types of decay rates, which are
expected to become faster as the trapping becomes more unstable (compare
for instance \cite{Ikawa1988} and \cite{Ralston1971}). It is remarkable,
therefore, that in \cite{Burq1998} Burq was able to prove \emph{without
any assumptions regarding the nature of trapping and the form of the
obstacle} $\mathcal{O}$, and with the hypotheses on $\text{\textgreek{y}}$
as before, that one has 
\begin{equation}
E_{R}(t)\lesssim\big\{\log(2+t)\big\}^{-2m}\cdot E^{(m)}(0),
\end{equation}
where $E^{(m)}(0)$ denotes the energy of the first $m$ derivatives
of $\text{\textgreek{y}}$ at $\{t=0\}$. In the preceding $\lesssim$
notation, the implicit constant depends on $R$ and on the size of
the compact support of the initial data for $\text{\textgreek{y}}$
on $\{t=0\}$. 

In fact, Burq's result is a bit more general, as it allowed for \textgreek{y}
satisfying an equation of the form 
\begin{equation}
\partial_{t}^{2}\text{\textgreek{y}}-\partial_{i}(a^{ij}\partial_{j})\text{\textgreek{y}}=0
\end{equation}
where $a^{ij}=a^{ij}(x)$ is a smooth positive definite matrix which
is equal to the identity outside some large compact set. However,
the fact that this operator is identical to $\partial_{t}^{2}-\text{\textgreek{D}}$
outside a large spatial ball was used in an essential way in Burq's
argument, which relied on a decomposition of $\text{\textgreek{y}}$
into spherical harmonics outside a large ball and on using explicit
representation formulas to study the asymptotics of the resulting
Hankel functions. Hence, his proof does not immediately generalise
to the case where $\{a^{ij}\}-Id$ is not compactly supported.

\subsection{A result of Rodnianski--Tao for product Lorentzian manifolds}

The next step after studying the wave equation on $\mathbb{R}^{d}$
outside a compact obstacle is addressing the wave equation on product
Lorentzian manifolds with a more general asymptotically flat structure.

Given an arbitrary Riemannian manifold $(\mathcal{N}^{d},\bar{g})$,
quantitative estimates for the resolvent $R(\text{\textgreek{l}}+i\text{\textgreek{e}};\text{\textgreek{D}}_{\bar{g}})$
of the Laplacian $\text{\textgreek{D}}_{\bar{g}}$ on $(\mathcal{N},g)$
are closely connected to estimates for solutions to the wave equation
$\square_{g}\text{\textgreek{y}}=0$ on the product Lorentzian manifold
$(\mathbb{R}\times\mathcal{N},g)$ with $g=-dt^{2}+\bar{g}$ ($t$
being the projection to the first factor of $\mathbb{R}\times\mathcal{N}$).%
\footnote{Notice that on such a Lorentzian manifold, the Killing field $\partial_{t}$
immediately gives rise to a bounded (in fact preserved) energy $\int_{t=const}J_{\text{\textgreek{m}}}^{\partial_{t}}(\text{\textgreek{y}})n^{\text{\textgreek{m}}}$
for solutions $\text{\textgreek{y}}$ to the wave equation $\square_{g}\text{\textgreek{y}}=0$.%
} Therefore, relevant to our question on the behaviour of scalar waves
on general Lorentzian manifolds are the results of Rodnianski and
Tao (see \cite{Rodnianski2011}).%
\footnote{For results similar to the ones in \cite{Rodnianski2011}, but restricted
to high frequencies $\text{\textgreek{l}}\gg1$ and obtained by different
methods, see also the work of Cardoso and Vodev, \cite{Cardoso2002,Cardoso2004}.%
} In particular, in \cite{Rodnianski2011} the authors have established
(among other results) quantitative bounds for such resolvents for
a general class of asymptotically conic Riemannian manifolds $\mathcal{N}$
of dimension $d\ge3$. This enabled them to obtain, as a corollary
of their resolvent estimates, a decay without a rate result for solutions
to the wave equation on $(\mathbb{R}\times\mathcal{N},g)$.

Though not explicitly stated, the results established in \cite{Rodnianski2011}
are also sufficient to obtain a logarithmic decay statement for scalar
waves on $(\mathbb{R}\times\mathcal{N},g)$: The exponential bounds
on the resolvent $R(\text{\textgreek{l}}+i\text{\textgreek{e}};\text{\textgreek{D}}_{\bar{g}})$
as $\text{\textgreek{l}}\rightarrow\infty$, as well as the bounds
for the resolvent near $\text{\textgreek{l}}=0$, suffice to establish
that on the product manifold $(\mathbb{R}\times\mathcal{N},g)$, the
local energy of solutions $\text{\textgreek{y}}$ to the wave equation
decays with a rate of the form:
\[
E_{R}(\text{\textgreek{t}})\lesssim\big\{\log(2+\text{\textgreek{t}})\big\}^{-2m}\cdot E_{w}^{(m)}(0).
\]
In the above, $E_{R}(t)=\int_{\{t=\text{\textgreek{t}}\}\cap\{r\le R\}}\big(|\partial_{t}\text{\textgreek{y}}|^{2}+|\nabla^{(\mathcal{N})}\text{\textgreek{y}}|^{2}\big)$
is the local energy contained in the region $\{r\le R\}$ at time
$\text{\textgreek{t}}$, for some function $r\ge0$ tending to $+\infty$
towards the asymptotically conic end of $\mathcal{N}$, and $E_{w}^{(m)}(0)$
is a suitable weighted energy of the first $m$ derivatives of $\text{\textgreek{y}}$
at $\{t=0\}$. Moreover, the implicit constant in the preceding $\lesssim$
notation depends only on the geometry of $(\mathcal{N},\bar{g})$,
and not on the size of the support of the intial data for $\text{\textgreek{y}}$. 

Thus, the results of \cite{Rodnianski2011} generalise the results
of Burq in \cite{Burq1998} to the case of product asymptotically
conic Lorentzian manifods, that are not necessarily identical to $(\mathbb{R}^{d+1},\text{\textgreek{h}})$
outside a bounded cylinder, $\text{\textgreek{h}}$ being the usual
Minkowski metric on $\mathbb{R}^{d+1}$.

We will now proceed to state the main result of this paper, which
should be viewed as a generalisation of the results of \cite{Rodnianski2011}
to not necessarily product Lorentzian manifolds, which are moreover
allowed to contain black hole regions and a small ergoregion. We remark
already that many of the ideas of the proof of the main result of
the current paper actually originate in \cite{Rodnianski2011}. The
importance of the ideas and techniques of \cite{Rodnianski2011} in
our setting will become evident in the subsequent sections.

\subsection{The main result: Logarithmic decay in a suitably general asymptotically
flat setting}

The aforementioned results of Burq and Rodnianski--Tao indicate that
if one wants to study a more general class of asymptotically flat
spacetimes, possibly containing a black hole and a small ergoregion,
and with few further restrictions on the geometry of the region between
the horizon (if non empty) and ``infinity'', one can only hope to
prove a logarithmic decay rate for the local energy of solutions to
(\ref{eq:WaveEquation}). This is exactly the result that we will
prove here. 

In particular, we will be concerned with globally hyperbolic spacetimes
$(\mathcal{M}^{d+1},g)$, $d\ge3$, satisfying the following assumptions:
\begin{enumerate}
\item \emph{\label{enu:As1}\bf{\emph{Asymptotic flatness and stationarity:}}}
The metric $\bar{g}$ and the second fundamental form $k$ induced
on a Cauchy hypersurface $\text{\textgreek{S}}$ of $(\mathcal{M},g)$
must form an asymptotically flat triad $(\text{\textgreek{S}},\bar{g},k)$.
Moreover, there must exist a Killing field $T$ (called the \emph{stationary}
Killing field) with complete orbits on the \emph{domain of outer communications}
$\mathcal{D}$ of $(\mathcal{M},g)$, extending also to the boundary
of $\mathcal{D}$, such that $T$ becomes strictly timelike in the
asymptotically flat region of $(\mathcal{M},g)$. See Section \ref{sub:Assumptions}
for a precise definition of these notions.
\item \emph{\label{enu:As2}\bf{\emph{Killing horizon with positive surface gravity:}}}
If $(\mathcal{M},g)$ is not identical to the domain of outer communications
$\mathcal{D}$, then the boundary $\mathcal{H}$ of $\mathcal{D}$
(called the \emph{event horizon} of the spacetime) must be a non degenerate
Killing horizon (possibly for a different Killing vector field than
$T$), with positive surface gravity.
\item \emph{\label{enu:As3}\bf{\emph{Smallness of the ergoregion:}}} The
subset $\{g(T,T)\ge0\}$ of the domain of outer communications $\mathcal{D}$
must be quantitatively small, as should be the maximum positive value
of $g(T,T)$. If $\mathcal{H}=\emptyset$, then this requirement reduces
to the assumption that the ergoregion being empty.%
\footnote{For the energy instability associated to the existence of an ergoregion
in the absence of an event horizon, see already our forthcoming \cite{Moschidis}.
We will return to this issue at the discussion in the next section.%
}
\item \emph{\label{enu:As4}\bf{\emph{Boundedness of the energy of waves:}}}
The energy of solutions $\text{\textgreek{y}}$ to the scalar wave
equation $\square_{g}\text{\textgreek{y}}=0$ with respect to a $T$-invariant,
globally timelike vector field $N$ and a suitable foliation of the
domain of outer communications $\mathcal{D}$ must remain bounded
by a constant times the energy of $\text{\textgreek{y}}$ initially.
\end{enumerate}
These assumptions will be stated more precisely in Section \ref{sub:Assumptions}.
The main result of this paper can be stated as follows:
\begin{thm}
\label{thm:TheoremIntroduction}Let $(\mathcal{M}^{d+1},g)$, $d\ge3$,
be a spacetime satisfying the hypotheses \ref{enu:As1}--\ref{enu:As4},
and let $\mathcal{D}$ be its domain of outer communications. Then
any smooth solution $\text{\textgreek{y}}$ to $\square_{g}\text{\textgreek{y}}=0$
on $\mathcal{D}$ with suitably decaying initial data on a Cauchy
hypersurface $\text{\textgreek{S}}$ of $\mathcal{M}$ satisfies for
any integer $m>0$ and any $0<\text{\textgreek{d}}_{0}\le1$:

\begin{equation}
E_{loc}(t)\le\frac{C_{m}}{\big\{\log(2+t)\big\}^{2m}}E^{(m)}(0)+\frac{C}{t^{\text{\textgreek{d}}_{0}}}E_{w,\text{\textgreek{d}}_{0}}(0).\label{eq:CariqatureInequality}
\end{equation}
 In the above, $t\ge0$ is a suitable time function on $J^{+}(\text{\textgreek{S}})\cap\mathcal{D}$
($J^{+}(\text{\textgreek{S}})$ being the causal future of $\text{\textgreek{S}}$)
with $\{t=0\}\equiv\text{\textgreek{S}}\cap\mathcal{D}$, and $E_{loc}(t)$
is the local energy of the wave at time $t$. $E^{(m)}(0)$ is the
energy of the first $m$ derivatives of $\text{\textgreek{y}}$ at
$\{t=0\}$, while $E_{w,\text{\textgreek{d}}_{0}}(0)$ is a suitable
weighted energy of $\text{\textgreek{y}}$ at $\{t=0\}$. The constant
$C$ on the right hand side depends on the size of the ball where
the local energy $E_{loc}$ is measured and on the geometry of $(\mathcal{D},g)$,
while in addition to that, $C_{m}$ also depends on the number $m$
of derivatives of $\text{\textgreek{y}}$ in $E^{(m)}(0)$.
\end{thm}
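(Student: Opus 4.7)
The plan is to adapt the Fourier/resolvent-estimate strategy of Burq \cite{Burq1998} and Rodnianski--Tao \cite{Rodnianski2011} to the present non-product setting, in which the stationary Killing field $T$ plays the role of $\partial_{t}$ but the geometry near the horizon and inside the ergoregion must also be controlled. The first step is to Fourier transform a solution $\psi$ of $\square_{g}\psi=0$ in the Killing parameter (after a suitable spatial/temporal cutoff justified by the energy boundedness assumption \ref{enu:As4}), reducing matters to the stationary operator $P(\omega)$ obtained by the formal substitution $T\mapsto -i\omega$. A standard contour deformation / Paley--Wiener argument then shows that a logarithmic decay rate of the form (\ref{eq:CariqatureInequality}) follows from an exponential resolvent bound of the schematic form
\begin{equation*}
\|\chi\,(P(\omega+i\varepsilon))^{-1}\chi\|_{H^{1}\to L^{2}} \lesssim e^{C(1+|\omega|)}
\end{equation*}
valid for all real $\omega$ and $\varepsilon\in(0,1]$, where $\chi$ is a compactly supported spatial cutoff. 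The bulk of the proof is devoted to establishing this bound and then implementing the inversion.

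For the high-frequency regime $|\omega|\gtrsim 1$ I would establish the exponential bound by a Carleman estimate of Rodnianski--Tao type, conjugating $P(\omega)$ with a weight $e^{\lambda\varphi}$ for a carefully constructed spatial function $\varphi$ and parameter $\lambda$ of order $|\omega|$. Such estimates have the decisive feature of being insensitive to the structure of the trapped set: trapping enters only through exponential losses in $\lambda$, which is exactly what the target bound tolerates. The Carleman weight $\varphi$ must be patched across three regions: in the asymptotically flat end a radial weight analogous to that of \cite{Rodnianski2011} produces coercivity; near the event horizon the positive surface gravity hypothesis \ref{enu:As2}, combined with the redshift vectorfield technique of \cite{DafRod5, DafRod6}, generates a coercive boundary contribution that absorbs horizon error terms; inside the ergoregion the indefinite $\omega T$-cross terms are handled as a perturbation, controllable thanks to the smallness hypothesis \ref{enu:As3}. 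Matching these three regimes while keeping the conjugated operator globally coercive is where I expect the main technical difficulty to lie.

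For the low-frequency regime $|\omega|\ll 1$ one needs a non-exponential resolvent bound. Here the dimensional restriction $d\geq 3$ is crucial, since it precludes a zero-energy resonance and supplies the Hardy inequality needed to control long-range contributions in the asymptotically flat end. Following the scheme of \cite{Rodnianski2011}, I would construct the resolvent via a limiting absorption principle based on a positive commutator estimate with a radial multiplier; the energy boundedness assumption \ref{enu:As4} again rules out a nontrivial $L^{2}$ kernel of $P(0)$, while the ergoregion is absorbed perturbatively via \ref{enu:As3}.

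The final step is to synthesise the two frequency regimes and return to the time domain. Writing the localised solution as an inverse Fourier transform of $R(\omega)$ applied to the data and deforming the integration contour into the lower half plane at distance $\eta$ of order $1/\log(2+t)$ from the real axis, the high-frequency exponential bound $e^{C|\omega|}$ is balanced against the gain $e^{-\eta t}$ to yield the first term on the right hand side of (\ref{eq:CariqatureInequality}); trading $m$ derivatives for powers of $(1+|\omega|)^{-1}$ in the data converts the loss of regularity into the extra factor $\{\log(2+t)\}^{-2m}$. The residual polynomial remainder $Ct^{-\delta_{0}}E_{w,\delta_{0}}(0)$ arises because the Fourier argument is naturally run on compactly supported data, so the non-compactly supported tail of the initial data must be treated separately using the $r^{p}$-weighted hierarchy of Dafermos--Rodnianski \cite{DafRod5}, which converts the spatial decay encoded in $E_{w,\delta_{0}}$ into an inverse polynomial in $t$ for the contribution coming from large radii.
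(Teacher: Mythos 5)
Your proposal follows a genuinely different route from the one taken in the paper: you propose the classical Burq-style complex-analytic strategy, reducing the time-domain estimate to a uniform resolvent bound for the stationary operator $P(\omega)$ across a neighbourhood of the real axis and recovering decay by contour deformation. The paper deliberately avoids any complex analysis of the resolvent. Instead, after the physical-space cutoff in $t$, it works exclusively with \emph{real}-frequency band-limited components $\text{\textgreek{y}}_{k}$ obtained by convolving with Schwartz kernels in $t$, proves the Carleman inequality directly as a spacetime energy-current estimate applied to these components, absorbs the far-region boundary terms via the Rodnianski--Tao ODE lemma applied to the spherical energies on cylinders $\{r=\text{\textgreek{r}}\}$, and finally assembles the result by a frequency interpolation in which $\text{\textgreek{w}}_{+}\sim\log(2+\text{\textgreek{t}})$. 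This yields the same exponential-in-$\text{\textgreek{w}}_{+}$ loss that your resolvent bound would, but through energy-method machinery that remains consonant with the ``black-box'' energy boundedness hypothesis \ref{enu:As4} and never requires continuing any operator into the complex plane.

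That said, as written your sketch has two places where I do not think the argument would go through without substantial further input. First, the contour deformation needs resolvent bounds for $\text{Im}\,\text{\textgreek{w}}<0$, not merely $\text{\textgreek{e}}\in(0,1]$; a Carleman estimate on the real line gives a pole-free strip whose width shrinks like $e^{-C|\text{\textgreek{w}}|}$, so a deformation at a \emph{uniform} distance $\text{\textgreek{h}}\sim1/\log(2+t)$ leaves the resonance-free region as soon as $|\text{\textgreek{w}}|\gtrsim\log\log t$, which is far short of the $|\text{\textgreek{w}}|\lesssim\log t$ range you need. The correct Burq contour is frequency-dependent, following the boundary of the resonance-free region, and this needs to be spelled out. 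Second, the claim that the ergoregion is handled ``perturbatively'' in the resolvent picture is not obviously justified: with an ergoregion the stationary operator $P(\text{\textgreek{w}})$ is not a compact perturbation of a self-adjoint operator, and even with the smallness assumption \ref{enu:As3} a limiting-absorption principle is not automatic. The paper circumvents precisely this issue by controlling the ergoregion entirely in physical space, through the interplay of the red-shift current and the smallness of $g(T,T)$ in the proof of Proposition \ref{prop:LowFrequencies} and Lemma \ref{lem:Carlemann}. Finally, you are right that a physical-space cutoff in time is needed before Fourier transforming, but you should be aware that the resulting error terms are not negligible; a substantial portion of Section \ref{sec:FreqDecomposition} (Lemmas \ref{lem:BoundF}--\ref{lem:HighFrequencies}) is devoted to controlling them using Assumption \ref{enu:As4}.
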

A more precise statement of the above result will be given in Section
\ref{sub:Theorem}, after a more detailed discussion of the geometric
hypotheses on $(\mathcal{M},g)$.

This result can be immediately applied to many interesting black hole
spacetimes, on many of which no decay result had been established
so far. These examples will be discussed in Section \ref{sub:Examples}.

Let us also mention here that our result is a proper generalisation
of the theorem of Burq (at least in dimensions $d\ge3$): While not
explicitly mentioned later in this paper, the proof of Theorem \ref{thm:TheoremIntroduction}
can also be repeated in the case of the wave equation on $\mathbb{R}^{d}$
with obstacles, relaxing the assumption of $a^{ij}$ in the perturbed
Laplacian $\partial_{i}(a^{ij}\partial_{j})$ being equal to $Id$
outside a compact set (as required by Burq) to just demanding that
$(a^{ij})\rightarrow Id$ as $r\rightarrow\infty$ in a quantitative
manner. Moreover, the constants in the final inequality do not depend
on the size of the support of the initial data for $\text{\textgreek{y}}$
(which in particular needs not be compactly supported initially);
only the finiteness of a suitable weighted norm of an initial higher
order energy of $\text{\textgreek{y}}$ is necessary.

\subsection{Necessity of Assumptions 1--4}

At this point, a few words should be said regarding the necessity
of Assumptions \ref{enu:As1}-\ref{enu:As4} on the spacetimes under
consideration. We will see that while each of these assumptions can
possibly be relaxed, discarding completely any of them (without adding
any extra hypothesis in its place) appears to lead to spacetimes where
Theorem \ref{thm:TheoremIntroduction} does not hold.

Some assumption of global character, such as Assumption \ref{enu:As1}
on stationarity and asymptotic flatness, seems to be necessary for
a first statement of the generality we are after. In particular, Assumption
\ref{enu:As1} seems to be the most appropriate for applications in
cases of isolated self gravitating relativistic systems satisfying
the Einstein equations (with suitable matter fields) with zero cosmological
constant. 

Replacing the asymptotic flatness assumption with other asymptotics
can lead to instability results contradicting Theorem \ref{thm:TheoremIntroduction}:
This is the case for asymptotically AdS spacetimes, for instance (with
suitable boundary conditions at the timelike conformal boundary considered
as in \cite{Holzegel2012}). In fact, on Anti-deSitter space itself,
while the usual $\partial_{t}$ Killing vector field gives rise to
a conserved non degenerate energy current, there exist time periodic
solutions to the wave equation, and this fact prohibits the proof
of any sort of uniform decay rate (see \cite{Holzegel2013a} and references
therein). 
\begin{rem*}
The presence of a non-empty non-degenerate horizon $\mathcal{H}^{+}$
in an asymptotically AdS spacetime might be sufficient to provide
stability results for scalar waves, at least in the case where a boundedness
statement for the energy of solutions to $\square\text{\textgreek{y}}=0$
is true, as the results on Kerr-AdS suggest: In \cite{Holzegel2013},
Holzegel and Smulevici show that under certain bounds on the value
of the cosmological constant $\text{\textgreek{L}}$, the angular
momentum $a$, the mass $M$ of the Kerr-AdS spacetime and the mass
$m$, implying among other things the existence of a globally causal
Killing field, any solution $\text{\textgreek{y}}$ to the massive
wave equation 
\begin{equation}
\square_{g}\text{\textgreek{y}}+m\text{\textgreek{y}}=0\label{eq:MassiveEq}
\end{equation}
decays logarithmically in time (provided, of course, that some weighted
energy of the initial data is finite). And in fact, this decay rate
is optimal (see \cite{Holzegel2013a}, and also \cite{Gannot2014}).
Hence, it would be of particular interest to explore the minimal geometric
conditions that need to be imposed on a general asymptotically AdS
spacetime with a non-degenerate event horizon, in order for a logarithmic
decay estimate for solutions to (\ref{eq:MassiveEq}) to hold. Notice,
of course, that such a decay estimate can not hold on all asymptotically
AdS spacetimes containing a non-degenerate horizon, since there exist
examples of such spacetimes on which (\ref{eq:MassiveEq}) is actually
unstable: In the recent \cite{Dold2015}, Dold constructed exponentially
growing solutions to the massive wave equation on Kerr-AdS spacetimes
violating the Hawking--Reall bound.
\end{rem*}
Concerning Assumption \ref{enu:As2} on the non-degeneracy of the
horizon $\mathcal{H}^{+}$, we notice that the instability results
of Aretakis \cite{Aretakis2011,Aretakis2011a,Aretakis,Aretakis2012a}
on extremal black holes are rooted in the absence of a red shift type
vector field near $\mathcal{H}^{+}$ in the sense of \cite{DafRod2}.
This fact suggests that dispensing with Assumption \ref{enu:As2}
might lead to spacetimes on which Theorem \ref{thm:TheoremIntroduction}
is not true. Notice, however, that Aretakis in \cite{Aretakis2011a,Aretakis2011}
was able to exploit additional geometric features of the horizon of
extremal Reissner--Nordstr\"om spacetime in order to obtain stability
results for the first order energy of a scalar wave $\text{\textgreek{y}}$.
Thus, it may be possible to replace Assumption \ref{enu:As2} by some
weaker assumption regarding the geometry of $\mathcal{H}$, that allows
interesting extremal examples.%
\footnote{Even in that case, however, it may be necessary to weaken Theorem
\ref{thm:TheoremIntroduction} to a statement about some degenerate
energy current.%
}

Assumption \ref{enu:As3} on the smallness of the ergoregion, if violated,
will contradict any decay statement for the local energy of scalar
waves: On a spacetime with a spatially compact ergoregion and no horizon,
for instance, a solution $\text{\textgreek{y}}$ to $\square\text{\textgreek{y}}=0$
with initial $T$ energy $\int_{t=0}J_{\text{\textgreek{m}}}^{T}(\text{\textgreek{y}})n^{\text{\textgreek{m}}}=-1$
will satisfy $\int_{t=\text{\textgreek{t}}}J_{\text{\textgreek{m}}}^{T}(\text{\textgreek{y}})n^{\text{\textgreek{m}}}=-1$
for all times $\text{\textgreek{t}}\ge0$. This means that $\int_{\{t=\text{\textgreek{t}}\}\cap\{ergoregion\}}J_{\text{\textgreek{m}}}^{T}(\text{\textgreek{y}})n^{\text{\textgreek{m}}}=-1$
for any $\text{\textgreek{t}}\ge0$, and hence the local energy of
$\text{\textgreek{y}}$ over any compact spatial subset containing
the ergoregion will not decay at all.

In addition to the violation of any decay statement, the presence
of a large ergoregion could even preclude the boundedness of the energy
of waves: According to the heuristics of Friedman (\cite{Friedman1978}),
in the case of a stationary, asymptotically flat spacetime with no
horizon but with non empty ergoregion, one expects no uniform boundedness
statement to hold regarding the energy of solutions to the scalar
wave equation. Using the machinery of the proof of Theorem \ref{thm:Theorem},
we were able to provide a rigorous proof of this instability result,
which will be presented in our forthcoming \cite{Moschidis}. It is
an open problem, however, to understand whether Theorem \ref{thm:TheoremIntroduction}
is true in the case Assumption \ref{enu:As3} is violated but energy
boundedness (namely Assumption \ref{enu:As4}) still holds.

Finally, even if one assumes the existence of a non empty horizon
with positive surface gravity and some form of smallness of the ergoregion,
superradiance (i.\,e.\ the fact that the domain of outer communications
$\mathcal{D}$ of $(\mathcal{M},g)$ admits no globally non spacelike
Killing vector field) may pose serious risks for the boundedness of
the energy of waves: Shlapentokh--Rothman (see \cite{Shlapentokh-Rothman2013})
has constructed exponentially growing solutions to the Klein--Gordon
equation $\square_{g}\text{\textgreek{y}}-m\text{\textgreek{y}}=0$
on a slowly rotating Kerr background, for arbitrarily small values
of the mass $m$. For the case of the scalar wave equation on Kerr,
energy boundedness can also be violated if a compactly supported real
and positive potential is added; see our forthcoming \cite{Moschidisa}.
These facts suggest that Assumption \ref{enu:As4} on the boundedness
of the energy of scalar waves, which plays a fundamental role in the
proof of Theorem \ref{thm:TheoremIntroduction}, can not be inferred
from the rest of the Assumptions \ref{enu:As1}--\ref{enu:As3}, and
thus should be indisposable if one is not willing to further restrict
the class of spacetimes under consideration.

\subsection{\label{sub:Examples}Examples}

Beyond the class of asymptotically flat Lorentzian manifolds of product
type $(\mathbb{R}\times\mathcal{N},-dt^{2}+\bar{g})$, which were
already treated by Rodnianski and Tao in \cite{Rodnianski2011}, there
is a large number of other interesting classes of spacetimes on which
Theorem \ref{thm:Theorem} readily applies to give a novel logarithmic
decay statement for the local energy of waves:
\begin{itemize}
\item The general class of asymptotically flat non-extremal black hole spacetimes
without ergoregion, for which a non degenerate energy boundedness
statement has been established by Dafermos and Rodnianski in Section
7 of \cite{DafRod6}. This class includes the spherically symmetric
black hole spacetimes studied in \cite{Masarik2011}, and in particular
includes the black hole solutions of the $SU(2)$ Einstein--Yang Mills
equations constructed in \cite{Smoller1993}.
\item When restricted to axisymmetric solutions of $\square_{g}\text{\textgreek{y}}=0$,
Theorem \ref{thm:Theorem} also applies on the domain of outer communications
of the higher dimensional Emparan--Reall black rings (see \cite{Emparan2002,Emparan2002a})
and the Elvang--Figueras black Saturn (see \cite{Elvang2007}). Restriction
to axisymmetry in this case is necessitated by the fact that no energy
boundedness statement (in the spirit of Assumption \ref{enu:As4})
for general solutions to $\square_{g}\text{\textgreek{y}}=0$ has
been established so far on these spacetimes.
\item The class of $C^{1}$ perturbations of the slowly rotating Kerr spacetime
exterior for which a non degenerate energy boundedness statement was
proven by Dafermos and Rodnianski in \cite{DafRod5}. Notice that
since these perturbations are only close to Kerr exterior in a suitable
$C^{1}$ norm, the structure of trapping (which would be controlled
by the smallness of a $C^{2}$ norm) can differ substantially on these
spacetimes compared to trapping on Kerr exterior. Moreover, note that
these spacetimes can possibly contain an ergoregion.
\item Keir's class of static spherically symmetric spacetimes, including
a class of ultracomact neutron stars (see \cite{Keir2014}). Of course,
on these spacetimes \cite{Keir2014} has already established a logarithmic
decay bound, showing also that it is sharp. Thus, these spacetimes
also serve to exhibit the sharpness of Theorem \ref{thm:Theorem}
on the class of spacetimes under consideration.
\end{itemize}
The above examples will be discussed in more detail in Section \ref{sub:Examples-Assumptions},
after a more precise statement of Assumptions \ref{enu:As1}--\ref{enu:As4}
in Section \ref{sub:Assumptions}.

\subsection{Acknowledgements}

I would like to thank my advisor Mihalis Dafermos for suggesting this
problem to me and providing me with comments, ideas and assistance
while this paper was being written. I would also like to thank Igor
Rodnianski for his invaluable ideas and suggestions. I would finally
like to thank Yakov Shlapentokh-Rothman and Stefanos Aretakis for
many insightful conversations and important comments on preliminary
versions of this paper.

\section{\label{sec:Theorem-Sketch}Statement of the theorem and sketch of
the proof}

We will now proceed to rigorously state and explain the various geometric
assumptions on the class of spacetimes under consideration, and provide
the precise statement of the main result of this paper. We will then
briefly sketch the proof of this result, highlighting the important
ideas involved in the detailed proof occupying the subsequent sections
of the paper.

\subsection{\label{sub:Assumptions}Geometric assumptions on the background spacetime}

For the following, let $(\mathcal{M}^{d+1},g)$ be a smooth, time
oriented, globally hyperbolic $d+1$ dimensional Lorentzian manifold
for $d\ge3$. Let also $\text{\textgreek{S}}$ be a smooth spacelike
Cauchy hypersurface of $(\mathcal{M},g)$. We will now formulate assumptions
\ref{enu:As1}--\ref{enu:As4} in more detail in terms of the geometry
of $(\mathcal{M},g)$. For a detailed description of the notational
conventions adopted in what follows, see Section \ref{sec:Notational-conventions}.

Since in our formulation of Assumptions \hyperref[Assumption 1]{1}--\hyperref[Assumption 4]{4}
we have strived to include a broad class of globally hyperbolic Lorentzian
manifolds with as little structure as possible, we will need to perform
from scratch some geometric constructions that are trivial in many
concrete examples of spacetimes that appear in the literature. To
this end, the reader who is only interested in applications of the
results of this paper in specific examples of spacetimes should feel
free to skip many of the details of the explanation of the constructions
performed in the next paragraphs.

\subsubsection{\emph{Assumption \ref{enu:As1}}\label{Assumption 1} \emph{(Asymptotic
flatness and stationarity).}}

We assume that there exists a smooth Killing vector field $T$ on
$\mathcal{M}$, which has complete orbits%
\footnote{namely starting from any point of $\mathcal{M}$ we can follow the
flow of $T$ for infinite time in both directions%
} and which, when restricted on $\text{\textgreek{S}}$, becomes future
directed and timelike outside a compact subset of $\text{\textgreek{S}}$.
We will refer to $T$ as the \emph{stationary Killing field}.

We also assume that there exists a compact subset $K$ of $\text{\textgreek{S}}$
such that $\text{\textgreek{S}}\backslash K$ has a finite number
of connected components $\{\mathcal{S}_{i}\}_{i=1}^{m}$, and each
connected component $\mathcal{S}_{i}$ of $\text{\textgreek{S}}\backslash K$
can be diffeomorphically mapped to $\mathbb{R}^{d}\backslash B_{R}(0)$
(where $B_{R}(0)$ is the ball of $\mathbb{R}^{d}$ of radius $R$
centered at the origin). We will set 
\begin{equation}
\mathcal{S}\doteq\text{\textgreek{S}}\backslash K=\cup_{i=1}^{m}\mathcal{S}_{i}.
\end{equation}
In the resulting chart $(x^{1},x^{2},\ldots,x^{d})$ on each of the
$\mathcal{S}_{i}$'s, we let $r=\sqrt{(x^{1})^{2}+(x^{2})^{2}+\ldots+(x^{d})^{2}}$
be the pullback of the polar distance on $\mathbb{R}^{d}$. See Section
\ref{sec:Notational-conventions} for the $(r,\text{\textgreek{sv}})$
notation used for the polar coordinate chart on each connected component
of $\mathcal{S}$.

Without loss of generality, we assume that the compact subset $K$
was chosen large enough so that the restriction of $T$ on $\mathcal{S}$
is strictly timelike. Then the integral curves of $T$ would intersect
$\mathcal{S}$ transversally. Letting $\mathcal{S}_{\text{\textgreek{t}}}$
be the image of $\mathcal{S}$ under the flow of $T$ for time $\lyxmathsym{\textgreek{t}}$,
we deduce that the set 
\begin{equation}
\mathcal{I}_{as}\doteq\cup_{\text{\textgreek{t}}\in\mathbb{R}}\mathcal{S}_{\text{\textgreek{t}}}\simeq\mathbb{R}\times\mathcal{S}
\end{equation}
 is an open subset of $(\mathcal{M},g)$. 

We define the coordinate function $t:\mathcal{I}_{as}\rightarrow\mathbb{R}$
by the relation 
\begin{equation}
t|_{\mathcal{S}}=0,\, T(t)=1
\end{equation}
 and we note that if we extend the coordinate functions $(r,\text{\textgreek{sv}}):\mathcal{S}\rightarrow\mathbb{R}_{+}\times\mathbb{S}^{d-1}$
(see Section \ref{sec:Notational-conventions} for the $\text{\textgreek{sv}}$
notation) in the whole of $\mathcal{I}_{as}$ by the conditions $Tr=0,T\text{\textgreek{sv}}=0$,
then $(t,r,\text{\textgreek{sv}})$ is a valid coordinate chart on
each connected component of $\mathcal{I}_{as}$. In this chart, $\partial_{t}=T$.

\paragraph*{\noindent \emph{Quantitative asympotics for the metric:} }

\noindent In the $(t,r,\text{\textgreek{sv}})$ coordinates oneach
connected component of $\mathcal{I}_{as}$, there exists an $a\in(0,1]$
such that the metric $g$ takes the asymptotically flat form

\begin{equation}
g=-\Big(1-\frac{2M}{r}+h_{1}(r,\text{\textgreek{sv}})\Big)dt^{2}+\Big(1+\frac{2M}{r}+h_{2}(r,\text{\textgreek{sv}})\Big)dr^{2}+r^{2}\cdot\Big(g_{\mathbb{S}^{d-1}}+h_{3}(r,\text{\textgreek{sv}})\Big)+h_{4}(r,\text{\textgreek{sv}})dtd\text{\textgreek{sv}}\label{eq:metric}
\end{equation}
 where:
\begin{itemize}
\item $h_{1},h_{2}$ are smooth $O_{4}(r^{-1-a})$ functions. See Section
\ref{sec:Notational-conventions} for the $O_{k}(\cdot)$ notation.
\item $h_{4}$ is a smooth $O_{4}(r^{-a})$ function.
\item For every $r$, $h_{3}(r,\cdot)$ is a smooth symmetric $(0,2)$-tensor
field defined on $\mathbb{S}^{d-1}$, and satisfies the bound $h_{3}=O_{4}(r^{-1-a})$.
\end{itemize}
In the above expression for the metric in the asymptotically flat
region, we will refer to the constant $M$ as the mass of the spacetime
$(\mathcal{M},g)$, although it will only coincide with the ADM mass
of the spacetime if the space dimension is $d=3$.

Notice that (\ref{eq:metric}) implies that $T$ has been normalised
so that $g(T,T)\rightarrow-1$ as $r\rightarrow\infty$.%
\footnote{One could point out that a general asymptotically flat metric should
also contain a $dt\cdot dr$ component, but if this component has
$O_{4}(r^{-1-a})$ asymptotics, it can always be annihilated for $\{r\gg1\}$
by a transformation of the form $t\rightarrow t+f(r,\text{\textgreek{sv}})$,
which would of course change our choice of Cauchy hypersurface \textgreek{S}
corresponding to $\{t=0\}$. The same is true for the $dr\cdot d\text{\textgreek{sv}}$
terms that might possibly exist in a more general expression of an
asymptotically flat metric: If the $dr\cdot d\text{\textgreek{sv}}$
components are $O_{5}(r^{\frac{1-a}{2}})$, then one can choose a
new spherical parametrization $\text{\textgreek{sv}}_{new}=\text{\textgreek{sv}}_{new}(r,\text{\textgreek{sv}})$,
so that in $(t,r,\text{\textgreek{sv}}_{new})$ coordinates and for
$r\gg1$ the metric has the form \ref{eq:metric}.%
}

\paragraph*{\noindent \emph{Some special subsets of $(\mathcal{M},g)$: }}

\noindent We will define a few subsets of $(\mathcal{M},g)$ which
are tied to the causal structure of the spacetime, and will be frequently
referred to throughout the paper. 

We define the \emph{domain of outer communications} of the asymptotically
flat end $\mathcal{S}$ of $\text{\textgreek{S}}$ to be the subset
\begin{equation}
\mathcal{D}\doteq clos\big(J^{-}(\mathcal{I}_{as})\cap J^{+}(\mathcal{I}_{as})\big)
\end{equation}
 of $\mathcal{M}$. Here, $J^{+}(A)$ denotes the causal future of
the set $A$,%
\footnote{i.\,e. the set of points of $\mathcal{M}$ which can be the endpoints
of future directed causal curves in $\mathcal{M}$ starting from $A$%
} and $J^{-}(A)$ the causal past. If $\mathcal{D}$ is not the whole
of $\mathcal{M}$, then the set 
\begin{equation}
\mathcal{H}\doteq\partial\mathcal{D}
\end{equation}
 will be non empty, and we will call this set the \emph{horizon} of
$(\mathcal{M},g)$. The horizon $\mathcal{H}$ can be split naturally
as the union $\mathcal{H}^{+}\cup\mathcal{H}^{-}$, where $\mathcal{H}^{+}\doteq J^{+}(\mathcal{I}_{as})\cap\partial\big(J^{-}(\mathcal{I}_{as})\big)$
and $\mathcal{H}^{-}\doteq J^{-}(\mathcal{I}_{as})\cap\partial\big(J^{+}(\mathcal{I}_{as})\big)$.
Note that $\mathcal{H}^{-}$ lies in the past of $\mathcal{H}^{+}$. 

If non empty, we will assume that $\mathcal{H}^{+},\mathcal{H}^{-}$
are smooth null hypersurfaces of $(\mathcal{M},g)$, possibly with
boundary $\mathcal{H}^{+}\cap\mathcal{H}^{-}$. Since $T$ is a Killing
field of $\mathcal{M}$, and $\mathcal{I}_{as}$ is by definition
invariant under the flow of $T$, we conclude that $\mathcal{D}$,
$\mathcal{H}^{+}$, $\mathcal{H}^{-}$, $\mathcal{H}^{+}\cap\mathcal{H}^{-}$
and $\mathcal{H}$ must be invariant under the flow of $T$. Since
$\mathcal{H}^{+},\mathcal{H}^{-}$ were assumed to be smooth hypersurfaces
with boundary, this means that $T$ is tangent to them, and tangent
to their boundary.

Notice also that $\mathcal{D}$ might have more than one connected
comonents, but it is possible for more than one connected components
of $\mathcal{I}_{as}$ to lie in the same component of $\mathcal{D}$.

\paragraph*{\noindent \emph{Requirements regarding the Cauchy hypersurface \textgreek{S}:} }

\noindent We will need to guarantee that the Cauchy hypersurface $\text{\textgreek{S}}$
is in the right position so that we can uniquely solve the wave equation
$\square_{g}\text{\textgreek{y}}=0$ on $J^{+}(\text{\textgreek{S}})\cap\mathcal{D}$,
given initial data on $\text{\textgreek{S}}\cap\mathcal{D}$. This
will inevitably lead to a few extra hypotheses regarding the part
of the Cauchy hypersurface that does not lie in the asymptotically
flat region $\mathcal{I}_{as}$. 

We assume that, by altering $\text{\textgreek{S}}$ if necessary (but
keeping $\text{\textgreek{S}}$ as before in the region $\{r\gg1\}$),
we can arrange so that $\mathcal{H}^{-}\subset I^{-}(\text{\textgreek{S}}\cap\mathcal{D})$%
\footnote{i.\,e. the timelike past of $\text{\textgreek{S}}\cap\mathcal{D}$.%
}. With this assumption, for any $p\in J^{+}(\text{\textgreek{S}})\cap\mathcal{D}$
we have $J^{-}(p)\cap\text{\textgreek{S}}\subseteq\text{\textgreek{S}}\cap\mathcal{D}$
(see Lemma \ref{lem:-for-,Inclusion} for a proof of this fact), and
this inclusion implies, according to the arguments of \cite{Ringstroem2009},
that any smooth solution to $\square_{g}\text{\textgreek{y}}=0$ on
$J^{+}(\text{\textgreek{S}})\cap\mathcal{D}$ is uniquely specified
by $(\text{\textgreek{y}},n_{\text{\textgreek{S}}}\text{\textgreek{y}})|_{\text{\textgreek{S}}\cap\mathcal{D}}$,
$n_{\text{\textgreek{S}}}$ denoting the future directed unit normal
to $\text{\textgreek{S}}$.

\paragraph*{\noindent \emph{Transversality assumptions on the stationary vector
field $T$:} }

\noindent It would be convenient for us to work in coordinate charts
where $T$ is a coordinate vector field, and for this reason we have
to state two more assumptions regarding the orbits of $T$. 

First, we assume that $T$ is transversal to $\text{\textgreek{S}}\cap\mathcal{D}$.%
\footnote{Note that for this to be true in the Schwarzschild spacetime, for
instance, we must choose $\text{\textgreek{S}}$ so that it does not
intersect the bifurcate sphere.%
} This implies that $T$ points to the future of $\text{\textgreek{S}}\cap\mathcal{D}$,%
\footnote{Note that T may fail to be timelike!%
} in the sense that following the integral curves of $T$ for some
arbitrarily small positive time, starting from apoint on $\text{\textgreek{S}}\cap\mathcal{D}$,
we are led to a point in $I^{+}(\text{\textgreek{S}}\cap\mathcal{D})$.
\footnote{Proof of the last claim: if this claim was false, and $A\subseteq\text{\textgreek{S}}\cap\mathcal{D}$
was the set of points of $\text{\textgreek{S}}\cap\mathcal{D}$ starting
from which the flow of $T$ did not lead to $I^{+}(\text{\textgreek{S}})$
for small times, then $\partial A\neq\emptyset$ (since $A$ was assumed
to be non empty, but $A$ does not contain any point in the asymptotically
flat region $\{r\gg1\}$). But then, due to the smoothness of $T$,
at the points of $\partial A$ $T$ should be tangent to $\text{\textgreek{S}}$,
which contradicts the assumption of the transversality of $\mathcal{D}\cap\text{\textgreek{S}}$
and $T$.%
}

Therefore, the image of $\text{\textgreek{S}}\cap\mathcal{D}$ under
the flow of $T$ for positive times always lies in $J^{+}(\text{\textgreek{S}})\cap\mathcal{D}$.
At this point, we also need to introduce a second assumption on $T$,
namely that these images actually cover the whole of $J^{+}(\text{\textgreek{S}})\cap\mathcal{D}$.
Given this, one can extend the function $t$ defined before on $\mathcal{I}_{as}$
to the whole of $J^{+}(\text{\textgreek{S}})\cap\mathcal{D}$, by
demanding as before that $t|_{\text{\textgreek{S}}}=0$ and $T(t)=1$. 

By following the flow of $T$ for negative times, $t$ is also defined
on the past images of $\text{\textgreek{S}}\cap\mathcal{D}$ under
the flow of $T$. In this way, $t$ will be defined only on $\mathcal{D}\backslash\mathcal{H}^{-}$
and not on the whole of $\mathcal{D}$ (this will not be a problem
to us, since we will perform our analysis only on $J^{+}(\text{\textgreek{S}})\cap\mathcal{D}$).
We will also denote $\text{\textgreek{S}}_{\text{\textgreek{t}}}\doteq\{t=\text{\textgreek{t}}\}$,
and with this notation we will have $\text{\textgreek{S}}_{0}=\text{\textgreek{S}}\cap\mathcal{D}$.

With these assumptions on $T$, if $x=(x^{1},x^{2},\ldots,x^{d})$
is a local chart on a subset $\mathcal{V}$ of $\text{\textgreek{S}}\cap\mathcal{D}$,
then extending its coordinates functions $x^{i}$ by the requirement
$T(x^{i})=0$ one can construct a local coordinate chart $(t,x^{1},x^{2},\ldots,x^{d})$
on $\mathcal{D}$. In such a local chart, one has $\partial_{t}=T$.
We will mostly work in local charts of this form.

\paragraph*{\noindent \emph{Implications for $\text{\textgreek{S}}\cap\mathcal{H}$: }}

\noindent Due to our previous assumption that $\mathcal{H}^{-}\subset I^{-}(\text{\textgreek{S}}\cap\mathcal{D})$,
we deduce that $(\text{\textgreek{S}}\cap\mathcal{D})\cap\mathcal{H}^{-}=\emptyset$
and hence also $(\text{\textgreek{S}}\cap\mathcal{D})\cap(\mathcal{H}^{+}\cap\mathcal{H}^{-})=\emptyset$.
This, together with the fact that $\text{\textgreek{S}}$ intersects
$\mathcal{H}^{+}$ transversally (since $T$ is tangent to $\mathcal{H}^{+}$
and transversal to $\text{\textgreek{S}}$) implies that $\text{\textgreek{S}}\cap\mathcal{H}^{+}$
is a smooth submanifold of $\text{\textgreek{S}}$. Since $\mathcal{H}^{+}$
is closed (due to its definition), $\text{\textgreek{S}}\cap\mathcal{H}^{+}$
is a closed subset of $\text{\textgreek{S}}\cap\mathcal{D}$. In fact,
$\text{\textgreek{S}}\cap\mathcal{H}^{+}$ is the boundary of the
manifold-with-boundary $\text{\textgreek{S}}\cap\mathcal{D}$. Moreover,
$\mathcal{H}$ can not intersect the asymptotically flat region $\mathcal{I}_{as}$,%
\footnote{\noindent since $\mathcal{H}=\partial\mathcal{D}$ and $\mathcal{D}=J^{-}(\mathcal{I}_{as})\cap J^{+}(\mathcal{I}_{as})$%
} and hence $\text{\textgreek{S}}\cap\mathcal{H}^{+}$ lies in a compact
subset of $\text{\textgreek{S}}$. Therefore, we conclude $\text{\textgreek{S}}\cap\mathcal{H}^{+}$
is a compact submanifold of $\text{\textgreek{S}}$. 

Since $\mathcal{H}^{+}$ is $T$-invariant and the future translates
of $\text{\textgreek{S}}\cap\mathcal{D}$ cover $J^{+}(\text{\textgreek{S}})\cap\mathcal{D}$,
we deduce that the future translates of $\mathcal{H}^{+}\cap\text{\textgreek{S}}$
by $T$ cover $J^{+}(\text{\textgreek{S}})\cap\mathcal{H}=J^{+}(\text{\textgreek{S}})\cap\mathcal{H}^{+}$.
Since $T$ is transversal to $\text{\textgreek{S}}\cap\mathcal{H}^{+}$,
this implies that $J^{+}(\text{\textgreek{S}})\cap\mathcal{H}\simeq\mathbb{R}\times(\text{\textgreek{S}}\cap\mathcal{H}^{+})$
is a smooth submanifold of $\mathcal{D}$ with boundary $\text{\textgreek{S}}\cap\mathcal{H}^{+}$.
Since $\mathcal{H}^{+}\backslash(\mathcal{H}^{+}\cap\mathcal{H}^{-})$
was assumed to be a null hypersurface, $J^{+}(\text{\textgreek{S}})\cap\mathcal{H}$
must also be a null hypersurface if non empty.

\paragraph*{\noindent \emph{Extension of the function r:} }

\noindent We can extend the function $r$ defined before on $\mathcal{S}$
(as the pullback of the polar distance on $\mathbb{R}^{d}$ on each
component of $\mathcal{S}$) to the whole of $\text{\textgreek{S}}\cap\mathcal{D}$,
by simply demanding that it is a Morse function on $\text{\textgreek{S}}\cap\mathcal{D}$
, satisfying $r\ge0$ on $\text{\textgreek{S}}\cap\mathcal{D}$ and
with $r=0$ only on $\text{\textgreek{S}}\cap\mathcal{H}^{+}$ (this
is possible since we showed that $\text{\textgreek{S}}\cap\mathcal{H}^{+}$is
a smooth compact submanifold of $\text{\textgreek{S}}$, and at the
same time it is the boundary of the manifold with boundary $\text{\textgreek{S}}\cap\mathcal{D}$).
Moreover, we demand that $dr|_{\text{\textgreek{S}}\cap\mathcal{H}^{+}}\neq0$,
so that $r$ can also be used as a coordinate function close to the
horizon. 

We can then extend $r$ to the whole of $\mathcal{D}$ by just demanding
that $Tr=0$. Of course, we might not be able to make $r$ a coordinate
function, since if $\text{\textgreek{S}}$ has complicated topology,
Morse theory implies that $\nabla r$ should necessarily vanish somewhere
on $\text{\textgreek{S}}$.

\paragraph*{\noindent \emph{Hyperboloidal hypersurfaces terminating at $\mathcal{I}^{+}$:} }

\noindent We will need to define a special class of hypersurfaces
that will be used frequently throughout this paper.

Let $S$ be a spacelike hypersurface of $\mathcal{D}$. We will say
that $S$ is a hyperboloidal hypersurface terminating at $\mathcal{I}^{+}$,
if in each component of the asymptotically flat region $\mathcal{I}_{as}$
of $(\mathcal{D},g)$, equipped with the coordinate chart $(t,r,\text{\textgreek{sv}})\in\mathbb{R}\times(\mathbb{R}^{d}\backslash B_{R})$,
$S$ is the set $\{(t,r,\text{\textgreek{sv}})|t=f(r,\text{\textgreek{sv}})\}$
for some function $f:\,\mathbb{R}^{d}\backslash B_{R}\rightarrow\mathbb{R}$
such that 
\begin{equation}
\sup_{(r,\text{\textgreek{sv}})\in\mathbb{R}^{d}\backslash B_{R}}\big(|f(r,\text{\textgreek{sv}})-r-2M\log(r-2M)|\big)<+\infty.\label{eq:HyperboloidTerminatingAtNullInfinity}
\end{equation}

\subsubsection{\emph{Assumption \ref{enu:As2}}\label{Assumption 2} \emph{(Killing
horizon with positive surface gravity)}.}

In the case $J^{+}(\text{\textgreek{S}})\cap\mathcal{H}^{+}\neq\emptyset$,
we assume that there exists a non-zero vector field $V$ defined on
$J^{+}(\text{\textgreek{S}})\cap\mathcal{H}^{+}$, which is parallel
to the null generators of $J^{+}(\text{\textgreek{S}})\cap\mathcal{H}^{+}$,
and its flow preserves the induced (degenerate) metric on $I^{+}(\text{\textgreek{S}})\cap\mathcal{H}^{+}$.
We also assume that $V$ commutes with $T$ on $J^{+}(\text{\textgreek{S}})\cap\mathcal{H}$.
Moreover, we assume that there exists a $T$-invariant strictly timelike
vector field $N$ on $\mathcal{D}$, which, when restricted on $J^{+}(\text{\textgreek{S}})\cap\mathcal{H}$,
satisfies 
\begin{equation}
K^{N}(\text{\textgreek{y}})\ge cJ_{\text{\textgreek{m}}}^{N}(\text{\textgreek{y}})N^{\text{\textgreek{m}}}\label{eq:PositiveKN}
\end{equation}
 for any $\text{\textgreek{y}}\in C^{\infty}(\mathcal{D})$, where
$c>0$ is independent of $\text{\textgreek{y}}$. For the notations
on currents, see Section \ref{sub:Currents}.

We will call the vector field $N$ the \emph{red shift} vector field.
The reason for this name is that a vector field of that form was shown
to exist for a general class of Killing horizons with positive surface
gravity by Dafermos and Rodnianski in \cite{DafRod2}. However, here
we will just assume the existence of such a vector field without specifying
the geometric orgin of it. 

Note that we can modify the vector field $N$ away from the horizon,
so that in the region $r\gg1$ it coincides with $T$, and still retain
the bound (\ref{eq:PositiveKN}) on $J^{+}(\text{\textgreek{S}})\cap\mathcal{H}$.%
\footnote{The convexity of the cone of the future timelike vectors over each
point of $\mathcal{D}$ is used in this argument%
} We will hence assume without loss of generality that $N$ has been
chosen so that $N\equiv T$ away from the horizon. 

Due to the smoothness of $N$, there exists an $r_{0}>0$, such that
(\ref{eq:PositiveKN}) also holds (possibly with a smaller constant
$c$ on the right hand side) in a neighborhood of $\mathcal{H}^{+}\backslash(\mathcal{H}^{+}\cap\mathcal{H}^{-})$
in $\mathcal{D}$ of the form $\{r\le r_{0}\}$. For $r\gg1$, of
course, since $N\equiv T$ there, we have $K^{T}(\text{\textgreek{y}})\equiv0$.
Hence, due to the $T-$invariance of $N$ and the compactness of the
sets of the form $\{r\le R\}\cap\text{\textgreek{S}}$, there is also
a (possibly large) constant $C>0$ such that 
\begin{equation}
|K^{N}(\text{\textgreek{y}})|\le C\cdot J_{\text{\textgreek{m}}}^{N}(\text{\textgreek{y}})N^{\text{\textgreek{m}}}
\end{equation}
 everywhere on $\mathcal{D}$ for any $\text{\textgreek{y}}\in C^{\infty}(\mathcal{D})$.

Without loss of generality, we also assume that $r_{0}$ is small
enough so that $dr\neq0$ on $\{r\le3r_{0}\}$. This is possible,
since we have assumed that $dr|_{\text{\textgreek{S}}\cap\mathcal{H}^{+}}\neq0$
and that $\text{\textgreek{S}}\cap\mathcal{H}^{+}$ is compact.

\subsubsection{\emph{Assumption \ref{enu:As3}}\label{Assumption 3} \emph{(Smallness
of the ergoregion).}}

In the case $\mathcal{H}^{+}\neq\emptyset$, we assume that $g(T,T)<0$
in the region $\{r>\frac{r_{0}}{2}\}$ (where $r_{0}$ was defined
before so that $K^{N}(\text{\textgreek{y}})\ge C\cdot J_{\text{\textgreek{m}}}^{N}(\text{\textgreek{y}})N^{\text{\textgreek{m}}}$
in $\{r\le r_{0}\}$). Moreover, we assume that 
\begin{equation}
\sup\{g(T,T)\}<\text{\textgreek{e}}\label{eq:Epsilon}
\end{equation}
 for a fixed small positive constant $\text{\textgreek{e}}$, the
value of which will be specified exactly later on in terms of the
rest of the geometry of our spacetime.%
\footnote{Informally, this means that the ergoregion is small enough so that
the ``red shift'' effect can ``protect'' us from the various difficulties
tied to coping with waves inside the ergoregion, such us those stemming
from the superradiance effect.%
} Note that due to the asymptotic flatness assumption, on $\{r\ge\frac{3r_{0}}{4}\}$
the quantity $g(T,T)$ is uniformly bounded away from zero. 

Since $\{g(T,T)>0\}$ in $\{r\ge r_{0}\}$, one could have adjusted
the previous construction of the vector field $N$ so that it coincides
with $T$ on $\{r\ge2r_{0}\}$. Hence, without loss of generality,
from now on we will assume that $N\equiv T$ on $\{r\ge2r_{0}\}$.

In the case $\mathcal{H}^{+}=\emptyset$, we assume that $T$ is everywhere
timelike on $\mathcal{D}$. 
\begin{rem*}
We should remark again that, in view of Friedman's ergospher instability
(see \cite{Friedman1978}), the existence of a non empty ergoregion
in the case $\mathcal{H}^{+}=\emptyset$ would lead to a violation
of Assumption \hyperref[Assumption 4]{4}. For a rigorous proof of
this result, see our forthcoming \cite{Moschidis}.
\end{rem*}

\subsubsection{\emph{Assumption \ref{enu:As4}}\label{Assumption 4}\emph{ (Boundedness
of the energy of scalar waves).}}

Recall that the leaves of the foliation $\{\text{\textgreek{S}}_{t}|t\ge0\}$
of $\mathcal{D}\cap J^{+}(\text{\textgreek{S}})$ are the level sets
of the function $t$ defined under Assumption~\hyperref[Assumption 1]{1}.
We assume that there exists a positive constant $C>0$ such that for
any smooth solution \textgreek{y} to the wave equation $\square_{g}\text{\textgreek{y}}=0$
on $\mathcal{D}\cap J^{+}(\text{\textgreek{S}})$, and for any $0\le t\le t'$,
we can bound
\begin{equation}
\int_{\text{\textgreek{S}}_{t'}}J_{\text{\textgreek{m}}}^{N}(\text{\textgreek{y}})n^{\text{\textgreek{m}}}\le C\cdot\int_{\text{\textgreek{S}}_{t}}J_{\text{\textgreek{m}}}^{N}(\text{\textgreek{y}})n^{\text{\textgreek{m}}},\label{eq:Boundedness-1}
\end{equation}
 where $n^{\text{\textgreek{m}}}$ is the unit normal to the leaves
of the foliation $\{\text{\textgreek{S}}_{\text{\textgreek{t}}}\}_{\text{\textgreek{t}}\ge0}$
and the integrals are taken with respect to the volume form of the
induced metric.

This assumption will be used as a ``black box'' in the subsequent
sections, in the sense that the mechanisms from which boundedness
stems will not be addressed. Notice that we have not included in our
assumption the boundedness of the horizon flux $\int_{\mathcal{H}^{+}}J_{\text{\textgreek{m}}}^{N}(\text{\textgreek{y}})n_{\mathcal{H}}^{\text{\textgreek{m}}}$
(for our conventions on integration over null hypersurfaces, see section
\ref{sec:Notational-conventions}).
\begin{rem*}
While the boundedness assumption \hyperref[Assumption 4]{4} was stated
only for the level sets of the function $t$, it actually holds for
the level sets of any function of the form $t+\text{\textgreek{q}}$
with achronal level sets, $\text{\textgreek{q}}:\mathcal{D}\rightarrow\mathbb{R}$
being any function supported in $r\gg1$ and satisfying $T\text{\textgreek{q}}=0$.
This fact can be easily deduced by the conservation of the $J_{\text{\textgreek{m}}}^{T}$
current and the fact that $T\equiv N$ is timelike for $r\gg1$. 
\end{rem*}

\subsection{\label{sub:Examples-Assumptions}Examples of spacetimes satisfying
Assumptions \hyperref[Assumption 1]{1}--\hyperref[Assumption 4]{4}}

A first example of a broad class of spacetimes satisfying Assumptions
\hyperref[Assumption 1]{1}--\hyperref[Assumption 4]{4} is the class
of asymptotically flat Lorentzian manifolds of product type $(\mathbb{R}\times\mathcal{N},-dt^{2}+\bar{g})$,
which was already treated by Rodnianski and Tao in \cite{Rodnianski2011}. 

In order to verify that Assumptions \hyperref[Assumption 1]{1}-\hyperref[Assumption 4]{4}
are indeed satisfied on these spacetimes, one has to notice that Assumption
\hyperref[Assumption 1]{1} on stationarity and asymptotic flatness
follows readily from the asymptotic flatness of the Riemannian manifold
$\mathcal{N}$, while Assumptions \hyperref[Assumption 2]{2} and
\hyperref[Assumption 3]{3} are trivially satisfied by the absence
of a black hole region and an ergoregion ($g(\partial_{t},\partial_{t})$
being identical to $-1$ everywhere) on product Lorentzian manifolds.
Finally, since the vector field $\partial_{t}$ gives rise to the
conserved positive definite energy $\int_{\{t=const\}}J_{\text{\textgreek{m}}}^{\partial_{t}}(\text{\textgreek{y}})n_{\{t=const\}}^{\text{\textgreek{m}}}$
for solutions $\text{\textgreek{y}}$ to the wave equation, Assumption
\hyperref[Assumption 4]{4} is also satisfied for any Lorentzian manifold
of product type. 

In addition to the class of asymptotically flat Lorentzian manifolds
of product type, some other classes of spacetimes satisfying the assumptions
of the previous Section, and on which Theorem \ref{thm:Theorem} readily
applies to give a logarithmic decay statement for the local energy
of waves, are the following:

\subsubsection{Black hole spacetimes without ergoregion}

In Section 7 of \cite{DafRod6}, Dafermos and Rodnianski consider
a large class of stationary spacetimes without ergoregion, with horizons
exhibiting positive surface gravity, and with minimal further geometric
and topological assumptions. For these spacetimes, they establish
a boundedness estimate for the energy of solutions to the wave equation
$\square_{g}\text{\textgreek{y}}=0$ with respect to a timelike vector
field and a suitable foliation. Therefore, Assumption \hyperref[Assumption 4]{4}
on energy boundedness holds for spacetimes in this class.

Restricting our attention to the asymptotically flat spacetimes contained
in this class, namely the spacetimes satisfying Assumption \hyperref[Assumption 1]{1},
the reader can easily verify that they also satisfy Assumptions \hyperref[Assumption 2]{2}-\hyperref[Assumption 3]{3}
due to the non degeneracy of the horizons and the absence of an ergoregion.
Thus, Theorem \ref{thm:Theorem} applies in this case to give a logarithmic
decay estimate for the local energy of solutions to $\square_{g}\text{\textgreek{y}}=0$.
This decay rate for the local energy can also be upgraded to a pointwise
logarithmic decay statement for $\text{\textgreek{y}}$, using the
commutation techniques explained in \cite{DafRod6}, thus improving
the results of \cite{DafRod6} on those spacetimes.

This class includes the spherically symmetric and asymptotically flat
black hole spacetimes studied in \cite{Masarik2011}. An example of
spacetimes in the class studied by \cite{Masarik2011} are the black
hole solutions of the $SU(2)$ Einstein-Yang Mills equations constructed
in \cite{Smoller1993}. On these spacetimes, our Theorem \ref{thm:Theorem}
improves the qualitative results of \cite{Masarik2011}.

\subsubsection{Axisymmetric waves on black hole rings and black Saturn}

The topological censorship theorem, established by Friedman, Schleich
and Witt in \cite{Friedman1993}, implies, through the work of Chru\'sciel
and Wald \cite{Chrusciel1994} and Jacobson and Venkataramani \cite{Jacobson1995},
that in the realm of $3+1$ dimensional vacuum spacetimes, spatial
cross sections of black hole horizons must have spherical topology.
In higher dimensions, however, horizons are allowed to become more
complicated. In $4+1$ dimensions, for instance, the intersection
of the black hole horizon with a Cauchy hypersurface in a vacuum spacetime
is allowed to have components diffeomorphic to $\mathbb{S}^{1}\times\mathbb{S}^{2}$.

The first exact solutions of the vacuum Einstein equations exhibiting
horizons with non spherical topology were the black ring solution
constructed by Emparan and Reall in \cite{Emparan2002,Emparan2002a}.
These are $4+1$ dimensional, stationary and axisymmetric asymptotically
flat solutions to the vacuum Einstein equations, with non degenerate
horizons having $\mathbb{S}^{1}\times\mathbb{S}^{2}$ spatial cross
sections. A variation of these spacetimes are the black hole rings
rotating in two different directions presented by Pomeransky and Sen'kov
in \cite{Pomeransky2006}.

The Emparan--Reall spacetimes necessarily posses an ergoregion, and
this fact implies an energy boundedness statement on their domain
of outer communications $\mathcal{D}$ can not be inferred as a corollary
of the general result of Dafermos and Rodnianski in \cite{DafRod6},
discussed in the previous paragraph. However, since in $\mathcal{D}$
the span of the stationary Killing field $T$ and the axisymmetric
Killing field $\text{\textgreek{F}}$ always contains a timelike vector,
axisymmetric functions on these spacetimes have positive $T-$energy
with respect to any spacelike foliation of $\mathcal{D}$. Thus, axisymmetric
solutions to the wave equation $\square\text{\textgreek{y}}=0$ on
the domain of outer communications of the aforementioned spacetimes
do not ``see'' the ergoregion, and for them the results of Dafermos
and Rodnianski discussed in Section 7 of \cite{DafRod6} applies to
establish a non degenerate energy boundedness statement.

Therefore, restricting our attention only to axisymmetric waves on
$\mathcal{D}$, we can easily verify that all Assumptions \hyperref[Assumption 1]{1}--\hyperref[Assumption 4]{4}
(reduced, of course, to the case of axisymmetric waves) are satisfied
in this case. Furthermore, in view of the fact that $\text{\textgreek{F}}$
is a Killing field, the proof of Theorem \ref{thm:Theorem} can be
restricted on the class of axisymmetric functions on $\mathcal{D}$.
Thus, we infer that the local energy of axisymmetric solutions to
the wave equation $\square_{g}\text{\textgreek{y}}=0$ on the domain
of outer communications of the Emparan--Reall spacetimes decay at
least logarithmically in time (and this decay rate can also be made
pointwise, after implementing as before the commutation techniques
explained in \cite{DafRod6}). 

The situation is similar on the ``black Saturn'' solution to the
vacuum Einstein equations, constructed by Elvang and Figueras in \cite{Elvang2007}.
The Elvang--Figueras spacetime is a $4+1$ dimensional, stationary,
axisymmetric and asymptotically flat vacuum spacetime, possessing
a black hole region with non degenerate horizon. The horizon has spatial
cross sections resembling Saturn: one component of the horizon is
homeomorphic to $\mathbb{S}^{3}$, while the other one is homeomorphic
to $\mathbb{S}^{1}\times\mathbb{S}^{2}$. Arguing exactly as before,
the results of Dafermos and Rodnianski presented in \cite{DafRod6}
establish a non degenerate energy boundedness statement for axisymmetric
solutions to the wave equation $\square_{g}\text{\textgreek{y}}=0$
on the domain of outer communications $\mathcal{D}_{s}$ of this spacetime.
Assumptions \hyperref[Assumption 1]{1}--\hyperref[Assumption 4]{4}
are also satisfied, therefore, in this case as before, and we deduce
that axisymmetric solutions to the wave equation $\square\text{\textgreek{y}}=0$
on $\mathcal{D}_{s}$ decay at least logarithmically in time. 

It would be interesting to study in greater detail the axisymmetric
trapping on the Elvang--Figueras spacetime (which appears to be a
more challenging task due to the horiozn having two components), so
as to infer better decay results in this case. Moreover, we should
remark that establishing an energy boundedness statement for those
of the Emparan--Reall and Elvang--Figueras spacetimes possessing a
small ergoregion (which could in principle be achieved by the strategy
followed in \cite{DafRod5} for slowly rotating Kerr spacetimes, if
supperadiance and trapping can be shown to not ``overlap'') would
remove our restriction on axisymmetric waves, making Theorem \ref{thm:Theorem}
applicable to all solutions of the wave equation on these spacetimes.

\subsubsection{$C^{1}$ perturbations of slowly rotating Kerr spacetimes}

Of special interest are also the $C^{1}$ perturbations of the exterior
of slowly rotating Kerr spacetimes, discussed by Dafermos and Rodnianski
in \cite{DafRod5}. In particular, these spacetimes $(\mathcal{M},g)$
possess an atlas, in which the expression for the metric $g$ is $C^{1}$
close to the metric $g_{M}$ expressed on a fixed atlas on the domain
of outer communications (including the horizon) of Schwarzschild spacetime
$(\mathcal{M}_{M},g_{M})$ with mass $M$. In addition, all spacetimes
$(\mathcal{M},g)$ in this class are required to be stationary and
axisymmetric. These spacetimes include the exterior of very slowly
rotating Kerr spacetimes (i.\,e.~with $|a|\ll M$). 

On spacetimes belonging to the above class, it was established in
\cite{DafRod5} that solutions to the wave equation $\square_{g}\text{\textgreek{y}}=0$
have bounded energy with respect to a globally timelike $T-$invariant
vector field $N$ (where $T$ is the stationary Killing field of $(\mathcal{M},g)$)
and a suitable spacelike foliation. Therefore, it can be easily checked
that Assumptions \hyperref[Assumption 1]{1}-\hyperref[Assumption 4]{4}
are satisfied for spacetimes belonging to this class, and thus our
Theorem \ref{thm:Theorem} applies in this case to provide a logarithmic
decay rate for the local energy of waves (which, again, can also be
upgraded to a pointwise logarithmic decay rate). 

It is important to remark that on this class of spacetimes, the nature
of the trapped set can be very different from the one in Schwarzschild
or slowly rotating Kerr spacetimes, because the metric $g$ of $\mathcal{M}$
was required to be only $C^{1}$ close to that of $(\mathcal{M}_{M},g_{M})$.%
\footnote{That is to say, since it is (at least) the $C^{2}$ norm of the difference
of the metrics that would control the change in trapping, and this
norm is not assumed to be small, the nature of the trapped set in
a spacetime $(\mathcal{M},g)$ of this class can be fundamentally
different from the one in Schwarzschild spacetime.%
} Hence, methods relying on the particular form of trapping on Schwarzschild
or Kerr exterior backgrounds would not be applicable in order to obtain
quantitative decay estimates in this class of spacetimes.

\subsubsection{Keir's class of spherically symmetric spacetimes}

Another class of spacetimes which satisfy Assumptions \hyperref[Assumption 1]{1}--\hyperref[Assumption 4]{4}
are the static spherically symmetric spacetimes (including spherically
symmetric ultracompact neutron stars) studied by Keir in \cite{Keir2014}.
Of course, \cite{Keir2014} already establishes a logarithmic decay
estimate for solutions to the wave equation $\square_{g}\text{\textgreek{y}}=0$,
and thus Theorem \ref{thm:Theorem} in this case just recovers some
of the results of \cite{Keir2014}. Since it was shown in \cite{Keir2014}
that in this class of spacetimes the logarithmic decay rates are optimal,
this class also exhibits the optimality of Theorem \ref{thm:Theorem}.
See Section \ref{sub:Sharpness} for more on this.

\subsection{\label{sub:Theorem}Statement of the decay result}

We are now able to state the main result of the current paper:
\begin{thm}
\label{thm:Theorem}Let $(\mathcal{M}^{d+1},g)$, $d\ge3$, be a globally
hyperbolic spacetime with a Cauchy hypersurface $\text{\textgreek{S}}$
satisfying Assumptions \hyperref[Assumption 1]{1}--\hyperref[Assumption 4]{4}
as described in the previous section. Let $\mathcal{D}$ be the domain
of outer communications of $(\mathcal{M},g)$, and let $t:J^{+}(\text{\textgreek{S}})\cap\mathcal{D}\rightarrow[0,+\infty)$
be the time function defined according to Assumption \hyperref[Assumption 1]{1}.
Assume, moreover, that, in case $\mathcal{H}^{+}\neq\emptyset$, the
$\text{\textgreek{e}}>0$ appearing in (\ref{eq:Epsilon}) is sufficiently
small in terms of the geometry of $J^{+}(\text{\textgreek{S}})\cap\mathcal{D}\backslash\{g(T,T)>0\}$.
Then for any $R_{1}>0$, any $0<\text{\textgreek{d}}_{0}\le1$ and
any $m\in\mathbb{N}$, there exists a positive constant $C_{m,\text{\textgreek{d}}_{0}}(R_{1})>0$,
such that every smooth solution $\text{\textgreek{y}}:J^{+}(\text{\textgreek{S}})\cap\mathcal{D}\rightarrow\mathbb{C}$
to the wave equation $\square_{g}\text{\textgreek{y}}=0$ satisfies
the following estimate for any $\text{\textgreek{t}}>0$:

\begin{equation}
\int_{\{t=\text{\textgreek{t}}\}\cap\{r\le R_{1}\}}J_{\text{\textgreek{m}}}^{N}(\text{\textgreek{y}})n^{\text{\textgreek{m}}}\le\frac{C_{m,\text{\textgreek{d}}_{0}}(R_{1})}{\{\log(2+\text{\textgreek{t}})\}^{2m}}\Big(\sum_{j=0}^{m}\int_{\{t=0\}}J_{\text{\textgreek{m}}}^{N}(T^{j}\text{\textgreek{y}})n^{\text{\textgreek{m}}}\Big)+\frac{C_{m}(R_{1})}{\text{\textgreek{t}}^{\text{\textgreek{d}}_{0}}}\int_{\{t=0\}}(1+r)^{\text{\textgreek{d}}_{0}}\cdot J_{\text{\textgreek{m}}}^{N}(\text{\textgreek{y}})n^{\text{\textgreek{m}}},\label{eq:BasicTheorem}
\end{equation}
 $n^{\text{\textgreek{m}}}$ being the future directed normal to the
leaves of the foliation $\{t=const\}$ of $J^{+}(\text{\textgreek{S}})\cap\mathcal{D}$.
See Section \ref{sub:Currents} for the notations on currents.
\end{thm}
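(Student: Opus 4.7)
The overarching strategy is to reduce the logarithmic decay statement to an exponentially-growing resolvent bound for the stationary problem, in the spirit of Burq and of Rodnianski--Tao. Since $T=\partial_t$ is Killing on $\mathcal{D}$, formally Fourier-transforming in $t$ converts $\square_g\text{\textgreek{y}}=0$ into a family of stationary equations $P(\text{\textgreek{w}})\hat{\text{\textgreek{y}}}(\text{\textgreek{w}},\cdot)=0$ on $\text{\textgreek{S}}_0\cap\mathcal{D}$, where $P(\text{\textgreek{w}})$ is obtained by substituting $-i\text{\textgreek{w}}$ for $\partial_t$ in $\square_g$. The key estimate I would aim for is a cutoff resolvent bound of the form $\|\chi(P(\text{\textgreek{w}}))^{-1}\chi\|_{L^2\to L^2}\lesssim e^{C|\text{\textgreek{w}}|}$ in a strip $|\textrm{Im}(\text{\textgreek{w}})|\le c$ around the real axis, where $\chi$ is a spatial cutoff. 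Once this is established, a standard contour-deformation / Phragm\'en--Lindel\"of argument, combined with the energy boundedness of Assumption~\hyperref[Assumption 4]{4} to justify the relevant Fourier representation for suitably cut-off solutions, converts the exponential resolvent bound into the logarithmic decay rate appearing in (\ref{eq:BasicTheorem}).

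The centerpiece is therefore proving the resolvent estimate, which I would do via a Carleman estimate on $\text{\textgreek{S}}_0\cap\mathcal{D}$ with a weight of the form $e^{\text{\textgreek{f}}/h}$ and semiclassical parameter $h=1/|\text{\textgreek{w}}|$ for $|\text{\textgreek{w}}|\gg 1$. The radial weight $\text{\textgreek{f}}$ must be engineered so that: (i) in the compact region containing the trapped set, it is pseudo-convex with respect to the spatial principal symbol of $P(\text{\textgreek{w}})$ up to a fixed constant loss, which is precisely what is absorbed by the $e^{C|\text{\textgreek{w}}|}$ growth and is the ultimate source of the logarithmic (rather than polynomial or exponential) rate; (ii) in the asymptotically flat region the growth of $\text{\textgreek{f}}$ in $r$ is tuned to the form (\ref{eq:metric}) so that the $O_4(r^{-1-a})$ corrections can be absorbed; (iii) near the horizon it is chosen compatibly with the red-shift vector field $N$ of Assumption~\hyperref[Assumption 2]{2}, so that the estimate (\ref{eq:PositiveKN}) supplies a positive contribution that dominates the otherwise uncontrolled flux through $\mathcal{H}^+$. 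The smallness of the ergoregion from Assumption~\hyperref[Assumption 3]{3} enters when showing that the non-Killing character of $N$ on $\{g(T,T)\ge 0\}$ does not destroy the global sign of the Carleman commutator: since $N\equiv T$ outside a neighborhood of $\mathcal{H}^+$, the ergoregion lies entirely within the red-shift layer, so for $\text{\textgreek{e}}$ in (\ref{eq:Epsilon}) chosen small enough in terms of the constant $c$ in (\ref{eq:PositiveKN}), the bad superradiant terms can be absorbed into the positive $K^N(\text{\textgreek{y}})$-contribution.

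The second term on the right-hand side of (\ref{eq:BasicTheorem}), with the polynomial rate $\text{\textgreek{t}}^{-\text{\textgreek{d}}_0}$ and weighted initial data, reflects the low-frequency regime $|\text{\textgreek{w}}|\to 0$, and I would derive it primarily in physical space by means of the $r^p$-hierarchy of Dafermos--Rodnianski applied on a foliation of $J^+(\text{\textgreek{S}})\cap\mathcal{D}$ by hyperboloidal hypersurfaces of the type (\ref{eq:HyperboloidTerminatingAtNullInfinity}) introduced under Assumption~\hyperref[Assumption 1]{1}. The dimension bound $d\ge 3$ is important here because it guarantees that $-\text{\textgreek{D}}_{\mathbb{R}^d}$ admits no zero resonance, so that $P(\text{\textgreek{w}})$ can be inverted uniformly near $\text{\textgreek{w}}=0$ via a Fredholm argument after subtracting a Euclidean parametrix in $\{r\gg 1\}$; Assumption~\hyperref[Assumption 4]{4} is what rules out real embedded modes that would obstruct this inversion and the analytic continuation of the cutoff resolvent across the real axis.

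The hardest step, I expect, will be the simultaneous construction of the Carleman weight $\text{\textgreek{f}}$ so that all three geometric features, namely the possibly complicated trapped set, the red-shift layer near $\mathcal{H}^+$, and the ergoregion, are reconciled into a single globally defined weight, with all error terms absorbable by the positive commutator plus the horizon term, and with the threshold $\text{\textgreek{e}}$ in (\ref{eq:Epsilon}) depending only on the rest of the geometry and not on $\text{\textgreek{w}}$. A related subtlety is the passage from the frequency-localized resolvent bound back to the time domain: because $\text{\textgreek{y}}$ only has bounded (not decaying) energy a priori, one cannot directly Fourier-transform in $t$, and the argument must be run on a family of time-cutoffs $\text{\textgreek{q}}(t/T)\text{\textgreek{y}}$ with carefully tracked commutator errors, with the resulting estimate optimized in $T$ at the end to yield the logarithmic rate. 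The $r^p$-method for the low-frequency tail and the Carleman / contour-shift scheme for the high-frequency logarithmic loss must then be combined into the single estimate (\ref{eq:BasicTheorem}) via a low/high frequency splitting of the data at scale $\text{\textgreek{w}}\sim 1/\log(2+\text{\textgreek{t}})$.
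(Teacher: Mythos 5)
Your proposal follows a genuinely different route from the paper's proof. You propose passing to the stationary resolvent $P(\text{\textgreek{w}})^{-1}$ on a slice, proving an exponential cutoff resolvent bound on a strip, and then running a contour-deformation/Phragm\'en--Lindel\"of argument. The paper never forms the stationary resolvent: it instead performs a physical-space time cut-off $\text{\textgreek{y}}_{t^*}=h_{t^*}\text{\textgreek{y}}$, decomposes $\text{\textgreek{y}}_{t^*}$ into frequency-localized pieces $\text{\textgreek{y}}_{0},\text{\textgreek{y}}_{k},\text{\textgreek{y}}_{\ge\text{\textgreek{w}}_{+}}$, proves a spacetime Carleman estimate and a spacetime integrated local energy decay estimate for each $\text{\textgreek{y}}_{k}$, and then interpolates. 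The physical-space route avoids having to set up the resolvent on a black hole slice (which boundary conditions on $\text{\textgreek{H}}^{+}$?), avoids invoking any meromorphic/analytic continuation (which Assumption~\hyperref[Assumption 4]{4} does not provide), and avoids defining function spaces adapted to a non-coercive energy in the ergoregion. Your route would be cleaner in the product case treated by Rodnianski--Tao, but in the present generality the paper's purely Lorentzian argument sidesteps several delicate spectral-theoretic issues that your sketch does not address.

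There is one genuine gap that is present regardless of which route you take. A Carleman estimate with an $r$-increasing weight $e^{\text{\textgreek{f}}/h}$ produces a boundary term near $r\sim R$ whose weight $e^{\text{\textgreek{f}}(R)/h}$ is \emph{larger} than any weight in the bulk $\{r\le R\}$; unlike the horizon-side boundary terms, this cannot be absorbed into the left-hand side, and it cannot simply be bounded by the initial data either (naively you would lose the full exponential factor). Burq disposes of this term by matching to exact spherical Bessel asymptotics outside a large ball, which is only possible because he assumes the operator is exactly $\text{\textgreek{D}}$ there; that assumption fails here, since the metric is only $O_4(r^{-1-a})$ close to flat. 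The paper's resolution, which your proposal does not mention, is the ODE lemma of Rodnianski--Tao applied to the ``spherical energies'' $\mathcal{\mathscr{M}},\mathcal{\mathscr{F}},\mathcal{\mathscr{P}}$ on the cylinders $\{r=\text{\textgreek{r}}\}$: this dichotomy argument shows that the boundary term at $r\sim R$ either decays exponentially fast enough to be absorbed, or is already controlled by the initial energy. Your proposal also treats the low-frequency regime with a ``Fredholm argument after subtracting a Euclidean parametrix,'' which is too vague to assess: in the presence of a horizon and small ergoregion the paper needs a specific auxiliary current (Section~\ref{sub:LowFrequencies}), in which Assumption~\hyperref[Assumption 3]{3} on the smallness of the ergoregion and the red-shift current enter quantitatively; it is not merely the absence of a zero resonance that is at stake. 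Finally, the optimization of the frequency threshold should read $\text{\textgreek{w}}_{+}\sim\log(2+\text{\textgreek{t}})$ (the threshold grows, so that the high-frequency remainder $\lesssim\text{\textgreek{w}}_{+}^{-2m}$ decays), not $\text{\textgreek{w}}\sim 1/\log(2+\text{\textgreek{t}})$ as stated.
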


\subsection{Corollaries of Theorem \ref{thm:Theorem}}

As a first corollary of Theorem \ref{thm:Theorem}, using \cite{Moschidisc},
we will upgrade the statement of Theorem \ref{thm:Theorem} itself
to a logarithmic decay estimate for the energy of a solution $\text{\textgreek{y}}$
to (\ref{eq:WaveEquation}) with respect to hyperboloidal hypersurfaces
terminating at $\mathcal{I}^{+}$:
\begin{cor}
\label{cor:Corollary}\textbf{(Logarithmic decay of the energy through a hyperboloidal foliation).}
Let $(\mathcal{M},g)$, $\text{\textgreek{S}}$, $\mathcal{D}$ and
$t$ be as in the statement of Theorem \ref{thm:Theorem}. Let $S_{0}$
be any smooth hyperboloidal hypersurface terminating at $\mathcal{I}^{+}$
(according to the definition under Assumption \hyperref[Assumption 1]{1}),
intersecting transversally $\mathcal{H}^{+}\cap J^{+}(\text{\textgreek{S}})$
(if non empty), such that $S_{0}\subset J^{+}(\{t=0\})$. Let also
$S_{t}$ denote the image of $S_{0}$ under the flow of $T$ for time
$t>0$. Then, for any $m\in\mathbb{N}$, there exists a positive constant
$C_{m}>0$ such that every smooth solution $\text{\textgreek{y}}:J^{+}(\text{\textgreek{S}})\cap\mathcal{D}\rightarrow\mathbb{C}$
to the wave equation $\square_{g}\text{\textgreek{y}}=0$ satisfies
the following estimate for any $\text{\textgreek{t}}>0$:

\begin{equation}
\int_{S_{\text{\textgreek{t}}}}J_{\text{\textgreek{m}}}^{N}(\text{\textgreek{y}})n_{S}^{\text{\textgreek{m}}}\le\frac{C_{m}}{\{\log(2+\text{\textgreek{t}})\}^{2m}}\Big(\int_{\{t=0\}}r^{\text{\textgreek{d}}_{0}}J_{\text{\textgreek{m}}}^{N}(\text{\textgreek{y}})n^{\text{\textgreek{m}}}+\sum_{j=0}^{m}\int_{\{t=0\}}J_{\text{\textgreek{m}}}^{N}(T^{j}\text{\textgreek{y}})n^{\text{\textgreek{m}}}\Big),\label{eq:BasicCorollary}
\end{equation}
 $n_{S}^{\text{\textgreek{m}}}$ being the future directed normal
to the leaves of the foliation $S_{\text{\textgreek{t}}}$ and $n^{\text{\textgreek{m}}}$
being the future directed normal to the leaves of the foliation $\{t=const\}$.
\end{cor}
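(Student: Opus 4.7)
The plan is to combine Theorem \ref{thm:Theorem}, which gives logarithmic decay for the local energy on $\{t = \tau\} \cap \{r \le R_1\}$, with the $r^p$-weighted energy hierarchy established in \cite{Moschidisc} for the class of asymptotically flat spacetimes under consideration. The role of the $r^p$-hierarchy is to transfer bounded-region energy decay into control over the full hyperboloidal flux on $S_\tau$, whose end terminates at $\mathcal{I}^+$ and is therefore well suited to propagating $r$-weighted energies forward, an operation that is not available on the asymptotically timelike slices $\{t = \tau\}$.

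The first step is to invoke the $r^p$-weighted estimate of \cite{Moschidisc}: there exist $R_* > 0$ and $p_0 > 0$ such that for $p \in (0, p_0]$ and $0 \le \tau_1 \le \tau_2$,
\begin{equation*}
\int_{S_{\tau_2} \cap \{r \ge R_*\}} r^p\, J^N_\mu n_S^\mu \le C_p \int_{S_{\tau_1} \cap \{r \ge R_*/2\}} r^p\, J^N_\mu n_S^\mu + C_p \int_{\tau_1}^{\tau_2} \int_{S_s \cap \{r \le R_*\}} J^N_\mu n_S^\mu\, ds.
\end{equation*}
Taking $p = \delta_0$, the spacetime error term is controlled by the bulk of the local energy, which by Theorem \ref{thm:Theorem} is integrable in time via its $\tau^{-\delta_0}$ part. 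This propagates the weighted initial energy $\int_{\{t=0\}} r^{\delta_0}\, J^N_\mu n^\mu$ uniformly to every later hyperboloid $S_\tau$, up to an additive contribution involving the higher-order energies $\sum_j \int_{\{t=0\}} J^N_\mu(T^j\psi)n^\mu$ that already appear on the right-hand side of \eqref{eq:BasicCorollary}.

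The second step is to split the flux through $S_\tau$ at a radius $R(\tau)$ to be chosen:
\begin{equation*}
\int_{S_\tau} J^N_\mu n_S^\mu = \int_{S_\tau \cap \{r \le R(\tau)\}} J^N_\mu n_S^\mu + \int_{S_\tau \cap \{r \ge R(\tau)\}} J^N_\mu n_S^\mu.
\end{equation*}
The far piece is bounded by $R(\tau)^{-\delta_0}\int_{S_\tau} r^{\delta_0} J^N_\mu n_S^\mu$, and by the previous step this is controlled by the weighted and higher-order initial energies. The near piece is compared, via a domain-of-dependence argument and $T$-translation of $S_0$, to $\int_{\{t = \tau'\} \cap \{r \le R'\}} J^N_\mu n^\mu$ with $\tau' \sim \tau$ and $R'$ polynomial in $R(\tau)$; Theorem \ref{thm:Theorem} applied to this slice yields a bound of order $C_{m,\delta_0}(R(\tau))\, \{\log(2+\tau)\}^{-2m}$ times the higher-order energies, plus a negligible $\tau^{-\delta_0}$ contribution. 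Choosing $R(\tau) = \{\log(2+\tau)\}^{\alpha}$ with $\alpha$ sufficiently large then balances the two contributions and produces the claimed $\{\log(2+\tau)\}^{-2m}$ rate, after absorbing the $R$-dependence of the constant into an $m$ slightly larger than prescribed and invoking Theorem \ref{thm:Theorem} at this larger $m$.

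The main obstacle will be closing the coupling between the two estimates without a net loss: the bulk error term that feeds from Theorem \ref{thm:Theorem} into the $r^p$-hierarchy must be integrable in $\tau$, which forces one to rely on the $\tau^{-\delta_0}$ piece of \eqref{eq:BasicTheorem} rather than the bare logarithmic piece, and one must track that $C_{m,\delta_0}(R)$ grows at most polynomially in $R$ so that its product with $R(\tau)^{-\delta_0}$ or with $\{\log(2+\tau)\}^{-2m}$ remains acceptable under $R(\tau) = \{\log(2+\tau)\}^\alpha$. Both ingredients are available from the explicit dependence in \eqref{eq:BasicTheorem} and from the structure of the $r^p$-estimates of \cite{Moschidisc}, but the bookkeeping between the logarithmic rate at finite $r$ and the polynomial $r$-weight at infinity is the delicate point of the argument.
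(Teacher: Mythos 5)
The linchpin of your argument is the claim that the $r^{\delta_0}$-weighted flux $\int_{S_\tau} r^{\delta_0}\,J^{N}_{\mu}(\text{\textgreek{y}})\,n_S^{\mu}$ stays uniformly bounded in $\tau$, with the bulk error from the $r^p$-hierarchy controlled by ``the integrable $\tau^{-\delta_0}$ part'' of Theorem \ref{thm:Theorem}. This is not correct. The local-energy bound of Theorem \ref{thm:Theorem} has the shape $\{\log(2+\tau)\}^{-2m}+\tau^{-\delta_0}$ with $0<\delta_0\le 1$, and \emph{neither} term is integrable on $[1,\infty)$: in particular $\int_{1}^{\infty}\{\log(2+s)\}^{-2m}\,ds=\infty$ for every $m$, and $\int_{1}^{\infty} s^{-\delta_0}\,ds=\infty$ for $\delta_0\le 1$. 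Propagating the $r^p$-weighted energy forward therefore accumulates a bulk error that grows roughly like $\int_{0}^{\tau}\{\log(2+s)\}^{-2m}\,ds\sim\tau\{\log\tau\}^{-2m}$, so the weighted flux through $S_\tau$ is not uniformly bounded. With $R(\tau)=\{\log(2+\tau)\}^{\alpha}$ your far-field piece is then bounded only by $\{\log\tau\}^{-\alpha\delta_0}\cdot\tau\{\log\tau\}^{-2m}$, which diverges as $\tau\to\infty$ for any choice of $\alpha$; the balancing step does not close.

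There is also a secondary issue that you flag but do not resolve: you need $C_{m,\delta_0}(R_1)$ in Theorem \ref{thm:Theorem} to grow at most polynomially in $R_1$ while you send $R(\tau)\to\infty$. Nothing in the proof justifies this. The Carleman weights used in Section \ref{sub:CarlemannEstimates} are of the form $e^{2sw}$ with $w=e^{\lambda\text{\textgreek{f}}}$ and $\lambda=\lambda(R)$ chosen large in terms of $R$, so the implied constants can easily be super-exponential in $R$; the paper deliberately fixes a single $R$-scale (and a single $\text{\textgreek{w}}_0$) once and never sends it to infinity inside a log-decay estimate. The paper's proof of the corollary avoids both pitfalls: instead of a pointwise bound on the weighted flux, it proves an \emph{integrated} bound $\int_{t'}^{t''}f(s)\,ds\lesssim_m (A+B)\int_{t'}^{t''}\{\log(2+s)\}^{-2m}\,ds$ for $f(\tau)=\int_{S_\tau\cap\{r\ge R_1\}}J^N_\mu n_S^\mu$ (via the $r^p$-estimate, a Hardy inequality to absorb $|\text{\textgreek{y}}|^2$ on the fixed-width annulus, and Theorem \ref{thm:Theorem} on the resulting bounded region of fixed radius), extracts a good dyadic sequence of times by a pigeonhole argument, and then uses the energy-boundedness Assumption \hyperref[Assumption 4]{4} to upgrade the dyadic bound to all times. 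This circumvents the need for an integrable bulk error, which is exactly what logarithmic decay cannot provide.
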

The proof of this corollary is a direct consequence of Theorem \ref{thm:Theorem},
combined with the $r^{p}$-weighted energy method of \cite{Moschidisc}.
It is presented in Section \ref{sec:Proof-of-corollary}.

The quantitative decay rate (\ref{eq:BasicCorollary}) also allows
us to infer that, for solutions $\text{\textgreek{y}}$ to the wave
equation which have initial data with only finite initial $J^{N}-$energy
(and thus with initial data with not necessarily enough decay in $r$
for Theorem \ref{thm:Theorem} to apply), the $J^{N}-$energy of $\text{\textgreek{y}}$
with respect to a foliation of hyperboloidal hypersurfaces terminating
at $\mathcal{I}^{+}$ still decays to $0$, albeit in a non quantitative
manner. In particular, we can establish the following result:
\begin{cor}
\label{cor:CorollaryQualititative}\textbf{(Non-quantitative hyperboloidal decay in the energy class).}
Let $(\mathcal{M},g)$, $\text{\textgreek{S}}$, $\mathcal{D}$ and
$t$ be as in the statement of Theorem \ref{thm:Theorem}. Let $S_{0}$
be any smooth hyperboloidal hypersurface terminating at $\mathcal{I}^{+}$
(according to the definition under Assumption \hyperref[Assumption 1]{1}),
intersecting transversally $\mathcal{H}^{+}\cap J^{+}(\text{\textgreek{S}})$
(if non-empty), such that $S_{0}\subset J^{+}(\{t=0\})$. Let also
$S_{t}$ denote the image of $S_{0}$ under the flow of $T$ for time
$t>0$. Then, for every solution $\text{\textgreek{y}}:J^{+}(\text{\textgreek{S}})\cap\mathcal{D}\rightarrow\mathbb{C}$
to the wave equation $\square_{g}\text{\textgreek{y}}=0$ on $(\mathcal{D},g)$
with $\int_{t=0}J_{\text{\textgreek{m}}}^{N}(\text{\textgreek{y}})n^{\text{\textgreek{m}}}<\infty$,
$n^{\text{\textgreek{m}}}$ being the future directed normal to the
leaves of the foliation $\{t=const\}$, the following is true:

\begin{equation}
\lim_{\text{\textgreek{t}}\rightarrow+\infty}\int_{S_{\text{\textgreek{t}}}}J_{\text{\textgreek{m}}}^{N}(\text{\textgreek{y}})n_{S}^{\text{\textgreek{m}}}=0,\label{eq:Corollary2}
\end{equation}
 $n_{S}^{\text{\textgreek{m}}}$ being the future directed normal
to the leaves of the foliation $S_{\text{\textgreek{t}}}$.

If the stationary vector field $T$ is non spacelike in $\mathcal{D}$
and timelike away from the future event horizon $\mathcal{H}^{+}$,
then 
\begin{equation}
\lim_{\text{\textgreek{t}}\rightarrow+\infty}\int_{S_{\text{\textgreek{t}}}}J_{\text{\textgreek{m}}}^{T}(\text{\textgreek{y}})n_{S}^{\text{\textgreek{m}}}=0\label{eq:TenergyQualitativedecayrate}
\end{equation}
 also holds for solutions $\text{\textgreek{y}}$ to $\square_{g}\text{\textgreek{y}}=0$
satisfying merely $\int_{t=0}J_{\text{\textgreek{m}}}^{T}(\text{\textgreek{y}})n^{\text{\textgreek{m}}}<\infty$. 
\end{cor}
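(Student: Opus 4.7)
The plan is to reduce the statement to Corollary \ref{cor:Corollary} via a density argument in the energy class. Given a solution $\text{\textgreek{y}}$ of $\square_{g}\text{\textgreek{y}}=0$ with $\int_{\{t=0\}}J_{\mu}^{N}(\text{\textgreek{y}})n^{\mu}<\infty$, smooth initial data with compact support on the manifold-with-boundary $\text{\textgreek{S}}\cap\mathcal{D}$ are dense in the completion of the smooth finite-$J^{N}$-energy initial data with respect to the $J^{N}$-energy norm, by standard mollification-and-truncation arguments (the compactness of $\text{\textgreek{S}}\cap\mathcal{H}^{+}$ ensuring that no issue arises at the horizon boundary). Hence for every $\text{\textgreek{e}}>0$ one can find a smooth solution $\text{\textgreek{y}}^{(\text{\textgreek{e}})}$ of the wave equation with smooth compactly supported initial data on $\text{\textgreek{S}}\cap\mathcal{D}$ such that the $J^{N}$-energy of $\text{\textgreek{y}}-\text{\textgreek{y}}^{(\text{\textgreek{e}})}$ on $\{t=0\}$ is at most $\text{\textgreek{e}}$.

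Compactly supported smooth initial data trivially satisfy the weighted hypotheses of Corollary \ref{cor:Corollary}: both $\int_{\{t=0\}}r^{\text{\textgreek{d}}_{0}}J_{\mu}^{N}(\text{\textgreek{y}}^{(\text{\textgreek{e}})})n^{\mu}$ and $\sum_{j=0}^{m}\int_{\{t=0\}}J_{\mu}^{N}(T^{j}\text{\textgreek{y}}^{(\text{\textgreek{e}})})n^{\mu}$ are finite for every $\text{\textgreek{d}}_{0}\in(0,1]$ and $m\in\mathbb{N}$, since $T^{j}\text{\textgreek{y}}^{(\text{\textgreek{e}})}|_{t=0}$ can be expressed, through repeated use of $\square_{g}\text{\textgreek{y}}^{(\text{\textgreek{e}})}=0$, as a spatial differential operator applied to the initial data and hence is again smooth and compactly supported. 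Corollary \ref{cor:Corollary} then yields $\int_{S_{\text{\textgreek{t}}}}J_{\mu}^{N}(\text{\textgreek{y}}^{(\text{\textgreek{e}})})n_{S}^{\mu}\rightarrow0$ as $\text{\textgreek{t}}\rightarrow+\infty$ for each fixed $\text{\textgreek{e}}$. Using linearity and the triangle inequality for the seminorm $\text{\textgreek{f}}\mapsto\big(\int_{S_{\text{\textgreek{t}}}}J_{\mu}^{N}(\text{\textgreek{f}})n_{S}^{\mu}\big)^{1/2}$, (\ref{eq:Corollary2}) reduces to the following uniform-in-$\text{\textgreek{t}}$ hyperboloidal energy boundedness:
\begin{equation*}
\int_{S_{\text{\textgreek{t}}}}J_{\mu}^{N}(\text{\textgreek{f}})n_{S}^{\mu}\le C\int_{\{t=0\}}J_{\mu}^{N}(\text{\textgreek{f}})n^{\mu}
\end{equation*}
for every smooth solution $\text{\textgreek{f}}$ of $\square_{g}\text{\textgreek{f}}=0$ with finite initial $J^{N}$-energy, with $C$ depending only on the geometry of $(\mathcal{D},g)$ and on the fixed choice of $S_{0}$; applying this bound to $\text{\textgreek{f}}=\text{\textgreek{y}}-\text{\textgreek{y}}^{(\text{\textgreek{e}})}$ and then sending $\text{\textgreek{e}}\rightarrow0$ completes the proof of (\ref{eq:Corollary2}).

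I would establish the required hyperboloidal boundedness by applying the divergence theorem to $J^{N}(\text{\textgreek{f}})$ in the spacetime region bounded by $\{t=0\}$, $S_{\text{\textgreek{t}}}$, and the portion of $\mathcal{H}^{+}$ trapped between them: the horizon flux is non-negative by (\ref{eq:PositiveKN}), the bulk term $K^{N}(\text{\textgreek{f}})$ is supported in the bounded set $\{r\le2r_{0}\}$ (since $N\equiv T$ outside this region and $T$ is Killing) and is absorbed using Assumption \hyperref[Assumption 4]{4} integrated in $t$ over a bounded $t$-interval, while the contribution from $S_{\text{\textgreek{t}}}\cap\{r\gg1\}$ is handled by switching to the Killing current $J^{T}$ (which coincides with $J^{N}$ for large $r$ and is exactly conserved) and comparing with the corresponding truncation of the flux through $\{t=0\}$ via a standard asymptotically-flat cutoff estimate. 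The second claim of the corollary follows by running the same scheme with $J^{T}$ in place of $J^{N}$: when $T$ is Killing and non-spacelike on $\mathcal{D}$, the $J^{T}$-flux through $\mathcal{H}^{+}$ is automatically non-negative and $J^{T}$ is exactly conserved, so the hyperboloidal boundedness $\int_{S_{\text{\textgreek{t}}}}J_{\mu}^{T}(\text{\textgreek{f}})n_{S}^{\mu}\le\int_{\{t=0\}}J_{\mu}^{T}(\text{\textgreek{f}})n^{\mu}$ follows directly from the divergence theorem, and for the compactly supported approximants the $J^{T}$-flux through $S_{\text{\textgreek{t}}}$ decays because it is pointwise dominated by the $J^{N}$-flux, which decays by the first part.

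The main obstacle is the uniform hyperboloidal $J^{N}$-energy boundedness: since $S_{\text{\textgreek{t}}}$ terminates at future null infinity and no weighted initial norm of $\text{\textgreek{f}}$ is available, the naive divergence identity produces a potentially unbounded radiation flux through $\mathcal{I}^{+}$ between $\{t=0\}$ and $S_{\text{\textgreek{t}}}$, and one must intertwine Assumption \hyperref[Assumption 4]{4} in the near-horizon region with the exact conservation of $J^{T}$ in the asymptotically flat region in order to close the estimate with a $\text{\textgreek{t}}$-independent constant.
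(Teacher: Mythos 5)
Your proposal follows the same route as the paper: approximate the finite-$J^{N}$-energy data by smooth compactly supported data $\tilde{\text{\textgreek{y}}}$ (which satisfy all the weighted hypotheses of Corollary \ref{cor:Corollary} trivially), apply Corollary \ref{cor:Corollary} to $\tilde{\text{\textgreek{y}}}$, and conclude via the triangle inequality once one has a $\text{\textgreek{t}}$-independent hyperboloidal boundedness estimate $\int_{S_{\text{\textgreek{t}}}}J_{\text{\textgreek{m}}}^{N}(\text{\textgreek{y}}-\tilde{\text{\textgreek{y}}})n_{S}^{\text{\textgreek{m}}}\lesssim\int_{\{t=0\}}J_{\text{\textgreek{m}}}^{N}(\text{\textgreek{y}}-\tilde{\text{\textgreek{y}}})n^{\text{\textgreek{m}}}$; the paper asserts the latter with exactly the one-line justification you invoke (Assumption \hyperref[Assumption 4]{4} combined with $J^{T}$-conservation), so you have correctly identified the crux. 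The $J^{T}$ variant is also treated as you describe.

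Where your sketch of the hyperboloidal boundedness step goes astray is in the order of operations: if you apply the divergence theorem to $J^{N}$ directly in the region between $\{t=0\}$ and $S_{\text{\textgreek{t}}}$, the indefinite-sign part of the bulk $K^{N}$ (supported in the fixed annulus $\{r_{0}\le r\le2r_{0}\}$) is integrated over a $t$-interval of length comparable to $\text{\textgreek{t}}$, which is \emph{not} uniformly bounded; Assumption \hyperref[Assumption 4]{4} supplies only slice-wise boundedness, not integrated-in-$t$ control, so the resulting bound grows linearly in $\text{\textgreek{t}}$. The right manoeuvre is to first use Assumption \hyperref[Assumption 4]{4} to pass from $\{t=0\}$ to $\{t=\text{\textgreek{t}}-c\}$, with $c$ a $\text{\textgreek{t}}$-independent constant chosen so that $S_{\text{\textgreek{t}}}\cap\{r\lesssim1\}\subset\{\text{\textgreek{t}}-c\le t\le\text{\textgreek{t}}+c\}$, and only then apply the divergence theorem between $\{t=\text{\textgreek{t}}-c\}$ and $S_{\text{\textgreek{t}}}$: the near-horizon bulk term is then over a genuinely bounded $t$-interval (uniform in $\text{\textgreek{t}}$ by $T$-invariance), while in the far region one switches to the conserved $J^{T}$ current exactly as in the remark following Assumption \hyperref[Assumption 4]{4}. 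One small attribution slip as well: the non-negativity of the horizon flux $\int_{\mathcal{H}^{+}}J_{\text{\textgreek{m}}}^{N}n_{\mathcal{H}^{+}}^{\text{\textgreek{m}}}$ comes from $N$ being future-timelike and $n_{\mathcal{H}^{+}}$ future-null (the dominant energy condition for $T_{\text{\textgreek{m}\textgreek{n}}}$), not from (\ref{eq:PositiveKN}), which is a positivity statement about the bulk current $K^{N}$.
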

The proof of Corollary \ref{cor:CorollaryQualititative} is presented
in Section \ref{sec:Proof-of-Corollary2}.

In spacetimes $(\mathcal{M},g)$ where the stationary vector field
$T$ is everywhere non spacelike on the domain of outer communications
$\mathcal{D}$, and timelike away from the horizon $\mathcal{H}$,%
\footnote{In this case, the $J^{T}-$energy is coercive.%
} the qualitative decay statement (\ref{eq:TenergyQualitativedecayrate})
for solutions to the wave equation with finite initial $J^{T}$-energy
readily leads to a few interesting scattering results. Before stating
these results, we will first define the notion of the \emph{future
radiation field }on the spacetimes under consideration.

Repeating the proof of Theorem 7.1 of \cite{Moschidisc} for spacetimes
$(\mathcal{M},g)$ satisfying Assumption \hyperref[Assumption 1]{1}
on asymptotic flatness,%
\footnote{The only difference between spacetimes satisfying Assumption \hyperref[Assumption 1]{1}
and the ones considered in the statement of Theorem 7.1 of \cite{Moschidisc}
is in the regularity of the metric $g$ near $\mathcal{I}^{+}$.%
} any smooth solution $\text{\textgreek{y}}$ to the wave equation
$\square_{g}\text{\textgreek{y}}=0$ on $\mathcal{D}$ (the domain
of outer communications of an asymptotically flat end of $\mathcal{M}$)
with compactly supported initial data on $\text{\textgreek{S}}\cap\mathcal{D}$
($\text{\textgreek{S}}$ being a Cauchy hypersurface of $\mathcal{M}$
according to Assumption \hyperref[Assumption 1]{1}) gives rise to
a well defined radiation field $\text{\textgreek{Y}}_{\mathcal{I}^{+}}$
on the future null infinity $\mathcal{I}^{+}$ of $(\mathcal{D},g)$.
The future radiation field $\text{\textgreek{Y}}_{\mathcal{I}^{+}}$
is defined as the limit in the $(u,r,\text{\textgreek{sv}})$ coordinate
system in the region $\{r\gg1\}$ (see Section \ref{sec:ConstructionOfTheDoubleNullCoordinateSystem}
for how to construct this coordinate system in a spacetime with the
asymptotics (\ref{eq:metric}) for the metric) as follows: 
\begin{defn*}
For any smooth solution $\text{\textgreek{y}}$ to the wave equation
$\square_{g}\text{\textgreek{y}}=0$ on $\mathcal{D}$ with compactly
supported initial data on $\text{\textgreek{S}}\cap\mathcal{D}$,
the future radiation field $\text{\textgreek{Y}}_{\mathcal{I}^{+}}$
on $\mathcal{I}^{+}$ is defined as the limit (in each connected component
of $\mathcal{I}_{as}$) 
\begin{equation}
\text{\textgreek{Y}}_{\mathcal{I}^{+}}(u,\text{\textgreek{sv}})=\lim_{r\rightarrow+\infty}r^{\frac{d-1}{2}}\text{\textgreek{y}}(u,r,\text{\textgreek{sv}}),
\end{equation}
 where the convergence refers to the $H_{loc}^{1}(\mathbb{R}\times\mathbb{S}^{d-1},du^{2}+g_{\mathbb{S}^{d-1}})$
topology. 
\end{defn*}
The $J^{T}$-energy flux of $\text{\textgreek{y}}$ on $\mathcal{I}^{+}$
is then defined in the following way: 
\begin{equation}
\int_{\mathcal{I}^{+}}J_{\text{\textgreek{m}}}^{T}(\text{\textgreek{y}})n_{\mathcal{I}^{+}}^{\text{\textgreek{m}}}\doteq\sum_{components\, of\,\mathcal{I}^{+}}\int_{-\infty}^{\infty}\int_{\mathbb{S}^{d-1}}\big(\partial_{u}\text{\textgreek{Y}}_{\mathcal{I}^{+}}(u,\text{\textgreek{sv}})\big)^{2}\, d\text{\textgreek{sv}}du.\label{eq:EnergyI+}
\end{equation}

A priori, the energy (\ref{eq:EnergyI+}) might be infinite. However,
if the vector field $T$ is everywhere non spacelike on $\mathcal{D}$,
and timelike away from the horizon $\mathcal{H}$, then the conservation
of the $J^{T}$-current easily implies that 
\begin{equation}
\int_{\mathcal{I}^{+}}J_{\text{\textgreek{m}}}^{T}(\text{\textgreek{y}})n_{\mathcal{I}^{+}}^{\text{\textgreek{m}}}\le\int_{t=0}J_{\text{\textgreek{m}}}^{T}(\text{\textgreek{y}})n^{\text{\textgreek{m}}}.\label{eq:BoundI+}
\end{equation}
Through the conservation of the $J^{T}$-current one can bound the
energy flux on the future horizon by the initial energy of $\text{\textgreek{y}}$.
In particular, one can bound: 
\begin{equation}
\int_{\mathcal{H}^{+}\cap\{t\ge0\}}J_{\text{\textgreek{m}}}^{T}(\text{\textgreek{y}})n_{\mathcal{H}^{+}}^{\text{\textgreek{m}}}+\int_{\mathcal{I}^{+}}J_{\text{\textgreek{m}}}^{T}(\text{\textgreek{y}})n_{\mathcal{I}^{+}}^{\text{\textgreek{m}}}\le\int_{t=0}J_{\text{\textgreek{m}}}^{T}(\text{\textgreek{y}})n^{\text{\textgreek{m}}}.\label{eq:BoundHorizon}
\end{equation}
 (see Section \ref{sec:Notational-conventions} on integration over
null hypersurfaces).

By a standard density argument, one can define the future radiation
field $\text{\textgreek{Y}}_{\mathcal{I}^{+}}$ for solutions $\text{\textgreek{y}}$
of $\square_{g}\text{\textgreek{y}}=0$ satisfying merely $\int_{t=0}J_{\text{\textgreek{m}}}^{T}(\text{\textgreek{y}})n^{\text{\textgreek{m}}}<\infty$
as follows:
\begin{defn*}
For any solution $\text{\textgreek{y}}$ to the wave equation $\square_{g}\text{\textgreek{y}}=0$
on $\mathcal{D}$ with $H_{loc}^{1}$ initial data on $\text{\textgreek{S}}\cap\mathcal{D}$
satisfying $\int_{t=0}J_{\text{\textgreek{m}}}^{T}(\text{\textgreek{y}})n^{\text{\textgreek{m}}}<\infty$,
the future radiation field $\text{\textgreek{Y}}_{\mathcal{I}^{+}}$
is defined as the limit (in the topology defined by the energy norm
(\ref{eq:EnergyI+})) 
\begin{equation}
\text{\textgreek{Y}}_{\mathcal{I}^{+}}=\lim_{n}\text{\textgreek{Y}}_{n,\mathcal{I}^{+}},
\end{equation}
 where $\text{\textgreek{Y}}_{n,\mathcal{I}^{+}}$ is the future radiation
field of a sequence of smooth functions $\text{\textgreek{y}}_{n}$
solving $\square_{g}\text{\textgreek{y}}_{n}=0$ on $\mathcal{D}$
with $(\text{\textgreek{y}}_{n},T\text{\textgreek{y}}_{n})|_{\text{\textgreek{S}}\cap\mathcal{D}}$
compactly supported and approximating $(\text{\textgreek{y}},T\text{\textgreek{y}})|_{\text{\textgreek{S}}\cap\mathcal{D}}$
in the $\int_{t=0}J_{\text{\textgreek{m}}}^{T}(\cdot)n^{\text{\textgreek{m}}}$
norm as $n\rightarrow+\infty$. 
\end{defn*}
The existence and uniqueness of $\text{\textgreek{Y}}_{\mathcal{I}^{+}}$
in this case follows by applying (\ref{eq:BoundHorizon}) for the
sequence $\text{\textgreek{y}}_{n+1}-\text{\textgreek{y}}_{n}$.

The qualitative decay statement for the $J^{T}$-energy on hyperboloids
provided by Corollary \ref{cor:CorollaryQualititative} leads to the
following scattering result:
\begin{cor}
\label{cor:CorollaryScattering}\textbf{(Asymptotic completeness for spacetimes without ergoregion).}
Let $(\mathcal{M},g)$, $\text{\textgreek{S}}$, $\mathcal{D}$ and
$t$ be as in the statement of Theorem \ref{thm:Theorem}, and moreover
suppose that the stationary vector field $T$ on $\mathcal{D}$ is
everywhere non spacelike, and timelike away from the horizon $\mathcal{H}$.
Then, for every solution $\text{\textgreek{y}}:J^{+}(\text{\textgreek{S}})\cap\mathcal{D}\rightarrow\mathbb{R}$
to the wave equation $\square_{g}\text{\textgreek{y}}=0$ on $(\mathcal{D},g)$
with $\int_{t=0}J_{\text{\textgreek{m}}}^{T}(\text{\textgreek{y}})n^{\text{\textgreek{m}}}<\infty$,
$n^{\text{\textgreek{m}}}$ being the future directed normal to the
leaves of the foliation $\{t=const\}$, inequality (\ref{eq:BoundHorizon})
is actually an equality:

\begin{equation}
\int_{\mathcal{H}^{+}\cap\{t\ge0\}}J_{\text{\textgreek{m}}}^{T}(\text{\textgreek{y}})n_{\mathcal{H}^{+}}^{\text{\textgreek{m}}}+\int_{\mathcal{I}^{+}}J_{\text{\textgreek{m}}}^{T}(\text{\textgreek{y}})n_{\mathcal{I}^{+}}^{\text{\textgreek{m}}}=\int_{t=0}J_{\text{\textgreek{m}}}^{T}(\text{\textgreek{y}})n^{\text{\textgreek{m}}}.\label{eq:CorollaryScattering}
\end{equation}

Furthermore, for any scattering data set $(\text{\textgreek{y}}|_{\mathcal{H}^{+}},\text{\textgreek{Y}}_{\mathcal{I}^{+}})$
with $\int_{\mathcal{H}^{+}}J_{\text{\textgreek{m}}}^{T}(\text{\textgreek{y}})n_{\mathcal{H}^{+}}^{\text{\textgreek{m}}}+\int_{\mathcal{I}^{+}}J_{\text{\textgreek{m}}}^{T}(\text{\textgreek{y}})n_{\mathcal{I}^{+}}^{\text{\textgreek{m}}}<\infty$,
one can find a solution $\text{\textgreek{y}}$ to the wave equation
on $\mathcal{D}$ with $\int_{t=0}J_{\text{\textgreek{m}}}^{T}(\text{\textgreek{y}})n^{\text{\textgreek{m}}}<\infty$,
for which the restriction on $\mathcal{H}^{+}$ and the resulting
future radiation field coincide with the given scattering data.

Thus, any solution $\text{\textgreek{y}}$ to $\square_{g}\text{\textgreek{y}}=0$
with $\int_{t=0}J_{\text{\textgreek{m}}}^{T}(\text{\textgreek{y}})n^{\text{\textgreek{m}}}<\infty$
on $\mathcal{D}\cap J^{+}(\text{\textgreek{S}})$ is uniquely determined
by its scattering data on $\mathcal{H}^{+},\mathcal{I}^{+}$ (i.\,e.
by $\text{\textgreek{y}}|_{\mathcal{H}^{+}}$, $\text{\textgreek{Y}}_{\mathcal{I}^{+}}$).\end{cor}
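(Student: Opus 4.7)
The proof has three essentially independent parts: the energy identity \eqref{eq:CorollaryScattering}, the uniqueness of a solution with given scattering data, and the existence (backward scattering). My plan is to treat the identity first, derive uniqueness as an immediate corollary, and reduce existence to an approximation argument that uses the identity as a uniform bound.

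\textbf{Step 1: The energy identity.} I would start from a smooth solution $\text{\textgreek{y}}$ with compactly supported initial data on $\text{\textgreek{S}}\cap\mathcal{D}$, for which the radiation field $\text{\textgreek{Y}}_{\mathcal{I}^{+}}$ is classically defined. For the hyperboloidal foliation $\{S_{\text{\textgreek{t}}}\}$ of the statement, I would apply the divergence theorem to the $J^{T}$-current in the spacetime region $\mathcal{R}_{\text{\textgreek{t}},R}$ bounded by $\{t=0\}$ below, by $\mathcal{H}^{+}\cap J^{-}(S_{\text{\textgreek{t}}})\cap\{t\ge0\}$, by $S_{\text{\textgreek{t}}}\cap\{r\le R\}$, and by the timelike cylinder $\{r=R\}$ between these. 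Using $\nabla^{\text{\textgreek{m}}}J_{\text{\textgreek{m}}}^{T}=0$ and letting $R\to\infty$ (the cylinder flux converges to the flux on the part of $\mathcal{I}^{+}$ with $u\le\text{\textgreek{t}}$, by the definition of the radiation field and a Hardy-type computation in the $(u,r,\text{\textgreek{sv}})$ coordinates near $\mathcal{I}^{+}$) I obtain
\begin{equation}
\int_{\{t=0\}}J_{\text{\textgreek{m}}}^{T}(\text{\textgreek{y}})n^{\text{\textgreek{m}}}=\int_{S_{\text{\textgreek{t}}}}J_{\text{\textgreek{m}}}^{T}(\text{\textgreek{y}})n_{S}^{\text{\textgreek{m}}}+\int_{\mathcal{H}^{+}\cap\{0\le t,\, J^{-}(S_{\text{\textgreek{t}}})\}}J_{\text{\textgreek{m}}}^{T}(\text{\textgreek{y}})n_{\mathcal{H}^{+}}^{\text{\textgreek{m}}}+\int_{\mathcal{I}^{+}\cap\{u\le\text{\textgreek{t}}\}}J_{\text{\textgreek{m}}}^{T}(\text{\textgreek{y}})n_{\mathcal{I}^{+}}^{\text{\textgreek{m}}}.
\end{equation}
Each of the three terms on the right is non-negative since $T$ is non-spacelike on $\mathcal{D}$ and causal on $\mathcal{H}^{+}$. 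Letting $\text{\textgreek{t}}\to\infty$: the $S_{\text{\textgreek{t}}}$-term vanishes by Corollary \ref{cor:CorollaryQualititative} (equation \eqref{eq:TenergyQualitativedecayrate}), while the other two converge by monotone convergence, yielding \eqref{eq:CorollaryScattering} for smooth compactly supported data. A density argument together with \eqref{eq:BoundHorizon} extends the identity to every $\text{\textgreek{y}}$ with $\int_{t=0}J_{\text{\textgreek{m}}}^{T}(\text{\textgreek{y}})n^{\text{\textgreek{m}}}<\infty$.

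\textbf{Step 2: Uniqueness.} If $\text{\textgreek{y}}_{1},\text{\textgreek{y}}_{2}$ share the same scattering data then their difference $\text{\textgreek{y}}=\text{\textgreek{y}}_{1}-\text{\textgreek{y}}_{2}$ solves $\square_{g}\text{\textgreek{y}}=0$, has finite initial $J^{T}$-energy, and has vanishing horizon and radiation-field data. Applying \eqref{eq:CorollaryScattering} to $\text{\textgreek{y}}$ forces $\int_{\{t=0\}}J_{\text{\textgreek{m}}}^{T}(\text{\textgreek{y}})n^{\text{\textgreek{m}}}=0$; since $T$ is timelike off $\mathcal{H}$ this yields $\text{\textgreek{y}}\equiv0$ on $J^{+}(\text{\textgreek{S}})\cap\mathcal{D}$ by a standard unique continuation/density argument (the initial $J^{N}$-energy on a slightly truncated slice is then controlled, and well-posedness of the forward problem closes the argument).

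\textbf{Step 3: Existence of a solution realizing given scattering data.} For scattering data $(\text{\textgreek{y}}|_{\mathcal{H}^{+}},\text{\textgreek{Y}}_{\mathcal{I}^{+}})$ with finite energy, I would approximate by a sequence of smooth, compactly supported (in $t$ on $\mathcal{H}^{+}$, and in $u$ on $\mathcal{I}^{+}$) data $(f_{n},F_{n})$ converging in the energy norm. For each $n$, I would solve a backward characteristic initial value problem: the null pieces being (i) $\mathcal{H}^{+}\cap\{t\le T_{n}\}$ with data $f_{n}$, and (ii) an ingoing null hypersurface $\mathcal{C}_{R_{n}}=\{u=-R_{n}^{*}\}$ (for a large $R_{n}$) with data chosen so that $r^{(d-1)/2}\text{\textgreek{y}}$ matches $F_{n}$ on $\mathcal{C}_{R_{n}}$; one then evolves backwards to $\{t=0\}$. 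Standard well-posedness of the characteristic problem in the region $r\ge r_{0}$ for finite $T_{n}$, combined with the red-shift region $\{r\le r_{0}\}$ treated via Assumption \ref{enu:As2} played backward (which now acts as a ``blue-shift,'' giving only finite-in-time bounds, sufficient here since the data is compactly supported), produces a smooth solution $\text{\textgreek{y}}_{n}$ with initial data on $\{t=0\}$. The energy identity \eqref{eq:CorollaryScattering} applied to $\text{\textgreek{y}}_{n}$ (after slightly modifying the region of integration to accommodate $\mathcal{C}_{R_{n}}$ and taking $R_{n}\to\infty$) gives a uniform bound $\int_{\{t=0\}}J_{\text{\textgreek{m}}}^{T}(\text{\textgreek{y}}_{n})n^{\text{\textgreek{m}}}\le\|(f_{n},F_{n})\|^{2}$. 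Cauchy-ness of the sequence $\text{\textgreek{y}}_{n}|_{\{t=0\}}$ in the $J^{T}$-energy norm follows by applying the same identity to differences. The limit $\text{\textgreek{y}}_{\infty}$ provides the desired solution by forward evolution.

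\textbf{Main obstacle.} Step 1 and Step 2 are essentially bookkeeping plus the quantitative decay from Corollary \ref{cor:CorollaryQualititative}. The genuine technical difficulty is Step 3, specifically the construction of the backward solution from a chunk of $\mathcal{I}^{+}$: one must pass to the limit $R_{n}\to\infty$ in the characteristic problem, which requires $r^{p}$-weighted energy estimates adapted to the geometry near $\mathcal{I}^{+}$ in the \emph{backward} direction (where the usual $r^{p}$ hierarchy is reversed) to show that the data induced on a fixed hyperboloid by the backward evolution from $\mathcal{C}_{R_{n}}$ converges. This is where the asymptotic flatness asymptotics \eqref{eq:metric} and the machinery of \cite{Moschidisc} enter crucially, exactly as they did in deriving Corollary \ref{cor:Corollary}; the same $r^{p}$-method furnishes the converging weighted energies needed to identify the limiting radiation field with the prescribed $\text{\textgreek{Y}}_{\mathcal{I}^{+}}$.
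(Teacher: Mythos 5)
Your proposal is broadly aligned with the paper's strategy but diverges at the crucial existence step, and it is worth comparing the two.

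Steps 1 and 2 essentially match the paper. The paper also reduces the identity to smooth compactly supported data, applies the divergence theorem in a spacetime region bounded by $\{t=0\}$, $S_{\text{\textgreek{t}}}$, $\mathcal{H}^{+}$ and an approximating future boundary, then lets $\text{\textgreek{t}}\to\infty$ via Corollary \ref{cor:CorollaryQualititative}. One small difference: you close the finite region with a timelike cylinder $\{r=R\}$, whereas the paper uses the \emph{null} hypersurfaces $\mathcal{I}^{+}_{n,\text{\textgreek{t}}}=\partial J^{-}(S_{\text{\textgreek{t}}}\cap\{r\le R_{n}\})\setminus\mathcal{H}^{+}$. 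The null boundaries make the $R_{n}\to\infty$ limit trivial via dominated convergence (using the a priori bound (\ref{eq:ForDominatedConvergence}) and the finite-speed constraint (\ref{eq:UniformSmallnessEnergyFlux})), without the Hardy-type analysis of cylinder fluxes that your route requires. Your Step 2 (uniqueness) is correct and is implicit in the paper once the energy identity is read as the statement that the scattering map is an isometric embedding.

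Step 3 is where you take a genuinely different path, and this is also where your sketch has the most serious gaps. The paper never sets up a backward characteristic initial value problem with data on two null pieces. Instead it solves the \emph{flat} backward scattering problem (\ref{eq:FlatScatteringProblem}) in the far region (where $g$ is approximately Minkowski and the solution is controlled by classical conformal/$r^{p}$ methods), uses the resulting $\text{\textgreek{y}}_{fl}$ to prescribe data on a carefully chosen spacelike hypersurface $\bar{S}_{\text{\textgreek{e}}}$, and then solves forward/backward a standard mixed problem with this spacelike data together with the prescribed horizon data. The error $\square_{g}\text{\textgreek{y}}_{fl}$ is small in a weighted $L^{2}$ norm because $g-g_{fl}=O(r^{-1})$, and this is what makes the density argument close. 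Your sketch, by contrast, wants to pose backward characteristic data directly on $\mathcal{H}^{+}\cap\{t\le T_{n}\}$ and an outgoing cone $\{u=-R_{n}^{*}\}$ and pass $R_{n}\to\infty$. Two concrete issues: first, a constant-$u$ hypersurface is outgoing, not ingoing, so this is not the right proxy for $\mathcal{I}^{+}$ — you would want constant-$v$ cones, as in the paper's $\mathcal{I}^{+}_{n,\text{\textgreek{t}}}$; second, these two null pieces do not by themselves bound a region whose past they determine — you also need a spacelike cap (the role played by $\bar{S}_{\text{\textgreek{e}}}$), and without it the backward characteristic problem is not well posed. Finally, you are right that a direct limiting argument in $R_{n}$ would require backward $r^{p}$-weighted estimates that are not developed in the paper; the paper's flat-approximation device is precisely what lets it avoid building that machinery. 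Your plan is salvageable, but the construction of the trial solution needs to be reworked along the lines of the paper — solve the far-field scattering problem on Minkowski, then perturb — rather than posing a limit of double-characteristic problems.
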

\begin{rem*}
Let $(\mathcal{M},g)$ and $\mathcal{D}$ be as in the statement of
Theorem \ref{thm:Theorem} (now $(\mathcal{D},g)$ is allowed to have
a non-empty ergoregion), and assume that $(\mathcal{M},g)$ is also
axisymmetric with axisymmetric Killing field $\text{\textgreek{F}}$.
If the span of $\{T,\text{\textgreek{F}}\}$ is timelike everywhere
on $\mathcal{D}\backslash\mathcal{H}$, then Corollary \ref{cor:CorollaryScattering}
is valid when restricted to the class of axisymmetric solutions to
(\ref{eq:WaveEquation}) on $(\mathcal{M},g)$.
\end{rem*}
The proof of Corollary \ref{cor:CorollaryScattering} will be furnished
in Section \ref{sec:Proof-of-Corollary2}.

Let us notice that Corollary \ref{cor:CorollaryScattering} reproves
the classical asymptotic completeness result on Schwarzschild spacetime
(see \cite{Dimock1985,Dimock1986,Dimock1987,Nicolas1995}), and applies
on a large class of asymptotically flat spacetimes, including, for
example, the black hole exterior spacetimes constructed in Section
7 of \cite{DafRod6}, the black hole solutions of the $SU(2)$ Einstein-Yang
Mills equations of \cite{Smoller1993} and the spherically symmetric
spacetimes studied by Keir in \cite{Keir2014}. Furthermore, restricted
on the class of axisymmetric solutions of equation (\ref{eq:WaveEquation})
(see the remark below Corollary \ref{cor:CorollaryScattering}), it
also applies on the subsextremal Kerr exterior, the Emparan--Reall
black ring (see \cite{Emparan2002,Emparan2002a}) and the Elvang--Figueras
black Saturn (see \cite{Elvang2007}). These spacetimes were discussed
in Section \ref{sub:Examples-Assumptions}. 

Notice, however, that Corollary \ref{cor:CorollaryScattering} does
not apply on the general class of solutions to (\ref{eq:WaveEquation})
on spacetimes with an ergoregion, and, thus, it can not reprove the
results of \cite{Dafermos2014} on the subextremal Kerr family (with
Kerr parameters in the full subextremal range $|a|<M$) without axisymmetry.

\subsection{\label{sub:SketchOfProof}Sketch of the proof of Theorem \ref{thm:Theorem}}

In order to make the reasoning behind the subsequent proof of Theorem
\ref{thm:Theorem} more transparent, we will first present a sketch
of this proof on a heuristic level. The final step of the proof of
Theorem \ref{thm:Theorem} is the implementation of a frequency interpolation
scheme, similar to the one appearing already in \cite{Holzegel2013}. 

\medskip{}

\textbf{\emph{Frequency decomposition:}} Given any solution $\text{\textgreek{y}}$
to the wave equation $\square_{g}\text{\textgreek{y}}=0$ as in the
statement of Theorem \ref{thm:Theorem}, one starts by splitting it
into two parts, $\text{\textgreek{y}}_{\le\text{\textgreek{w}}_{+}}$
and $\text{\textgreek{y}}_{\ge\text{\textgreek{w}}_{+}}$, where $\text{\textgreek{y}}_{\le\text{\textgreek{w}}_{+}}$
has frequency support with respect to the $t$ coordinate in $\{|\text{\textgreek{w}}|\le\text{\textgreek{w}}_{+}\}$,
and similarly $\text{\textgreek{y}}_{\ge\text{\textgreek{w}}_{+}}$
has frequency support contained in $\{|\text{\textgreek{w}}|\ge\text{\textgreek{w}}_{+}\}$.
The parameter $\text{\textgreek{w}}_{+}>0$ will be determined later. 

Since the Killing vector field $T$ is not necessarily timelike near
$\mathcal{H}^{+}$ and we have not assumed that the red-shift vector
field $N$ satisfies some ``nice'' commutation properties with the
wave operator $\square_{g}$, the boundedness assumption for the energy
of $\text{\textgreek{y}}$ alone is not even enough to exclude the
pointwise exponential growth of \textgreek{y}. Thus, we will need
to perform a suitable cut-off in physical space in order for the Fourier
transform of $\text{\textgreek{y}}$ in $t$ to make sense. Such a
cut-off gives rise to well understood error terms in all the subsequent
estimates (these issues being conceptually identical to the ones appearing
in \cite{DafRod5}), and the treatment of these terms will be carried
out in a fashion similar to \cite{DafRod5} in Section \ref{sub:Frequency-cut-off}.
The boundedness assumption \hyperref[Assumption 4]{4} will play a
crucial role in estimating these error terms. But for the sake of
simplicity of the current sketch of the proof, let us momentarily
ignore the need for a physical space cut-off with all its corresponding
implications. 

\medskip{}

\textbf{\emph{The interpolation argument:}} The components $\text{\textgreek{y}}_{\ge\text{\textgreek{w}}_{+}}$
and $\text{\textgreek{y}}_{\le\text{\textgreek{w}}_{+}}$ of $\text{\textgreek{y}}$
also satisfy the wave equation, due to the stationarity of the metric
and the fact that a frequency cut-off corresponds to convolution with
a function of $t$ in physical space. One can hence use the boundedness
assumption \hyperref[Assumption 4]{4} for the high frequency part
$\text{\textgreek{y}}_{\ge\text{\textgreek{w}}_{+}}$ to conclude
that for any $\text{\textgreek{t}}>0$: 
\begin{equation}
\int_{\{t=\text{\textgreek{t}}\}\cap\{r\le R_{1}\}}J_{\text{\textgreek{m}}}^{N}(\text{\textgreek{y}}_{\ge\text{\textgreek{w}}_{+}})n^{\text{\textgreek{m}}}\le C\cdot\int_{\{t=0\}}J_{\text{\textgreek{m}}}^{N}(\text{\textgreek{y}})n^{\text{\textgreek{m}}}.\label{eq:SketchBoundedness}
\end{equation}
(Notice that the right hand side of (\ref{eq:SketchBoundedness})
contains the energy of $\text{\textgreek{y}}$ and not that of $\text{\textgreek{y}}_{\ge\text{\textgreek{w}}_{+}}$,
which is a result of the technical issues concerning the physical
space cut-off and the frequency decomposition that we will supress
in this section). Since the vector field $T$ is Killing, (\ref{eq:SketchBoundedness})
also holds for the pair $T^{m}\text{\textgreek{y}}_{\ge\text{\textgreek{w}}_{+}},T^{m}\text{\textgreek{y}}$
in place of $\text{\textgreek{y}}_{\ge\text{\textgreek{w}}_{+}},\text{\textgreek{y}}$,
and thus, due to the fact that the frequency support of $\text{\textgreek{y}}_{\ge\text{\textgreek{w}}_{+}}$
is contained in $\{|\text{\textgreek{w}}|\ge\text{\textgreek{w}}_{+}\}$,
one can prove a statement of the form

\begin{equation}
\int_{\{t=\text{\textgreek{t}}\}\cap\{r\le R_{1}\}}J_{\text{\textgreek{m}}}^{N}(\text{\textgreek{y}}_{\ge\text{\textgreek{w}}_{+}})n^{\text{\textgreek{m}}}\le\frac{C}{\text{\textgreek{w}}_{+}^{2m}}\sum_{j=0}^{m}\int_{\{t=0\}}J_{\text{\textgreek{m}}}^{N}(T^{j}\text{\textgreek{y}})n^{\text{\textgreek{m}}}.\label{eq:BoundednessHighFrequencies}
\end{equation}

As for the low frequency part $\text{\textgreek{y}}_{\le\text{\textgreek{w}}_{+}}$,
we will show that its local energy decays in time at a slow polynomial
rate:

\begin{equation}
\int_{\{t=\text{\textgreek{t}}\}\cap\{r\le R_{1}\}}J_{\text{\textgreek{m}}}^{N}(\text{\textgreek{y}}_{\le\text{\textgreek{w}}_{+}})n^{\text{\textgreek{m}}}\le\frac{C\cdot e^{C\text{\textgreek{w}}_{+}}}{\text{\textgreek{t}}^{\text{\textgreek{d}}_{0}}}\int_{\{t=0\}}J_{\text{\textgreek{m}}}^{N}(\text{\textgreek{y}})n^{\text{\textgreek{m}}}+\frac{C}{\text{\textgreek{t}}^{\text{\textgreek{d}}_{0}}}\int_{\{t=0\}}r^{\text{\textgreek{d}}_{0}}\cdot J_{\text{\textgreek{m}}}^{N}(\text{\textgreek{y}})n^{\text{\textgreek{m}}}.\label{eq:DecayLowFrequencies}
\end{equation}
 Notice already the exponential dependence of the constant of the
first term of the right hand side of (\ref{eq:DecayLowFrequencies})
on $\text{\textgreek{w}}_{+}$. Assuming for a moment that (\ref{eq:DecayLowFrequencies})
holds, the logarithmic decay of the local energy of $\text{\textgreek{y}}$
is deduced as follows: Combining (\ref{eq:BoundednessHighFrequencies})
with (\ref{eq:DecayLowFrequencies}), and recalling that $\text{\textgreek{y}}=\text{\textgreek{y}}_{\le\text{\textgreek{w}}_{+}}+\text{\textgreek{y}}_{\ge\text{\textgreek{w}}_{+}}$,
we obtain for any $\text{\textgreek{t}}>0$

\begin{equation}
\int_{\{t=\text{\textgreek{t}}\}\cap\{r\le R_{1}\}}J_{\text{\textgreek{m}}}^{N}(\text{\textgreek{y}})n^{\text{\textgreek{m}}}\le\frac{C\cdot e^{C\text{\textgreek{w}}_{+}}}{\text{\textgreek{t}}^{\text{\textgreek{d}}_{0}}}\int_{\{t=0\}}J_{\text{\textgreek{m}}}^{N}(\text{\textgreek{y}})n^{\text{\textgreek{m}}}+\frac{C}{\text{\textgreek{t}}^{\text{\textgreek{d}}_{0}}}\int_{\{t=0\}}r\cdot J_{\text{\textgreek{m}}}^{N}(\text{\textgreek{y}})n^{\text{\textgreek{m}}}+\frac{C}{\text{\textgreek{w}}_{+}^{2m}}\sum_{j=0}^{m}\int_{\{t=0\}}J_{\text{\textgreek{m}}}^{N}(T^{j}\text{\textgreek{y}})n^{\text{\textgreek{m}}}.\label{eq:InterpolationHeuristics}
\end{equation}
Thus, choosing $\text{\textgreek{w}}_{+}\sim\frac{1}{2C}\log(2+\text{\textgreek{t}})$
yields (if $\text{\textgreek{t}}\ge1)$ the desired logarithmic decay
estimate:

\begin{equation}
\int_{\{t=\text{\textgreek{t}}\}\cap\{r\le R_{1}\}}J_{\text{\textgreek{m}}}^{N}(\text{\textgreek{y}})n^{\text{\textgreek{m}}}\le\frac{C}{\{\log(2+\text{\textgreek{t}})\}^{2m}}\Big(\sum_{j=0}^{m}\int_{t=0}J_{\text{\textgreek{m}}}^{N}(T^{j}\text{\textgreek{y}})n^{\text{\textgreek{m}}}\Big)+\frac{C}{\text{\textgreek{t}}^{\text{\textgreek{d}}_{0}}}\int_{t=0}r^{\text{\textgreek{d}}_{0}}\cdot J_{\text{\textgreek{m}}}^{N}(\text{\textgreek{y}})n^{\text{\textgreek{m}}}.
\end{equation}
 
\begin{rem*}
In this interpolation procedure it is evident that, exactly as expected,
the ``slow'' log decay is caused by the high frequency part of $\text{\textgreek{y}}$,
since the low frequency one decays polynomially. Such an interpolation
scheme was also used in \cite{Holzegel2013} (see also \cite{Burq1998}).

\medskip{}

\end{rem*}
Thus, the proof will be complete after establishing a uniform decay
statement of the form (\ref{eq:DecayLowFrequencies}) for the low
frequency part $\text{\textgreek{y}}_{\le\text{\textgreek{w}}_{+}}$.

\smallskip{}

\textbf{\emph{Polynomial decay for $\text{\textgreek{y}}_{\le\text{\textgreek{w}}_{+}}$:}}
In order to obtain an estimate of the form (\ref{eq:DecayLowFrequencies}),
we will use the results of \cite{Moschidisc}, which generalise the
$r^{p}$-weighted energy method of Dafermos and Rodnianski \cite{DafRod7}
to a more general setting of asymptotically flat backgrounds. Here,
the asymptotic form of the metric (\ref{eq:metric}) comes into play.
Using the results of \cite{Moschidisc}, the problem of proving (\ref{eq:DecayLowFrequencies})
reduces to proving an integrated local energy decay statement of the
form

\begin{equation}
\int_{0}^{\infty}\Big\{\int_{\{t=\text{\textgreek{t}}\}\cap\{r\le R\}}J_{\text{\textgreek{m}}}^{N}(\text{\textgreek{y}}_{\le\text{\textgreek{w}}_{+}})n^{\text{\textgreek{m}}}\Big\}\, d\text{\textgreek{t}}\le C(R)\cdot e^{C(R)\cdot\text{\textgreek{w}}_{+}}\int_{\{t=0\}}J_{\text{\textgreek{m}}}^{N}(\text{\textgreek{y}})n^{\text{\textgreek{m}}}.\label{eq:ILEDLowFrequencies}
\end{equation}
Thus, the rest of the proof is centered around establishing the estimate
(\ref{eq:ILEDLowFrequencies}).

Heuristically, the reason that (\ref{eq:ILEDLowFrequencies}) can
be obtained lies in the fact that $\text{\textgreek{y}}_{\le\text{\textgreek{w}}_{+}}$
has bounded frequency support, and hence it does not experience trapping:
Its energy will eventually ``leak'' (at a rate shrinking exponentially
in $\text{\textgreek{w}}_{+}$ as $\text{\textgreek{w}}_{+}\rightarrow\infty$)
to $\mathcal{I}^{+}$ or through $\mathcal{H}^{+}$. This phenomenon
is captured by a Carleman-type inequality which is roughly of the
form
\begin{equation}
\begin{split}\int_{\mathcal{R}(0,T)\cap\{r_{1}\le r\le R\}}e^{s\cdot\text{\textgreek{w}}_{+}w} & \Big\{|\text{\textgreek{y}}_{\le\text{\textgreek{w}}_{+}}|^{2}+J_{\text{\textgreek{m}}}^{N}(\text{\textgreek{y}}_{\le\text{\textgreek{w}}_{+}})n^{\text{\textgreek{m}}}\Big\}\le\\
\le & C(R)(1+(s\text{\textgreek{w}}_{+})^{2})\int_{\mathcal{R}(0,T)\cap\{\frac{r_{1}}{2}\le r\le r_{1}\})}e^{s\cdot\text{\textgreek{w}}_{+}w}\big\{|\text{\textgreek{y}}_{\le\text{\textgreek{w}}_{+}}|^{2}+J_{\text{\textgreek{m}}}^{N}(\text{\textgreek{y}}_{\le\text{\textgreek{w}}_{+}})n^{\text{\textgreek{m}}}\big\}+\\
 & +C(R)(1+(s\text{\textgreek{w}}_{+})^{2})\int_{\mathcal{R}(0,T)\cap\{R\le r\le R+1\})}e^{s\cdot\text{\textgreek{w}}_{+}w}\big\{|\text{\textgreek{y}}_{\le\text{\textgreek{w}}_{+}}|^{2}+J_{\text{\textgreek{m}}}^{N}(\text{\textgreek{y}}_{\le\text{\textgreek{w}}_{+}})n^{\text{\textgreek{m}}}\big\}+\\
 & +C(R)\cdot e^{C(R)\cdot s\text{\textgreek{w}}_{+}}\int_{\{t=0\}}J_{\text{\textgreek{m}}}^{N}(\text{\textgreek{y}})n^{\text{\textgreek{m}}}.
\end{split}
\label{eq:CarlemannForLowFrequencies}
\end{equation}
In (\ref{eq:CarlemannForLowFrequencies}), $r_{1}>0$ should be considered
small and $R>0$ large, $\mathcal{R}(0,T)$ denotes the region $\{0\le t\le T\}$
for some arbitrary $T>0$, and $w$ is a suitable function on $\mathcal{D}$
which for the purpose of this discussion can be assumed to be a strictly
increasing (but not bounded!)~positive function of $r$. Finally,
$s>0$ is a large constant. 

The extraction of (\ref{eq:CarlemannForLowFrequencies}) follows closely
the derivation of a similar Carleman type inequality in \cite{Rodnianski2011},
using the multiplier method. The boundedness assumption \hyperref[Assumption 4]{4}
is used in an essential way to control boundary terms at $\{t=T\}$
appearing in this procedure. 

Notice that due to the fact that $w$ has been chosen strictly increasing
in $r$ (at least near the horizon and near null infinity), the boundary
term near the horizon in the right hand side of (\ref{eq:CarlemannForLowFrequencies})
(namely the integral over $r\sim r_{1}$) carries a weight $e^{s\text{\textgreek{w}}_{+}\cdot w}$
that is small in comparison to the weights $e^{s\text{\textgreek{w}}_{+}\cdot w}$
in the bulk integral of the left hand side. The opposite happens with
the boundary term near spacelike infinity ($r\sim R$), where the
$e^{s\text{\textgreek{w}}_{+}w}$ weights should be considered large
in comparison to the weights in the integral of the left hand side.

In order to attain the ILED statement (\ref{eq:ILEDLowFrequencies}),
we would like to absorb the boundary terms of the right hand side
of (\ref{eq:CarlemannForLowFrequencies}) by the left hand side, in
a process producing error terms that can be bounded by the initial
energy of $\text{\textgreek{y}}$. 

\medskip{}

\noindent \emph{Treatment of the boundary terms near the horizon:}
In order to dispense with the first term of the right hand side of
(\ref{eq:CarlemannForLowFrequencies}), one uses the positivity properties
of the red shift vector field $N$ near the horizon (according to
Assumption \hyperref[Assumption 2]{2}), as well as the fact that
the weight $e^{s\text{\textgreek{w}}_{+}w}$ in the $r\sim r_{1}$
region has much smaller values than in the $\{r\gg r_{1}\}$ region
if $s$ is chosen sufficiently large. Let us remark at this point
that Assumption \hyperref[Assumption 3]{3} is also used here: The
proof of the Carleman inequality (\ref{eq:CarlemannForLowFrequencies})
requires that the set $\{r_{1}\le r\le R\}$ does not intersect the
ergoregion, while, in order to use Assumption \hyperref[Assumption 2]{2}
in order to deal with the first term of the right hand side of (\ref{eq:CarlemannForLowFrequencies}),
it is necessary that $r_{0}\gtrsim r_{1}$ (where $r_{0}$ is the
parameter appearing in Assumption \hyperref[Assumption 2]{2}). Thus,
it is essential in this step that the ergoregion is contained in the
set $\{r\lesssim r_{0}\}$, and this fact is guaranteed by Assumption
\hyperref[Assumption 3]{3}.%
\footnote{\noindent We should also note that in the case where $\mathcal{H}^{+}=\emptyset$,
the first term of the right hand side of (\ref{eq:CarlemannForLowFrequencies})
can be dropped, and the integral in the left hand side of (\ref{eq:CarlemannForLowFrequencies})
is over the whole region $\mathcal{R}(0,T)\cap\{r\le R\}$.%
}

In particular, according to (\ref{eq:PositiveKN}), the $K^{N}$ current
is positive near the horizon and controls the first term of the right
hand side of (\ref{eq:CarlemannForLowFrequencies}), while it becomes
non positive only in the $r_{0}\le r\le2r_{0}$ region, where $r_{1}<r_{0}$.
Hence, the first term in the right hand side of (\ref{eq:CarlemannForLowFrequencies})
can be bounded as: 
\begin{equation}
\begin{split}\int_{\mathcal{R}(0,T)\cap\{\frac{r_{1}}{2}\le r\le r_{1}\})}(1 & +(s\text{\textgreek{w}}_{+})^{2})e^{s\cdot\text{\textgreek{w}}_{+}w}\big\{|\text{\textgreek{y}}_{\le\text{\textgreek{w}}_{+}}|^{2}+J_{\text{\textgreek{m}}}^{N}(\text{\textgreek{y}}_{\le\text{\textgreek{w}}_{+}})n^{\text{\textgreek{m}}}\big\}\le\\
\le C\cdot & (1+(s\text{\textgreek{w}}_{+})^{2})\Big(\sup_{r\le r_{1}}e^{s\text{\textgreek{w}}_{+}w}\Big)\int_{\mathcal{R}(0,T)\cap\{r_{0}\le r\le2r_{0}\})}\big\{|\text{\textgreek{y}}_{\le\text{\textgreek{w}}_{+}}|^{2}+J_{\text{\textgreek{m}}}^{N}(\text{\textgreek{y}}_{\le\text{\textgreek{w}}_{+}})n^{\text{\textgreek{m}}}\big\}+\\
 & +C\cdot(1+(s\text{\textgreek{w}}_{+})^{2})\Big(\sup_{r\le r_{1}}e^{s\text{\textgreek{w}}_{+}w}\Big)\cdot\int_{\{t=0\}}J_{\text{\textgreek{m}}}^{N}(\text{\textgreek{y}})n^{\text{\textgreek{m}}}.
\end{split}
\label{eq:NewTermRHS}
\end{equation}
 The second term of the right hand side of (\ref{eq:NewTermRHS})
is what we would like to end up with in the right hand side of (\ref{eq:CarlemannForLowFrequencies}),
so we still have to dispense with the first term of (\ref{eq:NewTermRHS}).
Notice that since $r_{1}<r_{0}$ and $w$ is a strictly increasing
function of $r$, we can bound 
\begin{equation}
\sup_{r\le r_{1}}e^{s\text{\textgreek{w}}_{+}w}\le e^{-c\cdot\text{\textgreek{w}}_{+}s}\cdot\inf_{r_{0}\le r\le2r_{0}}e^{s\text{\textgreek{w}}_{+}w}
\end{equation}
for some $c>0$ depending on the precise choice of $r_{1},r_{0}$.
Thus, if $s\gg1$, we can etimate the first term of the right hand
side of (\ref{eq:NewTermRHS}) by a small constant times the left
hand side of (\ref{eq:CarlemannForLowFrequencies}) (recall $\text{\textgreek{w}}_{+}\ge1$):

\begin{multline}
(1+(s\text{\textgreek{w}}_{+})^{2})\Big(\sup_{r\le r_{1}}e^{s\text{\textgreek{w}}_{+}w}\Big)\int_{\mathcal{R}(0,T)\cap\{r_{0}\le r\le2r_{0}\})}\big\{|\text{\textgreek{y}}_{\le\text{\textgreek{w}}_{+}}|^{2}+J_{\text{\textgreek{m}}}^{N}(\text{\textgreek{y}}_{\le\text{\textgreek{w}}_{+}})n^{\text{\textgreek{m}}}\big\}\le\\
\le e^{-c\cdot\text{\textgreek{w}}_{+}s}\cdot\int_{\mathcal{R}(0,T)\cap\{r_{1}\le r\le R\}}e^{s\cdot\text{\textgreek{w}}_{+}w}\Big\{|\text{\textgreek{y}}_{\le\text{\textgreek{w}}_{+}}|^{2}+J_{\text{\textgreek{m}}}^{N}(\text{\textgreek{y}}_{\le\text{\textgreek{w}}_{+}})n^{\text{\textgreek{m}}}\Big\}.\label{eq:ABoundInSketch}
\end{multline}
Thus, the first term of the right hand side of (\ref{eq:NewTermRHS})
can be absorbed into the left hand side of (\ref{eq:CarlemannForLowFrequencies})
if $s$ is chosen sufficiently large in terms of the geometry of $(\mathcal{M},g)$.
Thus, (\ref{eq:CarlemannForLowFrequencies}), (\ref{eq:NewTermRHS})
and (\ref{eq:ABoundInSketch}) yield:
\begin{equation}
\begin{split}\int_{\mathcal{R}(0,T)\cap\{r\le R\}}e^{s\cdot\text{\textgreek{w}}_{+}w} & \Big\{|\text{\textgreek{y}}_{\le\text{\textgreek{w}}_{+}}|^{2}+J_{\text{\textgreek{m}}}^{N}(\text{\textgreek{y}}_{\le\text{\textgreek{w}}_{+}})n^{\text{\textgreek{m}}}\Big\}\le\\
\le & C(R)\int_{\mathcal{R}(0,T)\cap\{R\le r\le R+1\})}(1+(s\text{\textgreek{w}}_{+})^{2})e^{s\cdot\text{\textgreek{w}}_{+}w}\big\{|\text{\textgreek{y}}_{\le\text{\textgreek{w}}_{+}}|^{2}+J_{\text{\textgreek{m}}}^{N}(\text{\textgreek{y}}_{\le\text{\textgreek{w}}_{+}})n^{\text{\textgreek{m}}}\big\}+\\
 & +C(R)\cdot e^{C(R)\cdot s\text{\textgreek{w}}_{+}}\int_{\{t=0\}}J_{\text{\textgreek{m}}}^{N}(\text{\textgreek{y}})n^{\text{\textgreek{m}}}.
\end{split}
\label{eq:CarlemannForLowFrequencies-1}
\end{equation}

\medskip{}

\noindent \emph{Treatment of the boundary terms near infinity:} Unfortunately,
the previous trick does not apply in order to deal with the first
term of the right hand side of (\ref{eq:CarlemannForLowFrequencies-1}),
i.\,e. the $r\sim R$ boundary term: In this case, the weight $e^{s\text{\textgreek{w}}_{+}w}$
is now much larger in the region $r\sim R$ than in the region $r<R$.
Hence we can not hope to directly absorb the first term of the right
hand side of (\ref{eq:CarlemannForLowFrequencies-1}) into the left
hand side. 

In order to deal with this term, therefore, a much more delicate analysis
is necessary, and it is at this point that the results of \cite{Rodnianski2011}
are used in a fundamental way. In particular, we will employ an ODE
lemma proven in Sections 9 and 10 of \cite{Rodnianski2011}, and we
will apply it on a system of ordinary differential inequalities satisfied
by the mass of $\text{\textgreek{f}}$ and its derivatives on the
cylinders $\{r=\text{\textgreek{r}}\}$ (and varying eith $\text{\textgreek{r}}$).
This lemma will imply that, if $\text{\textgreek{f}}$ solves the
wave equation on $(\mathcal{D},g)$ and has frequency support in $\{|\text{\textgreek{w}}|\ge\text{\textgreek{w}}_{1}\}$,
then for any large $R_{f}$ and for any given $C_{2}>0$, for $R$
taking values in any interval of the form $[R_{f},C\cdot R_{f}]$
the quantity $\int_{\mathcal{R}(0,T)\cap\{R\le r\le R+1\})}\big\{|\text{\textgreek{f}}|^{2}+J_{\text{\textgreek{m}}}^{N}(\text{\textgreek{f}})n^{\text{\textgreek{m}}}\big\}$
will either decay as a function of $R$ with a $e^{-C_{2}\text{\textgreek{w}}_{1}R}$
rate, or it will be bounded by $C(C_{2})$ times the initial energy
of $\text{\textgreek{f}}$. 

Assume, for a moment, that our solution $\text{\textgreek{y}}_{\le\text{\textgreek{w}}_{+}}$
actually has frequency support only in the regime $|\text{\textgreek{w}}|\sim\text{\textgreek{w}}_{+}$.
Then the ODE lemma of Rodnianski and Tao would yield that, for a suitable
choice of $C_{2}\gg1$, the first term in the right hand side of (\ref{eq:CarlemannForLowFrequencies-1})
can either be absorbed by the left hand side, or can be bounded by
the initial energy of $\text{\textgreek{y}}$ (since, again, in our
heuristic setting the energy of $\text{\textgreek{y}}_{\le\text{\textgreek{w}}_{+}}$
can be bounded by the energy of $\text{\textgreek{y}}$). Thus, in
the case where $\text{\textgreek{y}}_{\le\text{\textgreek{w}}_{+}}$
has frequency support only in the regime $|\text{\textgreek{w}}|\sim\text{\textgreek{w}}_{+}$,
(\ref{eq:CarlemannForLowFrequencies-1}) readily implies the ILED
statement:

\begin{equation}
\int_{\mathcal{R}(0,T)\cap\{r\le R\}}e^{s\cdot\text{\textgreek{w}}_{+}w}\Big\{|\text{\textgreek{y}}_{\le\text{\textgreek{w}}_{+}}|^{2}+J_{\text{\textgreek{m}}}^{N}(\text{\textgreek{y}}_{\le\text{\textgreek{w}}_{+}})n^{\text{\textgreek{m}}}\Big\}\le C(R)\cdot e^{C(R)\cdot s\text{\textgreek{w}}_{+}}\int_{\{t=0\}}J_{\text{\textgreek{m}}}^{N}(\text{\textgreek{y}})n^{\text{\textgreek{m}}}.\label{eq:CarlemannForLowFrequencies-1-1}
\end{equation}

In general, however, $\text{\textgreek{y}}_{\le\text{\textgreek{w}}_{+}}$
will not have frequency support only in the region $|\text{\textgreek{w}}|\sim\text{\textgreek{w}}_{+}$.
For this reason, we need to decompose $\text{\textgreek{y}}_{\le\text{\textgreek{w}}_{+}}$
further into $\text{\textgreek{y}}_{k}$ pieces, which have frequency
supports in small intervals $[\text{\textgreek{w}}_{k},\text{\textgreek{w}}_{k+1}]$
with comparable endpoints, except for $\text{\textgreek{y}}_{0}$,
which has frequency support in a small neighborhood around $0$. For
each of the frequency decomposed components $\text{\textgreek{y}}_{k}$,
for $k\neq0$, the previous heuristics work exactly as presented for
$\text{\textgreek{y}}_{\le\text{\textgreek{w}}_{+}}$, since the Carleman
inequality (\ref{eq:CarlemannForLowFrequencies}) and the ODE lemma
of Rodnianski and Tao hold for $\text{\textgreek{y}}_{k}$ as well.
Thus, after applying this procedure and then summing the $\text{\textgreek{y}}_{k}$'s
for $k\neq0$, this line of arguments establishes the integrated local
energy decay statement (\ref{eq:ILEDLowFrequencies}) for $\text{\textgreek{y}}_{\le\text{\textgreek{w}}_{+}}-\text{\textgreek{y}}_{0}$
in place of $\text{\textgreek{y}}_{\le\text{\textgreek{w}}_{+}}$. 

\medskip{}

\noindent \emph{Treatment of the very low frequency component $\text{\textgreek{y}}_{0}$:}
For $\text{\textgreek{y}}_{0}$, a different argument needs to be
furnished in order to reach the full integrated local energy decay
statement (\ref{eq:ILEDLowFrequencies}). This is accomplished in
Section \ref{sub:LowFrequencies}, with the use of a special vector
field current, coupled with a locally improved Morawetz ILED statement.
At this point, Assumption \hyperref[Assumption 3]{3} on the smallness
of the ergoregion plays a crucial role.

\subsection{\noindent \label{sub:Remark}Remark on the proof of Theorem \ref{thm:Theorem}.\emph{ }}

\noindent We will only be concerned with proving Theorem \ref{thm:Theorem}
in the case where future horizon $\mathcal{H}^{+}$ is non-empty.
In the case $\mathcal{H}^{+}=\emptyset$, Assumption \hyperref[Assumption 2]{2}
can be completely dropped, and Assumption \hyperref[Assumption 3]{3}
degenerates to the condition that $g(T,T)<0$ everywhere on $\mathcal{D}$.
Therefore, in the case $\mathcal{H}^{+}=\emptyset$, Theorem \ref{thm:Theorem}
can be established using almost the same arguments as in the case
$\mathcal{H}^{+}\neq\emptyset$, and in fact the proof of the results
of Sections \ref{sub:LowFrequencies} and \ref{sec:ILEDlow} are simplified.
Thus, from now on we will assume without loss of generality that $\mathcal{H}^{+}\neq\emptyset$
(the differences of the proof in the cases $\mathcal{H}^{+}\neq\emptyset$
and $\mathcal{H}^{+}=\emptyset$ will be highlighted in the footnotes
of Sections \ref{sub:LowFrequencies} and \ref{sec:ILEDlow}).

In order to avoid confusion when doing estimates in each connected
component of the asymptotically flat region $\mathcal{I}_{as}$ of
$\mathcal{D}$, we will assume without loss of generality that $\mathcal{I}_{as}$
has only one connected component (i.\,e.~that $(\mathcal{M},g)$
has only one asymptotically flat end). This assumption will simplify
the derivation of the estimates in the region $\{r\gg1\}$ appearing
in Sections \ref{sec:ILEDlow}--\ref{sec:Polynomial-decay}, in view
of the fact that $\{r\gg1\}$ will be covered by a single polar coordinate
chart. In the general case where $\mathcal{I}_{as}$ has more than
one components, the same estimates will follow after repeating the
same proves on each component seperately and then adding the resulting
estimates. The reader is advised to keep in mind this simplifying
assumption when reading Sections \ref{sec:ILEDlow}--\ref{sec:Polynomial-decay}.

It will also be convenient for the proof of Theorem \ref{thm:Theorem}
to assume without loss of generality that $\text{\textgreek{y}}$
has compact initial data. This will not pose any further restriction
for the statement of the theorem, since the full statement can then
follow by a usual density argument. However, compact initial data
for $\text{\textgreek{y}}$ imply, due to the domain of dependence
property, that the restriction of $\text{\textgreek{y}}$ on any $\{t=const\}$
hypersurface is also compactly supported, and in particular, $\text{\textgreek{y}}$
must be supported in a set of the form $\{r\lesssim R_{sup}+|t|\}$
for some large $R_{sup}$ depending on $\text{\textgreek{y}}$.%
\footnote{Of course no constant in the proof must be allowed to depend on $R_{sup}$%
}

\subsection{Outline of the paper}

The proof of Theorem \ref{thm:Theorem} is presented in Sections \ref{sec:FreqDecomposition}--\ref{sec:The-interpolation-argument}.
While reading these sections, one is advised to keep in mind the above
heuristics, since despite their simplicity, the main arguments are
often blurred by technicalities (mainly because of the need to carefully
perform physical space as well as frequency space cut-offs, and also
because of the very general assumptions on the geometry). 

In Section \ref{sec:FreqDecomposition}, we carry out in detail the
cut-off procedure in both physical and frequency space, and we establish
all the required lemmas regarding the behaviour of $\text{\textgreek{y}}_{\le\text{\textgreek{w}}_{+}},\text{\textgreek{y}}_{\ge\text{\textgreek{w}}_{+}}$
and $\text{\textgreek{y}}_{k}$. 

In Section \ref{sub:EstimatesPsiKAsymptoticallyFlat}, we obtain estimates
for the behaviour of $\text{\textgreek{y}}_{k},\text{\textgreek{y}}_{\le\text{\textgreek{w}}_{+}},\text{\textgreek{y}}_{\ge\text{\textgreek{w}}_{+}}$
in the asymtotically flat region $\{r\gg1\}$. This is achieved by
applying the new method of Dafermos and Rodnianski, originally appearing
in \cite{DafRod7}, and generalised in \cite{Moschidisc} to include
a broader class of asymptotically flat manifolds. In particular, in
Section \ref{sub:EstimatesPsiKAsymptoticallyFlat} the results of
\cite{Moschidisc} are specialized for the case of the frequency decomposed
components $\text{\textgreek{y}}_{k},\text{\textgreek{y}}_{\le\text{\textgreek{w}}_{+}},\text{\textgreek{y}}_{\ge\text{\textgreek{w}}_{+}}$
of $\text{\textgreek{y}}$.

In Section \ref{sub:LowFrequencies}, we provide the proof of integrated
local energy decay for the very low frequency part $\text{\textgreek{y}}_{0}$. 

The proof of the integrated local energy decay of $\text{\textgreek{y}}_{\le\text{\textgreek{w}}_{+}}$
occupies Section \ref{sec:ILEDlow}. There, we establish a Carleman
type inequality for $\text{\textgreek{y}}_{k}$, $k\neq0$, which
is then upgraded to an integrated local energy decay statement for
$\text{\textgreek{y}}_{\le\text{\textgreek{w}}_{+}}$ with the use
of a technical lemma of \cite{Rodnianski2011}. With this last statement
at our disposal, and with the use of the generalised version of the
$r^{p}$-weighted energy hierarchy of Dafermos and Rodnianski (from
Section \ref{sub:BoundsForPsiK}), we then infer in Section \ref{sec:Polynomial-decay}
that the local energy of $\text{\textgreek{y}}_{\le\text{\textgreek{w}}_{+}}$
decays polynomially in time. 

The proof of Theorem \ref{thm:Theorem} is completed in Section \ref{sec:The-interpolation-argument},
where the interpolation scheme (\ref{eq:InterpolationHeuristics})
is rigorously formulated.

The proof of Corollary \ref{cor:Corollary} is presented in Section
\ref{sec:Proof-of-corollary}, while the proof of Corollaries \ref{cor:CorollaryQualititative}
and \ref{cor:CorollaryScattering} are presented in Section \ref{sec:Proof-of-Corollary2}.
Finally, we show that the result of Theorem \ref{thm:Theorem} is
optimal in this generality, by establishing the sharpness of the logarithmic
decay rate in Section \ref{sub:Sharpness}.

\section{\label{sec:Notational-conventions}Notational conventions and Hardy
inequalities}

In this paper, we will adopt the same notational conventions as we
did in \cite{Moschidisc}. We will now proceed to describe in more
detail these conventions.

\subsection{Constants and parameters}

We will adopt the following convention for denoting constants appearing
in inequalities, as is done in \cite{DafRod5}: Capital letters (e.\,g.~$C$)
will be used to denote ``large'' constants, typically appearing
on the right hand side of inequalities. (Such constants can be ``freely''
replaced by larger ones without rendering the inequality invalid.)
We will use lower case letters (e.\,g. $c$) to denote ``small''
constants (which can similarly freely be replaced by smaller ones).
The same characters will be frequently used to denote different constants. 

We will assume that all non-explicit constants will depend on the
specific geometric aspects of our spacetime (e.\,g the topology and
the exact form of the metric) and we will not keep track of this dependence,
except for some very specific cases. However, since we will need to
define a plethora of parameters in the following sections, we will
always keep track of the dependence of all constants on each of these
parameters. Once a parameter is fixed (which will be clearly stated
in the text), the dependence of constants on it will be dropped. 

The parameter $R_{1}$, appearing in the statement of Theorem \ref{thm:Theorem},
will be considered fixed, and hence we will drop any explicit referrence
to it from constants depending on its choice, unless there is a need
to emphasize this dependence. Since we can always increase $R_{1}$without
affecting the statement of Theorem \ref{thm:Theorem}, we will assume
without loss of generality that $R_{1}$ is large enough so that the
region $\{r\ge R_{1}\}$ is contained in the chart where the metric
$g$ takes the form (\ref{eq:metric}).

\subsection{Inequality symbols}

We will use the notation $f_{1}\lesssim f_{2}$ for two real functions
$f_{1},f_{2}$ as usual to imply that there exists some $C>0$, such
that $f_{1}\le C\cdot f_{2}$. This constant $C$ might depend on
free parameters, and these parameters will be stated clearly in each
case. If nothing is stated regarding the dependence of this constant
on parameters, it should be assumed that it only depends on the geometry
of $(\mathcal{D},g)$, and on the parameter $R_{1}$ appearing in
the statement of Theorem \ref{thm:Theorem}. 

We will write $f_{1}\sim f_{2}$ when we can bound $f_{1}\lesssim f_{2}$
and $f_{2}\lesssim f_{1}$. The notation $f_{1}\ll f_{2}$ will be
equivalent to the statement that $\frac{|f_{1}|}{|f_{2}|}$ can be
bounded by some sufficiently small positive constant, the magnitude
and the dependence of which on variable parameters will be clear in
each case from the context. For any function $f:\mathcal{M}\rightarrow[0,+\infty)$,
$\{f\gg1\}$ will denote the subset $\{f\ge C\}$ of $\mathcal{M}$
for some constant $C\gg1$.

For functions $f_{1},f_{2}:[x_{0},+\infty)\rightarrow\mathbb{R}$,
the notation $f_{1}=o(f_{2})$ will mean that $\frac{|f_{1}|}{|f_{2}|}$
can be bounded by some continuous function $h:[x_{0},+\infty)\rightarrow(0,+\infty)$
such that $h(x)\rightarrow0$ as $x\rightarrow+\infty$. This bound
$h$ might deppend on free parameters, and this fact will be clear
in each case from the context.

\subsection{\label{sub:CoordinateCharts}Coordinate charts on $\mathcal{D}$
and subsets of $\mathcal{D}$ associated to $t$}

In this paper, we will identify $\mathcal{D}\backslash\mathcal{H}^{-}$
with $\mathbb{R}\times(\text{\textgreek{S}}\cap\mathcal{D})$ by setting
$\{t\}\times(\text{\textgreek{S}}\cap\mathcal{D})$ to be equal to
the image of $\text{\textgreek{S}}\cap\mathcal{D}$ under the flow
of the vector field $T$ for time $t$ (see the remarks in Assumption
\hyperref[Assumption 1]{1}. 

When performing calculations in specific coordinate charts on regions
of $\mathcal{D}\backslash\mathcal{H}^{-}$, we will always pick coordinate
charts of the following form: For any local coordinate chart $x=(x^{1},x^{2},\ldots,x^{d})$
on an open subset $\mathcal{V}$ of $\text{\textgreek{S}}\cap\mathcal{D}$,
we will extend the functions $x^{i}$ on the whole of $\mathbb{R}\times\mathcal{V}\subset\mathcal{D}\backslash\mathcal{H}^{-}$
by the requirement $T(x^{i})=0$, and we will use the coordinate chart
$(t,x^{1},\ldots,x^{d})$ on $\mathbb{R}\times\mathcal{V}$. Notice
that in such a coordinate chart we have $\partial_{t}\equiv T$. In
the expression of any tensor $\mathfrak{w}$ in such a coordinate
chart, components with indices ranging from $1$ to $d$ will correspond
to the components of the tensor associated to $\partial_{x^{1}},\ldots,\partial_{x^{d}}$
(or $dx^{1},\ldots,dx^{d}$) respectively.

For any $t_{1}\le t_{2}$, we will denote 
\begin{equation}
\mathcal{R}(t_{1},t_{2})\doteq\{t_{1}\le t\le t_{2}\}\subset\mathcal{D}.
\end{equation}
Furthermore, for any $t_{0}\in\mathbb{R}$, we will denote: 
\begin{equation}
\text{\textgreek{S}}_{t_{0}}\doteq\{t=t_{0}\}.
\end{equation}

\subsection{\label{sub:Connections-and-volume}Connections and volume forms}

We will usually denote the natural connection of a pseudo-Riemannian
manifold $(\mathcal{N},h_{\mathcal{N}})$ as $\nabla^{h_{\mathcal{N}}}$
or $\nabla_{h_{\mathcal{N}}}$ (or simply $\nabla_{\mathcal{N}}$
when there is no ambiguity about the metric $h_{\mathcal{N}}$). The
associated volume form will be denoted as $dh_{\mathcal{N}}$. If
$h_{\mathcal{N}}$ is Riemannian, $\big|\cdot\big|_{h_{\mathcal{N}}}$
will denote the associated norm on the tensor bundle of $\mathcal{N}$. 

For any integer $l\ge0$, $\big(\nabla^{h_{\mathcal{N}}}\big)^{l}$
or $\nabla_{h_{\mathcal{N}}}^{l}$ will denote the higher order operator
\begin{equation}
\underbrace{\nabla_{h_{\mathcal{N}}}\cdots\nabla_{h_{\mathcal{N}}}}_{l\mbox{ times}}.\label{eq:ProductDerivatives}
\end{equation}
 We should remark that the product (\ref{eq:ProductDerivatives})
is not symmetrised. We will also adopt the convention that we will
always use Latin characters to denote such powers of covariant derivative
operators. On the other hand, Greek characters will be used for the
indices of a tensor in an abstract index notation. 

Let us state an example of the above convention: Let $k$ be a $(n_{1},n_{2})$-tensor
and $\text{\textgreek{f}}:(\mathcal{N},h_{\mathcal{N}})\rightarrow\mathbb{C}$
be a smooth function. Then the quantity 
\begin{equation}
k_{\text{\textgreek{b}}_{1}\ldots\text{\textgreek{b}}_{n_{2}}}^{\text{\textgreek{a}}_{1}\ldots\text{\textgreek{a}}_{n_{1}}}\cdot\big(\nabla_{h_{\mathcal{N}}}^{n_{1}+n_{2}}\big)_{\hphantom{\text{\textgreek{b}}_{1}\ldots\text{\textgreek{b}}_{m}}\text{\textgreek{a}}_{1}\ldots\text{\textgreek{a}}_{n_{1}}}^{\text{\textgreek{b}}_{1}\ldots\text{\textgreek{b}}_{n_{2}}}\text{\textgreek{f}}\label{eq:ExampleIndices}
\end{equation}
denotes the contraction of the $n_{1}+n_{2}$ order derivative $\nabla_{h_{\mathcal{N}}}^{n_{1}+n_{2}}\text{\textgreek{f}}$
of $\text{\textgreek{f}}$ with the tensor $k$. In the above, the
metric $h_{\mathcal{N}}$ was used to raise the first $n_{2}$ indices
of $\nabla_{h_{\mathcal{N}}}^{n_{1}+n_{2}}\text{\textgreek{f}}$.
Notice that in (\ref{eq:ExampleIndices}) we have used the abstract
index notation, and hence the indices in (\ref{eq:ExampleIndices})
are not associated to any fixed local coordinate chart.

\subsection{Integration over domains and hypersurfaces}

In the cases where we use the natural volume form $\text{\textgreek{w}}$
associated to the metric $g$ of a Lorentzian manifold $(\mathcal{M},g)$
in order to integrate over open subsets of $\mathcal{M}$, the volume
form will be often dropped in the expression for the integral. Recall
that in any local coordinate chart $(x^{0},x^{1},x^{2},\ldots x^{d})$,
$\text{\textgreek{w}}$ is expressed as 
\[
\text{\textgreek{w}}=\sqrt{-det(g)}dx^{0}\cdots dx^{d}.
\]
We will apply the same rule when integrating over any spacelike hypersurface
$\mathcal{S}$ of $(\mathcal{M},g)$ using the natural volume form
of its induced (Riemannian) metric.

In the case of a smooth null hypersurface $\mathscr{H}$, the volume
form with which integration will be considered will as usual depend
on the choice of a future directed null generator $n_{\mathcal{\mathscr{H}}}$
for $\mathcal{\mathscr{H}}$. For any such choice of $n_{\mathcal{\mathscr{H}}}$,
selecting an arbitrary vecor field $X$ on $T_{\mathcal{\mathscr{H}}}M$
such that $g(X,n_{\mathscr{H}})=-1$ enables the construction of a
non degenerate $d$-form on $\mathcal{\mathscr{H}}$: $dvol_{n_{\mathscr{H}}}\doteq i_{X}\text{\textgreek{w}}$,
which depends on the on the precise choice of $n_{\mathcal{\mathscr{H}}}$,
but not on the choice for $X$. In that case, $dvol_{n_{\mathcal{\mathscr{H}}}}$
will be the volume form on $\mathscr{H}$ associated with $n_{\mathcal{\mathscr{H}}}$.

\subsection{Notations for derivatives on $\mathbb{S}^{d-1}$}

Since, in the present paper, we will frequently work in polar coordinates
in the asymptotically flat region of $(\mathcal{D},g)$, we will introduce
some convenient shorthand notation regarding iterated derivatives
on the unit sphere $\mathbb{S}^{d-1}$, $d\ge3$.

The usual round metric on the sphere $\mathbb{S}^{d-1}$ will be denoted
as $g_{\mathbb{S}^{d-1}}$. This is simply the induced metric on the
unit sphere of $\mathbb{R}^{d}$. We will also denote with $g_{\mathbb{S}^{d-1}}$
the natural extension of the round metric to an inner product on the
space of tensors over $\mathbb{S}^{d-1}$. According to the conventions
of Section \ref{sub:Connections-and-volume}, for any tensor field
$\mathfrak{w}$ on $\mathbb{S}^{d-1}$, $|\mathfrak{w}|_{g_{\mathbb{S}^{d-1}}}$
will denote the norm of $\mathfrak{w}$ with respect to $g_{\mathbb{S}^{d-1}}$
and $\nabla^{\mathbb{S}^{d-1}}$ (or $\nabla_{\mathbb{S}^{d-1}}$)
will denote the covariant derivative associated with $g_{\mathbb{S}^{d-1}}$.
Furthermore, for any smooth $(n_{1},n_{2})$-tensor field $\mathfrak{w}$
on $\mathbb{S}^{d-1}$, $\big(\nabla^{\mathbb{S}^{d-1}}\big)^{k}\mathfrak{w}$
(or $\nabla_{\mathbb{S}^{d-1}}^{k}\mathfrak{w}$) will denote the
$(n_{1},n_{2}+k)$-tensor field on $\mathbb{S}^{d-1}$ obtained after
applying the operator $\nabla^{\mathbb{S}^{d-1}}$ on $\mathfrak{w}$
$k$ times. The Laplace--Beltrami operator on $(\mathbb{S}^{d-1},g_{\mathbb{S}^{d-1}})$
will be denoted as $\text{\textgreek{D}}_{g_{\mathbb{S}^{d-1}}}$. 

Frequently, we will work on regions $\mathcal{U}$ of a spacetime
$\mathcal{M}^{d+1}$ such that $\mathcal{U}$ can be mapped diffeomorphically,
through a coordinate ``chart'', onto $\mathbb{R}_{+}\times\mathbb{R}_{+}\times\mathbb{S}^{d-1}$.
In any such a coordinate ``chart'', $\text{\textgreek{sv}}$ will
denote the projection $\text{\textgreek{sv}}:\mathcal{U}\rightarrow\mathbb{S}^{d-1}$.
We remark that, for any $x\in\mathcal{M}$, $\text{\textgreek{sv}}(x)$
is a point on $\mathbb{S}^{d-1}$ and not just the coordinates of
this point in a coordinate chart on $\mathbb{S}^{d-1}$. The same
$\text{\textgreek{sv}}$ notation will also be used for the spherical
variable of a polar coordinate ``chart'' on codimension $1$ submanifolds
of $\mathcal{M}$ (in this case, the range of such a ``chart'' will
be simply $\mathbb{R}_{+}\times\mathbb{S}^{d-1}$). For example, we
will use the notation $(r,\text{\textgreek{sv}}):\{x^{0}=0\}\rightarrow\mathbb{R}_{+}\times\mathbb{S}^{d-1}$
for the usual polar coordinate chart on the hyperplane $\{x^{0}=0\}$
of $\mathbb{R}^{d+1}$. 

On a subset $\mathcal{U}$ of a spacetime $\mathcal{M}$ covered by
a polar coordinate chart $(u_{1},u_{2},\text{\textgreek{sv}}):\mathcal{U}\rightarrow\mathbb{R}_{+}\times\mathbb{R}_{+}\times\mathbb{S}^{d-1}$,
for any function $h:\mathcal{U}\rightarrow\mathbb{C}$ and any $\text{\textgreek{b}}_{1},\text{\textgreek{b}}_{2}\in\mathbb{R}_{+}$,
$h(\text{\textgreek{b}}_{1},\text{\textgreek{b}}_{2},\cdot)$ defines
a function on $\mathbb{S}^{d-1}$. Under this correspondence, the
$\nabla^{\mathbb{S}^{d-1}}$ differential operator on $\mathbb{S}^{d-1}$
is extended to a tangential differential operator on the hypersurfaces
$\{u_{1},u_{2}=const\}\subset\mathcal{U}$. This operator is, of course,
related to the specific choice of the polar coordinate chart $(u_{1},u_{2},\text{\textgreek{sv}})$.

The following schematic notation for derivatives on $\mathbb{S}^{d-1}$
(and the associated tangential operators on the hypersurfaces $\{u_{1},u_{2}=const\}$
in a $(u_{1},u_{2},\text{\textgreek{sv}})$ coordinate chart on a
spacetime $\mathcal{M}$) will be frequently used in the present paper:
For any function $h:\mathbb{S}^{d-1}\rightarrow\mathbb{C}$ and any
$l\in\mathbb{N}$, we will denote the $l$-th order derivative $\nabla_{\mathbb{S}^{d-1}}^{l}h$
as $\partial_{\text{\textgreek{sv}}}^{l}h$. The norm of this tensor
will be denoted as: 
\begin{equation}
|\partial_{\text{\textgreek{sv}}}^{l}h|\doteq\big|\nabla_{\mathbb{S}^{d-1}}^{l}h\big|_{\mathbb{S}^{d-1}}.
\end{equation}
 Furthermore, for any \underline{symmetric} $(l,0)$-tensor $b$
on $\mathbb{S}^{d-1}$, the following schematic notation for the contraction
of $\big(\nabla^{\mathbb{S}^{d-1}}\big)^{l}h$ with $b$ will be frequently
used:
\begin{equation}
b\cdot\partial_{\text{\textgreek{sv}}}^{l}h\doteq b^{\text{\textgreek{i}}_{1}\ldots\text{\textgreek{i}}_{l}}(\nabla_{\mathbb{S}^{d-1}}^{l})_{\text{\textgreek{i}}_{1}\ldots\text{\textgreek{i}}_{l}}h\label{eq:ContractionOneFunction}
\end{equation}
(for the notations on powers of covariant derivatives and the abstract
index notation, see Section \ref{sub:Connections-and-volume}). The
same notation will be used for the contraction of the product of derivatives
of multiple functions: For any family of $m$ functions $h_{1},\ldots,h_{m}:\mathbb{S}^{d-1}\rightarrow\mathbb{C}$
and any set $(j_{1},\ldots j_{m})$ of non-negative integers, for
any $(\sum_{k=1}^{n}j_{k},0$)-tensor $b$ on $\mathbb{S}^{d-1}$
which is symmetric in any pair of indices lying in the same one of
the intervals $I_{n}=\Big(\sum_{k=1}^{n-1}j_{k}+1,\sum_{k=1}^{n}j_{k}\Big)$
for each $n\in\{1,\ldots m\}$, we will adopt the notation 
\begin{equation}
b\cdot\partial_{\text{\textgreek{sv}}}^{j_{1}}h_{1}\cdots\partial_{\text{\textgreek{sv}}}^{j_{m}}h_{n}\doteq b^{\text{\textgreek{i}}_{1}\ldots\text{\textgreek{i}}_{\sum_{k=1}^{m}j_{k}}}\cdot\big(\nabla_{\mathbb{S}^{d-1}}^{j_{1}}\big)_{\text{\textgreek{i}}_{1}\ldots\text{\textgreek{i}}_{j_{1}}}h_{1}\cdots\big(\nabla_{\mathbb{S}^{d-1}}^{j_{m}}\big)_{\text{\textgreek{i}}_{\sum_{k=1}^{m-1}j_{k}+1}\ldots\text{\textgreek{i}}_{\sum_{k=1}^{m}j_{k}}}h_{m}.\label{eq:ContractionProductFunctions}
\end{equation}
Notice that the tensor $b$ used in the notation (\ref{eq:ContractionProductFunctions})
will not necessarily be symmetric in pairs of indices lying in seperate
pairs of the $I_{n}$ intervals.

We will also use the same notation (i.\,e.~(\ref{eq:ContractionOneFunction})
and (\ref{eq:ContractionProductFunctions})) when $h$, $h_{1},\ldots,h_{m}$
are tensor fields on $\mathbb{S}^{d-1}$.

Notice also that, working in a polar coordinate ``chart'' $(u_{1},u_{2},\text{\textgreek{sv}}):\mathcal{U}\rightarrow\mathbb{R}_{+}\times\mathbb{R}_{+}\times\mathbb{S}^{d-1}$,
the following commutation relation holds for any function $h$ on
$\mathcal{U}$: 
\begin{equation}
\big[\mathcal{L}_{\partial_{u_{i}}}\nabla^{\mathbb{S}^{d-1}},\nabla^{\mathbb{S}^{d-1}}\partial_{u_{i}}\big]h=0,
\end{equation}
where $\partial_{u_{i}}$ is the coordinate vector field associated
to the coordinate function $u_{i}$ (for $i=1,2$). Therefore, we
will frequently denote 
\begin{equation}
\mathcal{L}_{\partial_{u_{i}}}\nabla^{\mathbb{S}^{d-1}}h\doteq\partial_{u_{i}}\partial_{\text{\textgreek{sv}}}h
\end{equation}
and this notation will allow commuting $\partial_{u_{i}}$ with $\partial_{\text{\textgreek{sv}}}$,
as if $\partial_{\text{\textgreek{sv}}}$ was a regular coordinate
vector field. 

The notation $d\text{\textgreek{sv}}$ will be used in two different
ways, depending on the context: it will denote either the usual volume
form on $(\mathbb{S}^{d-1},g_{\mathbb{S}^{d-1}}$) or a $1$-form
on $\mathbb{S}^{d-1}$ satisfying for $0\le k\le4$ the bound $\big|\big(\nabla^{\mathbb{S}^{d-1}}\big)^{k}d\text{\textgreek{sv}}\big|_{g_{\mathbb{S}^{d-1}}}\le1$.
Similarly, $d\text{\textgreek{sv}}d\text{\textgreek{sv}}$ will denote
a symmetric $(2,0)$-tensor on $\mathbb{S}^{d-1}$ satisfying for
any $0\le k\le4$ the bound $\big|\big(\nabla^{\mathbb{S}^{d-1}}\big)^{k}(d\text{\textgreek{sv}}d\text{\textgreek{sv}})\big|_{g_{\mathbb{S}^{d-1}}}\le1$.
\begin{example*}
For any function $f$ and any tensor $b$ on $\mathbb{S}^{d-1}$ with
the aforementioned symmetries, the above notation will allow us to
perform the following integration by parts procedure: 
\begin{equation}
\int_{\mathbb{S}^{d-1}}b\cdot\partial_{\text{\textgreek{sv}}}f\cdot\partial_{\text{\textgreek{sv}}}\partial_{\text{\textgreek{sv}}}f\, d\text{\textgreek{sv}}=-\frac{1}{2}\int_{\mathbb{S}^{d-1}}(c_{1}\partial_{\text{\textgreek{sv}}}b+c_{2}b)\cdot\partial_{\text{\textgreek{sv}}}f\cdot\partial_{\text{\textgreek{sv}}}f\, d\text{\textgreek{sv}},\label{eq:Integrationbypartsspheregeneralrule-1}
\end{equation}
 for some smooth contracting tensors $c_{1}$, $c_{2}$ which are
bounded with bounds depending only on the tensor type of $b$. 
\end{example*}
The notation (\ref{eq:ContractionOneFunction}) and (\ref{eq:ContractionProductFunctions})
will be frequently used in cases where we lack an explicit form for
the contracting tensor $b$, but we have bounds for the norm of $b$
and its derivatives. This is the reason motivating our choice of a
notation which apparently loses information regarding the structure
of the underlying expression.

\subsection{The $O_{k}(\cdot)$ notation}

For any integer $k\ge0$ and any $b\in\mathbb{R}$, the notation $h=O_{k}(r^{b})$
for some smooth function $h:\mathcal{M}\rightarrow\mathbb{C}$ will
be used to denote that in the $(t,r,\text{\textgreek{sv}})$ polar
coordinate chart in the region $\{r\gg1\}$ of $\mathcal{D}$ (see
Assumption \hyperref[Assumption 1]{1}): 
\begin{equation}
\sum_{j=0}^{k}\sum_{j_{1}+j_{2}+j_{3}=j}r^{j_{1}+j_{2}}|\partial_{t}^{j_{1}}\partial_{r}^{j_{2}}\partial_{\text{\textgreek{sv}}}^{j_{3}}h|\le C\cdot r^{b}
\end{equation}
for some constant $C>0$ dependng on $k$ and $h$. The same notation
(omitting the $\partial_{t}$ derivatives) will also be used for functions
on regions of manifolds cover by an $(r,\text{\textgreek{sv}})$ polar
coordinate chart.

\subsection{\noindent \label{sub:Currents}Vector field multipliers and currents:}

In the present paper, we will frequently use the language of currents
and vector field multipliers in order to establish the desired estimates.
On any Lorentzian manifold $(\mathcal{M},g)$, associated to the wave
operator $\square_{g}=\frac{1}{\sqrt{-det(g)}}\partial_{\text{\textgreek{m}}}\Big(\sqrt{-det(g)}\cdot g^{\text{\textgreek{m}\textgreek{n}}}\partial_{\text{\textgreek{n}}}\Big)$
is a $(0,2)$-tensor called the \emph{energy momentum tensor} $T$.
For any smooth function $\text{\textgreek{y}}:\mathcal{M}\rightarrow\mathbb{C}$,
the energy momentum tensor takes the form

\begin{equation}
T_{\text{\textgreek{m}\textgreek{n}}}(\text{\textgreek{y}})=\frac{1}{2}\Big(\partial_{\text{\textgreek{m}}}\text{\textgreek{y}}\cdot\partial_{\text{\textgreek{n}}}\bar{\text{\textgreek{y}}}+\partial_{\text{\textgreek{m}}}\bar{\text{\textgreek{y}}}\cdot\partial_{\text{\textgreek{n}}}\text{\textgreek{y}}\Big)-\frac{1}{2}\big(\partial^{\text{\textgreek{l}}}\text{\textgreek{y}}\cdot\partial_{\text{\textgreek{l}}}\bar{\text{\textgreek{y}}}\big)g_{\text{\textgreek{m}\textgreek{n}}}.
\end{equation}

For any continuous and piecewise $C^{1}$ vector field $X$ on $\mathcal{M}$,
the following associated currents can be defined almost everywhere:

\begin{equation}
J_{\text{\textgreek{m}}}^{X}(\text{\textgreek{y}})=T_{\text{\textgreek{m}\textgreek{n}}}(\text{\textgreek{y}})X^{\text{\textgreek{m}}},
\end{equation}
\begin{equation}
K^{X}(\text{\textgreek{y}})=T_{\text{\textgreek{m}\textgreek{n}}}(\text{\textgreek{y}})\nabla^{\text{\textgreek{m}}}X^{\text{\textgreek{n}}}.
\end{equation}
 The following divergence identity then holds almost everywhere:

\begin{equation}
\nabla^{\text{\textgreek{m}}}J_{\text{\textgreek{m}}}^{X}(\text{\textgreek{y}})=K^{X}(\text{\textgreek{y}})+Re\Big\{(\square_{g}\text{\textgreek{y}})\cdot X\bar{\text{\textgreek{y}}}\Big\}.
\end{equation}

\subsection{Hardy-type inequalities}

Frequently throughout this paper, we will need to control the weighted
$L^{2}$ norm of some function $u$ by some weighted $L^{2}$ norm
of its derivative $\nabla u$. This will always be accomplished with
the use of some variant of the following Hardy-type inequality on
$\mathbb{R}^{d}$ (which is true for $d\ge1$, although we will only
need it for $d\ge3$):
\begin{lem}
For any $a>0$, there exists some $C_{a}>0$ such that for any smooth
and compactly supported function $u:\mathbb{R}^{d}\rightarrow\mathbb{C}$
and any $R>0$ we can bound
\begin{equation}
\int_{\mathbb{R}^{d}\cap\{r\ge R\}}r^{-d+a}\cdot|u|^{2}\, dx+\int_{\{r=R\}}R^{-(d-1)+a}\cdot|u|^{2}\, dg_{\{r=R\}}\le C_{a}\int_{\mathbb{R}^{d}\cap\{r\ge R\}}r^{-(d-2)+a}|\partial_{r}u|^{2}\, dx\label{eq:GeneralHardyBound}
\end{equation}
 In the above, $r$ is the polar distance on $\mathbb{R}^{d}$, $dx$
is the usual volume form on $\mathbb{R}^{d}$ and $dg_{\{r=R\}}$
is the volume form of the induced metric on the sphere $\{r=R\}\subset\mathbb{R}^{d}$.\end{lem}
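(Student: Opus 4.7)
The plan is to reduce \eqref{eq:GeneralHardyBound} to a one-dimensional weighted Hardy estimate on the half-line $[R,\infty)$ by separating radial and spherical variables, and then integrating back over $\mathbb{S}^{d-1}$. In polar coordinates one has $dx = r^{d-1}\,dr\,d\sigma$ and $dg_{\{r=R\}} = R^{d-1}\,d\sigma$, so the powers of $r$ simplify as $r^{-d+a}\cdot r^{d-1} = r^{a-1}$, $r^{-(d-2)+a}\cdot r^{d-1} = r^{a+1}$, and $R^{-(d-1)+a}\cdot R^{d-1} = R^a$. Thus, writing $v(r) \doteq u(r,\omega)$ for each fixed $\omega$, the claim reduces to showing
\[
\int_R^\infty r^{a-1}|v|^2\,dr + R^a |v(R)|^2 \le C_a \int_R^\infty r^{a+1}|\partial_r v|^2\,dr
\]
for every smooth, compactly supported $v$ on $[R,\infty)$, and then integrating this inequality against $d\sigma$ on $\mathbb{S}^{d-1}$ via Fubini.

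To establish the one-dimensional inequality I would write $r^{a-1} = \tfrac{1}{a}\partial_r(r^a)$ and integrate by parts from $R$ to $+\infty$. Compact support kills the boundary term at infinity, while the boundary term at $R$ produces $-\tfrac{R^a}{a}|v(R)|^2$ on the right-hand side, which we move to the left with exactly the favourable sign needed to recover the surface term in the statement. The leftover bulk contribution is of the schematic form $\tfrac{1}{a}\int_R^\infty r^a\,\bar v\,\partial_r v\,dr$; applying a weighted Cauchy--Schwarz of the form $|\bar v\,\partial_r v| \le \tfrac{\epsilon}{2} r^{-1}|v|^2 + \tfrac{1}{2\epsilon}r|\partial_r v|^2$, with $\epsilon$ chosen proportional to $a$, allows one to absorb a fraction of the $\int r^{a-1}|v|^2$-term back into the left-hand side. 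What remains is bounded by a constant multiple of $\int_R^\infty r^{a+1}|\partial_r v|^2\,dr$, with constant $C_a$ of order $a^{-2}$; this matches the hypothesis $a > 0$ and the expected blow-up of the constant as $a \downarrow 0$ for a critical-type Hardy inequality.

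Integrating the resulting one-dimensional inequality against $d\sigma$ on $\mathbb{S}^{d-1}$ and reverting to the volume forms $dx$ and $dg_{\{r=R\}}$ yields \eqref{eq:GeneralHardyBound}. The only delicate point is the sign of the boundary contribution produced by the integration by parts: because $\partial_r(r^a) = a\, r^{a-1} > 0$ on $[R,\infty)$, the boundary term at $r=R$ arrives with precisely the sign needed to place the surface term $R^a|v(R)|^2$ on the left-hand side. Beyond tracking this sign, the argument is a routine Cauchy--Schwarz absorption, so no essential obstacle is expected; the main conceptual content of the lemma is encoded in the favourable coupling between the weight $r^a$, the boundary sign, and the compact support assumption on $u$.
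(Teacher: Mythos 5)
Your proposal is correct and follows essentially the same route as the paper: pass to polar coordinates, write $r^{a-1}=\tfrac{1}{a}\partial_r(r^a)$, integrate by parts (picking up the boundary term at $r=R$ with the favourable sign), and then absorb the resulting cross term into the left-hand side. The only cosmetic differences are that you phrase the reduction as a genuine one-dimensional inequality on $[R,\infty)$ (rather than keeping the sphere integration inline) and absorb via a pointwise weighted AM--GM rather than the paper's integral Cauchy--Schwarz followed by division, both of which yield the same $O(a^{-2})$ constant.
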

\begin{proof}
Following the usual steps for proving a Hardy inequality, we can calculate
in polar coordinates:
\begin{align}
\int_{\mathbb{R}^{d}\cap\{r\ge R\}}r^{-d+a}\cdot|u|^{2}\, dx & =\int_{\mathbb{S}^{d-1}}\Big(\int_{R}^{\infty}r^{-d+a}\cdot|u|^{2}\, r^{d-1}dr\Big)dg_{\mathbb{S}^{d-1}}=\label{eq:GeneralHardyIntermediate}\\
 & =\int_{\mathbb{S}^{d-1}}\Big(\int_{R}^{\infty}r^{-1+a}\cdot|u|^{2}\, dr\Big)dg_{\mathbb{S}^{d-1}}=\nonumber \\
 & =\frac{1}{a}\int_{\mathbb{S}^{d-1}}\Big(\int_{R}^{\infty}\partial_{r}(r^{a})\cdot|u|^{2}\, dr\Big)dg_{\mathbb{S}^{d-1}}=\nonumber \\
 & =\frac{1}{a}\Big\{-\int_{\mathbb{S}^{d-1}}\big(r^{a}\cdot|u|^{2}\big)|_{r=R}\, dg_{\mathbb{S}^{d-1}}+2\int_{\mathbb{S}^{d-1}}\Big(\int_{R}^{\infty}r^{a}\cdot Re(\partial_{r}u\cdot\bar{u})\, dr\Big)dg_{\mathbb{S}^{d-1}}\Big\},\nonumber 
\end{align}
 where, in order to integrate by parts, we have used the fact that
$u$ was compactly supported.

Moving the first term of the right hand side of (\ref{eq:GeneralHardyIntermediate})
to the left hand side, and using the fact that in polar coordinates
$dg_{\{r=R\}}=R^{d-1}dg_{\mathbb{S}^{d-1}}$ and $dx=r^{d-1}drdg_{\mathbb{S}^{d-1}}$,
we obtain:
\begin{multline}
\int_{\mathbb{R}^{d}\cap\{r\ge R\}}r^{-d+a}\cdot|u|^{2}\, dx+\frac{1}{a}\int_{\{r=R\}}R^{-(d-1)+a}\cdot|u|^{2}\, dg_{\{r=R\}}=\frac{2}{a}\int_{\mathbb{R}^{d}\cap\{r\ge R\}}r^{-(d-1)+a}\cdot Re(\partial_{r}u\cdot\bar{u})\, dx\le\\
\le\frac{2}{a}\Big(\int_{\mathbb{R}^{d}\cap\{r\ge R\}}r^{-(d-2)+a}\cdot|\partial_{r}u|^{2}\, dx\Big)^{1/2}\cdot\Big(\int_{\mathbb{R}^{d}\cap\{r\ge R\}}r^{-d+a}\cdot|u|^{2}\, dx\Big)^{1/2}.\label{eq:Generalhardy2}
\end{multline}
 The desired inequality (\ref{eq:GeneralHardyBound}) now readily
follows after absorbing the second factor of the right hand side of
(\ref{eq:Generalhardy2}) into the left hand side.
\end{proof}

\section{\label{sec:FreqDecomposition}Construction of the frequency decomposed
components of $\text{\textgreek{y}}$}

In this section, we will assume that we are given a smooth function
$\text{\textgreek{y}}:\mathcal{D}\rightarrow\mathbb{C}$ as in the
statement of Theorem \ref{thm:Theorem} solving $\square_{g}\text{\textgreek{y}}=0$
on $J^{+}(\text{\textgreek{S}})\cap\mathcal{D}$ with compactly supported
initial data on $\text{\textgreek{S}}$. We will introduce the parameters
$t^{*}>0$, $\text{\textgreek{w}}_{+}>1$ and $0<\text{\textgreek{w}}_{0}<1$,
and we will decompose the function $\text{\textgreek{y}}$ into components
with localised frequency support in the $t$ variable. We will always
identify $\mathcal{D}\backslash\mathcal{H}^{-}$ with $\mathbb{R}\times\text{\textgreek{S}}_{0}=\mathbb{R}\times(\text{\textgreek{S}}\cap\mathcal{D})$
under the flow of $T$ as explained in Section \ref{sub:CoordinateCharts}. 

In order to be able to apply the Fourier transform in the time variable,
we will first need to multiply $\text{\textgreek{y}}$ with a suitable
cut-off function in time, so that the resulting function has compact
support in $t$. This cut off procedure will be similar to the one
followed by Dafermos--Rodnianski in \cite{DafRod5}. We will then
establish various estimates for the frequency decomposed components
of (the cut-off of) $\text{\textgreek{y}}$. We will now proceed with
the details.

\subsection{\label{sub:Frequency-cut-off}Frequency cut-off }

Let $\text{\textgreek{q}}_{1}:\mathbb{R}_{+}\rightarrow[0,1]$ be
a smooth function such that $\text{\textgreek{q}}_{1}\equiv0$ on
$x\le R_{1}$ and $\text{\textgreek{q}}_{1}\equiv1$ on $x\ge R_{1}+1$,
for the given $R_{1}$ in the statement of Theorem \ref{thm:Theorem}.
Since we can always increase $R_{1}$without affecting the statement
of Theorem \ref{thm:Theorem}, whenever needed we will assume without
loss of generality that $R_{1}$ is large enough in terms of the geometry
of $(\mathcal{D},g)$.

We define the following ``distorted'' time functions on $\mathcal{D}$:

\begin{equation}
t_{+}=t-\frac{1}{2}\text{\textgreek{q}}_{1}(r)(r-R_{1}),\label{eq:DistortedTime+}
\end{equation}

\begin{equation}
t_{-}=t+\frac{1}{2}\text{\textgreek{q}}_{1}(r)(r-R_{1}).\label{eq:DistortedTime-}
\end{equation}
Note that the level sets of these time functions are indeed spacelike
hypersurfaces (provided $R_{1}\gg1$),%
\footnote{terminating at spacelike infinity%
} and they agree with the level sets of $t$ on $\{r\le R_{1}\}$.
Moreover: 
\begin{equation}
\{t_{+}=0\}\subset J^{+}(\{t=0\})\subset J^{+}(\{t_{-}=0\}).
\end{equation}
The reason for introducing these distorted time functions will become
apparent in the next section, when we will need to exchange decay
in time with decay in $r$. 

We will now describe the cut off of procedure in the time variable:
Let $\text{\textgreek{q}}_{2}:\mathbb{R}\rightarrow[0,1]$ be a smooth
function such that $\text{\textgreek{q}}_{2}\equiv1$ on $[1,+\infty)$
and $\text{\textgreek{q}}_{2}\equiv0$ on $(-\infty,0]$. Then for
any given $t^{*}>0$, we define the smooth cut-off function $h_{t^{*}}:\mathcal{D}\rightarrow[0,1]$:

\begin{equation}
h_{t^{*}}=h_{t^{*}}(t,r)=\text{\textgreek{q}}_{2}(t_{-}(t,r))\text{\textgreek{q}}_{2}(t^{*}-t_{+}(t,r)).\label{eq:PhysicalSpaceCutOff}
\end{equation}
We observe that $supp(h_{t^{*}})\subseteq\{t^{+}\le t^{*}\}\cap\{t^{-}\ge0\}$
and $supp(\nabla h_{t^{*}})\subseteq\{0\le t_{-}\le1\}\cup\{t^{*}-1\le t_{+}\le t^{*}\}$.
It is also readily verified that $\sup|\nabla h_{t^{*}}|,|\nabla^{2}h_{t^{*}}|\le C$
for a constant $C$ independent of $t^{*}$. 

Given a positive real number $t^{*}$ as before, we will define the
time cut-off $\text{\textgreek{y}}_{t^{*}}$ of \textgreek{y} as 

\begin{equation}
\text{\textgreek{y}}_{t^{*}}=h_{t^{*}}\cdot\text{\textgreek{y}}.\label{eq:Cut-OffDefinition}
\end{equation}
 Observe that $\text{\textgreek{y}}_{t^{*}}$ satisfies the equation
\begin{equation}
\square_{g}\text{\textgreek{y}}_{t^{*}}=F,
\end{equation}
where 
\begin{equation}
F=\partial^{\text{\textgreek{m}}}h_{t^{*}}\cdot\partial_{\text{\textgreek{m}}}\text{\textgreek{y}}+(\square_{g}h_{t^{*}})\cdot\text{\textgreek{y}}\label{eq:SourceTerm}
\end{equation}
is supported in $\{0\le t_{-}\le1\}\cup\{t^{*}-1\le t_{+}\le t^{*}\}$. 

In view of the assumption that the initial data for $\text{\textgreek{y}}$
on $\{t=0\}$ are supported in a set of the form $\{r\le R_{sup}\}$
(see Section \ref{sub:Remark}), we infer that $\text{\textgreek{y}}_{t^{*}}$
is supported in a cylinder of the form $\{r\lesssim R_{sup}+t^{*}\}$.
This fact will serve to show that some spacetime integrals of $\text{\textgreek{y}}_{t^{*}}$
are well defined at various points throughout the proof.

Note that for any $\text{\textgreek{t}}\in[0,t^{*}]$:

\begin{align}
\int_{t=\text{\textgreek{t}}}J_{\text{\textgreek{m}}}^{N}(\text{\textgreek{y}}_{t^{*}})n^{\text{\textgreek{m}}} & \le C\cdot\int_{t=\text{\textgreek{t}}}\big(h_{t^{*}}J_{\text{\textgreek{m}}}^{N}(\text{\textgreek{y}})n^{\text{\textgreek{m}}}+|\nabla h_{t^{*}}|\cdot|\text{\textgreek{y}}|^{2}\big)\\
 & \le C\cdot\big(\int_{t=\text{\textgreek{t}}}h_{t^{*}}J_{\text{\textgreek{m}}}^{N}(\text{\textgreek{y}})n^{\text{\textgreek{m}}}+\int_{\{t=\text{\textgreek{t}}\}\cap\{r\le2R_{1}\}}|\text{\textgreek{y}}|^{2}\big)\nonumber \\
 & \le C\cdot\int_{t=\text{\textgreek{t}}}J_{\text{\textgreek{m}}}^{N}(\text{\textgreek{y}})n^{\text{\textgreek{m}}}\le C\cdot\int_{t=0}J_{\text{\textgreek{m}}}^{N}(\text{\textgreek{y}})n^{\text{\textgreek{m}}}.\nonumber 
\end{align}

We will also need to perform a cut-off procedure in the frequency
domain: Let $\text{\textgreek{w}}_{0}>0$ be a (small) positive constant,
and $\text{\textgreek{w}}_{+}\gg\text{\textgreek{w}}_{0}$ a (large)
positive constant. We decompose the interval $[\text{\textgreek{w}}_{0},\text{\textgreek{w}}_{+}]$
into a finite number of closed intervals $\{[\text{\textgreek{w}}_{k},\text{\textgreek{w}}_{k+1}]\}_{k=0}^{n-1}$
such that $\text{\textgreek{w}}_{n}=\text{\textgreek{w}}_{+}$ and
$\frac{1}{4}\text{\textgreek{w}}_{0}<\text{\textgreek{w}}_{k+1}-\text{\textgreek{w}}_{k}<\frac{1}{2}\text{\textgreek{w}}_{0}$.
Note that $n\sim\frac{\text{\textgreek{w}}_{+}}{\text{\textgreek{w}}_{0}}$.
We will also set for $-n\le k\le-1$: $\text{\textgreek{w}}_{k}=-\text{\textgreek{w}}_{-k}$.

Fix a smooth function $\text{\textgreek{q}}_{3}:\mathbb{R}\rightarrow[0,1]$
such that $\text{\textgreek{q}}_{3}\equiv1$ on $[-1,1]$ and $\text{\textgreek{q}}_{3}\equiv0$
outside $(-\frac{9}{8},\frac{9}{8})$, and define the smooth cut-off
function $\text{\textgreek{z}}_{\le\text{\textgreek{w}}_{+}}:\mathbb{R}\rightarrow[0,1]$,
\begin{equation}
\text{\textgreek{z}}_{\le\text{\textgreek{w}}_{+}}(\text{\textgreek{w}})=\text{\textgreek{q}}_{3}(\frac{\text{\textgreek{w}}}{\text{\textgreek{w}}_{+}}).
\end{equation}
We also set $\text{\textgreek{z}}_{\ge\text{\textgreek{w}}_{+}}=1-\text{\textgreek{z}}_{\le\text{\textgreek{w}}_{+}}$.
These two functions will be used to split $\text{\textgreek{y}}_{t^{*}}$
into a high frequency component $\text{\textgreek{y}}_{\ge\text{\textgreek{w}}_{+}}$
and a low frequency component $\text{\textgreek{y}}_{\le\text{\textgreek{w}}_{+}}$. 

As we remarked in Section \ref{sub:SketchOfProof}, we will need to
perform a finer frequency cut-off on the low frequency component $\text{\textgreek{y}}_{\le\text{\textgreek{w}}_{+}}$.
To this end, we also define the following cut-off functions on $\mathbb{R}$:
\begin{equation}
\tilde{\text{\textgreek{z}}}_{0}(\text{\textgreek{w}})=\text{\textgreek{q}}_{3}(\frac{\text{\textgreek{w}}}{\text{\textgreek{w}}_{0}})
\end{equation}
and for $1\le k\le n$: 
\begin{equation}
\tilde{\text{\textgreek{z}}}_{k}(\text{\textgreek{w}})=\text{\textgreek{q}}_{3}(\frac{\text{\textgreek{w}}-\frac{\text{\textgreek{w}}_{k-1}+\text{\textgreek{w}}_{k}}{2}}{\frac{1}{2}(\text{\textgreek{w}}_{k}-\text{\textgreek{w}}_{k-1})}).
\end{equation}
For $-n\le k\le-1$, we set $\tilde{\text{\textgreek{z}}}_{k}(\text{\textgreek{w}})\doteq\tilde{\text{\textgreek{z}}}_{-k}(-\text{\textgreek{w}})$.
Finally, for $-n\le k\le n$, we define the functions 
\begin{equation}
\text{\textgreek{z}}_{k}(\text{\textgreek{w}})=\cfrac{\tilde{\text{\textgreek{z}}}_{k}(\text{\textgreek{w}})}{\sum_{i=-n}^{n}\tilde{\text{\textgreek{z}}}_{i}(\text{\textgreek{w}})}.\label{eq:zetak}
\end{equation}

The properties of the functions $\text{\textgreek{z}}_{k}$ that we
will need are the following:

\medskip{}

\begin{itemize}
\item $supp(\text{\textgreek{z}}_{\le\text{\textgreek{w}}_{+}})\subseteq[-\frac{9}{8}\text{\textgreek{w}}_{+},\frac{9}{8}\text{\textgreek{w}}_{+}]\mbox{, }supp(\text{\textgreek{z}}_{\ge\text{\textgreek{w}}_{+}})\subseteq(-\infty,-\text{\textgreek{w}}_{+}]\cup[\text{\textgreek{w}}_{+},+\infty)$
\item $supp(\text{\textgreek{z}}_{0})\subseteq[-\frac{9}{8}\text{\textgreek{w}}_{0},\frac{9}{8}\text{\textgreek{w}}_{0}]$
\item For $1\le k\le n$: \\
$supp(\text{\textgreek{z}}_{k})\subseteq[\text{\textgreek{w}}_{k-1}-\frac{\text{\textgreek{w}}_{0}}{8},\text{\textgreek{w}}_{k}+\frac{\text{\textgreek{w}}_{0}}{8}]$ 
\item For $-n\le k\le-1$:\\
$supp(\text{\textgreek{z}}_{k})\subseteq[-\text{\textgreek{w}}_{k}-\frac{\text{\textgreek{w}}_{0}}{8},-\text{\textgreek{w}}_{k-1}+\frac{\text{\textgreek{w}}_{0}}{8}]$
\item $\sum_{j=-n}^{n}\text{\textgreek{z}}_{j}\equiv1$ on $[-\text{\textgreek{w}}_{+},\text{\textgreek{w}}_{+}]$.
\end{itemize}
\medskip{}

For any smooth function $\text{\textgreek{Y}}:\mathcal{D}\rightarrow\mathbb{C}$
such that $\text{\textgreek{Y}}(t,x)$ has compact support in $t$
for any fixed $x\in\text{\textgreek{S}}\cap\mathcal{D}$ (having used
the identification of $\mathcal{D}\backslash\mathcal{H}^{-}$ with
$\mathbb{R}\times(\text{\textgreek{S}}\cap\mathcal{D})$, see Section
\ref{sub:CoordinateCharts}), we will denote the Fourier transform
of $\text{\textgreek{Y}}$ in the $t$ coordinate by $\hat{\text{\textgreek{Y}}}$,
and the inverse Fourier transform in $t$ with $\check{\text{\textgreek{Y}}}$. 

With this notation, we define the following partition of $\text{\textgreek{y}}_{t^{*}}$
(defined as (\ref{eq:Cut-OffDefinition})) into frequency decomposed
components:
\begin{itemize}
\item $\text{\textgreek{y}}_{\le\text{\textgreek{w}}_{+}}(t,\cdot)\doteq\int_{-\infty}^{\infty}\text{\textgreek{z}}_{\le\text{\textgreek{w}}_{+}}(\text{\textgreek{w}})\cdot e^{i\text{\textgreek{w}}t}\hat{\text{\textgreek{y}}}_{t^{*}}(\text{\textgreek{w}},\cdot)\, d\text{\textgreek{w}}$
\item $\text{\textgreek{y}}_{\ge\text{\textgreek{w}}_{+}}(t,\cdot)\doteq\int_{-\infty}^{\infty}\text{\textgreek{z}}_{\ge\text{\textgreek{w}}_{+}}(\text{\textgreek{w}})\cdot e^{i\text{\textgreek{w}}t}\hat{\text{\textgreek{y}}}_{t^{*}}(\text{\textgreek{w}},\cdot)\, d\text{\textgreek{w}}=\text{\textgreek{y}}_{t^{*}}(t,\cdot)-\text{\textgreek{y}}_{\le\text{\textgreek{w}}_{+}}(t,\cdot)$
\end{itemize}
\noindent and we decompose $\text{\textgreek{y}}_{\le\text{\textgreek{w}}_{+}}$
further:
\begin{itemize}
\item $\text{\textgreek{y}}_{k}(t,\cdot)\doteq\int_{-\infty}^{\infty}\text{\textgreek{z}}_{k}(\text{\textgreek{w}})\cdot e^{i\text{\textgreek{w}}t}\hat{\text{\textgreek{y}}}_{\le\text{\textgreek{w}}_{+}}(\text{\textgreek{w}},\cdot)\, d\text{\textgreek{w}}=\int_{-\infty}^{\infty}\text{\textgreek{z}}_{\le\text{\textgreek{w}}_{+}}(\text{\textgreek{w}})\cdot\text{\textgreek{z}}_{k}(\text{\textgreek{w}})\cdot e^{i\text{\textgreek{w}}t}\hat{\text{\textgreek{y}}}_{t^{*}}(\text{\textgreek{w}},\cdot)\, d\text{\textgreek{w}}$
for $-n\le k\le n$.
\end{itemize}
\noindent Note that $\text{\textgreek{y}}_{\le\text{\textgreek{w}}_{+}}+\text{\textgreek{y}}_{\ge\text{\textgreek{w}}_{+}}=\text{\textgreek{y}}_{t^{*}}$
and $\sum_{k=-n}^{n}\text{\textgreek{y}}_{k}=\text{\textgreek{y}}_{\le\text{\textgreek{w}}_{+}}$.

In the same way, we will decompose $F$ (defined as (\ref{eq:SourceTerm}))
in $F_{k}$, $F_{\le\text{\textgreek{w}}_{+}}$, $F_{\ge\text{\textgreek{w}}_{+}}$.
Due to the linearity of the cut-off operators, we have $\square_{g}\text{\textgreek{y}}_{k}=F_{k}$,
$\square_{g}\text{\textgreek{y}}_{\le\text{\textgreek{w}}_{+}}=F_{\le\text{\textgreek{w}}_{+}}$,
$\square_{g}\text{\textgreek{y}}_{\ge\text{\textgreek{w}}_{+}}=F_{\ge\text{\textgreek{w}}_{+}}$.

Note also that since $\text{\textgreek{y}}_{t^{*}}$ is supported
in the cylinder $\{r\lesssim R_{sup}+t^{*}\}$, the same is also true
for the functions $\text{\textgreek{y}}_{k},\text{\textgreek{y}}_{\le\text{\textgreek{w}}_{+}},\text{\textgreek{y}}_{\ge\text{\textgreek{w}}_{+}}$.

\subsection{\label{sub:BoundsForPsiK}Bounds for the frequency-decomposed components}

In this section, we will establish some useful estimates for the energy
of $\text{\textgreek{y}}_{k},\text{\textgreek{y}}_{\le\text{\textgreek{w}}_{+}},\text{\textgreek{y}}_{\ge\text{\textgreek{w}}_{+}}$
(as well as for the ``error'' terms $F_{k},F_{\le\text{\textgreek{w}}_{+}},F_{\ge\text{\textgreek{w}}_{+}}$)
in terms of the initial energy of $\text{\textgreek{y}}$. 

We will start by producing some basic estimates for the projection
operators $\text{\textgreek{z}}_{k},\text{\textgreek{z}}_{\le\text{\textgreek{w}}_{+}}$.
Since the functions $\text{\textgreek{z}}_{\le\text{\textgreek{w}}_{+}}$
and $\text{\textgreek{z}}_{k}\cdot\text{\textgreek{z}}_{\le\text{\textgreek{w}}_{+}}$
are smooth with compact support, their inverse Fourier transforms
$h_{\le\text{\textgreek{w}}_{+}}$ and $h_{k}$ are Schwartz functions.
The following lemma establishes some Schwartz bounds for $h_{\le\text{\textgreek{w}}_{+}},h_{k}$.
In view of the fact that 
\begin{equation}
\text{\textgreek{y}}_{\le\text{\textgreek{w}}_{+}}(t,\cdot)=\int_{-\infty}^{\infty}h_{\le\text{\textgreek{w}}_{+}}(t-s)\cdot\text{\textgreek{y}}_{t^{*}}(s,\cdot)\, ds,
\end{equation}
 and for $-n\le k\le n$: 
\begin{equation}
\text{\textgreek{y}}_{k}(t,\cdot)=\int_{-\infty}^{\infty}h_{k}(t-s)\cdot\text{\textgreek{y}}_{t^{*}}(s,\cdot)\, ds,
\end{equation}
 the Schwartz bounds for $h_{\le\text{\textgreek{w}}_{+}}$ and $h_{k}$
will then be used to establish useful estimates for the functions
$\text{\textgreek{y}}_{k},\text{\textgreek{y}}_{\le\text{\textgreek{w}}_{+}},\text{\textgreek{y}}_{\ge\text{\textgreek{w}}_{+}}$.
\begin{lem}
\label{lem:SchwartzBoundsForProjections} For $\text{\textgreek{w}}_{0}\le1$
and $\text{\textgreek{w}}_{+}\ge1$, the convolution kernels $h_{\le\text{\textgreek{w}}_{+}}$
and $h_{k}$ (for $-n\le k\le n$) satisfy:
\end{lem}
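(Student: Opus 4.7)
The plan is to reduce the lemma to standard inverse-Fourier-transform bounds for smooth bump functions, keeping careful track of the two scales $\omega_+$ and $\omega_0$ that enter. By construction, $h_{\le\text{\textgreek{w}}_+}$ is the inverse Fourier transform of $\text{\textgreek{z}}_{\le\text{\textgreek{w}}_+}(\text{\textgreek{w}})=\text{\textgreek{q}}_3(\text{\textgreek{w}}/\text{\textgreek{w}}_+)$, which is smooth with support contained in $[-\tfrac{9}{8}\text{\textgreek{w}}_+,\tfrac{9}{8}\text{\textgreek{w}}_+]$. Since $\|\partial_\text{\textgreek{w}}^N \text{\textgreek{z}}_{\le\text{\textgreek{w}}_+}\|_{L^\infty}\lesssim_N \text{\textgreek{w}}_+^{-N}$, the $L^1$ norm of the $N$-th derivative is $\lesssim_N \text{\textgreek{w}}_+^{1-N}$. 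Integrating by parts $N$ times in the defining integral, and combining with the trivial bound $|h_{\le\text{\textgreek{w}}_+}(t)|\lesssim \text{\textgreek{w}}_+$, I would obtain the envelope
\[
|h_{\le\text{\textgreek{w}}_+}(t)|\lesssim_N \frac{\text{\textgreek{w}}_+}{(1+\text{\textgreek{w}}_+|t|)^N}.
\]
The same bound for $\partial_t^\ell h_{\le\text{\textgreek{w}}_+}$ follows by the fact that multiplication by $(i\text{\textgreek{w}})^\ell$ on the Fourier side contributes an extra factor $\text{\textgreek{w}}_+^\ell$ in the $L^1$ estimate.

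For $h_k$ with $k\neq 0$, I would first factor out the oscillation. By the definition of $\tilde{\text{\textgreek{z}}}_k$, the multiplier $\text{\textgreek{z}}_k\cdot\text{\textgreek{z}}_{\le\text{\textgreek{w}}_+}$ is supported in an interval of length comparable to $\text{\textgreek{w}}_0$, centered at some $\text{\textgreek{w}}_k^\ast$ with $|\text{\textgreek{w}}_k^\ast|\le \tfrac{9}{8}\text{\textgreek{w}}_+$. Writing $\text{\textgreek{z}}_k(\text{\textgreek{w}})\text{\textgreek{z}}_{\le\text{\textgreek{w}}_+}(\text{\textgreek{w}})=\tilde{\text{\textgreek{x}}}_k(\text{\textgreek{w}}-\text{\textgreek{w}}_k^\ast)$, the translation rule for the Fourier transform gives $h_k(t)=e^{i\text{\textgreek{w}}_k^\ast t}\,\check{\tilde{\text{\textgreek{x}}}}_k(t)$. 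The same integration-by-parts argument as above, now with $\text{\textgreek{w}}_0$ replacing $\text{\textgreek{w}}_+$, yields the envelope
\[
|h_k(t)|\lesssim_N \frac{\text{\textgreek{w}}_0}{(1+\text{\textgreek{w}}_0|t|)^N},
\]
and analogously for $h_0$ directly without modulation. The bounds on $\partial_t^\ell h_k$ then acquire a factor $\text{\textgreek{w}}_+^\ell$ (rather than $\text{\textgreek{w}}_0^\ell$), since the dominant contribution to $\partial_t^\ell$ comes from differentiating the phase $e^{i\text{\textgreek{w}}_k^\ast t}$ when $|\text{\textgreek{w}}_k^\ast|\sim \text{\textgreek{w}}_+$.

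The only genuinely new point to verify is that the normalization in (\ref{eq:zetak}) does not spoil these derivative bounds for $\text{\textgreek{z}}_k$. Here I would use that on any point $\text{\textgreek{w}}\in\mathbb{R}$ the denominator $\sum_{i=-n}^n \tilde{\text{\textgreek{z}}}_i(\text{\textgreek{w}})$ is bounded below by a positive absolute constant (by the choice of the $\text{\textgreek{w}}_k$ with spacing between $\tfrac{1}{4}\text{\textgreek{w}}_0$ and $\tfrac{1}{2}\text{\textgreek{w}}_0$ and the fact that the unscaled bump $\text{\textgreek{q}}_3$ is identically $1$ on $[-1,1]$), and that, since $\partial_\text{\textgreek{w}}^N\tilde{\text{\textgreek{z}}}_i$ is bounded by $C_N\text{\textgreek{w}}_0^{-N}$, the Leibniz and quotient rules propagate the same $\text{\textgreek{w}}_0^{-N}$ bound to $\partial_\text{\textgreek{w}}^N\text{\textgreek{z}}_k$.

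I do not expect a serious obstacle: once the scalings are set up correctly, the lemma is a book-keeping exercise in one-dimensional Fourier analysis. The only point requiring a little care is the uniform control of the quotient in (\ref{eq:zetak}) across all values of $k$ and of the parameters $\text{\textgreek{w}}_0,\text{\textgreek{w}}_+$, which is exactly why the standing hypotheses $\text{\textgreek{w}}_0\le 1\le \text{\textgreek{w}}_+$ and the spacing restriction $\tfrac{1}{4}\text{\textgreek{w}}_0<\text{\textgreek{w}}_{k+1}-\text{\textgreek{w}}_k<\tfrac{1}{2}\text{\textgreek{w}}_0$ have been imposed.
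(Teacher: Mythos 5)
Your proposal is correct and uses essentially the same approach as the paper: control the $L^1$ (or $L^\infty$) norms of the $\omega$-derivatives of $\text{\textgreek{z}}_{\le\text{\textgreek{w}}_+}$ and $\text{\textgreek{z}}_k\cdot\text{\textgreek{z}}_{\le\text{\textgreek{w}}_+}$, tracking the $\text{\textgreek{w}}_+$ and $\text{\textgreek{w}}_0$ scalings separately, and pass to the Schwartz decay of $h_{\le\text{\textgreek{w}}_+}$ and $h_k$ by integration by parts (the paper simply writes $h_{\le\text{\textgreek{w}}_+}(t)=\text{\textgreek{w}}_+\check{\text{\textgreek{q}}}_3(\text{\textgreek{w}}_+t)$ directly and estimates $\int\big|\tfrac{d^q}{d\text{\textgreek{w}}^q}\{\text{\textgreek{z}}_k\text{\textgreek{z}}_{\le\text{\textgreek{w}}_+}\}\big|\lesssim_q\text{\textgreek{w}}_0^{1-q}$, which is the same bookkeeping). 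Your explicit factoring of the modulation $e^{i\text{\textgreek{w}}_k^\ast t}$ and your spelled-out verification that the normalization in (\ref{eq:zetak}) preserves the $\text{\textgreek{w}}_0^{-N}$ derivative bounds are minor stylistic additions (the latter is left implicit in the paper), and your remarks on $\partial_t^\ell h_k$ go beyond what the lemma actually asserts but do no harm.
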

\begin{equation}
\sup_{t}\big(\text{\textgreek{w}}_{+}^{-1}(1+|\text{\textgreek{w}}_{+}t|)^{q}|h_{\le\text{\textgreek{w}}_{+}}(t)|\big)\le C_{q}\label{eq:SchwartzBounds1}
\end{equation}
 and

\begin{equation}
\sup_{t}\big(\text{\textgreek{w}}_{0}^{-1}(1+|\text{\textgreek{w}}_{0}t|)^{q}|h_{k}(t)|\big)\le C_{q}.\label{eq:SchwartzBounds2}
\end{equation}

\begin{proof}
From the definition $\text{\textgreek{z}}_{\le\text{\textgreek{w}}_{+}}(\text{\textgreek{w}})=\text{\textgreek{q}}_{3}(\frac{\text{\textgreek{w}}}{\text{\textgreek{w}}_{+}})$,
we compute that $h_{\le\text{\textgreek{w}}_{+}}(t)=\text{\textgreek{w}}_{+}\cdot\check{\text{\textgreek{q}}}_{3}(\text{\textgreek{w}}_{+}t)$,
where $\check{\text{\textgreek{q}}}_{3}$ is a Schwartz function being
the inverse Fourier transform of the compactly supported $\text{\textgreek{q}}_{3}$.
Hence, we can bound for each $q\in\mathbb{N}$:

\begin{equation}
\sup_{t}\big(\text{\textgreek{w}}_{+}^{-1}(1+|\text{\textgreek{w}}_{+}t|)^{q}|h_{\le\text{\textgreek{w}}_{+}}(t)|\big)\le C_{q}.\label{eq:SchwartzBounds1-1}
\end{equation}

Similarly, due to the definition of $\text{\textgreek{z}}_{k}$ (\ref{eq:zetak})
and the bounds $\frac{1}{4}\text{\textgreek{w}}_{0}<\text{\textgreek{w}}_{k+1}-\text{\textgreek{w}}_{k}<\frac{1}{2}\text{\textgreek{w}}_{0}$,
we also have for each $q\in\mathbb{N}$: 
\begin{equation}
\int_{-\infty}^{\infty}\Big|\frac{d^{q}}{d\text{\textgreek{w}}^{q}}\{\text{\textgreek{z}}_{k}\cdot\text{\textgreek{z}}_{\le\text{\textgreek{w}}_{+}}\}(\text{\textgreek{w}})\Big|\, d\text{\textgreek{w}}\le C_{q}\cdot\sum_{i=0}^{q}\int_{-\infty}^{\infty}|\text{\textgreek{z}}_{k}^{(i)}(\text{\textgreek{w}})|\cdot|\text{\textgreek{z}}_{\le\text{\textgreek{w}}_{+}}^{(q-i)}(\text{\textgreek{w}})|\, d\text{\textgreek{w}}\le C_{q}\text{\textgreek{w}}_{0}^{1-q}.
\end{equation}
 In the above, the constant $C_{q}$ does not depend on $k$, since
$\sup_{\text{\textgreek{w}}}|\text{\textgreek{z}}_{\le\text{\textgreek{w}}_{+}}^{(l)}(\text{\textgreek{w}})|\le C_{l}\text{\textgreek{w}}_{+}^{-l}\le C_{l}$
and $\int_{-\infty}^{\infty}|\text{\textgreek{z}}_{k}^{(l)}(\text{\textgreek{w}})|\, d\text{\textgreek{w}}\le C_{l}\cdot\text{\textgreek{w}}_{0}^{-l+1}$. 

Thus, since $h_{k}=\check{(\text{\textgreek{z}}_{k}\cdot\text{\textgreek{z}}_{\le\text{\textgreek{w}}_{+}})}$,
we can bound for each $q\in\mathbb{N}$, $-n\le k\le n$:
\[
\sup_{t}|(\text{\textgreek{w}}_{0}t){}^{q}\cdot h_{k}(t)|\le C\cdot\int_{-\infty}^{\infty}\text{\textgreek{w}}_{0}^{q}\Big|\frac{d^{q}}{d\text{\textgreek{w}}^{q}}\{\text{\textgreek{z}}_{k}\cdot\text{\textgreek{z}}_{\le\text{\textgreek{w}}_{+}}\}(\text{\textgreek{w}})\Big|\, d\text{\textgreek{w}}\le C_{q}\text{\textgreek{w}}_{0}^{1-q+q}=C_{q}\text{\textgreek{w}}_{0}
\]
 and hence for any $q\in\mathbb{N}$ we obtain the desired estimate:

\begin{equation}
\sup_{t}\big(\text{\textgreek{w}}_{0}^{-1}(1+|\text{\textgreek{w}}_{0}t|)^{q}|h_{k}(t)|\big)\le C_{q}.\label{eq:SchwartzBounds2-1}
\end{equation}

\end{proof}
\medskip{}

Let us also state a straightforward lemma that will be used frequently
throughout this paper:
\begin{lem}
\label{lem:ComparisonTT-}For any $q\ge0$, there exist constants
$c_{q},C_{q}$, such that for any solution $\text{\textgreek{y}}$
to the wave equation $\square_{g}\text{\textgreek{y}}=0$ on $\mathcal{D}$
and any $\text{\textgreek{t}}\in\mathbb{R}$ we can bound
\begin{equation}
c_{q}\cdot\int_{\{t_{-}=\text{\textgreek{t}}\}}r^{q}\cdot J_{\text{\textgreek{m}}}^{N}(\text{\textgreek{y}})n^{\text{\textgreek{m}}}\le\int_{\{t=\text{\textgreek{t}}\}}r^{q}\cdot J_{\text{\textgreek{m}}}^{N}(\text{\textgreek{y}})n^{\text{\textgreek{m}}}\le C_{q}\cdot\int_{\{t_{-}=\text{\textgreek{t}}\}}r^{q}\cdot J_{\text{\textgreek{m}}}^{N}(\text{\textgreek{y}})n^{\text{\textgreek{m}}}.\label{eq:ComparisonEnergy}
\end{equation}

In paticular, for $q=0$ we have the identity
\begin{equation}
\int_{\{t_{-}=\text{\textgreek{t}}\}}J_{\text{\textgreek{m}}}^{N}(\text{\textgreek{y}})n^{\text{\textgreek{m}}}=\int_{\{t=\text{\textgreek{t}}\}}J_{\text{\textgreek{m}}}^{N}(\text{\textgreek{y}})n^{\text{\textgreek{m}}}.\label{eq:EnergyIdentity}
\end{equation}
\end{lem}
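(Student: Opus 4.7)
The plan is to exploit two structural facts already arranged in Assumptions 1--3: (a) $N \equiv T$ outside $\{r \le 2r_0\}$, hence on the entire region where the hypersurfaces $\{t = \tau\}$ and $\{t_- = \tau\}$ fail to coincide (since by the convention of Section \ref{sub:CoordinateCharts} $R_1$ may be taken large compared to $r_0$), so that $J^N = J^T$ there; and (b) $T$ is Killing, hence $J^T$ is divergence-free when $\square_g \psi = 0$.

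First, I would establish the identity \eqref{eq:EnergyIdentity} ($q = 0$) directly. On $\{r \le R_1\}$ the two hypersurfaces agree pointwise (the cutoff $\theta_1$ vanishes there), so the integrands are identical. On $\{r \ge R_1\}$ it remains to compare the $J^T$-fluxes. Apply the divergence theorem to the conserved current $J^T$ on the spacetime slab
$$\Omega_\tau = \bigl\{(t, \cdot) : \tau - \tfrac{1}{2}\theta_1(r)(r - R_1) \le t \le \tau\bigr\} \cap \{r \ge R_1\},$$
bounded by $\{t = \tau\} \cap \{r \ge R_1\}$ and $\{t_- = \tau\} \cap \{r \ge R_1\}$; the two hypersurfaces meet along the codimension-two surface $\{t = \tau, r = R_1\}$, so no lateral contribution arises at $r = R_1$. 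Under the reduction to compactly supported initial data (Section \ref{sub:Remark}), the asymptotic contribution vanishes as well, and one obtains exactly \eqref{eq:EnergyIdentity}.

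For $q > 0$, I would apply the divergence theorem instead to the non-conserved current $r^q J^T$ on $\Omega_\tau$. Since $r$ is $T$-invariant, the divergence reduces to $q r^{q-1}(\nabla r)^\mu J^T_\mu$, which in the asymptotic region is pointwise dominated by $C r^{q-1} (J^T_\mu T^\mu)$, using the uniform timelike character of $T$ and the dominant-energy property of the wave stress-energy tensor. To convert this bulk bound into the two-sided inequality \eqref{eq:ComparisonEnergy}, I would localize in $r$ via a dyadic partition of unity $\{\chi_j\}_{j \ge 0}$ with $\chi_j$ supported on $\{r \sim 2^j R_1\}$. On $\operatorname{supp} \chi_j$ the weight $r^q$ is essentially constant $\sim (2^j R_1)^q$, so
$$\int_{\{t=\tau\}} r^q J^N_\mu n^\mu \sim \sum_j (2^j R_1)^q \int_{\{t=\tau\}} \chi_j J^T_\mu n^\mu,$$
and analogously for $\{t_- = \tau\}$. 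Running the $q = 0$ argument on the localized current $\chi_j J^T$ introduces an additional term from $\chi_j'$ of size $O((2^j R_1)^{-1})$ that, combined with the $t$-extent $\sim 2^j R_1$ of $\Omega_\tau$ over this annulus, yields an $O(1)$-bound tying the annular fluxes on the two hypersurfaces (with a cross-coupling only to immediately neighboring $\chi_{j \pm 1}$ terms). Summing in $j$ with the weights $(2^j R_1)^q$ then gives \eqref{eq:ComparisonEnergy} with constants $c_q, C_q$ depending only on $q$ and the asymptotic geometry.

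The principal technical obstacle is this dyadic absorption step: one must verify that the cross-annulus couplings coming from the $\chi_j'$ remain uniformly summable after the multiplication by the $(2^j R_1)^q$ weights. This is where the precise balance among the annular width, $|\chi_j'| \sim (2^j R_1)^{-1}$, the $t$-slab width of $\Omega_\tau$, and the weight $r^q$ becomes essential; these three scales conspire so that the coupling constants are $j$-independent, and the summation closes against the boundary weighted fluxes.
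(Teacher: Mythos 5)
Your $q=0$ argument is exactly the paper's (both hypersurfaces coincide for $r\le R_1$, and for $r\ge R_1$ one integrates the conserved current $J^{T}=J^{N}$ over the slab between them, with no lateral contribution at $r=R_1$).

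For $q>0$ the dyadic-decomposition idea is also the same, but you execute it differently, and that difference opens a real gap. The paper never introduces a smooth cutoff: it applies the divergence theorem for $J^{T}$ to the \emph{causal diamonds} $J^{-}\big(\{t=\text{\textgreek{t}}\}\cap\{r\in I_{l}\}\big)\cap J^{+}\big(\{t_{-}=\text{\textgreek{t}}\}\big)$ (and their past-directed analogues). Because $T$ is timelike in that region, the flux through the lateral null boundary has a sign, so one reads off directly that the annular flux on $\{t=\text{\textgreek{t}}\}$ is dominated by the flux through the $(2Z+1)$ adjacent annuli on $\{t_{-}=\text{\textgreek{t}}\}$; the crucial input is the pair of \emph{causal inclusions} quantifying how far a dyadic annulus spreads between the two hypersurfaces. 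There is no bulk term to control.

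You instead apply the divergence theorem for the cut-off current $\text{\textgreek{q}}_{j}J^{T}$ on the full slab $\text{\textgreek{W}}_{\text{\textgreek{t}}}$, producing the bulk error $\int_{\text{\textgreek{W}}_{\text{\textgreek{t}}}}(\nabla^{\text{\textgreek{m}}}\text{\textgreek{q}}_{j})J^{T}_{\text{\textgreek{m}}}$. Your claim that the scaling $|\nabla\text{\textgreek{q}}_{j}|\sim(2^{j}R_{1})^{-1}$ against the $t$-extent $\sim2^{j}R_{1}$ "conspires" to give an $O(1)$ bound tied to the annular boundary fluxes is not justified as written. The bulk integral foliates as $\int_{t\in I}\big(\int_{\{t=s\}\cap A_{j}}|J^{T}|\big)\,ds$; conservation of $J^{T}$ alone only controls the \emph{global} flux through $\{t=s\}$, not the flux localised to the annulus $A_{j}$, because energy can stream into $A_{j}$ from neighbouring annuli between times $t_{-}=\text{\textgreek{t}}$ and $s$. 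To bound $\int_{\{t=s\}\cap A_{j}}J^{T}$ by the $\{t_{-}=\text{\textgreek{t}}\}$-flux over a bounded collection of nearby dyadic annuli, you need finite speed of propagation — i.e.~precisely the causal-diamond argument the paper uses directly. Once that domain-of-dependence step is supplied, your cutoff approach closes, but at that point it is a detour: one has reproduced the paper's argument and then added back an inessential bulk term. So the proposal, as it stands, is incomplete at the decisive step.
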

\begin{proof}
As we remarked earlier, $\{t_{-}=\text{\textgreek{t}}\}\cap\{r\le R_{1}\}\equiv\{t=\text{\textgreek{t}}\}\cap\{r\le R_{1}\}$,
and $N\equiv T$ for $r\ge R_{1}$. Hence, since $T$ is Killing and
$\text{\textgreek{y}}$ solves the equation $\square_{g}\text{\textgreek{y}}=0$,
the current $J_{\text{\textgreek{m}}}^{N}(\text{\textgreek{y}})=T_{\text{\textgreek{m}\textgreek{n}}}(\text{\textgreek{y}})N^{\text{\textgreek{n}}}$
is divergence free for $r\ge R_{1}$. Integrating, therefore, $\nabla^{\text{\textgreek{m}}}J_{\text{\textgreek{m}}}^{N}(\text{\textgreek{y}})=0$
in the domain bounded by $\{t_{-}=\text{\textgreek{t}}\}$ and $\{t=\text{\textgreek{t}}\}$
which lies entirely in the region $\{r\ge R_{1}\}$), identity (\ref{eq:EnergyIdentity})
follows immediately.

In order to show (\ref{eq:ComparisonEnergy}), we define for $l\in\mathbb{Z}$
the intervals $I_{l}=[2^{l},2^{l+1}]\subseteq\mathbb{R}_{+}$, for
$l\ge1$, $I_{0}=[0,2]$ for $l=0$, and $I_{l}=\emptyset$ for $l<0$.
We then compute that 
\begin{equation}
\int_{t=\text{\textgreek{t}}}r^{q}\cdot J_{\text{\textgreek{m}}}^{N}(\text{\textgreek{y}})n^{\text{\textgreek{m}}}\sim_{q}\,\sum_{l=0}^{\infty}2^{q\cdot l}\cdot\int_{\{t=\text{\textgreek{t}}\}\cap\{r\in I_{l}\}}J_{\text{\textgreek{m}}}^{N}(\text{\textgreek{y}})n^{\text{\textgreek{m}}},\label{eq:DyadicDecompositionEnergy}
\end{equation}
and similarly 
\begin{equation}
\int_{t_{-}=\text{\textgreek{t}}}r^{q}\cdot J_{\text{\textgreek{m}}}^{N}(\text{\textgreek{y}})n^{\text{\textgreek{m}}}\sim_{q}\,\sum_{l=0}^{\infty}2^{q\cdot l}\cdot\int_{\{t_{-}=\text{\textgreek{t}}\}\cap\{r\in I_{l}\}}J_{\text{\textgreek{m}}}^{N}(\text{\textgreek{y}})n^{\text{\textgreek{m}}}.\label{eq:DyadicDecompositionEnergy-1}
\end{equation}

Since $t_{-}=t+\frac{1}{2}\text{\textgreek{q}}_{1}(r)(r-R_{1})$,
we deduce that there exists an integer $Z>0$ such that for every
$\text{\textgreek{t}}\in\mathbb{R}$ and every $l\in\mathbb{N}$:
\begin{equation}
J^{+}\big(\{t_{-}=\text{\textgreek{t}}\}\cap\{r\in I_{l}\}\big)\cap\{t=\text{\textgreek{t}}\}\subseteq\{t=\text{\textgreek{t}}\}\cap\{r\in\cup_{i=l-Z}^{l+Z}I_{i}\}\label{eq:Relation1}
\end{equation}
 and
\begin{equation}
J^{-}\big(\{t=\text{\textgreek{t}}\}\cap\{r\in I_{l}\}\big)\cap\{t_{-}=\text{\textgreek{t}}\}\subseteq\{t_{-}=\text{\textgreek{t}}\}\cap\{r\in\cup_{i=l-Z}^{l+Z}I_{i}\}.\label{eq:Relation2}
\end{equation}

Since $\{t_{-}=\text{\textgreek{t}}\}\equiv\{t=\text{\textgreek{t}}\}$
for $r\le R_{1}$ and $T\equiv N$ for $r\ge R_{1}$, the inclusions
(\ref{eq:Relation1}) and (\ref{eq:Relation2}) imply, after integrating
the identity $\nabla^{\text{\textgreek{m}}}J_{\text{\textgreek{m}}}^{T}=0$
on $J^{-}\big(\{t=\text{\textgreek{t}}\}\cap\{r\in I_{l}\}\big)\cap J^{+}\big(\{t_{-}=\text{\textgreek{t}}\}\big)$,
that for any $l$:
\begin{equation}
\int_{\{t=\text{\textgreek{t}}\}\cap\{r\in I_{l}\}}J_{\text{\textgreek{m}}}^{N}(\text{\textgreek{y}})n^{\text{\textgreek{m}}}\le\int_{\{t_{-}=\text{\textgreek{t}}\}\cap\{r\in\cup_{i=l-Z}^{l+Z}I_{i}\}}J_{\text{\textgreek{m}}}^{N}(\text{\textgreek{y}})n^{\text{\textgreek{m}}}\label{eq:Relation3}
\end{equation}
 and similarly, after an integration on $J^{+}\big(\{t_{-}=\text{\textgreek{t}}\}\cap\{r\in I_{l}\}\big)\cap J^{-}\big(\{t=\text{\textgreek{t}}\}\big)$:
\begin{equation}
\int_{\{t_{-}=\text{\textgreek{t}}\}\cap\{r\in I_{l}\}}J_{\text{\textgreek{m}}}^{N}(\text{\textgreek{y}})n^{\text{\textgreek{m}}}\le\int_{\{t=\text{\textgreek{t}}\}\cap\{r\in\cup_{i=l-Z}^{l+Z}I_{i}\}}J_{\text{\textgreek{m}}}^{N}(\text{\textgreek{y}})n^{\text{\textgreek{m}}}.\label{eq:Relation4}
\end{equation}

Thus, in view of (\ref{eq:DyadicDecompositionEnergy}), (\ref{eq:DyadicDecompositionEnergy-1}),
(\ref{eq:Relation3}) and (\ref{eq:Relation4}) we can bound:

\begin{align}
\int_{t=\text{\textgreek{t}}}r^{q}\cdot J_{\text{\textgreek{m}}}^{N}(\text{\textgreek{y}})n^{\text{\textgreek{m}}} & \le C_{q}\cdot\sum_{l=0}^{\infty}2^{ql}\cdot\int_{\{t=\text{\textgreek{t}}\}\cap\{r\in I_{l}\}}J_{\text{\textgreek{m}}}^{N}(\text{\textgreek{y}})n^{\text{\textgreek{m}}}\\
 & \leq C_{q}\cdot\sum_{l=0}^{\infty}2^{ql}\cdot\int_{\{t_{-}=\text{\textgreek{t}}\}\cap\{r\in\cup_{i=l-Z}^{l+Z}I_{i}\}}J_{\text{\textgreek{m}}}^{N}(\text{\textgreek{y}})n^{\text{\textgreek{m}}}\nonumber \\
 & \le2ZC_{q}\cdot\sum_{l=0}^{\infty}2^{q(l+Z+1)}\cdot\int_{\{t_{-}=\text{\textgreek{t}}\}\cap\{r\in I_{l}\}}J_{\text{\textgreek{m}}}^{N}(\text{\textgreek{y}})n^{\text{\textgreek{m}}}\nonumber \\
 & =C_{q}\cdot2Z\cdot2^{q(Z+1)}\sum_{l=0}^{\infty}2^{ql}\cdot\int_{\{t_{-}=\text{\textgreek{t}}\}\cap\{r\in I_{l}\}}J_{\text{\textgreek{m}}}^{N}(\text{\textgreek{y}})n^{\text{\textgreek{m}}}\nonumber \\
 & \le C_{q}\cdot\int_{t_{-}=\text{\textgreek{t}}}r^{q}\cdot J_{\text{\textgreek{m}}}^{N}(\text{\textgreek{y}})n^{\text{\textgreek{m}}}\nonumber 
\end{align}
 and similarly

\begin{align}
\int_{t_{-}=\text{\textgreek{t}}}r^{q}\cdot J_{\text{\textgreek{m}}}^{N}(\text{\textgreek{y}})n^{\text{\textgreek{m}}} & \le C_{q}\cdot\sum_{l=0}^{\infty}2^{ql}\cdot\int_{\{t_{-}=\text{\textgreek{t}}\}\cap\{r\in I_{l}\}}J_{\text{\textgreek{m}}}^{N}(\text{\textgreek{y}})n^{\text{\textgreek{m}}}\\
 & \leq C_{q}\cdot\sum_{l=0}^{\infty}2^{ql}\cdot\int_{\{t=\text{\textgreek{t}}\}\cap\{r\in\cup_{i=l-Z}^{l+Z}I_{i}\}}J_{\text{\textgreek{m}}}^{N}(\text{\textgreek{y}})n^{\text{\textgreek{m}}}\nonumber \\
 & \le2ZC_{q}\cdot\sum_{l=0}^{\infty}2^{q(l+Z+1)}\cdot\int_{\{t=\text{\textgreek{t}}\}\cap\{r\in I_{l}\}}J_{\text{\textgreek{m}}}^{N}(\text{\textgreek{y}})n^{\text{\textgreek{m}}}\nonumber \\
 & \le C_{q}\cdot\int_{t=\text{\textgreek{t}}}r^{q}\cdot J_{\text{\textgreek{m}}}^{N}(\text{\textgreek{y}})n^{\text{\textgreek{m}}},\nonumber 
\end{align}
 thus reaching the desired inequality

\begin{equation}
c_{q}\cdot\int_{t_{-}=\text{\textgreek{t}}}r^{q}\cdot J_{\text{\textgreek{m}}}^{N}(\text{\textgreek{y}})n^{\text{\textgreek{m}}}\le\int_{t=\text{\textgreek{t}}}r^{q}\cdot J_{\text{\textgreek{m}}}^{N}(\text{\textgreek{y}})n^{\text{\textgreek{m}}}\le C_{q}\cdot\int_{t_{-}=\text{\textgreek{t}}}r^{q}\cdot J_{\text{\textgreek{m}}}^{N}(\text{\textgreek{y}})n^{\text{\textgreek{m}}}.
\end{equation}

\end{proof}
We will now establish some estimates for the error terms $F_{k},F_{\le\text{\textgreek{w}}_{+}},F_{\ge\text{\textgreek{w}}_{+}}$.
Note that from now on we will always assume without loss of generality
that $\text{\textgreek{w}}_{0}\le1$ and $\text{\textgreek{w}}_{+}\ge1$.
\begin{lem}
\label{lem:BoundF}We can bound for any $q,q'\in\mathbb{N}$ and any
$0\le t_{1}\le t_{2}\le t^{*}$: 
\begin{equation}
\int_{\mathcal{R}(t_{1},t_{2})}r^{q}|F_{k}|^{2}\le C_{q,q'}(\text{\textgreek{w}}_{0})\cdot\big((1+t_{1})^{-q'}+(1+t^{*}-t_{2})^{-q'}\big)\int_{t=0}J_{\text{\textgreek{m}}}^{N}(\text{\textgreek{y}})n^{\text{\textgreek{m}}}
\end{equation}
 for $-n\le k\le n$. The same inequality also holds for $F_{\le\text{\textgreek{w}}_{+}},F_{\ge\text{\textgreek{w}}_{+}}$
in place of $F_{k}$.\end{lem}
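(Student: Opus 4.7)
I would express $F_k$ as a convolution in the time variable: $F_k(t, x) = \int h_k(t - s)\, F(s, x)\, ds$, using the identification $\mathcal{D} \setminus \mathcal{H}^- \simeq \mathbb{R} \times \Sigma_0$. By Lemma~\ref{lem:SchwartzBoundsForProjections} applied with $q = 2$, $\|h_k\|_{L^1(\mathbb{R})} \le C$ uniformly in $k, \omega_0, \omega_+$, so Cauchy--Schwarz yields the pointwise bound $|F_k(t, x)|^2 \le C \int |h_k(t-s)|\, |F(s, x)|^2\, ds$. Integrating against $r^q\, d\mathrm{vol}$ over $\mathcal{R}(t_1, t_2)$ and swapping the order of integration produces
\[
\int_{\mathcal{R}(t_1, t_2)} r^q |F_k|^2 \;\le\; C \int_{-\infty}^\infty H(s)\, G(s)\, ds,
\]
where $H(s) \doteq \int_{t_1}^{t_2} |h_k(t - s)|\, dt$ and $G(s) \doteq \int_{\Sigma_s} r^q |F(s, \cdot)|^2$.

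For the temporal factor, I would use that $t_- \ge t$ and $t_+ \le t$, so the support of $F$ projects onto $\{s \le 1\} \cup \{s \ge t^* - 1\}$ in the $t$-axis. Applying Lemma~\ref{lem:SchwartzBoundsForProjections} with a large index $q'' = q''(q,q')$ and using the elementary inequality $(1 + a + b)^{-m} \le (1+a)^{-m/2}(1+b)^{-m/2}$ for $a, b \ge 0$, I would write $t_1 - s = (t_1 - 1) + (1 - s)$ (both non-negative for $s \le 1$, $t_1 \ge 2$) and note $1 + \omega_0(t_1 - 1) \ge c_{\omega_0}(1 + t_1)$, to obtain
\[
H(s) \le C_{q',q,\omega_0}\,(1 + t_1)^{-q'}\,(1 + \omega_0(1 - s))^{-(q+4)} \qquad (s \le 1,\ t_1 \ge 2),
\]
with a symmetric bound for $s \ge t^* - 1$, $t^* - t_2 \ge 2$. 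When $t_1 \in [0, 2]$ or $t^* - t_2 \in [0, 2]$ the prefactor $(1 + t_1)^{-q'}$ (resp.\ $(1 + t^* - t_2)^{-q'}$) is bounded below by a positive constant and the trivial estimate $H(s) \le \|h_k\|_{L^1} \le C$ is enough.

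For the spatial factor, the support of $F(s, \cdot)$ in Region~A ($0 \le t_- \le 1$) lies in the shell $\{\tfrac{1}{2}\theta_1(r)(r - R_1) \in [-s, 1 - s]\}$, which sits in $\{r \sim 1 + |s|\}$ for $s \le 0$ and in $\{r \le R_1 + 2\}$ for $s \in [0, 1]$, with symmetric statements in Region~B. Using the pointwise bound $|F|^2 \lesssim |\partial \psi|^2 + |\psi|^2$ (since $|\nabla h_{t^*}|$ and $|\Box h_{t^*}|$ are uniformly bounded) together with a Hardy inequality controlling $\int_{A_s} |\psi|^2$ by $C(1 + |s|)^2 \int |\partial \psi|^2$ on the relevant shell, one obtains
\[
G(s) \le C_q\,(1 + |s_A|)^{q+2} \int_{\Sigma_s} J^N_\mu(\psi)\, n^\mu,
\]
with $|s_A| = \max(-s, 0)$ in Region~A and $|s_A| = \max(s - t^*, 0)$ in Region~B. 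For $s \ge 0$, Assumption~\hyperref[Assumption 4]{4} supplies $\int_{\Sigma_s} J^N(\psi) n \le C \int_{\Sigma_0} J^N(\psi) n$. Inserting both bounds, the $s$-integral reduces to $\int_0^\infty (1 + \omega_0 u)^{-(q+4)}(1 + u)^{q+2}\, du \le C_{q,\omega_0}$, yielding the claimed bound for $F_k$.

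The same argument with $h_{\le \omega_+}$ in place of $h_k$ (whose Schwartz bound is supplied by Lemma~\ref{lem:SchwartzBoundsForProjections}) proves the estimate for $F_{\le \omega_+}$. For $F_{\ge \omega_+} = F - F_{\le \omega_+}$, when $t_1 > 1$ and $t_2 < t^* - 1$ the term $F$ vanishes identically on $\mathcal{R}(t_1, t_2)$ and the bound reduces to that of $F_{\le \omega_+}$; otherwise the prefactor $(1 + t_1)^{-q'} + (1 + t^* - t_2)^{-q'}$ is bounded below by a constant and the direct estimate $\int_{\mathcal{R}(t_1, t_2)} r^q |F|^2 \le C \int_{\Sigma_0} J^N(\psi) n$ (via the same spatial analysis) suffices. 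The \emph{main obstacle} is establishing the backward-in-time energy bound $\int_{\Sigma_s} J^N(\psi) n \le C \int_{\Sigma_0} J^N(\psi) n$ for $s < 0$ with constants independent of the initial-data support $R_{sup}$: Assumption~\hyperref[Assumption 4]{4} provides only forward-in-time boundedness, so this must be obtained through the conservation of the $J^T$-current in the asymptotically flat region $\{r \ge R_1\}$ (where $N \equiv T$ is Killing), combined with the compact support of the initial data and finite propagation speed, to propagate the estimate backward into the shell $\{r \sim 1 + |s|\}$ where $F(s, \cdot)$ is supported.
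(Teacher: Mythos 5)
Your overall skeleton---writing $F_k$ as a convolution with $h_k$, applying Cauchy--Schwarz against $\|h_k\|_{L^1}$, splitting into temporal and spatial factors, and tracking the spacetime support of $F$---matches the paper's argument, and you correctly flag the delicate point: for $s<0$ the spatial factor involves the energy of $\psi$ on slices lying in the past of $\Sigma_0$. However, the resolution you sketch does not close the gap. The inequality $\int_{\Sigma_s} J^N_\mu(\psi)\,n^\mu \lesssim \int_{\Sigma_0} J^N_\mu(\psi)\,n^\mu$ for $s<0$ that your chain of estimates relies on is simply not available: Assumption~\hyperref[Assumption 4]{4} is only forward-in-time; conservation of $J^T$ on the region $\{s\le t\le 0\}\cap\{r\ge R_1\}$ cannot produce it because the flux through the \emph{timelike} side boundary $\{r=R_1\}$ is uncontrolled; and compact support of the data cannot enter (the constants must be independent of $R_{sup}$). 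As written this step is false, and the intermediate formulation involving the full-slice energy $\int_{\Sigma_s} J^N_\mu(\psi)n^\mu$ makes the error hard to repair in place.

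The paper's fix, which you should use here, is Lemma~\ref{lem:ComparisonTT-}: the identity $\int_{\{t_-=\tau\}} J^N_\mu(\psi)n^\mu = \int_{\{t=\tau\}} J^N_\mu(\psi)n^\mu$. The distorted time function $t_-$ is engineered precisely so that for $\tau\in[0,1]$ the level set $\{t_-=\tau\}$ coincides with $\{t=\tau\}$ on $\{r\le R_1\}$ but dips into the past as $r\to\infty$, sweeping out exactly the exterior region $\{r\ge R_1,\ 0\le t_-\le 1\}$ on which $F$ is supported. Because the region between $\{t_-=\tau\}$ and $\{t=\tau\}$ lies entirely in $\{r\ge R_1\}$ (where $N\equiv T$ is Killing and timelike) and has \emph{no} side boundary---the two hypersurfaces meet at $r=R_1$---conservation of $J^T$ gives the identity with no error term. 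Combined with Assumption~\hyperref[Assumption 4]{4} for $\tau\ge 0$ this controls the shell fluxes at negative retarded times $s$. The practical consequence is that in $\{r\ge R_1\}$ one should not slice the $(s,x)$-integral by $\{t=s\}$ at all, but reparametrize by $\{t_-=\tau\}$-slices, exactly as the paper does in the second display of its proof; your "$\Sigma_s$" factor should be replaced by fluxes through $\{t_-=\tau\}$ for $\tau\in[0,1]$ (respectively $\{t_+=\tau\}$ for $\tau\in[t^*-1,t^*]$ in the other component of the support).
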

\begin{proof}
Since $F_{k}(t,\cdot)=\int_{-\infty}^{\infty}h_{k}(t-s)\cdot F(s,\cdot)\, ds$,
we can bound ($dx$ denoting in the next lines the $dg_{\text{\textgreek{S}}}$
integration measure):

\begin{align}
\int_{\mathcal{R}(t_{1},t_{2})}r^{q}|F_{k}|^{2} & \le C\cdot\int_{\text{\textgreek{S}}_{0}}\int_{t_{1}}^{t_{2}}r^{q}\Big|\int_{-\infty}^{\infty}h_{k}(t-s)\cdot F(s,x)\, ds\Big|^{2}\, dtdx\label{eq:FInequality}\\
 & \le C\cdot R_{1}^{q}\cdot\Big(\int_{\text{\textgreek{S}}_{0}\cap\{r\le R_{1}\}}\int_{t_{1}}^{t_{2}}\Big|\int_{-\infty}^{\infty}h_{k}(t-s)\cdot F(s,x)\, ds\Big|^{2}\, dtdx\Big)+\nonumber \\
 & \hphantom{\le C}+\int_{\text{\textgreek{S}}_{0}\cap\{r\ge R_{1}\}}r^{q}\int_{t_{1}}^{t_{2}}\Big|\int_{-\infty}^{\infty}h_{k}(t-s)\cdot F(s,x)\, ds\Big|^{2}\, dtdx.\nonumber 
\end{align}

Because in the region $\{r\le R_{1}\}$, $F=\partial^{\text{\textgreek{m}}}h_{t^{*}}\cdot\partial_{\text{\textgreek{m}}}\text{\textgreek{y}}+(\square_{g}h_{t^{*}})\cdot\text{\textgreek{y}}$
is supported in $\{0\le t\le1\}\cup\{t^{*}-1\le t\le t^{*}\}$ and
$\sup_{t}|\text{\textgreek{w}}_{0}^{-1}(1+|\text{\textgreek{w}}_{0}t|)^{q'+1}h_{k}(t)|\le C_{q'+1}$
due to (\ref{eq:SchwartzBounds2}), the first term of the right hand
side of (\ref{eq:FInequality}) can be bounded by
\begin{equation}
\begin{split}\int_{\text{\textgreek{S}}_{0}\cap\{r\le R_{1}\}}\int_{t_{1}}^{t_{2}}\Big|\int_{-\infty}^{\infty} & h_{k}(t-s)\cdot F(s,x)\, ds\Big|^{2}\, dtdx\le\\
\le & \int_{\text{\textgreek{S}}_{0}\cap\{r\le R_{1}\}}\int_{t_{1}}^{t_{2}}\Big|\int_{[0,1]\cup[t^{*}-1,t^{*}]}\frac{C_{q'}\text{\textgreek{w}}_{0}}{(1+\text{\textgreek{w}}_{0}|t-s|)^{q'+1}}\cdot F(s,x)\, ds\Big|^{2}\, dtdx\\
\le & C_{q'}(\text{\textgreek{w}}_{0})\int_{\text{\textgreek{S}}_{0}\cap\{r\le R_{1}\}}\int_{t_{1}}^{t_{2}}\Big\{(1+|t|)^{-q'-1}\int_{[0,1]}|F(s,x)|^{2}\, ds+(1+|t^{*}-t|)^{-q'-1}\int_{[t^{*}-1,t^{*}]}|F(s,x)|^{2}\, ds\Big\}\, dtdx\\
\le & C_{q'}(\text{\textgreek{w}}_{0})\cdot\big((1+t_{1})^{-q'}+(1+t^{*}-t_{2})^{-q'}\big)\int_{t=0}J_{\text{\textgreek{m}}}^{N}(\text{\textgreek{y}})n^{\text{\textgreek{m}}},
\end{split}
\label{eq:BoundFirstTermFIneq}
\end{equation}
the last inequality being a consequence of the boundedness assumption
\hyperref[Assumption 4]{4}, the fact that $F=\partial^{\text{\textgreek{m}}}h_{t^{*}}\cdot\partial_{\text{\textgreek{m}}}\text{\textgreek{y}}+(\square_{g}h_{t^{*}})\cdot\text{\textgreek{y}}$,
as well as the Hardy inequality 
\begin{equation}
\int_{\{t=\text{\textgreek{t}}\}\cap\{r\le R_{1}\}}|\text{\textgreek{y}}|^{2}\le CR_{1}^{2}\cdot\int_{t=\text{\textgreek{t}}}J_{\text{\textgreek{m}}}^{N}(\text{\textgreek{y}})n^{\text{\textgreek{m}}}
\end{equation}
 following from (\ref{eq:GeneralHardyBound}). 

For the second term of (\ref{eq:FInequality}), we will use the definition
of $t_{+},t_{-}$ and the support of $F$ to conclude that for $r\ge R_{1}$,
$l\in\mathbb{N}$ and $t\in(t_{1},t_{2})$:

\begin{align}
\int_{-\infty}^{\infty}|h_{k}(t-s)|\cdot|F(s,r,\text{\textgreek{sv}})|\, ds & =\int_{-\frac{1}{2}\text{\textgreek{q}}_{1}\cdot(r-R_{1})}^{1-\frac{1}{2}\text{\textgreek{q}}_{1}\cdot(r-R_{1})}|h_{k}(t-s)|\cdot|F(s,r,\text{\textgreek{sv}})|\, ds+\\
 & \hphantom{=C}+\int_{t^{*}-1+\frac{1}{2}\text{\textgreek{q}}_{1}\cdot(r-R_{1})}^{t^{*}+\frac{1}{2}\text{\textgreek{q}}_{1}\cdot(r-R_{1})}|h_{k}(t-s)|\cdot|F(s,r,\text{\textgreek{sv}})|\, ds\nonumber \\
 & \le C_{l}(\text{\textgreek{w}}_{0})\Big(\frac{1}{(1+|t+\frac{1}{2}(r-R_{1})|)^{l}}\int_{-\frac{1}{2}\text{\textgreek{q}}_{1}\cdot(r-R_{1})}^{1-\frac{1}{2}\text{\textgreek{q}}_{1}\cdot(r-R_{1})}|F(s,r,\text{\textgreek{sv}})|\, ds\nonumber \\
 & \hphantom{\le C_{l}(\text{\textgreek{w}}_{0})\Big(}+\frac{1}{(1+|t-t^{*}-\frac{1}{2}(r-R_{1})|)^{l}}\int_{t^{*}-1+\frac{1}{2}\text{\textgreek{q}}_{1}\cdot(r-R_{1}))^{1/2}}^{t^{*}+\frac{1}{2}\text{\textgreek{q}}_{1}\cdot(r-R_{1})}|F(s,r,\text{\textgreek{sv}})|\, ds\Big).\nonumber 
\end{align}
Therefore, by choosing $l$ large enough (with respect to $q,q'$),
we conclude
\begin{equation}
\begin{split}\int_{\text{\textgreek{S}}_{0}\cap\{r\ge R_{1}\}}\int_{t_{1}}^{t_{2}}r^{q}\Big| & \int_{-\infty}^{\infty}h_{k}(t-s)\cdot F(s,x)\, ds\Big|^{2}\, dtdx\le\\
\le & C_{l}(\text{\textgreek{w}}_{0})\cdot\int_{\text{\textgreek{S}}_{0}\cap\{r\ge R_{1}\}}\int_{t_{1}}^{t_{2}}\frac{r^{q}}{(1+|t+\frac{1}{2}(r-R_{1})|)^{2l}}\Big(\int_{-\frac{1}{2}\text{\textgreek{q}}_{1}\cdot(r-R_{1})}^{1-\frac{1}{2}\text{\textgreek{q}}_{1}\cdot(r-R_{1})}|F(s,x)|^{2}\, ds\Big)\, dtdx+\\
 & \hphantom{C_{q,q'}(}+C_{l}(\text{\textgreek{w}}_{0})\cdot\int_{\text{\textgreek{S}}_{0}\cap\{r\ge R_{1}\}}\int_{t_{1}}^{t_{2}}\frac{r^{q}}{(1+|t-t^{*}-\frac{1}{2}(r-R_{1})|)^{2l}}\Big(\int_{t^{*}-1+\frac{1}{2}\text{\textgreek{q}}_{1}\cdot(r-R_{1})}^{t^{*}+\frac{1}{2}\text{\textgreek{q}}_{1}\cdot(r-R_{1})}|F(s,x)|^{2}\, ds\Big)\, dtdx\le\\
\le & C_{l}(\text{\textgreek{w}}_{0})\cdot\int_{\text{\textgreek{S}}_{0}\cap\{r\ge R_{1}\}}\big(\int_{t_{1}}^{t_{2}}\frac{r^{q}}{(1+|t+\frac{1}{2}(r-R_{1})|)^{2l}}\, dt\big)\cdot\Big(\int_{\frac{1}{2}\text{\textgreek{q}}_{1}\cdot(r-R_{1})}^{1-\frac{1}{2}\text{\textgreek{q}}_{1}\cdot(r-R_{1})}\big(J_{\text{\textgreek{m}}}^{N}(\text{\textgreek{y}})n^{\text{\textgreek{m}}}+|\text{\textgreek{y}}|^{2}\big)(s,x)\, ds\Big)\, dx+\\
 & \hphantom{C_{q,q'}(}+C_{l}(\text{\textgreek{w}}_{0})\cdot\int_{\text{\textgreek{S}}_{0}\cap\{r\ge R_{1}\}}\big(\int_{t_{1}}^{t_{2}}\frac{r^{q}}{(1+|t-t^{*}-\frac{1}{2}(r-R_{1})|)^{2l}}\, dt\big)\cdot\Big(\int_{t^{*}-1+\frac{1}{2}\text{\textgreek{q}}_{1}\cdot(r-R_{1})}^{t^{*}+\frac{1}{2}\text{\textgreek{q}}_{1}\cdot(r-R_{1})}\big(J_{\text{\textgreek{m}}}^{N}(\text{\textgreek{y}})n^{\text{\textgreek{m}}}+|\text{\textgreek{y}}|^{2}\big)(s,x)\, ds\Big)\, dx\le
\end{split}
\label{eq:BoundSecondTermFIneq}
\end{equation}
\[
\begin{split}\hphantom{\int_{\text{\textgreek{S}}_{0}\cap\{r\ge R_{1}\}}\int_{t_{1}}^{t_{2}}r}\le & C_{q,q'}(\text{\textgreek{w}}_{0})\cdot(1+t_{1})^{-q'}\int_{\text{\textgreek{S}}_{0}\cap\{r\ge R_{1}\}}\frac{1}{1+r^{4}}\Big(\int_{-\frac{1}{2}\text{\textgreek{q}}_{1}\cdot(r-R_{1})}^{1-\frac{1}{2}\text{\textgreek{q}}_{1}\cdot(r-R_{1})}\big(J_{\text{\textgreek{m}}}^{N}(\text{\textgreek{y}})n^{\text{\textgreek{m}}}+|\text{\textgreek{y}}|^{2}\big)(s,x)\, ds\Big)\, dx+\\
 & \hphantom{C_{q,q'}(}+C_{q,q'}(\text{\textgreek{w}}_{0})\cdot(1+t^{*}-t_{2})^{-q'}\int_{\text{\textgreek{S}}_{0}\cap\{r\ge R_{1}\}}\frac{1}{1+r^{4}}\Big(\int_{t^{*}-1+\frac{1}{2}\text{\textgreek{q}}_{1}\cdot(r-R_{1})}^{t^{*}+\frac{1}{2}\text{\textgreek{q}}_{1}\cdot(r-R_{1})}\big(J_{\text{\textgreek{m}}}^{N}(\text{\textgreek{y}})n^{\text{\textgreek{m}}}+|\text{\textgreek{y}}|^{2}\big)(s,x)\, ds\Big)\, dx\le\\
\le & C_{q,q'}(\text{\textgreek{w}}_{0})(1+t_{1})^{-q'}\cdot\int_{0}^{\infty}\frac{1}{1+\text{\textgreek{t}}^{2}}\int_{\{t^{-}=\text{\textgreek{t}}\}\cap\{R_{1}\lesssim r\lesssim R_{1}+\text{\textgreek{t}}\}}\big(J_{\text{\textgreek{m}}}^{N}(\text{\textgreek{y}})n^{\text{\textgreek{m}}}+\frac{1}{r^{2}}|\text{\textgreek{y}}|^{2}\big)\, dxd\text{\textgreek{t}}+\\
 & \hphantom{C_{q,q'}(}+C_{q,q'}(\text{\textgreek{w}}_{0})(1+t^{*}-t_{2})^{-q'}\int_{0}^{1}\int_{\{t^{-}=\text{\textgreek{t}}\}\cap\{r\ge R_{1}\}}\frac{1}{r^{2}}\big(J_{\text{\textgreek{m}}}^{N}(\text{\textgreek{y}})n^{\text{\textgreek{m}}}+\frac{1}{r^{2}}|\text{\textgreek{y}}|^{2}\big)\, dxd\text{\textgreek{t}}\le\\
\le & C_{q,q'}(\text{\textgreek{w}}_{0})\big((1+t_{1})^{-q'}+(1+t^{*}-t_{2})^{-q'}\big)\int_{t=0}J_{\text{\textgreek{m}}}^{N}(\text{\textgreek{y}})n^{\text{\textgreek{m}}},
\end{split}
\]
the last inequality being a consequence of the boundedness assumption
\hyperref[Assumption 4]{4} and Lemma \ref{lem:ComparisonTT-} (and
a Hardy inequality for the $\frac{1}{r^{2}}|\text{\textgreek{y}}|^{2}$
term). Thus, from (\ref{eq:FInequality}), (\ref{eq:BoundFirstTermFIneq})
and (\ref{eq:BoundSecondTermFIneq}) we obtain the desired bound:

\begin{equation}
\int_{\mathcal{R}(t_{1},t_{2})}r^{q}|F_{k}|^{2}\le C_{q,q'}(\text{\textgreek{w}}_{0})\big((1+t_{1})^{-q'}+(1+t^{*}-t_{2})^{-q'}\big)\int_{t=0}J_{\text{\textgreek{m}}}^{N}(\text{\textgreek{y}})n^{\text{\textgreek{m}}}.
\end{equation}

In the same way, one can show the same inequality for $F_{\le\text{\textgreek{w}}_{+}},F_{\ge\text{\textgreek{w}}_{+}}=F-F_{\le\text{\textgreek{w}}_{+}}$.
\end{proof}
We will als\textgreek{o} need to bound the energy of the frequency-decomposed
components of $\text{\textgreek{y}}$ in terms of the energy of $\text{\textgreek{y}}$
itself:
\begin{lem}
\label{lem:BoundednessPsiK}There exists a positive constant $C(\text{\textgreek{w}}_{0})$
such that for any $-n\le k\le n$, and any $\text{\textgreek{t}}\in[0,t^{*}]$:

\begin{equation}
\int_{t=\text{\textgreek{t}}}J_{\text{\textgreek{m}}}^{N}(\text{\textgreek{y}}_{k})n^{\text{\textgreek{m}}}\le C(\text{\textgreek{w}}_{0})\cdot\int_{t=0}J_{\text{\textgreek{m}}}^{N}(\text{\textgreek{y}})n^{\text{\textgreek{m}}}.\label{eq:A Bound for psiK}
\end{equation}
 The same estimate holds for $\text{\textgreek{y}}_{\le\text{\textgreek{w}}_{+}},\text{\textgreek{y}}_{\ge\text{\textgreek{w}}_{+}}$
in place of $\text{\textgreek{y}}_{k}$ (and in that case the constant
in (\ref{eq:Boundedness1}) does not depend on $\text{\textgreek{w}}_{0}$).\end{lem}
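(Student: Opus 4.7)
The plan is to exploit the fact that each of the frequency-cut components $\text{\textgreek{y}}_k$, $\text{\textgreek{y}}_{\le\text{\textgreek{w}}_+}$, $\text{\textgreek{y}}_{\ge\text{\textgreek{w}}_+}$ is a convolution of $\text{\textgreek{y}}_{t^*}$ in the $t$-variable with a kernel ($h_k$ or $h_{\le\text{\textgreek{w}}_+}$) whose $L^1$ norm is uniformly bounded by Lemma \ref{lem:SchwartzBoundsForProjections}, combined with the elementary (unweighted) energy bound for the cut-off $\text{\textgreek{y}}_{t^*}$ already established at the end of Section \ref{sub:Frequency-cut-off} (which is itself an immediate consequence of Assumption \hyperref[Assumption 4]{4}).

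First I would work in the $T$-adapted chart $(t,x)$ on $\mathcal{D}\backslash\mathcal{H}^-$ from Section \ref{sub:CoordinateCharts}. Since both the red-shift vector field $N$ and the unit normal $n$ to $\text{\textgreek{S}}_t$ are $T$-invariant, the quantity $J^N_\mu(\cdot)n^\mu$, restricted to $\text{\textgreek{S}}_\tau$ and evaluated pointwise at $(\tau,x)$, is a positive-semidefinite quadratic form in $(\nabla\cdot)(\tau,x)$ whose coefficients depend only on $x$. Denote the induced seminorm on the tangent space at $x$ by $\|\cdot\|_x$, so $J^N_\mu(\text{\textgreek{f}})n^\mu = \|\nabla\text{\textgreek{f}}\|_x^2$. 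Because differentiation in $t$ or in any spatial coordinate of $\text{\textgreek{S}}_0$ commutes with convolution in $t$, each component of $\nabla\text{\textgreek{y}}_k(\tau,x)$ is the convolution $\int h_k(\tau-s)(\nabla\text{\textgreek{y}}_{t^*})(s,x)\,ds$. The pointwise triangle inequality for $\|\cdot\|_x$ then gives
\begin{equation*}
\|\nabla\text{\textgreek{y}}_k(\tau,x)\|_x \le \int_{-\infty}^{\infty}|h_k(\tau-s)|\,\|\nabla\text{\textgreek{y}}_{t^*}(s,x)\|_x\,ds,
\end{equation*}
and squaring and integrating over $\text{\textgreek{S}}_0$ (identified with $\text{\textgreek{S}}_\tau$ via the flow of $T$), followed by Minkowski's integral inequality in $L^2(\text{\textgreek{S}}_0)$, yields
\begin{equation*}
\Bigl(\int_{\text{\textgreek{S}}_\tau} J^N_\mu(\text{\textgreek{y}}_k)n^\mu\Bigr)^{1/2} \le \int_{-\infty}^{\infty}|h_k(\tau-s)|\Bigl(\int_{\text{\textgreek{S}}_s} J^N_\mu(\text{\textgreek{y}}_{t^*})n^\mu\Bigr)^{1/2}ds.
\end{equation*}

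To finish I would invoke the estimate $\int_{\text{\textgreek{S}}_s} J^N_\mu(\text{\textgreek{y}}_{t^*})n^\mu \le C\int_{t=0} J^N_\mu(\text{\textgreek{y}})n^\mu$ derived at the end of Section \ref{sub:Frequency-cut-off} (extending trivially to values of $s$ outside the $t$-support of $\text{\textgreek{y}}_{t^*}$, where the left-hand side vanishes), together with the uniform bound $\int|h_k(t)|\,dt \le C$ that follows from Lemma \ref{lem:SchwartzBoundsForProjections} taken with $q=2$ (the weight $\text{\textgreek{w}}_0$ cancels against $dt$). Squaring gives (\ref{eq:A Bound for psiK}), and in fact shows that the constant can be chosen independent of $\text{\textgreek{w}}_0$. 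The case of $\text{\textgreek{y}}_{\le\text{\textgreek{w}}_+}$ is identical, using instead the bound $\int|h_{\le\text{\textgreek{w}}_+}(t)|\,dt \le C$ (independent of $\text{\textgreek{w}}_+$), and $\text{\textgreek{y}}_{\ge\text{\textgreek{w}}_+} = \text{\textgreek{y}}_{t^*} - \text{\textgreek{y}}_{\le\text{\textgreek{w}}_+}$ is handled by the triangle inequality together with the already-known bound for $\text{\textgreek{y}}_{t^*}$ itself. There is no genuine obstacle; the only point requiring a brief justification is that the pointwise quadratic nature of $J^N_\mu n^\mu$ interacts correctly with the convolution, which is why I would introduce the seminorm $\|\cdot\|_x$ explicitly and appeal to Minkowski's integral inequality rather than naively trying to bound a quadratic quantity through the convolution.
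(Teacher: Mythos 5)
Your overall strategy—exploit the convolution structure of $\text{\textgreek{y}}_{k}$ and feed a uniform-in-$s$ bound on the energy of $\text{\textgreek{y}}_{t^{*}}$ through the $L^{1}$-bounded kernel $h_{k}$—is indeed the paper's strategy, and your use of Minkowski's integral inequality in place of the paper's Cauchy--Schwarz on the kernel is a harmless cosmetic difference. The flaw lies in the step you call trivial, and that is precisely where the paper does its real work.

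You invoke the bound $\int_{\text{\textgreek{S}}_{s}}J_{\text{\textgreek{m}}}^{N}(\text{\textgreek{y}}_{t^{*}})n^{\text{\textgreek{m}}}\le C\int_{t=0}J_{\text{\textgreek{m}}}^{N}(\text{\textgreek{y}})n^{\text{\textgreek{m}}}$ from the end of Section \ref{sub:Frequency-cut-off} and assert that it ``extends trivially'' to $s$ outside the $t$-support of $\text{\textgreek{y}}_{t^{*}}$, ``where the left-hand side vanishes.'' But the cut-off $h_{t^{*}}$ is built from the \emph{distorted} time functions $t_{\pm}=t\mp\tfrac{1}{2}\text{\textgreek{q}}_{1}(r)(r-R_{1})$: its support is $\{t_{-}\ge0\}\cap\{t_{+}\le t^{*}\}$, which is \emph{not} contained in $\{0\le t\le t^{*}\}$. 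For $s<0$ the slice $\text{\textgreek{S}}_{s}$ meets $\mathrm{supp}(\text{\textgreek{y}}_{t^{*}})$ in the region $\{r\gtrsim R_{1}+2|s|\}$, so the left-hand side does not vanish, and the bound from Section \ref{sub:Frequency-cut-off}—which was only stated for $\text{\textgreek{t}}\in[0,t^{*}]$—does not apply. Controlling these slices requires integrating the conserved $J^{T}$-current over the region between $\{t=s\}\cap\{t_{-}\ge0\}$ and $\{t_{-}=0\}$ (using that $T\equiv N$ there) and then invoking Lemma \ref{lem:ComparisonTT-}; this is exactly what the paper does, and it is not a formality. Worse, because $\partial h_{t^{*}}$ is supported, on $\text{\textgreek{S}}_{s}$, near $\{r\sim R_{1}+2\,\mathrm{dist}(s,[0,t^{*}])\}$, the Hardy inequality needed for the $\partial h_{t^{*}}\cdot\text{\textgreek{y}}$ contribution introduces a factor $\big(1+\mathrm{dist}\{s,[0,t^{*}]\}\big)^{2}$ (see (\ref{eq:ThirdBoundTDerivative})). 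So the correct statement is
\begin{equation*}
\int_{\text{\textgreek{S}}_{s}}J_{\text{\textgreek{m}}}^{N}(\text{\textgreek{y}}_{t^{*}})n^{\text{\textgreek{m}}}\le C\big(1+\mathrm{dist}\{s,[0,t^{*}]\}\big)^{2}\int_{t=0}J_{\text{\textgreek{m}}}^{N}(\text{\textgreek{y}})n^{\text{\textgreek{m}}},
\end{equation*}
not a uniform-in-$s$ bound. Your Minkowski argument must then control $\int|h_{k}(\text{\textgreek{t}}-s)|\,(1+|\text{\textgreek{t}}-s|)\,ds$, and since the kernel $h_{k}$ has width $\sim\text{\textgreek{w}}_{0}^{-1}$, this integral is $\sim1+\text{\textgreek{w}}_{0}^{-1}$; after squaring the constant comes out $\sim\text{\textgreek{w}}_{0}^{-2}$, exactly as the paper remarks. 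Your claim that the constant for $\text{\textgreek{y}}_{k}$ is independent of $\text{\textgreek{w}}_{0}$ is therefore incorrect (the $\text{\textgreek{w}}_{0}$-independence does hold for $\text{\textgreek{y}}_{\le\text{\textgreek{w}}_{+}}$ because $\text{\textgreek{w}}_{+}\ge1$, which is the content of the paper's parenthetical). Once you repair the uniform energy bound by incorporating the polynomial weight and the $s<0$ argument via $J^{T}$-conservation and Lemma \ref{lem:ComparisonTT-}, your Minkowski route closes the lemma.
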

\begin{proof}
The proof will be similar to the proof of the previous lemma. Since
$\nabla_{\text{\textgreek{m}}}\text{\textgreek{y}}_{k}(t,\cdot)=\int_{-\infty}^{\infty}h_{k}(t-s)\cdot\nabla_{\text{\textgreek{m}}}\text{\textgreek{y}}_{t^{*}}(s,\cdot)\, ds$,
we can bound:

\begin{equation}
\int_{t=\text{\textgreek{t}}}J_{\text{\textgreek{m}}}^{N}(\text{\textgreek{y}}_{k})n^{\text{\textgreek{m}}}\le C\cdot\int_{t=\text{\textgreek{t}}}\big(|T\text{\textgreek{y}}_{k}|^{2}+|\nabla_{\text{\textgreek{S}}_{\text{\textgreek{t}}}}\text{\textgreek{y}}_{k}|_{g_{t}}^{2}\big),
\end{equation}
 where $g_{t}$ is the induced Riemannian metric on $\text{\textgreek{S}}_{\text{\textgreek{t}}}$.
We can also estimate for $q$ large enough
\begin{align}
\int_{t=\text{\textgreek{t}}}|T\text{\textgreek{y}}_{k}|^{2} & =\int_{t=\text{\textgreek{t}}}\Big|\int_{-\infty}^{\infty}h_{k}(t-s)\cdot T\text{\textgreek{y}}_{t^{*}}(s,\cdot)\, ds\Big|^{2}\label{eq:Boundedness1}\\
 & \le C_{q}\cdot\text{\textgreek{w}}_{0}^{2}\int_{t=\text{\textgreek{t}}}\Big(\int_{-\infty}^{\infty}\frac{1}{1+(\text{\textgreek{w}}_{0}|t-s|)^{q}}|T\text{\textgreek{y}}_{t^{*}}(s,\cdot)|\, ds\Big)^{2}\nonumber \\
 & \le C_{q}\cdot\text{\textgreek{w}}_{0}\int_{t=\text{\textgreek{t}}}\Big(\int_{-\infty}^{\infty}\frac{1}{1+(\text{\textgreek{w}}_{0}|t-s|)^{q}}|T\text{\textgreek{y}}_{t^{*}}(s,\cdot)|^{2}\, ds\Big)\nonumber \\
 & \le C_{q}\cdot\text{\textgreek{w}}_{0}\int_{-\infty}^{\infty}\frac{1}{1+(\text{\textgreek{w}}_{0}|\text{\textgreek{t}}-s|)^{q}}\Big(\int_{t=s}|T\text{\textgreek{y}}_{t^{*}}|^{2}\Big)\, ds.\nonumber 
\end{align}

We now recall that $T\text{\textgreek{y}}_{t^{*}}=h_{t^{*}}\cdot T\text{\textgreek{y}}+Th_{t^{*}}\cdot\text{\textgreek{y}}$.
We note that $h_{t^{*}}$ is supported only in $\{t_{-}\ge0\}$, and
hence
\begin{equation}
\int_{\{t=s\}}|h_{t^{*}}\cdot T\text{\textgreek{y}}|^{2}=\int_{\{t=s\}\cap\{t_{-}\ge0\}}|h_{t^{*}}\cdot T\text{\textgreek{y}}|^{2}.
\end{equation}
Therefore, for $s\ge0$, the boundedness assumption \hyperref[Assumption 4]{4}
implies that
\begin{equation}
\int_{\{t=s\}}|h_{t^{*}}\cdot T\text{\textgreek{y}}|^{2}\le C\cdot\int_{\{t=0\}}J_{\text{\textgreek{m}}}^{N}(\text{\textgreek{y}})n^{\text{\textgreek{m}}}.\label{eq:FirstBoundTDerivative}
\end{equation}
For $s<0$, integrating $\nabla^{\text{\textgreek{m}}}J_{\text{\textgreek{m}}}^{T}$
in the domain bounded by $\{t=s\}\cap\{t_{-}\ge0\}$ and $\{t_{-}=0\}$
we obtain
\begin{equation}
\int_{\{t=s\}\cap\{t_{-}\ge0\}}|h_{t^{*}}\cdot T\text{\textgreek{y}}|^{2}\le C\cdot\int_{t_{-}=0}J_{\text{\textgreek{m}}}^{N}(\text{\textgreek{y}})n^{\text{\textgreek{m}}}.
\end{equation}
Thus, applying in this case Lemma \ref{lem:ComparisonTT-}, we deduce
that for any $s\in\mathbb{R}$ we can bound
\begin{equation}
\int_{t=s}|h_{t^{*}}\cdot T\text{\textgreek{y}}|^{2}\le C\cdot\int_{t=0}J_{\text{\textgreek{m}}}^{N}(\text{\textgreek{y}})n^{\text{\textgreek{m}}}.\label{eq:SecondBoundTDerivative}
\end{equation}

Moreover, $grad(h_{t^{*}})$ is only supported in $\{0\le t_{-}\le1\}\cup\{t^{*}-1\le t_{+}\le t^{*}\}$,
and thus we can bound through a Hardy inequality (in view of the definition
(\ref{eq:DistortedTime+}) and (\ref{eq:DistortedTime-}) of $t_{+}$,$t_{-}$)
for any $s\in\mathbb{R}$
\begin{equation}
\int_{t=s}|Th_{t^{*}}\cdot\text{\textgreek{y}}|^{2}\le C\cdot\big(1+dist\{s,[0,t^{*}]\}\big)^{2}\cdot\int_{\{t=s\}\cap\{t_{-}\ge0\}}J_{\text{\textgreek{m}}}^{N}(\text{\textgreek{y}})n^{\text{\textgreek{m}}}.\label{eq:AnIntermediateInequality}
\end{equation}
 For $s\ge0$, Assumption \hyperref[Assumption 4]{4} states that
$\int_{t=s}J_{\text{\textgreek{m}}}^{N}(\text{\textgreek{y}})n^{\text{\textgreek{m}}}\le C\cdot\int_{t=0}J_{\text{\textgreek{m}}}^{N}(\text{\textgreek{y}})n^{\text{\textgreek{m}}}$.
For $s<0$, the conservation of the $J^{T}$ current in the domain
bounded by $\{t=s\}\cap\{t_{-}\ge0\}$ and $\{t_{-}=0\}$ together
with Lemma \ref{lem:ComparisonTT-} imply that $\int_{\{t=s\}\cap\{t_{-}\ge0\}}J_{\text{\textgreek{m}}}^{N}(\text{\textgreek{y}})n^{\text{\textgreek{m}}}\le C\cdot\int_{t=0}J_{\text{\textgreek{m}}}^{N}(\text{\textgreek{y}})n^{\text{\textgreek{m}}}$
in this case as well. Thus, (\ref{eq:AnIntermediateInequality}) yields
\begin{equation}
\int_{t=s}|Th_{t^{*}}\cdot\text{\textgreek{y}}|^{2}\le C\cdot\big(1+dist\{s,[0,t^{*}]\}\big)^{2}\cdot\int_{t=0}J_{\text{\textgreek{m}}}^{N}(\text{\textgreek{y}})n^{\text{\textgreek{m}}}.\label{eq:ThirdBoundTDerivative}
\end{equation}

Substituting (\ref{eq:SecondBoundTDerivative}) and (\ref{eq:ThirdBoundTDerivative})
in \ref{eq:Boundedness1}, we obtain
\begin{equation}
\int_{t=\text{\textgreek{t}}}|T\text{\textgreek{y}}_{k}|^{2}\le C_{q}\cdot\text{\textgreek{w}}_{0}\Big\{\int_{-\infty}^{\infty}\frac{(1+dist\{s,[0,t^{*}]\})^{2}}{1+(\text{\textgreek{w}}_{0}|\text{\textgreek{t}}-s|)^{q}}\, ds\Big\}\int_{t=0}J_{\text{\textgreek{m}}}^{N}(\text{\textgreek{y}})n^{\text{\textgreek{m}}}
\end{equation}
 and hence, since $\text{\textgreek{t}}\in[0,t^{*}]$, if we fix $q=4$
we conclude

\begin{equation}
\int_{t=\text{\textgreek{t}}}|T\text{\textgreek{y}}_{k}|^{2}\le C(\text{\textgreek{w}}_{0})\int_{t=0}J_{\text{\textgreek{m}}}^{N}(\text{\textgreek{y}})n^{\text{\textgreek{m}}}.
\end{equation}
In the same way, we can estimate 
\begin{equation}
\int_{t=\text{\textgreek{t}}}|\nabla_{\text{\textgreek{S}}_{\text{\textgreek{t}}}}\text{\textgreek{y}}_{k}|^{2}\le C(\text{\textgreek{w}}_{0})\int_{t=0}J_{\text{\textgreek{m}}}^{N}(\text{\textgreek{y}})n^{\text{\textgreek{m}}},
\end{equation}
and thus attain (\ref{eq:A Bound for psiK}) regarding $\text{\textgreek{y}}_{k}$.

The statement about $\text{\textgreek{y}}_{\le\text{\textgreek{w}}_{+}}$
follows in exactly the same way by considering $h_{\le\text{\textgreek{w}}_{+}}$
instead of $h_{k}$, and using (\ref{eq:SchwartzBounds1}) instead
of (\ref{eq:SchwartzBounds2}). In this case, in view of the fact
that $\text{\textgreek{w}}_{+}\ge1$, we can also make the constant
appearing in (\ref{eq:BoundednessFrmInitialDataOnK-1}) (which in
the case of $\text{\textgreek{y}}_{k}$ is $\sim\text{\textgreek{w}}_{0}^{-2}$)
not to depend on $\text{\textgreek{w}}_{0}$, but we have chosen to
neglect this fact. 

Finally, the statement for $\text{\textgreek{y}}_{\ge\text{\textgreek{w}}_{+}}=\text{\textgreek{y}}_{t^{*}}-\text{\textgreek{y}}_{\le\text{\textgreek{w}}_{+}}$
follows immediately, in view also of the boundedness assumption \hyperref[Assumption 4]{4}
and the support of the cut-off function $h_{t^{*}}$, which allow
us to handle the $\text{\textgreek{y}}_{t^{*}}$ term in exactly the
same way as we did for $\text{\textgreek{y}}_{k}$.
\end{proof}
It would seem useful to estimate the energy of $\text{\textgreek{y}}_{k}$
(and $\text{\textgreek{y}}_{\le\text{\textgreek{w}}_{+}},\text{\textgreek{y}}_{\ge\text{\textgreek{w}}_{+}}$as
well) only in terms of its own initial data, and not the initial data
of $\text{\textgreek{y}}$. Unfortunately, such a bound is not obtainable.
We can instead establish the following estimate:
\begin{lem}
\label{lem:BoundednessFromInitialDataPsiK}For any $q\in\mathbb{N}$,
there exist suitable constants $C,C_{q}(\text{\textgreek{w}}_{0})$
such that for any $-n\le k\le n$, any $0\le t_{st}\le\frac{1}{2}t^{*}$
and any $t_{st}\le t_{1}\le t_{2}\le t^{*}-t_{st}$ satisfying $t_{2}-t_{1}\le t_{st}$
, the following inequality holds for any $\text{\textgreek{t}}\in[t_{1},t_{2}]$:

\begin{equation}
\int_{t=\text{\textgreek{t}}}J_{\text{\textgreek{m}}}^{N}(\text{\textgreek{y}}_{k})n^{\text{\textgreek{m}}}\le C\cdot\int_{t=t_{1}}J_{\text{\textgreek{m}}}^{N}(\text{\textgreek{y}}_{k})n^{\text{\textgreek{m}}}+C_{q}(\text{\textgreek{w}}_{0})(1+t_{st})^{-q}\int_{t=0}J_{\text{\textgreek{m}}}^{N}(\text{\textgreek{y}})n^{\text{\textgreek{m}}}.\label{eq:BoundednessFrmInitialDataOnK}
\end{equation}
 The same also holds for $\text{\textgreek{y}}_{\le\text{\textgreek{w}}_{+}},\text{\textgreek{y}}_{\ge\text{\textgreek{w}}_{+}}$
in place of $\text{\textgreek{y}}_{k}$.\end{lem}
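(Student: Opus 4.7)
The plan is to prove \eqref{eq:BoundednessFrmInitialDataOnK} by combining Assumption \hyperref[Assumption 4]{4} with a Duhamel decomposition, using Lemma \ref{lem:BoundF} to show that the error produced by the source term $F_k$ is negligible thanks to the hypothesis that the interval $[t_1,t_2]$ stays a distance $t_{st}$ away from both endpoints $0$ and $t^{*}$ of the time interval on which the cut-off $h_{t^{*}}$ is non-trivial. The analogous statement for $\text{\textgreek{y}}_{\le\text{\textgreek{w}}_{+}}$ and $\text{\textgreek{y}}_{\ge\text{\textgreek{w}}_{+}}$ will follow by the same argument, since Lemma \ref{lem:BoundF} applies equally to $F_{\le\text{\textgreek{w}}_{+}}$ and $F_{\ge\text{\textgreek{w}}_{+}}$.

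\medskip

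First, decompose $\text{\textgreek{y}}_{k}=u_{hom}+u_{part}$ on $\mathcal{R}(t_{1},t^{*})$, where $u_{hom}$ solves $\square_{g}u_{hom}=0$ with $(u_{hom},n_{\text{\textgreek{S}}_{t_{1}}}u_{hom})|_{\text{\textgreek{S}}_{t_{1}}}=(\text{\textgreek{y}}_{k},n_{\text{\textgreek{S}}_{t_{1}}}\text{\textgreek{y}}_{k})|_{\text{\textgreek{S}}_{t_{1}}}$, and $u_{part}$ solves $\square_{g}u_{part}=F_{k}$ with trivial data on $\text{\textgreek{S}}_{t_{1}}$. The assumption that \textgreek{y} has compactly supported initial data (together with the cylindrical support of $\text{\textgreek{y}}_{k}$) ensures that both $u_{hom}$ and $u_{part}$ have finite $J^{N}$-energy on every $\text{\textgreek{S}}_{t}$, $t\ge t_{1}$. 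For the homogeneous piece, Assumption \hyperref[Assumption 4]{4} directly yields, for every $\text{\textgreek{t}}\in[t_{1},t_{2}]$,
\[
\int_{t=\text{\textgreek{t}}}J_{\text{\textgreek{m}}}^{N}(u_{hom})n^{\text{\textgreek{m}}}\le C\int_{t=t_{1}}J_{\text{\textgreek{m}}}^{N}(u_{hom})n^{\text{\textgreek{m}}}=C\int_{t=t_{1}}J_{\text{\textgreek{m}}}^{N}(\text{\textgreek{y}}_{k})n^{\text{\textgreek{m}}},
\]
which is precisely the first term on the right hand side of \eqref{eq:BoundednessFrmInitialDataOnK}.

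\medskip

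For the particular part, apply Duhamel's principle: writing $u_{part}(t,\cdot)=\int_{t_{1}}^{t}v_{s}(t,\cdot)\,ds$, where $v_{s}$ solves $\square_{g}v_{s}=0$ with data on $\text{\textgreek{S}}_{s}$ chosen so that $v_{s}|_{\text{\textgreek{S}}_{s}}=0$ and $n_{\text{\textgreek{S}}_{s}}v_{s}|_{\text{\textgreek{S}}_{s}}$ equals $F_{k}(s,\cdot)$ divided by a bounded lapse factor. Then Assumption \hyperref[Assumption 4]{4} applied to each $v_{s}$ (which is legitimate since $s\ge t_{1}\ge t_{st}\ge0$ and $\text{\textgreek{t}}\ge s$) gives
\[
\int_{t=\text{\textgreek{t}}}J_{\text{\textgreek{m}}}^{N}(v_{s})n^{\text{\textgreek{m}}}\le C\int_{t=s}J_{\text{\textgreek{m}}}^{N}(v_{s})n^{\text{\textgreek{m}}}\le C\int_{t=s}|F_{k}(s,\cdot)|^{2},
\]
the last bound following from the trivial spatial data of $v_{s}$ at $t=s$. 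Combining the triangle inequality for the energy norm with Cauchy--Schwarz in the $s$ variable,
\[
\int_{t=\text{\textgreek{t}}}J_{\text{\textgreek{m}}}^{N}(u_{part})n^{\text{\textgreek{m}}}\le\left(\int_{t_{1}}^{\text{\textgreek{t}}}\Big(\int_{t=\text{\textgreek{t}}}J_{\text{\textgreek{m}}}^{N}(v_{s})n^{\text{\textgreek{m}}}\Big)^{1/2}ds\right)^{2}\le C(\text{\textgreek{t}}-t_{1})\int_{\mathcal{R}(t_{1},\text{\textgreek{t}})}|F_{k}|^{2}\le C\cdot t_{st}\int_{\mathcal{R}(t_{1},t_{2})}|F_{k}|^{2}.
\]

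\medskip

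Finally, feed Lemma \ref{lem:BoundF} into the last estimate with the exponent $q'=q+1$. Since $t_{1}\ge t_{st}$ and $t^{*}-t_{2}\ge t_{st}$, the resulting error term is controlled by
\[
C\cdot t_{st}\cdot C_{q+1}(\text{\textgreek{w}}_{0})\big((1+t_{1})^{-q-1}+(1+t^{*}-t_{2})^{-q-1}\big)\int_{t=0}J_{\text{\textgreek{m}}}^{N}(\text{\textgreek{y}})n^{\text{\textgreek{m}}}\le C_{q}(\text{\textgreek{w}}_{0})(1+t_{st})^{-q}\int_{t=0}J_{\text{\textgreek{m}}}^{N}(\text{\textgreek{y}})n^{\text{\textgreek{m}}},
\]
using $t_{st}(1+t_{st})^{-q-1}\le(1+t_{st})^{-q}$. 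Summing with the bound for $u_{hom}$ yields \eqref{eq:BoundednessFrmInitialDataOnK}.

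\medskip

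The only delicate point is the justification of the Duhamel formula in the very general geometric setting of Assumptions \hyperref[Assumption 1]{1}--\hyperref[Assumption 4]{4}, where the leaves $\text{\textgreek{S}}_{t}$ may intersect $\mathcal{H}^{+}$ and $T$ may fail to be timelike: one must verify that the auxiliary solutions $v_{s}$ satisfy Assumption \hyperref[Assumption 4]{4} uniformly in $s\in[t_{1},t_{2}]$, which reduces to translating this assumption via the $T$-flow (this is legitimate since all relevant structures are $T$-invariant and $T\cdot t\equiv1$), and one should approximate $F_{k}$, which is only a priori $L^{2}$ on space-time, by smoother functions in order to make sense of the pointwise-in-$s$ data for $v_{s}$. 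Both steps are technical but routine and do not introduce constants depending on $\text{\textgreek{w}}_{+}$, $t^{*}$ or $R_{sup}$.
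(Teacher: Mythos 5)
Your proof is correct and follows essentially the same route as the paper's: decompose $\text{\textgreek{y}}_{k}$ into a homogeneous and an inhomogeneous part at $t=t_{1}$, apply Assumption~\hyperref[Assumption 4]{4} to the homogeneous part, handle the inhomogeneous part by Duhamel's principle combined with Assumption~\hyperref[Assumption 4]{4} applied to each $u_{s}$, and conclude via Lemma~\ref{lem:BoundF} using the standoff distance $t_{st}$. The only point where the paper is slightly more explicit is in justifying the Duhamel representation $\text{\textgreek{y}}_{k,inhom}(t,\cdot)=\int_{t_{1}}^{t}u_{s}(t,\cdot)\,ds$, which the paper verifies directly by showing that the difference solves the homogeneous equation with vanishing data (using $T$-invariance of the metric), whereas you flag this as a technical step to be checked but do not carry it out.
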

\begin{proof}
Recall that $\text{\textgreek{y}}_{k}$ satisfies $\square_{g}\text{\textgreek{y}}_{k}=F_{k}$.
Therefore, on $J^{+}(\{t=t_{1}\})\cap\mathcal{D}$ we can uniquely
decompose $\text{\textgreek{y}}_{k}=\text{\textgreek{y}}_{k,hom}+\text{\textgreek{y}}_{k,inhom}$,
where 

\begin{equation}
\begin{cases}
\square_{g}\text{\textgreek{y}}_{k,hom}=0\\
\text{\textgreek{y}}_{k,hom}|_{t=t_{1}}=\text{\textgreek{y}}_{k}|_{t=t_{1}}\\
\partial_{t}\text{\textgreek{y}}_{k,hom}|_{t=t_{1}}=\partial_{t}\text{\textgreek{y}}_{k}|_{t=t_{1}}
\end{cases}
\end{equation}
and
\begin{equation}
\begin{cases}
\square_{g}\text{\textgreek{y}}_{k,inhom}=F_{k}\\
\text{\textgreek{y}}_{k,inhom}|_{t=t_{1}}=0\\
\partial_{t}\text{\textgreek{y}}_{k,inhom}|_{t=t_{1}}=0.
\end{cases}
\end{equation}

The boundedness assumtion \hyperref[Assumption 4]{4} applies to $\text{\textgreek{y}}_{k,hom}$
and we obtain (since the initial data on $t=t_{1}$ that $\text{\textgreek{y}}_{k,hom}$
satisfies are the ones induced by $\text{\textgreek{y}}_{k}$)

\begin{equation}
\int_{t=\text{\textgreek{t}}}J_{\text{\textgreek{m}}}^{N}(\text{\textgreek{y}}_{k,hom})n^{\text{\textgreek{m}}}\le C\cdot\int_{t=t_{1}}J_{\text{\textgreek{m}}}^{N}(\text{\textgreek{y}}_{k})n^{\text{\textgreek{m}}}.\label{eq:HomogeneousTerm}
\end{equation}

For $\text{\textgreek{y}}_{k,inhom}$, we will use Duhamel's principle
as follows: For $s\ge t_{1}$, let $u_{s}:J^{+}(\{t=s\})\cap\mathcal{D}\rightarrow\mathbb{C}$
be the unique solution to the initial value problem $\square_{g}u_{s}=0$,
$u_{s}|_{t=s}=0$, $\partial_{t}u_{s}|_{t=s}=F_{k}|_{t=s}$. Then
the following relation holds 
\begin{equation}
\text{\textgreek{y}}_{k,inhom}(t,\cdot)=\int_{t_{1}}^{t}u_{s}(t,\cdot)\, ds.
\end{equation}
This can be deduced by just noting that (in view of the fact that
the vector field $T$ is Killing), the function $\text{\textgreek{Y}}(t,x)=\text{\textgreek{y}}_{k,inhom}(t,x)-\int_{t_{1}}^{t}u_{s}(t,x)\, ds$
(for any $t\in\mathbb{R}$ and $x\in\text{\textgreek{S}}\cap\mathcal{D}$)
satisfies on $J^{+}(\{t=t_{1}\})\cap\mathcal{D}$: 
\[
\square_{g}\text{\textgreek{Y}}=F_{k}(t,x)-F_{k}(t,x)=0,
\]
 with $\text{\textgreek{Y}}|_{t=t_{1}}=0$, $\partial_{t}\text{\textgreek{Y}}|_{t=t_{1}}=0$.
Thus, $\text{\textgreek{Y}}\equiv0$ on $J^{+}(\{t=t_{1}\})\cap\mathcal{D}$. 

Therefore, since the boundedness assumption \hyperref[Assumption 4]{4}
applies to $u_{s}$, yielding for $\text{\textgreek{t}}\ge s$ 
\begin{equation}
\int_{t=\text{\textgreek{t}}}J_{\text{\textgreek{m}}}^{N}(u_{s})n^{\text{\textgreek{m}}}\le C\cdot\int_{t=s}|F_{k}|^{2},
\end{equation}
 we infer that:

\begin{equation}
\int_{t=\text{\textgreek{t}}}J_{\text{\textgreek{m}}}^{N}(\text{\textgreek{y}}_{k,inhom})n^{\text{\textgreek{m}}}\le C\cdot\int_{t=0}\Big(\int_{t_{1}}^{\text{\textgreek{t}}}|\partial u_{s}(\text{\textgreek{t}},\cdot)|\, ds\Big)^{2}\, dx\le C\cdot(\text{\textgreek{t}}-t_{1})\cdot\int_{R(t_{1},\text{\textgreek{t}})}|F_{k}|^{2}.\label{eq:FirstDuhamel}
\end{equation}
 In view of Lemma \ref{lem:BoundF}, and the fact that $t_{st}\le t_{1}\le\text{\textgreek{t}}\le t_{2}\le t^{*}-t_{st}$
and $\text{\textgreek{t}}-t_{1}\le t_{2}-t_{1}\le t_{st}$, from (\ref{eq:FirstDuhamel})
we conclude that

\begin{equation}
\int_{t=\text{\textgreek{t}}}J_{\text{\textgreek{m}}}^{N}(\text{\textgreek{y}}_{k,inhom})n^{\text{\textgreek{m}}}\le C_{q}(\text{\textgreek{w}}_{0})\cdot(1+t_{st})^{-q}\int_{t=0}J_{\text{\textgreek{m}}}^{N}(\text{\textgreek{y}})n^{\text{\textgreek{m}}}.\label{eq:Inhomogeneous term}
\end{equation}

Since $\text{\textgreek{y}}_{k}=\text{\textgreek{y}}_{k,hom}+\text{\textgreek{y}}_{k,inhom}$,
adding \ref{eq:HomogeneousTerm} and \ref{eq:Inhomogeneous term}
yields \ref{eq:BoundednessFrmInitialDataOnK}. The statement about
$\text{\textgreek{y}}_{\le\text{\textgreek{w}}_{+}},\text{\textgreek{y}}_{\ge\text{\textgreek{w}}_{+}}$
follows in exactly the same way.
\end{proof}
We will also need to use a boundedness statement for the energy of
$\text{\textgreek{y}}_{k}$, $\text{\textgreek{y}}_{\le\text{\textgreek{w}}_{+}}$
and $\text{\textgreek{y}}_{\ge\text{\textgreek{w}}_{+}}$ on spacelike
hypersurfaces more general than $\{t=const\}$. The following lemma,
is a straightforward generalisation of Lemma \ref{lem:BoundednessFromInitialDataPsiK},
and its proof is identical (and will be omitted).
\begin{lem}
\label{lem:GeneralisedBoundednessPsiK}Let $\text{\textgreek{j}}:\text{\textgreek{S}}_{0}\rightarrow\mathbb{R}$
be a non negative function such that $t_{\text{\textgreek{j}}}=t-\text{\textgreek{j}}$
has spacelike level sets. Then for any $q\in\mathbb{N}$, there exist
suitable constants $C,C_{q}(\text{\textgreek{w}}_{0})$ (depending
also on the precise choice of $\text{\textgreek{j}}$) such that for
any $-n\le k\le n$, any $0\le t_{st}\le\frac{1}{2}t^{*}$ and any
$t_{st}\le t_{1}\le t_{2}\le t^{*}-t_{st}$ satisfying $t_{2}-t_{1}\le t_{st}$
, the following inequality holds for any $\text{\textgreek{t}}\in[t_{1},t_{2}]$:

\begin{equation}
\int_{\{t_{\text{\textgreek{j}}}=\text{\textgreek{t}}\}\cap\mathcal{R}(0,t^{*})}J_{\text{\textgreek{m}}}^{N}(\text{\textgreek{y}}_{k})n^{\text{\textgreek{m}}}\le C\cdot\int_{\{t_{\text{\textgreek{j}}}=t_{1}\}\cap\mathcal{R}(0,t^{*})}J_{\text{\textgreek{m}}}^{N}(\text{\textgreek{y}}_{k})n^{\text{\textgreek{m}}}+C_{q}(\text{\textgreek{w}}_{0})(1+t_{st})^{-q}\int_{t=0}J_{\text{\textgreek{m}}}^{N}(\text{\textgreek{y}})n^{\text{\textgreek{m}}}.\label{eq:BoundednessFrmInitialDataOnK-1}
\end{equation}
 The same estimate also holds for $\text{\textgreek{y}}_{\le\text{\textgreek{w}}_{+}},\text{\textgreek{y}}_{\ge\text{\textgreek{w}}_{+}}$
in place of $\text{\textgreek{y}}_{k}$.
\end{lem}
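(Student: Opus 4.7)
The plan is to follow line-by-line the proof of Lemma \ref{lem:BoundednessFromInitialDataPsiK}, replacing the foliation $\{t=\text{const}\}$ by the foliation $\{t_{\text{\textgreek{j}}}=\text{const}\}$ throughout. On the region $J^+(\{t_{\text{\textgreek{j}}}=t_{1}\})\cap\mathcal{D}\cap\mathcal{R}(0,t^{*})$, I would uniquely decompose $\text{\textgreek{y}}_{k}=\text{\textgreek{y}}_{k,hom}+\text{\textgreek{y}}_{k,inhom}$, where $\text{\textgreek{y}}_{k,hom}$ solves the homogeneous wave equation $\square_{g}\text{\textgreek{y}}_{k,hom}=0$ with Cauchy data on $\{t_{\text{\textgreek{j}}}=t_{1}\}$ matching those of $\text{\textgreek{y}}_{k}$, and $\text{\textgreek{y}}_{k,inhom}$ solves $\square_{g}\text{\textgreek{y}}_{k,inhom}=F_{k}$ with trivial Cauchy data on $\{t_{\text{\textgreek{j}}}=t_{1}\}$. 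The analogous decomposition is used for $\text{\textgreek{y}}_{\le\text{\textgreek{w}}_{+}}$ and $\text{\textgreek{y}}_{\ge\text{\textgreek{w}}_{+}}$.

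The key input needed is an energy boundedness statement of the form $\int_{\{t_{\text{\textgreek{j}}}=\text{\textgreek{t}}\}}J_{\text{\textgreek{m}}}^{N}(u)n^{\text{\textgreek{m}}}\le C\int_{\{t_{\text{\textgreek{j}}}=s\}}J_{\text{\textgreek{m}}}^{N}(u)n^{\text{\textgreek{m}}}$ for $\text{\textgreek{t}}\ge s$ and for arbitrary smooth solutions $u$ of $\square_{g}u=0$, applicable in particular to $\text{\textgreek{y}}_{k,hom}$ and to the solutions appearing in the Duhamel decomposition below. This is exactly the content of the Remark following Assumption~\hyperref[Assumption 4]{4}: since $\text{\textgreek{j}}$ is a function on $\text{\textgreek{S}}_{0}$ extended by $T\text{\textgreek{j}}=0$, and since the level sets of $t_{\text{\textgreek{j}}}$ have been assumed spacelike (hence achronal), the conservation of the $J^{T}$ current between $\{t=\text{const}\}$ and $\{t_{\text{\textgreek{j}}}=\text{const}\}$ hypersurfaces in the region $\{r\gg1\}$ (where $T\equiv N$ is timelike) reduces the bound to the $t$-foliation case provided by Assumption~\hyperref[Assumption 4]{4}. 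This step introduces a $\text{\textgreek{j}}$-dependent multiplicative constant, which is why the constants in \eqref{eq:BoundednessFrmInitialDataOnK-1} depend on the choice of $\text{\textgreek{j}}$. Applying this boundedness to $\text{\textgreek{y}}_{k,hom}$ immediately yields $\int_{\{t_{\text{\textgreek{j}}}=\text{\textgreek{t}}\}}J_{\text{\textgreek{m}}}^{N}(\text{\textgreek{y}}_{k,hom})n^{\text{\textgreek{m}}}\le C\int_{\{t_{\text{\textgreek{j}}}=t_{1}\}}J_{\text{\textgreek{m}}}^{N}(\text{\textgreek{y}}_{k})n^{\text{\textgreek{m}}}$.

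For the inhomogeneous piece I would use Duhamel's principle to write $\text{\textgreek{y}}_{k,inhom}(\cdot)=\int_{t_{1}}^{\text{\textgreek{t}}}u_{s}(\cdot)\, ds$, where each $u_{s}$ solves $\square_{g}u_{s}=0$ with Cauchy data $(0,F_{k}|_{\{t_{\text{\textgreek{j}}}=s\}})$ on $\{t_{\text{\textgreek{j}}}=s\}$. Applying the just-established boundedness assumption for the $\{t_{\text{\textgreek{j}}}=\text{const}\}$ foliation to each $u_{s}$, followed by Cauchy--Schwarz in the $s$-integral (using $\text{\textgreek{t}}-t_{1}\le t_{st}$), gives
\begin{equation}
\int_{\{t_{\text{\textgreek{j}}}=\text{\textgreek{t}}\}\cap\mathcal{R}(0,t^{*})}J_{\text{\textgreek{m}}}^{N}(\text{\textgreek{y}}_{k,inhom})n^{\text{\textgreek{m}}}\le C\cdot t_{st}\cdot\int_{\mathcal{R}(t_{1},t_{2})}|F_{k}|^{2}.
\end{equation}
Lemma \ref{lem:BoundF} applied with $q'$ large (in terms of $q$) then controls the right-hand side by $C_{q}(\text{\textgreek{w}}_{0})(1+t_{st})^{-q}\int_{t=0}J_{\text{\textgreek{m}}}^{N}(\text{\textgreek{y}})n^{\text{\textgreek{m}}}$, where the factor $(1+t_{st})^{-q}$ arises because, by the hypothesis $t_{st}\le t_{1}\le t_{2}\le t^{*}-t_{st}$, both $t_{1}$ and $t^{*}-t_{2}$ are at least $t_{st}$. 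Summing the homogeneous and inhomogeneous contributions yields \eqref{eq:BoundednessFrmInitialDataOnK-1} for $\text{\textgreek{y}}_{k}$, and the arguments for $\text{\textgreek{y}}_{\le\text{\textgreek{w}}_{+}}$ and $\text{\textgreek{y}}_{\ge\text{\textgreek{w}}_{+}}$ are identical.

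The only step that is not a word-for-word transcription of the earlier proof is the derivation of energy boundedness for the $\{t_{\text{\textgreek{j}}}=\text{const}\}$ foliation, and I expect this to be the main (minor) obstacle: one must verify that the region bounded between $\{t=\text{const}\}$ and $\{t_{\text{\textgreek{j}}}=\text{const}\}$ lies in the zone where $T$ is timelike and $N\equiv T$, so that the $J^{T}$-flux through each of the two hypersurfaces is comparable (with constants depending on $\text{\textgreek{j}}$ but independent of the solution). Once this is in place the rest of the argument is purely formal.
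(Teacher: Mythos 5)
The paper itself omits the proof, asserting only that it is a ``straightforward generalisation'' of Lemma~\ref{lem:BoundednessFromInitialDataPsiK} whose proof is ``identical'' — so your reconstruction is exactly the intended argument, and you have correctly isolated the single new ingredient: transferring the energy-boundedness input of Assumption~\hyperref[Assumption 4]{4} from the $\{t=\text{const}\}$ foliation to the $\{t_{\text{\textgreek{j}}}=\text{const}\}$ foliation via the remark that follows that assumption (conservation of the $J^{T}$-current in the far region where $N\equiv T$ is timelike). One small point you compress (as does the paper): after Duhamel the $F_{k}$ error naturally sits on the slab $\{t_{1}\le t_{\text{\textgreek{j}}}\le\text{\textgreek{t}}\}$ rather than on the $t$-slab $\mathcal{R}(t_{1},t_{2})$, and reducing it to the format of Lemma~\ref{lem:BoundF} uses that $\text{\textgreek{j}}\ge 0$ together with the restriction to $\mathcal{R}(0,t^{*})$ keep $t_{-}\gtrsim t_{st}$ and $t^{*}-t_{+}\gtrsim t_{st}$ (up to $\text{\textgreek{j}}$-dependent constants, which the statement allows) on this region, which is all the proof of Lemma~\ref{lem:BoundF} actually requires.
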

\medskip{}

We will also need to localise in time estimates of the form $\int_{-\infty}^{\infty}|\partial_{t}\text{\textgreek{y}}_{k}|^{2}\, dt\sim\text{\textgreek{w}}_{k}^{2}\int_{-\infty}^{\infty}|\text{\textgreek{y}}_{k}|^{2}\, dt$.
To this end, we have to introduce a few more cut off functions.

Let $\text{\ensuremath{\bar{\text{\textgreek{q}}}}}_{3}:\mathbb{R}\rightarrow[0,1]$
be a smooth function that is identically 1 on $[-\frac{9}{8},\frac{9}{8}]$
and identically 0 outside $(-\frac{5}{4},\frac{5}{4})$. Then for
$1\le k\le n$ we will define the functions 
\begin{equation}
\text{\ensuremath{\bar{\lyxmathsym{\textgreek{z}}}}}_{k}=\bar{\text{\textgreek{q}}}_{3}(\frac{\text{\textgreek{w}}-\frac{\text{\textgreek{w}}_{k-1}+\text{\textgreek{w}}_{k}}{2}}{\frac{1}{2}(\text{\textgreek{w}}_{k}-\text{\textgreek{w}}_{k-1})}).
\end{equation}
We will extend this definition for $-n\le k\le-1$ by setting $\text{\ensuremath{\bar{\lyxmathsym{\textgreek{z}}}}}_{k}(\text{\textgreek{w}})=\bar{\text{\textgreek{z}}}_{-k}(-\text{\textgreek{w}})$. 

Note that the $\bar{\text{\textgreek{z}}}_{k}$'s are smooth functions
with compact support, and hence their inverse Fourier transforms $\bar{h}_{k}$
are Schwartz functions. For $1\le k\le n$, the $\bar{h}_{k}$'s are
of the form 
\begin{equation}
\bar{h}_{k}(t)=\frac{\text{\textgreek{w}}_{k}-\text{\textgreek{w}}_{k-1}}{2}e^{i\frac{\text{\textgreek{w}}_{k}+\text{\textgreek{w}}_{k-1}}{2}t}\check{\bar{\text{\textgreek{q}}}}_{3}(\frac{1}{2}(\text{\textgreek{w}}_{k}-\text{\textgreek{w}}_{k-1})t).
\end{equation}
Hence, due to the bound $\frac{1}{4}\text{\textgreek{w}}_{0}<\text{\textgreek{w}}_{k}-\text{\textgreek{w}}_{k-1}<\frac{1}{2}\text{\textgreek{w}}_{0}$,
the fact that $\check{\bar{\text{\textgreek{q}}}}_{3}$ is a Schwartz
function implies that for any $q\in\mathbb{N}$ 
\begin{equation}
\sup_{t}\big(\text{\textgreek{w}}_{0}^{-1}\{1+|\text{\textgreek{w}}_{0}t|^{q}\}|\bar{h}_{k}(t)|\big)\le C_{q}\mbox{ and }\sup_{t}\big(\text{\textgreek{w}}_{0}^{-2}\{1+|\text{\textgreek{w}}_{0}t|^{q}\}\big|\big\{\partial_{t}\bar{h}_{k}(t)-i\frac{\text{\textgreek{w}}_{k}+\text{\textgreek{w}}_{k-1}}{2}\bar{h}_{k}(t)\big\}\big|\big)\le C_{q}\label{eq:SchwarzBounds1}
\end{equation}
 for some constants depending only on the precise choice of $\bar{\text{\textgreek{q}}}_{3}$.
The same bounds also hold for $-n\le k\le-1$.

In the same way, we can define the function $\bar{\text{\textgreek{z}}}_{0}:\mathbb{R}\rightarrow[0,1]$:
\begin{equation}
\text{\ensuremath{\bar{\lyxmathsym{\textgreek{z}}}}}_{0}=\bar{\text{\textgreek{q}}}_{3}(\frac{|\text{\textgreek{w}}|}{\text{\textgreek{w}}_{0}}).
\end{equation}
Then its inverse Fourier transform $\bar{h}_{0}$ also satisfies 
\begin{equation}
\sup_{t}\big(\text{\textgreek{w}}_{0}^{-1}\{1+|\text{\textgreek{w}}_{0}t|^{q}\}|\bar{h}_{0}(t)|\big)\le C_{q}\mbox{ and }\sup_{t}\big(\text{\textgreek{w}}_{0}^{-2}\{1+|\text{\textgreek{w}}_{0}t|^{q}\}|\partial_{t}\bar{h}_{k}(t)|\big)\le C_{q}.\label{eq:SchwarzBounds2}
\end{equation}

Since $\bar{\text{\textgreek{z}}}_{k}\equiv1$ on the support of $\text{\textgreek{z}}_{k}$,
we have the relation $\text{\textgreek{z}}_{k}\cdot\bar{\text{\textgreek{z}}}_{k}=\text{\textgreek{z}}_{k}$.
This relation implies for $-n\le k\le n$ the following self reproducing
formula for $\text{\textgreek{y}}_{k}$:

\begin{equation}
\text{\textgreek{y}}_{k}(t,\cdot)=\int_{-\infty}^{\infty}\bar{h}_{k}(t-s)\cdot\text{\textgreek{y}}_{k}(s,\cdot)\, ds.\label{eq:Reproducing Formula}
\end{equation}

For $1\le|k|\le n$, we can also establish an estimate for the anti-derivative
of $\bar{h}_{k}$: The functions $\tilde{\text{\textgreek{z}}}_{k}(\text{\textgreek{w}})=\frac{1}{i\text{\textgreek{w}}}\bar{\text{\textgreek{z}}}_{k}(\text{\textgreek{w}})$
are smooth functions of compact support, and in particular they are
identically equal to $(i\text{\textgreek{w}})^{-1}$ on the frequency
support of $\hat{\text{\textgreek{y}}}_{k}$ (i.\,e. the support
of $\text{\textgreek{z}}_{k}$). Hence, we have the identity 
\begin{equation}
\tilde{\text{\textgreek{z}}}_{k}(\text{\textgreek{w}})\cdot i\text{\textgreek{w}}\hat{\text{\textgreek{y}}}_{k}(\text{\textgreek{w}},\cdot)=\hat{\text{\textgreek{y}}}_{k}(\text{\textgreek{w}},\cdot).
\end{equation}
Therefore, by denoting with $\tilde{h}_{k}$ the inverse Fourier transform
of $\tilde{\text{\textgreek{z}}}_{k}$, from (\ref{eq:Reproducing Formula})
we obtain the relation

\begin{equation}
\text{\textgreek{y}}_{k}(t,\cdot)=\int_{-\infty}^{\infty}\tilde{h}_{k}(t-s)\cdot T\text{\textgreek{y}}_{k}(s,\cdot)\, ds.\label{eq:ReproducingFormula2}
\end{equation}

Note also that for any $q\in\mathbb{N}$ we can bound
\begin{align}
\int_{-\infty}^{\infty}|\text{\textgreek{w}}_{0}^{q}\frac{d^{q}}{d\text{\textgreek{w}}^{q}}\tilde{\text{\textgreek{z}}}_{k}(\text{\textgreek{w}})|\, d\text{\textgreek{w}} & \le C_{q}\cdot\sum_{l=0}^{q}\Big\{\int_{-\infty}^{\infty}\text{\textgreek{w}}_{0}^{q}\Big|\frac{d^{l}}{d\text{\textgreek{w}}^{l}}\Big(\frac{1}{\text{\textgreek{w}}}\Big)\cdot\text{\textgreek{w}}_{0}^{l-q}\cdot\bar{\text{\textgreek{q}}}_{3}^{(q-l)}(\frac{\text{\textgreek{w}}-\frac{\text{\textgreek{w}}_{k-1}+\text{\textgreek{w}}_{k}}{2}}{\frac{1}{2}(\text{\textgreek{w}}_{k}-\text{\textgreek{w}}_{k-1})})\Big|\, d\text{\textgreek{w}}\Big\}\le\label{eq:BoundFromReproducingFormula}\\
 & \le C_{q}\cdot\sum_{l=0}^{q}\Big\{\text{\textgreek{w}}_{0}^{l}\int_{-\infty}^{\infty}|\text{\textgreek{w}}|^{-l-1}\cdot\Big|\bar{\text{\textgreek{q}}}_{3}^{(q-l)}(\frac{\text{\textgreek{w}}-\frac{\text{\textgreek{w}}_{k-1}+\text{\textgreek{w}}_{k}}{2}}{\frac{1}{2}(\text{\textgreek{w}}_{k}-\text{\textgreek{w}}_{k-1})})\Big|\, d\text{\textgreek{w}}\Big\}\le\nonumber \\
 & \le C_{q}\cdot\text{\textgreek{w}}_{k-1}^{-1}\cdot\text{\textgreek{w}}_{0}.\nonumber 
\end{align}
 However, from the definition of the inverse Fourier transform, for
any $q\in\mathbb{N}$ we have
\begin{equation}
\sup_{t}\{|\text{\textgreek{w}}_{0}t|^{q}\cdot|\tilde{h}_{k}(t)|\}\le C\cdot\int_{-\infty}^{\infty}\Big|\text{\textgreek{w}}_{0}^{q}\frac{d^{q}}{d\text{\textgreek{w}}^{q}}\tilde{\text{\textgreek{z}}}_{k}(\text{\textgreek{w}})\Big|\, d\text{\textgreek{w}}
\end{equation}
 and hence, (\ref{eq:BoundFromReproducingFormula}) implies:

\begin{equation}
\sup_{t}\{\text{\textgreek{w}}_{0}^{-1}\{1+|\text{\textgreek{w}}_{0}t|^{q}\}\cdot|\tilde{h}_{k}(t)|\}\le C_{q}\cdot\text{\textgreek{w}}_{k-1}^{-1}.\label{eq:ScwarzBoundDerivative}
\end{equation}
 The same statement obviously also holds for $-n\le k\le-1$.

We can now establish the following lemma:
\begin{lem}
\label{lem:DtToOmegaInequalities}For any $1\le|k|\le n$, any $0\le t_{1}\le t_{2}\le t^{*}$
and any $R\ge0$, we can bound

\begin{multline}
c\cdot\text{\textgreek{w}}_{k-1}^{2}\Big\{\int_{\mathcal{R}(t_{1},t_{2})\cap\{r\le R\}}|\text{\textgreek{y}}_{k}|^{2}-C(\text{\textgreek{w}}_{0})\cdot\int_{t=0}J_{\text{\textgreek{m}}}^{N}(\text{\textgreek{y}})n^{\text{\textgreek{m}}}\Big\}\le\int_{\mathcal{R}(t_{1},t_{2})\cap\{r\le R\}}|T\text{\textgreek{y}}_{k}|^{2}\le\\
\le C\cdot\text{\textgreek{w}}_{k}^{2}\int_{\mathcal{R}(t_{1},t_{2})\cap\{r\le R\}}|\text{\textgreek{y}}_{k}|^{2}+\text{\textgreek{w}}_{k}^{2}C(\text{\textgreek{w}}_{0})R^{2}\int_{t=0}J_{\text{\textgreek{m}}}^{N}(\text{\textgreek{y}})n^{\text{\textgreek{m}}},\label{eq:w-estimate}
\end{multline}
and similarly for $k=0$:

\begin{equation}
\int_{\mathcal{R}(t_{1},t_{2})\cap\{r\le R\}}|T\text{\textgreek{y}}_{0}|^{2}\le C\cdot\text{\textgreek{w}}_{0}^{2}\int_{\mathcal{R}(t_{1},t_{2})\cap\{r\le R\}}|\text{\textgreek{y}}_{0}|^{2}+C(\text{\textgreek{w}}_{0})R^{2}\cdot\int_{t=0}J_{\text{\textgreek{m}}}^{N}(\text{\textgreek{y}})n^{\text{\textgreek{m}}}.\label{eq:LowFrequencyEstimate}
\end{equation}
 
\end{lem}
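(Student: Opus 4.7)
\medskip

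\textbf{Plan of proof.} The lemma is a quantitative version of the principle that $T = \partial_t$ acts on a function localised in frequency to $[\text{\textgreek{w}}_{k-1},\text{\textgreek{w}}_k]$ essentially as multiplication by a frequency of size $\text{\textgreek{w}}_k$. The tools to make this precise are the two self-reproducing convolution formulas (\ref{eq:Reproducing Formula}) and (\ref{eq:ReproducingFormula2}), together with the Schwartz-type bounds (\ref{eq:SchwarzBounds1}), (\ref{eq:SchwarzBounds2}) on $\bar h_k,\partial_t\bar h_k$ and (\ref{eq:ScwarzBoundDerivative}) on the antiderivative kernel $\tilde h_k$. The boundedness assumption \hyperref[Assumption 4]{4}, mediated by Lemma \ref{lem:BoundednessPsiK}, will be used to dominate the ``tail'' contributions that arise when we restrict to the finite slab $\mathcal{R}(t_1,t_2)$.

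\medskip

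For the upper bound in (\ref{eq:w-estimate}), first differentiate (\ref{eq:Reproducing Formula}) in $t$ and use the algebraic identity
\[
\partial_t\bar h_k(t-s)\;=\;i\,\tfrac{\text{\textgreek{w}}_k+\text{\textgreek{w}}_{k-1}}{2}\,\bar h_k(t-s)\;+\;\mathcal{R}_k(t-s),
\]
where by (\ref{eq:SchwarzBounds1}) the remainder satisfies $|\mathcal{R}_k(\tau)|\le C_q\,\text{\textgreek{w}}_0^{\,2}\{1+|\text{\textgreek{w}}_0\tau|\}^{-q}$ for every $q$. Thus
\[
T\text{\textgreek{y}}_k(t,x)=i\,\tfrac{\text{\textgreek{w}}_k+\text{\textgreek{w}}_{k-1}}{2}\,\text{\textgreek{y}}_k(t,x)+\int \mathcal{R}_k(t-s)\,\text{\textgreek{y}}_k(s,x)\,ds.
\]
Squaring pointwise, applying Cauchy--Schwarz in $s$ (against the $L^1$ weight $\text{\textgreek{w}}_0^{\,2}(1+|\text{\textgreek{w}}_0 \tau|)^{-q}$), and integrating over $\mathcal{R}(t_1,t_2)\cap\{r\le R\}$ yields
\[
\int_{\mathcal{R}(t_1,t_2)\cap\{r\le R\}}|T\text{\textgreek{y}}_k|^{2}\le C\,\text{\textgreek{w}}_k^{2}\int_{\mathcal{R}(t_1,t_2)\cap\{r\le R\}}|\text{\textgreek{y}}_k|^{2}+\mathcal{E}_{\rm tail},
\]
where $\mathcal{E}_{\rm tail}$ is a weighted average, with rapidly decaying weight, of $\int_{\{t=s\}\cap\{r\le R\}}|\text{\textgreek{y}}_k|^2$ over $s\in\mathbb{R}\setminus[t_1,t_2]$.

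\medskip

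The tail is then handled as follows. On each slice $\{t=s\}\cap\{r\le R\}$, we apply a Hardy--Poincar\'e inequality (either the general inequality (\ref{eq:GeneralHardyBound}) combined with a cut-off at $r=R$, or its analogue on the bounded region $\{r\le R\}$) to bound $\int_{\{t=s\}\cap\{r\le R\}}|\text{\textgreek{y}}_k|^2\le CR^{2}\int_{\{t=s\}}J^N_{\text{\textgreek{m}}}(\text{\textgreek{y}}_k)n^{\text{\textgreek{m}}}$. Next, Lemma \ref{lem:BoundednessPsiK} turns this slice energy into $C(\text{\textgreek{w}}_0)\int_{\{t=0\}}J^N_{\text{\textgreek{m}}}(\text{\textgreek{y}})n^{\text{\textgreek{m}}}$. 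Integrating the Schwartz weight in $s$ is harmless, so $\mathcal{E}_{\rm tail}\le \text{\textgreek{w}}_k^{2}C(\text{\textgreek{w}}_0)R^{2}\int_{\{t=0\}}J^N_{\text{\textgreek{m}}}(\text{\textgreek{y}})n^{\text{\textgreek{m}}}$, which is exactly the right-hand error term in (\ref{eq:w-estimate}). The lower bound in (\ref{eq:w-estimate}) is obtained by the mirror argument: start from the alternative reproducing formula (\ref{eq:ReproducingFormula2}), apply Cauchy--Schwarz to the convolution using the bound (\ref{eq:ScwarzBoundDerivative}) (which gives an $L^1$ weight of total mass $\sim \text{\textgreek{w}}_{k-1}^{-1}$), and integrate. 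One obtains $\int|\text{\textgreek{y}}_k|^{2}\le C\,\text{\textgreek{w}}_{k-1}^{-2}\int|T\text{\textgreek{y}}_k|^{2}$ plus an analogous tail, which rearranges into the desired inequality. For the case $k=0$ only the upper estimate (\ref{eq:LowFrequencyEstimate}) is asserted (since $\text{\textgreek{y}}_0$ may contain zero-frequency content), and its proof repeats the argument of the first step with $\bar h_0$ in place of $\bar h_k$, using (\ref{eq:SchwarzBounds2}): here $\partial_t\bar h_0$ has no main oscillatory part and is itself $O(\text{\textgreek{w}}_0^{\,2})$ in a Schwartz sense, producing the factor $\text{\textgreek{w}}_0^{\,2}$ on the right-hand side of (\ref{eq:LowFrequencyEstimate}).

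\medskip

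The main technical nuisance, rather than any single deep obstacle, is the careful bookkeeping required when we localise both in $s\in[t_1,t_2]$ and in $r\le R$: one must verify that the Hardy step produces the correct $R^{2}$ factor (and no worse), and that interchanging Cauchy--Schwarz with integration in $t$ does not cost any additional factor of $\text{\textgreek{w}}_0$ beyond what is absorbed into $C(\text{\textgreek{w}}_0)$. Once one keeps track of the $L^1$ masses of the kernels $\bar h_k$, $\partial_t \bar h_k - i\tfrac{\text{\textgreek{w}}_k+\text{\textgreek{w}}_{k-1}}{2}\bar h_k$, and $\tilde h_k$ as dictated by (\ref{eq:SchwarzBounds1})--(\ref{eq:ScwarzBoundDerivative}), the desired estimates follow directly.
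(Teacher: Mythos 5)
Your proposal follows the same overall strategy as the paper: the upper bound via the convolution formula $\text{\textgreek{y}}_{k}(t,\cdot)=\int\bar{h}_{k}(t-s)\text{\textgreek{y}}_{k}(s,\cdot)\,ds$ and a Schwartz bound on $\partial_{t}\bar{h}_{k}$, the lower bound via the antiderivative kernel $\tilde{h}_{k}$ and (\ref{eq:ScwarzBoundDerivative}), with Cauchy--Schwarz against the rapidly decaying kernel and a Fubini step, and a Hardy inequality plus the energy boundedness to absorb the tails. The only stylistic difference in the main term is that you peel off the oscillatory part $i\,\tfrac{\text{\textgreek{w}}_{k}+\text{\textgreek{w}}_{k-1}}{2}\bar{h}_{k}$ explicitly and treat the remainder separately, whereas the paper simply estimates $|\partial_{t}\bar{h}_{k}(\text{\textgreek{t}})|\le C_{p}\text{\textgreek{w}}_{0}\text{\textgreek{w}}_{k}(1+\text{\textgreek{w}}_{0}|\text{\textgreek{t}}|)^{-p}$ in one stroke; these are equivalent by the triangle inequality.

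There is, however, one concrete gap in the tail step that you should not leave implicit. You propose to bound $\int_{\{t=s\}\cap\{r\le R\}}|\text{\textgreek{y}}_{k}|^{2}$ (and likewise $|T\text{\textgreek{y}}_{k}|^{2}$) by a Hardy inequality followed by Lemma \ref{lem:BoundednessPsiK}. But Lemma \ref{lem:BoundednessPsiK} is only stated for $\text{\textgreek{t}}\in[0,t^{*}]$, and the tail sum necessarily involves slices with $s$ far outside $[t_{1},t_{2}]$, including $s<0$ and $s>t^{*}$. For such $s$ the slice energy of $\text{\textgreek{y}}_{k}$ is \emph{not} uniformly bounded by $C(\text{\textgreek{w}}_{0})\int_{t=0}J_{\text{\textgreek{m}}}^{N}(\text{\textgreek{y}})n^{\text{\textgreek{m}}}$: because $\partial h_{t^{*}}$ is supported in a region that extends to arbitrarily large $r$ as $|s|\to\infty$, one only obtains a bound with an extra factor like $\big(1+dist\{s,[0,t^{*}]\}\big)^{2}$. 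The paper sidesteps this by not passing through Lemma \ref{lem:BoundednessPsiK} at all: it re-expands $\text{\textgreek{y}}_{k}$ (resp.\ $T\text{\textgreek{y}}_{k}$) as a convolution of $h_{k}$ against $\text{\textgreek{y}}_{t^{*}}$ (resp.\ $T\text{\textgreek{y}}_{t^{*}}$), then bounds $\int_{\{t=s\}\cap\{r\le R\}}|\text{\textgreek{y}}_{t^{*}}|^{2}$ uniformly in $s$ (via the Hardy step and Assumption 4 applied to $\text{\textgreek{y}}$), while tracking the polynomial weight $(1+dist\{s,[0,t^{*}]\})^{2}$ where it genuinely appears -- this is what produces the harmless $(1+|l|^{3})$ factor in the dyadic sum. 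Your argument works, but only after either re-proving a version of Lemma \ref{lem:BoundednessPsiK} valid for all $s\in\mathbb{R}$ with explicit polynomial growth, or (as the paper does) avoiding it and working with $\text{\textgreek{y}}_{t^{*}}$ directly. Either way, one must verify that the superpolynomial decay of the kernel weight (choose $q$ large) dominates this polynomial growth, which you should state explicitly.
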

\emph{Remark}: Notice that the constant multiplying the error term
in the right hand side of (\ref{eq:w-estimate}) depends on $R$,
while this is not the case in the left hand side. Notice also that
the constants in front of the $\text{\textgreek{y}}_{k}$ terms in
(\ref{eq:w-estimate}) do not depend on $\text{\textgreek{w}}_{0}$.
\begin{proof}
We will just prove the inequality for $1\le k\le n$, since the cases
$-n\le k\le-1$ and $k=0$ follow in exactly the same way. The proof
relies on manipulating the formulas (\ref{eq:Reproducing Formula})
and (\ref{eq:ReproducingFormula2}), together with the bounds (\ref{eq:SchwarzBounds1}),
(\ref{eq:SchwarzBounds2}) and (\ref{eq:ScwarzBoundDerivative}). 

By differentiating (\ref{eq:Reproducing Formula}) with respect to
$t$, we readily deduce as in \cite{DafRod5} (using (\ref{eq:SchwarzBounds1}))
that

\begin{align}
|\partial_{t}\text{\textgreek{y}}_{k}(t,\cdot)| & \le\int_{-\infty}^{\infty}|\partial_{t}\bar{h}_{k}(t-s)|\cdot|\text{\textgreek{y}}_{k}(s,\cdot)|\, ds\le\label{eq:FirstFormulaFromReproducing}\\
 & \le C_{p}\text{\textgreek{w}}_{0}\big(\frac{\text{\textgreek{w}}_{k}+\text{\textgreek{w}}_{k+1}}{2}+\text{\textgreek{w}}_{0}\big)\cdot\int_{-\infty}^{\infty}(1+\text{\textgreek{w}}_{0}|t-s|)^{-p}\cdot|\text{\textgreek{y}}_{k}(s,\cdot)|\, ds\nonumber \\
 & \le C_{p}\text{\textgreek{w}}_{0}\text{\textgreek{w}}_{k}\int_{-\infty}^{\infty}(1+\text{\textgreek{w}}_{0}|t-s|)^{-p}\cdot|\text{\textgreek{y}}_{k}(s,\cdot)|\, ds.\nonumber 
\end{align}
Applying a H\"older inequality, and provided that $p>1$ so that
$\Big(\int_{-\infty}^{\infty}(1+|\text{\textgreek{w}}_{0}s|)^{-p}\, d(\text{\textgreek{w}}_{0}s)\Big)^{1/2}\le C_{p}<\infty$,
(\ref{eq:FirstFormulaFromReproducing}) yields:

\begin{align}
|\partial_{t}\text{\textgreek{y}}_{k}(t,\cdot)| & \le C_{p}\text{\textgreek{w}}_{0}\text{\textgreek{w}}_{k}\int_{-\infty}^{\infty}(1+\text{\textgreek{w}}_{0}|t-s|)^{-p}\cdot|\text{\textgreek{y}}_{k}(s,\cdot)|\, ds\\
 & \le C_{p}\text{\textgreek{w}}_{k}\text{\textgreek{w}}_{0}^{1/2}\cdot\Big(\int_{-\infty}^{\infty}(1+\text{\textgreek{w}}_{0}|t-s|)^{-p}\cdot|\text{\textgreek{y}}_{k}(s,\cdot)|^{2}\, ds\Big)^{1/2}\nonumber \\
 & \le C_{p}\text{\textgreek{w}}_{k}\text{\textgreek{w}}_{0}^{1/2}\Big(\sum_{l=-\infty}^{\infty}(1+|l|)^{-p}\cdot\int_{t+\text{\textgreek{w}}_{0}^{-1}l}^{t+\text{\textgreek{w}}_{0}^{-1}(l+1)}|\text{\textgreek{y}}_{k}(s,\cdot)|^{2}\, ds\Big)^{1/2}.\nonumber 
\end{align}
Hence

\begin{align}
\int_{t_{1}}^{t_{2}}|\partial_{t}\text{\textgreek{y}}_{k}(t,\cdot)|^{2}\, dt & \le C_{p}\text{\textgreek{w}}_{k}^{2}\text{\textgreek{w}}_{0}\int_{t_{1}}^{t_{2}}\Big(\sum_{l=-\infty}^{\infty}(1+|l|)^{-p}\cdot\Big(\int_{t+\text{\textgreek{w}}_{0}^{-1}l}^{t+\text{\textgreek{w}}_{0}^{-1}(l+1)}|\text{\textgreek{y}}_{k}(s,\cdot)|^{2}\, ds\Big)\Big)\, dt\label{eq:FirstDtBound}\\
 & \le C_{p}\text{\textgreek{w}}_{k}^{2}\text{\textgreek{w}}_{0}\sum_{l=-\infty}^{\infty}(1+|l|)^{-p}\cdot\Big(\int_{t_{1}}^{t_{2}}\int_{t+\text{\textgreek{w}}_{0}^{-1}l}^{t+\text{\textgreek{w}}_{0}^{-1}(l+1)}|\text{\textgreek{y}}_{k}(s,\cdot)|^{2}\, dsdt\Big)\nonumber \\
 & \le C_{p}\text{\textgreek{w}}_{k}^{2}\text{\textgreek{w}}_{0}\sum_{l=-\infty}^{\infty}(1+|l|)^{-p}\cdot\Big(\int_{t_{1}}^{t_{2}}\int_{\text{\textgreek{w}}_{0}^{-1}l}^{\text{\textgreek{w}}_{0}^{-1}(l+1)}|\text{\textgreek{y}}_{k}(s+t,\cdot)|^{2}\, dsdt\Big)\nonumber \\
 & \le C_{p}\text{\textgreek{w}}_{k}^{2}\sum_{l=-\infty}^{\infty}(1+|l|)^{-p}\cdot\Big(\int_{t_{1}+\text{\textgreek{w}}_{0}^{-1}l}^{t_{2}+\text{\textgreek{w}}_{0}^{-1}(l+1)}|\text{\textgreek{y}}_{k}(t,\cdot)|^{2}\, dt\Big)\nonumber \\
 & \le C_{p}\text{\textgreek{w}}_{k}^{2}\Big(\int_{t_{1}}^{t_{2}}|\text{\textgreek{y}}_{k}(t,\cdot)|^{2}\, dt\Big)+C_{p}\text{\textgreek{w}}_{k}^{2}\sum_{l=-\infty}^{\infty}(1+|l|)^{-p}\cdot\Big(\int_{t_{1}+\text{\textgreek{w}}_{0}^{-1}l}^{t_{2}+\text{\textgreek{w}}_{0}^{-1}(l+1)}(1-\text{\textgreek{q}}_{[t_{1},t_{2}]})|\text{\textgreek{y}}_{k}(t,\cdot)|^{2}\, dt\Big),\nonumber 
\end{align}
 where $\text{\textgreek{q}}_{[t_{1},t_{2}]}$ is the characteristic
function of $\{t_{1}\le t\le t_{2}\}$.

Integrating (\ref{eq:FirstDtBound}) over $\{r\le R\}$, we obtain

\begin{align}
\int_{\mathcal{R}(t_{1},t_{2})\cap\{r\le R\}}|T\text{\textgreek{y}}_{k}|^{2}\le & C_{p}\text{\textgreek{w}}_{k}^{2}\int_{\mathcal{R}(t_{1},t_{2})\cap\{r\le R\}}|\text{\textgreek{y}}_{k}|^{2}+\label{eq:AfterIntegration}\\
 & +C_{p}\text{\textgreek{w}}_{k}^{2}\sum_{l=-\infty}^{\infty}(1+|l|)^{-p}\cdot\Big(\int_{\{r\le R\}}\int_{t_{1}+\text{\textgreek{w}}_{0}^{-1}l}^{t_{2}+\text{\textgreek{w}}_{0}^{-1}(l+1)}(1-\text{\textgreek{q}}_{[t_{1},t_{2}]})|\text{\textgreek{y}}_{k}(t,x)|^{2}\, dtdx\Big).\nonumber 
\end{align}

For the second term of the right hand side of (\ref{eq:AfterIntegration}),
we estimate:
\begin{equation}
\begin{split}\sum_{l=-\infty}^{-1}(1+|l|)^{-p}\Big(\int_{\{r\le R\}}\int_{t_{1}+\text{\textgreek{w}}_{0}^{-1}l}^{t_{2}+\text{\textgreek{w}}_{0}^{-1}(l+1)} & (1-\text{\textgreek{q}}_{[t_{1},t_{2}]})|\text{\textgreek{y}}_{k}(t,x)|^{2}\, dtdx\Big)\le\\
\le & \sum_{l=-\infty}^{-1}(1+|l|)^{-p}\Big(\int_{\{r\le R\}}\int_{t_{1}+\text{\textgreek{w}}_{0}^{-1}l}^{t_{1}}|\text{\textgreek{y}}_{k}(t,x)|^{2}\, dtdx\Big).
\end{split}
\label{eq:almost there}
\end{equation}

Notice that for any $q\in\mathbb{N}$ we can bound 
\begin{equation}
|\text{\textgreek{y}}_{k}(t,\cdot)|\le\int_{-\infty}^{\infty}|h_{k}(t-s)|\cdot|\text{\textgreek{y}}_{t^{*}}(s,\cdot)|\, ds\le C_{q}\cdot\text{\textgreek{w}}_{0}\int_{-\infty}^{\infty}(1+\text{\textgreek{w}}_{0}|t-s|)^{-q}\cdot|\text{\textgreek{y}}_{t^{*}}(s,\cdot)|\, ds
\end{equation}
due to (\ref{eq:SchwartzBounds2}), and hence (for $q>1$) 
\begin{equation}
|\text{\textgreek{y}}_{k}(t,\cdot)|^{2}\le C_{q}(\text{\textgreek{w}}_{0})\cdot\int_{-\infty}^{\infty}(1+\text{\textgreek{w}}_{0}|t-s|)^{-q}\cdot|\text{\textgreek{y}}_{t^{*}}(s,\cdot)|^{2}\, ds.
\end{equation}
Therefore, substituting in (\ref{eq:almost there}) we infer:

\begin{equation}
\int_{\{r\le R\}}\int_{t_{1}+\text{\textgreek{w}}_{0}^{-1}l}^{t_{1}}|\text{\textgreek{y}}_{k}(t,x)|^{2}\, dtdx\le C_{q}(\text{\textgreek{w}}_{0})\int_{t_{1}+\text{\textgreek{w}}_{0}^{-1}l}^{t_{1}}\int_{-\infty}^{\infty}(1+\text{\textgreek{w}}_{0}|t-s|)^{-q}\cdot\Big(\int_{\{r\le R\}\cap\{t=s\}}|\text{\textgreek{y}}_{t^{*}}|^{2}\Big)\, dsdt.\label{eq:IntermediateBeforeHardy}
\end{equation}

Using a Hardy inequality (such as (\ref{eq:GeneralHardyBound})),
we can bound: 
\begin{equation}
\int_{\{t=s\}\cap\{r\le R\}}|\text{\textgreek{y}}_{t^{*}}|^{2}=\int_{\{t=s\}\cap\{r\le R\}}|h_{t^{*}}|^{2}|\text{\textgreek{y}}|^{2}\le\int_{\{t=s\}\cap\{t_{-}\ge0\}\cap\{r\le R\}}|\text{\textgreek{y}}|^{2}\le C\cdot R^{2}\int_{\{t=s\}\cap\{t_{-}\ge0\}}J_{\text{\textgreek{m}}}^{N}(\text{\textgreek{y}})n^{\text{\textgreek{m}}}\label{eq:Some Hardy}
\end{equation}
 for any $s\in\mathbb{R}$. Moreover, as we did in the proofs of the
previous lemmas, using the boundedness assumption \hyperref[Assumption 4]{4}
for $s\ge0$, and the conservation of the $J^{T}$ current and Lemma
\ref{lem:ComparisonTT-} for $s<0$, we can bound for any $s\in\mathbb{R}$:
$\int_{\{t=s\}\cap\{t_{-}\ge0\}}J_{\text{\textgreek{m}}}^{N}(\text{\textgreek{y}})n^{\text{\textgreek{m}}}\le\int_{t=0}J_{\text{\textgreek{m}}}^{N}(\text{\textgreek{y}})n^{\text{\textgreek{m}}}$.
Thus, inequality (\ref{eq:Some Hardy}) yields

\begin{equation}
\int_{\{t=s\}\cap\{r\le R\}}|\text{\textgreek{y}}_{t^{*}}|^{2}\le C\cdot R^{2}\int_{t=0}J_{\text{\textgreek{m}}}^{N}(\text{\textgreek{y}})n^{\text{\textgreek{m}}}.
\end{equation}
Hence, returning to (\ref{eq:IntermediateBeforeHardy}), we have:

\begin{align}
\int_{\{r\le R\}}\int_{t_{1}+\text{\textgreek{w}}_{0}^{-1}l}^{t_{1}}|\text{\textgreek{y}}_{k}(t,x)|^{2}\, dtdx & \le C_{q}(\text{\textgreek{w}}_{0})R^{2}\Big\{\int_{t_{1}+\text{\textgreek{w}}_{0}^{-1}l}^{t_{1}}\int_{-\infty}^{\infty}(1+\text{\textgreek{w}}_{0}(t-s))^{-q}\, dsdt\Big\}\cdot\int_{\{t=0\}}J_{\text{\textgreek{m}}}^{N}(\text{\textgreek{y}})n^{\text{\textgreek{m}}}\label{eq:ONemoreBound}\\
 & \le C_{q}(\text{\textgreek{w}}_{0})\cdot|l|\cdot R^{2}\int_{\{t=0\}}J_{\text{\textgreek{m}}}^{N}(\text{\textgreek{y}})n^{\text{\textgreek{m}}}.\nonumber 
\end{align}

Substituting (\ref{eq:ONemoreBound}) in \ref{eq:almost there}, and
fixing $p,q$ large enough, we infer the desired bound

\begin{equation}
\sum_{l=-\infty}^{-1}(1+|l|)^{-p}\Big(\int_{\{r\le R\}}\int_{t_{1}+\text{\textgreek{w}}_{0}^{-1}l}^{t_{2}+\text{\textgreek{w}}_{0}^{-1}(l+1)}(1-\text{\textgreek{q}}_{[t_{1},t_{2}]})|\text{\textgreek{y}}_{k}(t,x)|^{2}\, dtdx\Big)\le C(\text{\textgreek{w}}_{0})R^{2}\int_{\{t=0\}}J_{\text{\textgreek{m}}}^{N}(\text{\textgreek{y}})n^{\text{\textgreek{m}}}.\label{eq:NegativePartSum}
\end{equation}
 In the same way, we can bound

\begin{equation}
\sum_{l=0}^{\infty}(1+|l|)^{-p}\Big(\int_{\{r\le R\}}\int_{t_{1}+\text{\textgreek{w}}_{0}^{-1}l}^{t_{2}+\text{\textgreek{w}}_{0}^{-1}(l+1)}(1-\text{\textgreek{q}}_{[t_{1},t_{2}]})|\text{\textgreek{y}}_{k}(t,x)|^{2}\, dtdx\Big)\le C(\text{\textgreek{w}}_{0})R^{2}\int_{\{t=0\}}J_{\text{\textgreek{m}}}^{N}(\text{\textgreek{y}})n^{\text{\textgreek{m}}}.\label{eq:Positivesum}
\end{equation}
 Returning to (\ref{eq:AfterIntegration}), in view of (\ref{eq:NegativePartSum})
and (\ref{eq:Positivesum}) we obtain

\begin{equation}
\int_{\mathcal{R}(t_{1},t_{2})\cap\{r\le R\}}|T\text{\textgreek{y}}_{k}|^{2}\le C\text{\textgreek{w}}_{k}^{2}\int_{R(t_{1},t_{2})\cap\{r\le R\}}|\text{\textgreek{y}}_{k}|^{2}+C(\text{\textgreek{w}}_{0})\text{\textgreek{w}}_{k}^{2}\cdot R^{2}\int_{\{t=0\}}J_{\text{\textgreek{m}}}^{N}(\text{\textgreek{y}})n^{\text{\textgreek{m}}}.\label{eq:Righthandsidebound}
\end{equation}
 Hence, we have established the right ``half'' of inequality (\ref{eq:w-estimate}).

The left ``half'' is proved in exactly the same way, but with the
use of (\ref{eq:ReproducingFormula2}) instead of (\ref{eq:Reproducing Formula}).
That is, in exactly the same way as before (using now (\ref{eq:ScwarzBoundDerivative})
instead of (\ref{eq:SchwarzBounds1})), we obtain

\begin{align}
|\text{\textgreek{y}}_{k}(t,\cdot)| & \le\int_{-\infty}^{\infty}|\tilde{h}_{k}(t-s)|\cdot|T\text{\textgreek{y}}_{k}(s,\cdot)|\, ds\\
 & \le C_{p}\cdot\text{\textgreek{w}}_{k-1}^{-1}\text{\textgreek{w}}_{0}\cdot\int_{-\infty}^{\infty}(1+\text{\textgreek{w}}_{0}|t-s|)^{-p}\cdot|T\text{\textgreek{y}}_{k}(s,\cdot)|\, ds\nonumber \\
 & \le C_{p}\text{\textgreek{w}}_{k-1}^{-1}\text{\textgreek{w}}_{0}^{1/2}\Big(\int_{-\infty}^{\infty}(1+\text{\textgreek{w}}_{0}|t-s|)^{-p}\cdot|T\text{\textgreek{y}}_{k}(s,\cdot)|^{2}\, ds\Big)^{1/2}\nonumber 
\end{align}
and hence (working exactly as before)

\begin{align}
\int_{t_{1}}^{t_{2}}|\text{\textgreek{y}}_{k}(t,\cdot)|^{2}\, dt & \le C_{p}\text{\textgreek{w}}_{k-1}^{-2}\Big(\int_{t_{1}}^{t_{2}}|T\text{\textgreek{y}}_{k}(t,\cdot)|^{2}\, dt\Big)+\label{eq:ToBeIntegrated}\\
 & \hphantom{\le}+C_{p}\text{\textgreek{w}}_{k-1}^{-2}\sum_{l=-\infty}^{\infty}(1+|l|)^{-p}\cdot\Big(\int_{t_{1}+\text{\textgreek{w}}_{0}^{-1}l}^{t_{2}+\text{\textgreek{w}}_{0}^{-1}(l+1)}(1-\text{\textgreek{q}}_{[t_{1},t_{2}]})|T\text{\textgreek{y}}_{k}(t,\cdot)|^{2}\, dt\Big).\nonumber 
\end{align}
After integrating (\ref{eq:ToBeIntegrated}) over $\{r\le R\}$, we
obtain:

\begin{align}
\int_{\mathcal{R}(t_{1},t_{2})\cap\{r\le R\}}|T\text{\textgreek{y}}_{k}|^{2} & \ge c_{p}\cdot\text{\textgreek{w}}_{k-1}^{2}\int_{\mathcal{R}(t_{1},t_{2})\cap\{r\le R\}}|\text{\textgreek{y}}_{k}|^{2}-\label{eq:Almost left hand side}\\
 & \hphantom{\ge}-C_{p}\sum_{l=-\infty}^{\infty}(1+|l|)^{-p}\cdot\Big(\int_{r\le R}\int_{t_{1}+\text{\textgreek{w}}_{0}^{-1}l}^{t_{2}+\text{\textgreek{w}}_{0}^{-1}(l+1)}(1-\text{\textgreek{q}}_{[t_{1},t_{2}]})|T\text{\textgreek{y}}_{k}(t,x)|^{2}\, dtdx\Big).\nonumber 
\end{align}

For the last term of the right hand side of (\ref{eq:Almost left hand side}),
we work similarly as before: For the part of the infinte sum for $l\le-1$,
we can estimate: 
\begin{equation}
\begin{split}\sum_{l=-\infty}^{-1}(1+|l|)^{-p}\Big(\int_{\{r\le R\}}\int_{t_{1}+\text{\textgreek{w}}_{0}^{-1}l}^{t_{2}+\text{\textgreek{w}}_{0}^{-1}(l+1)} & (1-\text{\textgreek{q}}_{[t_{1},t_{2}]})|T\text{\textgreek{y}}_{k}(t,x)|^{2}\, dtdx\Big)\le\\
\le & \sum_{l=-\infty}^{-1}(1+|l|)^{-p}\Big(\int_{t_{1}+\text{\textgreek{w}}_{0}^{-1}l}^{t_{1}}\{\int_{\{t=\text{\textgreek{t}}\}}|T\text{\textgreek{y}}_{k}|^{2}\}\, d\text{\textgreek{t}}\Big).
\end{split}
\label{eq:AlmostThere2}
\end{equation}

In order to estimate the second term of the right hand side of (\ref{eq:AlmostThere2}),
we proceed as follows: Starting from the bound 
\begin{equation}
|T\text{\textgreek{y}}_{k}(t,\cdot)|\le\int_{-\infty}^{\infty}|h_{k}(t-s)|\cdot|T\text{\textgreek{y}}_{t^{*}}(s,\cdot)|\, ds\le C_{q}(\text{\textgreek{w}}_{0})\int_{-\infty}^{\infty}(1+\text{\textgreek{w}}_{0}|t-s|)^{-q}\cdot|T\text{\textgreek{y}}_{t^{*}}(s,\cdot)|\, ds
\end{equation}
for $q>1$, we can control: 
\begin{equation}
|T\text{\textgreek{y}}_{k}(t,\cdot)|^{2}\le C_{q}(\text{\textgreek{w}}_{0})\int_{-\infty}^{\infty}(1+\text{\textgreek{w}}_{0}|t-s|)^{-q}\cdot|T\text{\textgreek{y}}_{t^{*}}(s,\cdot)|^{2}\, ds.
\end{equation}
Thus, we have

\begin{equation}
\int_{\{t=\text{\textgreek{t}}\}}|T\text{\textgreek{y}}_{k}|^{2}\le C_{q}(\text{\textgreek{w}}_{0})\int_{-\infty}^{\infty}(1+\text{\textgreek{w}}_{0}|\text{\textgreek{t}}-s|)^{-q}\cdot(\int_{\{t=s\}}|T\text{\textgreek{y}}_{t^{*}}|^{2})\, ds.\label{eq:IntermediateBeforeHardy2}
\end{equation}
We can also bound 
\begin{equation}
|T\text{\textgreek{y}}_{t^{*}}|^{2}\lesssim|Th_{t^{*}}|^{2}|\text{\textgreek{y}}|^{2}+|h_{t^{*}}|^{2}\cdot|T\text{\textgreek{y}}|^{2}.
\end{equation}
Since $Th_{t^{*}}$ is supported in the region $\{0\le t_{-}\le1\}\cup\{t^{*}-1\le t_{+}\le t^{*}\}$,
while $h_{t^{*}}$ is supported in $\{t_{-}\ge0\}$, we obtain: 
\begin{equation}
\int_{t=s}|h_{t^{*}}\cdot T\text{\textgreek{y}}|^{2}=\int_{\{t=s\}\cap\{t_{-}\ge0\}}|T\text{\textgreek{y}}|^{2}\le C\cdot\int_{\{t=0\}}J_{\text{\textgreek{m}}}^{N}(\text{\textgreek{y}})n^{\text{\textgreek{m}}}\label{eq:AgainBoundTpsi}
\end{equation}
 for any $s\in\mathbb{R}$. Since $\{t=s\}\cap\{0\le t_{-}\le1\}\cup\{t^{*}-1\le t_{+}\le t^{*}\}\subset\{t=s\}\cap\{r\le R_{1}+1+2s\}$,
we also obtain for any $s\in\mathbb{R}$ through a Hardy inequality:

\begin{align}
\int_{\{t=s\}}|Th_{t^{*}}|^{2}|\text{\textgreek{y}}|^{2} & =\int_{\{t=s\}\cap\{0\le t_{-}\le1\}\cup\{t^{*}-1\le t_{+}\le t^{*}\}}|\text{\textgreek{y}}|^{2}\label{eq:AgainBoundTh}\\
 & \le C\cdot(1+dist\{s,[0,t^{*}]\})^{2}\int_{\{t=s\}\cap\{t_{-}\ge0\}}J_{\text{\textgreek{m}}}^{N}(\text{\textgreek{y}})n^{\text{\textgreek{m}}}\nonumber \\
 & \le C\cdot(1+dist\{s,[0,t^{*}]\})^{2}\int_{t=0}J_{\text{\textgreek{m}}}^{N}(\text{\textgreek{y}})n^{\text{\textgreek{m}}}.\nonumber 
\end{align}
 Hence, we deduce for $q>3$ from \ref{eq:IntermediateBeforeHardy2},
(\ref{eq:AgainBoundTpsi}) and (\ref{eq:AgainBoundTh}):
\begin{align}
\int_{t_{1}+\text{\textgreek{w}}_{0}^{-1}l}^{t_{1}}\{\int_{\{t=\text{\textgreek{t}}\}}|T\text{\textgreek{y}}_{k}|^{2}\}\, d\text{\textgreek{t}} & \le C_{q}(\text{\textgreek{w}}_{0})\Big\{\int_{t_{1}+\text{\textgreek{w}}_{0}^{-1}l}^{t_{1}}\int_{-\infty}^{\infty}(1+\text{\textgreek{w}}_{0}|\text{\textgreek{t}}-s|)^{-q}\cdot(1+dist\{s,[0,t^{*}]\})^{2}\, dsd\text{\textgreek{t}}\Big\}\int_{t=0}J_{\text{\textgreek{m}}}^{N}(\text{\textgreek{y}})n^{\text{\textgreek{m}}}\label{eq:AlmostThere3}\\
 & \le C_{q}(\text{\textgreek{w}}_{0})(1+|l|^{3})\int_{t=0}J_{\text{\textgreek{m}}}^{N}(\text{\textgreek{y}})n^{\text{\textgreek{m}}}.\nonumber 
\end{align}

From \ref{eq:AlmostThere2} and (\ref{eq:AlmostThere3}), we conclude
for $p,q$ large enough

\begin{equation}
\sum_{l=-\infty}^{-1}(1+|l|)^{-p}\Big(\int_{\{r\le R\}}\int_{t_{1}+\text{\textgreek{w}}_{0}^{-1}l}^{t_{2}+\text{\textgreek{w}}_{0}^{-1}(l+1)}(1-\text{\textgreek{q}}_{[t_{1},t_{2}]})|T\text{\textgreek{y}}_{k}(t,x)|^{2}\, dtdx\Big)\le C(\text{\textgreek{w}}_{0})\int_{t=0}J_{\text{\textgreek{m}}}^{N}(\text{\textgreek{y}})n^{\text{\textgreek{m}}}.\label{eq:BoundLowerSum}
\end{equation}
Similarly, we can also bound:

\begin{equation}
\sum_{l=0}^{\infty}(1+|l|)^{-p}\Big(\int_{\{r\le R\}}\int_{t_{1}+\text{\textgreek{w}}_{0}^{-1}l}^{t_{2}+\text{\textgreek{w}}_{0}^{-1}(l+1)}(1-\text{\textgreek{q}}_{[t_{1},t_{2}]})|T\text{\textgreek{y}}_{k}(t,x)|^{2}\, dtdx\Big)\le C(\text{\textgreek{w}}_{0})\int_{t=0}J_{\text{\textgreek{m}}}^{N}(\text{\textgreek{y}})n^{\text{\textgreek{m}}}.\label{eq:BoundUpperSum}
\end{equation}
Hence, we conclude from \ref{eq:Almost left hand side}, (\ref{eq:BoundLowerSum})
and (\ref{eq:BoundUpperSum}):

\begin{equation}
\int_{\mathcal{R}(t_{1},t_{2})\cap\{r\le R\}}|T\text{\textgreek{y}}_{k}|^{2}\ge c_{p}\cdot\text{\textgreek{w}}_{k-1}^{2}\int_{\mathcal{R}(t_{1},t_{2})\cap\{r\le R\}}|\text{\textgreek{y}}_{k}|^{2}-C(\text{\textgreek{w}}_{0})\int_{t=0}J_{\text{\textgreek{m}}}^{N}(\text{\textgreek{y}})n^{\text{\textgreek{m}}},
\end{equation}
 thus completing the proof of the Lemma.
\end{proof}
We can also establish the following variant of the previous lemma
in exactly the same way as before - hence the proof will be omitted:
\begin{lem}
\label{lem:DtToOmegaInequalities2}For any continuous function $\text{\textgreek{q}}:\mathcal{D}\rightarrow[0,+\infty)$
which satisfies the relation $T\text{\textgreek{q}}=0$, for any $1\le|k|\le n$
and any $0\le t_{1}\le t_{2}\le t^{*}$, $R\ge0$ we can bound:

\begin{multline}
c\cdot\text{\textgreek{w}}_{k-1}^{2}\Big\{\int_{\mathcal{R}(t_{1},t_{2})\cap\{r\le R\}}\text{\textgreek{q}}|\text{\textgreek{y}}_{k}|^{2}-C(\text{\textgreek{w}}_{0},\text{\textgreek{q}})\cdot\int_{t=0}J_{\text{\textgreek{m}}}^{N}(\text{\textgreek{y}})n^{\text{\textgreek{m}}}\Big\}\le\int_{\mathcal{R}(t_{1},t_{2})\cap\{r\le R\}}\text{\textgreek{q}}|T\text{\textgreek{y}}_{k}|^{2}\le\\
\le C\cdot\text{\textgreek{w}}_{k}^{2}\int_{\mathcal{R}(t_{1},t_{2})\cap\{r\le R\}}\text{\textgreek{q}}|\text{\textgreek{y}}_{k}|^{2}+\text{\textgreek{w}}_{k}^{2}C(\text{\textgreek{w}}_{0},\text{\textgreek{q}})R^{2}\int_{t=0}J_{\text{\textgreek{m}}}^{N}(\text{\textgreek{y}})n^{\text{\textgreek{m}}},\label{eq:=0003C9-estimate-1}
\end{multline}
and similarly for $k=0$

\begin{equation}
\int_{\mathcal{R}(t_{1},t_{2})\cap\{r\le R\}}\text{\textgreek{q}}|T\text{\textgreek{y}}_{0}|^{2}\le C\cdot\text{\textgreek{w}}_{0}^{2}\int_{\mathcal{R}(t_{1},t_{2})\cap\{r\le R\}}\text{\textgreek{q}}|\text{\textgreek{y}}_{0}|^{2}+C(\text{\textgreek{w}}_{0},\text{\textgreek{q}})R^{2}\cdot\int_{t=0}J_{\text{\textgreek{m}}}^{N}(\text{\textgreek{y}})n^{\text{\textgreek{m}}}.\label{eq:LowFrequencyEstimate-1}
\end{equation}

\end{lem}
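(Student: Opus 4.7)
The plan is to adapt the proof of Lemma \ref{lem:DtToOmegaInequalities} essentially verbatim, inserting the weight $\theta$ into the spatial integrals at every step. The crucial observation that makes this work is that $T\theta=0$, so in the coordinate system $(t,x^{1},\ldots,x^{d})$ adapted to the flow of $T$ (as constructed in Section \ref{sub:CoordinateCharts}), the function $\theta$ depends only on the spatial variables $(x^{1},\ldots,x^{d})$. Consequently $\theta$ commutes with convolution in $t$ and can be pulled outside of every time integral appearing in the proof of Lemma \ref{lem:DtToOmegaInequalities}, so the structure of the argument is unaltered.

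For the right-hand inequality in (\ref{eq:=0003C9-estimate-1}), I would differentiate the reproducing formula (\ref{eq:Reproducing Formula}) in $t$, apply the Schwartz bound (\ref{eq:SchwarzBounds1}) and H\"older exactly as in (\ref{eq:FirstFormulaFromReproducing})--(\ref{eq:FirstDtBound}), multiply by $\theta(x)$ and integrate over $\mathcal{R}(t_{1},t_{2})\cap\{r\le R\}$. Since $\theta(x)$ is independent of $t$, this yields
\[
\int_{\mathcal{R}(t_{1},t_{2})\cap\{r\le R\}}\theta|T\text{\textgreek{y}}_{k}|^{2}\le C_{p}\text{\textgreek{w}}_{k}^{2}\int_{\mathcal{R}(t_{1},t_{2})\cap\{r\le R\}}\theta|\text{\textgreek{y}}_{k}|^{2}+\mathrm{Err},
\]
where $\mathrm{Err}$ has exactly the form appearing between (\ref{eq:AfterIntegration}) and (\ref{eq:Positivesum}), but with the extra factor $\theta(x)$ inside the spatial integral. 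This error term is then treated exactly as in (\ref{eq:IntermediateBeforeHardy})--(\ref{eq:Positivesum}): I would use (\ref{eq:SchwartzBounds2}) to convert $|\text{\textgreek{y}}_{k}|^{2}$ into a time convolution of $|\text{\textgreek{y}}_{t^{*}}|^{2}$, apply the Hardy inequality (\ref{eq:GeneralHardyBound}) on $\{r\le R\}$, and invoke Assumption \hyperref[Assumption 4]{4} together with Lemma \ref{lem:ComparisonTT-}. Since $\theta$ is continuous and hence bounded on $\{r\le R\}\cap\text{\textgreek{S}}_{0}$, we pick up a factor $\sup_{\{r\le R\}\cap\text{\textgreek{S}}_{0}}\theta$ which we absorb into the constant $C(\text{\textgreek{w}}_{0},\text{\textgreek{q}})$.

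For the left-hand inequality in (\ref{eq:=0003C9-estimate-1}), I would use the inverse reproducing formula (\ref{eq:ReproducingFormula2}) together with the bound (\ref{eq:ScwarzBoundDerivative}) on $\tilde{h}_{k}$ to estimate pointwise $|\text{\textgreek{y}}_{k}|$ by a time convolution of $|T\text{\textgreek{y}}_{k}|$, and then mirror the argument (\ref{eq:ToBeIntegrated})--(\ref{eq:BoundUpperSum}) with the weight $\theta$ inserted. The error term is again treated by reducing to a convolution of $\int_{t=s}|T\text{\textgreek{y}}_{t^{*}}|^{2}$, splitting $T\text{\textgreek{y}}_{t^{*}}=h_{t^{*}}\cdot T\text{\textgreek{y}}+Th_{t^{*}}\cdot\text{\textgreek{y}}$, and applying Assumption \hyperref[Assumption 4]{4}, Lemma \ref{lem:ComparisonTT-} and a Hardy inequality on $\{r\le R\}$ as in (\ref{eq:AgainBoundTpsi})--(\ref{eq:AlmostThere3}). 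The case $k=0$ (yielding (\ref{eq:LowFrequencyEstimate-1})) proceeds identically, using $h_{0}$ directly and bypassing the inverse reproducing formula, exactly as in the derivation of (\ref{eq:LowFrequencyEstimate}).

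There is no real obstacle here beyond bookkeeping: since no new geometric mechanism is invoked and $T\theta=0$ permits every spatial--temporal factorisation used in the unweighted proof, the argument is a mechanical repetition of Lemma \ref{lem:DtToOmegaInequalities}. The only quantity that genuinely depends on $\theta$ is the multiplicative constant in front of the initial-data error term, which enters through $\sup_{\{r\le R\}\cap\text{\textgreek{S}}_{0}}\theta$ (and its $T$-translates, which agree with it by $T\theta=0$); this is precisely what is recorded by the notation $C(\text{\textgreek{w}}_{0},\text{\textgreek{q}})$ in the statement.
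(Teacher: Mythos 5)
Your proposal is correct and is precisely the approach the paper intends: the paper explicitly states that Lemma \ref{lem:DtToOmegaInequalities2} is proved ``in exactly the same way as before'' and omits the argument, and your observation that $T\text{\textgreek{q}}=0$ makes $\text{\textgreek{q}}$ a function of the spatial variables alone, so it passes through all the time convolutions (multiplying the pointwise-in-$x$ estimates such as (\ref{eq:FirstDtBound}) by $\text{\textgreek{q}}(x)$ before integrating over $\{r\le R\}$, and bounding $\text{\textgreek{q}}$ by its supremum on the compact set $\{r\le R\}\cap\text{\textgreek{S}}_{0}$ in the error terms) is exactly the bookkeeping needed to reproduce Lemma \ref{lem:DtToOmegaInequalities} with the weight inserted.
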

Finally, we will need the following bounds for the energy of the high
freqency part $\text{\textgreek{y}}_{\le\text{\textgreek{w}}_{+}}$
in terms of the initial energy of higher derivatives of $\text{\textgreek{y}}$:
\begin{lem}
\label{lem:HighFrequencies}For any $\text{\textgreek{t}}\in\mathbb{R}$
and any $m\in\mathbb{N}$, there exists a positive constant $C_{m}$
such that

\begin{equation}
\int_{\{t=\text{\textgreek{t}}\}\cap\{r\le R_{1}\}}J_{\text{\textgreek{m}}}^{N}(\text{\textgreek{y}}_{\ge\text{\textgreek{w}}_{+}})n^{\text{\textgreek{m}}}\le\frac{C_{m}}{\text{\textgreek{w}}_{+}^{2m}}\sum_{j=0}^{m}\int_{t=0}J_{\text{\textgreek{m}}}^{N}(T^{j}\text{\textgreek{y}})n^{\text{\textgreek{m}}}.\label{eq:HighFrequencyBoundLemma}
\end{equation}
\end{lem}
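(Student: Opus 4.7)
Since $\zeta_{\ge\omega_{+}}$ vanishes in a neighbourhood of $\omega=0$, we may legally define
\[
\eta_m(\omega)\doteq\frac{\zeta_{\ge\omega_{+}}(\omega)}{(i\omega)^m},
\]
a smooth function equal to $(i\omega)^{-m}$ for $|\omega|\ge\tfrac{9}{8}\omega_{+}$. Combining the identity $\widehat{T^m\psi_{t^*}}(\omega,\cdot)=(i\omega)^m\hat\psi_{t^*}(\omega,\cdot)$ with the defining Fourier integral for $\psi_{\ge\omega_{+}}$ yields the convolution representation
\[
\psi_{\ge\omega_{+}}(t,\cdot)=\int_{-\infty}^{\infty}\check\eta_m(t-s)\,T^m\psi_{t^*}(s,\cdot)\,ds.
\]
Arguing exactly as in Lemma~\ref{lem:SchwartzBoundsForProjections}, integration-by-parts on the Fourier side gives $\|\eta_m^{(q)}\|_{L^1(\mathbb R)}\le C_q\omega_{+}^{-m-q+1}$ whenever $m+q\ge 2$; hence $|\check\eta_m(t)|\le C_q\omega_{+}^{-m+1}(1+|\omega_{+}t|)^{-q}$ for every $q\in\mathbb N$, and in particular the crucial $L^1$ bound $\|\check\eta_m\|_{L^1(\mathbb R)}\le C_m\omega_{+}^{-m}$.

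Since spatial coordinate derivatives $\partial_{x^i}$ commute with $T=\partial_t$ in the chart of Section~\ref{sub:CoordinateCharts}, differentiating the representation (and integrating by parts in $s$, permissible as $T^m\psi_{t^*}$ has compact support) yields $\partial_{x^i}\psi_{\ge\omega_{+}}=\check\eta_m\ast\partial_{x^i}T^m\psi_{t^*}$ and $T\psi_{\ge\omega_{+}}=\check\eta_m\ast T^{m+1}\psi_{t^*}$. A Cauchy--Schwarz inequality in $s$ against the weight $|\check\eta_m(\tau-s)|\,ds$, combined with the $L^1$ bound above and the pointwise comparison $|T^{m+1}\psi_{t^*}|^2+\sum_i|\partial_{x^i}T^m\psi_{t^*}|^2\sim J_\mu^N(T^m\psi_{t^*})n^\mu$, gives after integration over $\{r\le R_1\}$
\[
\int_{\{t=\tau\}\cap\{r\le R_1\}}\!\!J_\mu^N(\psi_{\ge\omega_{+}})n^\mu\;\le\;C_m\omega_{+}^{-m}\int_{-\infty}^{\infty}|\check\eta_m(\tau-s)|\int_{\{t=s\}\cap\{r\le R_1\}}\!\!J_\mu^N(T^m\psi_{t^*})n^\mu\,ds.
\]
Expanding $T^m\psi_{t^*}=\sum_{j=0}^{m}\binom{m}{j}(T^{m-j}h_{t^*})\,T^j\psi$ by the Leibniz rule, and observing that $\partial_{x^i}h_{t^*}\equiv 0$ on $\{r\le R_1\}$ (since $t_{\pm}\equiv t$ there), I split $J_\mu^N(T^m\psi_{t^*})n^\mu$ into a \emph{principal term} arising from $h_{t^*}\,T^m\psi$ and \emph{cutoff-error terms} in which at least one $T$-derivative lands on $h_{t^*}$. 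Because $T$ is Killing, $T^m\psi$ itself solves $\square_g(T^m\psi)=0$, and so Assumption~\hyperref[Assumption 4]{4} applied to $T^m\psi$ bounds the principal contribution uniformly on its support $s\in[0,t^*]$ by $C\int_{t=0}J_\mu^N(T^m\psi)n^\mu$. The cutoff-error terms are supported on $\{r\le R_1\}$ only for $s\in[0,1]\cup[t^*-1,t^*]$ and are controlled there, via a Hardy inequality and Assumption~\hyperref[Assumption 4]{4}, by $C\sum_{j\le m}\int_{t=0}J_\mu^N(T^j\psi)n^\mu$. Integrating in $s$ against $|\check\eta_m(\tau-s)|\,ds$ then costs at most $\|\check\eta_m\|_{L^1}\le C_m\omega_{+}^{-m}$, producing the second $\omega_{+}^{-m}$ factor and hence the claimed $\omega_{+}^{-2m}$ decay.

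The main conceptual obstacle is that the identity $T\psi_{\ge\omega_{+}}=\check\eta_m\ast T^{m+1}\psi_{t^*}$ appears to demand one derivative more of $\psi$ than the right-hand side of \eqref{eq:HighFrequencyBoundLemma} permits. This is circumvented by noting that the Leibniz expansion of $T^{m+1}\psi_{t^*}$ produces $T^{m+1}\psi$ only in the single term $h_{t^*}\,T^{m+1}\psi$, and that $|T^{m+1}\psi|^2$ is precisely the time-derivative component of the energy density $J_\mu^N(T^m\psi)n^\mu$; the boundedness assumption applied to the wave solution $T^m\psi$ therefore controls it by $\int_{t=0}J_\mu^N(T^m\psi)n^\mu$, which is available on the right-hand side of \eqref{eq:HighFrequencyBoundLemma}. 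All remaining Leibniz terms carry at least one derivative of the cutoff $h_{t^*}$, which is either identically zero on $\{r\le R_1\}$ or confined to the narrow $t$-strips above; the $L^1$ decay of $\check\eta_m$ absorbs their contribution, in a fashion that parallels the bookkeeping of Lemmas~\ref{lem:BoundF} and~\ref{lem:BoundednessPsiK}.
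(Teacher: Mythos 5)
Your argument is correct and is, at its core, the same Fourier-multiplier strategy the paper uses: exploit the fact that $\hat{\text{\textgreek{y}}}_{\ge\text{\textgreek{w}}_{+}}$ is supported away from $\text{\textgreek{w}}=0$ to write it as a smooth multiplier that is $O(\text{\textgreek{w}}^{-m})$ at infinity times the Fourier transform of an $m$-fold $T$-derivative, then trade the two convolutions that appear for two factors of $\|\check{\text{\textgreek{h}}}_{m}\|_{L^{1}}\lesssim\text{\textgreek{w}}_{+}^{-m}$, and close with Assumption~\hyperref[Assumption 4]{4} and a Hardy inequality. The bookkeeping is mildly different from the paper's: the paper forms the self-reproducing formula with $T^{m}\text{\textgreek{y}}_{\ge\text{\textgreek{w}}_{+}}$ on the right-hand side and then needs a second, separate convolution (with $h_{\ge\text{\textgreek{w}}_{+}}$ against $\text{\textgreek{y}}_{t^*}$, giving (\ref{eq:IntermediateBoundTDerivatives})) to bound $\int_{\{t=s\}\cap\{r\le R_{1}\}}J^{N}_{\text{\textgreek{m}}}(T^{m}\text{\textgreek{y}}_{\ge\text{\textgreek{w}}_{+}})n^{\text{\textgreek{m}}}$ uniformly in $s$; you instead convolve directly against $T^{m}\text{\textgreek{y}}_{t^{*}}$ and obtain the analogous uniform bound by expanding $T^{m}(h_{t^{*}}\text{\textgreek{y}})$ with the Leibniz rule. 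Both variants work, and yours saves one convolution step. Two small remarks. First, the pointwise decay you assert, $|\check{\text{\textgreek{h}}}_{m}(t)|\le C_{q}\text{\textgreek{w}}_{+}^{-m+1}(1+|\text{\textgreek{w}}_{+}t|)^{-q}$, fails near $t=0$ when $m=1$: there is a logarithmic divergence there, exactly as the paper records in (\ref{eq:BoundForHighFrequencyCutOff}), and your own constraint $m+q\ge 2$ blocks $q=0$ precisely in that case. Since the logarithm is integrable, the $L^{1}$ bound $\|\check{\text{\textgreek{h}}}_{m}\|_{L^{1}}\le C_{m}\text{\textgreek{w}}_{+}^{-m}$ (the only bound you actually invoke) still holds, so this does not affect the conclusion. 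Second, the case $m=0$ needs a separate sentence: $\check{\text{\textgreek{h}}}_{0}$ is a distribution involving a $\text{\textgreek{d}}$-function and the $L^1$ argument does not apply, but that case is just Lemma~\ref{lem:BoundednessPsiK}, as the paper notes at the outset of its proof.
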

\begin{proof}
We can assume without loss of generality that $m\ge1$, since the
$m=0$ case is a direct consequence of Lemma (\ref{lem:BoundednessPsiK}).

Recall that we have defined $\text{\textgreek{q}}_{3}:\mathbb{R}\rightarrow[0,1]$
to be a smooth function which is identically $1$ on $[-1,1]$ and
identically $0$ outside of $(-\frac{9}{8},\frac{9}{8})$. We define,
in terms of $\text{\textgreek{q}}_{3}$, the function $\text{\textgreek{q}}_{3}^{c}\doteq1-\text{\textgreek{q}}_{3}$,
which vanishes in $[-1,1]$ and is identically $1$ outside of $(-\frac{9}{8},\frac{9}{8})$.
Then, for any $m\in\mathbb{N},\, m\ge1$, the function $\text{\textgreek{x}}_{m}:\mathbb{R}\rightarrow\mathbb{C}$,
\begin{equation}
\text{\textgreek{x}}_{m}(y)=\frac{1}{(iy)^{m}}\text{\textgreek{q}}_{3}^{c}(y)\label{eq:DefinitionKsim}
\end{equation}
 is smooth, vanishing in $[-1,1]$ and equal to $\frac{1}{(iy)^{m}}$
for $|y|\ge\frac{9}{8}$. 

We will need the inverse Fourier transform of $\text{\textgreek{x}}_{m}$,
namely $\check{\text{\textgreek{x}}}_{m}(\text{\textgreek{r}})=\int_{-\infty}^{\infty}e^{i\text{\textgreek{r}}y}\cdot\text{\textgreek{x}}_{m}(y)\, dy$,
defined in the sense of tempered distributions. Due to the fact that
\begin{equation}
\Big|\int_{1}^{+\infty}\frac{1}{y}e^{i\text{\textgreek{l}}y}\, dy\Big|\le C\cdot\big(|\log(\text{\textgreek{l}})|+1\big)
\end{equation}
 for any $\text{\textgreek{l}}>0$, the $\frac{1}{(iy)^{m}}$ asymptotics
of $\text{\textgreek{x}}_{m}$ imply that $\check{\text{\textgreek{x}}}_{m}$
is actually a measurable function, satisfying for almost all $y\in\mathbb{R}$
the estimate

\begin{equation}
|\check{\text{\textgreek{x}}}_{m}(\text{\textgreek{r}})|\le C\cdot\big(\big|\log|\text{\textgreek{r}}|\big|+1\big).\label{eq:LogBoundHighFrequencies}
\end{equation}
Of course, in the case $m>1$, when $\frac{1}{y^{m}}$ is integrable
away from $0$, the $\big|\log|\text{\textgreek{r}}|\big|$ summand
can be removed from (\ref{eq:LogBoundHighFrequencies}).

Moreover, in view of the fact that $\frac{d}{dy}\text{\textgreek{q}}_{3}^{c}$
is smooth and compactly supported and $\int_{1}^{\infty}\frac{1}{y^{m+1}}\, dy\le C$
for any $m\in\mathbb{N},$$m\ge1$, we deduce that $|\text{\textgreek{r}}\check{\text{\textgreek{x}}}_{m}(\text{\textgreek{r}})|=|\big(\check{\frac{d}{d\text{\textgreek{r}}}\text{\textgreek{x}}_{m}}\big)(\text{\textgreek{r}})|\le C$
and hence $|\check{\text{\textgreek{x}}}_{m}(\text{\textgreek{r}})|\le\frac{C}{|\text{\textgreek{r}}|}$.
Thus, by an induction argument we infer for any $q\in\mathbb{N}$,
$q\ge1$:

\begin{equation}
|\check{\text{\textgreek{x}}}_{m}(\text{\textgreek{r}})|\le C_{q,m}\cdot|\text{\textgreek{r}}|^{-q}.\label{eq:SchwartzTailHighFrequencies}
\end{equation}
Hence, combining \ref{eq:LogBoundHighFrequencies} and \ref{eq:SchwartzTailHighFrequencies}
we obtain for any $q\in\mathbb{N},q\ge1$:

\begin{equation}
|\check{\text{\textgreek{x}}}_{m}(\text{\textgreek{r}})|\le C_{q,m}\cdot\frac{\big|\log|\text{\textgreek{r}}|\big|+1}{1+|\text{\textgreek{r}}|^{q}}.\label{eq:BoundForHighFrequencyCutOff}
\end{equation}

Due to the fact that $\hat{\text{\textgreek{y}}}_{\ge\text{\textgreek{w}}_{+}}(\cdot,x)$
is supported in $\{|\text{\textgreek{w}}|\ge\text{\textgreek{w}}_{+}\}$,
the following identity holds trivially for all $\text{\textgreek{w}}\in\mathbb{R}$:
\begin{equation}
\hat{\text{\textgreek{y}}}_{\ge\text{\textgreek{w}}_{+}}(\text{\textgreek{w}},\cdot)=\text{\textgreek{q}}_{3}^{c}(\frac{2\text{\textgreek{w}}}{\text{\textgreek{w}}_{+}})\cdot\hat{\text{\textgreek{y}}}_{\ge\text{\textgreek{w}}_{+}}(\text{\textgreek{w}},\cdot).
\end{equation}
In view of the definition (\ref{eq:DefinitionKsim}) of $\text{\textgreek{x}}_{m}$,
we infer that for any $m\in\mathbb{N}$, $m\ge1$:

\begin{equation}
\hat{\text{\textgreek{y}}}_{\ge\text{\textgreek{w}}_{+}}(\text{\textgreek{w}},\cdot)=\big(\frac{\text{\textgreek{w}}_{+}}{2}\big)^{-m}\cdot\text{\textgreek{x}}_{m}(\frac{2\text{\textgreek{w}}}{\text{\textgreek{w}}_{+}})\cdot\hat{T^{m}\text{\textgreek{y}}}_{\ge\text{\textgreek{w}}_{+}}(\text{\textgreek{w}},\cdot).
\end{equation}
 Thus, applying the inverse Fourier transform we obtain:

\begin{equation}
\text{\textgreek{y}}_{\ge\text{\textgreek{w}}_{+}}(t,\cdot)=2^{m-1}\text{\textgreek{w}}_{+}^{-m+1}\cdot\int_{-\infty}^{\infty}\text{\ensuremath{\check{\text{\textgreek{x}}}}}_{m}(\frac{1}{2}\text{\textgreek{w}}_{+}(t-s))\cdot T^{m}\text{\textgreek{y}}_{\ge\text{\textgreek{w}}_{+}}(s,\cdot)\, ds.\label{eq:SelfReplicatingFormulaHighFrequencies}
\end{equation}
This formula is also valid for the derivatives of $\text{\textgreek{y}}_{\ge\text{\textgreek{w}}_{+}}$:

\begin{equation}
d\text{\textgreek{y}}_{\ge\text{\textgreek{w}}_{+}}(t,\cdot)=2^{m-1}\text{\textgreek{w}}_{+}^{-m+1}\cdot\int_{-\infty}^{\infty}\text{\ensuremath{\check{\text{\textgreek{x}}}}}_{m}(\frac{1}{2}\text{\textgreek{w}}_{+}(t-s))\cdot d\big(T^{m}\text{\textgreek{y}}_{\ge\text{\textgreek{w}}_{+}}\big)(s,\cdot)\, ds.\label{eq:Expression1Form}
\end{equation}

In view of the fact that $\int_{-\infty}^{\infty}|\check{\text{\textgreek{x}}}_{m}(\frac{1}{2}\text{\textgreek{w}}_{+}t)|\, dt\le C_{m}\cdot\text{\textgreek{w}}_{+}^{-1}$,
we compute from (\ref{eq:Expression1Form}) after contracting with
$T$ that:

\begin{align}
|T\text{\textgreek{y}}_{\ge\text{\textgreek{w}}_{+}}(t,\cdot)|^{2} & =2^{2m-2}\text{\textgreek{w}}_{+}^{-2m+2}\cdot|\int_{-\infty}^{\infty}\text{\ensuremath{\check{\text{\textgreek{x}}}}}_{m}(\frac{1}{2}\text{\textgreek{w}}_{+}(t-s))\cdot T^{m+1}\text{\textgreek{y}}_{\ge\text{\textgreek{w}}_{+}}(s,\cdot)\, ds|^{2}\le\\
 & \le2^{2m-2}\text{\textgreek{w}}_{+}^{-2m+2}\cdot\Big\{\int_{-\infty}^{\infty}|\text{\ensuremath{\check{\text{\textgreek{x}}}}}_{m}(\frac{1}{2}\text{\textgreek{w}}_{+}(t-s))|\, ds\Big\}\cdot\Big\{\int_{-\infty}^{\infty}|\text{\ensuremath{\check{\text{\textgreek{x}}}}}_{m}(\frac{1}{2}\text{\textgreek{w}}_{+}(t-s))|\cdot|T^{m+1}\text{\textgreek{y}}_{\ge\text{\textgreek{w}}_{+}}(s,\cdot)|^{2}\, ds\Big\}\le\nonumber \\
 & \le C_{m}\cdot\text{\textgreek{w}}_{+}^{-2m+1}\cdot\Big\{\int_{-\infty}^{\infty}|\text{\ensuremath{\check{\text{\textgreek{x}}}}}_{m}(\frac{1}{2}\text{\textgreek{w}}_{+}(t-s))|\cdot|T^{m+1}\text{\textgreek{y}}_{\ge\text{\textgreek{w}}_{+}}(s,\cdot)|^{2}\, ds\Big\}\nonumber 
\end{align}
and, after applying (\ref{eq:BoundForHighFrequencyCutOff}), we obtain:

\begin{equation}
|T\text{\textgreek{y}}_{\ge\text{\textgreek{w}}_{+}}(t,\cdot)|^{2}\le C_{m,q}\cdot\text{\textgreek{w}}_{+}^{-2m+1}\cdot\Big\{\int_{-\infty}^{\infty}|\frac{|\log\big(\text{\textgreek{w}}_{+}(t-s)\big)|+1}{1+|\text{\textgreek{w}}_{+}(t-s)|^{q}}|\cdot|T^{m+1}\text{\textgreek{y}}_{\ge\text{\textgreek{w}}_{+}}(s,\cdot)|^{2}\, ds\Big\}.\label{eq:StillLogConvolution}
\end{equation}
Hence, for $0\le\text{\textgreek{t}}\le t^{*}$ we can bound

\begin{equation}
\int_{\{t=\text{\textgreek{t}}\}\cap\{r\le R_{1}\}}|T\text{\textgreek{y}}_{\ge\text{\textgreek{w}}_{+}}|^{2}\le C_{m,q}\cdot\text{\textgreek{w}}_{+}^{-2m+1}\cdot\Big\{\int_{-\infty}^{\infty}\big|\frac{|\log\big(\text{\textgreek{w}}_{+}(\text{\textgreek{t}}-s)\big)|+1}{1+|\text{\textgreek{w}}_{+}(\text{\textgreek{t}}-s)|^{q}}\big|\cdot\big\{\int_{\{t=s\}\cap\{r\le R_{1}\}}|T^{m+1}\text{\textgreek{y}}_{\ge\text{\textgreek{w}}_{+}}|^{2}\big\}\, ds\Big\}.\label{eq:LogConvolutionAlmostEnergy}
\end{equation}

Using (\ref{eq:SchwartzBounds1}) for some $q'>1$ and the fact that
$\text{\textgreek{y}}_{\ge\text{\textgreek{w}}_{+}}(t,\cdot)=\int_{-\infty}^{\infty}h_{\ge\text{\textgreek{w}}_{+}}(t-s)\text{\textgreek{y}}_{t^{*}}(s)\, ds$,
we compute after an application of H\"older's inequality that 
\begin{equation}
|T^{m+1}\text{\textgreek{y}}_{\ge\text{\textgreek{w}}_{+}}(t,\cdot)|^{2}\le C_{m,q'}\cdot\text{\textgreek{w}}_{+}\int_{-\infty}^{\infty}\frac{1}{1+|\text{\textgreek{w}}_{+}(t-s)|^{q'}}\cdot|T^{m+1}\text{\textgreek{y}}_{t^{*}}(s,\cdot)|^{2}\, ds,
\end{equation}
which after an integration over $\{r\le R_{1}\}$ gives us for any
$\text{\textgreek{t}}\in\mathbb{R}$: 
\begin{equation}
\int_{\{t=\text{\textgreek{t}}\}\cap\{r\le R_{1}\}}|T^{m+1}\text{\textgreek{y}}_{\ge\text{\textgreek{w}}_{+}}|^{2}\le C_{m,q'}\cdot\text{\textgreek{w}}_{+}\int_{-\infty}^{\infty}\frac{1}{1+|\text{\textgreek{w}}_{+}(\text{\textgreek{t}}-s)|^{q'}}\cdot\Big\{\int_{\{t=s\}\cap\{r\le R_{1}\}}|T^{m+1}\text{\textgreek{y}}_{t^{*}}|^{2}\Big\}\, ds.\label{eq:OneMoreInequality}
\end{equation}
 Similarly, repeating the same procedure after contracting (\ref{eq:Expression1Form})
with $T$ invariant vector fields tangential to the foliations $\text{\textgreek{S}}_{\text{\textgreek{t}}}$,
we can also bound
\begin{equation}
\int_{\{t=\text{\textgreek{t}}\}\cap\{r\le R_{1}\}}|\nabla_{\text{\textgreek{S}}}(T^{m}\text{\textgreek{y}}_{\ge\text{\textgreek{w}}_{+}})|_{g_{\text{\textgreek{S}}}}^{2}\le C_{m,q'}\cdot\text{\textgreek{w}}_{+}\int_{-\infty}^{\infty}\frac{1}{1+|\text{\textgreek{w}}_{+}(\text{\textgreek{t}}-s)|^{q'}}\cdot\Big\{\int_{\{t=s\}\cap\{r\le R_{1}\}}|\nabla_{\text{\textgreek{S}}}(T^{m}\text{\textgreek{y}}_{t^{*}})|_{g_{\text{\textgreek{S}}}}^{2}\Big\}\, ds.\label{eq:OneMoreInequality-1}
\end{equation}
Recall that $\text{\textgreek{y}}_{t^{*}}=h_{t^{*}}\cdot\text{\textgreek{y}}$,
and in $\{r\le R_{1}\}$ $h_{t^{*}}$ is non zero only for $\{0\le t\le t^{*}\}$.
Thus, (\ref{eq:OneMoreInequality}) and (\ref{eq:OneMoreInequality-1})
for $q'=2$ yield, in view of the Hardy inequality 
\begin{equation}
\int_{\{t=s\}\cap\{r\le R_{1}\}}|\text{\textgreek{y}}|^{2}\le C(R_{1})\cdot\int_{\{t=s\}}J_{\text{\textgreek{m}}}^{N}(\text{\textgreek{y}})n^{\text{\textgreek{m}}}
\end{equation}
 and the boundedness assumption \hyperref[Assumption 4]{4}, that
\begin{equation}
\int_{\{t=\text{\textgreek{t}}\}\cap\{r\le R_{1}\}}J_{\text{\textgreek{m}}}^{N}(T^{m}\text{\textgreek{y}}_{\ge\text{\textgreek{w}}_{+}})n^{\text{\textgreek{m}}}\le C_{m}\sum_{j=0}^{m}\int_{t=0}J_{\text{\textgreek{m}}}^{N}(T^{j}\text{\textgreek{y}})n^{\text{\textgreek{m}}}.\label{eq:IntermediateBoundTDerivatives}
\end{equation}

Using (\ref{eq:IntermediateBoundTDerivatives}) and (\ref{eq:LogConvolutionAlmostEnergy})
for $q=2$ we infer the desired inequality for any $\text{\textgreek{t}}\in\mathbb{R}$:

\begin{equation}
\int_{\{t=\text{\textgreek{t}}\}\cap\{r\le R_{1}\}}J_{\text{\textgreek{m}}}^{N}(\text{\textgreek{y}}_{\ge\text{\textgreek{w}}_{+}})n^{\text{\textgreek{m}}}\le\frac{C_{m}}{\text{\textgreek{w}}_{+}^{2m}}\sum_{j=0}^{m}\int_{t=0}J_{\text{\textgreek{m}}}^{N}(T^{j}\text{\textgreek{y}})n^{\text{\textgreek{m}}}.
\end{equation}

\end{proof}

\section{\label{sub:EstimatesPsiKAsymptoticallyFlat}Estimates for $\text{\textgreek{y}}_{k}$
in the asymptotically flat region}

In this section, we will specialise some of the estimates established
in \cite{Moschidisc} for general asymptotically flat spacetimes to
our setting. As we did in Section (\ref{sec:FreqDecomposition}),
we will assume that we are given a smooth function $\text{\textgreek{y}}:\mathcal{D}\rightarrow\mathbb{C}$
solving $\square_{g}\text{\textgreek{y}}=0$ on $J^{+}(\text{\textgreek{S}})\cap\mathcal{D}$
with compactly supported initial data on $\text{\textgreek{S}}$,
together with a set of parameters $t^{*},\text{\textgreek{w}}_{0},\text{\textgreek{w}}_{+}$,
leading to the construction of the functions $\text{\textgreek{y}}_{t^{*}}$,
$\text{\textgreek{y}}_{\le\text{\textgreek{w}}_{+}}$, $\text{\textgreek{y}}_{\ge\text{\textgreek{w}}_{+}}$
and $\text{\textgreek{y}}_{k}$ (as performed in Section \ref{sec:FreqDecomposition}).
We will derive estimates for the functions $\text{\textgreek{y}}_{k}$,
$\text{\textgreek{y}}_{\le\text{\textgreek{w}}_{+}}$ and $\text{\textgreek{y}}_{\ge\text{\textgreek{w}}_{+}}$
in the asymptotically flat region $\{r\gg1\}$ of $\mathcal{D}$

We will make use of the fact that there exists a function $u$ in
the region $\{r\gg1\}$ (see the Appendix) such that in the $(u,r,\text{\textgreek{sv}})$
coordinate system the metric $g$ has the form:
\begin{align}
g=-4\big(1- & \frac{2M}{r}+O_{3}(r^{-1-a})\big)du^{2}-4\big(1+O_{3}(r^{-1-a})\big)dudr+r^{2}(g_{\mathbb{S}^{d-1}}+O_{3}(r^{-1-a}))+\label{eq:MetricUR-1-2}\\
 & +O_{3}(r^{-a})dud\text{\textgreek{sv}}+O_{3}(r^{-a})drd\text{\textgreek{sv}}.\nonumber 
\end{align}
 We will also introduce the function $v=u+r$, noting that in the
$(u,v,\text{\textgreek{sv}})$ coordinate chart the metric $g$ takes
the form: 
\begin{align}
g=-\Big(4+ & O_{3}(r^{-1-a})\Big)dvdu+r^{2}\cdot\Big(g_{\mathbb{S}^{d-1}}+O_{3}(r^{-1-a})\Big)+O_{3}(r^{-a})dud\text{\textgreek{sv}}+\label{eq:MetricUV}\\
 & +O_{3}(r^{-a})dvd\text{\textgreek{sv}}+4\Big(-\frac{2M}{r}+O_{3}(r^{-1-a})\Big)du^{2}.\nonumber 
\end{align}

Let us remark that most of the results of \cite{Moschidisc} were
stated under the requirement that the metric $g$ in the asymptotic
region is of the form 
\begin{align}
g=-\Big(4+ & O_{m}(r^{-1-a})\Big)dvdu+r^{2}\cdot\Big(g_{\mathbb{S}^{d-1}}+O_{m}(r^{-1-a})\Big)+O_{m}(r^{-a})dud\text{\textgreek{sv}}+\label{eq:MetricUV-1}\\
 & +O_{m}(r^{-a})dvd\text{\textgreek{sv}}+4\Big(-\frac{2M}{r}+O_{m}(r^{-1-a})\Big)du^{2}\nonumber 
\end{align}
for some large enough integer $m$, i.\,e.~it was assumed in \cite{Moschidisc}
that $g$ is smoother on $\mathcal{I}^{+}$ than what is assumed in
the present paper (actually, in \cite{Moschidisc} the metrics considered
where of more general form than (\ref{eq:MetricUV-1}), but this fact
is not relevant for the present paper).However, the results of \cite{Moschidisc}
that we will make use of in this section can be established also for
a metric with the rougher asymptotics (\ref{eq:MetricUR-1-2}) (as
can be readily verified by an inspection of their proof).

\subsection{Some $\partial_{r}$-Morawetz type estimates}

We will establish the following Lemma:
\begin{lem}
\label{lem:MorawetzDrLemma}For any given $0<\text{\textgreek{h}}<a$,
there exists an $R=R(\text{\textgreek{h}})>0$ and $C(\text{\textgreek{h}}),\, C(\text{\textgreek{w}}_{0},\text{\textgreek{h}})>0$
such that for any $-n\le k\le n$ and any smooth cut-off function
$\text{\textgreek{q}}:\mathcal{D}\rightarrow[0,1]$ supported in $\{r\ge R\}$,
we can bound:

\begin{multline}
\int_{\mathcal{R}(0,t^{*})}\text{\textgreek{q}}\cdot\Big(r^{-1-\text{\textgreek{h}}}\Big(|\partial_{t}\text{\textgreek{y}}_{k}|^{2}+|\partial_{r}\text{\textgreek{y}}_{k}|^{2}\Big)+r^{-1}|r^{-1}\partial_{\text{\textgreek{sv}}}\text{\textgreek{y}}_{k}|^{2}+r^{-3-\text{\textgreek{h}}}|\text{\textgreek{y}}_{k}|^{2}\Big)\le\\
\le C(\text{\textgreek{h}})\cdot\int_{\{supp(\partial\text{\textgreek{q}})\}\cap\mathcal{R}(0,t^{*})}|\partial\text{\textgreek{q}}|\cdot J_{\text{\textgreek{m}}}^{N}(\text{\textgreek{y}}_{k})n^{\text{\textgreek{m}}}+C(\text{\textgreek{w}}_{0},\text{\textgreek{h}})\cdot\int_{t=0}J_{\text{\textgreek{m}}}^{N}(\text{\textgreek{y}})n^{\text{\textgreek{m}}}.\label{eq:Morawetz2}
\end{multline}

The same estimate holds for $\text{\textgreek{y}}_{\le\text{\textgreek{w}}_{+}},\text{\textgreek{y}}_{\ge\text{\textgreek{w}}_{+}}$
in place of $\text{\textgreek{y}}_{k}$.\end{lem}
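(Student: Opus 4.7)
The plan is to deduce this estimate by specializing the general $\partial_r$-Morawetz framework of \cite{Moschidisc} to the frequency-decomposed components $\text{\textgreek{y}}_k$. The key difference from the homogeneous case treated in \cite{Moschidisc} is that $\text{\textgreek{y}}_k$ satisfies the inhomogeneous equation $\square_g\text{\textgreek{y}}_k=F_k$, which will produce an additional source error that we control using Lemma \ref{lem:BoundF}; boundary terms at $\{t=0\}$ and $\{t=t^*\}$ will be controlled using Lemma \ref{lem:BoundednessPsiK}.

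First I would apply a vector field multiplier of the form $X=\text{\textgreek{q}}\cdot f_{\text{\textgreek{h}}}(r)\partial_r$, with $f_{\text{\textgreek{h}}}(r)\sim r^{-\text{\textgreek{h}}}$ chosen so that, using the asymptotic form (\ref{eq:metric}) of $g$, the deformation tensor yields
\[
K^X(\text{\textgreek{y}}_k)\gtrsim \text{\textgreek{q}}\cdot\Big(r^{-1-\text{\textgreek{h}}}(|\partial_t\text{\textgreek{y}}_k|^2+|\partial_r\text{\textgreek{y}}_k|^2)+r^{-1}|r^{-1}\partial_{\text{\textgreek{sv}}}\text{\textgreek{y}}_k|^2\Big)+O(r^{-1-a-\text{\textgreek{h}}})\cdot|\partial\text{\textgreek{y}}_k|^2
\]
on $supp(\text{\textgreek{q}})$. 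Since $\text{\textgreek{h}}<a$, choosing $R=R(\text{\textgreek{h}})$ sufficiently large allows absorbing the $O(r^{-1-a-\text{\textgreek{h}}})$ errors into the principal term. The zeroth-order weight $r^{-3-\text{\textgreek{h}}}|\text{\textgreek{y}}_k|^2$ is produced by modifying $X$ with an auxiliary current of the form $\nabla^{\text{\textgreek{m}}}\bigl(g_{\text{\textgreek{h}}}(r)|\text{\textgreek{y}}_k|^2\bigr)-\frac{1}{2}\square_g g_{\text{\textgreek{h}}}(r)\cdot|\text{\textgreek{y}}_k|^2$ for a suitable $g_{\text{\textgreek{h}}}(r)\sim r^{-1-\text{\textgreek{h}}}$, which after a Hardy-type inequality on each $\{t=\text{\textgreek{t}}\}$ closes the bulk estimate.

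Second, integrating the divergence identity $\nabla^{\text{\textgreek{m}}}J^X_{\text{\textgreek{m}}}(\text{\textgreek{y}}_k)=K^X(\text{\textgreek{y}}_k)+\mathrm{Re}\{F_k\cdot X\bar{\text{\textgreek{y}}}_k\}$ over $\mathcal{R}(0,t^*)$ produces boundary contributions on $\{t=0\}$, $\{t=t^*\}$, and on $supp(\partial\text{\textgreek{q}})$. The contribution on $supp(\partial\text{\textgreek{q}})$ produces exactly the first term on the right-hand side of (\ref{eq:Morawetz2}) after a Cauchy--Schwarz estimate. The $\{t=\text{\textgreek{t}}\}$ boundary terms are bounded by $C(\text{\textgreek{h}})\int_{t=\text{\textgreek{t}}}J^N_{\text{\textgreek{m}}}(\text{\textgreek{y}}_k)n^{\text{\textgreek{m}}}$, which by Lemma \ref{lem:BoundednessPsiK} is bounded by $C(\text{\textgreek{w}}_0,\text{\textgreek{h}})\int_{t=0}J^N_{\text{\textgreek{m}}}(\text{\textgreek{y}})n^{\text{\textgreek{m}}}$. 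The inhomogeneous contribution is bounded via Cauchy--Schwarz by
\[
\text{\textgreek{e}}\int_{\mathcal{R}(0,t^*)}\text{\textgreek{q}}\cdot r^{-1-\text{\textgreek{h}}}|\partial_r\text{\textgreek{y}}_k|^2+C_{\text{\textgreek{e}}}\int_{\mathcal{R}(0,t^*)}r^{1-\text{\textgreek{h}}}|F_k|^2,
\]
where the first term is absorbed by the left-hand side, and the second is bounded by $C(\text{\textgreek{w}}_0,\text{\textgreek{h}})\int_{t=0}J^N_{\text{\textgreek{m}}}(\text{\textgreek{y}})n^{\text{\textgreek{m}}}$ via Lemma \ref{lem:BoundF} applied with $q=1-\text{\textgreek{h}}$ and any sufficiently large $q'$, exploiting the fact that $F_k$ is supported near $\{t\in[0,1]\}\cup\{t\in[t^*-1,t^*]\}$.

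The main obstacle will be ensuring that all error terms produced by the non-Schwarzschild corrections $h_i$ in the metric (\ref{eq:metric}) — in particular the cross terms mixing $\partial_r$ with $\partial_{\text{\textgreek{sv}}}$ — are genuinely subleading with respect to the principal Morawetz bulk terms. This is precisely where the restriction $\text{\textgreek{h}}<a$ is used. Since the argument uses only the equation $\square_g\text{\textgreek{y}}_k=F_k$, the boundedness of the energy of $\text{\textgreek{y}}_k$ by the energy of $\text{\textgreek{y}}$, and the bounds on $F_k$, which hold equally for $\text{\textgreek{y}}_{\le\text{\textgreek{w}}_+}$ and $\text{\textgreek{y}}_{\ge\text{\textgreek{w}}_+}$ in place of $\text{\textgreek{y}}_k$ (with $F_{\le\text{\textgreek{w}}_+}$, $F_{\ge\text{\textgreek{w}}_+}$ in place of $F_k$), the same argument yields the analogous estimates for those functions.
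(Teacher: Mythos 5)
Your overall strategy matches the paper's: a $\partial_r$-Morawetz current localized to the far region, with the inhomogeneity $F_k$ absorbed via Cauchy--Schwarz and Lemma \ref{lem:BoundF}, and the $\{t=0\},\{t=t^*\}$ boundary terms controlled by Lemma \ref{lem:BoundednessPsiK}. The paper gets all of this by simply citing Lemma~4.2 of \cite{Moschidisc} (an inequality for \emph{arbitrary} smooth $\text{\textgreek{f}}$ with a residual $\square_g\text{\textgreek{f}}$ term on the right-hand side) and plugging in $\text{\textgreek{y}}_k$, so your proposal amounts to re-deriving that black-box estimate rather than citing it.

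There is, however, a genuine problem with your multiplier choice. You take $X=\text{\textgreek{q}}\,f_{\text{\textgreek{h}}}(r)\partial_r$ with $f_{\text{\textgreek{h}}}(r)\sim r^{-\text{\textgreek{h}}}$, i.e.~$f_{\text{\textgreek{h}}}$ is \emph{decaying}. That is incompatible with the weights you claim to produce: the angular bulk term comes from $f_{\text{\textgreek{h}}}(r)/r$, which with your choice yields $r^{-1-\text{\textgreek{h}}}\,|r^{-1}\partial_{\text{\textgreek{sv}}}\text{\textgreek{y}}_k|^2$ rather than the claimed $r^{-1}\,|r^{-1}\partial_{\text{\textgreek{sv}}}\text{\textgreek{y}}_k|^2$; and, worse, the $|\partial_r\text{\textgreek{y}}_k|^2$ coefficient is proportional to $f_{\text{\textgreek{h}}}'(r)$, which is \emph{negative} for a decaying $f_{\text{\textgreek{h}}}$, so the would-be principal term has the wrong sign. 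The correct choice is a bounded, increasing function tending to a positive constant with $f_{\text{\textgreek{h}}}'(r)\sim r^{-1-\text{\textgreek{h}}}$ — for instance $f_{\text{\textgreek{h}}}(r)=1-(1+r)^{-\text{\textgreek{h}}}$ — which gives $f_{\text{\textgreek{h}}}'>0$ with the right decay and $f_{\text{\textgreek{h}}}/r\sim r^{-1}$. Also, a pure $\partial_r$-multiplier does not directly produce the $r^{-1-\text{\textgreek{h}}}|\partial_t\text{\textgreek{y}}_k|^2$ control; one must add the zeroth-order Lagrangian correction (as you do) \emph{and} use the $tt$-component of the deformation tensor coming from the $\partial_r g_{tt}\sim M r^{-2}$ term together with the $\square_g h$ contribution to close that piece. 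These are well-known but non-trivial points, and they are part of what the citation to \cite{Moschidisc} is doing work to hide; a careful derivation from scratch needs to get them right.
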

\begin{rem*}
We have used the notation $|\partial h|^{2}=|\partial_{t}h|^{2}+|\partial_{r}h|^{2}+|\frac{1}{r}\partial_{\text{\textgreek{sv}}}h|^{2}$.\end{rem*}
\begin{proof}
From Lemma 4.2 of \cite{Moschidisc}, we can bound for any smooth
function $\text{\textgreek{f}}:\mathcal{D}\rightarrow\mathbb{C}$
with compact support in space (in the $(u,v,\text{\textgreek{sv}})$
coordinate system constructed in the Appendix on the region $\{r\gg1\}$):
\begin{equation}
\begin{split}\int_{\mathcal{R}(0,t^{*})}\text{\textgreek{q}}\cdot\Big(r^{-1-\text{\textgreek{h}}}\Big(|\partial_{u}\text{\textgreek{f}}|^{2}+ & |\partial_{v}\text{\textgreek{f}}|^{2}\Big)+r^{-1}|r^{-1}\partial_{\text{\textgreek{sv}}}\text{\textgreek{f}}|^{2}+r^{-3-\text{\textgreek{h}}}|\text{\textgreek{f}}|^{2}\Big)\le\\
\le & C_{\text{\textgreek{h}}}\int_{\mathcal{R}(0,t^{*})}|\partial\text{\textgreek{q}}|\cdot\big(|\partial\text{\textgreek{f}}|^{2}+r^{-2}|\text{\textgreek{f}}|^{2}\big)+\\
 & +C_{\text{\textgreek{h}}}\int_{\{t=0\}\cap\{r\ge R\}}J_{\text{\textgreek{m}}}^{T}(\text{\textgreek{f}})n^{\text{\textgreek{m}}}+C_{\text{\textgreek{h}}}\int_{\{t=t^{*}\}\cap\{r\ge R\}}J_{\text{\textgreek{m}}}^{T}(\text{\textgreek{f}})n^{\text{\textgreek{m}}}\\
 & +\int_{\mathcal{R}(0,t^{*})}\text{\textgreek{q}}\cdot Re\big\{\big(O_{\text{\textgreek{h}}}(1)(\partial_{v}-\partial_{u})\bar{\text{\textgreek{f}}}+O_{\text{\textgreek{h}}}(r^{-1})\bar{\text{\textgreek{f}}}\big)\cdot\square_{g}\text{\textgreek{f}}\big\}.
\end{split}
\label{eq:MorawetzGeneralCaseRadiative}
\end{equation}

In view of Lemmas (\ref{lem:BoundF}) and (\ref{lem:BoundednessPsiK})
(as well as the properties of the functions $u,v$), inequality (\ref{eq:Morawetz2})
follows readily from (\ref{eq:MorawetzGeneralCaseRadiative}) after
substituting $\text{\textgreek{y}}_{k}$ (or $\text{\textgreek{y}}_{\le\text{\textgreek{w}}_{+}},\text{\textgreek{y}}_{\ge\text{\textgreek{w}}_{+}}$
respectively) in place of $\text{\textgreek{f}}$ (and using a Cauchy--Schwarz
inequality for the last term of the right hand side).%
\footnote{Notice that here we have used the fact that the functions $\text{\textgreek{y}}_{k},\text{\textgreek{y}}_{\le\text{\textgreek{w}}_{+}},\text{\textgreek{y}}_{\ge\text{\textgreek{w}}_{+}}$
have compact support in space, since they are supported in a cylinder
$\{r\lesssim R_{sup}+t^{*}\}$.%
}
\end{proof}
We can also present the previous estimate expressed in a more refined
form in terms of the ``boundary'' terms of the right hand side: 
\begin{lem}
\label{lem:MorawetzRefinedboundary}For any given $0<\text{\textgreek{h}}<a$
and any $R>0$ large enough in terms of $\text{\textgreek{h}}$, there
exist $C(\text{\textgreek{h}}),\, C(\text{\textgreek{w}}_{0},\text{\textgreek{h}})>0$
such that for any $-n\le k\le n$ we can bound:

\begin{multline}
\int_{\{r\ge2R\}\cap\mathcal{R}(0,t^{*})}\Big(r^{-1-\text{\textgreek{h}}}\Big(|\partial_{t}\text{\textgreek{y}}_{k}|^{2}+|\partial_{r}\text{\textgreek{y}}_{k}|^{2}\Big)+r^{-1}|r^{-1}\partial_{\text{\textgreek{sv}}}\text{\textgreek{y}}_{k}|^{2}+r^{-3-\text{\textgreek{h}}}|\text{\textgreek{y}}_{k}|^{2}\Big)\le\\
\le C(\text{\textgreek{h}})\cdot\int_{\{R\le r\le2R\}\cap\mathcal{R}(0,t^{*})}\big(R^{-1}J_{\text{\textgreek{m}}}^{N}(\text{\textgreek{y}}_{k})n^{\text{\textgreek{m}}}+R^{-3}|\text{\textgreek{y}}_{k}|^{2}\big)+C(\text{\textgreek{w}}_{0},\text{\textgreek{h}})\int_{t=0}J_{\text{\textgreek{m}}}^{N}(\text{\textgreek{y}})n^{\text{\textgreek{m}}}.\label{eq:Morawetz2-1}
\end{multline}
 The same estimate also holds for $\text{\textgreek{y}}_{\le\text{\textgreek{w}}_{+}},\text{\textgreek{y}}_{\ge\text{\textgreek{w}}_{+}}$
in place of $\text{\textgreek{y}}_{k}$.\end{lem}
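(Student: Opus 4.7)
The plan is to obtain Lemma~\ref{lem:MorawetzRefinedboundary} as a direct specialisation of Lemma~\ref{lem:MorawetzDrLemma}, with a carefully chosen cut-off function $\chi$ adapted to the annulus $\{R\le r\le 2R\}$, while retracing the underlying multiplier identity (\ref{eq:MorawetzGeneralCaseRadiative}) so that the $r^{-2}|\psi_k|^{2}$ boundary contribution is kept exposed (rather than being absorbed into $J^{N}$ via a Hardy inequality, as was presumably done in the proof of Lemma~\ref{lem:MorawetzDrLemma}).

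Concretely, I would fix a smooth radial function $\chi:\mathcal{D}\to[0,1]$ depending only on $r$ in the asymptotic region, with $\chi\equiv 0$ on $\{r\le R\}$, $\chi\equiv 1$ on $\{r\ge 2R\}$, and $|\partial\chi|\le C/R$ supported in $\{R\le r\le 2R\}$. Plugging $\psi_{k}$ into the basic multiplier inequality (\ref{eq:MorawetzGeneralCaseRadiative}) underlying Lemma~\ref{lem:MorawetzDrLemma}, the left-hand side dominates $\int_{\{r\ge 2R\}\cap\mathcal{R}(0,t^{*})}\!\bigl(r^{-1-\eta}(|\partial_{t}\psi_{k}|^{2}+|\partial_{r}\psi_{k}|^{2})+r^{-1}|r^{-1}\partial_{\text{\textgreek{sv}}}\psi_{k}|^{2}+r^{-3-\eta}|\psi_{k}|^{2}\bigr)$, since $\chi\equiv 1$ there. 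The first boundary term on the right takes the form $C_{\eta}\int|\partial\chi|\cdot(|\partial\psi_{k}|^{2}+r^{-2}|\psi_{k}|^{2})$, which, using $|\partial\chi|\le C/R$ and $r\sim R$ on $\mathrm{supp}(\partial\chi)$, is bounded exactly by the annular term $C(\eta)\int_{\{R\le r\le 2R\}\cap\mathcal{R}(0,t^{*})}\bigl(R^{-1}J_{\mu}^{N}(\psi_{k})n^{\mu}+R^{-3}|\psi_{k}|^{2}\bigr)$ appearing in (\ref{eq:Morawetz2-1}).

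The remaining contributions are handled as follows. The $\text{\textgreek{S}}_{0}$ and $\text{\textgreek{S}}_{t^{*}}$ boundary terms from the multiplier identity are controlled by Lemma~\ref{lem:BoundednessPsiK}, yielding a factor $C(\omega_{0})\int_{t=0}J_{\mu}^{N}(\psi)n^{\mu}$. The inhomogeneous spacetime term $\int\chi\cdot\mathrm{Re}\{(O_{\eta}(1)(\partial_{v}-\partial_{u})\bar\psi_{k}+O_{\eta}(r^{-1})\bar\psi_{k})\cdot F_{k}\}$ is split by Cauchy--Schwarz so that one factor is absorbed into the bulk on the left-hand side (using a small parameter $\varepsilon$) and the other factor is bounded by $\int r^{q}|F_{k}|^{2}$ for some $q$; an application of Lemma~\ref{lem:BoundF} with $q'$ chosen sufficiently large then produces an estimate of the form $C(\omega_{0},\eta)\int_{t=0}J_{\mu}^{N}(\psi)n^{\mu}$, as desired. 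The arguments for $\psi_{\le\omega_{+}}$ and $\psi_{\ge\omega_{+}}$ are identical, since Lemmas~\ref{lem:BoundF} and~\ref{lem:BoundednessPsiK} apply to them as well.

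The step I expect to require the most care is the bookkeeping that keeps all $\omega_{0}$-dependence confined to the coefficient of $\int_{t=0}J_{\mu}^{N}(\psi)n^{\mu}$, while the coefficient of the localised annular integral depends only on $\eta$ (and on $R$ through $\chi$, which is harmless since $R$ is a parameter of the statement). This involves choosing the polynomial decay $q'$ in Lemma~\ref{lem:BoundF} large enough relative to $q$ so that the resulting $C_{q,q'}(\omega_{0})$ does not infect the boundary constant, and paying attention to the fact that the $r^{q}$-weighted $L^{2}$ bound for $F_{k}$ scales in $t^{*}$ in a way that is already absorbed by the $(1+t_{1})^{-q'}+(1+t^{*}-t_{2})^{-q'}$ factor. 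No genuinely new analytic obstacle appears beyond what is already present in Lemma~\ref{lem:MorawetzDrLemma}; the content of the refinement is almost purely organisational.
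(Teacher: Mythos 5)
Your proposal is correct and follows the same route as the paper: the paper's proof simply fixes a profile $\xi:[0,\infty)\to[0,1]$ with $\xi\equiv 0$ on $[0,1]$ and $\xi\equiv 1$ on $[2,\infty)$, sets $\theta_R=\xi(r/R)$, observes that $|\partial\theta_R|\le C R^{-1}$ with $\mathrm{supp}(\partial\theta_R)\subseteq\{R\le r\le 2R\}$, and plugs this into Lemma~\ref{lem:MorawetzDrLemma}. One small remark: your precaution about keeping the $r^{-2}|\psi_k|^2$ contribution exposed by going back to (\ref{eq:MorawetzGeneralCaseRadiative}) is not actually needed — Lemma~\ref{lem:MorawetzDrLemma} as stated already has only $|\partial\theta|\,J^N_\mu(\psi_k)n^\mu$ on the right, so applying it as a black box gives a bound even stronger than (\ref{eq:Morawetz2-1}), with the $R^{-3}|\psi_k|^2$ term simply appearing as a harmless extra on the target's right-hand side; that said, your more cautious route through the identity is equally valid and lands in the same place.
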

\begin{proof}
Inequality (\ref{eq:Morawetz2-1}) can be proven by fixing a smooth
cut-off $\text{\textgreek{x}}:[0,+\infty)\rightarrow[0,1]$ such that
$\text{\textgreek{x}}\equiv0$ for $r\le1$ and $\text{\textgreek{x}}\equiv1$
for $r\ge2$, and defining $\text{\textgreek{q}}=\text{\textgreek{q}}_{R}:D\rightarrow[0,1]$,
$\text{\textgreek{q}}_{R}=\text{\textgreek{x}}\circ(\frac{r}{R})$
for any $R>0$. We easily calculate that there exists a constant $C>0$
depending on the precise choice of $\text{\textgreek{x}}$, such that
$|\partial\text{\textgreek{q}}_{R}|\le C\cdot R^{-1}$. Moreover,
$supp(\partial\text{\textgreek{q}}_{R})\subseteq\{R\le r\le2R\}$.
The result then follows by using $\text{\textgreek{q}}_{R}$ as a
cut-off function in the statement of the previous lemma (\ref{lem:MorawetzDrLemma}),
for any $R>0$ sufficiently large in terms of $\text{\textgreek{h}}$.
\end{proof}
We will also need a variant of the previous lemma, that provides improved
control of the spacetime integral of the energy of $\text{\textgreek{y}}_{k},\text{\textgreek{y}}_{\le\text{\textgreek{w}}_{+}},\text{\textgreek{y}}_{\ge\text{\textgreek{w}}_{+}}$
over compact subsets. We we will only need to use it in the case of
$\text{\textgreek{y}}_{0}$, but we can state it for all the components
$\text{\textgreek{y}}_{k},\text{\textgreek{y}}_{\le\text{\textgreek{w}}_{+}},\text{\textgreek{y}}_{\ge\text{\textgreek{w}}_{+}}$
of $\text{\textgreek{y}}$:
\begin{lem}
\label{lem:ImprovedMorawetz}For any given $0<\text{\textgreek{h}}<a$,
any $R>0$ sufficiently large in terms of $\text{\textgreek{h}}$,
there exists some $C(\text{\textgreek{h}})>0$ such that for any $R_{c}\ge2R$
and any $-n\le k\le n$ we can bound

\begin{multline}
\int_{\{2R\le r\le R_{c}\}\cap\mathcal{R}(0,t^{*})}\Big(r^{-\text{\textgreek{h}}}\Big(|\partial_{t}\text{\textgreek{y}}_{k}|^{2}+|\partial_{r}\text{\textgreek{y}}_{k}|^{2}+|r^{-1}\partial_{\text{\textgreek{sv}}}\text{\textgreek{y}}_{k}|^{2}\Big)+r^{-2-\text{\textgreek{h}}}|\text{\textgreek{y}}_{k}|^{2}\Big)\le\\
\le C(\text{\textgreek{h}})\cdot\int_{\{R\le r\le2R\}\cap\mathcal{R}(0,t^{*})}\big(J_{\text{\textgreek{m}}}^{N}(\text{\textgreek{y}}_{k})n^{\text{\textgreek{m}}}+R^{-2}|\text{\textgreek{y}}_{k}|^{2}\big)+C(\text{\textgreek{w}}_{0},R_{c},\text{\textgreek{h}})\int_{t=0}J_{\text{\textgreek{m}}}^{N}(\text{\textgreek{y}})n^{\text{\textgreek{m}}}.\label{eq:Morawetz2-1-1}
\end{multline}

The same estimate also holds for $\text{\textgreek{y}}_{\le\text{\textgreek{w}}_{+}},\text{\textgreek{y}}_{\ge\text{\textgreek{w}}_{+}}$
in place of $\text{\textgreek{y}}_{k}$.\end{lem}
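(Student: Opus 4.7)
\medskip

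The plan is to repeat the multiplier-method strategy used to prove Lemma \ref{lem:MorawetzDrLemma} (i.e.\ the Morawetz-type identity from Lemma 4.2 of \cite{Moschidisc} applied to $\text{\textgreek{y}}_{k}$), but with a heavier $r$-weighted radial multiplier. Whereas the multiplier underlying Lemma \ref{lem:MorawetzRefinedboundary} was tailored to produce bulk terms with weight $r^{-1-\text{\textgreek{h}}}$, here I choose a multiplier of the schematic form $X = f(r)\bigl(\partial_{v}-\partial_{u}\bigr)/2$ in the $(u,v,\text{\textgreek{sv}})$ coordinate chart constructed in the Appendix, with $f:[0,\infty)\to\mathbb{R}$ chosen so that $f'(r)\sim r^{1-\text{\textgreek{h}}}$ and $f(r)\sim r^{2-\text{\textgreek{h}}}/(2-\text{\textgreek{h}})$ on $\{2R\le r\le R_{c}\}$, and smoothly cut off to zero on $\{r\le R\}$ and on $\{r\ge R_{c}+1\}$. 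A routine calculation using the form \eqref{eq:MetricUV} of the metric shows that the corresponding deformation tensor $K^{X}$ is positive on $\{2R\le r\le R_{c}\}$ provided $R$ is chosen large enough in terms of $\text{\textgreek{h}}$, and yields precisely the bulk integrand
\[
r^{-\text{\textgreek{h}}}\bigl(|\partial_{t}\text{\textgreek{y}}_{k}|^{2}+|\partial_{r}\text{\textgreek{y}}_{k}|^{2}+|r^{-1}\partial_{\text{\textgreek{sv}}}\text{\textgreek{y}}_{k}|^{2}\bigr)+r^{-2-\text{\textgreek{h}}}|\text{\textgreek{y}}_{k}|^{2}
\]
appearing on the left-hand side of \eqref{eq:Morawetz2-1-1}, the zero-order term being generated by the customary correction current $\tfrac{1}{r}\frac{f(r)}{r}|\text{\textgreek{y}}_k|^2$ together with a Hardy inequality in the angular directions.

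Next, I apply the divergence theorem to $J^{X}(\text{\textgreek{y}}_{k})$ (with the appropriate correction) on the spacetime slab $\mathcal{R}(0,t^{*})$ and collect error terms. The cutoff gradient $\partial f$ is supported in $\{R\le r\le 2R\}\cup\{R_{c}\le r\le R_{c}+1\}$. The contribution from $\{R\le r\le 2R\}$ involves weights uniformly bounded in terms of $R$ (hence in terms of $\text{\textgreek{h}}$) and produces exactly the first boundary term on the right-hand side of \eqref{eq:Morawetz2-1-1}, with constant depending only on $\text{\textgreek{h}}$. The contribution from $\{R_{c}\le r\le R_{c}+1\}$ carries a coefficient $\sim R_{c}^{2-\text{\textgreek{h}}}$; this term, together with the two boundary terms on $\{t=0\}$ and $\{t=t^{*}\}$ (which are dominated by $R_{c}^{2-\text{\textgreek{h}}}\cdot\int_{\{t=0\}\cup\{t=t^{*}\}}J^{N}_{\text{\textgreek{m}}}(\text{\textgreek{y}}_{k})n^{\text{\textgreek{m}}}$), are controlled by $C(\text{\textgreek{w}}_{0},R_{c},\text{\textgreek{h}})\int_{t=0}J^{N}_{\text{\textgreek{m}}}(\text{\textgreek{y}})n^{\text{\textgreek{m}}}$ via Lemma \ref{lem:BoundednessPsiK}. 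The inhomogeneity $\square_{g}\text{\textgreek{y}}_{k}=F_{k}$ produces an additional spacetime term of the form $\int_{\mathcal{R}(0,t^{*})}\text{Re}\{X\bar{\text{\textgreek{y}}}_{k}\cdot F_{k}\}$, which after a Cauchy--Schwarz split (absorbing the derivative piece into the bulk) is bounded by $C(R_{c})\int_{\mathcal{R}(0,t^{*})}r^{2-\text{\textgreek{h}}}|F_{k}|^{2}$ plus a small multiple of the left-hand side; Lemma \ref{lem:BoundF} with a sufficiently large $q,q'$ then shows this is also $\le C(\text{\textgreek{w}}_{0},R_{c},\text{\textgreek{h}})\int_{t=0}J^{N}_{\text{\textgreek{m}}}(\text{\textgreek{y}})n^{\text{\textgreek{m}}}$. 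The identical argument applies verbatim to $\text{\textgreek{y}}_{\le\text{\textgreek{w}}_{+}}$ and $\text{\textgreek{y}}_{\ge\text{\textgreek{w}}_{+}}$ in place of $\text{\textgreek{y}}_{k}$.

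The main point requiring care is the choice of $f$: in the full derivation of the Morawetz identity (as in Lemma 4.2 of \cite{Moschidisc}), $f$ must be selected so that the bulk coefficients of both the $|\partial_{u}\text{\textgreek{y}}|^{2}+|\partial_{v}\text{\textgreek{y}}|^{2}$ and $|r^{-1}\partial_{\text{\textgreek{sv}}}\text{\textgreek{y}}|^{2}$ terms are simultaneously positive with the prescribed weight $r^{-\text{\textgreek{h}}}$, and so that the lower-order $|\text{\textgreek{y}}|^{2}$ coefficient is positive and $\sim r^{-2-\text{\textgreek{h}}}$. The off-diagonal $O_{3}(r^{-1-a})$ and $O_{3}(r^{-a})$ corrections from \eqref{eq:MetricUV} produce error terms bounded by a small multiple of the leading ones as soon as $\text{\textgreek{h}}<a$ and $R$ is large enough in terms of $\text{\textgreek{h}}$; this is the only nontrivial technical step and is essentially identical to the corresponding step in \cite{Moschidisc}.
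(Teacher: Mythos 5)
The paper's actual proof of this lemma is a one-liner: it invokes Lemma~4.4 of \cite{Moschidisc} as a black box (the displayed inequality \eqref{eq:MorawetzGeneralCaseImproved}, which is a Morawetz estimate on the truncated region $\{r\le R_c\}$ with bulk integrand carrying weight $1$, not $r^{-\text{\textgreek{h}}}$), applies it with $\text{\textgreek{y}}_k$ in place of $\text{\textgreek{f}}$, handles the inhomogeneity and the $\{t=0,t^*\}$ boundary terms via Lemmas~\ref{lem:BoundF} and~\ref{lem:BoundednessPsiK}, and obtains the $r^{-\text{\textgreek{h}}}$ weight on the left-hand side of \eqref{eq:Morawetz2-1-1} trivially from $r^{-\text{\textgreek{h}}}\le 1$. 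You instead attempt to re-derive the underlying Morawetz inequality by an explicit multiplier construction, which is a much more ambitious route and is not required, since the estimate is available off the shelf from the companion paper.

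More seriously, your specific choice of multiplier coefficient introduces a quantitative gap. With $f(r)\sim r^{2-\text{\textgreek{h}}}$ cut off to vanish at $r=R$, the flux through the cutoff region $\{R\le r\le 2R\}$ (where $|\partial\text{\textgreek{q}}|\sim R^{-1}$) carries a coefficient of order $f(2R)\cdot R^{-1}\sim R^{1-\text{\textgreek{h}}}$, which diverges as $R\to\infty$ whenever $\text{\textgreek{h}}<1$. But the statement of the lemma — reinforced by the remark immediately following it — requires the constant multiplying the $\{R\le r\le 2R\}$ integral to be $C(\text{\textgreek{h}})$, independent of $R$ (and of $R_c$). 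Your assertion that this contribution involves ``weights uniformly bounded in terms of $R$'' is therefore false as stated. The natural fix is to take $f(r)\sim r$ (precisely what Lemma~4.4 of \cite{Moschidisc} does), which gives flux coefficient $\sim 1$ at $\{R\le r\le 2R\}$ and bulk weight $\sim 1$; the $r^{-\text{\textgreek{h}}}$ weight on the left of \eqref{eq:Morawetz2-1-1} then comes for free. Alternatively one could normalise by $R^{1-\text{\textgreek{h}}}$, but you do not do this. You also assert that $K^X$ alone is positive and ``yields precisely the bulk integrand''; this is not correct without the correction current, since the raw $K^X$ for a radial multiplier with $f'>0$ does not have a positive angular coefficient — the sign is recovered only after adding the order-zero correction, and ``a Hardy inequality in the angular directions'' is not the mechanism by which the cited result accomplishes this.
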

\begin{rem*}
Notice that the constant in front of the $\{R\le r\le2R\}$ terms
does not depend on $R_{c}$.\end{rem*}
\begin{proof}
This is an immediate corollary of Lemma 4.4 of \cite{Moschidisc}
for $\text{\textgreek{y}}_{k}$ in place of $\text{\textgreek{f}}$,
which yields
\begin{equation}
\begin{split}\int_{\mathcal{R}(0,t^{*})\cap\{r\le R_{c}\}}\text{\textgreek{q}}\cdot\Big(\big(|\partial_{u}\text{\textgreek{y}}_{k}|^{2}+|\partial_{v} & \text{\textgreek{y}}_{k}|^{2}\big)+|r^{-1}\partial_{\text{\textgreek{sv}}}\text{\textgreek{y}}_{k}|^{2}+r^{-2}|\text{\textgreek{y}}_{k}|^{2}\Big)\le\\
\le & C(\text{\textgreek{h}})\cdot\int_{\mathcal{R}(0,t^{*})}|\partial\text{\textgreek{q}}|\cdot r\cdot\big(|\partial\text{\textgreek{y}}_{k}|^{2}+r^{-2}|\text{\textgreek{y}}_{k}|^{2}\big)+\\
 & +C(\text{\textgreek{h}},R_{c})\cdot\int_{\{t=0\}\cap\{r\ge R\}}|\partial\text{\textgreek{y}}_{k}|^{2}+C(\text{\textgreek{h}},R_{c})\cdot\int_{\{t=t^{*}\}\cap\{r\ge R\}}|\partial\text{\textgreek{y}}_{k}|^{2}+\\
 & +\int_{\mathcal{R}(0,t^{*})}\text{\textgreek{q}}\cdot Re\big\{\big(O_{R_{c},\text{\textgreek{h}}}(1)(\partial_{v}-\partial_{u})\bar{\text{\textgreek{y}}}_{k}+O_{R_{c},\text{\textgreek{h}}}(r^{-1})\bar{\text{\textgreek{y}}}_{k}\big)\cdot\square_{g}\text{\textgreek{y}}_{k}\big\},
\end{split}
\label{eq:MorawetzGeneralCaseImproved}
\end{equation}
combined with Lemmas \ref{lem:BoundF} and \ref{lem:BoundednessPsiK}.
Similarly for $\text{\textgreek{y}}_{\le\text{\textgreek{w}}_{+}},\text{\textgreek{y}}_{\ge\text{\textgreek{w}}_{+}}$.
\end{proof}

\subsection{\label{sub:NewMethodEstimates}Some $r^{p}$-weighted energy estimates}

The following $r^{p}$-weighted energy estimate can be established
as a Corollary of Theorem 5.3 of \cite{Moschidisc} (notice that $(v,\text{\textgreek{sv}})$
defines a regular polar coordinate map in the region $\{r\gg1\}$
of the hypersurfaces $\{t=const\}$):
\begin{lem}
\label{lem:FinalStatementNewMethodPsiK}For any $0<p\le2$, any $0<\text{\textgreek{h}}<a$,
any $0<\text{\textgreek{d}}<1$ and any $R>0$ large enough in terms
of $p,\text{\textgreek{h}},\text{\textgreek{d}}$, the following inequality
holds for any $-n\le k\le n$, any $0\le t_{1}\le t_{2}\le t^{*}$
and any smooth cut-off $\text{\textgreek{q}}_{R}:\mathcal{D}\rightarrow[0,1]$
supported in $\{r\ge R\}$ so that $\text{\textgreek{q}}_{R}\equiv1$
on $\{r\ge2R\}$: (we also set $\text{\textgreek{Y}}_{k}\doteq r^{\frac{d-1}{2}}\cdot\text{\textgreek{y}}_{k}$)
\begin{equation}
\begin{split}\int_{\{t=t_{2}\}}\text{\textgreek{q}}_{R}\cdot\Big(r^{p}|\partial_{v}\text{\textgreek{Y}}_{k}|^{2} & +r^{p}|r^{-1}\partial_{\text{\textgreek{sv}}}\text{\textgreek{Y}}_{k}|^{2}+\big((d-3)r^{p-2}+\min\{r^{p-2},r^{-\text{\textgreek{d}}}\}\big)|\text{\textgreek{Y}}_{k}|^{2}\Big)\, dvd\text{\textgreek{sv}}+\\
+\int_{\mathcal{R}(t_{1},t_{2})}\text{\textgreek{q}}_{R}\cdot\Big(pr^{p-1}\big|\partial_{v} & \text{\textgreek{Y}}_{k}\big|^{2}+\big\{\big((2-p)r^{p-1}+r^{p-1-\text{\textgreek{d}}}\big)\big|r^{-1}\partial_{\text{\textgreek{sv}}}\text{\textgreek{Y}}_{k}\big|^{2}+\\
\hphantom{+\int_{\{t_{1}\le t\le t_{2}\}}\text{\textgreek{q}}_{R}\cdot\Big(}+\big((2- & p)(d-3)r^{p-3}+\min\{r^{p-3},r^{-1-\text{\textgreek{d}}}\}\big)\big|\text{\textgreek{Y}}_{k}\big|^{2}\big\}+r^{-1-\text{\textgreek{h}}}|\partial_{u}\text{\textgreek{Y}}_{k}|^{2}\Big)\, dudvd\text{\textgreek{sv}}\le\\
\le & C(p,\text{\textgreek{h}},\text{\textgreek{d}})\cdot\int_{\mathcal{R}(t_{1},t_{2})}|\partial\text{\textgreek{q}}_{R}|\cdot\big(r^{p}|\partial\text{\textgreek{Y}}_{k}|^{2}+r^{p-2}|\text{\textgreek{Y}}_{k}|^{2}\big)\, dudvd\text{\textgreek{sv}}+\\
 & \hphantom{C(}+C(\text{\textgreek{w}}_{0},p,\text{\textgreek{h}},R,\text{\textgreek{q}}_{R},\text{\textgreek{d}})\cdot\int_{\{t=0\}}(1+r^{p})\cdot J_{\text{\textgreek{m}}}^{N}(\text{\textgreek{y}})n^{\text{\textgreek{m}}}.
\end{split}
\label{eq:FinalNewMethodPsiK}
\end{equation}

The same estimate also holds for $\text{\textgreek{y}}_{\le\text{\textgreek{w}}_{+}},\text{\textgreek{y}}_{\ge\text{\textgreek{w}}_{+}}$
in place of $\text{\textgreek{y}}_{k}$.\end{lem}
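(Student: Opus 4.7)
The plan is to apply Theorem 5.3 of \cite{Moschidisc} directly to the frequency-localised component $\text{\textgreek{y}}_{k}$, regarded as a solution of the inhomogeneous wave equation $\square_{g}\text{\textgreek{y}}_{k}=F_{k}$, and then to dispose of the extra terms generated by the inhomogeneity and by the non-initial slice $\{t=t_{1}\}$ by combining the bounds on $F_{k}$ from Lemma~\ref{lem:BoundF} with the energy estimates of Section \ref{sub:BoundsForPsiK}. The argument will be identical for $\text{\textgreek{y}}_{\le\text{\textgreek{w}}_{+}}$ and $\text{\textgreek{y}}_{\ge\text{\textgreek{w}}_{+}}$, so I will treat only $\text{\textgreek{y}}_{k}$.

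First, I would apply the generalised $r^{p}$-weighted identity from \cite{Moschidisc} on the slab $\mathcal{R}(t_{1},t_{2})$ with the cut-off $\text{\textgreek{q}}_{R}$ inserted in the multiplier. Since $R$ is chosen sufficiently large in terms of $p,\text{\textgreek{h}},\text{\textgreek{d}}$, in the region $\{r\ge R\}$ the metric already has the asymptotic form \eqref{eq:MetricUV} required by Theorem 5.3, and the proof of that theorem goes through unchanged for functions with compact spatial support (which is the case for $\text{\textgreek{y}}_{k}$, since $\text{\textgreek{y}}_{t^{*}}$ is supported in $\{r\lesssim R_{\mathrm{sup}}+t^{*}\}$). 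The output is \eqref{eq:FinalNewMethodPsiK} with three additional terms on the right-hand side: (i) the cut-off commutator supported in $\{R\le r\le 2R\}$, which is already of the form appearing in the claim; (ii) a boundary term at $\{t=t_{1}\}$ of the form $\int_{\{t=t_{1}\}}\text{\textgreek{q}}_{R}\cdot(r^{p}|\partial_{v}\text{\textgreek{Y}}_{k}|^{2}+\dots)$; and (iii) a bulk inhomogeneity contribution of the schematic form $\int_{\mathcal{R}(t_{1},t_{2})}\text{\textgreek{q}}_{R}\cdot r^{p}\cdot\mathrm{Re}\bigl\{(\partial_{v}-\partial_{u})\bar{\text{\textgreek{Y}}}_{k}\cdot F_{k}+r^{-1}\bar{\text{\textgreek{Y}}}_{k}\cdot F_{k}\bigr\}$ coming from $\square_{g}\text{\textgreek{y}}_{k}=F_{k}$.

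Term (iii) is handled by a Cauchy--Schwarz in the product with $F_{k}$: I would absorb an $\text{\textgreek{e}}$-fraction of $\int r^{p-1}|\partial_{v}\text{\textgreek{Y}}_{k}|^{2}$ into the left-hand side and be left with a term of the form $C_{\text{\textgreek{e}},p}\int_{\mathcal{R}(t_{1},t_{2})}r^{p+1}|F_{k}|^{2}$, which by Lemma~\ref{lem:BoundF} (with $q=p+1$ and any sufficiently large $q'$) is bounded by $C(\text{\textgreek{w}}_{0},p)\int_{\{t=0\}}J_{\text{\textgreek{m}}}^{N}(\text{\textgreek{y}})n^{\text{\textgreek{m}}}$. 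For term (ii), I need to transfer the $r^{p}$-weighted energy of $\text{\textgreek{y}}_{k}$ on $\{t=t_{1}\}$ to $(1+r^{p})$-weighted initial data of $\text{\textgreek{y}}$ on $\{t=0\}$. This is accomplished by repeating the Fourier-side Schwartz-kernel argument of Lemma~\ref{lem:BoundednessPsiK}, this time in the $r^{p}$-weighted norm: the convolution $\text{\textgreek{y}}_{k}(t,\cdot)=\int h_{k}(t-s)\,\text{\textgreek{y}}_{t^{*}}(s,\cdot)\,ds$ commutes with multiplication by $r^{p}$ in the spatial variable, so a H\"older inequality and the Schwartz bound \eqref{eq:SchwartzBounds2} reduce the estimate to controlling $\int_{\{t=s\}}r^{p}J_{\text{\textgreek{m}}}^{N}(\text{\textgreek{y}})n^{\text{\textgreek{m}}}$ by $\int_{\{t=0\}}(1+r^{p})J_{\text{\textgreek{m}}}^{N}(\text{\textgreek{y}})n^{\text{\textgreek{m}}}$. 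The latter bound follows by combining the conservation of $J^{T}$ in $\{r\ge R_{1}\}$ (where $T\equiv N$) with Assumption~\hyperref[Assumption 4]{4} on $\{r\le R_{1}\}$, dyadically in $r$, exactly as in the proof of Lemma~\ref{lem:ComparisonTT-}.

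The main obstacle I anticipate is keeping track of the dependence of the constant in front of $\int_{\{t=0\}}(1+r^{p})J_{\text{\textgreek{m}}}^{N}(\text{\textgreek{y}})n^{\text{\textgreek{m}}}$ on $R$ and $\text{\textgreek{q}}_{R}$ (which enter through the choice of cut-off needed to justify integration by parts in Theorem 5.3 of \cite{Moschidisc} on a slab with finite $R$) and on $\text{\textgreek{w}}_{0}$ (entering through the Schwartz tails of $h_{k}$), while ensuring that the constant in front of the cut-off commutator term is independent of $\text{\textgreek{w}}_{0}$ and of the initial data. This is a bookkeeping issue rather than a conceptual one, but it requires a careful accounting of how the weights at large $r$ interact with the time-convolution structure of the frequency cut-off, especially in verifying that the $F_{k}$-contribution does not accumulate an unfavourable $r^{p}$-dependent constant when summed via Lemma~\ref{lem:BoundF}.
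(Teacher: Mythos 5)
Your plan matches the paper's opening move — applying Theorem 5.3 of \cite{Moschidisc} to $\text{\textgreek{y}}_{k}$ as an inhomogeneous wave, absorbing the $F_{k}$-term by Cauchy--Schwarz and Lemma \ref{lem:BoundF}, and reducing to a bound on the $r^p$-weighted energy of $\text{\textgreek{y}}_{k}$ on $\{t=t_1\}$ via the Fourier-convolution identity with the Schwartz kernel $h_k$. That is all correct. But the key step you propose for that reduction is wrong, and it is the step that carries the whole lemma.

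You claim that $\int_{\{t=s\}}r^{p}J_{\text{\textgreek{m}}}^{N}(\text{\textgreek{y}})n^{\text{\textgreek{m}}}\le C\int_{\{t=0\}}(1+r^{p})J_{\text{\textgreek{m}}}^{N}(\text{\textgreek{y}})n^{\text{\textgreek{m}}}$ "follows by combining the conservation of $J^{T}$ in $\{r\ge R_1\}$ with Assumption 4, dyadically in $r$, exactly as in the proof of Lemma \ref{lem:ComparisonTT-}." It does not. Lemma \ref{lem:ComparisonTT-} compares two foliations at the \emph{same} time label, where the causal domain between $\{t_-=\text{\textgreek{t}}\}$ and $\{t=\text{\textgreek{t}}\}$ restricted to a dyadic shell $I_\ell$ stays within a \emph{bounded} number of shells — this is precisely what makes the dyadic sum convergent. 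Propagating the $r^p$-weighted energy from $\{t=0\}$ to $\{t=s\}$ is a different problem: over a time $s$, a causal curve starting in $I_\ell$ can travel across $\sim\log_2 s$ shells, and summing $2^{\ell p}$ over the range of $\ell$ reached gives a constant that blows up with the size of the support of $\text{\textgreek{y}}$ — i.e., depends on $R_{\mathrm{sup}}$, which is explicitly forbidden (Section \ref{sub:Remark}). The unweighted $J^T$-conservation simply does not propagate an $r^p$ weight: if it did, the whole $r^p$-hierarchy of \cite{DafRod7,Moschidisc} would be vacuous.

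What the paper actually does is re-apply Theorem 5.3 of \cite{Moschidisc} \emph{to $\text{\textgreek{y}}$ itself} on the region $\{t\le s\}\cap\{t_-\ge 0\}$, obtaining (\ref{eq:AuxiliaryBound}). This is the nontrivial ingredient you are missing. Even then, the resulting bound is \emph{not} uniform in $s$: the cut-off commutator $\int_{\{t_-\ge 0\}\cap\{t\le s\}}|\partial\text{\textgreek{q}}_R|\cdot(r^p|\partial\text{\textgreek{y}}|^2+r^{p-2}|\text{\textgreek{y}}|^2)$ grows linearly in $\max\{s,0\}$ (since the region $\{R\le r\le 2R\}\cap\{t\le s\}$ has temporal extent $\sim s$), and the $\partial h_{t^*}$-contribution grows like $(1+|s|)^{p+2}$ because $supp(\partial h_{t^*})\cap\{t=s\}$ extends out to $r\lesssim R_1+C|s|$, see (\ref{eq:BoundOnePsiCutOff})--(\ref{eq:BoundTwoOnPsiCutOff}). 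These polynomial-in-$s$ losses are then killed by the Schwartz decay of $h_k$ after fixing $q$ large enough, as in (\ref{eq:FinalBoundForInitialBoundaryTerm}). Your proposal has no mechanism to generate the polynomial-in-$s$ bound in the first place, since it stops at conservation rather than invoking the $r^p$-hierarchy on $\text{\textgreek{y}}$; the "bookkeeping issue" you flag is in fact precisely where this second application of the theorem is needed.
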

\begin{proof}
We will establish the result only for $\text{\textgreek{y}}_{k}$,
since the proof for the cases of $\text{\textgreek{y}}_{\le\text{\textgreek{w}}_{+}},\text{\textgreek{y}}_{\ge\text{\textgreek{w}}_{+}}$
is identical. 

We will set 
\begin{equation}
\text{\textgreek{Y}}_{k}\doteq r^{\frac{d-1}{2}}\text{\textgreek{y}}_{k}.
\end{equation}
 In view of Theorem 5.3 of \cite{Moschidisc} (with $\text{\textgreek{y}}_{k}$
in place of $\text{\textgreek{f}}$), using also of the fact that
in the region $\{r\gg1\}$ in the $(u,v,\text{\textgreek{sv}})$ coordinate
system we have the relation 
\begin{equation}
det(g)=-4r^{2(d-1)}\big(1+O_{3}(r^{-1})\big),
\end{equation}
we can bound: 
\begin{equation}
\begin{split}\int_{\{t=t_{2}\}}\text{\textgreek{q}}_{R}\cdot\Big(r^{p}|\partial_{v}\text{\textgreek{Y}}_{k}|^{2} & +r^{p}|r^{-1}\partial_{\text{\textgreek{sv}}}\text{\textgreek{Y}}_{k}|^{2}+\big((d-3)r^{p-2}+\min\{r^{p-2},r^{-\text{\textgreek{d}}}\}\big)|\text{\textgreek{Y}}_{k}|^{2}\Big)\, dvd\text{\textgreek{sv}}+\\
+\int_{\mathcal{R}(t_{1},t_{2})}\text{\textgreek{q}}_{R}\cdot\Big(pr^{p-1}\big|\partial_{v} & \text{\textgreek{Y}}_{k}\big|^{2}+\big\{\big((2-p)r^{p-1}+r^{p-1-\text{\textgreek{d}}}\big)\big|r^{-1}\partial_{\text{\textgreek{sv}}}\text{\textgreek{Y}}_{k}\big|^{2}+\\
\hphantom{+\int_{\{t_{1}\le t\le t_{2}\}}\text{\textgreek{q}}_{R}\cdot\Big(}+\big((2- & p)(d-3)r^{p-3}+\min\{r^{p-3},r^{-1-\text{\textgreek{d}}}\}\big)\big|\text{\textgreek{Y}}_{k}\big|^{2}\big\}+r^{-1-\text{\textgreek{h}}}|\partial_{u}\text{\textgreek{Y}}_{k}|^{2}\Big)\, dudvd\text{\textgreek{sv}}\lesssim_{p,\text{\textgreek{d}}}\\
\lesssim_{p,\text{\textgreek{h}},\lyxmathsym{\textgreek{d}}} & \int_{\{t=t_{1}\}}\text{\textgreek{q}}_{R}\cdot\Big(r^{p}|\partial_{v}\text{\textgreek{Y}}_{k}|^{2}+r^{p}|r^{-1}\partial_{\text{\textgreek{sv}}}\text{\textgreek{Y}}_{k}|^{2}+\big((d-3)r^{p-2}+\min\{r^{p-2},r^{-\text{\textgreek{d}}}\}\big)|\text{\textgreek{Y}}_{k}|^{2}\Big)\, dvd\text{\textgreek{sv}}+\\
 & +\int_{\{t=t_{1}\}}\text{\textgreek{q}}_{R}J_{\text{\textgreek{m}}}^{T}(\text{\textgreek{y}}_{k})n^{\text{\textgreek{m}}}+\int_{\mathcal{R}(t_{1},t_{2})}|\partial\text{\textgreek{q}}_{R}|\cdot\big(r^{p}|\partial\text{\textgreek{y}}_{k}|^{2}+r^{p-2}\cdot|\text{\textgreek{y}}_{k}|^{2}\big)+\\
 & +\int_{\mathcal{R}(t_{1},t_{2})}\text{\textgreek{q}}_{R}\cdot(r^{p+1}+r^{1+\text{\textgreek{h}}})\cdot|F_{k}|^{2}\,\text{\textgreek{W}}^{2}dudvd\text{\textgreek{sv}}.
\end{split}
\label{eq:newMethodFinalStatement-1}
\end{equation}
Thus, in view also of Lemmas \ref{lem:BoundF} and \ref{lem:BoundednessPsiK},
in order to reach (\ref{eq:FinalNewMethodPsiK}) it suffices to prove
that 
\begin{equation}
\begin{split}\int_{\{t=t_{1}\}}\text{\textgreek{q}}_{R}\cdot\Big(r^{p}|\partial_{v}\text{\ensuremath{\text{\textgreek{Y}}_{k}}}|^{2}+r^{p}|r^{-1}\partial_{\text{\textgreek{sv}}}\text{\textgreek{Y}}_{k}|^{2}+\big((d-3)r^{p-2}+\min\{r^{p-2} & ,r^{-\text{\textgreek{d}}}\}\big)|\text{\textgreek{Y}}_{k}|^{2}\Big)\, dvd\text{\textgreek{sv}}\le\\
\le & C(\text{\textgreek{w}}_{0},R,p,\text{\textgreek{q}}_{R})\cdot\int_{\{t=0\}}(1+r^{p})\cdot J_{\text{\textgreek{m}}}^{N}(\text{\textgreek{y}})n^{\text{\textgreek{m}}}.
\end{split}
\label{eq:DesiredInequality}
\end{equation}

Inequality (\ref{eq:DesiredInequality}) will be established in the
way that Lemma \ref{lem:BoundednessPsiK} was proven, the only difference
being that instead of the boundedness assumption \hyperref[Assumption 4]{4}
for $\text{\textgreek{y}}$, we will mainly use the following estimate
for \textgreek{y} (obtained by applying Theorem 5.3 of \cite{Moschidisc},
after repeating the proof in the region $\{t\le s\}\cap\{t_{-}\ge0\}$
for any $s\in\mathbb{R}$): setting $\text{\textgreek{Y}}=r^{\frac{d-1}{2}}\text{\textgreek{y}}$,
we can bound for any $s\in\mathbb{R}$
\begin{equation}
\begin{split}\int_{\{t=s\}\cap\{t_{-}\ge0\}}\text{\textgreek{q}}_{R}\cdot\Big(r^{p}|\partial_{v}\text{\textgreek{Y}}|^{2} & +r^{p}|r^{-1}\partial_{\text{\textgreek{sv}}}\text{\textgreek{Y}}|^{2}+\big((d-3)r^{p-2}+\min\{r^{p-2},r^{-\text{\textgreek{d}}}\}\big)|\text{\textgreek{Y}}|^{2}\Big)\, dvd\text{\textgreek{sv}}+\\
+\int_{\{t_{-}\ge0\}\cap\{t\le s\}}\text{\textgreek{q}}_{R}\cdot\Big(pr^{p-1}\big|\partial_{v} & \text{\textgreek{Y}}\big|^{2}+\big\{\big((2-p)r^{p-1}+r^{p-1-\text{\textgreek{d}}}\big)\big|r^{-1}\partial_{\text{\textgreek{sv}}}\text{\textgreek{Y}}\big|^{2}+\\
\hphantom{+\int_{\{t_{1}\le t\le t_{2}\}}\text{\textgreek{q}}_{R}\cdot\Big(}+\big((2- & p)(d-3)r^{p-3}+\min\{r^{p-3},r^{-1-\text{\textgreek{d}}}\}\big)\big|\text{\textgreek{Y}}\big|^{2}\big\}+r^{-1-\text{\textgreek{h}}}|\partial_{u}\text{\textgreek{Y}}|^{2}\Big)\, dudvd\text{\textgreek{sv}}\lesssim_{p,\text{\textgreek{d}}}\\
\lesssim_{p,\text{\textgreek{h}},\lyxmathsym{\textgreek{d}}} & \int_{\{t_{-}=0\}\cap\{t\le s\}}\text{\textgreek{q}}_{R}\cdot\Big(r^{p}|\partial_{v}\text{\textgreek{Y}}|^{2}+r^{p}|r^{-1}\partial_{\text{\textgreek{sv}}}\text{\textgreek{Y}}|^{2}+\big((d-3)r^{p-2}+\min\{r^{p-2},r^{-\text{\textgreek{d}}}\}\big)|\text{\textgreek{Y}}|^{2}\Big)\, dvd\text{\textgreek{sv}}+\\
 & +\int_{\{t_{-}=0\}\cap\{t\le s\}}\text{\textgreek{q}}_{R}J_{\text{\textgreek{m}}}^{T}(\text{\textgreek{y}})n^{\text{\textgreek{m}}}+\int_{\{t_{-}\ge0\}\cap\{t\le s\}}|\partial\text{\textgreek{q}}_{R}|\cdot\big(r^{p}|\partial\text{\textgreek{y}}|^{2}+r^{p-2}\cdot|\text{\textgreek{y}}|^{2}\big).
\end{split}
\label{eq:AuxiliaryBound}
\end{equation}

Using the boundedness assumption \hyperref[Assumption 4]{4} for $s\ge0$,
and the conservation of the $J^{T}$ current in the region $\{t_{-}\ge0\}\cap\{t\le s\}$
for $s<0$, as well as Lemma \ref{lem:ComparisonTT-}, we can bound
\begin{equation}
\int_{\{t_{-}=0\}\cap\{t\le s\}}J_{\text{\textgreek{m}}}^{N}(\text{\textgreek{y}})n^{\text{\textgreek{m}}}\le C\cdot\int_{\{t=0\}}J_{\text{\textgreek{m}}}^{N}(\text{\textgreek{y}})n^{\text{\textgreek{m}}}.
\end{equation}
 Using Lemma \ref{lem:ComparisonTT-} and a Hardy inequality 
\begin{equation}
\int_{\{t_{-}=0\}\cap\{t\le s\}}r^{p-2}|\text{\textgreek{y}}|^{2}\le C(p)\cdot\int_{\{t_{-}=0\}\cap\{t\le s\}}r^{p}J_{\text{\textgreek{m}}}^{N}(\text{\textgreek{y}})n^{\text{\textgreek{m}}}
\end{equation}
we can also bound:
\begin{equation}
\begin{split}\int_{\{t_{-}=0\}\cap\{t\le s\}}\text{\textgreek{q}}_{R}\cdot\Big(r^{p}|\partial_{v}\text{\textgreek{Y}}|^{2}+r^{p}|r^{-1}\partial_{\text{\textgreek{sv}}}\text{\textgreek{Y}}|^{2}+\big((d-3)r^{p-2}+\min\{r^{p-2},r^{-\text{\textgreek{d}}}\} & \big)|\text{\textgreek{Y}}|^{2}\Big)\, dvd\text{\textgreek{sv}}\le\\
\le & C(p)\cdot\int_{t=0}(1+r^{p})\cdot J_{\text{\textgreek{m}}}^{N}(\text{\textgreek{y}})n^{\text{\textgreek{m}}}.
\end{split}
\end{equation}
 Therefore, returning to (\ref{eq:AuxiliaryBound}) we obtain:

\begin{equation}
\begin{split}\int_{\{t=s\}\cap\{t_{-}\ge0\}}\text{\textgreek{q}}_{R}\cdot\Big(r^{p}|\partial_{v}\text{\textgreek{Y}}|^{2} & +r^{p}|r^{-1}\partial_{\text{\textgreek{sv}}}\text{\textgreek{Y}}|^{2}+\big((d-3)r^{p-2}+\min\{r^{p-2},r^{-\text{\textgreek{d}}}\}\big)|\text{\textgreek{Y}}|^{2}\Big)\, dvd\text{\textgreek{sv}}+\\
+\int_{\{t_{-}\ge0\}\cap\{t\le s\}}\text{\textgreek{q}}_{R}\cdot\Big(pr^{p-1}\big|\partial_{v} & \text{\textgreek{Y}}\big|^{2}+\big\{\big((2-p)r^{p-1}+r^{p-1-\text{\textgreek{d}}}\big)\big|r^{-1}\partial_{\text{\textgreek{sv}}}\text{\textgreek{Y}}\big|^{2}+\\
\hphantom{+\int_{\{t_{1}\le t\le t_{2}\}}\text{\textgreek{q}}_{R}\cdot\Big(}+\big((2- & p)(d-3)r^{p-3}+\min\{r^{p-3},r^{-1-\text{\textgreek{d}}}\}\big)\big|\text{\textgreek{Y}}\big|^{2}\big\}+r^{-1-\text{\textgreek{h}}}|\partial_{u}\text{\textgreek{Y}}|^{2}\Big)\, dudvd\text{\textgreek{sv}}\lesssim_{p,\text{\textgreek{d}}}\\
\lesssim_{p,\text{\textgreek{h}},\lyxmathsym{\textgreek{d}}} & \int_{t=0}(1+r^{p})\cdot J_{\text{\textgreek{m}}}^{N}(\text{\textgreek{y}})n^{\text{\textgreek{m}}}+\int_{\{t_{-}\ge0\}\cap\{t\le s\}}|\partial\text{\textgreek{q}}_{R}|\cdot\big(r^{p}|\partial\text{\textgreek{y}}|^{2}+r^{p-2}\cdot|\text{\textgreek{y}}|^{2}\big).
\end{split}
\label{eq:BoundPsiNewMethod}
\end{equation}

We recall that 
\begin{equation}
\text{\textgreek{y}}_{k}(t,\cdot)=\int_{-\infty}^{+\infty}h_{k}(t-s)\cdot\text{\textgreek{y}}_{t^{*}}(s,\cdot)\, ds,
\end{equation}
 where $h_{k}$ satisfies (\ref{eq:SchwartzBounds2}). Hence, we can
estimate for $q$ large enough (the precise value of which will be
specified exactly later in the proof ): 

\begin{align}
\int_{\{t=t_{1}\}}\text{\textgreek{q}}_{R}\cdot r^{p}|\partial_{v}\text{\text{\textgreek{Y}}}_{k}|^{2}\, dvd\text{\textgreek{sv}} & =\int_{\{t=t_{1}\}}r^{p}\text{\textgreek{q}}_{R}\cdot|\int_{-\infty}^{\infty}h_{k}(t_{1}-s)\cdot\partial_{v}(r^{\frac{d-1}{2}}\text{\textgreek{y}}_{t^{*}})(s,x)\, ds|^{2}\, dvd\text{\textgreek{sv}}\le\label{eq:Boundedness1-1-1}\\
 & \le\int_{\{t=t_{1}\}}r^{p}\text{\textgreek{q}}_{R}\cdot\big(\int_{-\infty}^{\infty}\frac{C_{q}(\text{\textgreek{w}}_{0})}{1+|t_{1}-s|^{q}}|\partial_{v}(r^{\frac{d-1}{2}}\text{\textgreek{y}}_{t^{*}})(s,x)|\, ds\big)^{2}\, dvd\text{\textgreek{sv}}\le\nonumber \\
 & \le C_{q}(\text{\textgreek{w}}_{0})\int_{\{t=t_{1}\}}r^{p}\text{\textgreek{q}}_{R}\cdot\big(\int_{-\infty}^{\infty}\frac{1}{1+|t_{1}-s|^{q}}|\partial_{v}(r^{\frac{d-1}{2}}\text{\textgreek{y}}_{t^{*}})(s,x)|^{2}\, ds\big)\, dvd\text{\textgreek{sv}}\le\nonumber \\
 & \le C_{q}(\text{\textgreek{w}}_{0})\int_{-\infty}^{\infty}\frac{1}{1+|t_{1}-s|^{q}}\big(\int_{\{t=s\}}r^{p}\text{\textgreek{q}}_{R}\cdot|\partial_{v}(r^{\frac{d-1}{2}}\text{\textgreek{y}}_{t^{*}})|^{2}\, dvd\text{\textgreek{sv}}\big)\, ds,\nonumber 
\end{align}
and similarly 
\begin{equation}
\begin{split}\int_{\{t=t_{1}\}} & \text{\textgreek{q}}_{R}\cdot\big(r^{p}|r^{-1}\partial_{\text{\textgreek{sv}}}\text{\text{\textgreek{Y}}}_{k}|^{2}+\big((d-3)r^{p-2}+\min\{r^{p-2},r^{-\text{\textgreek{d}}}\}\big)|\text{\textgreek{Y}}_{k}|^{2}\big)\, dvd\text{\textgreek{sv}}\le\\
\le & C_{q}(\text{\textgreek{w}}_{0})\int_{-\infty}^{\infty}\frac{1}{1+|t_{1}-s|^{q}}\Big\{\int_{\{t=s\}}\text{\textgreek{q}}_{R}\cdot\Big(r^{p}|r^{-1}\partial_{\text{\textgreek{sv}}}(r^{\frac{d-1}{2}}\text{\textgreek{y}}_{t^{*}})|^{2}+\big((d-3)r^{p-2}+\min\{r^{p-2},r^{-\text{\textgreek{d}}}\}\big)|r^{\frac{d-1}{2}}\text{\textgreek{y}}_{t^{*}}|^{2}\Big)\, dvd\text{\textgreek{sv}}\Big\}\, ds.
\end{split}
\label{eq:ExtraTermsBoundedness}
\end{equation}
 In view of the fact that $\text{\textgreek{y}}_{t^{*}}=h_{t^{*}}\cdot\text{\textgreek{y}}$,
from (\ref{eq:Boundedness1-1-1}) and (\ref{eq:ExtraTermsBoundedness})
we obtain: 
\begin{equation}
\begin{split}\int_{\{t=t_{1}\}} & \text{\textgreek{q}}_{R}\cdot\big(r^{p}|\partial_{v}\text{\textgreek{Y}}_{k}|^{2}+r^{p}|r^{-1}\partial_{\text{\textgreek{sv}}}\text{\text{\textgreek{Y}}}_{k}|^{2}+\big((d-3)r^{p-2}+\min\{r^{p-2},r^{-\text{\textgreek{d}}}\}\big)|\text{\textgreek{Y}}_{k}|^{2}\big)\, dvd\text{\textgreek{sv}}\le\\
\le & C_{q}(\text{\textgreek{w}}_{0})\int_{-\infty}^{\infty}\frac{1}{1+|t_{1}-s|^{q}}\Big(\int_{\{t=s\}}\text{\textgreek{q}}_{R}\cdot h_{t^{*}}^{2}\big(r^{p}|\partial_{v}\text{\textgreek{Y}}|^{2}+r^{p}|r^{-1}\partial_{\text{\textgreek{sv}}}\text{\textgreek{Y}}|^{2}+\big((d-3)r^{p-2}+\min\{r^{p-2},r^{-\text{\textgreek{d}}}\}\big)|\text{\textgreek{Y}}|^{2}\big)\, dvd\text{\textgreek{sv}}\Big)\, ds+\\
 & +C_{q}(\text{\textgreek{w}}_{0})\int_{-\infty}^{\infty}\frac{1}{1+|t_{1}-s|^{q}}\Big(\int_{\{t=s\}}\text{\textgreek{q}}_{R}\cdot|\partial h_{t^{*}}|^{2}r^{p}|\text{\textgreek{Y}}|^{2}\, dvd\text{\textgreek{sv}}\Big)\, ds.
\end{split}
\label{eq:ExtraTermsBoundedness-1}
\end{equation}

Since $h_{t^{*}}$ is supported only in $\{t_{-}\ge0\}$, due to (\ref{eq:BoundPsiNewMethod}),
we can estimate for any $s\in\mathbb{R}$:

\begin{align}
\int_{\{t=s\}}r^{p}\text{\textgreek{q}}_{R}\cdot h_{t^{*}}^{2} & \big(r^{p}|\partial_{v}\text{\textgreek{Y}}|^{2}+r^{p}|r^{-1}\partial_{\text{\textgreek{sv}}}\text{\textgreek{Y}}|^{2}+\big((d-3)r^{p-2}+\min\{r^{p-2},r^{-\text{\textgreek{d}}}\}\big)|\text{\textgreek{Y}}|^{2}\big)\, dvd\text{\textgreek{sv}}\le\label{eq:IntermediateBound}\\
\le & C(p)\int_{t=0}(1+r^{p})J_{\text{\textgreek{m}}}^{N}(\text{\textgreek{y}})n^{\text{\textgreek{m}}}+C(p)\cdot\int_{\{supp(\partial\text{\textgreek{q}}_{R})\}\cap\{0-\frac{1}{2}\text{\textgreek{q}}_{1}\cdot(r-R_{1})\le t\le s\}}|\partial\text{\textgreek{q}}_{R}|\cdot\big(r^{p}J_{\text{\textgreek{m}}}^{N}(\text{\textgreek{y}})n^{\text{\textgreek{m}}}+r^{p-2}|\text{\textgreek{y}}|^{2}\big).\nonumber 
\end{align}
In view of the inclusion $supp(\partial\text{\textgreek{q}}_{R})\subseteq\{R\le r\le2R\}$,
we can also bound 
\begin{equation}
\int_{\{supp(\partial\text{\textgreek{q}}_{R})\}\cap\{0-\frac{1}{2}\text{\textgreek{q}}_{1}\cdot(r-R_{1})\le t\le s\}}|\partial\text{\textgreek{q}}_{R}|\cdot\big(r^{p}J_{\text{\textgreek{m}}}^{N}(\text{\textgreek{y}})n^{\text{\textgreek{m}}}+r^{p-2}|\text{\textgreek{y}}|^{2}\big)\le C(\text{\textgreek{q}}_{R})\cdot(R+max\{s,0\})\cdot\int_{t=0}(1+r^{p})\cdot J_{\text{\textgreek{m}}}^{N}(\text{\textgreek{y}})n^{\text{\textgreek{m}}},
\end{equation}
 and hence, from (\ref{eq:IntermediateBound}) we obtain: 
\begin{align}
\int_{\{t=s\}}r^{p}\text{\textgreek{q}}_{R}\cdot h_{t^{*}}^{2}\big(r^{p}|\partial_{v}\text{\textgreek{Y}}|^{2}+r^{p}|r^{-1}\partial_{\text{\textgreek{sv}}}\text{\textgreek{Y}}|^{2}+\big((d-3)r^{p-2}+\min\{ & r^{p-2},r^{-\text{\textgreek{d}}}\}\big)|\text{\textgreek{Y}}|^{2}\big)\, dvd\text{\textgreek{sv}}\le\label{eq:BoundOnePsiCutOff}\\
\le & C(R,p,\text{\textgreek{q}}_{R})\cdot(1+|s|)\cdot\int_{t=0}(1+r^{p})J_{\text{\textgreek{m}}}^{N}(\text{\textgreek{y}})n^{\text{\textgreek{m}}}.\nonumber 
\end{align}

Since $supp(\partial h_{t^{*}})\subseteq\{0\le t_{-}\le1\}\cup\{t^{*}-1\le t_{+}\le t^{*}\}$,
the following inclusion 
\[
\{t=s\}\cap supp(\partial h_{t^{*}})\subseteq\{r\le R_{1}+C\cdot|s|\}
\]
holds for some $C\gg1$. We can therefore bound through a Hardy inequality

\begin{align}
\int_{\{t=s\}}r^{p}\text{\textgreek{q}}_{R}\cdot|\partial h_{t^{*}}|^{2}\cdot|\text{\textgreek{Y}}|^{2}\, dvd\text{\textgreek{sv}} & \le C\cdot\int_{\{t=s\}\cap supp(\partial h_{t^{*}})\cap\{r\ge R\}}r^{p+2}\cdot J_{\text{\textgreek{m}}}^{N}(\text{\textgreek{y}})n^{\text{\textgreek{m}}}\le\label{eq:BoundTwoOnPsiCutOff}\\
 & \le C\cdot(1+|s|)^{p+2}\cdot\int_{\{t=s\}\cap supp(\partial h_{t^{*}})\cap\{r\ge R\}}J_{\text{\textgreek{m}}}^{N}(\text{\textgreek{y}})n^{\text{\textgreek{m}}}\le\nonumber \\
 & \le C\cdot(1+|s|)^{p+2}\cdot\int_{\{t=0\}}J_{\text{\textgreek{m}}}^{N}(\text{\textgreek{y}})n^{\text{\textgreek{m}}}.\nonumber 
\end{align}

Returning to (\ref{eq:ExtraTermsBoundedness}) and using (\ref{eq:BoundOnePsiCutOff})
and (\ref{eq:BoundTwoOnPsiCutOff}), we can finally estimate (fixing
$q=6>(p+2)+2$)
\begin{equation}
\begin{split}\int_{\{t=t_{1}\}}\text{\textgreek{q}}_{R}\cdot\big(r^{p}|r^{-1}\partial_{\text{\textgreek{sv}}}\text{\text{\textgreek{Y}}}_{k}|^{2}+ & \big((d-3)r^{p-2}+\min\{r^{p-2},r^{-\text{\textgreek{d}}}\}\big)|\text{\textgreek{Y}}_{k}|^{2}\big)\, dvd\text{\textgreek{sv}}\le\\
\le & C_{q}(\text{\textgreek{w}}_{0},R,p,\text{\textgreek{q}}_{R})\cdot\Big\{\int_{-\infty}^{\infty}\frac{1+|s|^{p+2}}{1+|s|^{q}}\, ds\Big\}\cdot\int_{t=0}(1+r^{p})J_{\text{\textgreek{m}}}^{N}(\text{\textgreek{y}})n^{\text{\textgreek{m}}}\le\\
\le & C(\text{\textgreek{w}}_{0},R,p,\text{\textgreek{q}}_{R})\cdot\int_{t=0}(1+r^{p})J_{\text{\textgreek{m}}}^{N}(\text{\textgreek{y}})n^{\text{\textgreek{m}}}.
\end{split}
\label{eq:FinalBoundForInitialBoundaryTerm}
\end{equation}

\noindent Thus, in view of (\ref{eq:newMethodFinalStatement-1}) and
Lemmas \ref{lem:BoundF} and \ref{lem:BoundednessPsiK}, from (\ref{eq:FinalBoundForInitialBoundaryTerm})
we infer the required estimate (\ref{eq:FinalNewMethodPsiK}): 
\begin{equation}
\begin{split}\int_{\{t=t_{2}\}}\text{\textgreek{q}}_{R}\cdot\Big(r^{p}|\partial_{v}\text{\textgreek{Y}}_{k}|^{2} & +r^{p}|r^{-1}\partial_{\text{\textgreek{sv}}}\text{\textgreek{Y}}_{k}|^{2}+\big((d-3)r^{p-2}+\min\{r^{p-2},r^{-\text{\textgreek{d}}}\}\big)|\text{\textgreek{Y}}_{k}|^{2}\Big)\, dvd\text{\textgreek{sv}}+\\
+\int_{\mathcal{R}(t_{1},t_{2})}\text{\textgreek{q}}_{R}\cdot\Big(pr^{p-1}\big|\partial_{v} & \text{\textgreek{Y}}_{k}\big|^{2}+\big\{\big((2-p)r^{p-1}+r^{p-1-\text{\textgreek{d}}}\big)\big|r^{-1}\partial_{\text{\textgreek{sv}}}\text{\textgreek{Y}}_{k}\big|^{2}+\\
\hphantom{+\int_{\{t_{1}\le t\le t_{2}\}}\text{\textgreek{q}}_{R}\cdot\Big(}+\big((2- & p)(d-3)r^{p-3}+\min\{r^{p-3},r^{-1-\text{\textgreek{d}}}\}\big)\big|\text{\textgreek{Y}}_{k}\big|^{2}\big\}+r^{-1-\text{\textgreek{h}}}|\partial_{u}\text{\textgreek{Y}}_{k}|^{2}\Big)\, dudvd\text{\textgreek{sv}}\le\\
\le & C(p,\text{\textgreek{h}},\text{\textgreek{d}})\cdot\int_{\mathcal{R}(t_{1},t_{2})}|\partial\text{\textgreek{q}}_{R}|\cdot\big(r^{p}|\partial\text{\textgreek{Y}}_{k}|^{2}+r^{p-2}|\text{\textgreek{Y}}_{k}|^{2}\big)\, dudvd\text{\textgreek{sv}}+\\
 & \hphantom{C(}+C(\text{\textgreek{w}}_{0},p,\text{\textgreek{h}},R,\text{\textgreek{q}}_{R},\text{\textgreek{d}})\cdot\int_{\{t=0\}}(1+r^{p})\cdot J_{\text{\textgreek{m}}}^{N}(\text{\textgreek{y}})n^{\text{\textgreek{m}}}.
\end{split}
\label{eq:newMethodFinalStatement-1-1}
\end{equation}

The same estimate for $\text{\textgreek{y}}_{\le\text{\textgreek{w}}_{+}},\text{\textgreek{y}}_{\ge\text{\textgreek{w}}_{+}}$
in place of $\text{\textgreek{y}}_{k}$ follows in exactly the same
way. Hence, the proof of the lemma is complete
\end{proof}
Finally, let us note that we will need to use Lemma \ref{lem:FinalStatementNewMethodPsiK}
in order to estimate the energy of $\text{\textgreek{y}}_{k}$ on
the leaves of a hyperboloidal foliation restricted on $\mathcal{R}(0,t^{*})$.
Namely, we will consider the energy of $\text{\textgreek{y}}_{k}$
on $\tilde{S}_{\text{\textgreek{t}}}\cap\mathcal{R}(0,t^{*})$, where
the hyperboloids $\tilde{S}_{\text{\textgreek{t}}}$ are defined as
follows:
\begin{defn*}
For a fixed $\text{\textgreek{h}}'<1+2a$, we define function $\tilde{u}_{\text{\textgreek{h}}'}:\{r\gg1\}\rightarrow\mathbb{R}$
as 
\begin{equation}
\tilde{u}_{\text{\textgreek{h}}'}\doteq u-\frac{1}{1+r^{\text{\textgreek{h}}'}}.\label{eq:Hyperboloids}
\end{equation}
 We also define for any $\text{\textgreek{t}}\in\mathbb{R}$ the hypersurface
$\tilde{S}_{\text{\textgreek{t}}}\subset\{r\gg1\}$ as 
\begin{equation}
\tilde{S}_{\text{\textgreek{t}}}\doteq\{\tilde{u}_{\text{\textgreek{h}}'}=\text{\textgreek{t}}\}.
\end{equation}

\end{defn*}
\noindent Notice that the level sets of $\tilde{S}_{\text{\textgreek{t}}}$
are indeed spacelike hypersurfaces for $r$ large enough depending
on the chosen \textgreek{h}'. This follows from the computation:
\begin{align*}
g^{\text{\textgreek{m}\textgreek{n}}}\partial_{\text{\textgreek{m}}}\tilde{u}_{\text{\textgreek{h}}'}\cdot\partial_{\text{\textgreek{n}}}\tilde{u}_{\text{\textgreek{h}}'} & =g^{\text{\textgreek{m}\textgreek{n}}}\partial_{\text{\textgreek{m}}}u\cdot\partial_{\text{\textgreek{n}}}u+2g^{\text{\textgreek{m}\textgreek{n}}}\partial_{\text{\textgreek{m}}}u\cdot\partial_{\text{\textgreek{n}}}(\frac{-1}{1+r^{\text{\textgreek{h}}'}})+g^{\text{\textgreek{m}\textgreek{n}}}\partial_{\text{\textgreek{m}}}(\frac{-1}{1+r^{\text{\textgreek{h}}'}})\cdot\partial_{\text{\textgreek{n}}}(\frac{-1}{1+r^{\text{\textgreek{h}}'}})=\\
 & =-2\text{\textgreek{h}}'r^{-1-\text{\textgreek{h}}'}+O_{\text{\textgreek{h}}'}(r^{-2-a}+r^{-2-\text{\textgreek{h}'}})<0.
\end{align*}
 for $r$ large enough in terms of $\text{\textgreek{h}}'$. Moreover,
$|\tilde{u}_{\text{\textgreek{h}}'}-u|\le1$, and hence $\tilde{S}_{\text{\textgreek{t}}}$
are spacelike hypersurfaces terminating at future null infinity, according
to the definition (\ref{eq:HyperboloidTerminatingAtNullInfinity}). 

In view of (\ref{eq:Hyperboloids}), for any smooth function $\text{\textgreek{f}}:\{r\gg1\}\rightarrow\mathbb{C}$
the following estimate is true:

\begin{equation}
J_{\text{\textgreek{m}}}^{N}(\text{\textgreek{f}})\tilde{n}^{\text{\textgreek{m}}}\sim\frac{1}{r^{1+\text{\textgreek{h}}'}}|\partial_{u}\text{\textgreek{f}}|^{2}+|\partial_{v}\text{\textgreek{f}}|^{2}+|\frac{1}{r}\partial_{\text{\textgreek{sv}}}\text{\textgreek{f}}|^{2},\label{eq:EnergyEstimateOnTheHyperboloids}
\end{equation}
where $\tilde{n}$ denotes the future directed unit normal to $\{\tilde{\text{S}}_{\text{\textgreek{t}}}\}_{\text{\textgreek{t}}\in\mathbb{R}}$

We can now infer the following Lemma:
\begin{lem}
\label{lem:IntegratedBoundEnergyHyperboloids}Assume that $R\gg1$
is large enough in terms of the fixed value of $\text{\textgreek{h}}'$.
Then for any $\text{\textgreek{t}}\in\mathbb{R}$ and any $-n\le k\le n$,
the following inequality is true:

\begin{multline}
\int_{0}^{\infty}\big\{\int_{\tilde{S}_{\text{\textgreek{t}}}\cap\mathcal{R}(0,t^{*})\cap\{r\ge R+1\}}J_{\text{\textgreek{m}}}^{N}(\text{\textgreek{y}}_{k})\tilde{n}^{\text{\textgreek{m}}}\big\}\, d\text{\textgreek{t}}\le\\
\le C(R)\int_{\{R\le r\le R+1\}\cap\mathcal{R}(0,t^{*})}\big(J_{\text{\textgreek{m}}}^{N}(\text{\textgreek{y}}_{k})n^{\text{\textgreek{m}}}+|\text{\textgreek{y}}_{k}|^{2}\big)+C(\text{\textgreek{w}}_{0},R)\int_{t=0}(1+r)J_{\text{\textgreek{m}}}^{N}(\text{\textgreek{y}})n^{\text{\textgreek{m}}}.\label{eq:IntegratedDecayForTheEnergyOfTheHyperboloids}
\end{multline}

The same estimate also holds for $\text{\textgreek{y}}_{\le\text{\textgreek{w}}_{+}},\text{\textgreek{y}}_{\ge\text{\textgreek{w}}_{+}}$
in place of $\text{\textgreek{y}}_{k}$.

Moreover, for any $t_{1}\le t_{2}$ the following estimate for $\text{\textgreek{y}}$
holds:

\begin{equation}
\int_{t_{1}}^{t_{2}}\Big\{\int_{\tilde{S}_{\text{\textgreek{t}}}\cap\{r\ge R\}}J_{\text{\textgreek{m}}}^{N}(\text{\textgreek{y}})\tilde{n}^{\text{\textgreek{m}}}\Big\}\, d\text{\textgreek{t}}\le C(R)\cdot\int_{\{R\le r\le R+1\}\cap\mathcal{R}(t_{1},t_{2})}\big(J_{\text{\textgreek{m}}}^{N}(\text{\textgreek{y}})n^{\text{\textgreek{m}}}+|\text{\textgreek{y}}|^{2}\big)+C\cdot\int_{\{t=t_{1}\}}(1+r)J_{\text{\textgreek{m}}}^{N}(\text{\textgreek{y}})\tilde{n}^{\text{\textgreek{m}}}.\label{eq:IntegratedDecayForTheEnergyOfTheHyperboloidsPsi}
\end{equation}
\end{lem}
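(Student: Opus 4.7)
The plan is to reduce both inequalities to the $r^{p}$-weighted energy estimate of Lemma~\ref{lem:FinalStatementNewMethodPsiK} (respectively the corresponding statement for $\text{\textgreek{y}}$) applied with the specific exponent $p=1$, by means of the coarea formula for the level sets of $\tilde{u}_{\text{\textgreek{h}}'}$.

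First I would fix a smooth cut-off $\text{\textgreek{q}}_{R}:\mathcal{D}\to[0,1]$ supported in $\{r\ge R\}$, identically $1$ on $\{r\ge R+1\}$, with $|\partial\text{\textgreek{q}}_{R}|\le C$ and $\mathrm{supp}(\partial\text{\textgreek{q}}_{R})\subseteq\{R\le r\le R+1\}$. Since $R\gg 1$ is taken large in terms of $\text{\textgreek{h}}'$, the computation given just before the statement of the Lemma shows $\tilde{u}_{\text{\textgreek{h}}'}$ has spacelike level sets on $\{r\ge R\}$ with $|\nabla^{g}\tilde{u}_{\text{\textgreek{h}}'}|_{g}\sim r^{-(1+\text{\textgreek{h}}')/2}$; the coarea formula applied to the restriction of the spacetime volume form to $\{r\ge R+1\}\cap\mathcal{R}(0,t^{*})\cap\{\tilde{u}_{\text{\textgreek{h}}'}>0\}$ then gives
\begin{equation*}
\int_{0}^{\infty}\!\!\Big(\int_{\tilde{S}_{\text{\textgreek{t}}}\cap\mathcal{R}(0,t^{*})\cap\{r\ge R+1\}}\!\!J_{\text{\textgreek{m}}}^{N}(\text{\textgreek{y}}_{k})\tilde{n}^{\text{\textgreek{m}}}\Big)d\text{\textgreek{t}}=\int_{\{\tilde{u}_{\text{\textgreek{h}}'}>0\}\cap\mathcal{R}(0,t^{*})\cap\{r\ge R+1\}}J_{\text{\textgreek{m}}}^{N}(\text{\textgreek{y}}_{k})\tilde{n}^{\text{\textgreek{m}}}\cdot|\nabla^{g}\tilde{u}_{\text{\textgreek{h}}'}|_{g}\,dvol.
\end{equation*}

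Next I would use (\ref{eq:EnergyEstimateOnTheHyperboloids}) and the asymptotics $dvol\sim r^{d-1}dudvd\text{\textgreek{sv}}$ to bound the integrand of the right-hand side by a sum of the three terms
\begin{equation*}
r^{d-\frac{5}{2}-\frac{3\text{\textgreek{h}}'}{2}}|\partial_{u}\text{\textgreek{y}}_{k}|^{2}+r^{d-\frac{3}{2}-\frac{\text{\textgreek{h}}'}{2}}|\partial_{v}\text{\textgreek{y}}_{k}|^{2}+r^{d-\frac{7}{2}-\frac{\text{\textgreek{h}}'}{2}}|\partial_{\text{\textgreek{sv}}}\text{\textgreek{y}}_{k}|^{2}.
\end{equation*}
Expressing $\text{\textgreek{y}}_{k}=r^{-(d-1)/2}\text{\textgreek{Y}}_{k}$, each of these is pointwise dominated by the bulk integrand appearing on the left-hand side of (\ref{eq:FinalNewMethodPsiK}) with $p=1$, provided one picks $\text{\textgreek{h}}\le 1/2$ (so that the algebraic inequality $5/2+3\text{\textgreek{h}}'/2\ge 2+\text{\textgreek{h}}$ is satisfied for any $\text{\textgreek{h}}'\ge 0$); the remaining two comparisons hold for all $\text{\textgreek{h}}'\ge 0$. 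Applying Lemma~\ref{lem:FinalStatementNewMethodPsiK} with $p=1$, the chosen $\text{\textgreek{h}}$, some $\text{\textgreek{d}}\in(0,1)$, $t_{1}=0$, $t_{2}=t^{*}$ and the above cut-off $\text{\textgreek{q}}_{R}$, the bulk term on its left-hand side controls the spacetime integral produced by the coarea formula. The cut-off error term $\int|\partial\text{\textgreek{q}}_{R}|(r^{p}|\partial\text{\textgreek{Y}}_{k}|^{2}+r^{p-2}|\text{\textgreek{Y}}_{k}|^{2})dudvd\text{\textgreek{sv}}$, being supported in $\{R\le r\le R+1\}$ where $r\sim R$, translates (upon expressing $\text{\textgreek{Y}}_{k}=r^{(d-1)/2}\text{\textgreek{y}}_{k}$ and absorbing the lower-order term $r^{d-3}|\text{\textgreek{y}}_{k}|^{2}$) into the first term of the right-hand side of (\ref{eq:IntegratedDecayForTheEnergyOfTheHyperboloids}), up to a constant depending on $R$; while the weighted initial-data term $C(\text{\textgreek{w}}_{0},\dots)\int_{\{t=0\}}(1+r)J_{\text{\textgreek{m}}}^{N}(\text{\textgreek{y}})n^{\text{\textgreek{m}}}$ is precisely the output of Lemma~\ref{lem:FinalStatementNewMethodPsiK} for $p=1$. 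The same argument applies verbatim when $\text{\textgreek{y}}_{k}$ is replaced by $\text{\textgreek{y}}_{\le\text{\textgreek{w}}_{+}}$ or $\text{\textgreek{y}}_{\ge\text{\textgreek{w}}_{+}}$, since Lemma~\ref{lem:FinalStatementNewMethodPsiK} also provides the same estimate for these components.

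For (\ref{eq:IntegratedDecayForTheEnergyOfTheHyperboloidsPsi}) I would follow the same coarea-reduction, but now invoke directly Theorem 5.3 of \cite{Moschidisc} applied to $\text{\textgreek{y}}$ in the region $\mathcal{R}(t_{1},t_{2})\cap\{r\ge R\}$ with $p=1$ (no inhomogeneity $F_{k}$ appears, since $\square_{g}\text{\textgreek{y}}=0$, so no cut-off error terms enter). The only additional step is to transfer the $r^{p}$-weighted ``initial'' energy naturally generated on $\tilde{S}_{t_{1}}\cap\{r\ge R\}$ to the corresponding weighted energy on $\{t=t_{1}\}\cap\{r\ge R\}$; this is done by conservation of the $J^{T}$-current in the bounded region enclosed by $\{t=t_{1}\}$, $\tilde{S}_{t_{1}}$ and $\{r=R\}$, combined with the $r$-weighted version of Lemma~\ref{lem:ComparisonTT-}, and produces precisely the second term $C\int_{\{t=t_{1}\}}(1+r)J_{\text{\textgreek{m}}}^{N}(\text{\textgreek{y}})n^{\text{\textgreek{m}}}$ on the right-hand side of (\ref{eq:IntegratedDecayForTheEnergyOfTheHyperboloidsPsi}) (reading $n^{\text{\textgreek{m}}}$ in the RHS of the stated inequality in place of the apparent $\tilde{n}^{\text{\textgreek{m}}}$).

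The entire argument is thus essentially bookkeeping: verifying that the three weights $r^{-(1+\text{\textgreek{h}}')/2}$ (from $|\nabla\tilde{u}_{\text{\textgreek{h}}'}|$), $r^{d-1}$ (from the volume form) and the weights $r^{-1-\text{\textgreek{h}}'},1,r^{-2}$ attached respectively to $|\partial_{u}|^{2},|\partial_{v}|^{2},|\partial_{\text{\textgreek{sv}}}|^{2}$ in $J^{N}\tilde{n}$ combine into quantities dominated by the bulk of the $r^{p}$-weighted hierarchy for $p=1$. No analytical obstacle is anticipated beyond this matching of weights, together with the already carried-out reduction to initial $J^{N}$-energy provided by Lemmas~\ref{lem:BoundF}, \ref{lem:BoundednessPsiK} and (\ref{eq:AuxiliaryBound}) appearing in the proof of Lemma~\ref{lem:FinalStatementNewMethodPsiK}.
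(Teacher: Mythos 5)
Your treatment of the first inequality (\ref{eq:IntegratedDecayForTheEnergyOfTheHyperboloids}) follows essentially the same route as the paper: the paper simply observes that, by (\ref{eq:EnergyEstimateOnTheHyperboloids}) and a cut-off $\text{\textgreek{q}}_{R}$ supported in $\{r\ge R\}$ with $\text{\textgreek{q}}_{R}\equiv1$ on $\{r\ge R+1\}$, the bulk term of Lemma \ref{lem:FinalStatementNewMethodPsiK} with $p=1$ and $0<\text{\textgreek{h}}<\min\{a,\text{\textgreek{h}}'\}$ dominates the left-hand side. You spell out the implicit bookkeeping (the lapse factor $|\nabla\tilde{u}_{\text{\textgreek{h}}'}|_{g}\sim r^{-(1+\text{\textgreek{h}}')/2}$ arising from the foliation, $dvol\sim r^{d-1}dudvd\text{\textgreek{sv}}$, and the resulting exponent comparison) explicitly, which is a correct elaboration of what the paper compresses into ``we can readily bound''. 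One small imprecision: your condition ``$\text{\textgreek{h}}\le 1/2$'' should read $\text{\textgreek{h}}<\min\{a,1/2\}$ (or simply the paper's $\text{\textgreek{h}}<\min\{a,\text{\textgreek{h}}'\}$, which is also sufficient), since Lemma \ref{lem:FinalStatementNewMethodPsiK} requires $\text{\textgreek{h}}<a$.

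Your argument for the second inequality (\ref{eq:IntegratedDecayForTheEnergyOfTheHyperboloidsPsi}) has a genuine gap in the proposed ``transfer'' step. You invoke conservation of $J^{T}$ in ``the bounded region enclosed by $\{t=t_{1}\}$, $\tilde{S}_{t_{1}}$ and $\{r=R\}$'' together with an $r$-weighted analogue of Lemma \ref{lem:ComparisonTT-}, but: (i) that region is \emph{not} bounded — $\tilde{S}_{t_{1}}$ is hyperboloidal and terminates at $\mathcal{I}^{+}$, whereas $\{t=t_{1}\}$ terminates at spatial infinity, so the $t$-gap between them grows without bound as $r\to\infty$; (ii) $J^{T}$-conservation is an unweighted statement and does not propagate the $(1+r)$-weighted energy; and (iii) Lemma \ref{lem:ComparisonTT-} is a comparison between $\{t=\text{\textgreek{t}}\}$ and the \emph{distorted} slices $\{t_{-}=\text{\textgreek{t}}\}$, which are uniformly spacelike deformations of each other terminating at spacelike infinity — it does not extend, even ``$r$-weighted'', to a comparison with hyperboloidal slices terminating at null infinity. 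Transferring a $(1+r)$-weighted energy from $\tilde{S}_{t_{1}}$ to $\{t=t_{1}\}$ would itself require an application of the $r^{p}$-hierarchy in that (unbounded) slab, which is circular. The paper's resolution is cleaner and avoids the transfer altogether: adapt the $r^{p}$-weighted estimate of Theorem 5.1 of \cite{Moschidisc} so that it is stated directly in the region bounded above by $\tilde{S}_{t_{2}}$ and below by $\text{\textgreek{S}}_{t_{1}}$, so that the flux term on the past boundary is already the constant-$t$ one appearing in (\ref{eq:IntegratedDecayForTheEnergyOfTheHyperboloidsPsi}). Your observation that the $\tilde{n}^{\text{\textgreek{m}}}$ in the last term of (\ref{eq:IntegratedDecayForTheEnergyOfTheHyperboloidsPsi}) should be read as $n^{\text{\textgreek{m}}}$ is correct.
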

\begin{proof}
In view of (\ref{eq:EnergyEstimateOnTheHyperboloids}), fixing some
smooth cut-off function $\text{\textgreek{q}}_{R}:\mathcal{D}\rightarrow[0,1]$
such that $\text{\textgreek{q}}_{R}\equiv0$ on $\{r\le R\}$ and
$\text{\textgreek{q}}_{R}\equiv1$ on $\{r\ge R+1\}$, choosing some
$0<\text{\textgreek{h}}<\min\{a,\text{\textgreek{h}}'\}$ and $0<\text{\textgreek{d}}<1$
and setting $p=1$ in Lemma \ref{lem:FinalStatementNewMethodPsiK},
we can readily bound for any $-n\le k\le n$:

\begin{multline}
\int_{0}^{\infty}\Big\{\int_{\tilde{\text{S}}_{\text{\textgreek{t}}}\cap\mathcal{R}(0,t^{*})\cap\{r\ge R+1\}}J_{\text{\textgreek{m}}}^{N}(\text{\textgreek{y}}_{k})\tilde{n}^{\text{\textgreek{m}}}\Big\}\, d\text{\textgreek{t}}\le\\
\le C(R)\int_{\{R\le r\le R+1\}\cap\mathcal{R}(0,t^{*})}\big(J_{\text{\textgreek{m}}}^{N}(\text{\textgreek{y}}_{k})n^{\text{\textgreek{m}}}+|\text{\textgreek{y}}_{k}|^{2}\big)+C(\text{\textgreek{w}}_{0},R)\int_{t=0}(1+r)J_{\text{\textgreek{m}}}^{N}(\text{\textgreek{y}})n^{\text{\textgreek{m}}}.\label{eq:IntegratedDecayForTheEnergyOfTheHyperboloids-1}
\end{multline}
The same is also true for $\text{\textgreek{y}}_{\le\text{\textgreek{w}}_{+}}$,
$\text{\textgreek{y}}_{\ge\text{\textgreek{w}}_{+}}$ in place of
$\text{\textgreek{y}}_{k}$.

The estimate (\ref{eq:IntegratedDecayForTheEnergyOfTheHyperboloidsPsi})
follows by applying Theorem 5.1 of \cite{Moschidisc} for $\text{\textgreek{y}}$,
adapting the proof in the region spanned by $\tilde{S}_{t_{2}}$ and
$\text{\textgreek{S}}_{t_{1}}$ (instead of the region spanned by
$\tilde{S}_{t_{2}}$ and $\tilde{S}_{t_{1}}$, as is the case in the
original theorem).
\end{proof}

\section{\label{sub:LowFrequencies}Integrated local energy decay for $\text{\textgreek{y}}_{0}$}

As we explained in our sketch of the proof of Theorem \ref{thm:Theorem}
in Section \ref{sub:SketchOfProof}, the treatment of the very low
frequency component $\text{\textgreek{y}}_{0}$ of $\text{\textgreek{y}}$
differs substantially from the treatment of the rest of $\text{\textgreek{y}}_{k}$,
$1\le|k|\le n$. In this section, we will prove an integrated local
energy decay statement for $\text{\textgreek{y}}_{0}$, as described
in the proposition that follows. It is in this section that the parameter
$\text{\textgreek{w}}_{0}$ will be fixed. Moreover, in the proof
of Proposition \ref{prop:LowFrequencies} it is specified how small
the value of $\text{\textgreek{e}}=sup\{g(T,T)$ should be in the
statement of Theorem \ref{thm:Theorem}.
\begin{prop}
\label{prop:LowFrequencies}Suppose that $\text{\textgreek{e}}=sup\{g(T,T)\}$
(defined in Assumption \hyperref[Assumption 3]{3} regarding the smallness
of the ergoregion; recall that, in view of our remark in Section \ref{sub:Remark},
we have assumed that $\mathcal{H}^{+}\neq\emptyset$) is small enough
in terms of the geometry of the region $\{r>\frac{r_{0}}{2}\}$ (i.\,e.~the
geometry of a subset of $(\mathcal{D},g)$ not containing the ergoregion).
Then for any $R>0$ , there exists a $\text{\textgreek{d}}=\text{\textgreek{d}}(R)>0$,
such that if $\text{\textgreek{w}}_{0}<\text{\textgreek{d}}$, we
can bound:

\begin{equation}
\int_{\{r\le R\}\cap\mathcal{R}(0,t^{*})}\big(J_{\text{\textgreek{m}}}^{N}(\text{\textgreek{y}}_{0})n^{\text{\textgreek{m}}}+|\text{\textgreek{y}}_{0}|^{2})\le C(R,\text{\textgreek{w}}_{0})\cdot\int_{t=0}J_{\text{\textgreek{m}}}^{N}(\text{\textgreek{y}})n^{\text{\textgreek{m}}}\label{eq:LowFrequencies}
\end{equation}
\end{prop}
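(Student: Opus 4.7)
The plan is to establish (\ref{eq:LowFrequencies}) by a vector-field multiplier argument applied directly to $\psi_{0}$, in which the very-low-frequency property of $\psi_{0}$ is used in an essential way. The crucial input is estimate (\ref{eq:LowFrequencyEstimate}) from Lemma \ref{lem:DtToOmegaInequalities}, which allows us to trade integrated $|T\psi_{0}|^{2}$ on bounded regions for a small multiple ($\sim \omega_{0}^{2}$) of integrated $|\psi_{0}|^{2}$, plus a harmless initial-energy term. Together with Assumption \hyperref[Assumption 3]{3} (smallness of $\epsilon = \sup g(T,T)$) and the red-shift positivity of Assumption \hyperref[Assumption 2]{2}, this will let us close a Morawetz-type bulk estimate on $\mathcal{R}(0,t^{*})\cap\{r\le R\}$ without paying any high-frequency penalty.

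First I would introduce a $T$-invariant multiplier $X = \chi(r)N + f(r)\partial_{r}$, where $\chi$ is supported in $\{r\le 2r_{0}\}$ and equals $1$ on $\{r\le \tfrac{3}{2}r_{0}\}$, while $f$ is a bounded Morawetz profile supported in $\{\tfrac{3}{2}r_{0}\le r\le R^{*}\}$ (for some $R^{*}\gg R$ to be chosen), with $f$ positive and monotone so that a standard computation yields $K^{f\partial_{r}}(\psi_{0}) \ge c\bigl(f'(r)|\partial_{r}\psi_{0}|^{2} + r^{-1}f(r)|r^{-1}\partial_{\text{\textgreek{sv}}}\psi_{0}|^{2} + r^{-3}f(r)|\psi_{0}|^{2}\bigr) - C|T\psi_{0}|^{2}$ in the region $\{r\ge \tfrac{3}{2}r_{0}\}$, where $T$ is timelike (so the $|T\psi_{0}|^{2}$ terms have controllable sign). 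On the other hand, by Assumption \hyperref[Assumption 2]{2}, $\chi N$ produces a bulk term bounded below by $cJ_{\mu}^{N}(\psi_{0})n^{\mu}$ on $\{r\le \tfrac{3}{2}r_{0}\}$, modulo an error of the form $C\epsilon\cdot J_{\mu}^{N}(\psi_{0})n^{\mu} + C|T\psi_{0}|^{2}$ from the portion of the ergoregion contained inside $\{r\le 2r_{0}\}$. Integrating the divergence identity for $J^{X}_{\mu}(\psi_{0})$ over $\mathcal{R}(0,t^{*})\cap\{r\le R^{*}\}$, the boundary terms on the leaves $\{t=0\}, \{t=t^{*}\}$ are controlled by $C(\omega_{0})\int_{t=0}J_{\mu}^{N}(\psi)n^{\mu}$ via Lemma \ref{lem:BoundednessPsiK}, the flux through $\mathcal{H}^{+}$ is discarded by its favourable sign, the outer flux at $\{r=R^{*}\}$ is absorbed by invoking Lemma \ref{lem:ImprovedMorawetz} (the locally improved Morawetz estimate), and the inhomogeneous source $\int \mathrm{Re}(F_{0}\cdot X\bar{\psi}_{0})$ is bounded via Cauchy--Schwarz combined with Lemma \ref{lem:BoundF}.

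Combining these steps produces an inequality of the schematic form
\begin{equation*}
\int_{\mathcal{R}(0,t^{*})\cap\{r\le R^{*}\}}\bigl(J_{\mu}^{N}(\psi_{0})n^{\mu} + r^{-3}|\psi_{0}|^{2}\bigr) \le C(\epsilon + \omega_{0}^{2})\int_{\mathcal{R}(0,t^{*})\cap\{r\le R^{*}\}}|\psi_{0}|^{2} + C(\omega_{0},R^{*})\int_{t=0}J_{\mu}^{N}(\psi)n^{\mu},
\end{equation*}
after using (\ref{eq:LowFrequencyEstimate}) to dominate the $|T\psi_{0}|^{2}$ contributions. A Hardy inequality lets the bulk $|\psi_{0}|^{2}$ integral be controlled by the left-hand side plus an additional initial-data term (via Lemma \ref{lem:BoundednessPsiK}), so that choosing $\epsilon$ small in terms of the geometry and then $\omega_{0} < \delta(R)$ small in terms of $R$ and $R^{*}$ lets one absorb the bulk error into the left-hand side, yielding (\ref{eq:LowFrequencies}).

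The main obstacle is the precise handling of the ergoregion: the Morawetz multiplier $f(r)\partial_{r}$ is not Killing, so the bulk $K^{X}$ carries terms of the form $(\partial_{r}g(T,T))|T\psi_{0}|^{2}$ together with cross terms $\mathrm{Re}(T\psi_{0}\cdot \partial_{r}\bar{\psi}_{0})$ in the region where $T$ fails to be timelike, and neither is controlled by the purely spatial Morawetz bulk. It is only the combined smallness of $\epsilon$ (which bounds the spatial extent and amplitude of the ergoregion via Assumption \hyperref[Assumption 3]{3}) and of $\omega_{0}$ (which, via (\ref{eq:LowFrequencyEstimate}), effectively replaces $|T\psi_{0}|^{2}$ by $\omega_{0}^{2}|\psi_{0}|^{2}$) that makes these contributions absorbable. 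A secondary delicacy is that the outer boundary term at $\{r=R^{*}\}$ must be closed through Lemma \ref{lem:ImprovedMorawetz} rather than Lemma \ref{lem:MorawetzRefinedboundary}, so as to avoid introducing an $R^{*}$-weighted loss that would prevent the absorption step from succeeding.
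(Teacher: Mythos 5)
Your proposal identifies the correct auxiliary inputs (the red-shift positivity from Assumption 2, the smallness of $\text{\textgreek{e}}$ from Assumption 3, and the replacement of $|T\text{\textgreek{y}}_0|^2$ by $\text{\textgreek{w}}_0^2|\text{\textgreek{y}}_0|^2$ via the low-frequency Lemma), but the core bulk multiplier you propose does not work in the generality required here. Specifically, you write that a Morawetz profile $f(r)\partial_r$ supported in $\{\tfrac{3}{2}r_0\le r\le R^*\}$ gives, by ``a standard computation'', a bound of the schematic form
\[
K^{f\partial_r}(\text{\textgreek{y}}_0)\ge c\big(f'|\partial_r\text{\textgreek{y}}_0|^2+r^{-1}f|r^{-1}\partial_{\text{\textgreek{sv}}}\text{\textgreek{y}}_0|^2+r^{-3}f|\text{\textgreek{y}}_0|^2\big)-C|T\text{\textgreek{y}}_0|^2
\]
on that region. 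This fails for precisely the class of spacetimes in play, because Assumptions~1--4 place no structural hypothesis on the trapped set. The coefficient of the angular derivatives in $K^{f\partial_r}$ depends on how the sphere area-radius varies, and for a bounded monotone $f$ it changes sign wherever there is trapping; for example already in Schwarzschild the angular bulk carries a factor $(1-3M/r)$, which cannot be absorbed by an error of size $-C|T\text{\textgreek{y}}_0|^2\sim -C\text{\textgreek{w}}_0^2|\text{\textgreek{y}}_0|^2$, since the degenerate (or negative) contribution is in the angular direction and is of top order. In a general spacetime satisfying only Assumptions~1--4, trapping can occur arbitrarily throughout the compact region $\{\tfrac{3}{2}r_0\le r\lesssim R_1\}$, and your multiplier has no mechanism to counteract it. The $r^{-3}f|\text{\textgreek{y}}_0|^2$ bulk term is likewise unjustified: a pure $f\partial_r$ current produces no $|\text{\textgreek{y}}_0|^2$ bulk, and the zeroth-order modification that would is proportional to $-\tfrac12\square_g(\cdot)$, which again carries no definite sign in general.

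The paper avoids this obstruction by replacing the Morawetz multiplier with a Lagrangean/virial current $J_\mu = e\,T_{\mu\nu}(\text{\textgreek{y}}_0)N^\nu + h_R\text{\textgreek{y}}_0\partial_\mu\text{\textgreek{y}}_0 - \tfrac12\partial_\mu h_R\cdot\text{\textgreek{y}}_0^2$, whose bulk is $e\,K^N(\text{\textgreek{y}}_0) + h_R\,\partial^\mu\text{\textgreek{y}}_0\partial_\mu\text{\textgreek{y}}_0 - \tfrac12(\square_g h_R)\text{\textgreek{y}}_0^2$ plus source terms. The crucial point is that $h_R\,\partial^\mu\text{\textgreek{y}}_0\partial_\mu\text{\textgreek{y}}_0$ is a pointwise, algebraic quantity: using the splitting $g^{-1}=a\,T\otimes T + T\otimes b + b\otimes T + \mathcal{C}$ and the quantitative lower bound on $\mathcal{C}$ supplied by Assumption~3, together with the red-shift coercivity from Assumption~2 where $T$ may fail to be timelike, one obtains $e\,K^N(\text{\textgreek{y}}_0)+h_R\,\partial^\mu\text{\textgreek{y}}_0\partial_\mu\text{\textgreek{y}}_0\ge c\,h_R|\nabla_{\text{\textgreek{S}}}\text{\textgreek{y}}_0|^2 - C\,h_R|T\text{\textgreek{y}}_0|^2$ \emph{everywhere}, with no reference to the geodesic flow. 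A Poincar\'e inequality then bootstraps from $|\nabla_{\text{\textgreek{S}}}\text{\textgreek{y}}_0|^2$ to $|\text{\textgreek{y}}_0|^2$ (and hence to the full energy), and smallness of $\text{\textgreek{w}}_0$ absorbs the $|T\text{\textgreek{y}}_0|^2$ contribution. This is the step your argument would need in place of the claimed bulk positivity of $K^{f\partial_r}$; without it, the estimate cannot close for arbitrary trapped sets.
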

\begin{proof}
It suffices to establish (\ref{eq:LowFrequencies}) for $Re(\text{\textgreek{y}}_{0})$
and $Im(\text{\textgreek{y}}_{0})$ in place of $\text{\textgreek{y}}_{0}$,
since then by adding the two inequalities one arrives immediately
at the statement for $\text{\textgreek{y}}_{0}$. Hence, we can assume
without loss of generality that $\text{\textgreek{y}}_{0}$ is real
valued. 

Without loss of generality, we can also assume that $R$ is large
enough in terms of the geometry of $(\mathcal{D},g)$, since we can
always choose an even larger $R$ than the one given in the statement. 

We will need to fix a $C^{1}$ and piecewise $C^{2}$ function $h:[0,+\infty)\rightarrow[0,1]$
with the following properties for some $l>2$:

\begin{enumerate}

\item For $\{r\le1\}$: $h\equiv1$

\item For $\{1<r<2\}$: $h^{\prime\prime}<0$ and $h'<0$

\item For $\{2\le r\le l-c\}$: $h(r)=\frac{3}{2r}$ for some $0<c\ll1$

\item For $\{l-c<r<l+c\}$: $h^{\prime\prime}<0$, $h'<0$

\item For $\{l+c<r<2l\}$: $h(r)=\frac{3}{2l^{3}}(r-2l)^{2}$

\item For $\{r\ge2l\}$: $h\equiv0$

\end{enumerate}

\noindent The existence of such a function is established as follows:
First of all, the existence of a function satisfying the three first
assumptions on $[0,l-c]$ and moreover satisfying $h(r)=\frac{3}{2r}$
on the whole of $\{2\le r\le l\}$ is obvious, since the only obstruction
posed by the conditions $h^{\prime\prime}<0$ on $(1,2)$ and $h'(1)=0,h'(2)=-\frac{3}{8}$
is 
\[
h'(2)\cdot(2-1)<h(2)-h(1).
\]
 And this condition is indeed verified, since $-\frac{3}{8}<-\frac{1}{4}$.
Extending $h$ on the whole of $[0,2l]$ by the expression $h(r)=\frac{3}{2l^{3}}(r-2l)^{2}$
on $(l,2l]$, we infer that 
\[
\lim_{r\rightarrow l^{+}}h(r)=\frac{3}{2l}=\lim_{r\rightarrow l^{-}}h(r)
\]
 and 
\[
\lim_{r\rightarrow l^{+}}h'(r)=-\frac{3}{l^{2}}<-\frac{3}{2l^{2}}=\lim_{r\rightarrow l^{-}}h'(r).
\]
Thus, we conclude that $h$ can be mollified around $r=l$ so that
it is $C^{1}$ and concave around this point, and moreover $h'<0$
in the region of the mollification. Finally, the construction of $h$
is completed by extending $h$ to be identically $0$ on $[2l,+\infty)$
(notice that $h$ remains $C^{1}$ under this extension, since $h(2l)=h'(2l)=0$)

We can now define the function $h_{R}:\mathcal{D}\rightarrow[0,1]$
through the composition $h_{R}\doteq h\circ(\frac{r}{R})$. Using
this function, we can then define the current

\begin{equation}
J_{\text{\textgreek{m}}}=e\cdot T_{\text{\textgreek{m}\textgreek{n}}}(\text{\textgreek{y}}_{0})N^{\text{\textgreek{n}}}+h_{R}\text{\textgreek{y}}_{0}\partial_{\text{\textgreek{m}}}\text{\textgreek{y}}_{0}-\frac{1}{2}\partial_{\text{\textgreek{m}}}h_{R}\cdot\text{\textgreek{y}}_{0}^{2}
\end{equation}
 for a small parameter $e>0$ to be specified later.%
\footnote{Let us remark that in the case where $\mathcal{H}^{+}=\emptyset$,
the proof would follow by using the current $J_{\text{\textgreek{m}}}=h_{R}\text{\textgreek{y}}_{0}\partial_{\text{\textgreek{m}}}\text{\textgreek{y}}_{0}-\frac{1}{2}\partial_{\text{\textgreek{m}}}h_{R}\cdot\text{\textgreek{y}}_{0}^{2}$.%
} Since $\text{\textgreek{y}}_{0}$ satisfies the equation $\square_{g}\text{\textgreek{y}}_{0}=F_{0}$,
we compute (using the abstract index notation):

\begin{equation}
\nabla^{\text{\textgreek{m}}}J_{\text{\textgreek{m}}}=e\cdot K^{N}(\text{\textgreek{y}}_{0})+e\cdot F_{0}\cdot N\text{\ensuremath{\text{\textgreek{y}}_{0}}}+h_{R}\cdot\partial^{\text{\textgreek{m}}}\text{\textgreek{y}}_{0}\cdot\partial_{\text{\textgreek{m}}}\text{\textgreek{y}}_{0}-\frac{1}{2}(\square_{g}h_{R})\text{\textgreek{y}}_{0}^{2}+h_{R}\cdot\text{\textgreek{y}}_{0}\cdot F_{0}.\label{eq:Divergence}
\end{equation}

We will integrate (\ref{eq:Divergence}) over $\mathcal{R}(0,t^{*})$.
Since $\text{\textgreek{y}}_{0}$ is supported in some cylinder of
the form $\{r\lesssim R_{sup}+t^{*}\}$ (see the remark in Section
(\ref{sub:Remark})), we need not worry about boundary terms occuring
near spacelike infinty when applying the divergence theorem in $\mathcal{R}(0,t^{*})$.
After applying the divergence theorem on $\mathcal{R}(0,t^{*})$,
we obtain:

\begin{multline}
\int_{\mathcal{H}^{+}\cap\mathcal{R}(0,t^{*})}J_{\text{\textgreek{m}}}n_{\mathcal{H}^{+}}^{\text{\textgreek{m}}}+\int_{\{t=t^{*}\}}J_{\text{\textgreek{m}}}n^{\text{\textgreek{m}}}+e\int_{\mathcal{R}(0,t^{*})}K^{N}(\text{\textgreek{y}}_{0})+\int_{\mathcal{R}(0,t^{*})}h_{R}\partial^{\text{\textgreek{m}}}\text{\textgreek{y}}_{0}\cdot\partial_{\text{\textgreek{m}}}\text{\textgreek{y}}_{0}-\frac{1}{2}\int_{\mathcal{R}(0,t^{*})}(\square_{g}h_{R})\text{\textgreek{y}}_{0}^{2}=\\
=\int_{\{t=0\}}J_{\text{\textgreek{m}}}n^{\text{\textgreek{m}}}-e\int_{\mathcal{R}(0,t^{*})}F_{0}\cdot N\text{\textgreek{y}}_{0}-\int_{\mathcal{R}(0,t^{*})}h_{R}\cdot\text{\textgreek{y}}_{0}\cdot F_{0},\label{eq:IdentityNearHorizon}
\end{multline}
where $n_{\mathcal{H}^{+}}^{\text{\textgreek{m}}}$ denotes a future
directed $T$-invariant generator of $\mathcal{H}^{+}\backslash(\mathcal{H}^{+}\cap\mathcal{H}^{-})$
which coincides with the horizon Killing field $V$ on $\mathcal{H}\cap J^{+}(\text{\textgreek{S}})$.
For the volume form involved in this integration, see Section \ref{sec:Notational-conventions}
for our conventions regarding integration over null hypersurfaces.

Before proceeding to the extraction of an inequality from identity
(\ref{eq:IdentityNearHorizon}), we should note the following: Having
identified $\mathcal{D}\backslash\mathcal{H}^{-}$ with $\mathbb{R}\times(\text{\textgreek{S}}\cap\mathcal{D})$
through the flow of $T$ (see Section \ref{sub:CoordinateCharts}),
the inverse $g^{-1}$ of the metric $g$ (that is to say, $g^{-1}$
is the induced metric on the cotangent bundle $T^{*}\mathcal{M}$)
splits as

\begin{equation}
g^{-1}=a\cdot T\otimes T+T\otimes b+b\otimes T+\mathcal{C},\label{eq:InverseMetric}
\end{equation}
 where, due to our assumption \hyperref[Assumption 3]{3} on the smallness
of the ergoregion, the following conditions are satisfied on $\{t\ge0\}\subset\mathcal{D}$:

\begin{itemize}

\item $a$ is a real bounded function.

\item $b$ is a vector field tangential to the hypersurfaces $\text{\textgreek{S}}_{\text{\textgreek{t}}}$
and uniformly bounded with respect to the induced Riemannian metric
$g_{\text{\textgreek{S}}}$ on these hypersurfaces.

\item $\mathcal{C}$ is a symmetric $(0,2)$-tensor on the hypersurfaces
$\text{\textgreek{S}}_{\text{\textgreek{t}}}$, uniformly bounded
when measured with $g_{\text{\textgreek{S}}}$, satisfying the lower
bound $\mathcal{C}\ge-C\text{\textgreek{e}}\cdot g_{\text{\textgreek{S}}}^{-1}$
in the region $\{r\le\frac{3r_{0}}{4}\}$and $\mathcal{C}\ge c\cdot g_{\text{\textgreek{S}}}^{-1}$
in the region $\{r\ge\frac{3r_{0}}{4}\}$ for some $C,c>0$ (here
we have used the notation that $A\ge B$ for two symmetric $(0,2)$-tensors
if $A(\text{\textgreek{w}},\text{\textgreek{w}})\ge B(\text{\textgreek{w}},\text{\textgreek{w}})$
for all 1-forms $\text{\textgreek{w}}$).

\end{itemize}

Since $K^{N}(\cdot)\gtrsim J_{\text{\textgreek{m}}}^{N}(\cdot)n^{\text{\textgreek{m}}}$
on $\{r\le r_{0}\}$ (in view of Assumption \hyperref[Assumption 2]{2}),
we can bound from below on $\text{\textgreek{S}}_{t}\cap\{r\le r_{0}\}$
for any $t\ge0$ and any smooth function $v$ on $\{t\ge0\}$ provided
$e$ is chosen (independently of $t$) so that $e\gtrsim\text{\textgreek{e}}$:
\begin{equation}
e\cdot K^{N}(v)+\mathcal{C}(\partial v,\partial v)\ge0.\label{eq:FirstBoundEpsilon}
\end{equation}
Moreover, since $K^{N}(\cdot)\lesssim J_{\text{\textgreek{m}}}^{N}(\cdot)n^{\text{\textgreek{m}}}$
on $\{r_{0}\le r\le2r_{0}\}$ and $K^{N}$ vanishes anywhere else,
we can bound on $\text{\textgreek{S}}_{t}\cap\{r\ge r_{0}\}$: 
\begin{equation}
e\cdot K^{N}(v)+\mathcal{C}(\partial v,\partial v)+(Tv)^{2}\ge0\label{eq:SecondBoundEpsilon}
\end{equation}
 provided $e$ is chosen sufficiently small depending only on the
geometric aspects of our spacetime for $\{r\ge r_{0}\}$. This assumption
is compatible with the assumption $\text{\textgreek{e}}\lesssim e$
that we imposed for (\ref{eq:FirstBoundEpsilon}) to hold, provided
that $\text{\textgreek{e}}$ is small enough in terms of the geometry
of the spacetime region $\{r\ge r_{0}\}$. 

All in all, if \textgreek{e} is small enough in terms of the geometry
of $\{r\ge r_{0}\}$, we infer that we can choose a suitable $e$
such that for any smooth function $v$ on $\{t\ge0\}$ we can bound
(from (\ref{eq:FirstBoundEpsilon}) and (\ref{eq:SecondBoundEpsilon})):
\[
e\cdot K^{N}(v)+\mathcal{C}(\partial v,\partial v)+(Tv)^{2}\ge c(e)\cdot J_{\text{\textgreek{m}}}^{N}(v)n^{\text{\textgreek{m}}}
\]
 everywhere on $\{t\ge0\}$. This inequality implies, due to (\ref{eq:InverseMetric}),
that for some constant $C>0$ depending on the bounds for $a,b$ in
(\ref{eq:InverseMetric}) we can estimate from below (using the abstract
index notation): 
\begin{equation}
e\cdot K^{N}(v)+g^{\text{\textgreek{m}\textgreek{n}}}\partial_{\text{\textgreek{m}}}v\cdot\partial_{\text{\textgreek{n}}}v+C\cdot(Tv)^{2}\ge c(e)\cdot J_{\text{\textgreek{m}}}^{N}(v)n^{\text{\textgreek{m}}}\label{eq:First bound near horizon}
\end{equation}
everywhere on $\{t\ge0\}$. Therefore, from now on we can assume that
that the parameter $e$ has been fixed.

Since $h_{R}\equiv1$ near $\mathcal{H}^{+}$ and $h_{R}\ge0$ everywhere
(and $K^{N}$ vanishes for $r\ge2r_{0}$), we can bound due to (\ref{eq:First bound near horizon})
(denoting with $\nabla_{\text{\textgreek{S}}}\text{\textgreek{y}}_{0}$
the gradient of $\text{\textgreek{y}}_{0}$ on the $(\text{\textgreek{S}}_{t},g_{\text{\textgreek{S}}})$
hypersurfaces):

\begin{equation}
eK^{N}(\text{\textgreek{y}}_{0})+h_{R}\partial^{\text{\textgreek{m}}}\text{\textgreek{y}}_{0}\cdot\partial_{\text{\textgreek{m}}}\text{\textgreek{y}}_{0}\ge c\cdot h_{R}|\nabla_{\text{\textgreek{S}}}\text{\textgreek{y}}_{0}|_{g_{\text{\textgreek{S}}}}^{2}-C\cdot h_{R}|T\text{\textgreek{y}}_{0}|^{2}.\label{eq:AnotherIntermediateBound}
\end{equation}
Returning to the identity (\ref{eq:IdentityNearHorizon}), in view
of (\ref{eq:AnotherIntermediateBound}) we can bound:

\begin{multline}
\int_{H^{+}\cap\mathcal{R}(0,t^{*})}J_{\text{\textgreek{m}}}n_{H^{+}}^{\text{\textgreek{m}}}+\int_{\mathcal{R}(0,t^{*})}h_{R}\cdot|\nabla_{\text{\textgreek{S}}}\text{\textgreek{y}}_{0}|_{g_{\text{\textgreek{S}}}}^{2}-C\cdot\int_{\mathcal{R}(0,t^{*})}h_{R}\cdot|T\text{\textgreek{y}}_{0}|^{2}-\frac{1}{2}\int_{\mathcal{R}(0,t^{*})}(\square_{g}h_{R})\text{\textgreek{y}}_{0}^{2}\le\\
\le-e\int_{\mathcal{R}(0,t^{*})}F_{0}\cdot N\text{\textgreek{y}}_{0}-\int_{\mathcal{R}(0,t^{*})}h_{R}\cdot\text{\textgreek{y}}_{0}\cdot F_{0}+\int_{t=0}J_{\text{\textgreek{m}}}n^{\text{\textgreek{m}}}-\int_{t=t^{*}}J_{\text{\textgreek{m}}}n^{\text{\textgreek{m}}}.\label{eq:IdentityNearHorizon2}
\end{multline}

Along $\mathcal{H}^{+}$, the current $J_{\text{\textgreek{m}}}$
takes the form 
\begin{equation}
J_{\text{\textgreek{m}}}=e\cdot T_{\text{\textgreek{m}\textgreek{n}}}(\text{\ensuremath{\text{\textgreek{y}}_{0}}})N^{\text{\textgreek{n}}}+\text{\textgreek{y}}_{0}\cdot\partial_{\text{\textgreek{m}}}\text{\textgreek{y}}_{0}=e\cdot J_{\text{\textgreek{m}}}^{N}(\text{\textgreek{y}}_{0})+\frac{1}{2}\partial_{\text{\textgreek{m}}}(\text{\textgreek{y}}_{0}^{2}).
\end{equation}
Therefore, we compute

\begin{align}
-\int_{\mathcal{H}^{+}\cap\mathcal{R}(0,t^{*})}J_{\text{\textgreek{m}}}n_{\mathcal{H}^{+}}^{\text{\textgreek{m}}} & =-e\int_{\mathcal{H}^{+}\cap\{0\le t\le t^{*}\}}J_{\text{\textgreek{m}}}^{N}(\text{\textgreek{y}}_{0})n_{\mathcal{H}^{+}}^{\text{\textgreek{m}}}-\frac{1}{2}\int_{\mathcal{H}^{+}\cap\{0\le t\le t^{*}\}}n_{\mathcal{H}^{+}}(\text{\textgreek{y}}_{0}^{2})\le\label{eq:FirstHorizonEstimate}\\
 & \le-\frac{1}{2}\int_{\mathcal{H}^{+}\cap\{0\le t\le t^{*}\}}n_{\mathcal{H}^{+}}(\text{\textgreek{y}}_{0}^{2})=\nonumber \\
 & =\frac{1}{2}\big(\int_{\mathcal{H}^{+}\cap\{t=0\}}\text{\textgreek{y}}_{0}^{2}-\int_{\mathcal{H}^{+}\cap\{t=t^{*}\}}\text{\textgreek{y}}_{0}^{2}\big),\nonumber 
\end{align}
the last equality following from the definition of the volume element
on $\mathcal{H}^{+}$ with respect to $n_{\mathcal{H}^{+}}$, combined
with the fact that $n_{\mathcal{H}^{+}}$ was assumed to coincide
with the horizon Killing field $V$ on $\mathcal{H}^{+}\cap J^{+}(\text{\textgreek{S}})$.
Due to a trace theorem, a Hardy inequality and Lemma \ref{lem:BoundednessPsiK},
the following estimate holds for each $\text{\textgreek{t}}\in[0,t^{*}]$:
\begin{equation}
\int_{\{t=\text{\textgreek{t}}\}\cap\mathcal{H}^{+}}\text{\textgreek{y}}_{0}^{2}\le C\cdot\int_{\{t=\text{\textgreek{t}}\}\cap\{r\le1\}}(\text{\textgreek{y}}_{0}^{2}+|\nabla_{\text{\textgreek{S}}}\text{\textgreek{y}}_{0}|_{g_{\text{\textgreek{S}}}}^{2})\le C\cdot\int_{t=\text{\textgreek{t}}}J_{\text{\textgreek{m}}}^{N}(\text{\textgreek{y}}_{0})n^{\text{\textgreek{m}}}\le C(\text{\textgreek{w}}_{0})\cdot\int_{t=0}J_{\text{\textgreek{m}}}^{N}(\text{\textgreek{y}})n^{\text{\textgreek{m}}}.
\end{equation}
 Hence, we can bound in view of (\ref{eq:FirstHorizonEstimate}):

\begin{equation}
-\int_{\mathcal{H}^{+}\cap\mathcal{R}(0,t^{*})}J_{\text{\textgreek{m}}}n_{\mathcal{H}^{+}}^{\text{\textgreek{m}}}\le C(\text{\textgreek{w}}_{0})\cdot\int_{t=0}J_{\text{\textgreek{m}}}^{N}(\text{\textgreek{y}})n^{\text{\textgreek{m}}}.\label{eq:HorizonBoundaryTerm}
\end{equation}

We will now estimate the boundary terms at $\{t=0,t^{*}\}$ of (\ref{eq:IdentityNearHorizon2}).
Due to Lemma \ref{lem:BoundednessPsiK} (using also a Hardy inequalty
for the 0-th order terms, since $h_{R}$ and $\partial h_{R}$ decay
faster than $C(R)\cdot r^{-2}$), we can bound: 

\begin{equation}
\int_{\{t=const\}}J_{\text{\textgreek{m}}}(\text{\ensuremath{\text{\textgreek{y}}_{0}}})n^{\text{\textgreek{m}}}\le C(\text{\textgreek{w}}_{0},R)\int_{\{t=0\}}J_{\text{\textgreek{m}}}^{N}(\text{\textgreek{y}})n^{\text{\textgreek{m}}}.\label{eq:BoundaryTerms}
\end{equation}

By substituting estimates (\ref{eq:HorizonBoundaryTerm}) and (\ref{eq:BoundaryTerms})
into (\ref{eq:IdentityNearHorizon2}), we infer:

\begin{multline}
\int_{\mathcal{R}(0,t^{*})}h_{R}|\nabla_{\text{\textgreek{S}}}\text{\textgreek{y}}_{0}|_{g_{\text{\textgreek{S}}}}^{2}-\frac{1}{2}\int_{\mathcal{R}(0,t^{*})}(\square_{g}h_{R})\cdot\text{\textgreek{y}}_{0}^{2}\lesssim\int_{\mathcal{R}(0,t^{*})}h_{R}\cdot|T\text{\textgreek{y}}_{0}|^{2}+\\
-e\int_{\mathcal{R}(0,t^{*})}F_{0}\cdot N\text{\textgreek{y}}_{0}-\int_{\mathcal{R}(0,t^{*})}h_{R}\cdot\text{\textgreek{y}}_{0}\cdot F_{0}+C(\text{\textgreek{w}}_{0},R)\int_{t=0}J_{\text{\textgreek{m}}}^{N}(\text{\textgreek{y}})n^{\text{\textgreek{m}}}.\label{eq:Almost reached Igor's argument}
\end{multline}

We will now proceed to estimate the term $\int_{\mathcal{R}(0,t^{*})}(\square_{g}h_{R})\cdot\text{\textgreek{y}}_{0}^{2}$
in (\ref{eq:Almost reached Igor's argument}). Due to the properties
of $h$, we have $\square_{g}h_{R}\equiv0$ for $r\le R$. We can
assume without loss of generality that $R\ge R_{1}$. Then, in view
of the expression (\ref{eq:metric}) of $g$ in the $(t,r,\text{\textgreek{sv}})$
coordinate chart for $r\ge R_{1}$, we compute: 
\begin{equation}
\square_{g}h_{R}=\frac{1}{\sqrt{-det(g)}}\partial_{r}\big(g^{rr}\cdot\sqrt{-det(g)}\partial_{r}h_{R}\big)=(1+O(r^{-1}))\Big(\partial_{r}^{2}h_{R}+\frac{d-1}{r}\partial_{r}h_{R}\Big)+O(r^{-1-a})\cdot\partial_{r}h_{R}.\label{eq:SquareOfH}
\end{equation}
 Thus, we can estimate in $\{R<r<2R\}$ (in view again of the properties
of the constructed function $h$): 
\[
\square_{g}h_{R}<-c\cdot R^{-2},
\]
 provided $R\gg1$. For $2R<r<lR-c$ we calculate

\begin{equation}
\square_{g}h_{R}=\frac{3}{2}\Big\{(1+O(r^{-1}))\Big(2Rr^{-3}-\frac{d-1}{r}Rr^{-2}\Big)+O(r^{-1-a})\cdot R\cdot r^{-2}\Big\}\le C\cdot\frac{R}{r^{3+a}}.\label{eq:SquareOfHAway}
\end{equation}
Similarly, for $\{lR-c<r<lR+c\}$ we can readily bound 
\begin{equation}
\square_{g}h_{R}\le0,
\end{equation}
while for $\{lR+c<r<2lR\}$ we can estimate (provided again that $R\gg1$)
\begin{equation}
\square_{g}h_{R}\le\frac{C}{l^{3}R^{2}}.
\end{equation}
Since $r\sim l\cdot R$ in this interval, we can estimate for any
given (fixed) $0<\text{\textgreek{b}}<1$: 
\begin{equation}
\square_{g}h_{R}\le C\cdot\big(\frac{(lR)^{\text{\textgreek{b}}}}{l}\big)r^{-2-\text{\textgreek{b}}}.\label{eq:BoundBoxFarAwayRegion}
\end{equation}

It is at this point that we will determine the value of $m$ in terms
of $R$: We set $l=R^{\frac{\text{\textgreek{b}}}{\text{1-\textgreek{b}}}}$,
and with this choice we have $\frac{(lR)^{\text{\textgreek{b}}}}{l}=1$.
With this choice, therefore, for $lR+c<r<2lR$ we can estimate from
(\ref{eq:BoundBoxFarAwayRegion}): 
\begin{equation}
\square_{g}h_{R}\le C\cdot r^{-2-\text{\textgreek{b}}}.
\end{equation}
Of course, for $r>2l$ we have 
\begin{equation}
\square_{g}h_{R}\equiv0.
\end{equation}

Returning to (\ref{eq:Almost reached Igor's argument}) and using
the above estimates for $\square h_{R}$, we can bound 
\begin{equation}
\begin{split}\int_{\mathcal{R}(0,t^{*})}h_{R}|\nabla_{\text{\textgreek{S}}}\text{\ensuremath{\text{\textgreek{y}}_{0}}}|_{g_{\text{\textgreek{S}}}}^{2}+ & c\cdot\int_{\mathcal{R}(0,t^{*})\cap\{R\le r\le2R\}}\frac{\text{\textgreek{y}}_{0}^{2}}{R^{2}}\le\\
\le & C\cdot\int_{\mathcal{R}(0,t^{*})}h_{R}\cdot|T\text{\textgreek{y}}_{0}|^{2}+C\cdot\int_{\mathcal{R}(0,t^{*})\cap\{2R\le r\le lR\}}\frac{R}{r^{3+a}}\text{\textgreek{y}}_{0}^{2}+C\int_{\mathcal{R}(0,t^{*})\cap\{lR\le r\le2lR\}}\frac{\text{\textgreek{y}}_{0}^{2}}{r^{2+\text{\textgreek{b}}}}+\\
 & +C\Big|\int_{\mathcal{R}(0,t^{*})}F_{0}\cdot N\text{\textgreek{y}}_{0}\Big|+C\cdot\Big|\int_{\mathcal{R}(0,t^{*})}h_{R}\cdot\text{\textgreek{y}}_{0}\cdot F_{0}\Big|+C(\text{\textgreek{w}}_{0},R)\int_{t=0}J_{\text{\textgreek{m}}}^{N}(\text{\textgreek{y}})n^{\text{\textgreek{m}}},
\end{split}
\label{eq:Almost reached Igor's argument-1-1-1}
\end{equation}
 or, after adding a multiple of $\int_{\mathcal{R}(0,t^{*})}h_{R}\cdot|T\text{\textgreek{y}}_{0}|^{2}$
on both sides: 
\begin{equation}
\begin{split}\int_{\mathcal{R}(0,t^{*})}h_{R}J_{\text{\textgreek{m}}}^{N}(\text{\textgreek{y}}_{0})n^{\text{\textgreek{m}}}+ & c\cdot\int_{\mathcal{R}(0,t^{*})\cap\{R\le r\le2R\}}\frac{\text{\textgreek{y}}_{0}^{2}}{R^{2}}\le\\
\le & C\cdot\int_{\mathcal{R}(0,t^{*})}h_{R}\cdot|T\text{\textgreek{y}}_{0}|^{2}+C\cdot\int_{\mathcal{R}(0,t^{*})\cap\{2R\le r\le lR\}}\frac{R}{r^{3+a}}\text{\textgreek{y}}_{0}^{2}+C\int_{\mathcal{R}(0,t^{*})\cap\{lR\le r\le2lR\}}\frac{\text{\textgreek{y}}_{0}^{2}}{r^{2+\text{\textgreek{b}}}}+\\
 & +C\Big|\int_{\mathcal{R}(0,t^{*})}F_{0}\cdot N\text{\textgreek{y}}_{0}\Big|+C\Big|\int_{\mathcal{R}(0,t^{*})}h_{R}\cdot\text{\textgreek{y}}_{0}\cdot F_{0}\Big|+C(\text{\textgreek{w}}_{0},R)\int_{t=0}J_{\text{\textgreek{m}}}^{N}(\text{\textgreek{y}})n^{\text{\textgreek{m}}}.
\end{split}
\label{eq:Almost reached Igor's argument-1}
\end{equation}

We will dispense with the third term of the right hand side of (\ref{eq:Almost reached Igor's argument-1})
using Lemma \ref{lem:ImprovedMorawetz}. According to \ref{lem:ImprovedMorawetz}
for $\text{\textgreek{h}}=\min\{\frac{1}{2}\text{\textgreek{b}},a\}$
and $R_{c}=2lR=2R^{\frac{1}{1-\text{\textgreek{b}}}}$, we can bound
provided $R\gg1$:

\begin{equation}
\int_{\mathcal{R}(0,t^{*})\cap\{lR\le r\le2lR\}}\frac{\text{\textgreek{y}}_{0}^{2}}{r^{2+\frac{1}{2}\text{\textgreek{b}}}}\le C\int_{\mathcal{R}(0,t^{*})\cap\{R\le r\le2R\}}\Big(J_{\text{\textgreek{m}}}^{N}(\text{\textgreek{y}}_{0})n^{\text{\textgreek{m}}}+R^{-2}|\text{\textgreek{y}}_{0}|^{2}\Big)+C(\text{\textgreek{w}}_{0},R)\int_{t=0}J_{\text{\textgreek{m}}}^{N}(\text{\textgreek{y}})n^{\text{\textgreek{m}}}.\label{eq:BoundByNewMethod-1}
\end{equation}
Substituting (\ref{eq:BoundByNewMethod-1}) in (\ref{eq:Almost reached Igor's argument-1})
we have:
\begin{equation}
\begin{split}\int_{\mathcal{R}(0,t^{*})}h_{R} & J_{\text{\textgreek{m}}}^{N}(\text{\textgreek{y}}_{0})n^{\text{\textgreek{m}}}+c\cdot\int_{\mathcal{R}(0,t^{*})\cap\{R\le r\le2R\}}\frac{\text{\textgreek{y}}_{0}^{2}}{R^{2}}\le\\
\le & C\cdot\int_{\mathcal{R}(0,t^{*})}h_{R}\cdot|T\text{\textgreek{y}}_{0}|^{2}+C\cdot\int_{\mathcal{R}(0,t^{*})\cap\{2R\le r\le lR\}}\frac{R}{r^{3+a}}\text{\textgreek{y}}_{0}^{2}+CR^{-\frac{\text{\textgreek{b}}}{2}}\int_{\{R\le r\le2R\}\cap\mathcal{R}(0,t^{*})}\Big(J_{\text{\textgreek{m}}}^{N}(\text{\textgreek{y}}_{0})n^{\text{\textgreek{m}}}+R^{-2}\text{\textgreek{y}}_{0}^{2}\Big)+\\
 & +C\Big|\int_{\mathcal{R}(0,t^{*})}F_{0}\cdot N\text{\textgreek{y}}_{0}\Big|+C\cdot\Big|\int_{\mathcal{R}(0,t^{*})}h_{R}\cdot\text{\textgreek{y}}_{0}\cdot F_{0}\Big|+C(\text{\textgreek{w}}_{0},R)\int_{t=0}J_{\text{\textgreek{m}}}^{N}(\text{\textgreek{y}})n^{\text{\textgreek{m}}}.
\end{split}
\label{eq:BeforeTheTterms-3-1}
\end{equation}
Thus, since $h_{R}\ge\frac{1}{2}$ on $\{r\le2R\}$, if $R$ is large
enough, the third term of the right hand side of (\ref{eq:BeforeTheTterms})
can be absorbed by the left hand side, yielding
\begin{equation}
\begin{split}\int_{\mathcal{R}(0,t^{*})}h_{R}J_{\text{\textgreek{m}}}^{N}(\text{\textgreek{y}}_{0})n^{\text{\textgreek{m}}}+ & c\cdot\int_{\mathcal{R}(0,t^{*})\cap\{R\le r\le2R\}}\frac{\text{\textgreek{y}}_{0}^{2}}{R^{2}}\le\\
\le & C\cdot\int_{\mathcal{R}(0,t^{*})}h_{R}\cdot|T\text{\textgreek{y}}_{0}|^{2}+C\cdot\int_{\mathcal{R}(0,t^{*})\cap\{2R\le r\le lR\}}\frac{R}{r^{3+a}}\text{\textgreek{y}}_{0}^{2}+\\
 & +C\Big|\int_{\mathcal{R}(0,t^{*})}F_{0}\cdot N\text{\textgreek{y}}_{0}\Big|+C\cdot\Big|\int_{\mathcal{R}(0,t^{*})}h_{R}\cdot\text{\textgreek{y}}_{0}\cdot F_{0}\Big|+C(\text{\textgreek{w}}_{0},R)\int_{t=0}J_{\text{\textgreek{m}}}^{N}(\text{\textgreek{y}})n^{\text{\textgreek{m}}}.
\end{split}
\label{eq:BeforeTheSecondTermOfTheRHS}
\end{equation}

We will now absorb the second term of the righ hand side of (\ref{eq:BeforeTheSecondTermOfTheRHS})
by the left hand side through the use of Lemma \ref{lem:MorawetzRefinedboundary}
if $R\gg1$. According to Lemma \ref{lem:MorawetzRefinedboundary}
for any $0<\text{\textgreek{h}}<\text{\textgreek{a}}$, (if $R\gg1$):

\begin{equation}
\int_{\mathcal{R}(0,t^{*})\cap\{r\ge2R\}}\frac{1}{r^{3+\text{\textgreek{h}}}}\text{\textgreek{y}}_{0}^{2}\le C(\text{\textgreek{h}})\cdot\int_{\mathcal{R}(0,t^{*})\cap\{R\le r\le2R\}}\big\{\frac{1}{R}J_{\text{\textgreek{m}}}^{N}(\text{\textgreek{y}}_{0})n^{\text{\textgreek{m}}}+\frac{1}{R^{3}}|\text{\textgreek{y}}_{0}|^{2}\big\}+C(\text{\textgreek{h}},\text{\textgreek{w}}_{0})\int_{t=0}J_{\text{\textgreek{m}}}^{N}(\text{\textgreek{y}})n^{\text{\textgreek{m}}},
\end{equation}
 and hence 
\begin{align}
\int_{\mathcal{R}(0,t^{*})\cap\{2R\le r\le lR\}}\frac{R}{r^{3+a}}\text{\textgreek{y}}_{0}^{2} & \le C\cdot\int_{\mathcal{R}(0,t^{*})\cap\{r\ge2R\}}\frac{R^{1-a+\text{\textgreek{h}}}}{r^{3+\text{\textgreek{h}}}}\text{\textgreek{y}}_{0}^{2}\label{eq:FromSimpleMorawetz}\\
 & \le C\cdot\int_{\mathcal{R}(0,t^{*})\cap\{R\le r\le2R\}}\big\{ R^{-a+\text{\textgreek{h}}}J_{\text{\textgreek{m}}}^{N}(\text{\textgreek{y}}_{0})n^{\text{\textgreek{m}}}+R^{-2-a+\text{\textgreek{h}}}|\text{\textgreek{y}}_{0}|^{2}\big\}+C(\text{\textgreek{w}}_{0},R)\int_{t=0}J_{\text{\textgreek{m}}}^{N}(\text{\textgreek{y}})n^{\text{\textgreek{m}}}.\nonumber 
\end{align}
Thus, from (\ref{eq:BeforeTheSecondTermOfTheRHS}) and (\ref{eq:FromSimpleMorawetz})
for $\text{\textgreek{h}}=\frac{a}{2}$ we deduce that: 
\begin{equation}
\begin{split}\int_{\mathcal{R}(0,t^{*})}h_{R}J_{\text{\textgreek{m}}}^{N}(\text{\textgreek{y}}_{0})n^{\text{\textgreek{m}}}+ & c\cdot\int_{\mathcal{R}(0,t^{*})\cap\{R\le r\le2R\}}R^{-2}\text{\textgreek{y}}_{0}^{2}\le\\
\le & C\cdot\int_{\mathcal{R}(0,t^{*})}h_{R}\cdot|T\text{\textgreek{y}}_{0}|^{2}+C\cdot\int_{\mathcal{R}(0,t^{*})\cap\{R\le r\le2R\}}\big(R^{-\frac{a}{2}}J_{\text{\textgreek{m}}}^{N}(\text{\textgreek{y}}_{0})n^{\text{\textgreek{m}}}+R^{-2-\frac{a}{2}}\text{\textgreek{y}}_{0}^{2}\big)+\\
 & +C\Big|\int_{\mathcal{R}(0,t^{*})}F_{0}\cdot N\text{\textgreek{y}}_{0}\Big|+C\cdot\Big|\int_{\mathcal{R}(0,t^{*})}h_{R}\cdot\text{\textgreek{y}}_{0}\cdot F_{0}\Big|+C(\text{\textgreek{w}}_{0},R)\int_{t=0}J_{\text{\textgreek{m}}}^{N}(\text{\textgreek{y}})n^{\text{\textgreek{m}}},
\end{split}
\label{eq:Almost reached Igor's argument-1-1}
\end{equation}
which, after absorbing the second term of the right hand side into
the left hand side, yields:

\begin{multline}
\int_{\mathcal{R}(0,t^{*})}h_{R}\cdot J_{\text{\textgreek{m}}}^{N}(\text{\textgreek{y}}_{0})n^{\text{\textgreek{m}}}+c\cdot\int_{\mathcal{R}(0,t^{*})\cap\{R\le r\le2R\}}R^{-2}\text{\textgreek{y}}_{0}^{2}\le C\cdot\int_{\mathcal{R}(0,t^{*})}h_{R}\cdot|T\text{\textgreek{y}}_{0}|^{2}+\\
+C\Big|\int_{\mathcal{R}(0,t^{*})}F_{0}\cdot N\text{\textgreek{y}}_{0}\Big|+C\cdot\Big|\int_{\mathcal{R}(0,t^{*})}h_{R}\cdot\text{\textgreek{y}}_{0}\cdot F_{0}\Big|+C(\text{\textgreek{w}}_{0},R)\int_{t=0}J_{\text{\textgreek{m}}}^{N}(\text{\textgreek{y}})n^{\text{\textgreek{m}}}.\label{eq:BeforeTheTterms}
\end{multline}

Fixing a smooth $\text{\textgreek{q}}:[0,+\infty)\rightarrow[0,1]$
such that $\text{\textgreek{q}}\equiv1$ on $r\le1$ and $\text{\textgreek{q}}\equiv0$
on $r\ge2$, by applying a Poincare inequality for the compactly supported
function $(\text{\textgreek{q}}\circ(\frac{r}{R}))\cdot\text{\textgreek{y}}_{0}$
we obtain
\begin{align}
\int_{\mathcal{R}(0,t^{*})\cap\{r\le R\}}\text{\textgreek{y}}_{0}^{2} & \le C\cdot R^{2}\int_{\mathcal{R}(0,t^{*})\cap\{r\le2R\}}|\nabla_{\text{\textgreek{S}}}\big\{\text{\textgreek{y}}_{0}\cdot(\text{\textgreek{q}}\circ(\frac{r}{R}))\big\}|_{g_{\text{\textgreek{S}}}}^{2}\le\\
 & \le C(R)\Big(\int_{\mathcal{R}(0,t^{*})\cap\{r\le2R\}}|\nabla_{\text{\textgreek{S}}}\text{\textgreek{y}}_{0}|_{g_{\text{\textgreek{S}}}}^{2}+\int_{\mathcal{R}(0,t^{*})\cap\{R\le r\le2R\}}|\text{\textgreek{y}}_{0}|^{2}\Big).\nonumber 
\end{align}
 Therefore, (\ref{eq:BeforeTheTterms}) can be upgraded to:

\begin{multline}
\int_{\mathcal{R}(0,t^{*})}h_{R}\cdot J_{\text{\textgreek{m}}}^{N}(\text{\textgreek{y}}_{0})n^{\text{\textgreek{m}}}+\int_{\mathcal{R}(0,t^{*})\cap\{r\le2R\}}\text{\textgreek{y}}_{0}^{2}\le C(R)\cdot\int_{\mathcal{R}(0,t^{*})}h_{R}\cdot|T\text{\textgreek{y}}_{0}|^{2}+\\
+C(R)\Big|\int_{\mathcal{R}(0,t^{*})}F_{0}\cdot N\text{\textgreek{y}}_{0}\Big|+C(R)\cdot\Big|\int_{\mathcal{R}(0,t^{*})}h_{R}\cdot\text{\textgreek{y}}_{0}\cdot F_{0}\Big|+C(\text{\textgreek{w}}_{0},R)\int_{t=0}J_{\text{\textgreek{m}}}^{N}(\text{\textgreek{y}})n^{\text{\textgreek{m}}}.\label{eq:OneMoreTerm}
\end{multline}

We will now absorb the first term of the right hand side of (\ref{eq:OneMoreTerm})
by the left hand side under the condition that $\text{\textgreek{w}}_{0}$
is small in terms of $R$. Recall that, due to Lemma \ref{lem:DtToOmegaInequalities2},
we can bound%
\footnote{note that $supp(h_{R})\subseteq\{r\le2lR\}=\{r\le2R^{\frac{1}{1-\text{\textgreek{b}}}}\}$%
}: 
\begin{equation}
\int_{\mathcal{R}(0,t^{*})}h_{R}\cdot|T\text{\textgreek{y}}_{0}|^{2}\le C\cdot\text{\textgreek{w}}_{0}^{2}\int_{\mathcal{R}(0,t^{*})\cap\{r\le2lR\}}\text{\textgreek{y}}_{0}^{2}+C(\text{\textgreek{w}}_{0},R)\int_{t=0}J_{\text{\textgreek{m}}}^{N}(\text{\textgreek{y}})n^{\text{\textgreek{m}}}.\label{eq:BeforeAbsorbingThetTerm}
\end{equation}
 Since, due to Lemma \ref{lem:MorawetzRefinedboundary}, we can bound
\begin{equation}
\int_{\mathcal{R}(0,t^{*})\cap\{2R\le r\le2lR\}}\text{\textgreek{y}}_{0}^{2}\le C(R)\cdot\int_{\mathcal{R}(0,t^{*})\cap\{R\le r\le2R\}}(J_{\text{\textgreek{m}}}^{N}(\text{\textgreek{y}}_{0})n^{\text{\textgreek{m}}}+\text{\textgreek{y}}_{0}^{2})+C(\text{\textgreek{w}}_{0},R)\int_{t=0}J_{\text{\textgreek{m}}}^{N}(\text{\textgreek{y}})n^{\text{\textgreek{m}}},
\end{equation}
the inequality (\ref{eq:BeforeAbsorbingThetTerm}) yields

\begin{equation}
\int_{\mathcal{R}(0,t^{*})}h_{R}\cdot|\partial_{t}\text{\textgreek{y}}_{0}|^{2}\le C(R)\cdot\text{\textgreek{w}}_{0}^{2}\int_{\mathcal{R}(0,t^{*})\cap\{r\le2R\}}(J_{\text{\textgreek{m}}}^{N}(\text{\textgreek{y}}_{0})n^{\text{\textgreek{m}}}+\text{\textgreek{y}}_{0}^{2})+C(\text{\textgreek{w}}_{0},R)\int_{t=0}J_{\text{\textgreek{m}}}^{N}(\text{\textgreek{y}})n^{\text{\textgreek{m}}}.\label{eq:OneMoreBoundFromMorawetz}
\end{equation}

Substituting (\ref{eq:OneMoreTerm-1}) in (\ref{eq:OneMoreTerm}),
we obtain:

\begin{multline}
\int_{\mathcal{R}(0,t^{*})}h_{R}\cdot J_{\text{\textgreek{m}}}^{N}(\text{\textgreek{y}}_{0})n^{\text{\textgreek{m}}}+\int_{\mathcal{R}(0,t^{*})\cap\{r\le2R\}}\text{\textgreek{y}}_{0}^{2}\le C(R)\cdot\text{\textgreek{w}}_{0}^{2}\int_{\mathcal{R}(0,t^{*})\cap\{r\le2R\}}(J_{\text{\textgreek{m}}}^{N}(\text{\textgreek{y}}_{0})n^{\text{\textgreek{m}}}+\text{\textgreek{y}}_{0}^{2})+\\
+C(R)\Big|\int_{\mathcal{R}(0,t^{*})}F_{0}\cdot N\text{\textgreek{y}}_{0}\Big|+C(R)\cdot\Big|\int_{\mathcal{R}(0,t^{*})}h_{R}\cdot\text{\textgreek{y}}_{0}\cdot F_{0}\Big|+C(\text{\textgreek{w}}_{0},R)\int_{t=0}J_{\text{\textgreek{m}}}^{N}(\text{\textgreek{y}})n^{\text{\textgreek{m}}}.\label{eq:OneMoreTerm-1}
\end{multline}
Hence, if $\text{\textgreek{d}}$ in the statement of the current
Proposition (and thus $\text{\textgreek{w}}_{0}$) is small enough
in terms of the given $R$, the first term of the right hand side
of (\ref{eq:OneMoreTerm-1}) can be absorbed into the left hand side
(since $h_{R}\ge c>0$ on $\{r\le2R\}$). Thus, we deduce:

\begin{multline}
\int_{\mathcal{R}(0,t^{*})}h_{R}\cdot J_{\text{\textgreek{m}}}^{N}(\text{\textgreek{y}}_{0})n^{\text{\textgreek{m}}}+\int_{\mathcal{R}(0,t^{*})\cap\{r\le2R\}}\text{\textgreek{y}}_{0}^{2}\le\\
\le C(R)\Big|\int_{\mathcal{R}(0,t^{*})}F_{0}\cdot N\text{\textgreek{y}}_{0}\Big|+C(R)\cdot\Big|\int_{\mathcal{R}(0,t^{*})}h_{R}\cdot\text{\textgreek{y}}_{0}\cdot F_{0}\Big|+C(\text{\textgreek{w}}_{0},R)\int_{t=0}J_{\text{\textgreek{m}}}^{N}(\text{\textgreek{y}})n^{\text{\textgreek{m}}}.\label{eq:OnlyTheErroeTermsLeft}
\end{multline}

Finally, as we noted before, we can bound due to Lemma \ref{lem:MorawetzRefinedboundary}:

\begin{equation}
\int_{\mathcal{R}(0,t^{*})\cap\{r\ge2R\}}r^{-2}|N\text{\textgreek{y}}_{0}|^{2}+r^{-4}\text{\textgreek{y}}_{0}^{2}\le C\cdot\int_{\mathcal{R}(0,t^{*})\cap\{R\le r\le2R\}}(J_{\text{\textgreek{m}}}^{N}(\text{\textgreek{y}}_{0})n^{\text{\textgreek{m}}}+\text{\textgreek{y}}_{0}^{2})+C(\text{\textgreek{w}}_{0},R)\int_{t=0}J_{\text{\textgreek{m}}}^{N}(\text{\textgreek{y}})n^{\text{\textgreek{m}}}
\end{equation}
 and thus we can estimate

\begin{equation}
\int_{\mathcal{R}(0,t^{*})}(1+r)^{-2}|N\text{\textgreek{y}}_{0}|^{2}+(1+r)^{-4}\text{\textgreek{y}}_{0}^{2}\le C\cdot\int_{\mathcal{R}(0,t^{*})\cap\{r\le2R\}}(J_{\text{\textgreek{m}}}^{N}(\text{\textgreek{y}}_{0})n^{\text{\textgreek{m}}}+\text{\textgreek{y}}_{0}^{2})+C(\text{\textgreek{w}}_{0},R)\int_{t=0}J_{\text{\textgreek{m}}}^{N}(\text{\textgreek{y}})n^{\text{\textgreek{m}}}.
\end{equation}
Therefore, by a Cauchy-Schwarz inequality and an application of Lemma
\ref{lem:BoundF}, we can bound

\begin{multline}
C(R)\Big|\int_{\mathcal{R}(0,t^{*})}F_{0}\cdot N\text{\textgreek{y}}_{0}\Big|+C(R)\cdot\Big|\int_{\mathcal{R}(0,t^{*})}h_{R}\cdot\text{\textgreek{y}}_{0}\cdot F_{0}\Big|\le\\
\le\Big\{ C\cdot\int_{\mathcal{R}(0,t^{*})\cap\{r\le2R\}}(J_{\text{\textgreek{m}}}^{N}(\text{\textgreek{y}}_{0})n^{\text{\textgreek{m}}}+\text{\textgreek{y}}_{0}^{2})+C(\text{\textgreek{w}}_{0},R)\int_{t=0}J_{\text{\textgreek{m}}}^{N}(\text{\textgreek{y}})n^{\text{\textgreek{m}}}\Big\}^{1/2}\cdot\Big\{ C(\text{\textgreek{w}}_{0},R)\int_{t=0}J_{\text{\textgreek{m}}}^{N}(\text{\textgreek{y}})n^{\text{\textgreek{m}}}\Big\}^{1/2}.\label{eq:AfterCauchy}
\end{multline}
Substituting in (\ref{eq:OnlyTheErroeTermsLeft}) and absorbing the
$J_{\text{\textgreek{m}}}^{N}(\text{\textgreek{y}}_{0})n^{\text{\textgreek{m}}}+\text{\textgreek{y}}_{0}^{2}$
term of the first factor in (\ref{eq:AfterCauchy}) into the left
hand side of (\ref{eq:OnlyTheErroeTermsLeft}), we conclude:

\begin{equation}
\int_{\mathcal{R}(0,t^{*})}h_{R}J_{\text{\textgreek{m}}}^{N}(\text{\textgreek{y}}_{0})n^{\text{\textgreek{m}}}+\int_{\mathcal{R}(0,t^{*})\cap\{r\le2R\}}\text{\textgreek{y}}_{0}^{2}\le C(\text{\textgreek{w}}_{0},R)\int_{t=0}J_{\text{\textgreek{m}}}^{N}(\text{\textgreek{y}})n^{\text{\textgreek{m}}}.
\end{equation}
Thus, the proof of the Proposition is complete.
\end{proof}

\section{\label{sec:ILEDlow}Integrated local energy decay for $\text{\textgreek{y}}_{\le\text{\textgreek{w}}_{+}}$}

In this section, we will establish an integrated local energy decay
estimate for the low frequency part $\text{\textgreek{y}}_{\le\text{\textgreek{w}}_{+}}$
of $\text{\textgreek{y}}$. This will be accomplished by using of
a suitable Carleman type inequality and an ODE lemma appearing in
\cite{Rodnianski2011}. Of course we only have to obtain an integrated
local energy decay for $\text{\textgreek{y}}_{k}$ with $1\le|k|\le n$,
since the related estimate for the very low frequency part $\text{\textgreek{y}}_{0}$
was obtained in Section \ref{sub:LowFrequencies}.

\subsection{\label{sub:CarlemannEstimates}A Carleman-type estimate for $\text{\textgreek{y}}_{k}$,
$1\le|k|\le n$}

We will need to establish a Carleman type inequality for the functions
$\text{\textgreek{y}}_{k}$, $1\le|k|\le n$, that will be necessary
for proving the full integrated local energy decay statement for $\text{\textgreek{y}}_{\le\text{\textgreek{w}}_{+}}$
in the next section. The main ideas contained in the proof of the
following lemma have already been presented and used in \cite{Rodnianski2011}.
\begin{lem}
\label{lem:Carlemann}There exist functions $w,w':\mathcal{D}\rightarrow[0,1]$,
satisfying $T(w)=T(w')=0$ and which for $r\ge R_{1}$ are both equal
and strictly increasing functions of the variable $r$, such that
for any $R>0$ , any $0<\text{\textgreek{w}}_{0}\ll1$, any $\text{\textgreek{w}}_{+}>1$
and any $1\le|k|\le n$ the following bound holds (setting $s=C(\text{\textgreek{w}}_{0},R)\cdot\text{\textgreek{w}}_{k}$
for some large constant $C(\text{\textgreek{w}}_{0},R)$ depending
only on $\text{\textgreek{w}}_{0}$ and $R$):

\begin{align}
\int_{\mathcal{R}(0,t^{*})\cap\{r\le R\}}\Big(e^{2sw}+e^{2sw'}\Big)\cdot\Big(J_{\text{\textgreek{m}}}^{N}(\text{\textgreek{y}}_{k})n^{\text{\textgreek{m}}}+|\text{\textgreek{y}}_{k}|^{2}\Big)\le & C(\text{\textgreek{w}}_{0},R)\cdot\int_{\mathcal{R}(0,t^{*})\cap\{R\le r\le R+1\})}(1+s^{2})e^{2sw}\cdot\Big(J_{\text{\textgreek{m}}}^{N}(\text{\textgreek{y}}_{k})n^{\text{\textgreek{m}}}+|\text{\textgreek{y}}_{k}|^{2}\Big)+\label{eq:CarlemannTypeInequality}\\
 & +e^{C(R)\cdot s}\cdot C(\text{\textgreek{w}}_{0},R)\int_{t=0}J_{\text{\textgreek{m}}}^{N}(\text{\textgreek{y}})n^{\text{\textgreek{m}}}.\nonumber 
\end{align}
\end{lem}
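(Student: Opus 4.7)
I would prove \eqref{eq:CarlemannTypeInequality} by the standard conjugation-and-multiplier scheme for Carleman-type inequalities, closely following the argument of \cite{Rodnianski2011}. The key input specific to our setting is the frequency localization of $\text{\textgreek{y}}_{k}$: via Lemma \ref{lem:DtToOmegaInequalities}, one can exchange $|T\text{\textgreek{y}}_{k}|^{2}$ with $\text{\textgreek{w}}_{k}^{2}|\text{\textgreek{y}}_{k}|^{2}$ modulo an error of the form $C(\text{\textgreek{w}}_{0},R)\int_{t=0}J_{\text{\textgreek{m}}}^{N}(\text{\textgreek{y}})n^{\text{\textgreek{m}}}$. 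This is what enables the choice $s\sim \text{\textgreek{w}}_{k}$ and makes the bulk term produced by the Carleman identity dominate.

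\textbf{Weight construction.} I would construct two $T$-invariant weights $w,w':\mathcal{D}\to[0,1]$, both strictly increasing functions of $r$ in $\{r\ge R_{1}\}$ and reaching their supremum inside $\{r\le R\}$. Near the horizon the two are chosen differently so that the associated pseudo-convexity condition (positivity of the bulk quadratic form produced after conjugation) holds in different subregions; summing the two estimates will cover the whole of $\mathcal{R}(0,t^{*})\cap\{r\le R\}$, which explains the appearance of $e^{2sw}+e^{2sw'}$ on the left-hand side. The boundedness of $w,w'$ in $[0,1]$ is what fixes the exponential factor on the right-hand side at $e^{C(R)s}$.

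\textbf{Conjugated identity and treatment of boundary terms.} Setting $u_{k}=e^{sw}\text{\textgreek{y}}_{k}$, one obtains
\begin{equation}
\square_{g}u_{k}-2s\,g(\nabla w,\nabla u_{k})+\bigl(s^{2}g(\nabla w,\nabla w)-s\,\square_{g}w\bigr)u_{k}=e^{sw}F_{k}.
\end{equation}
Multiplying by $(Xu_{k}+\text{\textgreek{a}} s\,u_{k})$ for a suitable $T$-invariant vector field $X$ built from $N$ and $\nabla w$, and integrating by parts over $\mathcal{R}(0,t^{*})\cap\{r\le R\}$, produces a bulk term of the schematic form $\int e^{2sw}\bigl(s^{3}A_{0}|\text{\textgreek{y}}_{k}|^{2}+sA_{1}J_{\text{\textgreek{m}}}^{N}(\text{\textgreek{y}}_{k})n^{\text{\textgreek{m}}}\bigr)$ with positive coefficients $A_{0},A_{1}$, plus four kinds of error contributions: (i) terms on $\{t=0,t^{*}\}$, absorbed by $e^{C(R)s}$ times the slice energy of $\text{\textgreek{y}}_{k}$ and thus, via Lemma \ref{lem:BoundednessPsiK} and Assumption \hyperref[Assumption 4]{4}, by the $C(\text{\textgreek{w}}_{0},R)e^{C(R)s}\int_{t=0}J_{\text{\textgreek{m}}}^{N}(\text{\textgreek{y}})n^{\text{\textgreek{m}}}$ term; (ii) terms on $\mathcal{H}^{+}\cap\mathcal{R}(0,t^{*})$, handled exactly as in Section \ref{sub:LowFrequencies} using the $K^{N}$ positivity from Assumption \hyperref[Assumption 2]{2} together with the smallness of the ergoregion from Assumption \hyperref[Assumption 3]{3}; (iii) the boundary integral at $\{r=R\}$, which gives precisely the $(1+s^{2})e^{2sw}$ term on the right-hand side of \eqref{eq:CarlemannTypeInequality}; and (iv) the inhomogeneous term from $F_{k}$, controlled by Cauchy--Schwarz and Lemma \ref{lem:BoundF} applied with a large exponent $q'$. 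The whole procedure is then repeated with $w'$ in place of $w$, and the two bulk inequalities are added.

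\textbf{Main obstacle.} The main technical difficulty is to arrange, by the choice of the pair $(w,w')$, that the bulk coefficients $A_{0},A_{1}$ are strictly positive throughout $\{r\le R\}$ in spite of the degeneration of the causal structure at $\mathcal{H}^{+}$ and the sign-indefiniteness of $g(T,T)$ in the ergoregion. In the region away from both the horizon and the ergoregion, a standard convex radial weight suffices; near $\mathcal{H}^{+}$, the positivity can only be salvaged via the redshift contribution of $N$ (Assumption \hyperref[Assumption 2]{2}); and in the ergoregion it is the smallness of $\text{\textgreek{e}}=\sup g(T,T)$ (Assumption \hyperref[Assumption 3]{3}), combined with the large parameter $s\sim \text{\textgreek{w}}_{k}$, that lets the $s^{2}g(\nabla w,\nabla w)|u_{k}|^{2}$ term dominate the indefinite cross terms. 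Balancing these three constraints in the construction of $w$ and $w'$, together with the ``trading'' of $|T\text{\textgreek{y}}_{k}|^{2}$ for $\text{\textgreek{w}}_{k}^{2}|\text{\textgreek{y}}_{k}|^{2}$ afforded by the frequency localization, is the delicate point of the argument.
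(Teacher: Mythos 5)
Your overall scheme — conjugated Carleman identity, multiplier current, frequency localization to replace $|T\text{\textgreek{y}}_{k}|^{2}$ by $\text{\textgreek{w}}_{k}^{2}|\text{\textgreek{y}}_{k}|^{2}$ so that $s\sim\text{\textgreek{w}}_{k}$ can be taken, boundedness (Lemmas \ref{lem:BoundF}, \ref{lem:BoundednessPsiK}) for the $\{t=0,t^{*}\}$ boundary and the $F_{k}$ forcing, the redshift estimate for the horizon contribution, and the $\{r\sim R\}$ boundary integral left on the right-hand side — matches the paper's proof. The two places where the proposal goes astray are the role of the pair $(w,w')$ and the role of the ergoregion, and the first one is a genuine gap.

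The two weights are \emph{not} there to cover the horizon and ergoregion in complementary sub-regions; in fact in the paper $w$ and $w'$ are taken to agree identically on a neighbourhood of $\mathcal{H}^{+}$ and for all $r\ge R_{1}$. The real obstruction is topological. One takes $w=e^{\text{\textgreek{l}}\text{\textgreek{f}}}$, $w'=e^{\text{\textgreek{l}}\text{\textgreek{f}}'}$, where $\text{\textgreek{f}},\text{\textgreek{f}}'$ are Morse functions on $\text{\textgreek{S}}\cap\mathcal{D}$ with no local minima, both equal to $r$ near $\mathcal{H}^{+}$ and for $r\ge R_{1}$; the parameter $\text{\textgreek{l}}\gg1$ is used to make $\nabla^{2}w$ positive in the gradient direction and to dominate the negative eigenvalues of the Hessian transverse to $\nabla w$. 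This pseudo-convexity necessarily fails at the critical points of $\text{\textgreek{f}}$, which cannot be eliminated if $\text{\textgreek{S}}$ has nontrivial topology. The second Morse function $\text{\textgreek{f}}'$ is built by pushing the critical points of $\text{\textgreek{f}}$ to nearby locations (and using the absence of local minima to arrange $\text{\textgreek{f}}<\text{\textgreek{f}}'$ near $\text{crit}(\text{\textgreek{f}})$ and $\text{\textgreek{f}}'<\text{\textgreek{f}}$ near $\text{crit}(\text{\textgreek{f}}')$), so that the boundary terms each weight produces near its own critical set are dominated, for $s\gg1$, by the other weight's bulk term. This is what forces the symmetric combination $e^{2sw}+e^{2sw'}$ on the left. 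Your proposal, as written, has no mechanism for dealing with $\text{crit}(\text{\textgreek{f}})$ — a single radial weight (or two differing only near the horizon) cannot be pseudo-convex on a topologically nontrivial $\text{\textgreek{S}}$, and the estimate would simply fail there.

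The ergoregion confusion is related but less serious: the Carleman cutoff $\text{\textgreek{q}}_{R}$ is supported in $\{r\ge\frac{3r_{0}}{4}\}$, and Assumption \hyperref[Assumption 3]{3} places the ergoregion inside $\{r\le\frac{r_{0}}{2}\}$, so the Carleman bulk term never sees $g(T,T)>0$. The smallness of the ergoregion is not used to make $s^{2}g(\nabla w,\nabla w)|u_{k}|^{2}$ dominate indefinite cross terms \emph{inside} the ergoregion; it is used so that the ergoregion lies entirely within the region $\{r\le r_{0}\}$ where the redshift current $K^{N}$ is coercive (Assumption \hyperref[Assumption 2]{2}), and the entire horizon-adjacent block $\{r\le\frac{7r_{0}}{8}\}$ is absorbed through the redshift estimate, not through pseudo-convexity of $w$. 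What the construction does need in the Carleman region is that level sets of $\text{\textgreek{f}}$ have spacelike normals — equivalently that $\partial^{\text{\textgreek{m}}}\text{\textgreek{f}}\partial_{\text{\textgreek{m}}}\text{\textgreek{f}}\neq0$ away from critical points — which holds precisely because $T$ is timelike outside $\{r\le\frac{r_{0}}{2}\}$.
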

\begin{proof}
We will assume without loss of generality that $R$ is large enough
in terms of the geometry of $(\mathcal{D},g)$, since we can always
pick a larger $R$ if necessary. Recall that we have assumed without
loss of generality that $\mathcal{H}^{+}\neq\emptyset$ (see Section
\ref{sub:Remark}). The proof of (\ref{eq:CarlemannTypeInequality})
in the case where $\mathcal{H}^{+}=\emptyset$ is almost identical,
and we will explain in the footnotes the differences of the proof
in the two cases.

For any smooth function $w:\mathcal{D}\rightarrow\mathbb{R}$ and
some real number $s\in\mathbb{R}$ to be fixed more precisely later
on (it will eventually agree with the parameter $s$ in the statement
of the lemma), we define, in accordance with \cite{Rodnianski2011},
the following current for any $1\le|k|\le n$ using the abstract index
notation:

\begin{equation}
J_{\text{\textgreek{m}}}=\partial^{\text{\textgreek{n}}}(e^{2sw})T_{\text{\textgreek{m}\textgreek{n}}}(\text{\textgreek{y}}_{k})+\frac{1}{2}\square_{g}e^{2sw}\cdot Re\{\bar{\text{\textgreek{y}}}_{k}\cdot\partial_{\text{\textgreek{m}}}\text{\textgreek{y}}_{k}\}+\frac{1}{2}s\Big\{(\partial_{\text{\textgreek{m}}}w)(\square_{g}e^{2sw})-(\partial_{\text{\textgreek{m}}}\square_{g}w)e^{2sw}\Big\}|\text{\textgreek{y}}_{k}|^{2}.\label{eq:CarlemannCurrent}
\end{equation}
The divergence of (\ref{eq:CarlemannCurrent}) takes the form:

\begin{align}
\nabla^{\text{\textgreek{m}}}J_{\text{\textgreek{m}}}= & \big\{ s^{2}(\square_{g}w)^{2}+4s^{3}\partial^{\text{\textgreek{m}}}w\cdot\partial_{\text{\textgreek{m}}}w\cdot(\square_{g}w)+4s^{2}\nabla_{\text{\textgreek{m}\textgreek{n}}}^{2}w\cdot\partial^{\text{\textgreek{m}}}w\cdot\partial^{\text{\textgreek{n}}}w+4s^{4}(\partial^{\text{\textgreek{m}}}w\cdot\partial_{\text{\textgreek{m}}}w)^{2}-\frac{1}{2}s\square_{g}^{2}w\big\} e^{2sw}|\text{\textgreek{y}}_{k}|^{2}+\label{eq:CarlemannCurrentDivergence1}\\
 & +\big\{4s^{2}\partial_{\text{\textgreek{m}}}w\cdot\square_{g}w+4s^{2}\nabla_{\text{\textgreek{m}\textgreek{n}}}^{2}w\cdot\partial^{\text{\textgreek{n}}}w+8s^{3}\partial^{\text{\textgreek{n}}}w\cdot\partial_{\text{\textgreek{n}}}w\cdot\partial_{\text{\textgreek{m}}}w\big\} e^{2sw}Re\{\bar{\text{\textgreek{y}}}_{k}\cdot\partial^{\text{\textgreek{m}}}\text{\textgreek{y}}_{k}\}+\nonumber \\
 & +\big\{2s\nabla_{\text{\textgreek{m}\textgreek{n}}}^{2}w+4s^{2}\partial_{\text{\textgreek{m}}}w\cdot\partial_{\text{\textgreek{n}}}w\big\} e^{2sw}\partial^{\text{\textgreek{m}}}\bar{\text{\textgreek{y}}}_{k}\partial^{\text{\textgreek{n}}}\text{\textgreek{y}}_{k}+\nonumber \\
 & +\frac{1}{2}(\square_{g}e^{2sw})Re\{\bar{\text{\textgreek{y}}}_{k}\cdot F_{k}\}+\partial^{\text{\textgreek{m}}}e^{2sw}\cdot Re\{\partial_{\text{\textgreek{m}}}\bar{\text{\textgreek{y}}}_{k}\cdot F_{k}\}.\nonumber 
\end{align}
Proceeding as in \cite{Rodnianski2011}, after completion of the square,
(\ref{eq:CarlemannCurrentDivergence1}) becomes:

\begin{align}
\nabla^{\text{\textgreek{m}}}J_{\text{\textgreek{m}}}= & e^{2sw}|2s\partial^{\text{\textgreek{m}}}w\cdot\partial_{\text{\textgreek{m}}}\text{\textgreek{y}}_{k}+(2s^{2}\partial^{\text{\textgreek{m}}}w\cdot\partial_{\text{\textgreek{m}}}w+s\cdot\square_{g}w)\cdot\text{\textgreek{y}}_{k}|^{2}+2s\nabla_{\text{\textgreek{m}\textgreek{n}}}^{2}w\cdot\partial^{\text{\textgreek{m}}}(e^{sw}\bar{\text{\textgreek{y}}}_{k})\cdot\partial^{\text{\textgreek{n}}}(e^{sw}\text{\textgreek{y}}_{k})+\label{eq:CarlemannCurrentDivergence}\\
 & +2s^{3}\nabla_{\text{\textgreek{m}\textgreek{n}}}^{2}w\cdot\partial^{\text{\textgreek{m}}}w\cdot\partial^{\text{\textgreek{n}}}w\cdot e^{2sw}|\text{\textgreek{y}}_{k}|^{2}-\frac{1}{2}s(\square_{g}^{2}w)\cdot e^{2sw}|\text{\textgreek{y}}_{k}|^{2}+\nonumber \\
 & +\frac{1}{2}(\square_{g}e^{2sw})Re\{\bar{\text{\textgreek{y}}}_{k}\cdot F_{k}\}+\partial^{\text{\textgreek{m}}}e^{2sw}\cdot Re\{\partial_{\text{\textgreek{m}}}\bar{\text{\textgreek{y}}}_{k}\cdot F_{k}\}.\nonumber 
\end{align}

Let $\text{\textgreek{q}}(x):\text{\textgreek{S}}\rightarrow[0,1]$
be a smooth cut-off function to be defined more precisely later on,
with compact support away from $\mathcal{H}^{+}\cap\text{\textgreek{S}}$.
We extend it to the whole of $\mathcal{D}\backslash\mathcal{H}^{-}$
by the condition $T\text{\textgreek{q}}=0$. Then the divergence identity
yields:

\begin{equation}
\int_{\mathcal{R}(0,t^{*})}\text{\textgreek{q}}\cdot\nabla^{\text{\textgreek{m}}}J_{\text{\textgreek{m}}}=\int_{\{t=t^{*}\}}\text{\textgreek{q}}\cdot J_{\text{\textgreek{m}}}n^{\text{\textgreek{m}}}-\int_{\{t=0\}}\text{\textgreek{q}}\cdot J_{\text{\textgreek{m}}}n^{\text{\textgreek{m}}}-\int_{\mathcal{R}(0,t^{*})}\partial^{\text{\textgreek{m}}}\text{\textgreek{q}}\cdot J_{\text{\textgreek{m}}}.\label{eq:Green'sIdentity}
\end{equation}

Substituting (\ref{eq:CarlemannCurrentDivergence}) in (\ref{eq:Green'sIdentity})
and setting (for convenience) 
\begin{equation}
U_{k}\doteq e^{sw}(2s\partial^{\text{\textgreek{m}}}w\cdot\partial_{\text{\textgreek{m}}}\text{\textgreek{y}}_{k}+(2s^{2}\partial^{\text{\textgreek{m}}}w\cdot\partial_{\text{\textgreek{m}}}w+s\cdot\square_{g}w)\cdot\text{\textgreek{y}}_{k}),
\end{equation}
we obtain the following Carleman identity (in accordance with the
Riemannian setting in \cite{Rodnianski2011}): 
\begin{equation}
\begin{split}\int_{\mathcal{R}(0,t^{*})}\text{\textgreek{q}}\cdot|U_{k}|^{2} & +2s\int_{\mathcal{R}(0,t^{*})}\text{\textgreek{q}}\nabla_{\text{\textgreek{m}\textgreek{n}}}^{2}w\cdot\partial^{\text{\textgreek{m}}}(e^{sw}\bar{\text{\textgreek{y}}}_{k})\cdot\partial^{\text{\textgreek{n}}}(e^{sw}\text{\textgreek{y}}_{k})+2s^{3}\int_{\mathcal{R}(0,t^{*})}\text{\textgreek{q}}\nabla_{\text{\textgreek{m}\textgreek{n}}}^{2}w\cdot\partial^{\text{\textgreek{m}}}w\cdot\partial^{\text{\textgreek{n}}}w\cdot e^{2sw}|\text{\textgreek{y}}_{k}|^{2}=\\
= & \frac{1}{2}s\int_{\mathcal{R}(0,t^{*})}\text{\textgreek{q}}(\square_{g}^{2}w)\cdot e^{2sw}|\text{\textgreek{y}}_{k}|^{2}+\int_{\mathcal{R}(0,t^{*})}\text{\textgreek{q}}Re\{e^{sw}U_{k}\cdot\bar{F}_{k}\}+\int_{\{t=t^{*}\}}\text{\textgreek{q}}\cdot J_{\text{\textgreek{m}}}n^{\text{\textgreek{m}}}-\int_{\{t=0\}}\text{\textgreek{q}}\cdot J_{\text{\textgreek{m}}}n^{\text{\textgreek{m}}}-\int_{\mathcal{R}(0,t^{*})}\partial^{\text{\textgreek{m}}}\text{\textgreek{q}}\cdot J_{\text{\textgreek{m}}}.
\end{split}
\label{eq:CarlemannIdentity}
\end{equation}

Using a Cauchy--Schwarz inequality 
\[
\int_{\mathcal{R}(0,t^{*})}\text{\textgreek{q}}Re\{e^{sw}U_{k}\cdot\bar{F}_{k}\}\le\Big(\int_{\mathcal{R}(0,t^{*})}\text{\textgreek{q}}e^{2sw}|F_{k}|^{2}\Big)^{1/2}\Big(\int_{\mathcal{R}(0,t^{*})}\text{\textgreek{q}}|U_{k}|^{2}\Big)^{1/2},
\]
after absorbing the term $\int_{\mathcal{R}(0,t^{*})}\text{\textgreek{q}}|U_{k}|^{2}$
into the corresponding term on the left hand side of (\ref{eq:CarlemannIdentity})
and then dropping once and for all the positive $\int_{\mathcal{R}(0,t^{*})}\text{\textgreek{q}}|U_{k}|^{2}$
term from the left hand side, we obtain the following inequality:

\begin{multline}
2s\int_{\mathcal{R}(0,t^{*})}\text{\textgreek{q}}Hess_{\text{\textgreek{m}\textgreek{n}}}(w)\cdot\partial^{\text{\textgreek{m}}}(e^{sw}\bar{\text{\textgreek{y}}}_{k})\cdot\partial^{\text{\textgreek{n}}}(e^{sw}\text{\textgreek{y}}_{k})+2s^{3}\int_{\mathcal{R}(0,t^{*})}\text{\textgreek{q}}\nabla_{\text{\textgreek{m}\textgreek{n}}}^{2}w\cdot\partial^{\text{\textgreek{m}}}w\cdot\partial^{\text{\textgreek{n}}}w\cdot e^{2sw}|\text{\textgreek{y}}_{k}|^{2}\le\\
\le\frac{1}{2}s\int_{\mathcal{R}(0,t^{*})}\text{\textgreek{q}}(\square_{g}^{2}w)\cdot e^{2sw}|\text{\textgreek{y}}_{k}|^{2}+C\cdot\int_{\mathcal{R}(0,t^{*})}\text{\textgreek{q}}e^{2sw}|F_{k}|^{2}+\int_{\{t=t^{*}\}}\text{\textgreek{q}}\cdot J_{\text{\textgreek{m}}}n^{\text{\textgreek{m}}}-\int_{\{t=0\}}\text{\textgreek{q}}\cdot J_{\text{\textgreek{m}}}n^{\text{\textgreek{m}}}-\int_{\mathcal{R}(0,t^{*})}\partial^{\text{\textgreek{m}}}\text{\textgreek{q}}\cdot J_{\text{\textgreek{m}}}.\label{eq:PreliminaryCarlemannInequality}
\end{multline}

We will now proceed to define more precisely the function $w$. In
order to be able to control the boundary terms in the above inequality,
we will impose the condition $Tw=0$, and thus $w$ will be uniquely
determined by its restriction on $\text{\textgreek{S}}\cap\mathcal{D}=\{t=0\}$. 

If it were possible to construct a geodesically convex function $w$
(that is, one with positive definite Hessian), inequality (\ref{eq:PreliminaryCarlemannInequality})
would readily yield the desired local integrated decay statement for
$\text{\textgreek{y}}_{k}$. However, in general the construction
of such a function $w$ is not possible for the class of manifolds
under consideration, as, for example, even the existence of a single
geodesic of $\mathcal{D}$ contained in a subset of the form $\{r\le R\}$
would exclude the existence of such a $w$. See also the relevant
comments in \cite{Rodnianski2011}. For this reason, we will construct
the function $w$ so that it is geodesically convex along the gradient
direction (thus guaranteeing the positivity of the second term of
the left hand side of (\ref{eq:PreliminaryCarlemannInequality})),
at least away from the ergoregion, and away from a discrete set of
points which will be the necessary local extrema of $w$, which are
necessitated by the lack of serious restrictions on the topology of
$\text{\textgreek{S}}$. This approach was originally adopted in the
Carleman-type estimates established in \cite{Rodnianski2011}. 

According to \cite{Rodnianski2011} (see also \cite{Burq1998}), we
can construct a Morse function $\text{\textgreek{f}}:\text{\textgreek{S}}\rightarrow\mathbb{R}$,
such that \textgreek{f} has no local minima in $\text{\textgreek{S}}$,
and such that moreover $\text{\textgreek{f}}\equiv r$ for $\big(\{r\le2r_{0}\}\cup\{r\ge R_{1}\}\big)\cap\text{\textgreek{S}}$.
For instance, $\text{\textgreek{f}}$ can be obtained by first solving
the Dirichlet problem 
\begin{equation}
\begin{cases}
\text{\textgreek{D}}\bar{\text{\textgreek{f}}}=-1\, on\,\{2r_{0}<r<R_{1}\}\cap\text{\textgreek{S}}\\
\bar{\text{\textgreek{f}}}=2r_{0}\, on\, r=2r_{0}\\
\bar{\text{\textgreek{f}}}=R_{1}\, on\, r=R_{1},
\end{cases}
\end{equation}
 where $\text{\textgreek{D}}$ is the Laplacian of the induced Riemannian
metric on $\text{\textgreek{S}}$. Such a function $\text{\ensuremath{\bar{\text{\textgreek{f}}}}}$
is smooth on $\{2r_{0}\le r\le R_{1}\}\cap\text{\textgreek{S}}$,
and has no local minima in $\{2r_{0}<r<R_{1}\}\cap\text{\textgreek{S}}$,
since if such a local minimum existed then we should have $\text{\textgreek{D}}\bar{\text{\textgreek{f}}}\ge0$
there. The set of Morse functions is dense in the $C^{2}$ topology,
and thus we can approximate $\bar{\text{\textgreek{f}}}$ in $C^{2}$
by a Morse function $\text{\textgreek{f}}$ on $\{2r_{0}\le r\le R_{1}\}\cap\text{\textgreek{S}}$.
In view of the maximum principle and Hopf's lemma, we must have 
\[
2r_{0}<\bar{\text{\textgreek{f}}}<R_{1}\text{ on }\{2r_{0}<r<R_{1}\}\cap\text{\textgreek{S}}
\]
 and 
\[
\partial_{r}\bar{\text{\textgreek{f}}}|_{\{r=2r_{0}\}\cap\text{\textgreek{S}}},\partial_{r}\bar{\text{\textgreek{f}}}|_{\{r=R_{1}\}\cap\text{\textgreek{S}}}>0.
\]
Note that $dr\neq0$ in $\{r\le3r_{0}\}$, and hence the expression
$\partial_{r}\bar{\text{\textgreek{f}}}|_{\{r=2r_{0}\}\cap\text{\textgreek{S}}}>0$
makes sense in any local chart around the horizon where $r$ is a
coordinate function; similarly for the expression $\partial_{r}\bar{\text{\textgreek{f}}}|_{\{r=R_{1}\}\cap\text{\textgreek{S}}}>0$.
Hence, if $\text{\textgreek{f}}$ was chosen close enough to $\bar{\text{\textgreek{f}}}$,
$\text{\textgreek{f}}$ can be mollified around $\big(\{r=2r_{0}\}\cup\{r=R_{1}\}\big)\cap\text{\textgreek{S}}$,
so that it can be smoothly extended as a Morse function equal to $r$
on $\big(\{r\le2r_{0}\}\cup\{r\ge R_{1}\}\big)\cap\text{\textgreek{S}}$.
In this way, we obtain a Morse function $\text{\textgreek{f}}$ with
no local minima in $\text{\textgreek{S}}$ and such that $\text{\textgreek{f}}\equiv r$
for $\big(\{r\le2r_{0}\}\cup\{r\ge R_{1}\}\big)\cap\text{\textgreek{S}}$.%
\footnote{In the case where $\mathcal{H}^{+}=\emptyset$, a similar construction
yields a Morse function $\text{\textgreek{f}}$ such that $\text{\textgreek{f}}\equiv r$
on $\{r\ge R_{1}\}\cap\text{\textgreek{S}}$.%
}

Since $\text{\textgreek{f}}$ is a Morse function, it only has a finite
number of critical points $\{x_{j}\}_{j=1}^{n}$, all lying in the
region$\{2r_{0}\le r\le R_{1}\}\cap\text{\textgreek{S}}$, and there
exist positive numbers $\{\bar{\text{\textgreek{e}}}_{j}\}_{j=1}^{n}$
such that the balls $\{B(x_{j},\bar{\text{\textgreek{e}}}_{j})\}_{j=1}^{n}$
are disjoint (the balls considered with respect to the induced Riemannian
metric $g_{\text{\textgreek{S}}}$ on \textgreek{S}). 

We will also need to construct a second Morse function $\text{\textgreek{f}}'$
with the same properties as $\text{\textgreek{f}}$, but with a different
set of points of local extrema. Due to the construction of $\text{\textgreek{f}}$
none of the $x_{j}$'s are points of local minima, and thus there
exist points $y_{j}\in B(x_{j},\frac{1}{2}\bar{\text{\textgreek{e}}}_{j})$
such that $\text{\textgreek{f}}(y_{j})>\text{\textgreek{f}}(x_{j})$.
We can find a diffeomorphism $\text{\textgreek{z}}:\text{\textgreek{S}}\rightarrow\text{\textgreek{S}}$
which is equal to the identity outside $\cup_{j=1}^{n}B(x_{j},\frac{1}{2}\bar{\text{\textgreek{e}}}_{j})$,
and which interchanges $x_{j}$ with $y_{j}$. Considering, hence,
the function 
\begin{equation}
\text{\textgreek{f}}'=\text{\textgreek{f}}\circ\text{\textgreek{z}},\label{eq:SecondMorseFunction}
\end{equation}
we infer that $\text{\textgreek{f}}\equiv\text{\textgreek{f}}'$ on
$\text{\textgreek{S}}\backslash\cup_{j=1}^{n}B(x_{j},\frac{1}{2}\bar{\text{\textgreek{e}}}_{j})$,
and the only critical points of $\text{\textgreek{f}}'$ are $y_{j}$.
Moreover there exist positive numbers $\{\text{\textgreek{e}}_{j}\}_{j=1}^{n}$
such that $\text{\textgreek{f}}<\text{\textgreek{f}}'$ on $\cup_{j=1}^{n}B(x_{j},\text{\textgreek{e}}_{j})$
and $\text{\textgreek{f}}'<\text{\textgreek{f}}$ on $\cup_{j=1}^{n}B(y_{j},\text{\textgreek{e}}_{j})$,
with $\cup_{j=1}^{n}B(x_{j},\text{\textgreek{e}}_{j})$ and $\cup_{j=1}^{n}B(y_{j},\text{\textgreek{e}}_{j})$
being disjoint. 

Extending $\text{\textgreek{f}}$ to the whole of $\mathcal{D}\backslash\mathcal{H}^{-}$
by the condition $T\text{\textgreek{f}}=0$, and setting $w=e^{\text{\textgreek{l}}\text{\textgreek{f}}}$,
for some $\text{\textgreek{l}}>0$ to be determined later, we compute

\begin{equation}
\nabla_{\text{\textgreek{m}\textgreek{n}}}^{2}w\cdot\partial^{\text{\textgreek{m}}}w\partial^{\text{\textgreek{n}}}w=\Big\{\text{\textgreek{l}}^{2}(\partial^{\text{\textgreek{m}}}\text{\textgreek{f}}\cdot\partial_{\text{\textgreek{m}}}\text{\textgreek{f}})^{2}+\text{\textgreek{l}}Hess_{\text{\textgreek{m}\textgreek{n}}}(\text{\textgreek{f}})\partial^{\text{\textgreek{m}}}\text{\textgreek{f}}\cdot\partial^{\text{\textgreek{n}}}\text{\textgreek{f}}\Big\} e^{3\text{\textgreek{l}}\text{\textgreek{f}}}.\label{eq:HessW}
\end{equation}
 Moreover, for any vector field $X\in\text{\textgreek{G}}(T\text{\textgreek{S}})$,
we calculate

\begin{equation}
\nabla_{\text{\textgreek{m}\textgreek{n}}}^{2}w\cdot X^{\text{\textgreek{m}}}X^{\text{\textgreek{n}}}=\Big\{\text{\textgreek{l}}^{2}(X\text{\textgreek{f}})^{2}+\text{\textgreek{l}}Hess_{\text{\textgreek{m}\textgreek{n}}}(\text{\textgreek{f}})X^{\text{\textgreek{m}}}X^{\text{\textgreek{n}}}\Big\} e^{\text{\textgreek{l}}\text{\textgreek{f}}}.\label{eq:HessW,X}
\end{equation}

Notice that $\partial^{\text{\textgreek{m}}}\text{\textgreek{f}}\cdot\partial_{\text{\textgreek{m}}}\text{\textgreek{f}}\neq0$
on the spacetime region $\{r>\frac{1}{2}r_{0}\}\backslash\big(\mathbb{R}\times\cup_{j=1}^{n}\{x_{j}\}\big)$.%
\footnote{Recall that we have identified $\mathcal{D}\backslash\mathcal{H}^{-}$
with $\mathbb{R}\times(\text{\textgreek{S}}\cap\mathcal{D})$.%
} This is due to the fact that outside the ergoregion, the level sets
of $\text{\textgreek{f}}$ will always contain the timelike tangent
vector $T$, and hence their normal will never be a null vector. Thus,
$\partial^{\text{\textgreek{m}}}\text{\textgreek{f}}\cdot\partial_{\text{\textgreek{m}}}\text{\textgreek{f}}\neq0$
whenever $\partial_{\text{\textgreek{m}}}\text{\textgreek{f}}$ is
non zero in $\{r>\frac{1}{2}r_{0}\}$. Hence, for any $R\gg1$, there
exists a large enough $\text{\textgreek{l}}=\text{\textgreek{l}}(R)$
such that in the region $\{\frac{3r_{0}}{4}\le r\le R+1\}\backslash\big(\mathbb{R}\times\cup_{j=1}^{n}B(x_{j},\frac{1}{4}\text{\textgreek{e}}_{j})\big)$:

\begin{equation}
\nabla_{\text{\textgreek{m}\textgreek{n}}}^{2}w\cdot\partial^{\text{\textgreek{m}}}w\partial^{\text{\textgreek{n}}}w\ge1>0.\label{eq:PositiveGradientDirection}
\end{equation}

After fixing an auxiliary Riemannian metric $h$ on $\mathcal{D}$
which is $T$ invariant and equal to the Euclidean metric inherited
by the chart we have fixed near the asymptotically flat end (i.\,e.
the chart $(t,r,\text{\textgreek{sv}})$ chart where $g$ takes the
form (\ref{eq:metric})), we can also bound in the region $\{\frac{3r_{0}}{4}\le r\le R+1\}\backslash\big(\mathbb{R}\times\cup_{j=1}^{n}B(x_{j},\frac{1}{4}\text{\textgreek{e}}_{j})\big)$
due to (\ref{eq:HessW,X}) for any fixed small $c_{0}>0$ (having
chosen $\text{\textgreek{l}}=\text{\textgreek{l}}(R,c_{0})$ large
enough):

\begin{equation}
\nabla_{\text{\textgreek{m}\textgreek{n}}}^{2}w\cdot X^{\text{\textgreek{m}}}X^{\text{\textgreek{n}}}\ge-c_{0}\cdot\big(\nabla_{\text{\textgreek{m}\textgreek{n}}}^{2}w\frac{\partial^{\text{\textgreek{m}}}w\cdot\partial^{\text{\textgreek{n}}}w}{|\partial w|_{h}^{2}}\big)|X|_{h}^{2}.\label{eq:NonGradientDirection}
\end{equation}
 That is, the negative part of the Hessian of $w$ can be controlled
by the value of the Hessian in the gradient direction.%
\footnote{In the case where $\mathcal{H}^{+}=\emptyset$, in view of the fact
that in this case $g(T,T)<0$ everywhere on $\mathcal{D}$ according
to Assumption \hyperref[Assumption 3]{3}, we can similarly arrange
so that (\ref{eq:PositiveGradientDirection}) and (\ref{eq:NonGradientDirection})
hold on the whole of $\{r\le R+1\}\backslash\big(\mathbb{R}\times\cup_{j=1}^{n}B(x_{j},\frac{1}{4}\text{\textgreek{e}}_{j})\big)$.%
} 

We will denote for simplicity the Hessian in the gradient direction
as 
\begin{equation}
\nabla_{nn}^{2}w\doteq\nabla_{\text{\textgreek{m}\textgreek{n}}}^{2}w\frac{\partial^{\text{\textgreek{m}}}w\cdot\partial^{\text{\textgreek{n}}}w}{|\partial w|_{h}^{2}}.
\end{equation}
Then, (\ref{eq:NonGradientDirection}) and a Cauchy--Schwarz inequality
imply that:

\begin{equation}
\nabla_{\text{\textgreek{m}\textgreek{n}}}^{2}w\cdot\partial^{\text{\textgreek{m}}}(e^{sw}\bar{\text{\textgreek{y}}}_{k})\cdot\partial^{\text{\textgreek{n}}}(e^{sw}\text{\textgreek{y}}_{k})\ge-2c_{0}\nabla_{\text{\textgreek{m}\textgreek{n}}}^{2}we^{2sw}\big(s^{2}|\partial w|_{h}^{2}|\text{\textgreek{y}}_{k}|^{2}+|\partial\text{\textgreek{y}}_{k}|_{h}^{2}\big).\label{eq:DerivativeTerm}
\end{equation}

We will now fix the cut-off function $\text{\textgreek{q}}$ appearing
in (\ref{eq:PreliminaryCarlemannInequality}). Fixing a smooth function
$\text{\textgreek{q}}=\text{\textgreek{q}}_{R}:\text{\textgreek{S}}\rightarrow[0,1]$
such that $\text{\textgreek{q}}_{R}\equiv0$ on $\{r\le\frac{3r_{0}}{4}\}\cup\{r\ge R+1\}\cup_{j=1}^{n}B(x_{j},\frac{1}{4}\text{\textgreek{e}}_{j})$
and $\text{\textgreek{q}}_{R}\equiv1$ on $\{\frac{7r_{0}}{8}\le r\le R\}\backslash\cup_{j=1}^{n}B(x_{j},\frac{1}{2}\text{\textgreek{e}}_{j})$,
extending it on $\mathcal{D}\backslash\mathcal{H}^{-}$ by the condition
$T\text{\textgreek{q}}_{R}=0$, we deduce from (\ref{eq:PreliminaryCarlemannInequality}),
in view also of (\ref{eq:PositiveGradientDirection}), (\ref{eq:NonGradientDirection})
and (\ref{eq:DerivativeTerm}): 
\begin{equation}
\begin{split}(1-2c_{0})\cdot s^{3}\int_{\mathcal{R}(0,t^{*})}\text{\textgreek{q}}_{R}\nabla_{nn}^{2}w & \cdot|\partial w|_{h}^{2}e^{2sw}|\text{\textgreek{y}}_{k}|^{2}\le\\
\le & C\cdot s\int_{\mathcal{R}(0,t^{*})}\text{\textgreek{q}}_{R}(\square_{g}^{2}w)\cdot e^{2sw}|\text{\textgreek{y}}_{k}|^{2}+2c_{0}s\cdot\int_{\mathcal{R}(0,t^{*})}\text{\textgreek{q}}_{R}\nabla_{nn}^{2}w\cdot e^{2sw}|\partial\text{\textgreek{y}}_{k}|_{h}^{2}+\\
 & +\int_{\mathcal{R}(0,t^{*})}\text{\textgreek{q}}_{R}e^{2sw}|F_{k}|^{2}+\int_{\{t=t^{*}\}}\text{\textgreek{q}}_{R}\cdot J_{\text{\textgreek{m}}}n^{\text{\textgreek{m}}}-\int_{\{t=0\}}\text{\textgreek{q}}_{R}\cdot J_{\text{\textgreek{m}}}n^{\text{\textgreek{m}}}-\int_{\mathcal{R}(0,t^{*})}\partial^{\text{\textgreek{m}}}\text{\textgreek{q}}_{R}\cdot J_{\text{\textgreek{m}}}.
\end{split}
\label{eq:PreliminaryCarlemannInequality-1}
\end{equation}

In view of Lemmas \ref{lem:BoundF} and \ref{lem:BoundednessPsiK}
(using also some Hardy-type inequality to control the 0-th order terms
on the slices $t=0,t^{*}$), we obtain from \ref{eq:PreliminaryCarlemannInequality-1}
after dividing with $(1-2c_{0})$ (provided $c_{0}$ was chosen so
that $c_{0}\le\frac{1}{4}$):
\begin{equation}
\begin{split}s^{3}\int_{\mathcal{R}(0,t^{*})}\text{\textgreek{q}}_{R}\nabla_{nn}^{2}w & \cdot|\partial w|_{h}^{2}e^{2sw}|\text{\textgreek{y}}_{k}|^{2}\le\\
\le & C\cdot s\int_{\mathcal{R}(0,t^{*})}\text{\textgreek{q}}_{R}(\square_{g}^{2}w)\cdot e^{2sw}|\text{\textgreek{y}}_{k}|^{2}+\frac{2c_{0}}{1-2c_{0}}\cdot s\cdot\int_{\mathcal{R}(0,t^{*})}\text{\textgreek{q}}_{R}\nabla_{nn}^{2}w\cdot e^{2sw}|\partial\text{\textgreek{y}}_{k}|_{h}^{2}\Big\}+\\
 & +C(R)\cdot\int_{\mathcal{R}(0,t^{*})\cap\{supp(\partial\text{\textgreek{q}})\}}(1+s^{2})e^{2sw}\big\{|\partial\text{\textgreek{y}}_{k}|_{h}^{2}+|\text{\textgreek{y}}_{k}|^{2}\big\}+e^{C(R)\cdot s}\cdot C(\text{\textgreek{w}}_{0},R)\cdot\int_{t=0}J_{\text{\textgreek{m}}}^{N}(\text{\textgreek{y}})n^{\text{\textgreek{m}}}.
\end{split}
\label{eq:AlmostCarlemannInequality}
\end{equation}

We will now proceed to absorb the first two terms of the righ hand
side of (\ref{eq:AlmostCarlemannInequality}) into the left hand side
after choosing $s$ sufficirently large. We will use the auxiliary
current 
\begin{equation}
J_{\text{\textgreek{m}}}^{aux}=\nabla_{nn}^{2}w\cdot e^{2sw}\cdot Re\{\bar{\text{\textgreek{y}}}_{k}\cdot\partial_{\text{\textgreek{m}}}\text{\textgreek{y}}_{k}\}-\frac{1}{2}\partial_{\text{\textgreek{m}}}\Big\{\nabla_{nn}^{2}w\cdot e^{2sw}\Big\}|\text{\textgreek{y}}_{k}|^{2},
\end{equation}
for which the divergence theorem yields 
\begin{multline}
\int_{\mathcal{R}(0,t^{*})}\text{\textgreek{q}}_{R}\cdot\Big(\nabla_{nn}^{2}w\cdot e^{2sw}\cdot\partial^{\text{\textgreek{m}}}\bar{\text{\textgreek{y}}}_{k}\cdot\partial_{\text{\textgreek{m}}}\text{\textgreek{y}}_{k}-\frac{1}{2}\square_{g}\big\{\nabla_{nn}^{2}w\cdot e^{2sw}\big\}\cdot|\text{\textgreek{y}}_{k}|^{2}\Big)=\\
=-\int_{\mathcal{R}(0,t^{*})}\nabla_{nn}^{2}w\cdot e^{2sw}\cdot Re\{\bar{\text{\textgreek{y}}}_{k}\cdot F_{k}\}-\int_{t=t^{*}}\text{\textgreek{q}}_{R}\cdot J_{\text{\textgreek{m}}}^{aux}n^{\text{\textgreek{m}}}+\int_{t=0}\text{\textgreek{q}}_{R}\cdot J_{\text{\textgreek{m}}}^{aux}n^{\text{\textgreek{m}}}-\int_{\mathcal{R}(0,t^{*})}\partial^{\text{\textgreek{m}}}\text{\textgreek{q}}_{R}\cdot J_{\text{\textgreek{m}}}^{aux}.\label{eq:BeforeLagrangeIdentity}
\end{multline}
Therefore, from (\ref{eq:BeforeLagrangeIdentity}) we obtain the following
Langrangean identity:

\begin{multline}
\int_{\mathcal{R}(0,t^{*})}\text{\textgreek{q}}_{R}\cdot\nabla_{nn}^{2}w\cdot e^{2sw}\cdot\partial^{\text{\textgreek{m}}}\bar{\text{\textgreek{y}}}_{k}\cdot\partial_{\text{\textgreek{m}}}\text{\textgreek{y}}_{k}=\frac{1}{2}\int_{\mathcal{R}(0,t^{*})}\text{\textgreek{q}}_{R}\cdot\square_{g}\big\{\nabla_{nn}^{2}w\cdot e^{2sw}\big\}\cdot|\text{\textgreek{y}}_{k}|^{2}-\\
-\int_{\mathcal{R}(0,t^{*})}\nabla_{nn}^{2}w\cdot e^{2sw}\cdot Re\{\bar{\text{\textgreek{y}}}_{k}\cdot F_{k}\}-\int_{t=t^{*}}\text{\textgreek{q}}_{R}\cdot J_{\text{\textgreek{m}}}^{aux}n^{\text{\textgreek{m}}}+\int_{t=0}\text{\textgreek{q}}_{R}\cdot J_{\text{\textgreek{m}}}^{aux}n^{\text{\textgreek{m}}}-\int_{\mathcal{R}(0,t^{*})}\partial^{\text{\textgreek{m}}}\text{\textgreek{q}}_{R}\cdot J_{\text{\textgreek{m}}}^{aux}.\label{eq:LagrangeIdentity}
\end{multline}

In order to estimate the second term of the right hand side of (\ref{eq:AlmostCarlemannInequality}),
we will use the trivial bound 
\begin{equation}
|\partial\text{\textgreek{y}}_{k}|_{h}^{2}\le C_{1}\cdot\partial^{\text{\textgreek{m}}}\bar{\text{\textgreek{y}}}_{k}\cdot\partial_{\text{\textgreek{m}}}\text{\textgreek{y}}_{k}+C_{2}|T\text{\textgreek{y}}_{k}|^{2}.
\end{equation}
After using Lemmas \ref{lem:BoundF} and \ref{lem:BoundednessPsiK}
to deal with the $F_{k}$-terms and the boundary terms in (\ref{eq:LagrangeIdentity})
(as we did in the derivation of (\ref{eq:AlmostCarlemannInequality})),
from (\ref{eq:AlmostCarlemannInequality}), (\ref{eq:LagrangeIdentity})
and Lemma \ref{lem:DtToOmegaInequalities2} (which allows us to bound
$|T\text{\textgreek{y}}_{k}|^{2}$ bulk terms by $\text{\textgreek{w}}_{k}^{2}|\text{\textgreek{y}}_{k}|^{2}$)
terms we obtain: 
\begin{align}
s^{3}\int_{\mathcal{R}(0,t^{*})}\text{\textgreek{q}}_{R}\cdot\nabla_{nn}^{2}w\cdot|\partial w|_{h}^{2}e^{2sw}|\text{\textgreek{y}}_{k}|^{2}\le & C\cdot s\int_{\mathcal{R}(0,t^{*})}\text{\textgreek{q}}_{R}\cdot(\square_{g}^{2}w)\cdot e^{2sw}|\text{\textgreek{y}}_{k}|^{2}+C(\text{\textgreek{w}}_{0})\cdot s\cdot\text{\textgreek{w}}_{k}^{2}\int_{\mathcal{R}(0,t^{*})}\text{\textgreek{q}}_{R}\nabla_{nn}^{2}w\cdot e^{2sw}|\text{\textgreek{y}}_{k}|^{2}+\label{eq:AlmostCarlemannInequality-1}\\
 & +C\cdot\frac{2c_{0}}{1-2c_{0}}\cdot s\cdot\int_{\mathcal{R}(0,t^{*})}\text{\textgreek{q}}_{R}\cdot\square_{g}\big\{\nabla_{nn}^{2}w\cdot e^{2sw}\big\}\cdot|\text{\textgreek{y}}_{k}|^{2}+\nonumber \\
 & +C(\text{\textgreek{w}}_{0},R)\int_{\mathcal{R}(0,t^{*})\cap\{supp(\partial\text{\textgreek{q}}_{R})\}}(1+s^{2})e^{2sw}\big\{|\partial\text{\textgreek{y}}_{k}|_{h}^{2}+|\text{\textgreek{y}}_{k}|^{2}\big\}+\nonumber \\
 & +e^{C(R)\cdot s}\cdot C(\text{\textgreek{w}}_{0},R)\int_{t=0}J_{\text{\textgreek{m}}}^{N}(\text{\textgreek{y}})n^{\text{\textgreek{m}}}.\nonumber 
\end{align}

In view of the fact that in the support of $\text{\textgreek{q}}_{R}$
we can bound $\nabla_{nn}^{2}w\cdot|\partial w|_{h}^{2}\ge1$ (due
to (\ref{eq:PositiveGradientDirection})), the third term of the right
hand side of (\ref{eq:AlmostCarlemannInequality-1}) can be bounded
as follows: 
\begin{equation}
C\cdot\frac{2c_{0}}{1-2c_{0}}\cdot s\cdot\int_{\mathcal{R}(0,t^{*})}\text{\textgreek{q}}_{R}\cdot\square_{g}\big\{\nabla_{nn}^{2}w\cdot e^{2sw}\big\}\cdot|\text{\textgreek{y}}_{k}|^{2}\le C(R)\cdot\frac{2c_{0}}{1-2c_{0}}\cdot(s+s^{3})\cdot\int_{\mathcal{R}(0,t^{*})}\text{\textgreek{q}}_{R}\cdot\nabla_{nn}^{2}w\cdot|\partial w|_{h}^{2}e^{2sw}|\text{\textgreek{y}}_{k}|^{2}.
\end{equation}
Thus, if $c_{0}$ is small enough in terms of $R$ (which due to (\ref{eq:NonGradientDirection})
corresponds to $\text{\textgreek{l}}$ having been chosen large enough
in terms of $R$), the third term of the right hand of (\ref{eq:AlmostCarlemannInequality-1})
can be absorbed into the left hand side. Moreover, if we set $s=C\cdot\text{\textgreek{w}}_{k}$,
where $C=C(R,\text{\textgreek{w}}_{0})>0$ is large enough in terms
of $R$, $\text{\textgreek{w}}_{0}$, the left hand side of (\ref{eq:AlmostCarlemannInequality-1})
can also absorb the first two terms of the right hand side, yielding
the following Carleman type inequality:

\begin{equation}
s^{3}\int_{\mathcal{R}(0,t^{*})}\text{\textgreek{q}}_{R}e^{2sw}|\text{\textgreek{y}}_{k}|^{2}\le C(\text{\textgreek{w}}_{0},R)\cdot\int_{\mathcal{R}(0,t^{*})\cap\{supp(\partial\text{\textgreek{q}})\}}(1+s^{2})e^{2sw}\big\{|\partial\text{\textgreek{y}}_{k}|_{h}^{2}+|\text{\textgreek{y}}_{k}|^{2}\big\}+e^{C(R)\cdot s}\cdot C(\text{\textgreek{w}}_{0},R)\int_{t=0}J_{\text{\textgreek{m}}}^{N}(\text{\textgreek{y}})n^{\text{\textgreek{m}}}.\label{eq:CarlemannInequality1}
\end{equation}
In view of the Lagrangean identity (\ref{eq:LagrangeIdentity}), we
can also upgrade inequality \ref{eq:CarlemannInequality1} to:

\begin{multline}
s^{3}\int_{\mathcal{R}(0,t^{*})}\text{\textgreek{q}}_{R}e^{2sw}|\text{\textgreek{y}}_{k}|^{2}+c(\text{\textgreek{w}}_{0},R)\cdot s\int_{\mathcal{R}(0,t^{*})}\text{\textgreek{q}}_{R}e^{2sw}|\partial\text{\textgreek{y}}_{k}|_{h}^{2}\le\\
\le C(\text{\textgreek{w}}_{0},R)\int_{\mathcal{R}(0,t^{*})\cap\{supp(\partial\text{\textgreek{q}}_{R})\}}(1+s^{2})e^{2sw}\big\{|\partial\text{\textgreek{y}}_{k}|_{h}^{2}+|\text{\textgreek{y}}_{k}|^{2}\big\}+e^{C(R)\cdot s}\cdot C(\text{\textgreek{w}}_{0},R)\int_{t=0}J_{\text{\textgreek{m}}}^{N}(\text{\textgreek{y}})n^{\text{\textgreek{m}}}\label{eq:CarlemannInequality}
\end{multline}
for some constant $c(\text{\textgreek{w}}_{0},R)$ small in terms
of $\text{\textgreek{w}}_{0},R$.

The support of $\partial\text{\textgreek{q}}_{R}$ in $\mathcal{D}\backslash\mathcal{H}^{-}$
breaks into three pieces: The part contained in $\mathbb{R}\times(\cup_{j=1}^{n}B(x_{j},\text{\textgreek{e}}_{j}))$,
the part contained in $\{\frac{3r_{0}}{4}\le r\le\frac{7r_{0}}{8}\}$
and the part contained in $\{R\le r\le R+1\}$. Thus, the first term
of the right hand side of (\ref{eq:CarlemannInequality}) naturally
splits into three summands (corresponding to the aforementioned partition
of $supp(\partial\text{\textgreek{q}}_{R})$):
\begin{align}
\int_{\mathcal{R}(0,t^{*})\cap\{supp(\partial\text{\textgreek{q}}_{R})\}}(1+s^{2})e^{2sw}\big\{|\partial\text{\textgreek{y}}_{k}|_{h}^{2}+|\text{\textgreek{y}}_{k}|^{2}\big\}= & \int_{\mathcal{R}(0,t^{*})\cap\{supp(\partial\text{\textgreek{q}}_{R})\}\cap\big(\mathbb{R}\times(\cup_{j=1}^{n}B(x_{j},\text{\textgreek{e}}_{j}))\big)}(1+s^{2})e^{2sw}\big\{|\partial\text{\textgreek{y}}_{k}|_{h}^{2}+|\text{\textgreek{y}}_{k}|^{2}\big\}+\label{eq:PartitionOfNearBoundaryTerms}\\
 & +\int_{\mathcal{R}(0,t^{*})\cap\{supp(\partial\text{\textgreek{q}}_{R})\}\cap\{\frac{3r_{0}}{4}\le r\le\frac{7r_{0}}{8}\}}(1+s^{2})e^{2sw}\big\{|\partial\text{\textgreek{y}}_{k}|_{h}^{2}+|\text{\textgreek{y}}_{k}|^{2}\big\}+\nonumber \\
 & +\int_{\mathcal{R}(0,t^{*})\cap\{supp(\partial\text{\textgreek{q}}_{R})\}\cap\{R\le r\le R+1\}}(1+s^{2})e^{2sw}\big\{|\partial\text{\textgreek{y}}_{k}|_{h}^{2}+|\text{\textgreek{y}}_{k}|^{2}\big\}.\nonumber 
\end{align}
 Therefore, in order to reach a statement close to (\ref{eq:CarlemannTypeInequality}),
we have to dispense with the first two summands of the right hand
side of (\ref{eq:PartitionOfNearBoundaryTerms}).%
\footnote{Notice that the same steps apply in the case where $\mathcal{H}^{+}=\emptyset$,
the only difference being that in that case the second term of the
right hand side of (\ref{eq:PartitionOfNearBoundaryTerms}) can be
dropped.%
} We will accomplish this task in two steps.

\smallskip{}

\noindent \emph{1.} In order to deal with the first term of the right
hand side of (\ref{eq:PartitionOfNearBoundaryTerms}), we will make
use of the second Morse function function $\text{\textgreek{f}}'$
defined as (\ref{eq:SecondMorseFunction}).

By defining $w'=e^{\text{\textgreek{l}}\text{\textgreek{f}}'}$ and
$\text{\textgreek{q}}{}_{R}^{\prime}=\text{\textgreek{q}}_{R}\circ\text{\textgreek{z}}$,
and repeating the same procedure as before (leading to (\ref{eq:CarlemannInequality}))
with $w'$ and $\text{\textgreek{q}}_{R}^{\prime}$ in place of $w$
and $\text{\textgreek{q}}_{R}$, we obtain the inequality

\begin{multline}
s^{3}\int_{\mathcal{R}(0,t^{*})}\text{\textgreek{q}}_{R}^{\prime}e^{2sw'}|\text{\textgreek{y}}_{k}|^{2}+c(\text{\textgreek{w}}_{0},R)\int_{\mathcal{R}(0,t^{*})}\text{\textgreek{q}}_{R}^{\prime}e^{2sw'}|\partial\text{\textgreek{y}}_{k}|_{h}^{2}\le\\
\le C(\text{\textgreek{w}}_{0},R)\int_{\mathcal{R}(0,t^{*})\cap\{supp(\partial\text{\textgreek{q}}_{R}^{\prime})\}}(1+s^{2})e^{2sw'}\big\{|\partial\text{\textgreek{y}}_{k}|_{h}^{2}+|\text{\textgreek{y}}_{k}|^{2}\big\}+e^{C(R)\cdot s}\cdot C(\text{\textgreek{w}}_{0},R)\int_{t=0}J_{\text{\textgreek{m}}}^{N}(\text{\textgreek{y}})n^{\text{\textgreek{m}}}.\label{eq:CarlemannInequality2}
\end{multline}

Recall that $\text{\textgreek{f}}<\text{\textgreek{f}}'$ on $\cup_{j=1}^{n}B(x_{j},\text{\textgreek{e}}_{j})$
and $\text{\textgreek{f}}'<\text{\textgreek{f}}$ on $\cup_{j=1}^{n}B(y_{j},\text{\textgreek{e}}_{j})$,
with $\cup_{j=1}^{n}B(x_{j},\text{\textgreek{e}}_{j})$ and $\cup_{j=1}^{n}B(y_{j},\text{\textgreek{e}}_{j})$
being disjoint. This condition guarantees that if the constant $C(\text{\textgreek{w}}_{0},R)$
in the definition $s=C(\text{\textgreek{w}}_{0},R)\cdot\text{\textgreek{w}}_{k}$
is chosen large enough, then the term 
\[
\int_{\mathcal{R}(0,t^{*})\cap\{supp(\partial\text{\textgreek{q}}_{R})\}\cap\big(\mathbb{R}\times(\cup_{j=1}^{n}B(x_{j},\text{\textgreek{e}}_{j}))\big)}(1+s^{2})e^{2sw}\big\{|\partial\text{\textgreek{y}}_{k}|_{h}^{2}+|\text{\textgreek{y}}_{k}|^{2}\big\}
\]
 of (\ref{eq:CarlemannInequality}) can be controlled by the left
hand side of (\ref{eq:CarlemannInequality2}), while the term 
\[
\int_{\mathcal{R}(0,t^{*})\cap\{supp(\partial\text{\textgreek{q}}_{R}^{\prime})\}\cap\big(\mathbb{R}\times(\cup_{j=1}^{n}B(y_{j},\text{\textgreek{e}}_{j}))\big)}(1+s^{2})e^{2sw'}\big\{|\partial\text{\textgreek{y}}_{k}|_{h}^{2}+|\text{\textgreek{y}}_{k}|^{2}\big\}
\]
 of (\ref{eq:CarlemannInequality2}) can be controlled by the left
hand side of \ref{eq:CarlemannInequality}. Therefore, recalling that
$w=w'$ on the complement of $\mathbb{R}\times\cup_{j=1}^{n}B(x_{j},\frac{1}{2}\bar{\text{\textgreek{e}}}_{j})$,
and redefining $\text{\textgreek{q}}_{R}$ to be equal to the maximum
of the previous two cut-offs $\text{\textgreek{q}}_{R},\text{\textgreek{q}}'_{R}$
\footnote{note that in this way, we have $\text{\textgreek{q}}_{R}\equiv1$
on the whole of $\{\frac{7r_{0}}{8}\le r\le R\}$%
}, we obtain after adding (\ref{eq:CarlemannInequality}) and (\ref{eq:CarlemannInequality2}):

\begin{multline}
\int_{\mathcal{R}(0,t^{*})}\text{\textgreek{q}}_{R}\cdot\Big(e^{2sw}+e^{2sw'}\Big)|\text{\textgreek{y}}_{k}|^{2}+\int_{\mathcal{R}(0,t^{*})}\text{\textgreek{q}}_{R}\cdot\Big(e^{2sw}+e^{2sw'}\Big)|\partial\text{\textgreek{y}}_{k}|_{h}^{2}\le\\
\le C(\text{\textgreek{w}}_{0},R)\int_{\mathcal{R}(0,t^{*})\cap(\{\frac{3r_{0}}{4}\le r\le\frac{7r_{0}}{8}\}\cup\{R\le r\le R+1\})}(1+s^{2})e^{2sw}\big\{|\partial\text{\textgreek{y}}_{k}|_{h}^{2}+|\text{\textgreek{y}}_{k}|^{2}\big\}+e^{C(R)\cdot s}\cdot C(\text{\textgreek{w}}_{0},R)\int_{t=0}J_{\text{\textgreek{m}}}^{N}(\text{\textgreek{y}})n^{\text{\textgreek{m}}}.\label{eq:NonDegenerateCarlemannInequality}
\end{multline}

\smallskip{}

\noindent \emph{2.} In order to deal with the first term of the right
hand side of (\ref{eq:NonDegenerateCarlemannInequality}), we will
use the red shift current $K^{N}$, which is positive definite near
the horizon (see Assumption \hyperref[Assumption 2]{2}). This will
also help us extend our control of the inegral of $|\text{\textgreek{y}}_{k}|^{2}+|\partial\text{\textgreek{y}}_{k}|_{h}^{2}$
on the whole of $\{r\le R\}$.%
\footnote{Notice that this step is unnecessary in the case where $\mathcal{H}^{+}=\emptyset$,
as in that case the second term of the right hand side of (\ref{eq:PartitionOfNearBoundaryTerms})
can be dropped.%
} 

Setting $\tilde{\text{\textgreek{q}}}_{R}=max(1_{\{0\le r\le r_{0}\}},\text{\textgreek{q}}_{R})$
(note that $\tilde{\text{\textgreek{q}}}_{R}\equiv1$ on $\{r\le R\}$),
we compute using the divergence theorem:
\begin{align}
\int_{\mathcal{R}(0,t^{*})}\tilde{\text{\textgreek{q}}}_{R}K^{N}(\text{\textgreek{y}}_{k})= & -\int_{t=t^{*}}\tilde{\text{\textgreek{q}}}_{R}J_{\text{\textgreek{m}}}^{N}(\text{\textgreek{y}}_{k})n^{\text{\textgreek{m}}}-\int_{\mathcal{H}^{+}\cap\mathcal{R}(0,t^{*})}\tilde{\text{\textgreek{q}}}_{R}J_{\text{\textgreek{m}}}^{N}(\text{\textgreek{y}}_{k})n^{\text{\textgreek{m}}}+\int_{t=0}\tilde{\text{\textgreek{q}}}_{R}J_{\text{\textgreek{m}}}^{N}(\text{\textgreek{y}}_{k})n^{\text{\textgreek{m}}}-\label{eq:RedShiftIdentityILED}\\
 & -\int_{\mathcal{R}(0,t^{*})}\partial^{\text{\textgreek{m}}}\tilde{\text{\textgreek{q}}}_{R}\cdot J_{\text{\textgreek{m}}}^{N}(\text{\textgreek{y}}_{k})-\int_{\mathcal{R}(0,t^{*})}\tilde{\text{\textgreek{q}}}_{R}Re\{N\bar{\text{\textgreek{y}}}_{k}\cdot F_{k}\}.\nonumber 
\end{align}
Hence, in view of (\ref{eq:PositiveKN}), Lemmas \ref{lem:BoundF}
and \ref{lem:BoundednessPsiK}, and a Cauchy-Schwarz inequality for
the term $\int_{\mathcal{R}(0,t^{*})}\tilde{\text{\textgreek{q}}}_{R}Re\{N\bar{\text{\textgreek{y}}}_{k}\cdot F_{k}\}$,
we obtain from (\ref{eq:RedShiftIdentityILED}) for any small $\text{\textgreek{d}}>0$:

\begin{multline}
\int_{R(0,t^{*})\cap\{r\le\frac{7}{8}r_{0}\}}|\partial\text{\textgreek{y}}_{k}|_{h}^{2}\le C\cdot\int_{\mathcal{R}(0,t^{*})\cap\{r_{0}\le r\le2r_{0}\}}|\partial\text{\textgreek{y}}_{k}|_{h}^{2}+\text{\textgreek{d}}\cdot\int_{\mathcal{R}(0,t^{*})}\text{\textgreek{q}}_{R}|\partial\text{\textgreek{y}}_{k}|_{h}^{2}+\\
+C(\text{\textgreek{w}}_{0})\text{\textgreek{d}}^{-1}\int_{t=0}J_{\text{\textgreek{m}}}^{N}(\text{\textgreek{y}}_{k})n^{\text{\textgreek{m}}}+C(R)\cdot\int_{\mathcal{R}(0,t^{*})\cap\{R\le r\le R+1\}}\big\{|\partial\text{\textgreek{y}}_{k}|_{h}^{2}+|\text{\textgreek{y}}_{k}|^{2}\}.\label{eq:NearTheHorizonDerivativeEstimate}
\end{multline}

Hence, after adding (\ref{eq:NearTheHorizonDerivativeEstimate}) multiplied
by $\sup_{\{r\le7r_{0}/8\}}e^{2sw}$ to (\ref{eq:NonDegenerateCarlemannInequality})
for some $\text{\textgreek{d}}$ small enough in terms of $\text{\textgreek{w}}_{0},R$,
we infer:
\begin{equation}
\begin{split}\int_{\mathcal{R}(0,t^{*})\cap\{\frac{7r_{0}}{8}\le r\le R\}}\Big(e^{2sw}+e^{2sw'}\Big) & |\text{\textgreek{y}}_{k}|^{2}+\int_{\mathcal{R}(0,t^{*})\cap\{r\le R\}}\Big(e^{2sw}+e^{2sw'}\Big)|\partial\text{\textgreek{y}}_{k}|_{h}^{2}\le\\
\le & C(\text{\textgreek{w}}_{0},R)\cdot\big(\sup_{\{r\le7r_{0}/8\}}e^{2sw}\big)\int_{\mathcal{R}(0,t^{*})\cap\{r_{0}\le r\le2r_{0}\}}|\partial\text{\textgreek{y}}_{k}|_{h}^{2}+\\
 & +C(\text{\textgreek{w}}_{0},R)\int_{\mathcal{R}(0,t^{*})\cap\{R\le r\le R+1\})}(1+s^{2})e^{2sw}\big\{|\partial\text{\textgreek{y}}_{k}|_{h}^{2}+|\text{\textgreek{y}}_{k}|^{2}\big\}+e^{C(R)\cdot s}\cdot C(\text{\textgreek{w}}_{0},R)\int_{t=0}J_{\text{\textgreek{m}}}^{N}(\text{\textgreek{y}})n^{\text{\textgreek{m}}}.
\end{split}
\label{eq:AlmostThereCarlemann}
\end{equation}
 Since, due to the construction of $\text{\textgreek{f}}$, we have
\begin{equation}
\sup_{\{r\le7r_{0}/8\}}w<\inf_{\{r_{0}\le r\le2r_{0}\}}w,
\end{equation}
 for $s=C(\text{\textgreek{w}}_{0},R)\cdot\text{\textgreek{w}}_{k}$
large enough the first term of the right hand side of (\ref{eq:AlmostThereCarlemann})
can be absorbed into the second term of the left hand side, yielding:

\begin{multline}
\int_{\mathcal{R}(0,t^{*})\cap\{\frac{7r_{0}}{8}\le r\le R\}}\Big(e^{2sw}+e^{2sw'}\Big)\cdot|\text{\textgreek{y}}_{k}|^{2}+\int_{\mathcal{R}(0,t^{*})\cap\{r\le R\}}\Big(e^{2sw}+e^{2sw'}\Big)\cdot|\partial\text{\textgreek{y}}_{k}|_{h}^{2}\le\\
\le C(\text{\textgreek{w}}_{0},R)\int_{\mathcal{R}(0,t^{*})\cap\{R\le r\le R+1\})}(1+s^{2})e^{2sw}\big\{|\partial\text{\textgreek{y}}_{k}|_{h}^{2}+|\text{\textgreek{y}}_{k}|^{2}\big\}+e^{C(R)\cdot s}\cdot C(\text{\textgreek{w}}_{0},R)\int_{t=0}J_{\text{\textgreek{m}}}^{N}(\text{\textgreek{y}})n^{\text{\textgreek{m}}}\label{eq:AlmostThereCarlemann-1}
\end{multline}

Estimating through a Hardy-type inequality

\begin{align}
\int_{\mathcal{R}(0,t^{*})\cap\{r\le\frac{7}{8}r_{0}\}}|\text{\textgreek{y}}_{k}|^{2} & =\int_{\mathcal{R}(0,t^{*})\cap\{r\le\frac{7}{8}r_{0}\}}|\text{\ensuremath{\tilde{\lyxmathsym{\textgreek{q}}}}}_{R}\text{\textgreek{y}}_{k}|^{2}\le\label{eq:HardyForthe0OrderTermsNearTheHorizon}\\
 & \le C(R)\int_{\mathcal{R}(0,t^{*})}\tilde{\text{\textgreek{q}}}_{R}|\partial\text{\textgreek{y}}_{k}|_{h}^{2}+C(R)\int_{\mathcal{R}(0,t^{*})}|\partial\tilde{\text{\textgreek{q}}}_{R}|_{h}^{2}|\text{\textgreek{y}}_{k}|^{2}\le\nonumber \\
 & \le C(R)\int_{\mathcal{R}(0,t^{*})\cap\{r\le R\}}|\partial\text{\textgreek{y}}_{k}|_{h}^{2}+C(R)\int_{\mathcal{R}(0,t^{*})\cap\{R\le r\le R+1\}}\big\{|\partial\text{\textgreek{y}}_{k}|_{h}^{2}+|\text{\textgreek{y}}_{k}|^{2}\big\},\nonumber 
\end{align}
after adding (\ref{eq:HardyForthe0OrderTermsNearTheHorizon}) and
(\ref{eq:AlmostThereCarlemann-1}) we obtain the desired inequality:

\begin{align}
\int_{\mathcal{R}(0,t^{*})\cap\{r\le R\}}\Big(e^{2sw}+e^{2sw'}\Big)\cdot\Big(|\text{\textgreek{y}}_{k}|^{2}+|\partial\text{\textgreek{y}}_{k}|_{h}^{2}\Big)\le & C(\text{\textgreek{w}}_{0},R)\int_{\mathcal{R}(0,t^{*})\cap\{R\le r\le R+1\})}(1+s^{2})e^{2sw}\big\{|\partial\text{\textgreek{y}}_{k}|_{h}^{2}+|\text{\textgreek{y}}_{k}|^{2}\big\}+\label{eq:FinalCarlemann}\\
 & +e^{C(R)\cdot s}\cdot C(\text{\textgreek{w}}_{0},R)\int_{t=0}J_{\text{\textgreek{m}}}^{N}(\text{\textgreek{y}})n^{\text{\textgreek{m}}}.\nonumber 
\end{align}

\end{proof}

\subsection{\label{sub:ILED}Integrated local energy decay for $\text{\textgreek{y}}_{\le\text{\textgreek{w}}_{+}}$}

In this section, we will remove the first term of the right hand side
of (\ref{eq:CarlemannTypeInequality}) from Lemma \ref{lem:Carlemann},
in order to obtain a genuine integrated local energy decay statement
for $\text{\textgreek{y}}_{k}$, $1\le|k|\le n$. 

The main argument involved in the proof of Proposition \ref{prop:ILEDPsiK}
originates in \cite{Rodnianski2011}. In particular, we will use an
ODE Lemma stated and proven in Sections 9 and 10 of \cite{Rodnianski2011}.
We will mainly work in the region $\{r\gg1\}$, where in the $(t,r,\text{\textgreek{sv}})$
coordinate chart, according to Assumption \hyperref[Assumption 1]{1}
(see also Section \ref{sub:Remark} on our simplifying assumption
that this region has a single connected component), the metric takes
the form

\begin{equation}
g=-\big(1-\frac{2M}{r}+O_{4}(r^{-1-a})\big)dt^{2}+\big(1+\frac{2M}{r}+O_{4}(r^{-1-a})\big)dr{}^{2}+r^{2}(g_{\mathbb{S}^{d-1}}+O_{4}(r^{-1-a}))+O_{4}(r^{-a})dtd\text{\textgreek{sv}}.\label{eq:Metric1}
\end{equation}

We will first establish the following Proposition:
\begin{prop}
\label{prop:ILEDPsiK}For any $R>0$ and $0<\text{\textgreek{w}}_{0}\ll1$,
there exists a positive constant $C=C(R,\text{\textgreek{w}}_{0})$
such that for any smooth solution $\text{\textgreek{y}}$ to the wave
equation on $J^{+}(\text{\textgreek{S}})\cap D$ with compactly supported
initial data on $\{t=0\}$, any $\text{\textgreek{w}}_{+}>1$ and
any $1\le|k|\le n$ we can bound:

\begin{equation}
\int_{\{r\le R\}\cap\mathcal{R}(0,t^{*})}\big\{ J_{\text{\textgreek{m}}}^{N}(\text{\textgreek{y}}_{k})n^{\text{\textgreek{m}}}+|\text{\textgreek{y}}_{k}|^{2}\big\}\le C(R,\text{\textgreek{w}}_{0})\cdot e^{C(R,\text{\textgreek{w}}_{0})\cdot\text{\textgreek{w}}_{+}}\int_{t=0}J_{\text{\textgreek{m}}}^{N}(\text{\textgreek{y}})n^{\text{\textgreek{m}}}.\label{eq:ILED}
\end{equation}

\end{prop}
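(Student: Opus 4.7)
The plan is to combine the Carleman-type inequality of Lemma \ref{lem:Carlemann} with an ODE lemma from Sections~9--10 of \cite{Rodnianski2011} in order to dispense with the outer boundary term on the right hand side of (\ref{eq:CarlemannTypeInequality}). Fixing $s=C(\text{\textgreek{w}}_{0},R)\cdot\text{\textgreek{w}}_{k}$ as prescribed there, the obstruction is that the weight $e^{2sw}$ on the annulus $\{R\le r\le R+1\}$ is the largest value of $e^{2sw}$ over $\{r\le R+1\}$, so the boundary term cannot be absorbed directly. The idea is to exploit the bounded frequency support of $\text{\textgreek{y}}_{k}$ in $\{|\text{\textgreek{w}}|\sim\text{\textgreek{w}}_{k}\}$ to show that the boundary term must either decay exponentially in $r$ or be bounded by initial data, and then to choose the Carleman radius $R$ to be a value where this dichotomy is favorable.

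To set this up, I would work in the $(t,r,\text{\textgreek{sv}})$ chart in the region $\{r\gg1\}$, where the metric has the form (\ref{eq:Metric1}), and introduce the mass-type quantities
\[
M_{k}(\text{\textgreek{r}})\doteq\int_{\mathcal{R}(0,t^{*})\cap\{\text{\textgreek{r}}\le r\le\text{\textgreek{r}}+1\}}\big(J_{\text{\textgreek{m}}}^{N}(\text{\textgreek{y}}_{k})n^{\text{\textgreek{m}}}+|\text{\textgreek{y}}_{k}|^{2}\big),
\]
together with analogous quantities involving up to two spatial derivatives of $\text{\textgreek{y}}_{k}$. Using Lemma \ref{lem:DtToOmegaInequalities} (and Lemma~\ref{lem:DtToOmegaInequalities2}) to replace $T$-derivatives by multiples of $\text{\textgreek{w}}_{k}$ modulo acceptable initial-energy errors, the wave equation $\square_{g}\text{\textgreek{y}}_{k}=F_{k}$ becomes, on spatial slices, an elliptic equation of Helmholtz type with eigenvalue parameter $\sim\text{\textgreek{w}}_{k}^{2}$, and integrating by parts in the spherical variables produces a closed system of ordinary differential inequalities for $M_{k}(\text{\textgreek{r}})$ and its derivatives. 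The inhomogeneities in this system come from the $F_{k}$ terms (handled via Lemma~\ref{lem:BoundF}), and they will contribute an additive factor of $C(\text{\textgreek{w}}_{0})\cdot\int_{t=0}J_{\text{\textgreek{m}}}^{N}(\text{\textgreek{y}})n^{\text{\textgreek{m}}}$.

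With this ODE system in hand, the Rodnianski--Tao dichotomy lemma gives: for any given $C_{2}>0$ and any $R_{f}\gg1$, there exists an $R^{\prime}\in[R_{f},2R_{f}]$ such that either $M_{k}(R^{\prime})\le e^{-C_{2}\text{\textgreek{w}}_{k}R^{\prime}}\cdot M_{k}(R_{f})$, or $M_{k}(R^{\prime})\le C(C_{2})\cdot\int_{t=0}J_{\text{\textgreek{m}}}^{N}(\text{\textgreek{y}})n^{\text{\textgreek{m}}}$. Choosing $C_{2}$ large enough that $C_{2}\text{\textgreek{w}}_{k}R^{\prime}$ dominates the weight $2s\cdot\sup_{r\le R^{\prime}+1}w$ (recall that $w\le1$ is uniformly bounded), the first alternative allows us to absorb the boundary term at $r\sim R^{\prime}$ into the left hand side of (\ref{eq:CarlemannTypeInequality}) applied on $\{r\le R^{\prime}\}$; the second alternative bounds it directly by $e^{C(R,\text{\textgreek{w}}_{0})\text{\textgreek{w}}_{k}}\int_{t=0}J_{\text{\textgreek{m}}}^{N}(\text{\textgreek{y}})n^{\text{\textgreek{m}}}$. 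In either case, reapplying Lemma~\ref{lem:Carlemann} with $R^{\prime}$ in place of $R$ and then dropping the strictly positive weight $e^{2sw}$ on the left hand side (which is bounded below on $\{r\le R\}$), we obtain (\ref{eq:ILED}) with constant $C(R,\text{\textgreek{w}}_{0})\cdot e^{C(R,\text{\textgreek{w}}_{0})\text{\textgreek{w}}_{k}}\le C(R,\text{\textgreek{w}}_{0})\cdot e^{C(R,\text{\textgreek{w}}_{0})\text{\textgreek{w}}_{+}}$, since $|\text{\textgreek{w}}_{k}|\le\text{\textgreek{w}}_{+}$.

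The hard part will be the careful derivation and closure of the ODE system for $M_{k}$ and its derivatives in $\text{\textgreek{r}}$: one must track how the lower-order error terms produced by passing from $\square_{g}\text{\textgreek{y}}_{k}=F_{k}$ to a Helmholtz-type ODE on $\{r=\text{\textgreek{r}}\}$ slices depend on $\text{\textgreek{w}}_{k}$, so that the Rodnianski--Tao ODE lemma can be applied with coefficients of the correct size, and so that the contributions from $F_{k}$ and from the non-exact sphericality of the metric (the $O_{4}(r^{-a})$ cross terms in (\ref{eq:Metric1})) assemble into terms of the form $C\cdot e^{C\text{\textgreek{w}}_{k}}\int_{t=0}J_{\text{\textgreek{m}}}^{N}(\text{\textgreek{y}})n^{\text{\textgreek{m}}}$ rather than something worse. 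Lemmas~\ref{lem:BoundF}, \ref{lem:BoundednessPsiK} and \ref{lem:DtToOmegaInequalities} will be used repeatedly to absorb error terms arising in this process.
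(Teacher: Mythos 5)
Your proposal follows the same strategy as the paper's proof: combine the Carleman inequality of Lemma~\ref{lem:Carlemann} with the Rodnianski--Tao ODE dichotomy, exploiting the uniform boundedness of the Carleman weight $w$ so that the outer boundary term is either absorbed (exponential decay scenario) or controlled directly by initial data (boundedness scenario). This is correct, and it is exactly the route the paper takes. However, the step you yourself flag as ``the hard part'' --- the derivation of a closed ODE system --- is where a genuine gap remains, and ``integrating by parts in the spherical variables'' will not produce it as stated.

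Two concrete issues. First, your $M_{k}(\text{\textgreek{r}})$ is an integral over the annulus $\{\text{\textgreek{r}}\le r\le\text{\textgreek{r}}+1\}$, but the Rodnianski--Tao framework requires quantities on a \emph{single} coordinate sphere $\{r=\text{\textgreek{r}}\}\cap\mathcal{R}(0,t^{*})$ with respect to the fixed product measure $dg_{\mathbb{S}^{d-1}}dt$, precisely so that $\frac{d}{d\text{\textgreek{r}}}$ of such a quantity is again a single-sphere quantity; with the annular definition, $\frac{d}{d\text{\textgreek{r}}}M_{k}$ is a difference of boundary integrals at two distinct radii and does not close. Second, and more fundamentally, the masses involving $|\text{\textgreek{y}}_{k}|^{2}$ and first derivatives alone cannot be closed by integration by parts: one must introduce the \emph{Pohozaev flux} $\mathscr{P}(\text{\textgreek{r}})$, a specific signed combination of first-derivative terms in which $|\partial_{r}\tilde{\text{\textgreek{Y}}}_{k}|^{2}$ and $|\partial_{t}\tilde{\text{\textgreek{Y}}}_{k}|^{2}$ enter with opposite sign to the angular and zeroth-order contributions, obtained by pairing the reduced wave equation with $(1+a_{rr})^{-1}\tilde{\text{\textgreek{Y}}}_{k}$ and integrating by parts \`a la Pohozaev. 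The approximate monotonicity $\mathscr{P}\lesssim\mathfrak{D}+O(r^{-2-a})\mathscr{M}$ (proved by a Gronwall argument in $r$ using the compact support of $\text{\textgreek{y}}_{k}$) is what lets the system in $(\mathscr{M},\mathscr{F},\mathscr{P})$ close, after eliminating $\mathscr{R}$ by Cauchy--Schwarz and bounding $\mathscr{A}$ from below via the frequency localisation of Lemma~\ref{lem:DtToOmegaInequalities} to produce the decisive $\text{\textgreek{w}}_{k-1}^{2}\mathscr{M}$ term. One also normalises $\tilde{\text{\textgreek{y}}}_{k}=\big(\int_{\mathcal{R}(0,t^{*})}(1+r)^{-2}|\text{\textgreek{y}}_{k}|^{2}\big)^{-1/2}\text{\textgreek{y}}_{k}$ and reduces to $\mathfrak{D}<1$ before invoking the ODE lemma, whose dichotomy is moreover somewhat different from the one you state: the favourable radius $R_{0}$ in the boundedness scenario may lie anywhere in $[C_{1},C(C_{1},C_{2})]$ rather than in $[R_{f},2R_{f}]$, and the exponential scenario is a differential inequality on a fixed interval $[C_{1},10C_{1}]$, not a pointwise bound with $\text{\textgreek{w}}_{k}R'$ in the exponent. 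Without identifying the Pohozaev flux and working on spheres rather than annuli, your sketch cannot be completed.
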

\medskip{}

\noindent \emph{Proof.} We will assume without loss of generality
that $1\le k\le n$, since the case $-n\le k\le-1$ follows in exactly
the same way. We will also assume without loss of generality that
$\text{\textgreek{y}}_{k}$ is real valued (since otherwise we can
first establish (\ref{eq:ILED}) first for $Re(\text{\textgreek{y}}_{k})$
and $Im(\text{\textgreek{y}}_{k})$ and then add the resulting inequalities). 

The proof of the estimate (\ref{eq:ILED}) will follow from a number
of auxilliary lemmas that aim to provide us with control over the
boundary term near $r\sim R$ in the right hand side of (\ref{eq:CarlemannTypeInequality}).
To this end, we introduce a large constant $C_{1}=C_{1}(R,\text{\textgreek{w}}_{0})$,
the magnitude of which will be defined more precisely later in the
proof. We will examine the behavior of $\int_{\{r=\text{\textgreek{r}}\}\cap\mathcal{R}(0,t^{*})}|\text{\textgreek{y}}_{k}|^{2}$
in the region $\text{\textgreek{r}}\ge C_{1}$. 

We will make the following normalization, so that our notations are
in agreement with \cite{Rodnianski2011}: We will set (provided, of
course, $\text{\textgreek{y}}_{k}$ is not identically $0$ on $\mathcal{R}(0,t^{*})$,
in which case (\ref{eq:ILED}) would follow immediately) 
\begin{equation}
\tilde{\text{\textgreek{y}}}_{k}=\big(\int_{\mathcal{R}(0,t^{*})}(1+r)^{-2}|\text{\textgreek{y}}_{k}|^{2}\big)^{-1/2}\text{\textgreek{y}}_{k}\label{eq:RenormalisedPsi}
\end{equation}
 and 
\begin{equation}
\tilde{F}_{k}=\big(\int_{\mathcal{R}(0,t^{*})}(1+r)^{-2}|\text{\textgreek{y}}_{k}|^{2}\big)^{-1/2}F_{k}.\label{eq:RenormalisedF}
\end{equation}
Notice that $\tilde{\text{\textgreek{y}}}_{k}$ solves $\square_{g}\tilde{\text{\textgreek{y}}}_{k}=\tilde{F}_{k}$. 

We will also set, in order to stay close to the notations of \cite{Rodnianski2011}:
\begin{equation}
\mathfrak{D}\doteq\big(\int_{\mathcal{R}(0,t^{*})}r^{-2}|\text{\textgreek{y}}_{k}|^{2}\big)^{-1}\int_{t=0}J_{\text{\textgreek{m}}}^{N}(\text{\textgreek{y}})n^{\text{\textgreek{m}}}.\label{eq:Delta}
\end{equation}
Notice that all the previous quantities (\ref{eq:RenormalisedPsi}),
(\ref{eq:RenormalisedF}) and (\ref{eq:Delta}) are finite, since,
in view of the fact that the initial data for $\text{\textgreek{y}}$
on $\{t=0\}$ were assumed to be supported in a set of the form $\{r\le R_{sup}\}$,
$\text{\textgreek{y}}_{t^{*}}$ and $\text{\textgreek{y}}_{k},\text{\textgreek{y}}_{\le\text{\textgreek{w}}_{+}},\text{\textgreek{y}}_{\ge\text{\textgreek{w}}_{+}}$
are all supported in the cylinder $\{r\lesssim R_{sup}+t^{*}\}$.%
\footnote{Of course, as we remarked in Section \ref{sub:Frequency-cut-off},
no constant in what follows will be allowed to depend on $R_{sup},t^{*}$.%
}

Under the above normalizations, we compute that 
\[
\mathcal{\int}_{R(0,t^{*})}(1+r)^{-2}|\tilde{\text{\textgreek{y}}}_{k}|^{2}=1,
\]
and Lemma \ref{lem:BoundF} implies that for any $q\in\mathbb{N}$:
\[
\int_{\mathcal{R}(0,t^{*})}r^{q}\tilde{F}_{k}\le C_{q}(\text{\textgreek{w}}_{0})\cdot\mathfrak{D}.
\]
Moreover, in case $\mathfrak{D}\ge1$, the desired integrated local
energy decay statement (\ref{eq:ILED}) for $\text{\textgreek{y}}_{k}$
would readily follow: In this case, $\mathfrak{D}\ge1$ immediately
yields 
\begin{equation}
\int_{\mathcal{R}(0,t^{*})}(1+r)^{-2}|\text{\textgreek{y}}_{k}|^{2}\le\int_{t=0}J_{\text{\textgreek{m}}}^{N}(\text{\textgreek{y}})n^{\text{\textgreek{m}}}.
\end{equation}
Using the Lagrangean inequality 
\begin{equation}
\int_{\mathcal{R}(0,t^{*})}\text{\textgreek{q}}\cdot J_{\text{\textgreek{m}}}^{N}(\text{\textgreek{y}}_{k})n^{\text{\textgreek{m}}}\le C(\text{\textgreek{w}}_{0},\text{\textgreek{q}})\cdot(1+\text{\textgreek{w}}_{k}^{2})\cdot\int_{\mathcal{R}(0,t^{*})\cap supp(\text{\textgreek{q}})}|\text{\textgreek{y}}_{k}|^{2}+C(\text{\textgreek{w}}_{0},\text{\textgreek{q}})\cdot\int_{t=0}J_{\text{\textgreek{m}}}^{N}(\text{\textgreek{y}})n^{\text{\textgreek{m}}},
\end{equation}
 which holds for any compactly supported cut-off $\text{\textgreek{q}}$
and is easily obtained by using the current $J_{\text{\textgreek{m}}}=\text{\textgreek{q}}\cdot Re\big\{\text{\textgreek{y}}_{k}\cdot\partial_{\text{\textgreek{m}}}\bar{\text{\textgreek{y}}}_{k}\big\}-(\partial_{\text{\textgreek{m}}}\text{\textgreek{q}})\cdot|\text{\textgreek{y}}_{k}|^{2}$
(similarly with the extraction of (\ref{eq:LagrangeIdentity})), as
well as Lemmas \ref{lem:BoundF}, \ref{lem:BoundednessPsiK} and \ref{lem:DtToOmegaInequalities2}
(the latter enabling us to bound any $|T\text{\textgreek{y}}_{k}|^{2}$
bulk terms by $\text{\textgreek{w}}_{k}^{2}|\text{\textgreek{y}}_{k}|^{2}$
terms), one easily infers (\ref{eq:ILED}). Therefore, we will assume
without loss of generality that 
\begin{equation}
\mathfrak{D}<1.\label{eq:AssumptionDelta}
\end{equation}

We will also set 
\begin{equation}
\tilde{\text{\textgreek{Y}}}_{k}=D\cdot\text{\ensuremath{\tilde{\lyxmathsym{\textgreek{y}}}}}_{k},
\end{equation}
where 
\begin{equation}
D\doteq\big(\sqrt{-det(g)}\cdot g^{rr}\big)^{\frac{1}{2}}=r^{\frac{d-1}{2}}\big(1+O_{4}(r^{-1})\big).
\end{equation}
The factor $D$ was chosen so that $\tilde{\text{\textgreek{Y}}}_{k}$
satisfies in the region $\{r\ge C_{1}\}$ an equation of the form 

\begin{align}
D\cdot\tilde{F}_{k}=-\big(1+ & a_{tt}\big)\partial_{t}^{2}\tilde{\text{\textgreek{Y}}}_{k}+\big(1+a_{rr}\big)\partial_{r}^{2}\tilde{\text{\textgreek{Y}}}_{k}+r^{-2}\cdot\big(\text{\textgreek{D}}_{g_{\mathbb{S}^{d-1}}+O_{4}(r^{-1-a})}-\frac{(d-1)(d-3)}{4}\big)\tilde{\text{\textgreek{Y}}}_{k}+\label{eq:Equation1}\\
 & +a_{t\text{\textgreek{sv}}}\cdot\partial_{t}\partial_{\text{\textgreek{sv}}}\tilde{\text{\textgreek{Y}}}_{k}+a_{\text{\textgreek{sv}}}\cdot\partial_{\text{\textgreek{sv}}}\tilde{\text{\textgreek{Y}}}_{k}+a_{t}\cdot\partial_{t}\tilde{\text{\textgreek{Y}}}_{k}+a\cdot\tilde{\text{\textgreek{Y}}}_{k},\nonumber 
\end{align}
where
\begin{itemize}
\item $a_{tt}=\frac{2M}{r}+O_{4}(r^{-1-a})$, 
\item $a_{rr}=-\frac{2M}{r}+O_{4}(r^{-1-a})$, 
\item $a_{t\text{\textgreek{sv}}}=O_{4}(r^{-2-a})$,
\item $a_{\text{\textgreek{sv}}}=O_{3}(r^{-3-a})$, 
\item $a_{t}=O_{3}(r^{-1-a})$ and 
\item $a=O_{2}(r^{-2-a})$. 
\end{itemize}
Note that the $\partial_{r}$ derivatives appear only in the $\partial_{r}^{2}\tilde{\text{\textgreek{Y}}}_{k}$
term in this expression (owing to the normal form of the metric (\ref{eq:metric})).

Let us define for $r\ge C_{1}$ the following \emph{spherical energies}
(see \cite{Rodnianski2011}):
\begin{itemize}
\item $\mathscr{M}(\text{\textgreek{r}})\doteq\int_{\{r=\text{\textgreek{r}}\}\cap\mathcal{R}(0,t^{*})}|\tilde{\text{\textgreek{Y}}}_{k}|^{2}\, dg_{\mathbb{S}^{d-1}}dt.$ 
\item $\mathscr{F}(\text{\textgreek{r}})\doteq\int_{\{r=\text{\textgreek{r}}\}\cap\mathcal{R}(0,t^{*})}\tilde{\text{\textgreek{Y}}}_{k}\cdot\partial_{r}\tilde{\text{\textgreek{Y}}}_{k}\, dg_{\mathbb{S}^{d-1}}dt.$
\item $\mathscr{R}(\text{\textgreek{r}})\doteq\int_{\{r=\text{\textgreek{r}}\}\cap\mathcal{R}(0,t^{*})}|\partial_{r}\tilde{\text{\textgreek{Y}}}_{k}|^{2}\, dg_{\mathbb{S}^{d-1}}dt.$
\item $\mathscr{A}(\text{\textgreek{r}})\doteq\int_{\{r=\text{\textgreek{r}}\}\cap\mathcal{R}(0,t^{*})}|\frac{1}{r}\partial_{\text{\textgreek{sv}}}\tilde{\text{\textgreek{Y}}}_{k}|^{2}\, dg_{\mathbb{S}^{d-1}}dt.$
\end{itemize}
Notice that the measure of integration used is not the geometric one,
but rather the ``coordinate'' one.

We will also need the following Pohozaev-type flux $\mathcal{\mathscr{P}}$,
which is defined as 
\begin{multline}
\mathcal{\mathscr{P}}(\text{\textgreek{r}})=\int_{\{r=\text{\textgreek{r}}\}\cap\mathcal{R}(0,t^{*}\}}\Big\{|\partial_{r}\tilde{\text{\textgreek{Y}}}_{k}|^{2}+(1+a_{tt})(1+a_{rr})^{-1}|\partial_{t}\tilde{\text{\textgreek{Y}}}_{k}|^{2}-(1+a_{rr})^{-1}r^{-2}\big(|\partial_{\text{\textgreek{sv}}}\tilde{\text{\textgreek{Y}}}_{k}|^{2}+\frac{(d-1)(d-3)}{4}|\tilde{\text{\textgreek{Y}}}_{k}|^{2}\big)+\\
-(1+a_{rr})^{-1}a_{t\text{\textgreek{sv}}}\partial_{t}\tilde{\text{\textgreek{Y}}}_{k}\partial_{\text{\textgreek{sv}}}\tilde{\text{\textgreek{Y}}}_{k}+\big((1+a_{rr})^{-1}a_{\text{\textgreek{sv}}}-r^{-2}\partial_{\text{\textgreek{sv}}}(1+a_{rr})^{-1}\big)\cdot\tilde{\text{\textgreek{Y}}}_{k}\partial_{\text{\textgreek{sv}}}\tilde{\text{\textgreek{Y}}}_{k}+a_{t}\tilde{\text{\textgreek{Y}}}_{k}\partial_{t}\tilde{\text{\textgreek{Y}}}_{k}+a\cdot|\tilde{\text{\textgreek{Y}}}_{k}|^{2}\Big\}\, dg_{\mathbb{S}^{d-1}}dt.\label{eq:PohazevFlux}
\end{multline}
The above integrand can be obtained from (\ref{eq:Equation1}) by
applying multiplying the right hand side of equation (\ref{eq:Equation1})
with $(1+a_{rr})^{-1}\tilde{\text{\textgreek{Y}}}_{k}$ and then formally
integrating by parts in all second order terms schematically as: $a\cdot\partial^{2}\tilde{\text{\textgreek{Y}}}_{k}\cdot\tilde{\text{\textgreek{Y}}}_{k}\rightarrow-\partial\tilde{\text{\textgreek{Y}}}_{k}\cdot\partial(a\cdot\tilde{\text{\textgreek{Y}}}_{k})$,
after first multiplying the the $\partial_{r}^{2}$ term with $-1$. 

Notice that if $C_{1}\gg1$, then 
\begin{equation}
\mathcal{\mathscr{P}}+2\mathcal{\mathscr{A}}+r^{-2-a}\cdot\mathcal{\mathscr{M}}\ge c\cdot\int_{\{r=\text{\textgreek{r}}\}\cap\mathcal{R}(0,t^{*})}(|\partial_{t}\tilde{\text{\textgreek{Y}}}_{k}|^{2}+|\partial_{r}\tilde{\text{\textgreek{Y}}}_{k}|^{2}+|r^{-1}\partial_{\text{\textgreek{sv}}}\tilde{\text{\textgreek{Y}}}_{k}|^{2})\, dg_{\mathbb{S}^{d-1}}dt.
\end{equation}

We will denote with $G$ any function $G:[0,+\infty)\rightarrow\mathbb{R}$
defined in terms of $\tilde{\text{\textgreek{Y}}}_{k}$, such that
there exist $r_{1}\ge0\mbox{ and }C(\text{\textgreek{w}}_{0},R)>0$
for which we can bound 
\begin{equation}
\int_{r_{1}}^{\infty}|G(r)|\, dr\le C(\text{\textgreek{w}}_{0},R)\cdot\mathfrak{D},
\end{equation}
with the constant $C(\text{\textgreek{w}}_{0},R)$ depending only
on $\text{\textgreek{w}}_{0},R$ (and thus independent of $\text{\textgreek{y}},\text{\textgreek{y}}_{k},t^{*}$
etc.) 

It readily follows from the definitions that 
\begin{equation}
\mathcal{\mathscr{M}},\mathcal{\mathscr{A}},\mathcal{\mathscr{R}}\ge0.
\end{equation}
Furthermore, using a Cauchy--Schwarz inequality, we can bound:
\begin{equation}
|\mathcal{\mathscr{F}}|\le(\mathscr{M})^{1/2}\cdot(\mathcal{\mathscr{R}})^{1/2}.\label{eq:Cauchy-Schwartz}
\end{equation}
 Finally, since $\text{\textgreek{y}}_{k}$ is supported in the cylinder
$\{r\lesssim R_{sup}+t^{*}\}$, we have:
\begin{equation}
\lim_{r\rightarrow\infty}\frac{1}{r}\int_{r}^{2r}\big\{\mathcal{\mathscr{M}}(\text{\textgreek{r}})+\mathcal{\mathscr{R}}(\text{\textgreek{r}})+\mathcal{\mathscr{A}}(\text{\textgreek{r}})+|\mathcal{\mathscr{P}}(\text{\textgreek{r}})|\big\}\, d\text{\textgreek{r}}=0.\label{eq:InitialDataInfinity}
\end{equation}

The main result of this section (i.\,e.~(\ref{eq:ILED})) is a consequence
of the fact that the spherical energies obey a particular system of
equations of motion (see also \cite{Rodnianski2011}):
\begin{lem}
The spherical energies satisfy the following system of ODE's:

\begin{equation}
\begin{cases}
\frac{d}{dr}\mathcal{\mathscr{M}}=2\mathcal{\mathscr{F}}\\
\frac{d}{dr}\mathcal{\mathscr{F}}=2\mathscr{R}-\mathcal{\mathscr{P}}+G+O(r^{-3})\cdot\mathcal{\mathscr{M}}\\
\frac{d}{dr}\mathcal{\mathscr{P}}=\frac{2}{r}\mathcal{\mathscr{A}}+O(r^{-3-a})\cdot\mathcal{\mathscr{M}}+O(r^{-1-a})\{\mathcal{\mathscr{P}}+2\mathcal{\mathscr{A}}\}+G.
\end{cases}\label{eq:SystemODE's}
\end{equation}
 The constants in the Big $O$ notation are allowed to depend on $\text{\textgreek{w}}_{0},R$.\end{lem}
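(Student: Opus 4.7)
The first ODE $\frac{d}{dr}\mathscr{M}=2\mathscr{F}$ would follow directly from differentiation under the integral sign on the slice $\{r=\text{\textgreek{r}}\}\cap\mathcal{R}(0,t^*)$, parametrized by $(\text{\textgreek{sv}},t)\in\mathbb{S}^{d-1}\times[0,t^*]$, and requires no error analysis.

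For the second ODE, I would differentiate to obtain $\frac{d\mathscr{F}}{d\text{\textgreek{r}}} = \mathscr{R} + \int \tilde{\text{\textgreek{Y}}}_k\,\partial_r^2\tilde{\text{\textgreek{Y}}}_k\,dgdt$ and then eliminate $\partial_r^2\tilde{\text{\textgreek{Y}}}_k$ via the wave equation (\ref{eq:Equation1}), after solving for $(1+a_{rr})\partial_r^2\tilde{\text{\textgreek{Y}}}_k$. Integration by parts in $t$ on the $\partial_t^2$ summand (producing boundary pieces at $t=0,t^*$), IBP in $\text{\textgreek{sv}}$ on the angular Laplacian, and IBP in $t$ on the mixed $\partial_t\partial_{\text{\textgreek{sv}}}$ term would reproduce the nine summands of $-\mathscr{P}$ modulo three classes of residuals: (i) the source contribution $\int(1+a_{rr})^{-1}\tilde{\text{\textgreek{Y}}}_k\, D\tilde{F}_k\,dgdt$, handled by Cauchy--Schwarz together with Lemma \ref{lem:BoundF} and the renormalizations (\ref{eq:RenormalisedPsi})--(\ref{eq:Delta}); (ii) temporal boundary terms at $t=0,t^*$, controlled via Lemma \ref{lem:BoundednessPsiK} and the same renormalizations; and (iii) coefficient discrepancies of the form $((1+a_{rr})^{-1}-1)\times(\text{lower-order})$, which produce $G$-contributions from cross terms and an $O(r^{-3})\mathscr{M}$ piece from the $|\tilde{\text{\textgreek{Y}}}_k|^2$ correction. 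Combining the extra $\mathscr{R}$ with the $-\mathscr{R}$ implicit in $-\mathscr{P}$ (which contains $-\int|\partial_r\tilde{\text{\textgreek{Y}}}_k|^2$ absent from $\int\tilde{\text{\textgreek{Y}}}_k\partial_r^2\tilde{\text{\textgreek{Y}}}_k$) yields the advertised $2\mathscr{R}-\mathscr{P}$.

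The third ODE requires a Pohozaev-type multiplier: I would multiply (\ref{eq:Equation1}) by $2\partial_r\tilde{\text{\textgreek{Y}}}_k$ and integrate over $\{r=\text{\textgreek{r}}\}\cap\mathcal{R}(0,t^*)$. The leading term $2(1+a_{rr})\partial_r\tilde{\text{\textgreek{Y}}}_k\partial_r^2\tilde{\text{\textgreek{Y}}}_k=(1+a_{rr})\partial_r|\partial_r\tilde{\text{\textgreek{Y}}}_k|^2$ generates $\partial_r\mathscr{R}$ up to a coefficient derivative, while the right-hand side of (\ref{eq:Equation1}), after IBP in $t$ and in $\text{\textgreek{sv}}$, produces $\partial_r$ of the remaining quadratic expressions constituting $\mathscr{P}$. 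The decisive term comes from the angular piece: the identity $r^{-2}\partial_r|\partial_{\text{\textgreek{sv}}}\tilde{\text{\textgreek{Y}}}_k|^2 = \partial_r(r^{-2}|\partial_{\text{\textgreek{sv}}}\tilde{\text{\textgreek{Y}}}_k|^2) + 2r^{-3}|\partial_{\text{\textgreek{sv}}}\tilde{\text{\textgreek{Y}}}_k|^2$ contributes precisely $\frac{2}{r}\mathscr{A}$ after integration over the sphere. Collecting all $\partial_r(\cdot)$ summands reproduces $\frac{d\mathscr{P}}{d\text{\textgreek{r}}}$ on the left, leaving $\frac{2}{r}\mathscr{A}$ plus lower-order residuals on the right. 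The $O(r^{-1-a})\{\mathscr{P}+2\mathscr{A}\}$ contribution arises from $\partial_r$ falling on the $(1+a_{rr})^{-1}, (1+a_{tt})$ factors (each of size $O(r^{-2-a})$), while the $O(r^{-3-a})\mathscr{M}$ piece collects the $\partial_r\{r^{-2}(1+a_{rr})^{-1}\}\cdot\frac{(d-1)(d-3)}{4}|\tilde{\text{\textgreek{Y}}}_k|^2$ corrections together with derivatives of the $a, a_{\text{\textgreek{sv}}}, a_t$ coefficients; the source and boundary pieces are absorbed into $G$ as in the second ODE.

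The main technical obstacle is the Pohozaev computation for the third ODE, which demands careful bookkeeping to isolate $\frac{2}{r}\mathscr{A}$ cleanly and correctly distribute the abundant coefficient-derivative terms across the $G$, $O(r^{-3-a})\mathscr{M}$, and $O(r^{-1-a})\{\mathscr{P}+2\mathscr{A}\}$ categories. A secondary difficulty, common to all three identities, is verifying that the boundary contributions at $t=0,t^*$ and the source contributions involving $D\tilde{F}_k$ remain integrable in $r$ on $[r_1,\infty)$ with bound $C(\text{\textgreek{w}}_0,R)\mathfrak{D}$; this will rely on weighted Cauchy--Schwarz inequalities exploiting the normalization $\int(1+r)^{-2}|\tilde{\text{\textgreek{y}}}_k|^2=1$ together with the polynomial decay in $r$ of $\tilde{F}_k$ provided by Lemma \ref{lem:BoundF}.
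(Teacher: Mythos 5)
Your proposal follows essentially the same route as the paper: differentiate the spherical energies under the integral, eliminate $\partial_r^2\tilde{\text{\textgreek{Y}}}_k$ using equation (\ref{eq:Equation1}), integrate by parts in $t$ and $\text{\textgreek{sv}}$ to recognise the Pohozaev flux $\mathscr{P}$, and fold the source terms and $t$-boundary terms into $G$ via Lemmas \ref{lem:BoundF} and \ref{lem:BoundednessPsiK}. For the third ODE the paper differentiates the explicit expression (\ref{eq:PohazevFlux}) for $\mathscr{P}$ in $\text{\textgreek{r}}$ and then substitutes, rather than multiplying (\ref{eq:Equation1}) by $2\partial_r\tilde{\text{\textgreek{Y}}}_k$ as you propose; the two computations coincide, with the paper's choice dictated by the $(1+a_{rr})^{-1}$ weights already built into $\mathscr{P}$, which make the direct differentiation tidy. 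One small misattribution worth noting: you locate the $O(r^{-3})\mathscr{M}$ term of the second ODE in residual $(1+a_{rr})^{-1}-1$ coefficient factors, but the definition of $\mathscr{P}$ is weighted so that those factors cancel exactly in the IBP. In the paper's derivation (\ref{eq:BeforeSecondEquation}), that term arises instead from the Cauchy--Schwarz/Young step applied to the source integral $\int D|\tilde{F}_k||\tilde{\text{\textgreek{Y}}}_k|$, which is split into $\int r^3D^2|\tilde{F}_k|^2$ (a $G$-type function by Lemma \ref{lem:BoundF}) and a companion $r^{-3}\mathscr{M}$. This does not invalidate your argument, but it is where the bookkeeping actually lives.
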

\begin{proof}
The first equation of (\ref{eq:SystemODE's}) follows readily due
to the particular choice of measure on $\{r=const\}\cap\mathcal{R}(0,t^{*})$
slices, which makes it invariant under the flow of $\partial_{r}$.
Namely, 
\begin{equation}
\frac{d}{d\text{\textgreek{r}}}\Big(\int_{\{r=\text{\textgreek{r}}\}\cap\mathcal{R}(0,t^{*})}|\tilde{\text{\textgreek{Y}}}_{k}|^{2}\, dg_{\mathbb{S}^{d-1}}dt\Big)=2\int_{\{r=\text{\textgreek{r}}\}\cap\mathcal{R}(0,t^{*})}\partial_{r}\tilde{\text{\textgreek{Y}}}_{k}\cdot\tilde{\text{\textgreek{Y}}}_{k}\, dg_{\mathbb{S}^{d-1}}dt.
\end{equation}

In order to extract the second equation of (\ref{eq:SystemODE's}),
we must use the fact that $\tilde{\text{\textgreek{Y}}}_{k}$ satisfies
equation (\ref{eq:Equation1}). In particular, we compute 
\begin{equation}
\frac{d}{d\text{\textgreek{r}}}\Big(\int_{\{r=\text{\textgreek{r}}\}\cap\mathcal{R}(0,t^{*})}\partial_{r}\tilde{\text{\textgreek{Y}}}_{k}\cdot\tilde{\text{\textgreek{Y}}}_{k}\, dg_{\mathbb{S}^{d-1}}dt\Big)=\int_{\{r=\text{\textgreek{r}}\}\cap\mathcal{R}(0,t^{*})}\big(|\partial_{r}\tilde{\text{\textgreek{Y}}}_{k}|^{2}+\tilde{\text{\textgreek{Y}}}_{k}\cdot\partial_{r}^{2}\tilde{\text{\textgreek{Y}}}_{k}\big)\, dg_{\mathbb{S}^{d-1}}dt.\label{eq:FirstDrDerivative}
\end{equation}
Since we can use equation (\ref{eq:Equation1}) to replace $\partial_{r}^{2}\tilde{\text{\textgreek{Y}}}_{k}$
with 
\begin{equation}
\begin{split}(1+a_{rr})^{-1}\Big\{ & D\cdot\tilde{F}_{k}+(1+a_{tt})\partial_{t}^{2}\tilde{\text{\textgreek{Y}}}_{k}-r^{-2}\big(\text{\textgreek{D}}_{g_{\mathbb{S}^{d-1}}+O_{4}(r^{-1-a})}\tilde{\text{\textgreek{Y}}}_{k}-\frac{(d-1)(d-3)}{4}\tilde{\text{\textgreek{Y}}}_{k}\big)-\\
 & -a_{t\text{\textgreek{sv}}}\partial_{t}\partial_{\text{\textgreek{sv}}}\tilde{\text{\textgreek{Y}}}_{k}-a_{\text{\textgreek{sv}}}\partial_{\text{\textgreek{sv}}}\tilde{\text{\textgreek{Y}}}_{k}-a_{t}\partial_{t}\tilde{\text{\textgreek{Y}}}_{k}-a\cdot\tilde{\text{\textgreek{Y}}}_{k}\Big\},
\end{split}
\label{eq:ExpressionDrSquarePsiK}
\end{equation}
 we calculate from (\ref{eq:FirstDrDerivative}) that: 
\begin{equation}
\begin{split}\frac{d}{d\text{\textgreek{r}}}\Big(\int_{\{r=\text{\textgreek{r}}\}\cap\mathcal{R}(0,t^{*})}\partial_{r}\tilde{\text{\textgreek{Y}}}_{k}\cdot\tilde{\text{\textgreek{Y}}}_{k}\, dg_{\mathbb{S}^{d-1}} & dt\Big)=\int_{\{r=\text{\textgreek{r}}\}\cap\mathcal{R}(0,t^{*})}\Big(|\partial_{r}\tilde{\text{\textgreek{Y}}}_{k}|^{2}+(1+a_{rr})^{-1}\tilde{\text{\textgreek{Y}}}_{k}\cdot\Big\{ D\cdot\tilde{F}_{k}+(1+a_{tt})\partial_{t}^{2}\tilde{\text{\textgreek{Y}}}_{k}-\\
 & -r^{-2}\big(\text{\textgreek{D}}_{g_{\mathbb{S}^{d-1}}+O_{4}(r^{-1-a})}\tilde{\text{\textgreek{Y}}}_{k}-\frac{(d-1)(d-3)}{4}\tilde{\text{\textgreek{Y}}}_{k}\big)-a_{t\text{\textgreek{sv}}}\partial_{t}\partial_{\text{\textgreek{sv}}}\tilde{\text{\textgreek{Y}}}_{k}-a_{\text{\textgreek{sv}}}\partial_{\text{\textgreek{sv}}}\tilde{\text{\textgreek{Y}}}_{k}-\\
 & -a_{t}\partial_{t}\tilde{\text{\textgreek{Y}}}_{k}-a\cdot\tilde{\text{\textgreek{Y}}}_{k}\Big\}\Big)\, dg_{\mathbb{S}^{d-1}}dt.
\end{split}
\label{eq:BoundBeforePohozaev}
\end{equation}
 Hence, after integrating by parts in $t$ and $\lyxmathsym{\textgreek{sv}}$
in order to we eliminate the second order derivatives, we conclude
in view of (\ref{eq:PohazevFlux}) Lemma \ref{lem:BoundednessPsiK}%
\footnote{which shows with the boundary terms that result from integration by
parts in $t$ sum up to a $G$-type function of $r$%
} that:

\begin{equation}
\frac{d}{d\text{\textgreek{r}}}\Big(\int_{\{r=\text{\textgreek{r}}\}\cap\mathcal{R}(0,t^{*})}\partial_{r}\tilde{\text{\textgreek{Y}}}_{k}\cdot\tilde{\text{\textgreek{Y}}}_{k}\, dg_{\mathbb{S}^{d-1}}dt\Big)=2\mathcal{\mathscr{R}}-\mathcal{\mathscr{P}}+G+\int_{\{r=\text{\textgreek{r}}\}\cap\mathcal{R}(0,t^{*})}D\cdot|\tilde{F}_{k}|\cdot|\tilde{\text{\textgreek{Y}}}_{k}|\, dg_{\mathbb{S}^{d-1}}dt.\label{eq:BeforeSecondEquation}
\end{equation}
The second equation of (\ref{eq:SystemODE's}) now follows from (\ref{eq:BeforeSecondEquation}),
after applying a Cauchy--Schwarz inequality for the last term of the
right hand side and using Lemma \ref{lem:BoundF} to show that $\int_{\{r=\text{\textgreek{r}}\}\cap R(0,t^{*})}r^{3}D^{2}|\tilde{F}_{k}|^{2}\, dg_{\mathbb{S}^{d-1}}dt$
is a $G$-type function in $\text{\textgreek{r}}$. 

For the extraction of the third equation of (\ref{eq:SystemODE's}),
we work in a similar way. We calculate from the expression (\ref{eq:PohazevFlux})
for $\mathcal{\mathscr{P}}(\text{\textgreek{r}})$ that

\begin{align}
\frac{d}{d\text{\textgreek{r}}}\mathcal{\mathscr{P}}(\text{\textgreek{r}})= & \int_{\{r=\text{\textgreek{r}}\}\cap\mathcal{R}(0,t^{*}\}}2r^{-3}(1+O(r^{-1}))\big(|\partial_{\text{\textgreek{sv}}}\tilde{\text{\textgreek{Y}}}_{k}|^{2}+\frac{(d-1)(d-3)}{4}|\tilde{\text{\textgreek{Y}}}_{k}|^{2}\big)\, dg_{\mathbb{S}^{2}}dt+\label{eq:PohazevFlux-1}\\
 & +\int_{\{r=\text{\textgreek{r}}\}\cap\mathcal{R}(0,t^{*}\}}\Big\{2\partial_{r}^{2}\tilde{\text{\textgreek{Y}}}_{k}\cdot\partial_{r}\tilde{\text{\textgreek{Y}}}_{k}+2(1+a_{tt})(1+a_{rr})^{-1}\partial_{r}\partial_{t}\tilde{\text{\textgreek{Y}}}_{k}\cdot\partial_{t}\tilde{\text{\textgreek{Y}}}_{k}-\nonumber \\
 & \hphantom{+\int_{\{r=\text{\textgreek{r}}\}\cap\mathcal{R}(0,t^{*}\}}\Big\{}-2(1+a_{rr})^{-1}r^{-2}\big(g_{\mathbb{S}^{d-1}}(\nabla^{\mathbb{S}^{d-1}}\partial_{r}\tilde{\text{\textgreek{Y}}}_{k},\nabla^{\mathbb{S}^{d-1}}\tilde{\text{\textgreek{Y}}}_{k})+\frac{(d-1)(d-3)}{4}\partial_{r}\tilde{\text{\textgreek{Y}}}_{k}\cdot\tilde{\text{\textgreek{Y}}}_{k}\big)-\nonumber \\
 & \hphantom{+\int_{\{r=\text{\textgreek{r}}\}\cap\mathcal{R}(0,t^{*}\}}\Big\{}-(1+a_{rr})^{-1}a_{t\text{\textgreek{sv}}}\partial_{r}\partial_{t}\tilde{\text{\textgreek{Y}}}_{k}\cdot\partial_{\text{\textgreek{sv}}}\tilde{\text{\textgreek{Y}}}_{k}-(1+a_{rr})^{-1}a_{t\text{\textgreek{sv}}}\partial_{t}\tilde{\text{\textgreek{Y}}}_{k}\cdot\partial_{r}\partial_{\text{\textgreek{sv}}}\tilde{\text{\textgreek{Y}}}_{k}+\\
 & \hphantom{+\int_{\{r=\text{\textgreek{r}}\}\cap\mathcal{R}(0,t^{*}\}}\Big\{}+\big((1+a_{rr})^{-1}a_{\text{\textgreek{sv}}}-r^{-2}\partial_{\text{\textgreek{sv}}}(1+a_{rr})^{-1}\big)\tilde{\text{\textgreek{Y}}}_{k}\cdot\partial_{r}\partial_{\text{\textgreek{sv}}}\tilde{\text{\textgreek{Y}}}_{k}+a_{t}\tilde{\text{\textgreek{Y}}}_{k}\cdot\partial_{r}\partial_{t}\tilde{\text{\textgreek{Y}}}_{k}+\nonumber \\
 & \hphantom{+\int_{\{r=\text{\textgreek{r}}\}\cap\mathcal{R}(0,t^{*}\}}\Big\{}+O(r^{-3-a})\partial_{r}\tilde{\text{\textgreek{Y}}}_{k}\cdot\partial_{\text{\textgreek{sv}}}\tilde{\text{\textgreek{Y}}}_{k}+O(r^{-1-a})\partial_{r}\tilde{\text{\textgreek{Y}}}_{k}\cdot\partial_{t}\tilde{\text{\textgreek{Y}}}_{k}+O(r^{-2-a})\partial_{r}\tilde{\text{\textgreek{Y}}}_{k}\cdot\tilde{\text{\textgreek{Y}}}_{k}+\nonumber \\
 & \hphantom{+\int_{\{r=\text{\textgreek{r}}\}\cap\mathcal{R}(0,t^{*}\}}\Big\{}+O(r^{-2-a})|\partial_{t}\tilde{\text{\textgreek{Y}}}_{k}|^{2}+O(r^{-3-a})\partial_{t}\tilde{\text{\textgreek{Y}}}_{k}\cdot\partial_{\text{\textgreek{sv}}}\tilde{\text{\textgreek{Y}}}_{k}+O(r^{-4-a})\tilde{\text{\textgreek{Y}}}_{k}\cdot\partial_{\text{\textgreek{sv}}}\tilde{\text{\textgreek{Y}}}_{k}+\nonumber \\
 & \hphantom{+\int_{\{r=\text{\textgreek{r}}\}\cap\mathcal{R}(0,t^{*}\}}\Big\{}+O(r^{-2-a})\tilde{\text{\textgreek{Y}}}_{k}\cdot\partial_{t}\tilde{\text{\textgreek{Y}}}_{k}+O(r^{-3-a})|\tilde{\text{\textgreek{Y}}}_{k}|^{2}\Big\}\, dg_{\mathbb{S}^{d-1}}dt.\nonumber 
\end{align}
Using equation (\ref{eq:Equation1}) to replace $\partial_{r}^{2}\tilde{\text{\textgreek{Y}}}_{k}$
with (\ref{eq:ExpressionDrSquarePsiK}) in (\ref{eq:PohazevFlux-1})
and then integrating by parts in $t$ and $\text{\textgreek{sv}}$
in the highest order terms, then all the second order derivatives
of $\tilde{\text{\textgreek{Y}}}_{k}$ and $\tilde{\text{\textgreek{Y}}}_{k}$
in (\ref{eq:PohazevFlux-1}) cancel out. Thus, the third equation
of (\ref{eq:SystemODE's}) readily follows, after, of course, treating
the boundary terms resulting from the integration by parts, as well
as the $\tilde{F}_{k}$ terms, using Lemmas \ref{lem:BoundednessPsiK}
and \ref{lem:BoundF}.\end{proof}
\begin{lem}
The Pohozaev flux $\mathcal{\mathscr{P}}$ satisfies in $\{r\ge C_{1}\}$
for $C_{1}$ suitably large:

\begin{equation}
\mathcal{\mathscr{P}}\lesssim\mathfrak{D}+O(r^{-2-a})\cdot\mathcal{\mathscr{M}}.\label{eq:PohazevBound}
\end{equation}
 \end{lem}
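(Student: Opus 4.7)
My plan is to extract (\ref{eq:PohazevBound}) as an immediate consequence of the third ODE in the system (\ref{eq:SystemODE's}), integrated from $\text{\textgreek{r}}$ to $+\infty$, combined with the asymptotic vanishing condition (\ref{eq:InitialDataInfinity}). A pigeonhole argument applied to $\tfrac{1}{r}\int_r^{2r}|\mathscr{P}(s)|\, ds \to 0$ produces a sequence $\text{\textgreek{r}}_n \to +\infty$ along which $\mathscr{P}(\text{\textgreek{r}}_n) \to 0$. Integrating the third equation of (\ref{eq:SystemODE's}) from $\text{\textgreek{r}}$ to $\text{\textgreek{r}}_n$ and passing to the limit produces the identity
\begin{equation*}
\mathscr{P}(\text{\textgreek{r}}) = -\int_\text{\textgreek{r}}^\infty \tfrac{2}{r}\mathscr{A}\, dr - \int_\text{\textgreek{r}}^\infty O(r^{-3-a})\mathscr{M}\, dr - \int_\text{\textgreek{r}}^\infty O(r^{-1-a})\{\mathscr{P} + 2\mathscr{A}\}\, dr - \int_\text{\textgreek{r}}^\infty G\, dr.
\end{equation*}
The $G$-integral contributes $O(\mathfrak{D})$ by the very definition of a $G$-type function. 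The $O(r^{-1-a})\cdot\mathscr{P}$ term is absorbed via a Gronwall-type integrating factor $\exp\bigl(\int_\text{\textgreek{r}}^r O(s^{-1-a})\, ds\bigr)$, which, for $r,\text{\textgreek{r}} \ge C_1$, remains trapped between positive constants since its exponent is uniformly finite thanks to $a > 0$.

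For $C_1$ chosen large enough in terms of the implicit constant in the $O(r^{-1-a})$ coefficient, the combined factor $\tfrac{2}{r} + O(r^{-1-a})\cdot 2$ multiplying $\mathscr{A}$ stays strictly positive. Since $\mathscr{A} \ge 0$, the corresponding terms contribute \emph{non-positively} to $\mathscr{P}(\text{\textgreek{r}})$, which is exactly what is required for an upper bound; these terms can simply be discarded. The real obstacle — and the origin of the $O(r^{-2-a})\cdot\mathscr{M}$ correction in (\ref{eq:PohazevBound}) — is controlling $\int_\text{\textgreek{r}}^\infty r^{-3-a}\mathscr{M}(r)\, dr$.

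To handle this last term, I would use the first ODE $\mathscr{M}' = 2\mathscr{F}$ together with the Cauchy--Schwarz bound (\ref{eq:Cauchy-Schwartz}): the estimate $\mathscr{M}(r) \le \mathscr{M}(\text{\textgreek{r}}) + 2\int_\text{\textgreek{r}}^r |\mathscr{F}|(s)\, ds$ combined with a Fubini interchange produces
\begin{equation*}
\int_\text{\textgreek{r}}^\infty r^{-3-a}\mathscr{M}(r)\, dr \le \tfrac{1}{2+a}\text{\textgreek{r}}^{-2-a}\mathscr{M}(\text{\textgreek{r}}) + C\int_\text{\textgreek{r}}^\infty s^{-2-a}(\mathscr{M}\mathscr{R})^{1/2}(s)\, ds.
\end{equation*}
A further Cauchy--Schwarz on the remaining integral produces $\bigl(\int s^{-3-a}\mathscr{M}\, ds\bigr)^{1/2}\bigl(\int s^{-1-\text{\textgreek{h}}}\mathscr{R}\, ds\bigr)^{1/2}$ for any $\text{\textgreek{h}} < a$; the second factor is bounded by $C\mathfrak{D}^{1/2}$ via the $\partial_r$-Morawetz estimate of Lemma \ref{lem:MorawetzDrLemma} (applied to $\tilde{\text{\textgreek{y}}}_k$, after converting from $\tilde{\text{\textgreek{Y}}}_k = D\tilde{\text{\textgreek{y}}}_k$, whose weights match exactly thanks to $D^2\sim r^{d-1}$ canceling the Jacobian $r^{d-1}$ of the integration measure). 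An AM--GM inequality then absorbs the first factor back into the left-hand side, yielding $\int_\text{\textgreek{r}}^\infty r^{-3-a}\mathscr{M}(r)\, dr \le C\text{\textgreek{r}}^{-2-a}\mathscr{M}(\text{\textgreek{r}}) + C\mathfrak{D}$. Substituting back into the Pohozaev identity completes the proof of (\ref{eq:PohazevBound}).
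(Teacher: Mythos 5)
Your plan of integrating the third equation of (\ref{eq:SystemODE's}) from $\text{\textgreek{r}}$ to infinity, discarding the favourable $\mathscr{A}$ contribution and absorbing the $\mathscr{P}$ term by a Gronwall-type integrating factor is reasonable, but the step meant to dispose of $\int_{\text{\textgreek{r}}}^{\infty}r^{-3-a}\mathscr{M}\,dr$ does not close. You reduce this to estimating $\int_{\text{\textgreek{r}}}^{\infty}s^{-1-\text{\textgreek{h}}}\mathscr{R}(s)\,ds$, which (after undoing $\tilde{\text{\textgreek{Y}}}_{k}=D\tilde{\text{\textgreek{y}}}_{k}$) is a spacetime integral of $r^{-1-\text{\textgreek{h}}}|\partial_{r}\tilde{\text{\textgreek{y}}}_{k}|^{2}$ over $\{r\gtrsim\text{\textgreek{r}}\}\cap\mathcal{R}(0,t^{*})$, and you claim Lemma \ref{lem:MorawetzDrLemma} bounds it by $C\mathfrak{D}$. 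But the right-hand side of Lemma \ref{lem:MorawetzDrLemma} contains the boundary term $C(\text{\textgreek{h}})\int_{\{\mathrm{supp}(\partial\text{\textgreek{q}})\}\cap\mathcal{R}(0,t^{*})}|\partial\text{\textgreek{q}}|\,J_{\text{\textgreek{m}}}^{N}(\tilde{\text{\textgreek{y}}}_{k})n^{\text{\textgreek{m}}}$, a local energy flux on the cut-off region $\{r\sim R\}$, and under the normalisation $\int_{\mathcal{R}(0,t^{*})}(1+r)^{-2}|\tilde{\text{\textgreek{y}}}_{k}|^{2}=1$ together with the Lagrangean bound (\ref{eq:OneMoreLagrangeanIdentity}), that term is of size $O\big((1+\text{\textgreek{w}}_{k}^{2})R\big)+O(R^{-1}\mathfrak{D})$ --- not $O(\mathfrak{D})$. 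When $\mathfrak{D}\ll1$, which is exactly the regime of interest (and the one assumed, $\mathfrak{D}<1$), this boundary contribution can exceed $\mathfrak{D}$ by an arbitrarily large factor. More to the point, a bound of the form $\int_{\{r\sim R\}\cap\mathcal{R}(0,t^{*})}J_{\text{\textgreek{m}}}^{N}(\tilde{\text{\textgreek{y}}}_{k})n^{\text{\textgreek{m}}}\lesssim\mathfrak{D}$ \emph{is} the integrated local energy decay statement (\ref{eq:ILED}) that the Pohozaev flux lemma is meant to help establish, so invoking it here is circular.

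The paper circumvents this entirely by never estimating $\int r^{-3-a}\mathscr{M}$ in isolation. Instead one works with the modified flux $\mathscr{P}^{*}\doteq\mathscr{P}-C_{0}r^{-2-a}\mathscr{M}$. Differentiating the correction (using $\mathscr{M}'=2\mathscr{F}$) produces $+(2+a)C_{0}r^{-3-a}\mathscr{M}-2C_{0}r^{-2-a}\mathscr{F}$; after the AM--GM bound $|\mathscr{F}|\le r^{-1}\mathscr{M}+r\mathscr{R}$ and re-expressing $\mathscr{R}$ through $\mathscr{P}^{*}$, $\mathscr{A}$ and $\mathscr{M}$ via the definition (\ref{eq:PohazevFlux}), the net coefficient of $\mathscr{M}$ in the lower bound for $\tfrac{d}{dr}\mathscr{P}^{*}$ becomes of order $C_{0}r^{-3-a}+O(r^{-3-a})$, which is \emph{strictly positive} once $C_{0}$ is chosen large in terms of the implicit $O$-constants and $r\ge C_{1}$ is large. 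The $\mathscr{M}$-term therefore has a favourable sign and can simply be dropped, and the $\mathscr{A}$-coefficient remains positive too, leaving the scalar inequality $\tfrac{d}{dr}\mathscr{P}^{*}\ge O(r^{-1-a})\mathscr{P}^{*}+G$, to which Gronwall applies directly (using (\ref{eq:InitialDataInfinity}) for the data at infinity). This yields $\mathscr{P}^{*}\lesssim\mathfrak{D}$, i.e.\ (\ref{eq:PohazevBound}). So the repair to your argument is to replace the attempted direct estimate of the $\mathscr{M}$-integral by a sign argument obtained through the flux modification.
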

\begin{proof}
If we set $\mathcal{\mathscr{P}}^{*}\doteq\mathcal{\mathscr{P}}-C_{0}r^{-2-a}\mathcal{\mathscr{M}}$,
for a positive constant $C_{0}$ to be determined, we calculate from
(\ref{eq:SystemODE's}):

\begin{align}
\frac{d}{dr}\mathscr{P}^{*}= & \frac{2}{r}\mathscr{A}+O(r^{-3-a})\mathcal{\mathscr{M}}+O(r^{-1-a})\{\mathcal{\mathscr{P}}^{*}+C_{0}r^{-2-a}\mathcal{\mathscr{M}}+2\mathcal{\mathscr{A}}\}+G+\label{eq:AlmostPohazevBound}\\
 & +(2+a)C_{0}r^{-3-a}\mathcal{\mathscr{M}}-2C_{0}r^{-2-a}\mathcal{\mathscr{F}}+C_{0}O(r^{-4-2a})\mathcal{\mathscr{M}}.\nonumber 
\end{align}
 By bounding 
\begin{equation}
|\mathcal{\mathscr{F}}|\le r^{-1}\mathcal{\mathscr{M}}+r\mathcal{\mathscr{R}}\le r^{-1}(1+C_{0}\cdot O(r^{-a}))\mathcal{\mathscr{M}}+Cr(1+O(r^{-a}))\mathcal{\mathscr{A}}+Cr\mathscr{P}^{*}
\end{equation}
and substituting in (\ref{eq:AlmostPohazevBound}), we obtain

\begin{align}
\frac{d}{dr}\mathcal{\mathscr{P}}^{*}\ge & \frac{2}{r}\{1+O(r^{-a})-C_{0}(r^{-a}+O(r^{-2a}))\}\mathcal{\mathscr{A}}+\Big(C_{0}r^{-3-a}(1+O(r^{-1}))+O(r^{-3-a})+C_{0}\cdot O(r^{-3-2a})\Big)\mathcal{\mathscr{M}}+\label{eq:AlmostPohazevBound2}\\
 & +C_{0}\cdot O(r^{-1-a})\mathcal{\mathscr{P}}^{*}+G.\nonumber 
\end{align}

Hence, if $C_{0}$ is chosen sufficiently large, and $C_{1}$ (and
thus $r$) is large enough, both in terms of the geometry of $(\mathcal{D},g)$,
the coefficients in front of the $\mathcal{\mathscr{M}}$ and $\mathcal{\mathscr{A}}$
terms in (\ref{eq:AlmostPohazevBound2}) will be positive, resulting
in the inequality

\begin{equation}
\frac{d}{dr}\mathscr{P}^{*}\ge O(r^{-1-a})\mathcal{\mathscr{P}}^{*}+G,
\end{equation}
 from which (\ref{eq:PohazevBound}) follows by an application of
Gronwall's inequality (in view of the initial conditions (\ref{eq:InitialDataInfinity})
at infinity, as well as the definition of $\mathscr{P}^{*}$).
\end{proof}
\medskip{}

In view of Lemma \ref{lem:DtToOmegaInequalities} (notice that we
can allow $R\rightarrow\infty$ in the statement of that lemma, since
at this point we only care about the left hand side of (\ref{eq:w-estimate})
which is independent of $R$), we can bound from below 
\begin{equation}
\int_{\{r=\text{\textgreek{r}}\}\cap\mathcal{R}(0,t^{*})}|\partial_{t}\tilde{\text{\textgreek{Y}}}_{k}|^{2}\, dg_{\mathbb{S}^{2}}dt\ge c\cdot\text{\textgreek{w}}_{k-1}^{2}\mathscr{M}+G.
\end{equation}
Hence, from (\ref{eq:PohazevFlux}) we can bound from below: 
\begin{equation}
\mathcal{\mathscr{A}}\ge(1+O(r^{-1}))(\mathcal{\mathscr{R}}-\mathcal{\mathscr{P}})+c\cdot\text{\textgreek{w}}_{k-1}^{2}\mathcal{\mathscr{M}}+O(r^{-2-a})\mathcal{\mathscr{M}}.\label{eq:BoundFromFrequency}
\end{equation}

Moreover, in view of (\ref{eq:Cauchy-Schwartz}), we can eliminate
the quantity $\mathscr{R}$ from (\ref{eq:SystemODE's}) through the
estimate 
\begin{equation}
\mathcal{\mathscr{R}}\ge\frac{\mathcal{\mathscr{F}}^{2}}{\mathcal{\mathscr{M}}}.\label{eq:BoundFromCS}
\end{equation}
Hence, substituting (\ref{eq:BoundFromFrequency}) and (\ref{eq:BoundFromCS})
in (\ref{eq:SystemODE's}), we obtain the following system of differential
inequalities:

\begin{equation}
\begin{cases}
\frac{d}{dr}\mathcal{\mathscr{M}}=2\mathcal{\mathscr{F}}\\
\frac{d}{dr}\mathscr{F}\ge2\frac{\mathcal{\mathscr{F}}^{2}}{\mathcal{\mathscr{M}}}-\mathcal{\mathscr{P}}+G+O(r^{-3})\mathcal{\mathscr{M}}\\
\frac{d}{dr}\mathcal{\mathscr{P}}\ge\frac{2}{r}(1+O(r^{-1}))\cdot\Big(\frac{\mathcal{\mathscr{F}}^{2}}{\mathcal{\mathscr{M}}}-\mathcal{\mathscr{P}}+c\cdot\text{\textgreek{w}}_{k-1}^{2}\mathcal{\mathscr{M}}\Big)+O(r^{-3-a})\mathcal{\mathscr{M}}+O(r^{-1-a})\mathcal{\mathscr{P}}+G.
\end{cases}\label{eq:SystemODEinequalities}
\end{equation}
Note that in the third equation of (\ref{eq:SystemODEinequalities}),
the last term of the right hand side can also be replaced for free
by $O(r^{-1}+\text{\textgreek{w}}_{k-1})\cdot G$, since we have $\text{\textgreek{w}}_{0}\le\text{\textgreek{w}}_{k-1}$
and we allow the constants in the big $O$ notation to depend on $\text{\textgreek{w}}_{0}$.
Therefore, we see that we can immediately apply on the above system
of differential inequalities the ODE lemma from Section 10 in \cite{Rodnianski2011},
which can be stated as follows:
\begin{lem}
\label{thm:ODELemma} (Rodnianski--Tao) Let $\mathcal{\mathscr{M}},\mathcal{\mathscr{F}},\mathcal{\mathscr{P}}:[C_{1},+\infty)\rightarrow(0,+\infty)$
satisfy (\ref{eq:InitialDataInfinity}), (\ref{eq:PohazevBound})
and (\ref{eq:SystemODEinequalities}). Then, provided $C_{1}$ is
large enough (independently of $\text{\textgreek{w}}_{k-1}$), for
any positive constant $C_{2}\gg C_{1}$ (again with magnitude independent
of $\text{\textgreek{w}}_{k-1}$) there exists another positive constant
$C(C_{1},C_{2})$ such that one of the following scenarios holds:

1. Boundedness Scenario: There exists an $R_{0}$ with $C_{1}<R_{0}<C(C_{1},C_{2})$
such that
\begin{equation}
\mathcal{\mathscr{M}}(r)\le C(C_{1},C_{2})\big(\text{\textgreek{w}}_{k-1}^{-2C(C_{1},C_{2})}+1)\mathfrak{D}\label{eq:Boundedness}
\end{equation}
 for all $r\in[\frac{1}{2}R_{0},4R_{0}]$

2. Exponential growth from infinity Scenario: For all $C_{1}\le r\le10C_{1}$
we can bound 
\begin{equation}
\frac{d}{dr}\mathcal{\mathscr{M}}(r)\le-C_{2}(1+\text{\textgreek{w}}_{k-1})\mathcal{\mathscr{M}}(r).\label{eq:ExponentialGrowth}
\end{equation}

\end{lem}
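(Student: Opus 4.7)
The plan is to analyze the dynamics of the system \eqref{eq:SystemODEinequalities} by monitoring the logarithmic derivative $Q(r) \doteq \mathscr{F}(r)/\mathscr{M}(r)$, noting that $(\log \mathscr{M})' = 2Q$, so that exponential growth or decay of $\mathscr{M}$ is encoded in the sign and magnitude of $Q$. A direct computation using the first two inequalities of \eqref{eq:SystemODEinequalities} gives
\begin{equation*}
Q' \;=\; \frac{\mathscr{F}'}{\mathscr{M}} - 2Q^{2} \;\geq\; -\frac{\mathscr{P}}{\mathscr{M}} + \frac{G}{\mathscr{M}} + O(r^{-3}),
\end{equation*}
so that, in view of the Pohozaev bound \eqref{eq:PohazevBound}, on any interval where $\mathscr{M}(r) \geq K\mathfrak{D}$ for $K$ sufficiently large we have $Q' \geq -C r^{-2-a} - C/\mathscr{M} - G/\mathscr{M}$, which is integrable in $r$ with small $L^{1}$-norm. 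Hence on such an interval $Q$ is essentially non-decreasing up to a small controlled error.

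The second ingredient is a \emph{coercivity estimate} obtained from the third inequality of \eqref{eq:SystemODEinequalities}. Multiplying by $r/2$, absorbing the $O(r^{-1-a})\mathscr{P}$ term using \eqref{eq:PohazevBound}, and integrating from any $r\geq C_{1}$ to a large $R_{\infty}$ (and then sending $R_{\infty}\to\infty$ using \eqref{eq:InitialDataInfinity}) I would extract, after using \eqref{eq:PohazevBound} once more to control the boundary contribution at $r$, an inequality of the schematic form
\begin{equation*}
c\,\text{\textgreek{w}}_{k-1}^{2} \int_{r}^{\infty}\frac{\mathscr{M}(\rho)}{\rho}\,d\rho \;\leq\; C\mathfrak{D} + O(r^{-2-a})\mathscr{M}(r) + \int_{r}^{\infty}|G(\rho)|\,d\rho.
\end{equation*}
This is the key quantitative control tying $\text{\textgreek{w}}_{k-1}^{2}\mathscr{M}$ to the initial-data quantity $\mathfrak{D}$.

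With these two estimates in hand, I would set up the dichotomy as follows. Assume the Exponential growth scenario fails, i.e.\ there exists $r^{*}\in[C_{1},10C_{1}]$ at which $2Q(r^{*}) > -C_{2}(1+\text{\textgreek{w}}_{k-1})$. By the essential monotonicity of $Q$, this lower bound on $Q$ persists for all larger $r$ on which $\mathscr{M}\geq K\mathfrak{D}$. If $\mathscr{M}$ never drops below $K\mathfrak{D}$ on $[r^{*},\infty)$, then $(\log\mathscr{M})'$ is bounded below by $-C_{2}(1+\text{\textgreek{w}}_{k-1})$, and combined with the coercivity estimate one sees that $\int \mathscr{M}/\rho\,d\rho$ is forced to be infinite (or at any rate incompatible with the $\mathfrak{D}$-bound) provided $C_{2}$ was chosen sufficiently large relative to $C_{1}$. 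Hence there must exist some $R_{0}\in[r^{*},C(C_{1},C_{2})]$ at which $\mathscr{M}(R_{0})\leq K\mathfrak{D}$, and from this one can propagate the Boundedness bound to the full interval $[R_{0}/2, 4R_{0}]$ by Gronwall applied to the ODE system, with a deterioration factor of the form $\text{\textgreek{w}}_{k-1}^{-2C(C_{1},C_{2})}$ coming from the fact that, on short scales, $|Q|$ can be as large as $O(1+\text{\textgreek{w}}_{k-1})$.

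The main technical obstacle will be propagating the bound $\mathscr{M}(R_{0})\lesssim \mathfrak{D}$ to the whole interval $[R_{0}/2,4R_{0}]$ without losing better than a polynomial factor in $\text{\textgreek{w}}_{k-1}^{-1}$. A priori, $\mathscr{M}$ could grow like $\exp(c\text{\textgreek{w}}_{k-1}\cdot r)$ under the system, which would be catastrophic; preventing this requires showing that once $\mathscr{M}$ has dropped to size $K\mathfrak{D}$ at a single point, the coercivity estimate forces it to stay comparable for a full dyadic window. This, in turn, uses the third equation more delicately, together with a continuity/bootstrap argument on the size of $Q$, and is exactly where the choice of $C_{2}\gg C_{1}$ enters. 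The extension to the interval $[R_{0}/2, 4R_{0}]$ (as opposed to just $[R_{0}, 2R_{0}]$) is handled by running the argument both forwards and backwards in $r$, using that monotonicity of $Q$ in the forward direction corresponds to a reverse monotonicity in the backward direction, valid on the same ``$\mathscr{M}\geq K\mathfrak{D}$'' region.
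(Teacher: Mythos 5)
The lemma in question is \emph{not proven in this paper at all}: the author explicitly cites Sections~9 and~10 of \cite{Rodnianski2011} as the source of both the statement and the proof, and after stating the lemma proceeds directly to applying it in the two cases of the proof of Proposition~\ref{prop:ILEDPsiK}. So there is no ``paper's proof'' here to compare against -- your blind attempt is being measured against a reference proof from a different paper.

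That said, your high-level strategy -- tracking the logarithmic derivative $Q=\mathscr{F}/\mathscr{M}$ (so that $(\log\mathscr{M})'=2Q$), establishing an approximate monotonicity of $Q$ from the first two relations plus the Pohozaev bound, and extracting a frequency-weighted coercivity estimate from the third relation -- is the right shape of argument, and it does mirror the structure of the Rodnianski--Tao proof.

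There is, however, a genuine gap in the derivation of your key coercivity estimate, and it is not a cosmetic one. Integrating the third relation of \eqref{eq:SystemODEinequalities} from $r$ to infinity (using compact support or \eqref{eq:InitialDataInfinity} to kill the boundary at infinity), and isolating the $c\,\text{\textgreek{w}}_{k-1}^{2}\mathscr{M}/\rho$ term, leaves on the right-hand side either $-\mathscr{P}(r)$ together with $\int_{r}^{\infty}\tfrac{2}{\rho}\mathscr{P}\,d\rho$ (if one integrates directly), or, after the $r/2$-weighting you propose, the boundary term $-r\mathscr{P}(r)$ (with the bulk $\mathscr{P}$ contribution nicely cancelling). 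In either version one needs to bound $-\mathscr{P}(r)$ or $-\int\mathscr{P}/\rho$ from \emph{above}, which amounts to bounding $\mathscr{P}$ from \emph{below}. But the Pohozaev inequality \eqref{eq:PohazevBound} is one-sided: it only gives $\mathscr{P}\lesssim\mathfrak{D}+r^{-2-a}\mathscr{M}$, and its proof (Gronwall applied to $\mathscr{P}^{*}=\mathscr{P}-C_{0}r^{-2-a}\mathscr{M}$) produces no lower bound. So the inequality you write down -- $c\,\text{\textgreek{w}}_{k-1}^{2}\int_{r}^{\infty}\mathscr{M}/\rho\,d\rho\lesssim\mathfrak{D}+O(r^{-2-a})\mathscr{M}(r)$ -- does not follow from the stated system and \eqref{eq:PohazevBound} alone; an additional input (e.g.\ the lower bound $\mathscr{P}\geq -2\mathscr{A}-r^{-2-a}\mathscr{M}$ mentioned just before \eqref{eq:SystemODE's}, or a more refined use of the convexity relation $\mathscr{F}'\geq 2\mathscr{F}^{2}/\mathscr{M}-\mathscr{P}+\cdots$ to express $-\mathscr{P}$ in a sign-definite form) is needed, and you have not supplied it. Beyond this, you yourself flag the propagation of the one-point bound $\mathscr{M}(R_{0})\lesssim\mathfrak{D}$ to the whole window $[R_{0}/2,4R_{0}]$, and the precise appearance of the polynomial $\text{\textgreek{w}}_{k-1}^{-2C(C_{1},C_{2})}$ loss, as unresolved; those are indeed where the real work lies in the Rodnianski--Tao proof. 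As written, the argument is a plausible sketch but not a proof, and the reader should be directed to \cite{Rodnianski2011} as the paper does.
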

Using Lemma \ref{thm:ODELemma} we will suitably absorb the first
term of the right hand side of (\ref{eq:CarlemannTypeInequality}),
thus obtaining the desired integrated local energy decay statement
(\ref{eq:ILED}). 

We first fix $C_{1}=R$, assuming of course without loss of generality
that $R$ was large enough in terms of the geometry of $(\mathcal{D},g)$.
We then choose $C_{2}=C_{2}(C_{1},\text{\textgreek{w}}_{0})$ large
enough in terms of $C_{1}$ and $\text{\textgreek{w}}_{0}$ so that
\begin{equation}
C_{2}\gg_{\text{\textgreek{w}}_{0},R}C(\text{\textgreek{w}}_{0},R)\cdot\sup_{C_{1}\le r\le10C_{1}}w,
\end{equation}
where $C(\text{\textgreek{w}}_{0},R)$ is the constant appearing in
the $s$ parameter in inequality (\ref{eq:CarlemannTypeInequality}).
In this way, we can also bound (since $s=C\cdot\text{\textgreek{w}}_{k}$
in (\ref{eq:CarlemannTypeInequality}), and $\text{\textgreek{w}}_{k}\le\text{\textgreek{w}}_{k-1}+1$):

\begin{equation}
C_{2}\cdot(1+\text{\textgreek{w}}_{k-1})-s\cdot\sup_{C_{1}\le r\le10C_{1}}w\gg_{\text{\textgreek{w}}_{0},R}\text{\textgreek{w}}_{k-1}\label{eq:exponentBound}
\end{equation}

In view of Lemma \ref{thm:ODELemma}, there are only two possible
scenarios for $\mathcal{\mathscr{M}}$:
\begin{casenv}
\item Assume that the boundedness scenario in Lemma \ref{thm:ODELemma}
holds, that is (\ref{eq:Boundedness}) is true. From (\ref{eq:CarlemannTypeInequality})
we obtain (replacing $R$ in (\ref{eq:CarlemannTypeInequality}) by
the value $R_{0}$ which is provided by the boundedness scenario):
\begin{equation}
\begin{split}\int_{\mathcal{R}(0,t^{*})\cap\{r\le R_{0}\}}\Big(e^{2sw}+e^{2sw'}\Big)\cdot\Big(J_{\text{\textgreek{m}}}^{N}(\text{\textgreek{y}}_{k})n^{\text{\textgreek{m}}} & +|\text{\textgreek{y}}_{k}|^{2}\Big)\le\\
\le & C(\text{\textgreek{w}}_{0},R_{0})\int_{\mathcal{R}(0,t^{*})\cap\{R_{0}\le r\le2R_{0}\})}(1+s^{2})e^{2sw}\big\{ J_{\text{\textgreek{m}}}^{N}(\text{\textgreek{y}}_{k})n^{\text{\textgreek{m}}}+|\text{\textgreek{y}}_{k}|^{2}\big\}+\\
 & +e^{C(R_{0})\cdot s}\cdot C(\text{\textgreek{w}}_{0},R_{0})\int_{t=0}J_{\text{\textgreek{m}}}^{N}(\text{\textgreek{y}})n^{\text{\textgreek{m}}}.
\end{split}
\label{eq:BoundednessCarlemann}
\end{equation}
Furthermore, applying (as we did earlier) the divergence identity
for the current $J_{\text{\textgreek{m}}}=\text{\textgreek{q}}\cdot\text{\textgreek{y}}_{k}\cdot\partial_{\text{\textgreek{m}}}\text{\textgreek{y}}_{k}-(\partial_{\text{\textgreek{m}}}\text{\textgreek{q}})\cdot|\text{\textgreek{y}}_{k}|^{2}$,
combined with Lemmas \ref{lem:BoundF} and \ref{lem:BoundednessPsiK},
as well the bounded frequency estimate \ref{lem:DtToOmegaInequalities2}
(serving to bound $|\partial_{t}\text{\textgreek{y}}_{k}|^{2}$ bulk
terms by $\text{\textgreek{w}}_{k}^{2}|\text{\textgreek{y}}_{k}|^{2}$
terms), we obtain the following Lagrangean inequality: 
\begin{equation}
\int_{\mathcal{R}(0,t^{*})}\text{\textgreek{q}}\cdot J_{\text{\textgreek{m}}}^{N}(\text{\textgreek{y}}_{k})n^{\text{\textgreek{m}}}\le C(\text{\textgreek{w}}_{0},\text{\textgreek{q}})\cdot(1+\text{\textgreek{w}}_{k}^{2})\cdot\int_{\mathcal{R}(0,t^{*})\cap supp(\text{\textgreek{q}})}|\text{\textgreek{y}}_{k}|^{2}+C(\text{\textgreek{w}}_{0},\text{\textgreek{q}})\cdot\int_{t=0}J_{\text{\textgreek{m}}}^{N}(\text{\textgreek{y}})n^{\text{\textgreek{m}}}.\label{eq:OneMoreLagrangeanIdentity}
\end{equation}
Therefore, using (\ref{eq:OneMoreLagrangeanIdentity}) for a cut off
$\text{\textgreek{q}}$ supported in $\{\frac{1}{2}R_{0}\le r\le4R_{0}\}$
and being equal to $1$ in $\{R_{0}\le r\le2R_{0}\}$, combined with
(\ref{eq:Boundedness}), we can readily bound: 
\begin{equation}
\int_{\mathcal{R}(0,t^{*})\cap\{R_{0}\le r\le2R_{0}\})}(1+s^{2})e^{2sw}\big\{ J_{\text{\textgreek{m}}}^{N}(\text{\textgreek{y}}_{k})n^{\text{\textgreek{m}}}+|\text{\textgreek{y}}_{k}|^{2}\big\}\le C(R,\text{\textgreek{w}}_{0})e^{C(R,\text{\textgreek{w}}_{0})\text{\textgreek{w}}_{+}}\int_{t=0}J_{\text{\textgreek{m}}}^{N}(\text{\textgreek{y}})n^{\text{\textgreek{m}}}.\label{eq:BoundBoundaryTerm}
\end{equation}
Hence, since $R=C_{1}<R_{0}<C_{2}(R,\text{\textgreek{w}}_{0})$, in
this case we can estimate from (\ref{eq:BoundednessCarlemann}) and
(\ref{eq:BoundBoundaryTerm}):
\begin{equation}
\int_{\{r\le R\}\cap\mathcal{R}(0,t^{*})}\big\{ J_{\text{\textgreek{m}}}^{N}(\text{\textgreek{y}}_{k})n^{\text{\textgreek{m}}}+|\text{\textgreek{y}}_{k}|^{2}\big\}\le C(R,\text{\textgreek{w}}_{0})e^{C(R,\text{\textgreek{w}}_{0})\text{\textgreek{w}}_{+}}\int_{t=0}J_{\text{\textgreek{m}}}^{N}(\text{\textgreek{y}})n^{\text{\textgreek{m}}}.
\end{equation}

\item Assume that the exponential growth scenario in Lemma \ref{thm:ODELemma}
holds, that is (\ref{eq:ExponentialGrowth}) is true. Then a simple
application of Gronwall's inequality on (\ref{eq:ExponentialGrowth})
yields:
\begin{equation}
\int_{\{5C_{1}\le r\le10C_{1}\}}\mathscr{M}(r)\, dr\le C_{3}\cdot e^{-C_{2}(1+\text{\textgreek{w}}_{k-1})}\int_{\{2C_{1}\le r\le4C_{1}\}}\mathscr{M}(r)\, dr.\label{eq:ExponentialBeforeCarlemann}
\end{equation}
 Applying (\ref{eq:CarlemannTypeInequality}) (for $6C_{1}$ in place
of $R$), we obtain 
\begin{equation}
\begin{split}\int_{\mathcal{R}(0,t^{*})\cap\{r\le6C_{1}\}}\Big(e^{2sw}+e^{2sw'}\Big)\cdot\Big(J_{\text{\textgreek{m}}}^{N}( & \text{\textgreek{y}}_{k})n^{\text{\textgreek{m}}}+|\text{\textgreek{y}}_{k}|^{2}\Big)\le\\
\le & C(\text{\textgreek{w}}_{0},R)\int_{\mathcal{R}(0,t^{*})\cap\{6C_{1}\le r\le7C_{1}\})}(1+s^{2})e^{2sw}\Big(J_{\text{\textgreek{m}}}^{N}(\text{\textgreek{y}}_{k})n^{\text{\textgreek{m}}}+|\text{\textgreek{y}}_{k}|^{2}\Big)+\\
 & +e^{C(C_{1})\cdot s}\cdot C(\text{\textgreek{w}}_{0},R)\int_{t=0}J_{\text{\textgreek{m}}}^{N}(\text{\textgreek{y}})n^{\text{\textgreek{m}}}.
\end{split}
\label{eq:ExponantialCarlemann}
\end{equation}
 In view of (\ref{eq:OneMoreLagrangeanIdentity}) applied for a cut
off $\text{\textgreek{q}}$ supported in $5C_{1}\le r\le8C_{1}$ and
being equal to 1 in $6C_{1}\le r\le7C_{1}$, we can also bound 
\begin{equation}
\begin{split}\int_{\mathcal{R}(0,t^{*})\cap\{6C_{1}\le r\le7C_{1}\})}(1+s^{2})e^{2sw}\Big(J_{\text{\textgreek{m}}}^{N}( & \text{\textgreek{y}}_{k})n^{\text{\textgreek{m}}}+|\text{\textgreek{y}}_{k}|^{2}\Big)\le\\
\le & C(R,\text{\textgreek{w}}_{0})(1+\text{\textgreek{w}}_{k}^{2})e^{s\cdot\sup_{C_{1}\le r\le10C_{1}}w}\int_{\{5C_{1}\le r\le8C_{1}\}}\mathscr{M}(r)\, dr+\\
 & +C(R,\text{\textgreek{w}}_{0})e^{C(R,\text{\textgreek{w}}_{0})\text{\textgreek{w}}_{k}}\int_{t=0}J_{\text{\textgreek{m}}}^{N}(\text{\textgreek{y}})n^{\text{\textgreek{m}}}.
\end{split}
\label{eq:ExponentialDecayBound}
\end{equation}
 From (\ref{eq:ExponentialDecayBound}) and (\ref{eq:ExponentialBeforeCarlemann})we
infer:
\begin{equation}
\begin{split}\int_{\mathcal{R}(0,t^{*})\cap\{6C_{1}\le r\le7C_{1}\})}(1+s^{2})e^{2sw}\Big(J_{\text{\textgreek{m}}}^{N}( & \text{\textgreek{y}}_{k})n^{\text{\textgreek{m}}}+|\text{\textgreek{y}}_{k}|^{2}\Big)\le\\
\le & C(R,\text{\textgreek{w}}_{0})(1+\text{\textgreek{w}}_{k}^{2})e^{s\cdot\sup_{C_{1}\le r\le10C_{1}}w-C_{2}\text{\textgreek{w}}_{k-1}}\int_{\{2C_{1}\le r\le4C_{1}\}}\mathscr{M}(r)\, dr+\\
 & +C(R,\text{\textgreek{w}}_{0})e^{C(R,\text{\textgreek{w}}_{0})\text{\textgreek{w}}_{k}}\int_{t=0}J_{\text{\textgreek{m}}}^{N}(\text{\textgreek{y}})n^{\text{\textgreek{m}}}.
\end{split}
\label{eq:ExponentialDecayBound-1}
\end{equation}
 Therefore, since in view of (\ref{eq:exponentBound}) the first term
of the right hand side of (\ref{eq:ExponentialDecayBound-1}) can
be absorbed into the left hand side of (\ref{eq:ExponantialCarlemann}),
we conclude from (\ref{eq:ExponantialCarlemann}): 
\begin{equation}
\int_{\{r\le R\}\cap\mathcal{R}(0,t^{*})}\big\{ J_{\text{\textgreek{m}}}^{N}(\text{\textgreek{y}}_{k})n^{\text{\textgreek{m}}}+|\text{\textgreek{y}}_{k}|^{2}\big\}\le C(R,\text{\textgreek{w}}_{0})e^{C(R,\text{\textgreek{w}}_{0})\text{\textgreek{w}}_{+}}\int_{t=0}J_{\text{\textgreek{m}}}^{N}(\text{\textgreek{y}})n^{\text{\textgreek{m}}}.
\end{equation}

\end{casenv}
Thus, the proof of Proposition \ref{prop:ILEDPsiK} is complete.\qed

\medskip{}

As a corollary of Propositions \ref{prop:ILEDPsiK} and \ref{prop:LowFrequencies},
we will obtain an integrated local energy decay statement for the
$\text{\textgreek{y}}_{\le\text{\textgreek{w}}_{+}}$ component of
$\text{\textgreek{y}}$. This will be the final result of this section.
\begin{cor}
\label{cor:BasicILED}For any $R>0$ there exists a $\text{\textgreek{d}}=\text{\textgreek{d}}(R)>0$,
such that for any smooth solution $\text{\textgreek{y}}$ to the wave
equation on $J^{+}(\text{\textgreek{S}})\cap\mathcal{D}$ with compactly
supported initial data on $\text{\textgreek{S}}$, any $0<\text{\textgreek{w}}_{0}<\text{\textgreek{d}}$
and any $\text{\textgreek{w}}_{+}>1$ we can bound:\end{cor}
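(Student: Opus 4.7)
The plan is to assemble the sought integrated local energy decay estimate for $\text{\textgreek{y}}_{\le\text{\textgreek{w}}_{+}}$ from the per-frequency estimates already proven: Proposition \ref{prop:LowFrequencies} for the very low frequency piece $\text{\textgreek{y}}_{0}$, and Proposition \ref{prop:ILEDPsiK} for each intermediate-frequency piece $\text{\textgreek{y}}_{k}$, $1\le|k|\le n$. The algebraic tool linking these to $\text{\textgreek{y}}_{\le\text{\textgreek{w}}_{+}}$ is the identity $\text{\textgreek{y}}_{\le\text{\textgreek{w}}_{+}}=\sum_{k=-n}^{n}\text{\textgreek{y}}_{k}+\text{\textgreek{y}}_{res}$, where $\text{\textgreek{y}}_{res}$ is the frequency-localised residual coming from the mismatch between $\text{\textgreek{z}}_{\le\text{\textgreek{w}}_{+}}$ and $\big(\sum_{k}\text{\textgreek{z}}_{k}\big)\cdot\text{\textgreek{z}}_{\le\text{\textgreek{w}}_{+}}$; in view of the property $\sum_{k=-n}^{n}\text{\textgreek{z}}_{k}\equiv 1$ on $[-\text{\textgreek{w}}_{+},\text{\textgreek{w}}_{+}]$, the Fourier support of $\text{\textgreek{y}}_{res}$ is contained in $\text{\textgreek{w}}_{+}\le|\text{\textgreek{w}}|\le\tfrac{9}{8}\text{\textgreek{w}}_{+}$.

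First, I would fix $\text{\textgreek{d}}(R)>0$ to be the smallness threshold furnished by Proposition \ref{prop:LowFrequencies}. For any $0<\text{\textgreek{w}}_{0}<\text{\textgreek{d}}(R)$, Proposition \ref{prop:LowFrequencies} gives the uniform bound for $\text{\textgreek{y}}_{0}$, while Proposition \ref{prop:ILEDPsiK} gives, for each $1\le|k|\le n$, an inequality of the form
\[
\int_{\{r\le R\}\cap\mathcal{R}(0,t^{*})}\bigl(J_{\text{\textgreek{m}}}^{N}(\text{\textgreek{y}}_{k})n^{\text{\textgreek{m}}}+|\text{\textgreek{y}}_{k}|^{2}\bigr)\le C(R,\text{\textgreek{w}}_{0})\, e^{C(R,\text{\textgreek{w}}_{0})\text{\textgreek{w}}_{+}}\int_{t=0}J_{\text{\textgreek{m}}}^{N}(\text{\textgreek{y}})n^{\text{\textgreek{m}}}.
\]
Since $\text{\textgreek{y}}_{res}$ has the same spectral character as a single $\text{\textgreek{y}}_{k}$ localised at $|\text{\textgreek{w}}|\sim\text{\textgreek{w}}_{+}$ (and since all ingredients of the proof of Proposition \ref{prop:ILEDPsiK}---namely the Carleman estimate Lemma \ref{lem:Carlemann} and the Rodnianski--Tao ODE dichotomy of Lemma \ref{thm:ODELemma}---apply verbatim to any function solving $\square_{g}\text{\textgreek{y}}_{res}=F_{res}$ with frequency-localised data satisfying the analogues of Lemmas \ref{lem:BoundF}, \ref{lem:BoundednessPsiK}, \ref{lem:DtToOmegaInequalities}), the same exponential bound holds with $\text{\textgreek{y}}_{res}$ in place of $\text{\textgreek{y}}_{k}$.

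Second, I would assemble the pieces via the pointwise inequality
\[
J_{\text{\textgreek{m}}}^{N}(\text{\textgreek{y}}_{\le\text{\textgreek{w}}_{+}})n^{\text{\textgreek{m}}}+|\text{\textgreek{y}}_{\le\text{\textgreek{w}}_{+}}|^{2}\le(2n+2)\Bigl(\sum_{k=-n}^{n}\bigl(J_{\text{\textgreek{m}}}^{N}(\text{\textgreek{y}}_{k})n^{\text{\textgreek{m}}}+|\text{\textgreek{y}}_{k}|^{2}\bigr)+J_{\text{\textgreek{m}}}^{N}(\text{\textgreek{y}}_{res})n^{\text{\textgreek{m}}}+|\text{\textgreek{y}}_{res}|^{2}\Bigr),
\]
which is a direct Cauchy--Schwarz estimate exploiting the bilinearity of the energy current. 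Integrating, summing the per-frequency bounds, and noting that $n\lesssim \text{\textgreek{w}}_{+}/\text{\textgreek{w}}_{0}$, the polynomial prefactor $(2n+2)^{2}\lesssim\text{\textgreek{w}}_{+}^{2}/\text{\textgreek{w}}_{0}^{2}$ is trivially absorbed into $e^{C(R,\text{\textgreek{w}}_{0})\text{\textgreek{w}}_{+}}$ after enlarging the constant $C(R,\text{\textgreek{w}}_{0})$. This yields the stated bound.

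No new geometric or analytic input is required, so this step is mostly bookkeeping. The only mildly delicate point is the treatment of the residual $\text{\textgreek{y}}_{res}$: one must either observe that the proof of Proposition \ref{prop:ILEDPsiK} goes through verbatim for it, or alternatively absorb it into the high-frequency machinery of Lemma \ref{lem:HighFrequencies}. Either route gives the same exponential-in-$\text{\textgreek{w}}_{+}$ bound, and hence the corollary follows.
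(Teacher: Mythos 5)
Your proposal follows the same strategy as the paper's proof: sum the integrated local energy decay estimates furnished by Proposition \ref{prop:LowFrequencies} for $\text{\textgreek{y}}_{0}$ and by Proposition \ref{prop:ILEDPsiK} for each $\text{\textgreek{y}}_{k}$, $1\le|k|\le n$; combine them with the decomposition of $\text{\textgreek{y}}_{\le\text{\textgreek{w}}_{+}}$ into the $\text{\textgreek{y}}_{k}$'s via the elementary inequality $|\sum_{k}a_{k}|^{2}\le(2n+1)\sum_{k}|a_{k}|^{2}$; and absorb the resulting polynomial-in-$n\sim\text{\textgreek{w}}_{+}/\text{\textgreek{w}}_{0}$ prefactor into $e^{C(R,\text{\textgreek{w}}_{0})\text{\textgreek{w}}_{+}}$. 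The one point where you diverge from the paper is in treating a residual $\text{\textgreek{y}}_{res}$: the paper simply asserts $\sum_{k=-n}^{n}\text{\textgreek{y}}_{k}=\text{\textgreek{y}}_{\le\text{\textgreek{w}}_{+}}$ and works with the exact decomposition. Your caution is actually warranted: with the cut-offs as literally defined, $\sum_{k}\text{\textgreek{z}}_{k}\equiv1$ only on $[-\text{\textgreek{w}}_{+},\text{\textgreek{w}}_{+}]$ while $\text{\textgreek{z}}_{\le\text{\textgreek{w}}_{+}}$ is supported up to $\tfrac{9}{8}\text{\textgreek{w}}_{+}$, so there is indeed a small frequency-localised remainder. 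That said, two caveats on your remedies: the claim that Proposition \ref{prop:ILEDPsiK} applies ``verbatim'' to $\text{\textgreek{y}}_{res}$ is a slight overstatement (the Schwartz kernel for the $\text{\textgreek{y}}_{res}$ multiplier decays on scale $\text{\textgreek{w}}_{+}^{-1}$ rather than $\text{\textgreek{w}}_{0}^{-1}$, so the constants in the analogues of Lemmas \ref{lem:BoundF}, \ref{lem:BoundednessPsiK}, \ref{lem:DtToOmegaInequalities} change, though all resulting errors are still absorbable into $e^{C\text{\textgreek{w}}_{+}}$); and your alternative of ``absorbing it into the high-frequency machinery of Lemma \ref{lem:HighFrequencies}'' does not directly give what is needed, since that lemma produces a pointwise-in-time energy bound, not an integrated one. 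Neither caveat undermines the conclusion; your main route is sound and equivalent to the paper's, once either the residual is tracked as you describe or the cut-offs are adjusted so that the decomposition is exact.
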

\begin{quote}
\begin{equation}
\int_{\{r\le R\}\cap\mathcal{R}(0,t^{*})}\big\{ J_{\text{\textgreek{m}}}^{N}(\text{\textgreek{y}}_{\le\text{\textgreek{w}}_{+}})n^{\text{\textgreek{m}}}+|\text{\textgreek{y}}_{\le\text{\textgreek{w}}_{+}}|^{2}\big\}\le C(R,\text{\textgreek{w}}_{0})\cdot e^{C(R,\text{\textgreek{w}}_{0})\text{\textgreek{w}}_{+}}\int_{t=0}J_{\text{\textgreek{m}}}^{N}(\text{\textgreek{y}})n^{\text{\textgreek{m}}}.\label{eq:IntegratedLocalEnergydecayForTheLowFrequencies}
\end{equation}
\end{quote}
\begin{proof}
Proposition \ref{prop:LowFrequencies} for the zero frequency part
$\text{\textgreek{y}}_{0}$ of $\text{\textgreek{y}}$ provides an
integrated local energy decay statement for $\text{\textgreek{y}}_{0}$:
\[
\int_{\{r\le R\}\cap\mathcal{R}(0,t^{*})}\big(J_{\text{\textgreek{m}}}^{N}(\text{\textgreek{y}}_{0})n^{\text{\textgreek{m}}}+|\text{\textgreek{y}}_{0}|^{2})\le C(R,\text{\textgreek{w}}_{0})\cdot\int_{t=0}J_{\text{\textgreek{m}}}^{N}(\text{\textgreek{y}})n^{\text{\textgreek{m}}}.
\]
Adding (\ref{eq:LowFrequencies}) and (\ref{eq:ILED}) for all $1\le|k|\le n$,
and recalling that $n\sim\frac{\text{\textgreek{w}}_{+}}{\text{\textgreek{w}}_{0}}$,
we obtain:

\[
\sum_{k=-n}^{n}\Big\{\int_{\{r\le R\}\cap\mathcal{R}(0,t^{*})}\Big(J_{\text{\textgreek{m}}}^{N}(\text{\textgreek{y}}_{k})n^{\text{\textgreek{m}}}+|\text{\textgreek{y}}_{k}|^{2}\Big)\Big\}\le C(R,\text{\textgreek{w}}_{0})\cdot\text{\textgreek{w}}_{+}\cdot e^{C(R,\text{\textgreek{w}}_{0})\cdot\text{\textgreek{w}}_{+}}\int_{t=0}J_{\text{\textgreek{m}}}^{N}(\text{\textgreek{y}})n^{\text{\textgreek{m}}}.
\]
The result now follows readily from the fact that $\text{\textgreek{y}}_{\le\text{\textgreek{w}}_{+}}=\sum_{k=-n}^{n}\text{\textgreek{y}}_{k}$:
\begin{align}
\int_{\{r\le R\}\cap\mathcal{R}(0,t^{*})}\Big(J_{\text{\textgreek{m}}}^{N}(\text{\textgreek{y}}_{\le\text{\textgreek{w}}_{+}})n^{\text{\textgreek{m}}}+|\text{\textgreek{y}}_{\le\text{\textgreek{w}}_{+}}|^{2}\Big) & \le\int_{\{r\le R\}\cap\mathcal{R}(0,t^{*})}2n\cdot\sum_{k=-n}^{n}\Big(J_{\text{\textgreek{m}}}^{N}(\text{\textgreek{y}}_{k})n^{\text{\textgreek{m}}}+|\text{\textgreek{y}}_{k}|^{2}\Big)\le\\
 & \le C(R,\text{\textgreek{w}}_{0})\cdot\text{\textgreek{w}}_{+}^{2}\cdot e^{C(R,\text{\textgreek{w}}_{0})\cdot\text{\textgreek{w}}_{+}}\int_{t=0}J_{\text{\textgreek{m}}}^{N}(\text{\textgreek{y}})n^{\text{\textgreek{m}}}\le\nonumber \\
 & \le C(R,\text{\textgreek{w}}_{0})\cdot e^{C(R)\cdot\text{\textgreek{w}}_{+}}\int_{t=0}J_{\text{\textgreek{m}}}^{N}(\text{\textgreek{y}})n^{\text{\textgreek{m}}}.\nonumber 
\end{align}

\end{proof}

\section{\label{sec:Polynomial-decay}Polynomial decay of the local energy
of $\text{\textgreek{y}}_{\le\text{\textgreek{w}}_{+}}$}

In this Section, we will establish a polynomial decay rate for the
local energy of $\text{\textgreek{y}}_{\le\text{\textgreek{w}}_{+}}$,
using the integrated local energy decay estimate of Corollary \ref{cor:BasicILED},
together with the generalisation of the $r^{p}$-weighted energy method
of Dafermos and Rodnianski established in \cite{Moschidisc}. 

We will thus examine the energy of $\text{\textgreek{y}}_{\le\text{\textgreek{w}}_{+}}$
on a foliation of spacelike hypersurfaces terminating at null infinity.%
\footnote{Since the energy of $\text{\textgreek{y}}_{\le\text{\textgreek{w}}_{+}}$
on the $\text{\textgreek{S}}_{t}$ hypersurfaces can not be expected
to decay in $t$%
} In the $r\gg1$ region, such a foliation has been constructed in
terms of the level sets of the function $\tilde{u}_{\text{\textgreek{h}}'}$,
see (\ref{eq:Hyperboloids}). We will extend $\tilde{u}_{\text{\textgreek{h}}'}$
to a function defined on the whole of $\mathcal{D}\backslash\mathcal{H}^{-}$,
with everywhere spacelike level sets, such that it coincides with
the time function $t$ in the region $r\lesssim1$: 
\begin{defn*}
For any fixed $\text{\textgreek{h}}'\in(0,1+2a)$ and any constant
$R_{3}\gg1$ large in terms of the geometry of $(\mathcal{D},g)$,
we will define the function $\bar{t}:D\backslash\mathcal{H}^{-}\rightarrow\mathbb{R}$,
\begin{equation}
\bar{t}=t+\text{\textgreek{q}}_{R_{3}}(r)\cdot(\tilde{u}_{\text{\textgreek{h}'}}-t),\label{eq:EverywhereDefinedHyperboloidalFoliation}
\end{equation}
where $u_{\text{\textgreek{h}}'}$ is defined as (\ref{eq:Hyperboloids})
and $\text{\textgreek{q}}_{R_{3}}:\mathbb{R}\rightarrow[0,1]$ is
a smooth cut-off function equal to $0$ for $r\le R_{3}$ and equal
to $1$ for $r\ge R_{3}+1$. We will also denote with $S_{\text{\textgreek{t}}}$
the hyperboloids $\{\bar{t}=\text{\textgreek{t}}\}$, which for $\{r\ge R_{3}\}$
coincide with $\tilde{S}_{\text{\textgreek{t}}}$. 
\end{defn*}
We will establish the following proposition:
\begin{prop}
\label{prop:PolynomialDecayLowFrequencies}There exists a positive
constant $C$, depending only on the geometry of $(\mathcal{D},g)$,
such that for any smooth solution $\text{\textgreek{y}}$ to the wave
equation on $J^{+}(\text{\textgreek{S}})\cap D$ with compactly supported
initial data on $\text{\textgreek{S}}$, any $0\le\text{\textgreek{d}}_{0}\le1$,
any $\text{\textgreek{w}}_{+}>1$ and any $\text{\textgreek{t}}\in[\frac{1}{4}t^{*},\frac{3}{4}t^{*}]$
we can bound:

\begin{equation}
\int_{S_{\text{\textgreek{t}}}\cap\mathcal{R}(0,t^{*})}J_{\text{\textgreek{m}}}^{N}(\text{\textgreek{y}}_{\le\text{\textgreek{w}}_{+}})n_{S_{\text{\textgreek{t}}}}^{\text{\textgreek{m}}}\le\frac{C}{\big(\text{\textgreek{t}}-\frac{1}{4}t^{*}\big)^{\text{\textgreek{d}}_{0}}}\cdot\Big\{ e^{C\text{\textgreek{w}}_{+}}\int_{t=0}J_{\text{\textgreek{m}}}^{N}(\text{\textgreek{y}})n^{\text{\textgreek{m}}}+\int_{t=0}(1+r)^{\text{\textgreek{d}}_{0}}J_{\text{\textgreek{m}}}^{N}(\text{\textgreek{y}})n^{\text{\textgreek{m}}}\Big\}.\label{eq:PolynomialDecayLowFrequencies}
\end{equation}
\end{prop}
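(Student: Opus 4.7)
The plan is to combine the integrated local energy decay of Corollary \ref{cor:BasicILED} (which carries the $e^{C\text{\textgreek{w}}_{+}}$ factor) with the $r^{p}$-weighted energy hierarchy of Lemma \ref{lem:FinalStatementNewMethodPsiK} at level $p=\text{\textgreek{d}}_{0}$, following the generalised Dafermos--Rodnianski scheme of \cite{Moschidisc}. The $r^{\text{\textgreek{d}}_{0}}$-hierarchy controls the behaviour of $\text{\textgreek{y}}_{\le\text{\textgreek{w}}_{+}}$ in the far region, Corollary \ref{cor:BasicILED} handles the interior $\{r \le R\}$, and a dyadic pigeonhole argument converts the resulting time-integrated estimate into the pointwise bound \eqref{eq:PolynomialDecayLowFrequencies}.

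First I would apply Lemma \ref{lem:FinalStatementNewMethodPsiK} to $\text{\textgreek{y}}_{\le\text{\textgreek{w}}_{+}}$ with $p = \text{\textgreek{d}}_{0}$ and a cutoff $\text{\textgreek{q}}_{R}$ supported in $\{r \ge R\}$, equal to one on $\{r \ge 2R\}$, for some fixed $R \gg 1$. The cutoff-error term on the right-hand side of \eqref{eq:FinalNewMethodPsiK}, supported in $\{R \le r \le 2R\}$, is bounded via Corollary \ref{cor:BasicILED} by $C(R,\text{\textgreek{w}}_{0})\,e^{C(R,\text{\textgreek{w}}_{0})\text{\textgreek{w}}_{+}}\!\int_{t=0} J^{N}_{\text{\textgreek{m}}}(\text{\textgreek{y}}) n^{\text{\textgreek{m}}}$, while the initial-data term is exactly $C\!\int_{t=0}(1+r^{\text{\textgreek{d}}_{0}}) J^{N}_{\text{\textgreek{m}}}(\text{\textgreek{y}})n^{\text{\textgreek{m}}}$. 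Adding the interior ILED bound from Corollary \ref{cor:BasicILED} and invoking Lemma \ref{lem:IntegratedBoundEnergyHyperboloids} to convert the bulk on $\{R \le r \le R+1\}$ into a time-integrated hyperboloidal energy in $\{r \ge R+1\}$, I expect to obtain
\begin{equation*}
\int_{0}^{t^{*}}\!\left(\int_{S_{\text{\textgreek{s}}} \cap \mathcal{R}(0,t^{*})} J^{N}_{\text{\textgreek{m}}}(\text{\textgreek{y}}_{\le\text{\textgreek{w}}_{+}}) n_{S_{\text{\textgreek{s}}}}^{\text{\textgreek{m}}}\right)\! d\text{\textgreek{s}} \;\le\; C\,e^{C\text{\textgreek{w}}_{+}}\!\int_{t=0} J^{N}_{\text{\textgreek{m}}}(\text{\textgreek{y}})n^{\text{\textgreek{m}}} \;+\; C\!\int_{t=0}(1+r)^{\text{\textgreek{d}}_{0}} J^{N}_{\text{\textgreek{m}}}(\text{\textgreek{y}})n^{\text{\textgreek{m}}},
\end{equation*}
with $C$ independent of $\text{\textgreek{w}}_{+}$.

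Second, I would extract the pointwise rate $(\text{\textgreek{t}} - t^{*}/4)^{-\text{\textgreek{d}}_{0}}$ from this time-integrated estimate by an iterative pigeonhole on a dyadic partition of $[t^{*}/4,\text{\textgreek{t}}]$, in which the $r^{\text{\textgreek{d}}_{0}-1}$ bulk of Lemma \ref{lem:FinalStatementNewMethodPsiK} is used at each scale to shrink the relevant energy by the correct factor. The pointwise bound on the good time $\tilde{\text{\textgreek{t}}}$ produced by the pigeonhole is then transported forward to $\text{\textgreek{t}}$ using Lemma \ref{lem:GeneralisedBoundednessPsiK} applied to $t_{\text{\textgreek{j}}} = \bar{t}$; its hypothesis $t_{st} \le t^{*}/2$ is the reason $\text{\textgreek{t}}$ is confined to $[t^{*}/4, 3t^{*}/4]$, and the $F_{\le\text{\textgreek{w}}_{+}}$-error incurred by this forward transport is absorbed into the $e^{C\text{\textgreek{w}}_{+}}$-summand via Lemma \ref{lem:BoundF}.

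The main obstacle will be producing exactly the rate $(\text{\textgreek{t}} - t^{*}/4)^{-\text{\textgreek{d}}_{0}}$ for $\text{\textgreek{d}}_{0} \in (0,1)$ rather than the $(\text{\textgreek{t}} - t^{*}/4)^{-1}$ rate that a naive one-shot pigeonhole on the integrated estimate would yield --- note that $(\text{\textgreek{t}} - t^{*}/4)^{-1}$ is \emph{worse}, not better, than the announced rate as $\text{\textgreek{t}} \to t^{*}/4$. The $\text{\textgreek{d}}_{0}$-exponent must instead emerge from interpolating the bound obtained at level $p = \text{\textgreek{d}}_{0}$ against the simple $J^{N}$-boundedness of $\text{\textgreek{y}}_{\le\text{\textgreek{w}}_{+}}$ provided by Lemma \ref{lem:BoundednessPsiK}, and this interpolation must be implemented consistently across every dyadic scale, exactly as in the sharp decay framework of \cite{Moschidisc}. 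A secondary technical point is the careful passage between the $\{t = \mathrm{const}\}$ foliation on which Lemma \ref{lem:FinalStatementNewMethodPsiK} is stated and the hyperboloidal foliation $\{S_{\text{\textgreek{t}}}\}$ in the transition region $\{R_{3} \le r \le R_{3}+1\}$, for which Lemma \ref{lem:IntegratedBoundEnergyHyperboloids} is specifically tailored.
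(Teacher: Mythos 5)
Your skeleton matches the paper's --- ILED from Corollary \ref{cor:BasicILED}, the $r^{p}$-hierarchy to get a time-integrated hyperboloidal energy bound, a reverse pigeonhole using Lemma \ref{lem:GeneralisedBoundednessPsiK}, and a final interpolation against boundedness --- but the central parameter choice is wrong, and the interpolation you sketch would not produce the stated estimate even if the first step worked.

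The gap in the first step: you propose to run Lemma \ref{lem:FinalStatementNewMethodPsiK} at level $p=\text{\textgreek{d}}_{0}$ and claim this yields the time-integrated bound
\begin{equation*}
\int_{0}^{t^{*}}\!\left(\int_{S_{\text{\textgreek{s}}}\cap\mathcal{R}(0,t^{*})} J^{N}_{\text{\textgreek{m}}}(\text{\textgreek{y}}_{\le\text{\textgreek{w}}_{+}})\, n_{S_{\text{\textgreek{s}}}}^{\text{\textgreek{m}}}\right)\! d\text{\textgreek{s}} \;\lesssim\; e^{C\text{\textgreek{w}}_{+}}\int_{t=0} J^{N}_{\text{\textgreek{m}}}(\text{\textgreek{y}})n^{\text{\textgreek{m}}} + \int_{t=0}(1+r)^{\text{\textgreek{d}}_{0}} J^{N}_{\text{\textgreek{m}}}(\text{\textgreek{y}})n^{\text{\textgreek{m}}}.
\end{equation*}
This is not deducible at $p=\text{\textgreek{d}}_{0}<1$. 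The hierarchy bulk on the left of (\ref{eq:FinalNewMethodPsiK}) contains $p\,r^{p-1}|\partial_{v}\text{\textgreek{Y}}_{k}|^{2}$, whereas the hyperboloidal $J^{N}$-flux, by (\ref{eq:EnergyEstimateOnTheHyperboloids}), scales like $|\partial_{v}\text{\textgreek{Y}}_{k}|^{2}+\dots$ with no negative $r$-weight. You need $r^{p-1}\gtrsim 1$, i.e.\ $p\ge1$, to majorise the unweighted flux; for $p<1$ the bulk only controls a flux degenerate at infinity. This is precisely why Lemma \ref{lem:IntegratedBoundEnergyHyperboloids}, which you invoke, is proved at $p=1$ and carries the weight $\int(1+r)J^{N}$ in its conclusion. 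The paper therefore runs the hierarchy at $p=1$ always, obtaining (\ref{eq:ILEDLowFrequencies2}) and then the rate $(\text{\textgreek{t}}-t^{*}/4)^{-1}$ with the $\int r\,J^{N}$ initial weight in (\ref{eq:PolynomialDecayLowFrequencies-1}); $\text{\textgreek{d}}_{0}$ never enters before the final interpolation.

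The gap in the interpolation: the paper does not interpolate "consistently across every dyadic scale." It performs a single complex-analytic (Phragmen--Lindel\"of / Stein) interpolation in Lemma \ref{lem:InterpolationLemma}, applied to the analytic family $\text{\textgreek{y}}^{(s)}$ obtained by $r$-reweighting initial data as $s$ runs over a strip. This is essential because the two estimates being interpolated live over different data norms: the $(\text{\textgreek{t}}-t^{*}/4)^{-1}$ bound with $\int r\,J^{N}$ data, and the boundedness with $\int J^{N}$ data. Interpolating at level $\text{\textgreek{d}}_{0}$ weakens the rate to $(\text{\textgreek{t}}-t^{*}/4)^{-\text{\textgreek{d}}_{0}}$ \emph{and simultaneously} lightens the data weight to $\int r^{\text{\textgreek{d}}_{0}}J^{N}$, giving exactly (\ref{eq:PolynomialDecayLowFrequencies}). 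Note that your plan, even if step one had worked, would give a $(\text{\textgreek{t}}-t^{*}/4)^{-1}$ rate with $r^{\text{\textgreek{d}}_{0}}$-weighted data; interpolating \emph{that} against the boundedness with unweighted data would yield $(\text{\textgreek{t}}-t^{*}/4)^{-\text{\textgreek{d}}_{0}}$ with $r^{\text{\textgreek{d}}_{0}^{2}}$ data, which is not what the proposition claims.
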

\begin{proof}
Let us fix $R=R_{3}$, assuming, of course, without loss of generality
that $R_{3}$ has been chosen sufficiently large in terms of the geometry
of $(\mathcal{D},g)$. We will also fix a value $\text{\textgreek{w}}_{0}=\text{\textgreek{w}}_{0}(R)$,
so that according to Corollary \ref{cor:BasicILED} we can estimate:

\begin{equation}
\int_{\{r\le R\}\cap\mathcal{R}(0,t^{*})}\big\{ J_{\text{\textgreek{m}}}^{N}(\text{\textgreek{y}}_{\le\text{\textgreek{w}}_{+}})n^{\text{\textgreek{m}}}+|\text{\textgreek{y}}_{\le\text{\textgreek{w}}_{+}}|^{2}\big\}\le C\cdot e^{C\text{\textgreek{w}}_{+}}\int_{t=0}J_{\text{\textgreek{m}}}^{N}(\text{\textgreek{y}})n^{\text{\textgreek{m}}}.\label{eq:ILEDlowfrequencies}
\end{equation}

In view of (\ref{eq:IntegratedDecayForTheEnergyOfTheHyperboloids-1}),
we can bound:

\begin{multline}
\int_{0}^{\infty}\big\{\int_{S_{t}\cap\mathcal{R}(0,t^{*})\cap\{r\ge R\}}J_{\text{\textgreek{m}}}^{N}(\text{\textgreek{y}}_{\le\text{\textgreek{w}}_{+}})n_{S_{t}}^{\text{\textgreek{m}}}\big\}\, dt\le\\
\le C\int_{\{R-1\le r\le R\}\cap\mathcal{R}(0,t^{*})}\big(J_{\text{\textgreek{m}}}^{N}(\text{\textgreek{y}}_{\le\text{\textgreek{w}}_{+}})n^{\text{\textgreek{m}}}+|\text{\textgreek{y}}_{\le\text{\textgreek{w}}_{+}}|^{2}\big)+C\int_{t=0}(1+r)J_{\text{\textgreek{m}}}^{N}(\text{\textgreek{y}})n^{\text{\textgreek{m}}},\label{eq:NearInfinityDecay}
\end{multline}
 where $n_{S_{t}}$ is the future directed unit normal of the foliation
$S_{t}$.

From (\ref{eq:NearInfinityDecay}) and (\ref{eq:ILEDlowfrequencies})
we thus conclude

\begin{equation}
\int_{0}^{\infty}\big\{\int_{S_{\text{\textgreek{t}}}\cap\mathcal{R}(0,t^{*})}J_{\text{\textgreek{m}}}^{N}(\text{\textgreek{y}}_{\le\text{\textgreek{w}}_{+}})n_{S_{\text{\textgreek{t}}}}^{\text{\textgreek{m}}}\big\}\, d\text{\textgreek{t}}\le C\cdot\Big\{ e^{C\text{\textgreek{w}}_{+}}\int_{t=0}J_{\text{\textgreek{m}}}^{N}(\text{\textgreek{y}})n^{\text{\textgreek{m}}}+\int_{t=0}rJ_{\text{\textgreek{m}}}^{N}(\text{\textgreek{y}})n^{\text{\textgreek{m}}}\Big\}.\label{eq:ILEDLowFrequencies2}
\end{equation}
In order to simplify the expressions in the next few lines, we will
set: 
\begin{equation}
f(\text{\textgreek{t}})\doteq\int_{S_{\text{\textgreek{t}}}\cap\mathcal{R}(0,t^{*})}J_{\text{\textgreek{m}}}^{N}(\text{\textgreek{y}}_{\le\text{\textgreek{w}}_{+}})n_{S_{\text{\textgreek{t}}}}^{\text{\textgreek{m}}}.
\end{equation}
Then (\ref{eq:ILEDLowFrequencies2}) reads

\begin{equation}
\int_{0}^{\infty}f(\text{\textgreek{t}})\, d\text{\textgreek{t}}\le C\cdot\Big\{ e^{C\text{\textgreek{w}}_{+}}\int_{t=0}J_{\text{\textgreek{m}}}^{N}(\text{\textgreek{y}})n^{\text{\textgreek{m}}}+\int_{t=0}rJ_{\text{\textgreek{m}}}^{N}(\text{\textgreek{y}})n^{\text{\textgreek{m}}}\Big\}.\label{eq:ILEDLowFrequencies2-1}
\end{equation}

Due to Lemma \ref{lem:GeneralisedBoundednessPsiK} for $q=2$, for
any $\text{\textgreek{t}}_{1}\le\text{\textgreek{t}}_{2}$ in the
interval $[\frac{1}{4}t^{*},\frac{3}{4}t^{*}]$, setting $t_{st}=\frac{1}{4}t^{*}$
in (\ref{eq:BoundednessFrmInitialDataOnK-1}) (notice that this implies
that $t_{st}\sim t_{1}\sim t_{2}-t_{1}$), the following boundedness
statement holds for $f$:

\begin{equation}
f(\text{\textgreek{t}}_{2})\le C\cdot f(\text{\textgreek{t}}_{1})+C\text{\textgreek{t}}_{2}^{2}\int_{t=0}J_{\text{\textgreek{m}}}^{N}(\text{\textgreek{y}})n^{\text{\textgreek{m}}}.\label{eq:BoundednessForf}
\end{equation}
Hence, we infer from (\ref{eq:ILEDLowFrequencies2-1}) and (\ref{eq:BoundednessForf})
for any $\text{\textgreek{t}}\in[\frac{1}{4}t^{*},\frac{3}{4}t^{*}]$:
\begin{align}
C\cdot\Big\{ e^{C\text{\textgreek{w}}_{+}}\int_{t=0}J_{\text{\textgreek{m}}}^{N}(\text{\textgreek{y}})n^{\text{\textgreek{m}}}+\int_{t=0}rJ_{\text{\textgreek{m}}}^{N}(\text{\textgreek{y}})n^{\text{\textgreek{m}}}\Big\} & \ge\int_{0}^{\infty}f(s)\, ds\ge\int_{\frac{1}{4}t^{*}}^{\text{\textgreek{t}}}f(s)\, ds\ge\label{eq:FinalBoundDecay}\\
 & \ge\int_{\frac{1}{4}t^{*}}^{\text{\textgreek{t}}}\big(\frac{f(\text{\textgreek{t}})}{C}-\frac{1}{\text{\textgreek{t}}^{2}}\int_{t=0}J_{\text{\textgreek{m}}}^{N}(\text{\textgreek{y}})n^{\text{\textgreek{m}}}\big)\, ds\ge\nonumber \\
 & \ge c\cdot(\text{\textgreek{t}}-\frac{t^{*}}{4})f(\text{\textgreek{t}})-C\cdot\int_{t=0}J_{\text{\textgreek{m}}}^{N}(\text{\textgreek{y}})n^{\text{\textgreek{m}}}.\nonumber 
\end{align}
From (\ref{eq:FinalBoundDecay}) it readily follows that forany $\text{\textgreek{t}}\in[\frac{1}{4}t^{*},\frac{3}{4}t^{*}]$
we can bound:

\begin{equation}
\int_{S_{\text{\textgreek{t}}}\cap\mathcal{R}(0,t^{*})}J_{\text{\textgreek{m}}}^{N}(\text{\textgreek{y}}_{\le\text{\textgreek{w}}_{+}})n_{S_{\text{\textgreek{t}}}}^{\text{\textgreek{m}}}\le\frac{C}{\text{\textgreek{t}}-\frac{1}{4}t^{*}}\cdot\Big\{ e^{C\text{\textgreek{w}}_{+}}\int_{t=0}J_{\text{\textgreek{m}}}^{N}(\text{\textgreek{y}})n^{\text{\textgreek{m}}}+\int_{t=0}rJ_{\text{\textgreek{m}}}^{N}(\text{\textgreek{y}})n^{\text{\textgreek{m}}}\Big\}.\label{eq:PolynomialDecayLowFrequencies-1}
\end{equation}

Interpolating (using Lemma \ref{lem:InterpolationLemma} of the Appendix)
 between (\ref{eq:PolynomialDecayLowFrequencies-1}) and the boundedness
estimate (following from Lemmas \ref{lem:BoundednessPsiK} and \ref{lem:GeneralisedBoundednessPsiK})
\begin{equation}
\int_{S_{\text{\textgreek{t}}}\cap\mathcal{R}(0,t^{*})}J_{\text{\textgreek{m}}}^{N}(\text{\textgreek{y}}_{\le\text{\textgreek{w}}_{+}})n_{S_{\text{\textgreek{t}}}}^{\text{\textgreek{m}}}\le C\int_{t=0}J_{\text{\textgreek{m}}}^{N}(\text{\textgreek{y}})n^{\text{\textgreek{m}}},\label{eq:PolynomialDecayLowFrequencies-1-1}
\end{equation}
 we readily obtain the desired estimate: 
\begin{equation}
\int_{S_{\text{\textgreek{t}}}\cap\mathcal{R}(0,t^{*})}J_{\text{\textgreek{m}}}^{N}(\text{\textgreek{y}}_{\le\text{\textgreek{w}}_{+}})n_{S_{\text{\textgreek{t}}}}^{\text{\textgreek{m}}}\le\frac{C}{\big(\text{\textgreek{t}}-\frac{1}{4}t^{*}\big)^{\text{\textgreek{d}}_{0}}}\cdot\Big\{ e^{C\text{\textgreek{w}}_{+}}\int_{t=0}J_{\text{\textgreek{m}}}^{N}(\text{\textgreek{y}})n^{\text{\textgreek{m}}}+\int_{t=0}r^{\text{\textgreek{d}}_{0}}J_{\text{\textgreek{m}}}^{N}(\text{\textgreek{y}})n^{\text{\textgreek{m}}}\Big\}.\label{eq:PolynomialDecayLowFrequencies-1-2}
\end{equation}

\end{proof}

\section{\label{sec:The-interpolation-argument}The frequency interpolation
argument}

In this section, we will fix $t^{*}$, $\text{\textgreek{w}}_{0}$
and $\text{\textgreek{w}}_{+}$, and we will complete the proof of
Theorem \ref{thm:Theorem}. 

Let $\text{\textgreek{y}}:\mathcal{D}\rightarrow\mathbb{C}$ be a
smooth function as in the statement of Theorem \ref{thm:Theorem},
with compactly supported initial data on $\text{\textgreek{S}}\cap\mathcal{D}$.
For any given $\text{\textgreek{t}}>0$, let $t^{*}=2\text{\textgreek{t}}$.
Of course, we can safely asume without loss of generality that $\text{\textgreek{t}}\gg1$.
Let us introduce a parameter $\text{\textgreek{w}}_{+}>1$, and define
$\text{\textgreek{y}}_{t^{*}},$$\text{\textgreek{y}}_{\le\text{\textgreek{w}}_{+}}$and
$\text{\textgreek{y}}_{\ge\text{\textgreek{w}}_{+}}$ as in Section
\ref{sec:FreqDecomposition}. 

Due to the fact that $\text{\textgreek{y}}_{t^{*}}=\text{\textgreek{y}}_{\le\text{\textgreek{w}}_{+}}+\text{\textgreek{y}}_{\ge\text{\textgreek{w}}_{+}}$,
we can bound: 
\begin{align}
\int_{\{t=\text{\textgreek{t}}\}\cap\{r\le R_{1}\}}J_{\text{\textgreek{m}}}^{N}(\text{\textgreek{y}}) & =\int_{\{t=\text{\textgreek{t}}\}\cap\{r\le R_{1}\}}J_{\text{\textgreek{m}}}^{N}(\text{\textgreek{y}}_{t^{*}})\lesssim\label{eq:AlmostThereForTheend}\\
 & \lesssim\int_{\{t=\text{\textgreek{t}}\}\cap\{r\le R_{1}\}}J_{\text{\textgreek{m}}}^{N}(\text{\textgreek{y}}_{\le\text{\textgreek{w}}_{+}})+\int_{\{t=\text{\textgreek{t}}\}\cap\{r\le R_{1}\}}J_{\text{\textgreek{m}}}^{N}(\text{\textgreek{y}}_{\ge\text{\textgreek{w}}_{+}}).\nonumber 
\end{align}
In view of Lemma \ref{lem:HighFrequencies}, we can estimate for the
high frequency part:

\begin{equation}
\int_{\{t=\text{\textgreek{t}}\}\cap\{r\le R_{1}\}}J_{\text{\textgreek{m}}}^{N}(\text{\textgreek{y}}_{\ge\text{\textgreek{w}}_{+}})n^{\text{\textgreek{m}}}\le\frac{C_{m}}{\text{\textgreek{w}}_{+}^{2m}}\sum_{j=0}^{m}\int_{t=0}J_{\text{\textgreek{m}}}^{N}(T^{j}\text{\textgreek{y}})n^{\text{\textgreek{m}}}.\label{eq:BoundHighFrequency}
\end{equation}
In view of Proposition \ref{prop:PolynomialDecayLowFrequencies},
we can estimate for the low frequency part:

\begin{equation}
\int_{\{t=\text{\textgreek{t}}\}\cap\{r\le R_{1}\}}J_{\text{\textgreek{m}}}^{N}(\text{\textgreek{y}}_{\le\text{\textgreek{w}}_{+}})\le C\frac{1}{\text{\textgreek{t}}^{\text{\textgreek{d}}_{0}}}\cdot\Big\{ e^{C\text{\textgreek{w}}_{+}}\int_{t=0}J_{\text{\textgreek{m}}}^{N}(\text{\textgreek{y}})n^{\text{\textgreek{m}}}+\int_{t=0}r^{\text{\textgreek{d}}_{0}}J_{\text{\textgreek{m}}}^{N}(\text{\textgreek{y}})n^{\text{\textgreek{m}}}\Big\}.\label{eq:BoundLowFrequenct}
\end{equation}
 From (\ref{eq:AlmostThereForTheend}), (\ref{eq:BoundLowFrequenct})
and (\ref{eq:BoundHighFrequency}), we thus deduce: 
\begin{align}
\int_{\{t=\text{\textgreek{t}}\}\cap\{r\le R_{1}\}}J_{\text{\textgreek{m}}}^{N}(\text{\textgreek{y}})n^{\text{\textgreek{m}}}\le & C\frac{1}{\text{\textgreek{t}}^{\text{\textgreek{d}}_{0}}}\cdot\Big\{ e^{C\text{\textgreek{w}}_{+}}\int_{t=0}J_{\text{\textgreek{m}}}^{N}(\text{\textgreek{y}})n^{\text{\textgreek{m}}}+\int_{t=0}r^{\text{\textgreek{d}}_{0}}J_{\text{\textgreek{m}}}^{N}(\text{\textgreek{y}})n^{\text{\textgreek{m}}}\Big\}+\label{eq:BeforeLastInterpolation}\\
 & +\frac{C_{m}}{\text{\textgreek{w}}_{+}^{2m}}\sum_{j=0}^{m}\int_{t=0}J_{\text{\textgreek{m}}}^{N}(T^{j}\text{\textgreek{y}})n^{\text{\textgreek{m}}}.\nonumber 
\end{align}

It is at this point that we will choose a suitable value for $\text{\textgreek{w}}_{+}$.
Choosing $\text{\textgreek{w}}_{+}=\frac{log(2+\text{\textgreek{t}})}{2C}$
in (\ref{eq:BeforeLastInterpolation}), we conclude:

\begin{equation}
\int_{\{t=\text{\textgreek{t}}\}\cap\{r\le R_{1}\}}J_{\text{\textgreek{m}}}^{N}(\text{\textgreek{y}})n^{\text{\textgreek{m}}}\le\frac{C_{m}}{\{log(2+\text{\textgreek{t}})\}^{2m}}\Big(\sum_{j=0}^{m}\int_{t=0}J_{\text{\textgreek{m}}}^{N}(T^{j}\text{\textgreek{y}})n^{\text{\textgreek{m}}}\Big)+\frac{C}{\text{\textgreek{t}}^{\text{\textgreek{d}}_{0}}}\int_{t=0}(1+r)^{\text{\textgreek{d}}_{0}}J_{\text{\textgreek{m}}}^{N}(\text{\textgreek{y}})n^{\text{\textgreek{m}}}
\end{equation}
 and hence Theorem \ref{eq:BasicTheorem} has been established. \qed

\section{\label{sec:Proof-of-corollary}Proof of Corollary \ref{cor:Corollary}}

In view of the boundedness assumption \hyperref[Assumption 4]{4}
and the conservation of the $J^{T}$ flux, it suffices to prove Corollary
\ref{cor:Corollary} for $S_{\text{\textgreek{t}}}$ being the level
sets $\{\bar{t}=\text{\textgreek{t}}\}$ of the function $\bar{t}$
defined in Section \ref{sec:Polynomial-decay}. 

Fix an $R_{1}\gg1$ in the statement of Theorem \ref{thm:Theorem}.
According to Theorem \ref{eq:BasicTheorem}, for any $m\in\mathbb{N}$
and any $l>0$ (that will be fixed later), there exists a constant
$C_{m}(l)>0$ (depending also on $R_{1}$) such that:

\begin{equation}
\int_{\{t=\text{\textgreek{t}}\}\cap\{r\le l\cdot R_{1}\}}J_{\text{\textgreek{m}}}^{N}(\text{\textgreek{y}})n^{\text{\textgreek{m}}}\le\frac{C_{m}(l)}{\{log(2+\text{\textgreek{t}})\}^{2m}}\Big(\sum_{j=0}^{m}\int_{t=0}J_{\text{\textgreek{m}}}^{N}(T^{j}\text{\textgreek{y}})n^{\text{\textgreek{m}}}\Big)+\frac{C_{m}(l)}{\text{\textgreek{t}}^{\text{\textgreek{d}}_{0}}}\int_{\{t=0\}}(1+r)^{\text{\textgreek{d}}_{0}}\cdot J_{\text{\textgreek{m}}}^{N}(\text{\textgreek{y}})n^{\text{\textgreek{m}}}.\label{eq:ResultFromTheorem}
\end{equation}
Applying (\ref{eq:IntegratedDecayForTheEnergyOfTheHyperboloidsPsi}),
we obtain for any $0\le t'\le t^{\prime\prime}$: %
\footnote{Recall that $S_{t}$ and $\tilde{S}_{t}$ coincide for $\{r\ge R_{1}\}$
if $R_{1}\gg1$%
}

\begin{equation}
\int_{t'}^{t^{\prime\prime}}\big\{\int_{S_{\text{s}}\cap\{r\ge R_{1}\}}J_{\text{\textgreek{m}}}^{N}(\text{\textgreek{y}})n_{S}^{\text{\textgreek{m}}}\big\}\, ds\le C\cdot\int_{\mathcal{R}(t',t^{\prime\prime})\cap\{R_{1}\le r\le R_{1}+1\}}\big(J_{\text{\textgreek{m}}}^{N}(\text{\textgreek{y}})n^{\text{\textgreek{m}}}+|\text{\textgreek{y}}|^{2}\big)+C\int_{\{t=0\}}(1+r)^{\text{\textgreek{d}}_{0}}J_{\text{\textgreek{m}}}^{N}(\text{\textgreek{y}})n^{\text{\textgreek{m}}}.\label{eq:ResultFromNewMethod}
\end{equation}

Using a Hardy inequality on $S_{\text{\textgreek{t}}}$ (a corollary
of (\ref{eq:GeneralHardyBound})), we can bound: 
\begin{equation}
\int_{S_{t}\cap\{R_{1}\le r\le R_{1}+1\}}|\text{\textgreek{y}}|^{2}\le C\cdot\int_{S_{t}\cap\{R_{1}\le r\}}r^{-2-\frac{1}{2}}|\text{\textgreek{y}}|^{2}\le C\cdot\int_{S_{t}\cap\{R_{1}\le r\}}r^{-\frac{1}{2}}J_{\text{\textgreek{m}}}^{N}(\text{\textgreek{y}})n_{S}^{\text{\textgreek{m}}}.\label{eq:HardyOnSt}
\end{equation}
Thus, for any $\text{\textgreek{e}}>0$, if $l=l(\text{\textgreek{e}})$
is chosen sufficiently large (independently of $t$), there exists
a $C=C(\text{\textgreek{e}})>0$ such that

\begin{equation}
\int_{S_{t}\cap\{R_{1}\le r\le R_{1}+1\}}|\text{\textgreek{y}}|^{2}\le\text{\textgreek{e}}\cdot\int_{S_{\text{t}}\cap\{r\ge R_{1}\}}J_{\text{\textgreek{m}}}^{N}(\text{\textgreek{y}})n_{S}^{\text{\textgreek{m}}}+C(\text{\textgreek{e}})\int_{S_{t}\cap\{r\le lR_{1}\}}J_{\text{\textgreek{m}}}^{N}(\text{\textgreek{y}})n_{S}^{\text{\textgreek{m}}}.\label{eq:EpsilonBound}
\end{equation}

Returning to (\ref{eq:ResultFromNewMethod}), in view of (\ref{eq:EpsilonBound})
we have for any $\text{\textgreek{e}}>0$: 
\begin{align}
\int_{t'}^{t^{\prime\prime}}\big\{\int_{S_{\text{s}}\cap\{r\ge R_{1}\}}J_{\text{\textgreek{m}}}^{N}(\text{\textgreek{y}})n_{S}^{\text{\textgreek{m}}}\big\}\, ds\le & C\text{\textgreek{e}}\cdot\int_{t'}^{t^{\prime\prime}}\big\{\int_{S_{\text{s}}\cap\{r\ge R_{1}\}}J_{\text{\textgreek{m}}}^{N}(\text{\textgreek{y}})n_{S}^{\text{\textgreek{m}}}\big\}\, ds+\\
 & +C(\text{\textgreek{e}})\int_{t'}^{t^{\prime\prime}}\big\{\int_{S_{t}\cap\{r\le l\cdot R_{1}\}}J_{\text{\textgreek{m}}}^{N}(\text{\textgreek{y}})n_{S}^{\text{\textgreek{m}}}\big\}\, ds+C\int_{\{t=0\}}(1+r)^{\text{\textgreek{d}}_{0}}J_{\text{\textgreek{m}}}^{N}(\text{\textgreek{y}})n^{\text{\textgreek{m}}},\nonumber 
\end{align}
or, after absorbing the first term of the right hand side into the
left hand side:

\begin{equation}
\int_{t'}^{t^{\prime\prime}}\big\{\int_{S_{s}\cap\{r\ge R_{1}\}}J_{\text{\textgreek{m}}}^{N}(\text{\textgreek{y}})n_{S}^{\text{\textgreek{m}}}\big\}\, ds\le C(\text{\textgreek{e}})\cdot\int_{t'}^{t^{\prime\prime}}\big\{\int_{S_{t}\cap\{r\le l\cdot R_{1}\}}J_{\text{\textgreek{m}}}^{N}(\text{\textgreek{y}})n_{S}^{\text{\textgreek{m}}}\big\}\, ds+C\int_{\{t=0\}}(1+r)^{\text{\textgreek{d}}_{0}}J_{\text{\textgreek{m}}}^{N}(\text{\textgreek{y}})n^{\text{\textgreek{m}}}.\label{eq:LastInequality}
\end{equation}
 From now on, $\text{\textgreek{e}}$ and hence $l$ will be considered
fixed.

To make the notations a little simpler, let us for a moment denote
\begin{equation}
f(t)\doteq\int_{S_{\text{t}}\cap\{r\ge R_{1}\}}J_{\text{\textgreek{m}}}^{N}(\text{\textgreek{y}})n_{S}^{\text{\textgreek{m}}}
\end{equation}
 for $t\ge0$. We will also need the initial data quantities 
\begin{equation}
A\doteq\sum_{j=0}^{m}\int_{t=0}J_{\text{\textgreek{m}}}^{N}(T^{j}\text{\textgreek{y}})n^{\text{\textgreek{m}}}
\end{equation}
 and 
\[
B\doteq\int_{\{t=0\}}(1+r)^{\text{\textgreek{d}}_{0}}J_{\text{\textgreek{m}}}^{N}(\text{\textgreek{y}})n^{\text{\textgreek{m}}}.
\]

In view of (\ref{eq:ResultFromTheorem}), inequality (\ref{eq:LastInequality})
becomes (assuming without loss of generality that $t'\ge1$)

\begin{equation}
\int_{t'}^{t^{\prime\prime}}f(s)\, ds\le C_{m}\cdot A\cdot\int_{t'}^{t^{\prime\prime}}\frac{1}{\{log(s+2)\}^{2m}}\, ds+C_{m}\cdot B\int_{t'}^{t^{\prime\prime}}\frac{1}{s}\, ds\le C_{m}\cdot(A+B)\cdot\int_{t'}^{t^{\prime\prime}}\frac{1}{\{log(s+2)\}^{2m}}\, ds.\label{eq:IntegralEstimate}
\end{equation}
 Inequality (\ref{eq:IntegralEstimate}), combined with an application
of the pigeonhole principle, implies that there exists an infinite
dyadic sequence $0<t_{1}<...<t_{k}<...$ of positive numbers with
$t_{k+1}\in[2t_{k},4t_{k}]$,%
\footnote{thus $t_{j}\longrightarrow+\infty$ as $j\rightarrow+\infty$%
} such that $\forall j\in\mathbb{N}:$ 
\begin{equation}
f(t_{j})\le2C_{m}\cdot(A+B)\cdot\frac{1}{\{log(t_{j}+2)\}^{2m}}.
\end{equation}
 It is important that in this case, $C_{m}$ is the same constant
as the one appearing in (\ref{eq:IntegralEstimate}). 

The existence of such a dyadic sequence follows by contradiction:
If such a sequence did not exist, that would mean that for any $t_{1}>0$,
there should exist a $t_{c}=t_{c}(t_{1})\ge t_{1}$, such that for
all $t\in[2t_{c},4t{}_{c}]$: 
\begin{equation}
f(t)>2C_{m}\cdot(A+B)\cdot\frac{1}{\{log(t+2)\}^{2m}}.\label{eq:BoundForContradiction}
\end{equation}
But this would lead to a contradiction, since in this case due to
(\ref{eq:IntegralEstimate}) and (\ref{eq:BoundForContradiction})
we could bound: 
\begin{equation}
1\ge\limsup_{t_{1}\rightarrow+\infty}\frac{\int_{2t_{c}}^{4t_{c}}f(s)\, ds}{C_{m}\cdot(A+B)\cdot\int_{2t_{c}}^{4t_{c}}\frac{1}{\{log(s+2)\}^{2m}}\, ds}>\limsup_{t_{1}\rightarrow+\infty}\frac{2C_{m}\cdot(A+B)\cdot\int_{2t_{c}}^{4t_{c}}\frac{1}{\{log(s+2)\}^{2m}}\, ds}{C_{m}\cdot(A+B)\cdot\int_{2t_{c}}^{4t_{c}}\frac{1}{\{log(s+2)\}^{2m}}\, ds}=2.
\end{equation}

We also note that due to the boundedness assumption \hyperref[Assumption 4]{4},%
\footnote{Notice the remark following assumption \hyperref[Assumption 4]{4}%
} there exists a constant $C>0$ (independent, of course, of $\text{\textgreek{y}}$)
such that for any $t$ in any interval $[t_{j},t_{j+1}]$ defined
by the previous dyadic sequence, we can bound 
\begin{equation}
f(t)\le C\cdot f(t_{j}).
\end{equation}
 Thus, since the sequence $\{t_{j}\}_{j\in\mathbb{N}}$ satisfied
$t_{j+1}\in[2t_{j},4t_{j}]$, we conclude that for any $t\ge0$:

\begin{equation}
f(t)\le2C_{m}\cdot(A+B)\cdot\frac{1}{\{log(t+2)\}^{2m}},
\end{equation}
 and thus Corollary \ref{cor:Corollary} has been established.\qed

\section{\label{sec:Proof-of-Corollary2}Proof of Corollaries \ref{cor:CorollaryQualititative}
and \ref{cor:CorollaryScattering}}

In this Section, we will establish Corollaries \ref{cor:CorollaryQualititative}
and \ref{cor:CorollaryScattering} using Corollary \ref{cor:Corollary}.

\subsection{Proof of Corollary \ref{cor:CorollaryQualititative}}

In order to show that $\lim_{\text{\textgreek{t}}\rightarrow+\infty}\int_{S_{\text{\textgreek{t}}}}J_{\text{\textgreek{m}}}^{N}(\text{\textgreek{y}})n_{S}^{\text{\textgreek{m}}}=0$,
it suffices to show that for any given $\text{\textgreek{e}}>0$,
there exists some $\text{\textgreek{t}}_{0}>0$ such that for any
$\text{\textgreek{t}}\ge\text{\textgreek{t}}_{0}$: $\int_{S_{\text{\textgreek{t}}}}J_{\text{\textgreek{m}}}^{N}(\text{\textgreek{y}})n_{S}^{\text{\textgreek{m}}}<\text{\textgreek{e}}$. 

Since $\int_{t=0}J_{\text{\textgreek{m}}}^{N}(\text{\textgreek{y}})n^{\text{\textgreek{m}}}<\infty$,
using the standard density arguments we can find smooth and compactly
supported initial data $(\tilde{\text{\textgreek{y}}},\partial_{t}\tilde{\text{\textgreek{y}}})$
on $t=0$, with the property that $\int_{t=0}J_{\text{\textgreek{m}}}^{N}(\text{\textgreek{y}}-\tilde{\text{\textgreek{y}}})n^{\text{\textgreek{m}}}<0$. 

Let $\tilde{\text{\textgreek{y}}}$ be the unique solution of $\square_{g}\tilde{\text{\textgreek{y}}}=0$
on $\mathcal{D}$ with initial data $(\tilde{\text{\textgreek{y}}},\partial_{t}\tilde{\text{\textgreek{y}}})|_{t=0}$.
Then, due to the linearity of the wave operator, $\text{\textgreek{y}}-\tilde{\text{\textgreek{y}}}$
will also solve $\square_{g}(\text{\textgreek{y}}-\tilde{\text{\textgreek{y}}})=0$.
Due to the boundedness assumption \hyperref[Assumption 4]{4} and
the conservation of the $J^{T}$-flux, we can then bound 
\begin{align}
\int_{S_{\text{\textgreek{t}}}}J_{\text{\textgreek{m}}}^{N}(\text{\textgreek{y}}-\tilde{\text{\textgreek{y}}})n_{S}^{\text{\textgreek{m}}} & <C\cdot\int_{t=0}J_{\text{\textgreek{m}}}^{N}(\text{\textgreek{y}}-\tilde{\text{\textgreek{y}}})n^{\text{\textgreek{m}}}<\label{eq:DifferneceEstimate}\\
 & <C\cdot\text{\textgreek{e}}.\nonumber 
\end{align}

Moreover, since $(\tilde{\text{\textgreek{y}}},\partial_{t}\tilde{\text{\textgreek{y}}})|_{t=0}$
is smooth and compactly supported and thus has finite $\int_{t=0}r\cdot J_{\text{\textgreek{m}}}^{N}(\tilde{\text{\textgreek{y}}})n^{\text{\textgreek{m}}}$
and $\int_{t=0}J_{\text{\textgreek{m}}}^{N}(T\tilde{\text{\textgreek{y}}})n^{\text{\textgreek{m}}}$
norms, Corollary \ref{cor:Corollary} applies to show that 
\begin{equation}
\int_{S_{\text{\textgreek{t}}}}J_{\text{\textgreek{m}}}^{N}(\tilde{\text{\textgreek{y}}})n_{S}^{\text{\textgreek{m}}}\le\frac{C(\tilde{\text{\textgreek{y}}})}{\{\log(2+\text{\textgreek{t}})\}^{2}}.
\end{equation}
 Thus, there exists some $\text{\textgreek{t}}_{0}$ depending on
$\tilde{\text{\textgreek{y}}}$ itself such that for $\text{\textgreek{t}}\ge\text{\textgreek{t}}_{0}$:
\begin{equation}
\int_{S_{\text{\textgreek{t}}}}J_{\text{\textgreek{m}}}^{N}(\tilde{\text{\textgreek{y}}})n_{S}^{\text{\textgreek{m}}}<\text{\textgreek{e}}.\label{eq:DecayForApproximation}
\end{equation}

Combining (\ref{eq:DifferneceEstimate}) and (\ref{eq:DecayForApproximation}),
and using a triangle inequality, we thus obtain the required estimate:
\begin{equation}
\int_{S_{\text{\textgreek{t}}}}J_{\text{\textgreek{m}}}^{N}(\text{\textgreek{y}})n_{S}^{\text{\textgreek{m}}}<C\cdot\text{\textgreek{e}}.
\end{equation}

The claim that $\lim_{\text{\textgreek{t}}\rightarrow+\infty}\int_{S_{\text{\textgreek{t}}}}J_{\text{\textgreek{m}}}^{T}(\text{\textgreek{y}})n_{S}^{\text{\textgreek{m}}}=0$
if $\int_{t=0}J_{\text{\textgreek{m}}}^{T}(\text{\textgreek{y}})n^{\text{\textgreek{m}}}<\infty$
in the case the $J^{T}$-energy is coercive follows in exactly the
same way, using the conservation of the $J^{T}$-current in place
of the boundedness of the $J^{N}$-energy. \qed

\subsection{Proof of Corollary \ref{cor:CorollaryScattering}}

Assume that $(\mathcal{D},g)$ satisfies the assumptions of Corollary
\ref{cor:CorollaryScattering}. We will also assume without loss of
generality that $\mathcal{I}_{as}$ has only one connected component
(see also Section \ref{sub:Remark}).

We will first show the first part of Corollary \ref{cor:CorollaryScattering},
that is to say, the statement that for any solution $\text{\textgreek{y}}$
to the wave equation $\square_{g}\text{\textgreek{y}}=0$ on $\mathcal{D}$
with 
\begin{equation}
\int_{t=0}J_{\text{\textgreek{m}}}^{T}(\text{\textgreek{y}})n^{\text{\textgreek{m}}}<\infty,\label{eq:FinitenessEnergy}
\end{equation}
 the following equality holds: 
\begin{equation}
\int_{\mathcal{H}^{+}\cap\{t\ge0\}}J_{\text{\textgreek{m}}}^{T}(\text{\textgreek{y}})n_{\mathcal{H}^{+}}^{\text{\textgreek{m}}}+\int_{\mathcal{I}^{+}}J_{\text{\textgreek{m}}}^{T}(\text{\textgreek{y}})n_{\mathcal{I}^{+}}^{\text{\textgreek{m}}}=\int_{t=0}J_{\text{\textgreek{m}}}^{T}(\text{\textgreek{y}})n^{\text{\textgreek{m}}}.\label{eq:EnergyIdentityInfinity}
\end{equation}

Let $\{S_{\text{\textgreek{t}}}\}_{\text{\textgreek{t}}\in\mathbb{R}}$
be the level sets of the function $\bar{t}$ constructed in Section
\ref{sec:Polynomial-decay}. We can assume without loss of generality
(after setting $\bar{t}\rightarrow\bar{t}+c$) that $S_{0}\subset J^{+}(\{t=0\})$.
Moreover, in view of the definition of the future radiation field
on $\mathcal{I}^{+}$ for solutions to (\ref{eq:WaveEquation}) with
merely finite initial energy norm (see the remarks above the statement
of Corollary \ref{eq:CorollaryScattering}), we will assume without
loss of generality that $\text{\textgreek{y}}$ is smooth and induces
compactly supported initial data on $\text{\textgreek{S}}\cap\mathcal{D}$. 

The conservation of the $J_{\text{\textgreek{m}}}^{T}(\text{\textgreek{y}})$
current in the region bounded by $\{t=0\},S_{\text{\textgreek{t}}},\mathcal{H}^{+}$
and a null hyperurface $\mathcal{I}_{n,\text{\textgreek{t}}}^{+}$
of $\mathcal{M}$ of the form 
\begin{equation}
\mathcal{I}_{n,\text{\textgreek{t}}}^{+}\doteq\partial J^{-}(S_{\text{\textgreek{t}}}\cap\{r\le R_{n}\})\backslash\mathcal{H}^{+}
\end{equation}
for a sequence $R_{n}\rightarrow+\infty$ yields: 
\begin{align}
\int_{\{t=0\}\cap J^{-}(S_{\text{\textgreek{t}}}\cap\{r\le R_{n}\})}J_{\text{\textgreek{m}}}^{T}(\text{\textgreek{y}})= & \int_{\mathcal{H}^{+}\cap J^{-}(S_{\text{\textgreek{t}}})\cap\{t\ge0\}}J_{\text{\textgreek{m}}}^{T}(\text{\textgreek{y}})n_{\mathcal{H}^{+}}^{\text{\textgreek{m}}}+\int_{S_{\text{\textgreek{t}}}\cap\{r\le R_{n}\}}J_{\text{\textgreek{m}}}^{T}(\text{\textgreek{y}})n_{S}^{\text{\textgreek{m}}}+\label{eq:TIdentity-1}\\
 & +\int_{\mathcal{I}_{n,\text{\textgreek{t}}}^{+}\cap J^{-}(S_{\text{\textgreek{t}}})}J_{\text{\textgreek{m}}}^{T}(\text{\textgreek{y}})n_{\mathcal{I}_{n,\text{\textgreek{t}}}^{+}}^{\text{\textgreek{m}}}.\nonumber 
\end{align}
The identity (\ref{eq:TIdentity-1}) implies that 
\begin{equation}
\sup_{n}\int_{\mathcal{I}_{n,\text{\textgreek{t}}}^{+}\cap J^{-}(S_{\text{\textgreek{t}}})}J_{\text{\textgreek{m}}}^{T}(\text{\textgreek{y}})n_{\mathcal{I}_{n,\text{\textgreek{t}}}^{+}}^{\text{\textgreek{m}}}\le\int_{t=0}J_{\text{\textgreek{m}}}^{T}(\text{\textgreek{y}})n^{\text{\textgreek{m}}}<+\infty.\label{eq:ForDominatedConvergence}
\end{equation}
Moreover, since $\text{\textgreek{y}}$ was assumed to induce compactly
supported initial data on $\{t=0\}$, there exist $R_{0}\gg1$ and
$u_{0}\in\mathbb{R}$ such that (in view of the domain of dependence
property for the wave equation)
\begin{equation}
supp(\text{\textgreek{y}})\cap\{r\ge R_{0}\}\subset\{u\ge u_{0}\}\label{eq:UniformSmallnessEnergyFlux}
\end{equation}
(where the function $u$ is constructed in the Appendix). Therefore,
in view of (\ref{eq:ForDominatedConvergence}), (\ref{eq:UniformSmallnessEnergyFlux})
and the fact that $r^{\frac{d-1}{2}}\text{\textgreek{y}}(u,r,\text{\textgreek{sv}})\rightarrow\text{\textgreek{Y}}_{\mathcal{I}^{+}}(u,\text{\textgreek{sv}})$
in the $H_{loc}^{1}(\mathbb{R}\times\mathbb{S}^{d-1},du^{2}+g_{\mathbb{S}^{d-1}})$
norm (see the remarks above the statement of Corollary \ref{cor:CorollaryScattering}),
the dominated convergence theorem implies that 
\begin{equation}
\int_{\mathcal{I}_{n}^{+}}J_{\text{\textgreek{m}}}^{T}(\text{\textgreek{y}})n_{\mathcal{I}_{n}^{+}}^{\text{\textgreek{m}}}\rightarrow\int_{\mathcal{I}^{+}\cap J^{-}(S_{\text{\textgreek{t}}})}J_{\text{\textgreek{m}}}^{T}(\text{\textgreek{y}})n_{\mathcal{I}^{+}}^{\text{\textgreek{m}}},
\end{equation}
and thus (\ref{eq:TIdentity-1}) readily implies for any $\text{\textgreek{t}}\ge0$
(in the limit $n\rightarrow+\infty$):
\begin{align}
\int_{t=0}J_{\text{\textgreek{m}}}^{T}(\text{\textgreek{y}})= & \int_{\mathcal{H}^{+}\cap J^{-}(S_{\text{\textgreek{t}}})\cap\{t\ge0\}}J_{\text{\textgreek{m}}}^{T}(\text{\textgreek{y}})n_{\mathcal{H}^{+}}^{\text{\textgreek{m}}}+\int_{S_{\text{\textgreek{t}}}}J_{\text{\textgreek{m}}}^{T}(\text{\textgreek{y}})n_{S}^{\text{\textgreek{m}}}+\label{eq:TIdentity}\\
 & +\int_{\mathcal{I}^{+}\cap J^{-}(S_{\text{\textgreek{t}}})}J_{\text{\textgreek{m}}}^{T}(\text{\textgreek{y}})n_{\mathcal{I}^{+}}^{\text{\textgreek{m}}},\nonumber 
\end{align}
 where $\int_{\mathcal{I}^{+}\cap J^{-}(S_{\text{\textgreek{t}}})}J_{\text{\textgreek{m}}}^{T}(\text{\textgreek{y}})n_{\mathcal{I}^{+}}^{\text{\textgreek{m}}}$
is defined as (\ref{eq:EnergyI+}). 

According to Corollary \ref{cor:CorollaryQualititative}, since $\int_{t=0}J_{\text{\textgreek{m}}}^{T}(\text{\textgreek{y}})n^{\text{\textgreek{m}}}<\infty$
we also have: 
\begin{equation}
\lim_{\text{\textgreek{t}}\rightarrow\infty}\int_{S_{\text{\textgreek{t}}}}J_{\text{\textgreek{m}}}^{T}(\text{\textgreek{y}})n_{S}^{\text{\textgreek{m}}}=0.
\end{equation}
 Thus, in the limit $\text{\textgreek{t}}\rightarrow+\infty$, the
relation (\ref{eq:TIdentity}) yields the desired identity (\ref{eq:EnergyIdentityInfinity}).

In order to complete the proof of Corollary \ref{cor:CorollaryScattering},
it remains to show that for any pair $(\text{\textgreek{y}}|_{\mathcal{H}^{+}},\text{\textgreek{Y}}_{\mathcal{I}^{+}})$
with finite $J^{T}$-energy, there exists an $H_{loc}^{1}$ solution
$\text{\textgreek{y}}$ to the wave equation $\square_{g}\text{\textgreek{y}}=0$
on $\mathcal{D}$ with $\int_{t=0}J_{\text{\textgreek{m}}}^{T}(\text{\textgreek{y}})n^{\text{\textgreek{m}}}<\infty$,
which gives rise to the scattering data pair $(\text{\textgreek{y}}|_{\mathcal{H}^{+}},\text{\textgreek{Y}}_{\mathcal{I}^{+}})$.
To this end, let us define Hilbert space $\mathscr{E}_{0}$ of finite
energy initial data on $\{t=0\}$ consisting of the completion of
the space $C_{0}^{\infty}(\{t=0\})\times C_{0}^{\infty}(\{t=0\})$
with the $J^{T}-$norm: 
\begin{equation}
||(\text{\textgreek{y}}_{0},\text{\textgreek{y}}_{1})||_{J^{T}}\doteq\int_{t=0}J_{\text{\textgreek{m}}}^{T}(\text{\textgreek{y}})n^{\text{\textgreek{m}}},
\end{equation}
 where $\text{\textgreek{y}}$ is the unique function defined on $\{t\ge0\}\cap\mathcal{D}$
such that $\text{\textgreek{y}}|_{t=0}=\text{\textgreek{y}}_{0}$
and $T\text{\textgreek{y}}|_{t=0}=\text{\textgreek{y}}_{1}$. Let
us also define the Hilbert space $\mathscr{E}_{sc}$ of finite energy
scattering data as the completion of the space $C_{0}^{\infty}(\mathcal{H}^{+}\cap\{t\ge0\})\times C_{0}^{\infty}(\mathcal{I}^{+})$
(where $\mathcal{I}^{+}\simeq\mathbb{R}\times\mathbb{S}^{d-1}$) with
the norm 
\begin{equation}
||(\text{\textgreek{y}}|_{\mathcal{H}^{+}\cap\{t\ge0\}},\text{\textgreek{Y}}_{\mathcal{I}^{+}})||_{\mathscr{E}_{sc}}\doteq\int_{\mathcal{H}^{+}\cap\{t\ge0\}}J_{\text{\textgreek{m}}}^{T}(\text{\textgreek{y}})n_{\mathcal{H}^{+}}^{\text{\textgreek{m}}}+\int_{\mathcal{I}^{+}}J_{\text{\textgreek{m}}}^{T}(\text{\textgreek{y}})n_{\mathcal{I}^{+}}^{\text{\textgreek{m}}}.\label{eq:ScatteringNorm}
\end{equation}
We will also define the scattering map $Scatt:\mathscr{E}_{0}\rightarrow\mathscr{E}_{sc}$,
which maps any pair of initial data $(\text{\textgreek{y}}_{0},\text{\textgreek{y}}_{1})$
to the scattering data $(\text{\textgreek{y}}|_{\mathcal{H}^{+}\cap\{t\ge0\}},\text{\textgreek{Y}}_{\mathcal{I}^{+}})$,
induced by the solution $\text{\textgreek{y}}$ to $\square_{g}\text{\textgreek{y}}=0$
with initial data $\text{\textgreek{y}}|_{t=0}=\text{\textgreek{y}}_{0}$
and $T\text{\textgreek{y}}|_{t=0}=\text{\textgreek{y}}_{1}$, according
to the remark above the statement of Corollary \ref{cor:CorollaryScattering}. 

With these notations, in order to complete the proof of Corollary
\ref{cor:CorollaryScattering}, we have to show that the scattering
map $Scatt$ is onto. In view of (\ref{eq:EnergyIdentityInfinity}),
$Scatt$ is an isometric embedding. Thus, it suffices to show that
its image is dense in $\mathcal{E}_{sc}$. In particular, we will
show that for any scattering data pair $(\text{\textgreek{y}}|_{\mathcal{H}^{+}\cap\{t\ge0\}},\text{\textgreek{Y}}_{\mathcal{I}^{+}})$
which are smooth and compactly supported and any $\text{\textgreek{e}}>0$,
there exists a solution $\tilde{\text{\textgreek{y}}}$ to the wave
equation $\square\tilde{\text{\textgreek{y}}}=0$ on $\{t\ge0\}\cap\mathcal{D}$
with $\int_{t=0}J_{\text{\textgreek{m}}}^{T}(\tilde{\text{\textgreek{y}}})n^{\text{\textgreek{m}}}<+\infty$,
such that 
\begin{equation}
||(\tilde{\text{\textgreek{y}}}|_{\mathcal{H}^{+}\cap\{t\ge0\}},\tilde{\text{\textgreek{Y}}}_{\mathcal{I}^{+}})-(\text{\textgreek{y}}|_{\mathcal{H}^{+}\cap\{t\ge0\}},\text{\textgreek{Y}}_{\mathcal{I}^{+}})||_{\mathscr{E}_{sc}}<\text{\textgreek{e}},\label{eq:SmallnessScattering}
\end{equation}
where $\tilde{\text{\textgreek{Y}}}_{\mathcal{I}^{+}}$ is the future
radiation field of $\tilde{\text{\textgreek{y}}}$.

We will assume without loss of generality that the scattering data
pair $(\text{\textgreek{y}}|_{\mathcal{H}^{+}\cap\{t\ge0\}},\text{\textgreek{Y}}_{\mathcal{I}^{+}})$
is real valued. Since $(\text{\textgreek{y}}|_{\mathcal{H}^{+}\cap\{t\ge0\}},\text{\textgreek{Y}}_{\mathcal{I}^{+}})$
was assumed to be compactly supported, there exists some $t_{f}>0$
such that $\text{\textgreek{y}}|_{\mathcal{H}^{+}\cap\{t\ge t_{f}\}}\equiv0$
and $\text{\textgreek{Y}}_{\mathcal{I}^{+}}|_{\{u\ge t_{f}\}}\equiv0$.
Moreover, there exists some $t_{in}<0$ such that $\text{\textgreek{Y}}_{\mathcal{I}^{+}}|_{\{u\le t_{in}\}}\equiv0$. 

Let us also introduce the $C^{1}$ optical function $\bar{v}:\{r\ge R_{far}\}\cap\{\bar{t}\le t_{f}\}\rightarrow\mathbb{R}$
such that the level sets of $\bar{v}$ are the hypersurfaces 
\begin{equation}
\mathcal{I}_{R}^{+}\doteq\partial(J^{-}(S_{t_{f}}\cap\{r\le R\})\backslash\mathcal{H}^{+}
\end{equation}
 and satifying $T\bar{v}=1$. 

In the region $\{\bar{t}\le t_{f}\}\cap\{\bar{v}\ge v_{0}\}\subset\mathcal{D}$
for some $v_{0}\gg1$, we will introduce the flat metric associated
to the $(u,v,\text{\textgreek{sv}})$ coordinate chart there: 
\begin{equation}
g_{fl}=-4dudv+r^{2}g_{\mathbb{S}^{d-1}}.\label{eq:Minkowski metric}
\end{equation}
 In this region, we can construct a solution $\text{\textgreek{y}}_{fl}$
to the \underline{flat} wave equation by solving the following scattering
problem:
\begin{equation}
\begin{cases}
\square_{g_{fl}}\text{\textgreek{y}}_{fl}=0 & \mbox{on }\{\bar{t}\le t_{f}\}\cap\{\bar{v}\ge v_{0}\}\\
(\text{\textgreek{y}}_{fl},T\text{\textgreek{y}}_{fl})|_{S_{t_{f}}\cap\{\bar{v}\ge v_{0}\}}=0\\
\lim_{v\rightarrow+\infty}r^{\frac{d-1}{2}}\text{\textgreek{y}}_{fl}(u,v,\text{\textgreek{sv}})=\text{\textgreek{Y}}_{\mathcal{I}^{+}}(u,\text{\textgreek{sv}}).
\end{cases}\label{eq:FlatScatteringProblem}
\end{equation}
Notice that the scattering problem (\ref{eq:FlatScatteringProblem})
is a scattering problem on Minkowski spacetime, and the existence
and uniqueness of a solution $\text{\textgreek{y}}_{fl}$ to (\ref{eq:FlatScatteringProblem})
follows readily by using the confromal compactification method and
solving (backwards) a regular mixed characteristic-initial value problem
for the conformal wave equation (in the setup of \cite{Christodoulou1986}).
Alternatively, one can use the $r^{p}$-weighted energy method of
\cite{DafRod7}, see Section 9.6 of \cite{Dafermos2014}. Following
either way of proof, one obtains a unique smooth solution $\text{\textgreek{y}}_{fl}$
to (\ref{eq:FlatScatteringProblem}) satisfying the following qualititative
bound 
\begin{equation}
\sum_{j=0}^{k}\sum_{j_{1}+j_{2}+j_{3}=j}\int_{\{t\ge0\}\cap\{\bar{t}\le t_{f}\}\cap\{\bar{v}\ge v_{0}\}}r^{-1-\text{\textgreek{d}}}\cdot|r^{j_{1}}\partial_{v}^{j_{1}}\partial_{\text{\textgreek{sv}}}^{j_{2}}\partial_{u}^{j_{3}}(r^{\frac{d-1}{2}}\text{\textgreek{y}}_{fl})|^{2}\, dudvd\text{\textgreek{sv}}<+\infty\label{eq:qualitativebound}
\end{equation}
 for any $k\in\mathbb{N}$ and any $\text{\textgreek{d}}>0$. 

In view of (\ref{eq:qualitativebound}), for any $C_{0}>0$ (to be
fixed later), there exists a $v_{\text{\textgreek{e}}}\gg v_{0}$
(depending on $\text{\textgreek{e}},\text{\textgreek{d}},C_{0}$ and
the function $\text{\textgreek{y}}_{fl}$ itself) such that 
\begin{equation}
\sum_{j=0}^{3}\sum_{j_{1}+j_{2}+j_{3}=j}\int_{\{t\ge0\}\cap\{\bar{t}\le t_{f}\}\cap\{\bar{v}\ge v_{\text{\textgreek{e}}}\}}r^{-1-\text{\textgreek{d}}}\cdot|r^{j_{1}}\partial_{v}^{j_{1}}\partial_{\text{\textgreek{sv}}}^{j_{2}}\partial_{u}^{j_{3}}(r^{\frac{d-1}{2}}\text{\textgreek{y}}_{fl})|^{2}\, dudvd\text{\textgreek{sv}}<C_{0}^{-1}\text{\textgreek{e}}.\label{eq:qualitativeepsilonbulkbound}
\end{equation}
Let $R_{\text{\textgreek{e}}}>1$ be fixed so that $\{\bar{t}=t_{f}\}\cap\{\bar{v}=v_{\text{\textgreek{e}}}\}\subset\{R_{\text{\textgreek{e}}}-1\le r\le R_{\text{\textgreek{e}}}\}$.
Then, there exists a smooth spacelike hypersurface $\bar{S}_{\text{\textgreek{e}}}$
of $(\mathcal{D},g)$ (depending again on $\text{\textgreek{e}},\text{\textgreek{d}},C_{0}$
and $\text{\textgreek{y}}_{fl}$) such that $\bar{S}_{\text{\textgreek{e}}}\equiv\{t=R_{\text{\textgreek{e}}}+2v_{\text{\textgreek{e}}}\}$
in the region $\{r\ge R_{\text{\textgreek{e}}}+1\}$ and $\bar{S}_{\text{\textgreek{e}}}\equiv S_{t_{f}}$
in the region $\{r\le R_{\text{\textgreek{e}}}\}$. Notice that the
domain of dependence of $\bar{S}_{\text{\textgreek{e}}}\cup(\mathcal{H}^{+}\cap\{t\ge0\})$
contains the whole of $\{t\ge0\}$.

We will construct the solution $\tilde{\text{\textgreek{y}}}$ to
$\square_{g}\tilde{\text{\textgreek{y}}}=0$ on $\{t\ge0\}$ with
initial-characteristic data on $\bar{S}_{\text{\textgreek{e}}}\cup(\mathcal{H}^{+}\cap\{t\ge0\}$
as follows:
\begin{equation}
\begin{cases}
\square_{g}\tilde{\text{\textgreek{y}}}=0 & \mbox{on }J^{+}(\text{\textgreek{S}})\cap\mathcal{D}\\
(\tilde{\text{\textgreek{y}}},T\tilde{\text{\textgreek{y}}})|_{\bar{S}_{\text{\textgreek{e}}}\cap\{r\le R_{\text{\textgreek{e}}}-1\}}=(0,0)\\
(\tilde{\text{\textgreek{y}}},T\tilde{\text{\textgreek{y}}})|_{\bar{S}_{\text{\textgreek{e}}}\cap\{r\ge R_{\text{\textgreek{e}}}-1\}}=(\text{\textgreek{y}}_{fl},T\text{\textgreek{y}}_{fl})|_{\bar{S}_{\text{\textgreek{e}}}\cap\{r\ge R_{\text{\textgreek{e}}}-1\}}\\
\tilde{\text{\textgreek{y}}}|_{\mathcal{H}^{+}\cap\{t\ge0\}}=\text{\textgreek{y}}|_{\mathcal{H}^{+}\cap\{t\ge0\}}.
\end{cases}\label{eq:PerturbedWave}
\end{equation}
Notice that the initial data for $\tilde{\text{\textgreek{y}}}$ on
$\bar{S}_{\text{\textgreek{e}}}$ are smooth, in view of the fact
that $(\text{\textgreek{y}}_{fl},T\text{\textgreek{y}}_{fl})|_{S_{t_{f}}\cap\{\bar{v}\ge v_{0}\}}=0$.
Hence, $\tilde{\text{\textgreek{y}}}$ is a smooth function on $\{t\ge0\}$. 

We will now show that the radiation field of $\tilde{\text{\textgreek{y}}}$
on $\mathcal{I}^{+}$ exists in the $H_{loc}^{1}$ norm and is actually
$\text{\textgreek{e}}$-close to $\text{\textgreek{Y}}_{\mathcal{I}^{+}}$
in the $\int_{\mathcal{I}^{+}}J_{\text{\textgreek{m}}}^{T}(\cdot)n_{\mathcal{I}^{+}}^{\text{\textgreek{m}}}$
norm. Notice that for any smooth function $\text{\textgreek{f}}$
on $\{\bar{t}\le t_{f}\}\cap\{\bar{v}\ge v_{0}\}$ we can estimate
\begin{align}
r^{\frac{d-1}{2}}\square_{g}\text{\textgreek{f}}=r^{\frac{d-1}{2}} & \square_{fl}\text{\textgreek{f}}+O_{3}(r^{-2-a})\partial_{u}^{2}(r^{\frac{d-1}{2}}\text{\textgreek{f}})+O_{3}(r^{-1})\partial_{v}^{2}(r^{\frac{d-1}{2}}\text{\textgreek{f}})+O_{3}(r^{-2-a})\partial_{u}\partial_{\text{\textgreek{sv}}}(r^{\frac{d-1}{2}}\text{\textgreek{f}})+O_{3}(r^{-2})\partial_{v}\partial_{\text{\textgreek{sv}}}(r^{\frac{d-1}{2}}\text{\textgreek{f}})+\label{eq:ErrorTermsMorawetz}\\
 & +O_{2}(r^{-3-a})\partial_{\text{\textgreek{sv}}}\partial_{\text{\textgreek{sv}}}(r^{\frac{d-1}{2}}\text{\textgreek{f}})+O_{2}(r^{-2-a})\partial_{u}(r^{\frac{d-1}{2}}\text{\textgreek{f}})+O_{2}(r^{-1-a})\partial_{v}(r^{\frac{d-1}{2}}\text{\textgreek{f}})+O_{2}(r^{-2-a})\partial_{\text{\textgreek{sv}}}(r^{\frac{d-1}{2}}\text{\textgreek{f}})+O_{1}(r^{-3})(r^{\frac{d-1}{2}}\text{\textgreek{f}}).\nonumber 
\end{align}
In view of (\ref{eq:FlatScatteringProblem}), (\ref{eq:PerturbedWave})
and (\ref{eq:ErrorTermsMorawetz}) for $\text{\textgreek{y}}_{fl}$
in place of $\text{\textgreek{f}}$, the difference $\tilde{\text{\textgreek{y}}}-\text{\textgreek{y}}_{fl}$
solves the following initial value problem on $\{\bar{t}\le t_{f}\}\cap J^{+}(\bar{S}_{\text{\textgreek{e}}})$:
\begin{equation}
\begin{cases}
\square_{g}(\tilde{\text{\textgreek{y}}}-\text{\textgreek{y}}_{fl})=F_{fl} & \mbox{ on }\{\bar{t}\le t_{f}\}\cap J^{+}(\bar{S}_{\text{\textgreek{e}}})\\
\big(\tilde{\text{\textgreek{y}}}-\text{\textgreek{y}}_{fl},T(\tilde{\text{\textgreek{y}}}-\text{\textgreek{y}}_{fl})\big)\big|_{\{\bar{t}\le t_{f}\}\cap\bar{S}_{\text{\textgreek{e}}}}=0,
\end{cases}\label{eq:InitialvalueProblemDifference}
\end{equation}
where, in view of (\ref{eq:ErrorTermsMorawetz}) and (\ref{eq:qualitativeepsilonbulkbound})
we can estimate (for some $\text{\textgreek{d}}_{0}>0$ small in terms
of $\text{\textgreek{d}},a$) 
\begin{equation}
\sum_{j=0}^{1}\sum_{j_{1}+j_{2}+j_{3}=j}\int_{\{\bar{t}\le t_{f}\}\cap J^{+}(\bar{S}_{\text{\textgreek{e}}})}r^{1+\text{\textgreek{d}}_{0}}|r^{j_{1}}\partial_{v}^{j_{1}}\partial_{\text{\textgreek{sv}}}^{j_{2}}\partial_{u}^{j_{3}}(r^{\frac{d-1}{2}}F_{fl})|^{2}\, dudvd\text{\textgreek{sv}}\le C\cdot C_{0}^{-1}\text{\textgreek{e}}.\label{eq:BoundSourceDifference}
\end{equation}
 Repeating the proof of Theorem 7.1 of \cite{Moschidisc} for $\tilde{\text{\textgreek{y}}}-\text{\textgreek{y}}_{fl}$
in the region $\{\bar{t}\le t_{f}\}\cap J^{+}(\bar{S}_{\text{\textgreek{e}}})$,
(\ref{eq:InitialvalueProblemDifference}) and (\ref{eq:BoundSourceDifference})
imply that for any increasing sequence $\{v_{n}\}_{n\in\mathbb{N}}$
with $v_{n}\rightarrow+\infty$, the sequence $r^{\frac{d-1}{2}}(\tilde{\text{\textgreek{y}}}-\text{\textgreek{y}}_{fl})(u,v_{n},\text{\textgreek{sv}})$
converges in the $H_{loc}^{1}$ norm. Since by construction of the
function $\text{\textgreek{y}}_{fl}$ the sequence $r^{\frac{d-1}{2}}\text{\textgreek{y}}_{fl}(u,v_{n},\text{\textgreek{sv}})$
converges in the $H_{loc}^{1}$ topology, we also deduce that the
sequence $r^{\frac{d-1}{2}}\tilde{\text{\textgreek{y}}}(u,v_{n},\text{\textgreek{sv}})$
converges in the $H_{loc}^{1}$ norm, and we set 
\begin{equation}
\tilde{\text{\textgreek{Y}}}_{\mathcal{I}^{+}}(u,\text{\textgreek{sv}})\doteq\lim_{v\rightarrow+\infty}\big(r^{\frac{d-1}{2}}\tilde{\text{\textgreek{y}}}(u,v,\text{\textgreek{sv}})\big).\label{eq:RadiationFieldTilde}
\end{equation}
 Furthermore, by applying the energy identity for the $T$ vector
field on the region $\{\bar{t}\le t_{f}\}\cap J^{+}(\bar{S}_{\text{\textgreek{e}}})$,
we infer from (\ref{eq:InitialvalueProblemDifference}) and (\ref{eq:BoundSourceDifference})
that: 
\begin{equation}
\int_{\mathcal{I}^{+}\cap\{\bar{t}\le t_{f}\}}J_{\text{\textgreek{m}}}^{T}(\tilde{\text{\textgreek{y}}}-\text{\textgreek{y}}_{fl})n_{\mathcal{I}^{+}}^{\text{\textgreek{m}}}\le C\cdot C_{0}^{-1}\text{\textgreek{e}}.\label{eq:EnergyDifferenceNotOnWhole}
\end{equation}

Since the function $\text{\textgreek{y}}_{fl}$ solves 
\begin{equation}
\square_{g}\text{\textgreek{y}}_{fl}=-F_{fl}
\end{equation}
and satisfies (\ref{eq:FlatScatteringProblem}), by applying the energy
identity for the $T$ vector field on the region $\{\bar{t}\le t_{f}\}\cap J^{+}(\bar{S}_{\text{\textgreek{e}}})$
in the backward evolution (notice that $\bar{S}_{\text{\textgreek{e}}}\cap\{\bar{t}\le t_{f}\}$
lies in the domain of dependence of $\{\bar{t}=t_{f}\}\cup(\mathcal{I}^{+}\cap\{u\le t_{f}\}$),
we can bound (in view of (\ref{eq:BoundSourceDifference})): 
\begin{equation}
\int_{\bar{S}_{\text{\textgreek{e}}}\cap\{\bar{t}\le t_{f}\}}J_{\text{\textgreek{m}}}^{T}(\text{\textgreek{y}}_{fl})n^{\text{\textgreek{m}}}\le\int_{\mathcal{I}^{+}}J_{\text{\textgreek{m}}}^{T}(\text{\textgreek{y}})n_{\mathcal{I}^{+}}^{\text{\textgreek{m}}}+C\cdot C_{0}^{-1}\text{\textgreek{e}}.\label{eq:BoundSpacelikeEnergyFlatPsi}
\end{equation}
Thus, since 
\begin{equation}
(\tilde{\text{\textgreek{y}}},T\tilde{\text{\textgreek{y}}})|_{\bar{S}_{\text{\textgreek{e}}}\cap\{r\ge R_{\text{\textgreek{e}}}-1\}}=(\text{\textgreek{y}}_{fl},T\text{\textgreek{y}}_{fl})|_{\bar{S}_{\text{\textgreek{e}}}\cap\{r\ge R_{\text{\textgreek{e}}}-1\}}
\end{equation}
and 
\begin{equation}
(\tilde{\text{\textgreek{y}}},T\tilde{\text{\textgreek{y}}})|_{\bar{S}_{\text{\textgreek{e}}}\cap\{r\le R_{\text{\textgreek{e}}}-1\}}=(0,0),
\end{equation}
(\ref{eq:BoundSpacelikeEnergyFlatPsi}) immediately yields: 
\begin{equation}
\int_{\bar{S}_{\text{\textgreek{e}}}}J_{\text{\textgreek{m}}}^{T}(\tilde{\text{\textgreek{y}}})n_{\bar{S}_{\text{\textgreek{e}}}}^{\text{\textgreek{m}}}\le\int_{\mathcal{I}^{+}}J_{\text{\textgreek{m}}}^{T}(\text{\textgreek{y}})n^{\text{\textgreek{m}}}+C\cdot C_{0}^{-1}\text{\textgreek{e}},\label{eq:BoundSpacelikeEnergyFlatPsi-1}
\end{equation}
where $n_{\bar{S}_{\text{\textgreek{e}}}}$ is the future directed
unit normal of the hypersurface $\bar{S}_{\text{\textgreek{e}}}$.
Therefore, in view of the fact that $\tilde{\text{\textgreek{y}}}|_{\mathcal{H}^{+}\cap\{t\ge0\}}=\text{\textgreek{y}}|_{\mathcal{H}^{+}\cap\{t\ge0\}}$,
by applying once more the energy identity for the $T$ vector field
in the region boundd by $\{t=0\}$, $\mathcal{H}^{+}$ and $\bar{S}_{\text{\textgreek{e}}}$,
we obtain (using (\ref{eq:BoundSpacelikeEnergyFlatPsi-1})): 
\begin{equation}
\int_{t=0}J_{\text{\textgreek{m}}}^{T}(\tilde{\text{\textgreek{y}}})n^{\text{\textgreek{m}}}\le\int_{\mathcal{H}^{+}\cap\{t\ge0\}}J_{\text{\textgreek{m}}}^{T}(\text{\textgreek{y}})n_{\mathcal{H}^{+}}^{\text{\textgreek{m}}}+\int_{\mathcal{I}^{+}}J_{\text{\textgreek{m}}}^{T}(\text{\textgreek{y}})n_{\mathcal{I}^{+}}^{\text{\textgreek{m}}}+C\cdot C_{0}^{-1}\text{\textgreek{e}}.\label{eq:InitialEnergyPerturbedwave}
\end{equation}

Since $\int_{t=0}J_{\text{\textgreek{m}}}^{T}(\tilde{\text{\textgreek{y}}})n^{\text{\textgreek{m}}}<+\infty$
(in view of (\ref{eq:InitialEnergyPerturbedwave})), according to
the definition above the statement of Corollary \ref{cor:CorollaryScattering}
the radiation field $\tilde{\text{\textgreek{Y}}}_{\mathcal{I}^{+}}$
of $\tilde{\text{\textgreek{y}}}$ on $\mathcal{I}^{+}$ exists in
the Hilbert space defined by the $\int_{\mathcal{I}^{+}}J_{\text{\textgreek{m}}}^{T}(\cdot)n_{\mathcal{I}^{+}}^{\text{\textgreek{m}}}$
norm, and its restriction on $\mathcal{I}^{+}\cap\{u\le t_{f}\}$
coincides with the limit (\ref{eq:RadiationFieldTilde}). In view
of (\ref{eq:BoundHorizon}), which also applies for $\tilde{\text{\textgreek{y}}}$,
we can bound: 
\begin{equation}
\int_{\mathcal{H}^{+}\cap\{t\ge0\}}J_{\text{\textgreek{m}}}^{T}(\tilde{\text{\textgreek{y}}})n_{\mathcal{H}^{+}}^{\text{\textgreek{m}}}+\int_{\mathcal{I}^{+}}J_{\text{\textgreek{m}}}^{T}(\tilde{\text{\textgreek{y}}})n_{\mathcal{I}^{+}}^{\text{\textgreek{m}}}\le\int_{t=0}J_{\text{\textgreek{m}}}^{T}(\tilde{\text{\textgreek{y}}})n^{\text{\textgreek{m}}}.\label{eq:BoundHorizon-1}
\end{equation}
Thus, from (\ref{eq:InitialEnergyPerturbedwave}) and (\ref{eq:BoundHorizon-1})
(using also the fact that $\tilde{\text{\textgreek{y}}}|_{\mathcal{H}^{+}\cap\{t\ge0\}}=\text{\textgreek{y}}|_{\mathcal{H}^{+}\cap\{t\ge0\}}$)
we infer: 
\begin{equation}
\int_{\mathcal{I}^{+}}J_{\text{\textgreek{m}}}^{T}(\tilde{\text{\textgreek{y}}})n_{\mathcal{I}^{+}}^{\text{\textgreek{m}}}\le\int_{\mathcal{I}^{+}}J_{\text{\textgreek{m}}}^{T}(\text{\textgreek{y}})n_{\mathcal{I}^{+}}^{\text{\textgreek{m}}}+C\cdot C_{0}^{-1}\text{\textgreek{e}}.\label{eq:EnergyBoundRadiationField}
\end{equation}
From (\ref{eq:EnergyDifferenceNotOnWhole}) and the fact that $\text{\textgreek{Y}}_{\mathcal{I}^{+}}$
was assumed to be supported on $\{u\le t_{f}\}$, we obtain: 
\begin{equation}
\int_{\mathcal{I}^{+}\cap\{u\le t_{f}\}}J_{\text{\textgreek{m}}}^{T}(\tilde{\text{\textgreek{y}}})n_{\mathcal{I}^{+}}^{\text{\textgreek{m}}}\ge\int_{\mathcal{I}^{+}}J_{\text{\textgreek{m}}}^{T}(\text{\textgreek{y}})n_{\mathcal{I}^{+}}^{\text{\textgreek{m}}}-C\cdot C_{0}^{-1}\text{\textgreek{e}}.\label{eq:energyBoundRestricted}
\end{equation}
Therefore, from (\ref{eq:EnergyBoundRadiationField}) and (\ref{eq:energyBoundRestricted})
(as well as the fact that $\text{\textgreek{Y}}_{\mathcal{I}^{+}}\equiv0$
on $\{u\ge t_{f}\}$ we deduce: 
\begin{equation}
\int_{\mathcal{I}^{+}\cap\{u\ge t_{f}\}}J_{\text{\textgreek{m}}}^{T}(\tilde{\text{\textgreek{y}}}-\text{\textgreek{y}})n_{\mathcal{I}^{+}}^{\text{\textgreek{m}}}=\int_{\mathcal{I}^{+}\cap\{u\ge t_{f}\}}J_{\text{\textgreek{m}}}^{T}(\tilde{\text{\textgreek{y}}})n_{\mathcal{I}^{+}}^{\text{\textgreek{m}}}\le2C\cdot C_{0}^{-1}\text{\textgreek{e}},\label{eq:EpsilonBoundEnergyDifference}
\end{equation}
 which, after adding (\ref{eq:EnergyDifferenceNotOnWhole}) and assuming
$C_{0}$ was chosen large enough in terms of the geometry of $(\mathcal{D},g)$,
yields: 
\begin{equation}
\int_{\mathcal{I}^{+}}J_{\text{\textgreek{m}}}^{T}(\tilde{\text{\textgreek{y}}}-\text{\textgreek{y}})n_{\mathcal{I}^{+}}^{\text{\textgreek{m}}}\le\text{\textgreek{e}}.\label{eq:EpsilonBoundEnergyDifference-1}
\end{equation}

Combining (\ref{eq:EpsilonBoundEnergyDifference-1}) with the fact
that $\tilde{\text{\textgreek{y}}}|_{\mathcal{H}^{+}\cap\{t\ge0\}}=\text{\textgreek{y}}|_{\mathcal{H}^{+}\cap\{t\ge0\}}$,
we immediately infer the desired estimate (\ref{eq:SmallnessScattering}).
Thus, the proof of Corllary \ref{cor:CorollaryScattering} is complete.
\qed

\section{\label{sub:Sharpness}Sharpness of the logarithmic decay statement}

In this section, we will construct a spacetime $(\mathcal{M},g)$
satisfying all four assumptions \hyperref[Assumtion 1]{1}, \hyperref[Assumtion 2]{2},
\hyperref[Assumtion 3]{3} and \hyperref[Assumtion 4]{4}, for which
the logarithmic decay rate of Theorem \ref{thm:Theorem} is optimal.
This example is a modification of a counterexample provided by Rodnianski
and Tao in \cite{Rodnianski2011}, in the context of their proof of
effective limiting absorption principles for asymptotically conic
Riemannian manifolds, which in our setting correspond to ultrastatic
spacetimes (namely static spacetimes with $g(T,T)\equiv-1$).

We should notice, however, that a more realistic class of spacetimes
exhibiting the optimality of Theorem \ref{thm:Theorem} has been investigated
by Keir in the recent \cite{Keir2014}. This class includes the family
of spherically symmetric ultracompact neutron stars. The absence of
a horizon and an ergoregion, as well as the consrvation of a positive
definite energy makes these spacetimes valid backgrounds on which
Theorem \ref{thm:Theorem} holds. 

The construction of a simple spacetime $(\mathcal{M},g)$ exhibiting
the optimality of Theorem \ref{thm:Theorem} proceeds as follows:
We consider $\mathbb{R}^{3}$ equipped with a smooth Riemannian metric
$g_{0}$ with the following behaviour: In the usual polar coordinate
system $(r,\text{\textgreek{j}},\text{\textgreek{f}})$, we assume
that $g_{0}$ takes the folowing form:

\begin{equation}
g_{0}=dr^{2}+f(r)\cdot(d\text{\textgreek{j}}^{2}+\sin^{2}\text{\textgreek{j}}\cdot d\text{\textgreek{f}}^{2}),\label{eq:Counterexample1RiemannianMetric}
\end{equation}
where $f:[0,+\infty)\rightarrow[0,+\infty)$ is a smooth function
such that
\begin{itemize}
\item $f(r)=\sin^{2}r$ for $0\le r\le\frac{7\text{\textgreek{p}}}{8},$
\item $f(r)=r^{2}$ for $r\ge\text{\textgreek{p}}$ and
\item $f(r)>0$ for $\frac{7\text{\textgreek{p}}}{8}\le r\le\text{\textgreek{p}}.$
\end{itemize}
That is to say, $g_{0}$ coincides with the usual Euclidean metric
for $r\ge\text{\textgreek{p}}$, while the region $r\le\frac{7\text{\textgreek{p}}}{8}$
equipped with $g_{0}$ is isometric to $\mathbb{S}^{3}$ (with the
usual metric) with an open ball around one of the poles removed. 

Note that $\{r=\frac{\text{\textgreek{p}}}{2}\}$ corresponds to an
equator of $\mathbb{S}^{3}$, and hence this is a surface ruled by
stably trapped geodesics. It is the presence of this stable trapping
that will eventually lead to the desired slow logarithmic decay rate
for waves, in accordance also with the results on ultracompact neutron
stars of \cite{Keir2014}

We set $\mathcal{M}\doteq\mathbb{R}\times\mathbb{R}^{3}$, and we
construct the Lorentzian metric $g$ on $\mathcal{M}$ so that in
the coordinate system $(t,r,\text{\textgreek{j}},\text{\textgreek{f}})$,%
\footnote{$t$ denoting the projection on the first factor of $\mathbb{R}\times\mathbb{R}^{3}$,
while $(r,\text{\textgreek{j}},\text{\textgreek{f}})$ are as before
the usual polar coordinates on $\mathbb{R}^{3}$%
} $g$ takes the form 
\begin{equation}
g=-dt^{2}+dr^{2}+f(r)\cdot(d\text{\textgreek{j}}^{2}+\sin^{2}\text{\textgreek{j}}\cdot d\text{\textgreek{f}}^{2}).\label{eq:Counterexample1LorentzianMetric}
\end{equation}
Hence, for any coordinate chart $(x^{1},x^{2},x^{3})$ on $\mathbb{R}^{3}$
we have 
\begin{equation}
g=-dt^{2}+(g_{0})_{\text{\textgreek{m}\textgreek{n}}}dx^{\text{\textgreek{m}}}dx^{\text{\textgreek{n}}}.
\end{equation}

We readily verify that $(\mathcal{M},g)$ satisfies assumptions \hyperref[Assumtion 1]{1},
\hyperref[Assumtion 2]{2}, \hyperref[Assumtion 3]{3} and \hyperref[Assumtion 4]{4}:
\begin{itemize}
\item $(\mathcal{M},g)$ is a stationary spacetime (with Killing vector
field $T\doteq\partial_{t}$), with Cauchy hypersurface $\text{\textgreek{S}}_{0}\doteq\{t=0\}$,
which is asymptotically flat with one end (since outside $\{r\le\text{\textgreek{p}}\}$
$g$ is the usual Minkowski metric, and the $\text{\textgreek{S}}_{0}$
has Euclidean induced metric and vanishing second fundamental form).
Hence, Assumption \hyperref[Assumption 1]{1} is satisfied.
\item There is no event horizon in $(\mathcal{M},g)$, since any point of
$\mathcal{M}$ communicates with the flat region $\{r\ge\text{\textgreek{p}}\}$
with both future directed and past directed timelike curves: For any
$p\in\mathcal{M}$ and any smooth curve $\text{\textgreek{g}}:[0,1]\rightarrow\mathbb{R}^{3}$
such that $\text{\textgreek{g}}(0)=\tilde{p}$ ($\tilde{p}$ being
the projection of $p$ on the second factor of $\mathcal{M}=\mathbb{R}\times\mathbb{R}^{3}$)
and $\text{\textgreek{g}}(1)\in\{r\ge\text{\textgreek{p}}\}$,%
\footnote{such a curve always exist, it can be e.g. a straight line in $\mathbb{R}^{3}$%
} we construct the smooth curve $\text{\textgreek{G}}:[0,1]\rightarrow\mathcal{M}$,
$\text{\textgreek{G}}(\text{\textgreek{l}})\doteq(t(p)+C\cdot\text{\textgreek{l}},\text{\textgreek{g}}(\text{\textgreek{l}}))$,
for some $C\gg1$. We see that $\text{\textgreek{G}}(0)=p$, $\text{\textgreek{G}}(1)\in\{r\ge\text{\textgreek{p}}\}$
and, if $C\gg1$ is sufficiently large, due to \ref{eq:Counterexample1LorentzianMetric},
$\text{\textgreek{G}}$ is timelike future oriented. Similarly, if
we had instead chosen $C\ll-1$, $\text{\textgreek{G}}$ would have
been timelike past directed. Thus, assumption Assumption \hyperref[Assumption 2]{2}
is satisfied.
\item There is no ergoregion, since $g(T,T)\equiv-1$, and thus assumption
\hyperref[Assumption 3]{3} holds trivially.
\item The boundedness of the energy of solutions to the wave equation holds
trivially, since T is globally timelike and $\int_{t=\text{\textgreek{t}}}J_{\text{\textgreek{m}}}^{T}(\text{\textgreek{y}})n^{\text{\textgreek{m}}}$
is constant (and hence bounded) for any smooth solution $\text{\textgreek{y}}$
to the wave equation $\square_{g}\text{\textgreek{y}}=0$ on $(\mathcal{M},g)$.
So Assumption \hyperref[Assumption 4]{4} is also true.
\end{itemize}
The fact that Theorem \ref{thm:Theorem} is optimal for the spacetime
$(\mathcal{M},g)$ is a consequence of the following proposition:
\begin{prop}
For any function $h:[0,+\infty)\rightarrow[0,+\infty)$ for which
$h(t)\cdot\{\log^{2}(2+t)\}\rightarrow0$ as $t\rightarrow+\infty$,
there exists a $t_{0}>0$ so that for any $\text{\textgreek{t}}>t_{0}$,
one can construct a smooth solution $\text{\textgreek{y}}$ (depending
on $\text{\textgreek{t}}$) to the wave equation on $(\mathcal{M},g)$
with finite $T$-energy on the $\{t=const\}$ hypersurfaces such that
\[
\int_{\{t=\text{\textgreek{t}}\}\cap\{r\le\frac{3\text{\textgreek{p}}}{4}\}}J_{\text{\textgreek{m}}}^{T}(\text{\textgreek{y}})n^{\text{\textgreek{m}}}>h(\text{\textgreek{t}})\cdot\Big(\int_{t=0}(1+r)\cdot J_{\text{\textgreek{m}}}^{T}(\text{\textgreek{y}})n^{\text{\textgreek{m}}}+\int_{t=0}J_{\text{\textgreek{m}}}^{T}(T\text{\textgreek{y}})n^{\text{\textgreek{m}}}\Big).
\]
\end{prop}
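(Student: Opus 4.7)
The plan is to show sharpness by constructing, for each sufficiently large integer $\ell$, a solution $\psi_\ell$ to $\square_g \psi_\ell = 0$ whose local energy at time $\tau_\ell \sim e^{c\ell}$ remains comparable to $\ell^{-2}$ times the weighted initial-energy quantity on the right-hand side of the claimed inequality. Given any $h$ with $h(\tau)\log^2(2+\tau)\to 0$, for $\tau$ large one selects $\ell$ with $\tau \in [\tau_{\ell-1},\tau_\ell]$, so that $\ell \sim c^{-1}\log\tau$, and the solution $\psi_\ell$ violates the bound at time $\tau$. The construction uses high-angular-momentum quasimodes concentrated at the stably trapped equatorial $2$-sphere $\{r=\pi/2\}$ of the internal $\mathbb{S}^3$-region, essentially as in Section~11 of \cite{Rodnianski2011}.

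First I would separate variables. Writing $\psi = e^{i\omega t} u(r)\, Y_\ell(\vartheta,\phi)$ with $Y_\ell$ an $L^2(\mathbb{S}^2)$-normalized spherical harmonic of degree $\ell$, and substituting $v = \sqrt{f}\,u$, the equation $\square_g\psi=0$ reduces in the interior $\{r<7\pi/8\}$ to a one-dimensional Schr\"odinger equation $-v'' + V_\ell\, v = \omega^2 v$ with effective potential
\begin{equation*}
V_\ell(r) = \frac{\ell(\ell+1)}{f(r)} + \frac{f''(r)}{2f(r)} - \frac{(f'(r))^2}{4f(r)^2}.
\end{equation*}
Since $f=\sin^2 r$ attains a non-degenerate maximum at $r=\pi/2$, $V_\ell$ has a non-degenerate minimum there with harmonic approximation of classical frequency $\sim\ell$; moreover, since $f(r)\le c_0<1$ throughout $[5\pi/8,\pi]$, the potential satisfies $V_\ell(r)\ge \ell^2/c_0$ on this interval, forming a classically forbidden barrier of $O(1)$ width and depth $\gtrsim\ell^2$ separating the well from the asymptotically flat region. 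For each large $\ell$, I would then construct an $L^2$-normalized real-valued function $v_\ell$ on $(\mathbb{R}^3,g_0)$ of the separated form $v_\ell = \chi(r)\, f(r)^{-1/2}\, w_\ell(r)\, Y_\ell(\vartheta,\phi)$, supported in $\{|r-\pi/2|<\pi/16\}\times\mathbb{S}^2$, together with a frequency $\omega_\ell\sim\ell$, satisfying the exponentially accurate quasimode bound
\begin{equation}
\label{eq:ProposalQuasimode}
\bigl\|(\Delta_{g_0}+\omega_\ell^2)\, v_\ell\bigr\|_{L^2(\mathbb{R}^3,g_0)} \le e^{-c\ell}.
\end{equation}
Here $w_\ell$ is the principal Hermite-type WKB solution in the well (with leading profile $\ell^{1/4}e^{-\ell(r-\pi/2)^2/2}$) and $\omega_\ell^2$ the associated Bohr--Sommerfeld eigenvalue $\ell(\ell+1)+O(\ell)$. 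Since $f=\sin^2 r$ is real-analytic near $r=\pi/2$, $V_\ell$ is analytic there, so the polynomial $\ell^{-N}$ error of a smooth WKB construction can be upgraded to the exponential bound \eqref{eq:ProposalQuasimode} by summing the formal expansion to the optimal order $N\sim\ell$ and using Agmon-type exponential decay of the true quasimode in the forbidden region to bound the cutoff commutator. I expect this to be the principal technical step; note that a purely polynomial quasimode error $\ell^{-N}$ would only yield polynomial-in-$\tau$ lower bounds on the local energy, which are consistent with logarithmic decay rather than ruling them out.

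Finally, set $\psi_\ell^{\mathrm{app}}(t,x)=\cos(\omega_\ell t)\,v_\ell(x)$ and let $\psi_\ell$ solve $\square_g\psi_\ell=0$ with initial data $(v_\ell,0)$. Since $\square_g\psi_\ell^{\mathrm{app}}=\cos(\omega_\ell t)(\Delta_{g_0}+\omega_\ell^2)v_\ell$, estimate \eqref{eq:ProposalQuasimode} gives $\|\square_g\psi_\ell^{\mathrm{app}}(s,\cdot)\|_{L^2}\le e^{-c\ell}$ uniformly in $s$, and the standard $T$-energy estimate applied to $\psi_\ell-\psi_\ell^{\mathrm{app}}$ (which has vanishing initial data) yields $\bigl(\int_{t=\tau}J^T_\mu(\psi_\ell-\psi_\ell^{\mathrm{app}})n^\mu\bigr)^{1/2}\le C\tau\,e^{-c\ell}$. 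A direct computation shows $\int_{t=\tau}J^T_\mu(\psi_\ell^{\mathrm{app}})n^\mu\sim\omega_\ell^2\sim\ell^2$ for all $\tau$, supported entirely in $\{r\le 3\pi/4\}$ since $v_\ell$ is, so a triangle inequality gives
\begin{equation*}
\int_{\{t=\tau\}\cap\{r\le 3\pi/4\}}J^T_\mu(\psi_\ell)n^\mu \ge c_2\ell^2 \qquad \text{for all } \tau\le\tau_\ell := c_1 e^{c\ell/2}.
\end{equation*}
On the other hand, compact support of $v_\ell$ together with the quasimode bound yield $\|\nabla v_\ell\|_{L^2}^2\lesssim\ell^2$ and $\|\Delta_{g_0}v_\ell\|_{L^2}^2\lesssim\ell^4$; using $\partial_t\psi_\ell|_{t=0}=0$ and $\partial_t(T\psi_\ell)|_{t=0}=\Delta_{g_0}v_\ell$, this gives
\begin{equation*}
\int_{t=0}(1+r)J^T_\mu(\psi_\ell)n^\mu + \int_{t=0}J^T_\mu(T\psi_\ell)n^\mu \lesssim \ell^2+\ell^4\lesssim\ell^4.
\end{equation*}
Since $\log\tau_\ell\sim\ell$, combining the two bounds gives, for all $\tau\in[\tau_{\ell-1},\tau_\ell]$,
\begin{equation*}
\int_{\{t=\tau\}\cap\{r\le 3\pi/4\}}J^T_\mu(\psi_\ell)n^\mu \ge \frac{c_3}{\log^2(2+\tau)}\Big(\int_{t=0}(1+r)J^T_\mu(\psi_\ell)n^\mu + \int_{t=0}J^T_\mu(T\psi_\ell)n^\mu\Big).
\end{equation*}
Choosing $t_0$ large enough that $h(\tau)\log^2(2+\tau)<c_3$ for all $\tau>t_0$ then produces, for each such $\tau$, a solution $\psi:=\psi_\ell$ violating the claimed bound, completing the proof.
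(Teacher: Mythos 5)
Your high-level plan — concentrated quasimodes at the stably trapped equator, exponentially small source, Duhamel comparison over a time-scale $e^{c\ell}$, and the net ratio $\ell^{-2}\sim(\log\tau)^{-2}$ — is the same as the paper's, and your energy accounting is correct (including the observation that with $L^2$-normalized $v_\ell$ one has $\int_{t=0}J^T_\mu(T\psi_\ell)n^\mu\sim\ell^2\cdot\int_{t=0}J^T_\mu(\psi_\ell)n^\mu$, so the normalization convention is immaterial to the final ratio). But your construction of the quasimodes takes a genuinely different route, and that route has a gap.

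The crux of your argument is the exponential quasimode bound $\|(\Delta_{g_0}+\omega_\ell^2)v_\ell\|_{L^2}\le e^{-c\ell}$, and the sentence that dispatches it — sum the WKB expansion to optimal order $N\sim\ell$, control the cutoff commutator by Agmon-type decay — is a programme, not a proof. Passing from $O(\ell^{-N})$ to $O(e^{-c\ell})$ for an analytic potential is a nontrivial piece of semiclassical analysis (Gevrey estimates on the WKB coefficients, optimal truncation of the Borel sum, uniform exponential control of the remainder against the cutoff). You correctly note that a merely polynomial quasimode error would be useless here, so this step is precisely the heart of the matter; as written, it is not justified.

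The paper sidesteps all of this by an elementary observation you have not exploited: the interior $\{r\le 7\pi/8\}$ is, by design, isometric to a cap of the round $\mathbb{S}^3$, and on $\mathbb{S}^3$ the functions $u_\ell(\varphi_1,\varphi_2,\varphi_3)=\sin^\ell\varphi_1\cos\varphi_1\,Y_\ell(\varphi_2,\varphi_3)$ are \emph{exact} eigenfunctions, $\Delta_{\mathbb{S}^3}u_\ell+(\ell+1)(\ell+3)u_\ell=0$. Cutting off by a $\theta$ supported in $\{r\le 7\pi/8\}$ with $\theta\equiv1$ on $\{r\le 3\pi/4\}$ then produces an error $F_\ell$ supported in $\{3\pi/4\le r\le 7\pi/8\}$, and the exponential smallness $\|F_\ell\|_{L^2}=O(e^{-c\ell})$ drops out immediately from the factor $\sin^\ell\varphi_1\lesssim(\sin(3\pi/4))^\ell=2^{-\ell/2}$ on that set — no WKB, Agmon, or Gevrey machinery required. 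In fact, in your own reduction the effective potential collapses to an explicitly solvable one: with $f=\sin^2 r$ one computes $V_\ell(r)=\frac{\ell(\ell+1)}{\sin^2 r}-1$, a P\"oschl--Teller potential whose bound states are $v=\sin^{\ell+1}r$, $\sin^{\ell+1}r\cos r,\dots$, so the WKB expansion you propose to sum is just approximating functions you could write down in closed form. If you prefer not to use the $\mathbb{S}^3$ picture, the cleanest repair of your argument is to replace the asserted WKB quasimode by one of these explicit solutions of $-v''+V_\ell v=\omega^2 v$ and read off the exponential decay directly.
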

\begin{proof}
This is accomplished through the quasimode construction performed
by Rodnianski and Tao in \cite{Rodnianski2011} in the following way:

We first note that on $\mathbb{S}^{3}$, equipped with the usual coordinate
system $(\text{\textgreek{j}}_{1},\text{\textgreek{j}}_{2},\text{\textgreek{j}}_{3})$
in which the metric takes the form $g_{\mathbb{S}^{3}}=d\text{\textgreek{j}}_{1}^{2}+sin^{2}\text{\textgreek{j}}_{1}\cdot(d\text{\textgreek{j}}_{2}^{2}+\sin^{2}\text{\textgreek{j}}_{2}\cdot d\text{\textgreek{j}}_{3}^{2})$,
the Laplacian takes the form 

\begin{equation}
\text{\textgreek{D}}_{\mathbb{S}^{3}}=\partial_{\text{\textgreek{j}}_{1}}^{2}+2c\cot\text{\textgreek{j}}_{1}\cdot\partial_{\text{\textgreek{j}}_{1}}+\frac{1}{\sin^{2}\text{\textgreek{j}}_{1}}\text{\textgreek{D}}_{\mathbb{S}^{2}}
\end{equation}
 ($\text{\textgreek{D}}_{\mathbb{S}^{2}}$ being the Laplace-Beltrami
of the usual metric of $\mathbb{S}^{2}$ in the coordinates $(\text{\textgreek{j}}_{2},\text{\textgreek{j}}_{3})$). 

For any integer $l\ge0$, we will denote with $Y_{l}=Y_{l}(\text{\textgreek{j}}_{2},\text{\textgreek{j}}_{3})$
a spherical harmonic on $\mathbb{S}^{2}$ of order $l$, that is a
function satisfying $\text{\textgreek{D}}_{\mathbb{S}^{2}}Y_{l}+l^{2}Y_{l}=0$,
normalised so that $\int_{\mathbb{S}^{2}}|Y_{l}|^{2}=1$. Then, the
function $u_{l}:\mathbb{S}^{3}\rightarrow\mathbb{C}$ defined by $u_{l}(\text{\textgreek{j}}_{1},\text{\textgreek{j}}_{2},\text{\textgreek{j}}_{3})=sin^{l}(\text{\textgreek{j}}_{1})\cdot cos(\text{\textgreek{j}}_{1})\cdot Y_{l}(\text{\textgreek{j}}_{2},\text{\textgreek{j}}_{3})$
satisfies the equation

\begin{equation}
\text{\textgreek{D}}_{\mathbb{S}^{3}}u_{l}+\text{\textgreek{l}}^{2}u_{l}=0,
\end{equation}
 where $\text{\textgreek{l}}^{2}\doteq(l+1)\cdot(l+3)$ (and hence
$\text{\textgreek{l}}\sim l$ if $l\gg1$)

It is easy (see \cite{Rodnianski2011}) to obtain the following estimates
for $u_{l}$, which are in fact quantitative expressions of the fact
that the $u_{l}$'s become more and more concentrated around $\text{\textgreek{j}}_{1}=\frac{\text{\textgreek{p}}}{2}$
as $l\rightarrow\infty$ (which can be also deduced in view of the
$l$ exponent in $\sin^{l}(\text{\textgreek{j}}_{1})$ in the definition
of $u_{l}$):

\begin{equation}
\int_{|\text{\textgreek{j}}_{1}-\frac{\text{\textgreek{p}}}{2}|\le\frac{\text{\textgreek{p}}}{4}}|u_{l}|^{2}\sim\text{\textgreek{l}}^{-1}\sim l^{-1}\,,\int_{|\text{\textgreek{j}}_{1}-\frac{\text{\textgreek{p}}}{2}|\le\frac{\text{\textgreek{p}}}{4}}|\nabla u_{l}|^{2}\sim1\, as\, l\rightarrow\infty,\label{eq:UlBounds1}
\end{equation}
 as well as

\begin{equation}
\int_{|\text{\textgreek{j}}_{1}-\frac{\text{\textgreek{p}}}{2}|>\frac{\text{\textgreek{p}}}{4}}\Big(|u_{l}|^{2}+|\nabla u_{l}|^{2}\Big)=O(e^{-c\text{\textgreek{l}}})\label{eq:UlBounds2}
\end{equation}
 for some $c>0$.

We now fix a smooth cut-off function $\text{\textgreek{q}}:\mathbb{S}^{3}\rightarrow[0,1]$
such that $\text{\textgreek{q}}\equiv1$ for $\text{\textgreek{j}}_{1}\le\frac{3\text{\textgreek{p}}}{4}$
and $\text{\textgreek{q}}\equiv0$ for $\text{\textgreek{j}}_{1}\ge\frac{7\text{\textgreek{p}}}{8}$.
We define 
\begin{equation}
\tilde{u}_{l}\doteq\text{\textgreek{q}}\cdot u_{l},
\end{equation}
and we note that the $\tilde{u}_{l}$'s are supported in $\{\text{\textgreek{j}}_{1}\le\frac{7\text{\textgreek{p}}}{8}\}$.
Due to this fact, as well as the fact we can isometrically identify
$\{r\le\frac{7\text{\textgreek{p}}}{8}\}\subseteq\mathbb{R}^{3}$
(equipped with $g_{0}$) with $(\mathbb{S}^{3}\backslash\{\text{\textgreek{j}}_{1}>\frac{7\text{\textgreek{p}}}{8}\},g_{\mathbb{S}^{3}})$
through relabelling our coordinate maps $r\rightarrow\text{\textgreek{j}}_{1}$,
$\text{\textgreek{j}}\rightarrow\text{\textgreek{j}}_{2}$, $\text{\textgreek{f}}\rightarrow\text{\textgreek{j}}_{3}$,
we can consider $\tilde{u}_{l}$ as having been defined on $\mathbb{R}^{3}$
and satisfying:

\begin{equation}
\text{\textgreek{D}}_{g_{0}}\tilde{u}_{l}+\text{\textgreek{l}}^{2}\tilde{u}_{l}=2\partial^{i}\text{\textgreek{q}}\cdot\partial_{i}u_{l}+(\text{\textgreek{D}}_{g_{0}}\text{\textgreek{q}})\cdot u_{l},\label{eq:EquationUl}
\end{equation}
 where we raise indices using $g_{0}$. Note that the right hand side
of (\ref{eq:EquationUl}) is supported in $\{\frac{3\text{\textgreek{p}}}{4}\le r\le\frac{7\text{\textgreek{p}}}{8}\}$.

Finally, we define $\tilde{\text{\textgreek{y}}}_{l}:\mathcal{M}\rightarrow\mathbb{C}$,
\begin{equation}
\text{\ensuremath{\tilde{\lyxmathsym{\textgreek{y}}}}}_{l}(t,x)\doteq e^{i\text{\textgreek{l}}t}\cdot\tilde{u}_{l}(x)
\end{equation}
 ($t\in\mathbb{R}$, $x\in\mathbb{R}^{3}$). Note that $\text{\ensuremath{\tilde{\lyxmathsym{\textgreek{y}}}}}_{l}$
is supported in $\{r\le\frac{7\text{\textgreek{p}}}{8}\}$, and, due
to (\ref{eq:EquationUl}) and (\ref{eq:Counterexample1LorentzianMetric}),
it satisfies the equation 
\begin{equation}
\square\tilde{\text{\textgreek{y}}}_{l}=F_{l},
\end{equation}
where 
\begin{equation}
F_{l}(t,x)=e^{i\text{\textgreek{l}}t}\{2\partial^{i}\text{\textgreek{q}}(x)\cdot\partial_{i}u_{l}(x)+(\text{\textgreek{D}}_{g_{0}}\text{\textgreek{q}}(x))\cdot u_{l}(x)\}
\end{equation}
 is supported in $\{\frac{3\text{\textgreek{p}}}{4}\le r\le\frac{7\text{\textgreek{p}}}{8}\}$.

Due to (\ref{eq:UlBounds1}) and (\ref{eq:UlBounds2}), for any $\text{\textgreek{t}}\ge0$
we can estimate (as $l\rightarrow\infty$): 
\begin{equation}
\int_{t=0}J_{\text{\textgreek{m}}}^{T}(\tilde{\text{\textgreek{y}}}_{l})n^{\text{\textgreek{m}}}\sim1,
\end{equation}
\begin{equation}
\int_{t=0}J_{\text{\textgreek{m}}}^{T}(T\tilde{\text{\textgreek{y}}}_{l})n^{\text{\textgreek{m}}}\sim l^{2},
\end{equation}
\begin{equation}
\int_{\{t=\text{\textgreek{t}}\}\cap\{r\le\frac{3\text{\textgreek{p}}}{4}\}}J_{\text{\textgreek{m}}}^{T}(\tilde{\text{\textgreek{y}}}_{l})n^{\text{\textgreek{m}}}=\int_{\{t=0\}\cap\{r\le\frac{3\text{\textgreek{p}}}{4}\}}J_{\text{\textgreek{m}}}^{T}(\tilde{\text{\textgreek{y}}}_{l})n^{\text{\textgreek{m}}}\sim1\label{eq:LocalisedEnergy}
\end{equation}
 and
\begin{equation}
\int_{0}^{\text{\textgreek{t}}}\big(\int_{\{t=s\}}|F_{l}|^{2}\big)^{1/2}ds=\text{\textgreek{t}}\cdot O(e^{-cl}).
\end{equation}

If we denote with $\text{\textgreek{y}}_{l}$ the unique solution
to the wave equation $\square\text{\textgreek{y}}_{l}=0$ on $(\mathcal{M},g)$,
with initial data on $\{t=0\}$ being the induced initial data there
by $\text{\ensuremath{\tilde{\lyxmathsym{\textgreek{y}}}}}_{l}$,
then we have 
\begin{equation}
\int_{t=0}J_{\text{\textgreek{m}}}^{T}(\text{\textgreek{y}}_{l})n^{\text{\textgreek{m}}}=\int_{t=0}J_{\text{\textgreek{m}}}^{T}(\tilde{\text{\textgreek{y}}}_{l})n^{\text{\textgreek{m}}}\sim1.\label{eq:InitialEnergPsiL}
\end{equation}
 Moreover, since $\square_{g}=-\partial_{t}^{2}+\text{\textgreek{D}}_{g_{0}}$,
we calculate 
\begin{equation}
T^{2}\text{\textgreek{y}}_{l}|_{t=0}=\text{\textgreek{D}}_{g_{0}}\text{\textgreek{y}}_{l}||_{t=0}=\text{\textgreek{D}}_{g_{0}}\tilde{\text{\textgreek{y}}}_{l}|_{t=0}=T^{2}\tilde{\text{\textgreek{y}}}_{l}|_{t=0}+\square_{g}\tilde{\text{\textgreek{y}}}_{l}|_{t=0}=T^{2}\tilde{\text{\textgreek{y}}}_{l}|_{t=0}+F_{l}|_{t=0}.\label{eq:EqualityInitialSlice}
\end{equation}
Due to the fact that for any spatial derivative $\partial_{x^{i}}$
we have $\partial_{x^{i}}\partial_{t}\text{\textgreek{y}}_{l}|_{t=0}=\partial_{x^{i}}\partial_{t}\text{\ensuremath{\tilde{\lyxmathsym{\textgreek{y}}}}}_{l}|_{t=0}$,%
\footnote{since $\partial_{t}\text{\textgreek{y}}_{l}$ and $\partial_{t}\tilde{\text{\textgreek{y}}}_{l}$
are identical on $\{t=0\}$ %
} we can estimate in view of (\ref{eq:EqualityInitialSlice}): 
\begin{equation}
|\int_{t=0}J_{\text{\textgreek{m}}}^{T}(T\text{\textgreek{y}}_{l})n^{\text{\textgreek{m}}}|\le\int_{t=0}J_{\text{\textgreek{m}}}^{T}(T\tilde{\text{\textgreek{y}}}_{l})n^{\text{\textgreek{m}}}+\int_{t=0}|F_{l}|^{2}\sim l^{2}+O(e^{-cl})\sim l^{2}.\label{eq:InitialEnergyTPsiL}
\end{equation}

By an application of Duhamel's principle on $\text{\textgreek{y}}_{l}-\tilde{\text{\textgreek{y}}}_{l}$
(which satisfies $\square_{g}(\text{\textgreek{y}}_{l}-\tilde{\text{\textgreek{y}}}_{l})=-F_{l}$
and has vanishing initial data on $\{t=0\}$), we infer that for any
$\text{\textgreek{t}}\ge0$:

\begin{equation}
\Big(\int_{t=\text{\textgreek{t}}}J_{\text{\textgreek{m}}}^{T}(\text{\textgreek{y}}_{l}-\tilde{\text{\textgreek{y}}}_{l})n^{\text{\textgreek{m}}}\Big)^{1/2}\le C\cdot\int_{0}^{\text{\textgreek{t}}}\big(\int_{\{t=s\}}|F_{l}|^{2}\big)^{1/2}ds\le\text{\textgreek{t}}\cdot O(e^{-cl})\label{eq:BoundEnergyDifference}
\end{equation}
and thus in view of (\ref{eq:LocalisedEnergy}) and (\ref{eq:BoundEnergyDifference}):
\begin{equation}
\int_{\{t=\text{\textgreek{t}}\}\cap\{r\le\frac{3\text{\textgreek{p}}}{4}\}}J_{\text{\textgreek{m}}}^{T}(\text{\textgreek{y}}_{l})n^{\text{\textgreek{m}}}\ge\int_{\{t=\text{\textgreek{t}}\}\cap\{r\le\frac{3\text{\textgreek{p}}}{4}\}}J_{\text{\textgreek{m}}}^{T}(\tilde{\text{\textgreek{y}}}_{l})n^{\text{\textgreek{m}}}-\int_{t=\text{\textgreek{t}}}J_{\text{\textgreek{m}}}^{T}(\text{\textgreek{y}}_{l}-\tilde{\text{\textgreek{y}}}_{l})n^{\text{\textgreek{m}}}\ge1-C\text{\textgreek{t}}\cdot e^{-cl}.\label{eq:AlmostFinalStatement}
\end{equation}

Finally, since $\text{\textgreek{y}}_{l}|_{t=0}$ and $T\text{\textgreek{y}}_{l}|_{t=0}$
are supported in $\{r\le\frac{7\text{\textgreek{p}}}{8}\}$, we readily
see that 
\begin{equation}
\int_{t=0}(1+r)J_{\text{\textgreek{m}}}^{T}(\text{\textgreek{y}}_{l})n^{\text{\textgreek{m}}}\sim\int_{t=0}J_{\text{\textgreek{m}}}^{T}(\text{\textgreek{y}}_{l})n^{\text{\textgreek{m}}}\sim1.\label{eq:BoundenergyWeighted}
\end{equation}

From (\ref{eq:InitialEnergPsiL}), (\ref{eq:InitialEnergyTPsiL}),
(\ref{eq:AlmostFinalStatement}) and (\ref{eq:BoundenergyWeighted})
we conclude that there exists a constant $b>0$ such that for all
$l\gg1$ and for any given $\text{\textgreek{t}}>0$:

\begin{equation}
\int_{\{t=\text{\textgreek{t}}\}\cap\{r\le\frac{3\text{\textgreek{p}}}{4}\}}J_{\text{\textgreek{m}}}^{T}(\text{\textgreek{y}}_{l})n^{\text{\textgreek{m}}}>b\cdot l^{-2}\Big\{1-C\text{\textgreek{t}}\cdot e^{-cl}\Big\}\cdot\Big(\int_{t=0}(1+r)\cdot J_{\text{\textgreek{m}}}^{T}(\text{\textgreek{y}}_{l})n^{\text{\textgreek{m}}}+\int_{t=0}J_{\text{\textgreek{m}}}^{T}(T\text{\textgreek{y}}_{l})n^{\text{\textgreek{m}}}\Big).\label{eq:finalStatement}
\end{equation}
 Thus, for any given function $h(t)=o(\log^{-2}(t+2))$ (as $t\rightarrow\infty$),
we can find a large enough $t_{0}\gg1$, such that for any given $\text{\textgreek{t}}>t_{0}$,
after picking $l\sim\frac{2}{c}\log(\text{\textgreek{t}}+2)$ we can
bound from (\ref{eq:finalStatement}):
\begin{align}
\int_{\{t=\text{\textgreek{t}}\}\cap\{r\le\frac{3\text{\textgreek{p}}}{4}\}}J_{\text{\textgreek{m}}}^{T}(\text{\textgreek{y}}_{l})n^{\text{\textgreek{m}}} & >b\cdot\Big\{\frac{c^{2}}{4\{\log(\text{\textgreek{t}}+2)\}^{2}}-C\text{\textgreek{t}}\cdot\text{\textgreek{t}}^{-2}\Big\}\cdot\Big(\int_{t=0}(1+r)\cdot J_{\text{\textgreek{m}}}^{T}(\text{\textgreek{y}}_{l})n^{\text{\textgreek{m}}}+\int_{t=0}J_{\text{\textgreek{m}}}^{T}(T\text{\textgreek{y}}_{l})n^{\text{\textgreek{m}}}\Big)>\\
 & >h(t)\cdot\Big(\int_{t=0}(1+r)\cdot J_{\text{\textgreek{m}}}^{T}(\text{\textgreek{y}}_{l})n^{\text{\textgreek{m}}}+\int_{t=0}J_{\text{\textgreek{m}}}^{T}(T\text{\textgreek{y}}_{l})n^{\text{\textgreek{m}}}\Big).\nonumber 
\end{align}

\end{proof}
\appendix

\section{\label{sec:ConstructionOfTheDoubleNullCoordinateSystem}Construction
of the $(u,r,\text{\textgreek{sv}})$ coordinate chart}

Assume that an open subset $\mathcal{U}$ of a spacetime $(\mathcal{M},g)$
admits a polar coordinate chart $(t,r,\text{\textgreek{sv}})$ for
$\{r\ge R>0\}$, where, for some integer $m\ge1$, the metric $g$
has the expression:

\begin{equation}
g=-\Big(1-\frac{2M}{r}+O_{m}(r^{-1-a})\Big)dt^{2}+\Big(1+\frac{2M}{r}+O_{m}(r^{-1-a})\Big)dr^{2}+r^{2}\Big(g_{\mathbb{S}^{d-1}}+O_{m}(r^{-1-a})\Big)+O_{m}(r^{-a})dtd\text{\textgreek{sv}},\label{eq:MetricAsFlat-1-1}
\end{equation}
 where $M\in\mathbb{R}$ and no metric coefficient depends on the
$t$ coordinate; this means that the vector field $T=\partial_{t}$
is Killing. In the above, the $O_{m}(r^{b})$ notation is used to
denote functions or tensors $h$ on $\mathbb{S}^{d-1}$ depending
on $(r,\text{\textgreek{sv}})$ and satisfying the bound: 
\begin{equation}
\sum_{j=0}^{m}\Big(\sum_{j_{1}+j_{2}=j}r^{j_{1}}||\partial_{r}^{j_{1}}\partial_{\text{\textgreek{sv}}}^{j_{2}}h(r,\text{\textgreek{sv}})||_{g_{\mathbb{S}^{d-1}}}\Big)\le C\cdot r^{b}.
\end{equation}
We will show that we can find a new coordinate chart $(u,r,\text{\textgreek{sv}})$
on $\mathcal{U}$ in which $g$ will take the form: 
\begin{align}
g= & -4\big(1-\frac{2M}{r}+O_{m-1}(r^{-1-a})\big)du^{2}-4\big(1+O_{m-1}(r^{-1-a})\big)dudr+r^{2}(g_{\mathbb{S}^{d-1}}+O_{m-1}(r^{-1-a}))+\label{eq:MetricUR-1-1}\\
 & +O_{m-1}(r^{-a})dud\text{\textgreek{sv}}+O_{m-1}(r^{-a})drd\text{\textgreek{sv}}.\nonumber 
\end{align}

In order to construct the required coordinate function $u$, we first
introduce an auxiliary coordinate function $r^{*}=r^{*}(r,\text{\textgreek{sv}})$
, the analogue of the Regge-Wheeler coordinate function for the Schwarzschild
spacetime. The main identity that we will need to hold is $g_{r^{*}r^{*}}=-g_{tt}$
in the $(t,r^{*},\text{\textgreek{sv}})$ coordinate system. From
the expression of the metric (\ref{eq:MetricAsFlat-1-1}), we see
that $\sqrt{-\frac{g_{rr}}{g_{tt}}}=(1-\frac{2M}{r})^{-1}+h(r,\text{\textgreek{sv}})$,
for a smooth function $h(r,\text{\textgreek{sv}})=O_{m}(r^{-1-a})$.
Therefore, we have to define:

\[
r^{*}(r,\text{\textgreek{sv}})\doteq r+2M\cdot log(r-2M)-\int_{r}^{+\infty}h(\text{\textgreek{r}},\text{\textgreek{sv}})\, d\text{\textgreek{r}}.
\]
 Due to the fact that $h=O_{m}(r^{-1-a})$, $r^{*}$ is a continuous
function of $(r,\text{\textgreek{sv}})$ satisfying $\frac{\partial r^{*}}{\partial r}=(1-\frac{2M}{r})^{-1}+h(r,\text{\textgreek{sv}})$
as desired. Moreover 
\begin{equation}
\partial_{\text{\textgreek{sv}}}r^{*}(r,\text{\textgreek{sv}})=-\int_{r}^{+\infty}\partial_{\text{\textgreek{sv}}}h(\text{\textgreek{r}},\text{\textgreek{sv}})\, d\text{\textgreek{r}}=O_{m-1}(r^{-a}).
\end{equation}

Hence, in the coordinate system $(t,r^{*},\text{\textgreek{sv}})$
(which is indeed a coordinate system for $r\gg1$ due to the form
of $r^{*}$) we can compute that the metric takes the following form:

\begin{align}
g= & \Big(1-\frac{2M}{r}+O_{m-1}(r^{-1-a})\Big)\Big(-dt^{2}+(dr^{*})^{2}\Big)+r^{2}\Big(g_{\mathbb{S}^{d-1}}+O_{m-1}(r^{-1-a})\Big)+\label{eq:metricR*-1}\\
 & +O_{m-1}(r^{-a})dr^{*}d\text{\textgreek{sv}}+O_{m-1}(r^{-a})dtd\text{\textgreek{sv}}.\nonumber 
\end{align}

We can now introduce the function $u=\frac{1}{2}(t-r^{*})$. In the
$(u,r^{*},\text{\textgreek{sv}})$ coordinates, the metric takes the
form:

\begin{align}
g= & -4(1-\frac{2M}{r}+O_{m-1}(r^{-1-a}))\big(du^{2}+dudr^{*}\big)+r^{2}(g_{\mathbb{S}^{d-1}}+O_{m-1}(r^{-1-a}))+\label{eq:metricUR*-1}\\
 & +O_{m-1}(r^{-a})dud\text{\textgreek{sv}}+O_{m-1}(r^{-a})dr^{*}d\text{\textgreek{sv}}.\nonumber 
\end{align}

Thus, switching finally to the $(u,r,\text{\textgreek{sv}})$ coordinate
system, since $\partial_{r}r^{*}=1+\frac{2M}{r}+O_{m-1}(r^{-1-a})$,
we deduce that the metric has the required expression: 
\begin{align}
g= & -4\big(1-\frac{2M}{r}+O_{m-1}(r^{-1-a})\big)du^{2}-4\big(1+O_{m-1}(r^{-1-a})\big)dudr+r^{2}(g_{\mathbb{S}^{d-1}}+O_{m-1}(r^{-1-a}))+\label{eq:MetricUR-1}\\
 & +O_{m-1}(r^{-a})dud\text{\textgreek{sv}}+O_{m-1}(r^{-a})drd\text{\textgreek{sv}}.\nonumber 
\end{align}

\section{\noindent \label{sec:ProofOfInclusion}Proof of the inclusion $J^{-}(p)\cap\text{\textgreek{S}}\subseteq\text{\textgreek{S}}\cap\mathcal{D}$
for $p\in J^{+}(\text{\textgreek{S}})\cap\mathcal{D}$}

Let $(\mathcal{M}^{d+1},g)$, $d\ge3$ be a globally hyperbolic spacetime
with a Cauchy hypersurface $\text{\textgreek{S}}$. Let $\text{\textgreek{S}}$
be asymptotically flat (in the sense of Assumption \hyperref[Assumption 1]{1}),
and let $\mathcal{D}$ be the domain of outer communications associated
to one asymptotically flat region $\mathcal{I}_{as}$ of $(\mathcal{M},g)$
(see Assumption \hyperref[Assumption 1]{1} for the relevant definitions).
We also define 
\[
\mathcal{H}\doteq\partial\mathcal{D},
\]
 and we set 
\[
\mathcal{H}^{+}\doteq J^{+}(\mathcal{I}_{as})\cap\partial\big(J^{-}(\mathcal{I}_{as})\big)
\]
 and 
\[
\mathcal{H}^{-}\doteq J^{-}(\mathcal{I}_{as})\cap\partial\big(J^{+}(\mathcal{I}_{as})\big).
\]
 We will establish the following inclusion:
\begin{lem}
\label{lem:-for-,Inclusion}With $\text{\textgreek{S}}$, $\mathcal{D}$,
$\mathcal{H}^{+}$ and $\mathcal{H}^{-}$ as above, under the additional
assumption that $\mathcal{H}^{-}\subset I^{-}(\text{\textgreek{S}}\cap\mathcal{D})$,
the following inclusion holds: 
\begin{equation}
J^{-}(p)\cap\text{\textgreek{S}}\subseteq\text{\textgreek{S}}\cap\mathcal{D}\label{eq:Inclusion}
\end{equation}
 for any $p\in J^{+}(\text{\textgreek{S}})\cap\mathcal{D}$.\end{lem}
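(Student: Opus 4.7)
The plan is to argue by contradiction: suppose $q \in J^{-}(p) \cap \Sigma$ with $q \notin \mathcal{D}$, and fix a future-directed causal curve $\gamma : [0, 1] \to \mathcal{M}$ with $\gamma(0) = q$ and $\gamma(1) = p$. Since $\mathcal{D}$ is closed, the number
\[
t_{0} \doteq \min\{t \in [0, 1] : \gamma(t) \in \mathcal{D}\}
\]
is well-defined and lies in $(0, 1]$, with $\gamma(t_{0}) \in \partial \mathcal{D} = \mathcal{H} = \mathcal{H}^{+} \cup \mathcal{H}^{-}$. The heart of the argument is to show $\gamma(t_{0}) \in \mathcal{H}^{-}$. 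Once this is established, the hypothesis $\mathcal{H}^{-} \subset I^{-}(\Sigma \cap \mathcal{D})$ produces $y \in \Sigma \cap \mathcal{D}$ with $\gamma(t_{0}) \ll y$, and since $q \leq \gamma(t_{0}) \ll y$ one obtains $q \ll y$, contradicting the achronality of the Cauchy hypersurface $\Sigma$ (note that both $q$ and $y$ lie on $\Sigma$).

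The main technical step is therefore to rule out $\gamma(t_{0}) \in \mathcal{H}^{+} \setminus \mathcal{H}^{-}$. For this, I would first establish the auxiliary fact that $\overline{J^{-}(\mathcal{I}_{as})}$ is a past set, or equivalently that the open complement $\mathcal{B}^{\mathrm{int}} \doteq \mathcal{M} \setminus \overline{J^{-}(\mathcal{I}_{as})}$ is closed under future-directed causal curves. The key input is that for any $y \in \overline{J^{-}(\mathcal{I}_{as})}$ one has $I^{-}(y) \subseteq J^{-}(\mathcal{I}_{as})$: if $z \ll y$, then the open set $I^{+}(z)$ contains $y$, so it contains some $x_{n} \in J^{-}(\mathcal{I}_{as})$ with $x_{n} \to y$; then $z \ll x_{n} \leq a_{n}$ for some $a_{n} \in \mathcal{I}_{as}$, giving $z \in J^{-}(\mathcal{I}_{as})$. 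Using the identity $J^{-}(y) = \overline{I^{-}(y)}$, valid in globally hyperbolic spacetimes, one then concludes $J^{-}(y) \subseteq \overline{J^{-}(\mathcal{I}_{as})}$, which is the past-set property.

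With this in hand, no $\gamma(t)$ can lie in $\mathcal{B}^{\mathrm{int}}$: otherwise, the future-set property of $\mathcal{B}^{\mathrm{int}}$ would propagate to give $p = \gamma(1) \in \mathcal{B}^{\mathrm{int}}$, contradicting $p \in \mathcal{D} \subseteq \overline{J^{-}(\mathcal{I}_{as})}$. Hence $\gamma(t) \in \overline{J^{-}(\mathcal{I}_{as})}$ for every $t \in [0, 1]$. For $t < t_{0}$, combining $\gamma(t) \in \overline{J^{-}(\mathcal{I}_{as})}$ with $\gamma(t) \notin \mathcal{D}$ forces $\gamma(t) \in \mathcal{W} \doteq \mathcal{M} \setminus J^{+}(\mathcal{I}_{as})$: indeed, if $\gamma(t) \in J^{+}(\mathcal{I}_{as})$, then either $\gamma(t) \in J^{-}(\mathcal{I}_{as}) \cap J^{+}(\mathcal{I}_{as}) \subseteq \mathcal{D}$, or $\gamma(t) \in \partial J^{-}(\mathcal{I}_{as}) \cap J^{+}(\mathcal{I}_{as}) = \mathcal{H}^{+} \subseteq \mathcal{D}$, both contradicting $\gamma(t) \notin \mathcal{D}$. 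Since $\mathcal{W}$ is closed and $\mathcal{H}^{+} \subseteq J^{+}(\mathcal{I}_{as})$ is disjoint from $\mathcal{W}$, continuity of $\gamma$ yields $\gamma(t_{0}) \in \mathcal{H} \cap \mathcal{W} = \mathcal{H}^{-}$, completing the reduction.

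The main obstacle will be the clean formulation of the past-set property of $\overline{J^{-}(\mathcal{I}_{as})}$, which relies on carefully combining the openness of $J^{\pm}(\mathcal{I}_{as})$ with the globally hyperbolic identity $J^{\pm}(y) = \overline{I^{\pm}(y)}$; once that piece is in place, the remainder is a short bookkeeping argument combining this structural fact with the hypothesis $\mathcal{H}^{-} \subset I^{-}(\Sigma \cap \mathcal{D})$ and the achronality of $\Sigma$.
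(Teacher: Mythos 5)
Your proof is correct, and it takes a genuinely different route from the paper's, even though both are contradiction arguments built around a causal curve joining $p$ to the bad point $q\in J^{-}(p)\cap\text{\textgreek{S}}\setminus\mathcal{D}$. The paper fixes a \emph{past-inextendible, past-directed} causal curve $\text{\textgreek{g}}$ emanating from $p$: by the Cauchy property, $\text{\textgreek{g}}$ meets $\text{\textgreek{S}}$ exactly once, at $q\notin\mathcal{D}$, so $\text{\textgreek{g}}\cap(\text{\textgreek{S}}\cap\mathcal{D})=\emptyset$. The past-set property of $J^{-}(\mathcal{I}_{as})$ traps $\text{\textgreek{g}}$ there, and a case split (does $\text{\textgreek{g}}$ stay in $J^{+}(\mathcal{I}_{as})$ or exit through $\mathcal{H}^{-}$?) combined with the hypothesis $\mathcal{H}^{-}\subset I^{-}(\text{\textgreek{S}}\cap\mathcal{D})$ forces the unique $\text{\textgreek{S}}$-intersection into $\mathcal{D}$ — contradiction with the avoidance property. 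You instead take a \emph{future-directed} curve from $q$ to $p$, locate its first entry time $t_{0}$ into the closed set $\mathcal{D}$, and conclude the contradiction not from the Cauchy property but from the \emph{achronality} of $\text{\textgreek{S}}$ (via $q\ll y$ with $q,y\in\text{\textgreek{S}}$). Your argument is also more explicit about two causal-topological inputs the paper leaves implicit: that $J^{\pm}$ of an open set is open (so $\overline{J^{-}(\mathcal{I}_{as})}=J^{-}(\mathcal{I}_{as})\sqcup\partial J^{-}(\mathcal{I}_{as})$, and $\mathcal{W}=\mathcal{M}\setminus J^{+}(\mathcal{I}_{as})$ is closed), and that $\overline{J^{-}(\mathcal{I}_{as})}$ is a past set, for which you invoke $J^{-}(y)=\overline{I^{-}(y)}$ in globally hyperbolic spacetimes — this last step is genuinely needed, since the more elementary ``closure of a past set is a past set'' argument only controls $I^{-}$ and not $J^{-}$. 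You also handle the closure in the definition $\mathcal{D}=\mathrm{clos}(J^{-}(\mathcal{I}_{as})\cap J^{+}(\mathcal{I}_{as}))$ more carefully than the paper does; the paper writes $p\in\mathcal{D}\subseteq J^{-}(\mathcal{I}_{as})$, which is only clean after unpacking the closure. Net: the paper's version is slightly shorter; yours is more self-contained and makes the causal-structure prerequisites visible.
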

\begin{proof}
The proof will follow by contradiction: if 
\begin{equation}
J^{-}(p)\cap\text{\textgreek{S}}\subsetneq\text{\textgreek{S}}\cap\mathcal{D},\label{eq:Contradiction}
\end{equation}
then there exists a past directed causal curve $\text{\textgreek{g}}:[0,1)\rightarrow\mathcal{M}$
which is past inextendible with $\text{\textgreek{g}}(0)=p$, and
for which $\text{\textgreek{g}}\cap(\text{\textgreek{S}}\cap\mathcal{D})=\emptyset$.
But then, 
\begin{equation}
\text{\textgreek{g}}\subseteq J^{-}(\mathcal{I}_{as})\label{eq:ContainedPast}
\end{equation}
 since $p\in\mathcal{D}\subseteq J^{-}(\mathcal{I}_{as})$. Furthermore,
$\text{\textgreek{g}}\cap\text{\textgreek{S}}\neq\emptyset$, since
\textgreek{S} is a Cauchy hypersurface of $(\mathcal{M},g)$. Thus,
we have 
\begin{equation}
\text{\textgreek{g}}\cap(\text{\textgreek{S}}\cap J^{-}(\mathcal{I}_{as}))\neq\emptyset.\label{eq:NonEmptyIntersection}
\end{equation}

If, in addition to (\ref{eq:ContainedPast}), $\text{\textgreek{g}}$
is entirely contained also in $J^{+}(\mathcal{I}_{as})$, then we
would deduce from (\ref{eq:NonEmptyIntersection}) that $(\text{\textgreek{g}}\cap J^{+}(\mathcal{I}_{as}))\cap(\text{\textgreek{S}}\cap J^{-}(\mathcal{I}_{as}))\neq\emptyset$,
or equivalently (since $\mathcal{D}=J^{+}(\mathcal{I}_{as})\cap J^{-}(\mathcal{I}_{as})$)
that $\text{\textgreek{g}}\cap(\mathcal{D}\cap\text{\textgreek{S}})\neq\emptyset$,
which is the required contradiction. 

If, on the other hand, $\text{\textgreek{g}}$ is not entirely contained
in $J^{+}(\mathcal{I}_{as})$, then $\text{\textgreek{g}}\cap\partial J^{+}(\mathcal{I}_{as})$
should be non empty, which implies that 
\begin{equation}
\text{\textgreek{g}}\cap\mathcal{H}^{-}\neq\emptyset\label{eq:IntersectionHorizon}
\end{equation}
because of (\ref{eq:ContainedPast}) and the fact that $\mathcal{H}^{-}=J^{-}(\mathcal{I}_{as})\cap\partial J^{+}(\mathcal{I}_{as})$.
Therefore, let 
\begin{equation}
\text{\textgreek{l}}_{0}=\inf\big\{\text{\textgreek{l}}\in[0,1):\,\text{\textgreek{g}}(\text{\textgreek{l}})\in\mathcal{H}^{-}\big\}.
\end{equation}
Since $\mathcal{H}^{-}$ is closed by definition, $\text{\textgreek{g}}(\text{\textgreek{l}}_{0})\in\mathcal{H}^{-}$,
and due to the definition of $\text{\textgreek{l}}_{0}$, we have
\begin{equation}
\text{\textgreek{g}}([0,\text{\textgreek{l}}_{0}])\subset J^{+}(\mathcal{I}_{as})\cap J^{-}(\mathcal{I}_{as})=\mathcal{D}.
\end{equation}
In view of our assumption that $\mathcal{H}^{-}\subset J^{-}(\text{\textgreek{S}}\cap\mathcal{D})\subseteq J^{-}(\text{\textgreek{S}}\cap J^{-}(\mathcal{I}_{as}))$
and the facts that $\text{\textgreek{g}}(\text{\textgreek{l}}_{0})\in\mathcal{H}^{-}$
and $\text{\textgreek{g}}$ is causal and past directed, 
\begin{equation}
\text{\textgreek{g}}((\text{\textgreek{l}}_{0},1))\cap(\text{\textgreek{S}}\cap J^{-}(\mathcal{I}_{as}))=\emptyset.\label{eq:Empty2}
\end{equation}
 can not intersect $\text{\textgreek{S}}\cap J^{-}(\mathcal{I}_{as})$.
However, in view of (\ref{eq:NonEmptyIntersection}), there exists
some $\text{\textgreek{l}}_{1}\in[0,1)$ such that $\text{\textgreek{g}}(\text{\textgreek{l}}_{1})\in\text{\textgreek{S}}\cap J^{-}(\mathcal{I}_{as})$.
Thus, from (\ref{eq:Empty2}) we infer that $\text{\textgreek{l}}_{1}\in[0,\text{\textgreek{l}}_{0}]$.
But since $\text{\textgreek{g}}([0,\text{\textgreek{l}}_{0}])\subset\mathcal{D}$,
we obtain that $\text{\textgreek{g}}\cap(\text{\textgreek{S}}\cap\mathcal{D})\neq\emptyset$,
which again is the required contradiction.

Hence, we conclude that for any $p\in J^{+}(\text{\textgreek{S}})\cap\mathcal{D}$
we have $J^{-}(p)\cap\text{\textgreek{S}}\subseteq\text{\textgreek{S}}\cap\mathcal{D}$.
\end{proof}

\section{\label{sec:Interpolation}Interpolation for $r$-weighted energy
bounds}
\begin{lem}
\label{lem:InterpolationLemma}With the notations as in Section \ref{sec:Polynomial-decay},
if for any smooth solution $\text{\textgreek{y}}$ to $\square_{g}\text{\textgreek{y}}=0$
on $J^{+}(\text{\textgreek{S}})\cap\mathcal{D}$ with compactly supported
initial data on $\text{\textgreek{S}}\cap\mathcal{D}$ and for any
$\text{\textgreek{t}}\in[\frac{1}{4}t^{*},\frac{3}{4}t^{*}]$ we can
bound 
\begin{equation}
\int_{S_{\text{\textgreek{t}}}\cap\mathcal{R}(0,t^{*})}J_{\text{\textgreek{m}}}^{N}(\text{\textgreek{y}}_{\le\text{\textgreek{w}}_{+}})n_{S_{\text{\textgreek{t}}}}^{\text{\textgreek{m}}}\le\frac{C}{\text{\textgreek{t}}-\frac{1}{4}t^{*}}\cdot\Big\{ e^{C\text{\textgreek{w}}_{+}}\int_{t=0}J_{\text{\textgreek{m}}}^{N}(\text{\textgreek{y}})n^{\text{\textgreek{m}}}+\int_{t=0}rJ_{\text{\textgreek{m}}}^{N}(\text{\textgreek{y}})n^{\text{\textgreek{m}}}\Big\}\label{eq:PolynomialDecayLowFrequenciesApp-1}
\end{equation}
 and 
\begin{equation}
\int_{S_{\text{\textgreek{t}}}\cap\mathcal{R}(0,t^{*})}J_{\text{\textgreek{m}}}^{N}(\text{\textgreek{y}}_{\le\text{\textgreek{w}}_{+}})n_{S_{\text{\textgreek{t}}}}^{\text{\textgreek{m}}}\le C\int_{t=0}J_{\text{\textgreek{m}}}^{N}(\text{\textgreek{y}})n^{\text{\textgreek{m}}},\label{eq:PolynomialDecayLowFrequenciesApp}
\end{equation}
then for any $\text{\textgreek{d}}_{0}\in[0,1]$ and any $\text{\textgreek{t}}\in[\frac{1}{4}t^{*},\frac{3}{4}t^{*}]$
we can also bound 
\begin{equation}
\int_{S_{\text{\textgreek{t}}}\cap\mathcal{R}(0,t^{*})}J_{\text{\textgreek{m}}}^{N}(\text{\textgreek{y}}_{\le\text{\textgreek{w}}_{+}})n_{S_{\text{\textgreek{t}}}}^{\text{\textgreek{m}}}\le\frac{C}{\big(\text{\textgreek{t}}-\frac{1}{4}t^{*}\big)^{\text{\textgreek{d}}_{0}}}\cdot\Big\{ e^{C\text{\textgreek{w}}_{+}}\int_{t=0}J_{\text{\textgreek{m}}}^{N}(\text{\textgreek{y}})n^{\text{\textgreek{m}}}+\int_{t=0}r^{\text{\textgreek{d}}_{0}}J_{\text{\textgreek{m}}}^{N}(\text{\textgreek{y}})n^{\text{\textgreek{m}}}\Big\}.\label{eq:PolynomialDecayLowFrequenciesApp-2}
\end{equation}
\end{lem}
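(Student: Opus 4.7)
The approach is to split the initial data of $\text{\textgreek{y}}$ at a radius $R \sim \tau - t^*/4$, apply the hypothesized decay estimate \eqref{eq:PolynomialDecayLowFrequenciesApp-1} to the resulting ``near'' piece and the hypothesized boundedness estimate \eqref{eq:PolynomialDecayLowFrequenciesApp} to the ``far'' piece, and sum the two bounds. The essential observation is that the $r^{\text{\textgreek{d}}_0}$-weighted norm on the right-hand side of \eqref{eq:PolynomialDecayLowFrequenciesApp-2} must be used in \emph{two complementary ways simultaneously}: as an upper bound $r \le R^{1-\text{\textgreek{d}}_0} r^{\text{\textgreek{d}}_0}$ controlling the $r$-weighted initial energy of the near piece (gaining a factor $R^{1-\text{\textgreek{d}}_0}$), and as a lower bound $1 \le (2r/R)^{\text{\textgreek{d}}_0}$ controlling the unweighted initial energy of the far piece (gaining $R^{-\text{\textgreek{d}}_0}$). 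A more naive H\"older-type interpolation bounding $A^{1-\text{\textgreek{d}}_0} B^{\text{\textgreek{d}}_0}$ (with $A,B$ the unweighted and $r$-weighted initial energies) only produces $(\tau - t^*/4)^{-\text{\textgreek{d}}_0/(1+\text{\textgreek{d}}_0)}$ decay, so this two-sided use of the weight is what yields the sharp rate $(\tau - t^*/4)^{-\text{\textgreek{d}}_0}$.

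Concretely, fix a smooth cut-off $\chi_R : \text{\textgreek{S}} \cap \mathcal{D} \to [0,1]$ with $\chi_R \equiv 1$ on $\{r \le R/2\}$, $\chi_R \equiv 0$ on $\{r \ge R\}$, and $|\nabla \chi_R| \lesssim R^{-1}$, and let $\text{\textgreek{y}}_a, \text{\textgreek{y}}_b$ be the solutions of $\square_g \text{\textgreek{y}} = 0$ on $J^+(\text{\textgreek{S}}) \cap \mathcal{D}$ with initial data $\chi_R \cdot (\text{\textgreek{y}}|_{t=0}, T\text{\textgreek{y}}|_{t=0})$ and $(1-\chi_R) \cdot (\text{\textgreek{y}}|_{t=0}, T\text{\textgreek{y}}|_{t=0})$, respectively. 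Because both the physical-space truncation by $h_{t^*}$ and the Fourier multiplier defining $(\cdot)_{\le \text{\textgreek{w}}_+}$ are linear in $\text{\textgreek{y}}$, one has $\text{\textgreek{y}}_{\le \text{\textgreek{w}}_+} = (\text{\textgreek{y}}_a)_{\le \text{\textgreek{w}}_+} + (\text{\textgreek{y}}_b)_{\le \text{\textgreek{w}}_+}$, so the two hypotheses may be applied separately. Setting $A \doteq \int_{t=0} J_{\text{\textgreek{m}}}^N(\text{\textgreek{y}}) n^{\text{\textgreek{m}}}$ and $D \doteq \int_{t=0} r^{\text{\textgreek{d}}_0} J_{\text{\textgreek{m}}}^N(\text{\textgreek{y}}) n^{\text{\textgreek{m}}}$, a standard Hardy inequality (cf.\ \eqref{eq:GeneralHardyBound}) controls the commutator terms arising from $\nabla \chi_R$ and produces the initial-data bounds
\[
\int_{t=0} J_{\text{\textgreek{m}}}^N(\text{\textgreek{y}}_a) n^{\text{\textgreek{m}}} \lesssim A, \qquad \int_{t=0} r \cdot J_{\text{\textgreek{m}}}^N(\text{\textgreek{y}}_a) n^{\text{\textgreek{m}}} \lesssim R^{1-\text{\textgreek{d}}_0} D, \qquad \int_{t=0} J_{\text{\textgreek{m}}}^N(\text{\textgreek{y}}_b) n^{\text{\textgreek{m}}} \lesssim R^{-\text{\textgreek{d}}_0} D.
\]

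Applying \eqref{eq:PolynomialDecayLowFrequenciesApp-1} to $\text{\textgreek{y}}_a$ and \eqref{eq:PolynomialDecayLowFrequenciesApp} to $\text{\textgreek{y}}_b$ and summing then yields
\[
\int_{S_\tau \cap \mathcal{R}(0,t^*)} J_{\text{\textgreek{m}}}^N(\text{\textgreek{y}}_{\le \text{\textgreek{w}}_+}) n_{S_\tau}^{\text{\textgreek{m}}} \lesssim \frac{e^{C\text{\textgreek{w}}_+}}{\tau - t^*/4}\, A + \frac{R^{1-\text{\textgreek{d}}_0}}{\tau - t^*/4}\, D + R^{-\text{\textgreek{d}}_0} D.
\]
The choice $R = \tau - t^*/4$ balances the two $D$-terms, each becoming $C(\tau - t^*/4)^{-\text{\textgreek{d}}_0} D$, while the elementary bound $(\tau - t^*/4)^{-1} \le (\tau - t^*/4)^{-\text{\textgreek{d}}_0}$ (valid for $\tau - t^*/4 \ge 1$ and $\text{\textgreek{d}}_0 \le 1$) converts the $A$-term to $C e^{C\text{\textgreek{w}}_+} (\tau - t^*/4)^{-\text{\textgreek{d}}_0} A$; the complementary regime $0 \le \tau - t^*/4 \le 1$ is handled directly by \eqref{eq:PolynomialDecayLowFrequenciesApp} since there $(\tau - t^*/4)^{-\text{\textgreek{d}}_0} \ge 1$. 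The only technical point requiring care is the weighted-Hardy control of the commutator term $\int r^{\text{\textgreek{d}}_0} |\nabla \chi_R|^2 |\text{\textgreek{y}}|^2$ inside the middle estimate above, which follows from \eqref{eq:GeneralHardyBound} after noting $|\nabla \chi_R|^2 \lesssim R^{-2}$ and that $\mathrm{supp}(\nabla \chi_R) \subseteq \{R/2 \le r \le R\}$.
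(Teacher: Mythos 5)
Your proof is correct, but it proceeds by a genuinely different route from the one in the paper. You perform a spatial (real-interpolation) decomposition of the initial data at a cut-off radius $R\sim \tau - \frac{1}{4}t^{*}$, apply the polynomial decay estimate \eqref{eq:PolynomialDecayLowFrequenciesApp-1} to the near piece $\text{\textgreek{y}}_{a}$ (whose $r$-weighted initial energy gains a factor $R^{1-\text{\textgreek{d}}_{0}}$ off $\int r^{\text{\textgreek{d}}_{0}}J_{\text{\textgreek{m}}}^{N}(\text{\textgreek{y}})n^{\text{\textgreek{m}}}$) and the boundedness estimate \eqref{eq:PolynomialDecayLowFrequenciesApp} to the far piece $\text{\textgreek{y}}_{b}$ (whose flat energy gains $R^{-\text{\textgreek{d}}_{0}}$), then balance in $R$. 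The paper instead runs a Stein-type complex interpolation: it introduces a holomorphic family of solutions $\text{\textgreek{y}}^{(s)}$ whose initial data carry the weight $(1+r)^{\frac{s}{2}(\text{\textgreek{d}}_{0}-1)+\frac{1-s}{2}\text{\textgreek{d}}_{0}}$, pairs $(\text{\textgreek{t}}-\frac{1}{4}t^{*})^{s/2}(\text{\textgreek{y}}^{(s)})_{\le\text{\textgreek{w}}_{+}}|_{S_{\text{\textgreek{t}}}}$ against an arbitrary dual-energy vector $w$ to produce a holomorphic scalar $F_{w}(s)$ on the strip $\{0\le Re(s)\le1\}$, bounds $F_{w}$ on the two boundary lines via the two hypotheses, and invokes the Phragm\'en--Lindel\"of maximum principle at $s=\text{\textgreek{d}}_{0}$ (where the weight collapses to $1$ and $\text{\textgreek{y}}^{(\text{\textgreek{d}}_{0})}=\text{\textgreek{y}}$). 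Both arguments exploit the same linearity of the time cut-off and the frequency multiplier to split $\text{\textgreek{y}}_{\le\text{\textgreek{w}}_{+}}$. Your version is more elementary: no holomorphic families, no maximum principle, just a routine Hardy-inequality control of the commutator terms generated by $\nabla\chi_{R}$, and it makes visible exactly where the restriction $\text{\textgreek{d}}_{0}\le1$ is used (the step $(\text{\textgreek{t}}-\frac{1}{4}t^{*})^{-1}\le(\text{\textgreek{t}}-\frac{1}{4}t^{*})^{-\text{\textgreek{d}}_{0}}$). The paper's version is a one-line application of a standard principle, at the price of verifying the holomorphy and sub-exponential growth hypotheses for $F_{w}$ on the strip. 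One minor inaccuracy in your accompanying commentary: the ``naive'' geometric-mean interpolation of the two hypotheses does in fact yield the full $(\text{\textgreek{t}}-\frac{1}{4}t^{*})^{-\text{\textgreek{d}}_{0}}$ time decay, but with the weaker initial-data norm $A^{1-\text{\textgreek{d}}_{0}}D^{\text{\textgreek{d}}_{0}}$ (where $D=\int rJ^{N}$) rather than $\int r^{\text{\textgreek{d}}_{0}}J^{N}$; the spatial splitting is what upgrades the norm, not the decay rate.
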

\begin{proof}
For any $\text{\textgreek{t}}\in[\frac{1}{4}t^{*},\frac{3}{4}t^{*}]$,
we will define the following energy norm on the smooth and compactly
supported pairs of functions $(\text{\textgreek{f}}_{0},\text{\textgreek{f}}_{1})$
on the hypersurface $S_{\text{\textgreek{t}}}\cap\mathcal{R}(0,t^{*})$:
\begin{equation}
||(\text{\textgreek{f}}_{0},\text{\textgreek{f}}_{1})||_{\mathcal{H}_{en,\text{\textgreek{t}}}^{1}}\doteq\int_{S_{\text{\textgreek{t}}}\cap\mathcal{R}(0,t^{*})}J_{\text{\textgreek{m}}}^{N}(\text{\textgreek{f}})n_{S_{\text{\textgreek{t}}}}^{\text{\textgreek{m}}},\label{eq:EnergyNorm}
\end{equation}
where $\text{\textgreek{f}}$ is the unique solution on $J^{+}(S_{\text{\textgreek{t}}})\cap\mathcal{R}(0,t^{*})$
with $\text{\textgreek{f}}|_{S_{\text{\textgreek{t}}}\cap\mathcal{R}(0,t^{*})}=\text{\textgreek{f}}_{0}$
and $N\text{\textgreek{f}}|_{S_{\text{\textgreek{t}}}\cap\mathcal{R}(0,t^{*})}=\text{\textgreek{f}}_{1}$.
We will also denote with $||\cdot||_{\mathcal{H}_{en,\text{\textgreek{t}}}^{-1}}$
the dual norm of $||\cdot||_{\mathcal{H}_{en,\text{\textgreek{t}}}^{1}}$
on the space of smooth and compactly supported pairs on $S_{\text{\textgreek{t}}}\cap\mathcal{R}(0,t^{*})$,
that is to say: 
\begin{equation}
||(\text{\textgreek{f}}_{0},\text{\textgreek{f}}_{1})||_{\mathcal{H}_{en,\text{\textgreek{t}}}^{-1}}=\sup_{\{(w_{0},w_{1}):\,||(w_{0},w_{1})||_{\mathcal{H}_{en,\text{\textgreek{t}}}^{1}}=1\}}\int_{S_{\text{\textgreek{t}}}\cap\mathcal{R}(0,t^{*})}Re\big\{\text{\textgreek{f}}_{0}\bar{w}_{0}+\text{\textgreek{f}}_{1}\bar{w}_{1}\big\}.
\end{equation}

For any complex number $s$ in the strip $\{0\le Re(s)\le1\}$, we
define the function $\text{\textgreek{y}}^{(s)}:\mathcal{D}(\text{\textgreek{S}}_{0})\rightarrow\mathbb{C}$
(where $\mathcal{D}(\text{\textgreek{S}}_{0})$ is the domain of dependence
of $\text{\textgreek{S}}_{0}$) as the unique solution of the following
initial value problem 
\begin{equation}
\begin{cases}
\square\text{\textgreek{y}}^{(s)}=0\\
\text{\textgreek{y}}^{(s)}|_{\text{\textgreek{S}}_{0}}=(1+r)^{\frac{s}{2}(\text{\textgreek{d}}_{0}-1)+\frac{(1-s)}{2}\text{\textgreek{d}}_{0}}\text{\textgreek{y}}|_{\text{\textgreek{S}}_{0}},\\
N\text{\textgreek{y}}^{(s)}|_{\text{\textgreek{S}}_{0}}=N\big((1+r)^{\frac{s}{2}(\text{\textgreek{d}}_{0}-1)+\frac{(1-s)}{2}\text{\textgreek{d}}_{0}}\text{\textgreek{y}}\big)|_{\text{\textgreek{S}}_{0}}.
\end{cases}
\end{equation}
Notice that $\text{\textgreek{y}}^{(\text{\textgreek{d}}_{0})}\equiv\text{\textgreek{y}}$.
For any pair $w=(w_{0},w_{1})$ of smooth and compactly supported
functions on $S_{\text{\textgreek{t}}}\cap\mathcal{R}(0,t^{*})$ with
$||w||_{\mathcal{H}_{en,\text{\textgreek{t}}}^{-1}}=1$, we also introduce
the following function 
\begin{equation}
F_{w}[\text{\textgreek{y}}](s)\doteq\int_{S_{\text{\textgreek{t}}}}Re\big\{(\text{\textgreek{t}}-\frac{1}{4}t^{*})^{\frac{s}{2}}\text{\textgreek{y}}_{\le\text{\textgreek{w}}_{+}}^{(s)}\cdot\bar{w}_{0}+(\text{\textgreek{t}}-\frac{1}{4}t^{*})^{\frac{s}{2}}N(\text{\textgreek{y}}_{\le\text{\textgreek{w}}_{+}}^{(s)})\cdot\bar{w}_{1}\big\},\label{eq:DefinitionHolomorphicFunction}
\end{equation}
which is holomorhic in $s$ in the strip $\{0\le Re(s)\le1\}$. When
$s=0+\text{\textgreek{r}}i$ with $\text{\textgreek{r}}\in\mathbb{R}$,
an application of (\ref{eq:PolynomialDecayLowFrequenciesApp}) for
$\text{\textgreek{y}}^{(\text{\textgreek{r}}i)}$ in place of $\text{\textgreek{y}}$
readily yields (after also applying a Cauchy--Schwarz inequality on
(\ref{eq:DefinitionHolomorphicFunction}), in view of the fact that
$||w||_{\mathcal{H}_{en,\text{\textgreek{t}}}^{-1}}=1$) that
\begin{align}
|F_{w}[\text{\textgreek{y}}](i\text{\textgreek{r}})| & \le C\int_{t=0}\big((1+r)^{\text{\textgreek{d}}_{0}}J_{\text{\textgreek{m}}}^{N}(\text{\textgreek{y}})n^{\text{\textgreek{m}}}+(1+r)^{\text{\textgreek{d}}_{0}-2}|\text{\textgreek{y}}|^{2}\big)\le\label{eq:BoundFirstLine}\\
 & \le C\int_{t=0}(1+r)^{\text{\textgreek{d}}_{0}}J_{\text{\textgreek{m}}}^{N}(\text{\textgreek{y}})n^{\text{\textgreek{m}}}\nonumber 
\end{align}
(the second line following from the first after an application of
a Hardy-type inequality). When $s=1+\text{\textgreek{r}}i$ with $\text{\textgreek{r}}\in\mathbb{R}$,
an application of (\ref{eq:PolynomialDecayLowFrequenciesApp-1}) for
$\text{\textgreek{y}}^{(1+\text{\textgreek{r}}i)}$ in place of $\text{\textgreek{y}}$
yields (after applying again a Cauchy--Schwarz inequality on (\ref{eq:DefinitionHolomorphicFunction}),
in view of the fact that $||w||_{\mathcal{H}_{en,\text{\textgreek{t}}}^{-1}}=1$),
that 
\begin{align}
|F_{w}[\text{\textgreek{y}}](1+i\text{\textgreek{r}})| & \le C\Big\{ e^{C\text{\textgreek{w}}_{+}}\int_{t=0}\big((1+r)^{\text{\textgreek{d}}_{0}-1}J_{\text{\textgreek{m}}}^{N}(\text{\textgreek{y}})n^{\text{\textgreek{m}}}+(1+r)^{\text{\textgreek{d}}_{0}-3}|\text{\textgreek{y}}|^{2}\big)+\int_{t=0}\big((1+r)^{\text{\textgreek{d}}_{0}}J_{\text{\textgreek{m}}}^{N}(\text{\textgreek{y}})n^{\text{\textgreek{m}}}+(1+r)^{\text{\textgreek{d}}_{0}-2}|\text{\textgreek{y}}|^{2}\big)\Big\}\label{eq:BoundThirdLine}\\
 & \le C\Big\{ e^{C\text{\textgreek{w}}_{+}}\int_{t=0}J_{\text{\textgreek{m}}}^{N}(\text{\textgreek{y}})n^{\text{\textgreek{m}}}+\int_{t=0}r^{\text{\textgreek{d}}_{0}}J_{\text{\textgreek{m}}}^{N}(\text{\textgreek{y}})n^{\text{\textgreek{m}}}\Big\}.\nonumber 
\end{align}

By applying the Phragmen--Lindel\"of maximum principle on the function
$F_{w}[\text{\textgreek{y}}](s)$ on the strip $\{0\le Re(s)\le1\}$
and using (\ref{eq:BoundFirstLine}) and (\ref{eq:BoundThirdLine})
we obtain 
\begin{equation}
|F_{w}[\text{\textgreek{y}}](\text{\textgreek{d}}_{0})|\le C\Big\{ e^{C\text{\textgreek{w}}_{+}}\int_{t=0}J_{\text{\textgreek{m}}}^{N}(\text{\textgreek{y}})n^{\text{\textgreek{m}}}+\int_{t=0}r^{\text{\textgreek{d}}_{0}}J_{\text{\textgreek{m}}}^{N}(\text{\textgreek{y}})n^{\text{\textgreek{m}}}\Big\},
\end{equation}
that is to say 
\begin{equation}
\Big|\int_{S_{\text{\textgreek{t}}}}Re\big\{\text{\textgreek{t}}^{\frac{\text{\textgreek{d}}_{0}}{2}}\text{\textgreek{y}}_{\le\text{\textgreek{w}}_{+}}\cdot\bar{w}_{0}+\text{\textgreek{t}}^{\frac{\text{\textgreek{d}}_{0}}{2}}N(\text{\textgreek{y}}_{\le\text{\textgreek{w}}_{+}})\cdot\bar{w}_{1}\big\}\Big|\le C\Big\{ e^{C\text{\textgreek{w}}_{+}}\int_{t=0}J_{\text{\textgreek{m}}}^{N}(\text{\textgreek{y}})n^{\text{\textgreek{m}}}+\int_{t=0}r^{\text{\textgreek{d}}_{0}}J_{\text{\textgreek{m}}}^{N}(\text{\textgreek{y}})n^{\text{\textgreek{m}}}\Big\}.\label{eq:BoundFromPhragmenLindelof}
\end{equation}
Since (\ref{eq:BoundFromPhragmenLindelof}) holds for all pairs $w=(w_{0},w_{1})$
with $||w||_{\mathcal{H}_{en,\text{\textgreek{t}}}^{-1}}=1$, from
(\ref{eq:BoundFromPhragmenLindelof}) and the definition (\ref{eq:EnergyNorm})
of the $\mathcal{H}_{en,\text{\textgreek{t}}}^{1}$ norm, we finally
obtain (\ref{eq:PolynomialDecayLowFrequenciesApp-2}). 
\end{proof}
\bibliographystyle{plain}
\bibliography{DatabaseExample}

\end{document}